\documentclass[11pt, reqno]{amsart}
\usepackage{amsmath, amssymb, latexsym, enumerate,  graphicx,tikz, stmaryrd,pifont}
\usetikzlibrary{arrows,decorations,decorations.pathmorphing,snakes,positioning}
\usepackage{mdframed}
\usepackage{mdwlist}
\usepackage[pdftex]{hyperref}
\usepackage{amscd,mathrsfs,epic,empheq,float}
\usepackage{bbm,bm}
\usepackage[all]{xy}
 \usepackage{todonotes}
\usepackage{tikz}
\usepackage{graphicx}
\usepackage{caption}
\usepackage[vcentermath]{youngtab}
\setcounter{tocdepth}{1}

\newcommand{\renc}{\renewcommand}

 \captionsetup[table]{skip=0pt}
 \newlength{\baseunit}               % the basic unit length
         % width of the picture
          % depth of the picture
                % with between left margin and picture
 \newcount{\numlines}                % depth of picture (in number of lines)
 \setlength{\baseunit}{0.05ex}

 \setlength{\intextsep}{.1cm}

\newtheorem{theorem}{Theorem}[section]
\newtheorem{lemma}[theorem]{Lemma}
\newtheorem{prop}[theorem]{Proposition}
\newtheorem{corollary}[theorem]{Corollary}

\theoremstyle{definition}
\newtheorem{definition}[theorem]{Definition}

\newtheorem{remark}[theorem]{Remark}

\newtheorem{example}[theorem]{Example}
\newtheorem{ex}[theorem]{Example}

\newcommand{\bu}{{\scriptstyle \blacklozenge}}

\newcommand{\cM}{{\mathcal M}}

\newcommand{\cO}{{\mathcal O}}

\newcommand{\cT}{{\mathcal T}}

\newcommand{\ABr}{\mathcal{A}({\rm{D}_k})}

\newcommand{\ov}{\overline}

\newcommand{\B}{\mathbb{B}_{\Lambda}}

\newcommand{\D}{\mathbb{D}_{\Lambda}}

\def\down{\vee}
\def\up{\wedge}

\newcommand{\MOD}{\text{-}\op{mod}}
\newcommand{\gMOD}{\text{-}\op{gmod}}
\newcommand{\mg}{\mathfrak{g}}

\newcommand{\mR}{\mathbb{R}}

\newcommand{\mC}{\mathbb{C}}

\newcommand{\mZ}{\mathbb{Z}}
\newcommand{\mX}{\mathbb{X}}
\newcommand{\mD}{\mathbb{D}}

\newcommand{\pos}{\operatorname{p}}
\newcommand{\cupg}{\operatorname{cup}(\gamma)} 
\newcommand{\capg}{\operatorname{cap}(\gamma)}

\newcommand{\la}{\lambda}
\newcommand{\La}{\Lambda}
\newcommand{\Lap}{\Lambda_k^{\ov{p}}}

\newcommand{\p}{\mathfrak{p}}

\newcommand{\op}{\operatorname}

\renewcommand{\S}{\mathbb{S}_k}

\DeclareMathOperator{\End}{End}   
  
\DeclareMathOperator{\Mod}{\text{-}mod}

\DeclareMathOperator{\Perv}{Perv}

%\theoremstyle{remark}

% useful commands
%\renewcommand{\mp}{\mathfrak{p}}

%% \newcommand{\aff} {\mathbb A}

\newcommand{\CupConnectGraph}[1]{%
\begin{tikzpicture}[#1]%
\draw[thick] (0,0.6ex) -- (2.75ex,0.6ex);%
\draw[opacity=0] (0,0) -- (3ex,0);%
\end{tikzpicture}%
}

\newcommand{\DCupConnectGraph}[1]{%
\begin{tikzpicture}[#1]%
\draw[thick] (0,0.6ex) -- (2.75ex,0.6ex);%
\fill (1.375ex,0.6ex) circle(0.4ex);%
\draw[opacity=0] (0,0) -- (3ex,0);%
\end{tikzpicture}%
}

\newcommand{\RayConnectGraph}[1]{%
\begin{tikzpicture}[#1]%
\draw[opacity=0] (0,0) -- (3ex,0);%
\draw[thick] (0,0.6ex) -- (2.75ex,0.6ex);%
\draw[thick] (2.75ex,1.2ex) -- (2.75ex,0);%
\end{tikzpicture}%
}

\newcommand{\DRayConnectGraph}[1]{%
\begin{tikzpicture}[#1]%
\draw[opacity=0] (0,0) -- (3ex,0);%
\draw[thick] (0,0.6ex) -- (2.75ex,0.6ex);%
\fill (1.375ex,0.6ex) circle(0.4ex);%
\draw[thick] (2.75ex,1.2ex) -- (2.75ex,0);%
\end{tikzpicture}%
}

\newcommand*\circled[1]{\tikz[baseline=(char.base)]{\node[shape=circle,draw,inner sep=1pt] (char) {$\scriptstyle #1 $};}}
            
\newcommand{\CupConnect}{\CupConnectGraph{}}
\newcommand{\DCupConnect}{\DCupConnectGraph{}}
\newcommand{\RayConnect}{\RayConnectGraph{}}
\newcommand{\DRayConnect}{\DRayConnectGraph{}}

\newcommand{\un}{\underline}

\begin{document}
\title[Diagrams for perverse sheaves on isotropic Grassmannians]{Diagrammatic description for the categories of perverse sheaves on isotropic Grassmannians}
\author{Michael Ehrig}
\address{M.E.: Mathematisches Institut, Universit\"at Bonn, Endenicher Allee 60, Room 1.003, 53115 Bonn, Germany}
\email{mehrig@math.uni-bonn.de}

\author{Catharina Stroppel}
\address{C.S.: Mathematisches Institut, Universit\"at Bonn, Endenicher Allee 60, Room 4.007, 53115 Bonn, Germany}
\email{stroppel@math.uni-bonn.de}

\thanks{M.E. was financed by the DFG Priority program 1388. This material is based on work supported by the National Science Foundation under Grant No. 0932078 000, while the authors were in residence at the MSRI in Berkeley, California.}
%\date{\today}

\begin{abstract}
 For each integer $k\geq 4$ we describe diagrammatically a positively graded Koszul algebra $\mathbb{D}_k$ such that the category of finite dimensional
$\mathbb{D}_k$-modules is equivalent to the category of perverse sheaves on the isotropic Grassmannian of type ${\rm D}_k$ or ${\rm B}_{k-1}$, constructible with
respect to the Schubert stratification. The algebra is obtained by a (non-trivial) ``folding'' procedure from a generalized Khovanov arc algebra. Properties like graded cellularity and explicit closed formulas for graded decomposition numbers are established by elementary tools.
\end{abstract}

\maketitle

\tableofcontents
\section{Introduction}
%{\color{red}
%Wir operieren mit der Algebra von unten. Projektive entsprechen cap diagrams. $\underline{\la}a\overline{\mu}$. Endliche Gewichte. Nummerierung mit i) und I)
%}
\renc{\thetheorem}{\Alph{theorem}}
This is the first of four articles
studying some generalizations $\mathbb{D}_\Lambda$
of (generalized)  Khovanov arc algebras from \cite{Khovtangles} and \cite{BS1}  to types $\rm D$ and $\rm B$.

In this article we introduce these algebras and prove that for the special choice of a principal block, $\Lambda=\Lambda^{\ov 0}_k$, the category of finite dimensional modules over $\mathbb{D}_k:=\mathbb{D}_\Lambda$ is equivalent to the category $\op{Perv}_k$ of perverse sheaves on the isotropic Grassmannian $X$ of type ${\rm D}_k$ or  ${\rm B}_{k-1}$ constructible with respect to the Schubert stratification. The equivalence in type  $\rm D$ will be established by giving an explicit isomorphism to the endomorphism algebra of a projective
generator described in \cite{Braden}. Then we observe that type  ${\rm B}_{n+1}$ can in fact be deduced immediately by an isomorphism of varieties. Hence we stick for now to type ${\rm D}_n$. 
Since we have $X=G/P$ for $G=SO(2k,\mC)$ and $P$ the parabolic subgroup of type ${\rm A}_{k-1}$, these categories are then also equivalent to the Bernstein-Gelfand-Gelfand parabolic category $\cO_0^\p(\mathfrak{so}(2k,\mC))$ of type ${\rm D}_k$ with parabolic of type ${\rm A}_{k-1}$, see \cite{Humphreys} for the definition.

The algebras $\mathbb{D}_\Lambda$ are constructed in an elementary way, purely in terms of diagrams; they come naturally equipped with a grading. As a vector space they have an explicit homogeneous basis given by certain oriented circle diagrams similar to \cite{BS1}. Under our  isomorphism, Braden's algebra inherits a grading from $\mathbb{D}_k$ which agrees with the geometric Koszul grading from \cite{BGS}.\\

To be more specific, recall that Schubert varieties, and hence the simple objects in $\op{Perv}_k$ are labelled by symmetric partitions fitting into a box of size $k\times k$, or equivalently by the representatives of minimal length for the cosets $S_k\backslash W({\rm D}_k)$ of the Weyl group $W({\rm D}_k)$ of type ${\rm D}_k$ modulo a parabolic subgroup isomorphic to the symmetric group $S_k$. We first identify in Section~\ref{section21} these cosets with {\it diagrammatic weights} $\la$ (i.e. with $\{\up, \down\}$-sequences of length $k$ with an even number of $\up$'s) and then associate to each such diagrammatic weight a {\it (decorated) cup diagram} $\underline{\la}$ on $k$ points, see Definition~\ref{decoratedcups}. For instance, the eight possible decorated cup diagrams for $k=4$ are displayed in Example~\ref{exB} below. Such a cup diagram $\underline{\la}$ can be paired with a second cup diagram  $\underline{\mu}$ by putting $\underline\mu$ upside down on top of $\underline{\la}$ to obtain a {\it (decorated) circle diagram} $\underline{\la}\overline{\mu}$, see Example~\ref{ps} for such circle diagrams. Adding additionally a compatible weight diagram in between gives us an {\it oriented circle diagram}, see for instance Example~\ref{triples}. Let $\mathbb{D}_k$ be the vector space spanned by these decorated circle diagrams for fixed $k$.\\

The following theorem collects some of our main results.

\begin{theorem}
\label{intro}
\begin{enumerate}[1.)]
\item The vector space  $\mathbb{D}_k$ can be equipped with a diagrammatically defined associative algebra structure such that  $\mathbb{D}_k$ is isomorphic to the endomorphism algebra of a minimal projective generator of $\op{Perv}_k$ (see Theorem~\ref{thm:main}).
\item The multiplication is compatible with the grading (Proposition~\ref{forgotgraded}), such that $\mathbb{D}_k$ becomes a graded algebra (see Theorem~\ref{algebra_structure}).
\item The underlying graph of the quiver $\mathcal{Q}_k$ of $\op{Perv}_k$ is a finite graph with vertices labelled by the cup diagrams on $k$ points. Each cup $C$ in a cup diagram $\un\la$ defines an outgoing and an ingoing arrow to respectively from a cup diagram $\un\mu$, where $\mu$ is obtained from $\la$ by swapping $\up$ with $\down$ at the points connected by $C$
(see Lemma~\ref{lapairscup} and Corollary~\ref{samegraph}). 

\item The graded Cartan matrix $C_k$ of $\mathbb{D}_k$ is indexed by the cup diagrams $\underline{\la}$ and the entries count the number of possible orientations of the circle diagram $\un{\la}\ov\mu$ with their gradings (see Lemma~\ref{Cartanmatrix}).
\end{enumerate}
\end{theorem}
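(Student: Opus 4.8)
The plan is to read the four assertions as the payoff of a single construction. Once the diagrammatic multiplication on $\mathbb{D}_k$ is in place and identified with the endomorphism algebra of Braden's projective generator, parts (3) and (4) become bookkeeping on the homogeneous basis of oriented circle diagrams, so the real weight of the proof sits in (1) and (2). I would therefore organize the argument around first building and controlling the product, and only then harvesting the quiver and Cartan data.

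For (1), I would define the product of two oriented circle diagrams by the surgery procedure of \cite{BS1}, adapted to the decorated (folded) setting: stack $\un\mu\ov\la$ on top of $\un\la\ov\nu$, declare the product zero unless the two copies of the middle weight diagram $\la$ agree, and otherwise perform the iterated surgeries that merge and split circles, tracking the decorations through each surgery according to the rule forced by the folding. The first genuine task is associativity. I would establish it by the usual strategy of reducing to the local effect of a single pair of elementary surgeries and checking that the order in which non-nested surgeries are carried out is immaterial; equivalently, one checks that the surgeries assemble into a well-defined $2$-dimensional TQFT/Frobenius structure, the only new subtlety being that the decoration rule must be compatible with this reordering. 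With associativity secured, I would produce the isomorphism to $\End$ of Braden's generator by matching the idempotents $e_{\un\la}$ (indexed by cup diagrams) with the indecomposable projectives and matching a generating set of degree-one morphisms on the two sides; the grading on $\mathbb{D}_k$ then transports to Braden's algebra and, by \cite{BGS}, coincides with the geometric Koszul grading.

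Part (2) asks that this multiplication be homogeneous for the degree function on oriented diagrams (counting clockwise-oriented circles, suitably corrected for decorations). I would verify homogeneity surgery-by-surgery, checking that each elementary merge or split changes the circle count and the diagram degree in matched fashion, so that degree is additive under the product; recording this gives the graded-algebra statement. Parts (3) and (4) then follow by inspecting the basis. The Cartan entry for $(\un\la,\un\mu)$ is the graded dimension $\gdim\, e_{\un\la}\,\mathbb{D}_k\, e_{\un\mu}$, and since the homogeneous basis of this space is exactly the set of admissible orientations of the circle diagram $\un\la\ov\mu$, the entry is precisely the generating function counting these orientations by degree, which is (4). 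For (3), the vertices of the quiver are the idempotents $e_{\un\la}$, hence the cup diagrams, and the arrows form a basis of $\Ext^1$ between simples, detected under Koszulity by the degree-one part of the algebra; I would compute that the degree-one morphisms out of $e_{\un\la}$ are indexed exactly by the cups of $\un\la$, each cup $C$ producing the cup diagram $\un\mu$ obtained by swapping $\up$ and $\down$ at the two endpoints of $C$, with the ingoing arrow given by the dual morphism. Finiteness of the underlying graph is immediate, as there are finitely many cup diagrams on $k$ points.

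The main obstacle is entirely in (1): the folding forces a decoration and sign rule that is absent from the untwisted arc algebra of \cite{BS1}, and one must simultaneously prove that this rule is consistent under reordering of surgeries (for associativity) and that it reproduces Braden's relations exactly (for the isomorphism). Once that compatibility is verified, the homogeneity of the product and the extraction of the quiver and graded Cartan matrix from the oriented-diagram basis are direct.
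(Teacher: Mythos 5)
The fatal gap is the associativity argument in part (1). You propose to check that ``the surgeries assemble into a well-defined $2$-dimensional TQFT/Frobenius structure,'' so that order-independence becomes a local verification. That is precisely what fails here, and it is the central technical point separating $\mathbb{D}_k$ from the type $\rm A$ arc algebra of \cite{BS1}: the merge and split maps carry signs that are \emph{not} determined by the local picture at the surgery site. The split multiplies by $(-1)^{\pos(i)}(X_{(j,l)}\pm X_{(i,l)})$, and the corrections $(-1)^a$ in \eqref{mac}, \eqref{splitc}, \eqref{splita} are computed by counting undotted cups and caps along a path from the surgery site to the tag (rightmost vertex) of the ambient component — data depending on the whole diagram. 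Hence the multiplication is non-local, there is no underlying TQFT, and a purely local reordering check cannot even be formulated. The paper's substitute is Theorem~\ref{thm:surgeries_commute}, whose proof is a genuinely global case analysis: Lemmas~\ref{lem:opposite_cannot_connect} and~\ref{lem:two_pairs_connect} constrain how the two surgery sites can be connected, the algebraic models of Lemmas~\ref{lem:algebraic_merge} and~\ref{lem:algebraic_split} are used to compare the two composites, and in the hard cases the composites differ a priori by the global sign $(-1)^{{\pos}(c)-{\pos}(a)}{\rm sign}_D(b,l)\,{\rm sign}_D(d,l')$, which is shown to equal $1$ only by exploiting admissibility of all intermediate diagrams. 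There is also the orientability phenomenon of Example~\ref{ex:splitnotorientable}: in one order the first surgery is already zero (non-orientable output), while in the other order the first surgery is non-zero and only the second kills it — invisible to any local argument.

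A second, smaller gap concerns the isomorphism with Braden's algebra: matching idempotents and degree-one elements only yields a candidate map, and your sketch understates both halves of the verification. The paper's map (Theorem~\ref{mainthm}) sends $t_{\alpha,\la}$ to ${}_{\la}\mathbbm{1}_{\la}\pm X_{\alpha,\la}$ and $p(\la,\mu)$ to ${}_{\la}\mathbbm{1}_{\mu}+\tfrac12(-1)^j\,{}_{\la}X_{\mu}$ — a ``formal logarithm'' correction, not the naive identification of generators — and checking Braden's relations (R-1)--(R-5) occupies Proposition~\ref{welldefined}, the diamond classification (Proposition~\ref{diamondsclass}) and Lemmas~\ref{stupidcalcc}--\ref{stupidcalca}. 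Surjectivity then uses generation in degrees $0$ and $1$ (Theorem~\ref{prop:generatedindegree1}), and injectivity is not formal: it follows from equality of dimensions, obtained by identifying both Cartan matrices with $M_k^tM_k$ for the parabolic Kazhdan-Lusztig decomposition matrix (Corollary~\ref{CorCartan}, Lemma~\ref{lem:KLpolys}). By contrast, your treatment of (2)--(4) is sound and close to the paper: homogeneity checked surgery-by-surgery is Proposition~\ref{forgotgraded}, the Cartan entries as graded orientation counts is Lemma~\ref{Cartanmatrix}, and your route to (3) via the degree-one part of the graded algebra (legitimate once Theorem~\ref{prop:generatedindegree1} is in hand) is a reasonable alternative to the paper's derivation through Braden's presentation and Lemma~\ref{lapairscup}.
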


In case $k=4$ we obtain the following:

\subsection{An explicit example} 
\label{exB}
The quiver $\mathcal{Q}_4$ looks as follows:
\begin{equation} \label{fig:quiv}
\begin{tikzpicture}[thick,scale=.8]
\begin{scope}[xshift=3.1cm,yshift=-2cm]
\draw (.4,0) to +(0,-.5);
\draw (.8,0) to +(0,-.5);
\draw (1.2,0) to +(0,-.5);
\draw (1.6,0) to +(0,-.5);
\end{scope}
\draw[->] (4.25,-.8) to node[right]{$b_1$} +(0,-.9);
\draw[<-] (4,-.8) to node[left]{$a_1$} +(0,-.9);
\node[shape=circle,draw,inner sep=1pt] at (3,-2.25) {$\scriptstyle 1 $};

\begin{scope}[xshift=3.1cm]
\draw (.4,0) .. controls +(0,-.5) and +(0,-.5) .. +(.5,0);
\fill (.65,-.35) circle(2.5pt);
\draw (1.2,0) to +(0,-.5);
\draw (1.6,0) to +(0,-.5);
\end{scope}
\draw[->] (5.25,-.2) to node[above]{$a_2$} +(1,0);
\draw[<-] (5.25,-.4) to node[below]{$b_2$} +(1,0);
\node[shape=circle,draw,inner sep=1pt] at (3,-.25) {$\scriptstyle 2 $};

\begin{scope}[xshift=6.2cm]
\draw (.4,0) to +(0,-.5);
\fill (.4,-.25) circle(2.5pt);
\draw (.8,0) .. controls +(0,-.5) and +(0,-.5) .. +(.5,0);
\draw (1.6,0) to +(0,-.5);
\end{scope}
\draw[->] (8.25,.5) to node[above,pos=0.4]{$a_3$} +(1,.6);
\draw[<-] (8.25,.3) to node[below,pos=0.8]{$b_3$} +(1,.6);
\draw[<-] (8.25,-.6) to node[above,pos=0.7]{$b_4$} +(1,-.6);
\draw[->] (8.25,-.8) to node[below,pos=0.4]{$a_4$} +(1,-.6);
\node[shape=circle,draw,inner sep=1pt] at (7.25,.35) {$\scriptstyle 3 $};

\begin{scope}[xshift=9.2cm,yshift=1.5cm]
\draw (.4,0) to +(0,-.5);
\fill (.4,-.25) circle(2.5pt);
\draw (.8,0) to +(0,-.5);
\draw (1.2,0) .. controls +(0,-.5) and +(0,-.5) .. +(.5,0);
\end{scope}
\node[shape=circle,draw,inner sep=1pt] at (10.2,0.65) {$\scriptstyle 4 $};

\begin{scope}[xshift=9.2cm,yshift=-1.5cm]
\draw (.4,0) .. controls +(0,-.5) and +(0,-.5) .. +(.5,0);
\draw (1.2,0) to +(0,-.5);
\fill (1.2,-.25) circle(2.5pt);
\draw (1.6,0) to +(0,-.5);
\end{scope}
\node[shape=circle,draw,inner sep=1pt] at (10.2,-1.15) {$\scriptstyle 5 $};

\draw[->] (11.25,1.1) to node[above,pos=.65]{$a_5$} +(1,-.6);
\draw[<-] (11.25,.9) to node[below,pos=.4]{$b_5$} +(1,-.6);
\draw[<-] (11.25,-1.2) to node[above,pos=.3]{$b_6$} +(1,.6);
\draw[->] (11.25,-1.4) to node[below,pos=.7]{$a_6$} +(1,.6);

\begin{scope}[xshift=12.2cm]
\draw (.4,0) .. controls +(0,-.5) and +(0,-.5) .. +(.5,0);
\draw (1.2,0) .. controls +(0,-.5) and +(0,-.5) .. +(.5,0);
\end{scope}
\draw[->] (14.25,-.1) to node[above]{$a_7$} +(1,0);
\draw[<-] (14.25,-.3) to node[below]{$b_7$} +(1,0);
\node[shape=circle,draw,inner sep=1pt] at (13.2,0.35) {$\scriptstyle 6 $};

\begin{scope}[xshift=15cm]
\draw (.5,0) .. controls +(0,-1) and +(0,-1) .. +(1.5,0);
\draw (1,0) .. controls +(0,-.5) and +(0,-.5) .. +(.5,0);
\end{scope}
\draw[->] (16.15,.2) to node[left]{$a_8$} +(0,1.3);
\draw[<-] (16.35,.2) to node[right]{$b_8$} +(0,1.3);
\draw[<-] (16.15,-1) -- +(0,-1.5) -- node[above]{$a_{10}$} +(-8.8,-1.5) -- +(-8.8,.3);
\draw[->] (16.35,-1) -- +(0,-1.7) -- node[below]{$b_{10}$} +(-9.2,-1.7) -- +(-9.2,.3);
\node[shape=circle,draw,inner sep=1pt] at (17.35,-.25) {$\scriptstyle 7 $};

\begin{scope}[xshift=15.2cm,yshift=2.1cm]
\draw (.4,0) .. controls +(0,-.5) and +(0,-.5) .. +(.5,0);
\fill (.65,-.35) circle(2.5pt);
\draw (1.2,0) .. controls +(0,-.5) and +(0,-.5) .. +(.5,0);
\fill (1.45,-.35) circle(2.5pt);
\end{scope}
\draw[<-] (15.3,2) -- +(-11.3,0) -- node[left]{$a_9$} +(-11.3,-1.7);
\draw[->] (15.3,1.8) -- +(-11.1,0) -- node[right]{$b_9$} +(-11.1,-1.5);
\node[shape=circle,draw,inner sep=1pt] at (17.35,1.85) {$\scriptstyle 8 $};

\end{tikzpicture}
\end{equation}
The algebra $\mathbb{D}_4$ is the path algebra of this quiver $\mathcal{Q}_4$ modulo the following defining relations:
\begin{itemize}
\item[$\blacktriangleright$] \textit{Diamond relations:} Let $x$, $y$ be two distinct vertices from one of the  following four sets, called \emph{diamonds} in Definition \ref{Bradenalgebra}:
$$(\circled{3},\circled{4},\circled{6},\circled{5}),\: (\circled{3},\circled{4},\circled{6},\circled{7}),\: (\circled{3},\circled{5},\circled{6},\circled{7}),\: (\circled{2},\circled{3},\circled{7},\circled{8}).$$ 
 Then any two paths of length two from $x$ to $y$ involving only vertices inside the same diamond are equal.
\item[$\blacktriangleright$] \textit{Non-extendibility relations:} The following compositions are zero
\begin{equation*}
\begin{array}{cccccccc}
a_1a_2, & b_2b_1, & a_2a_3, & b_3b_2, & a_2a_4, & b_4b_2, & a_7a_8, & b_8b_7.
\end{array}
\end{equation*}
\item[$\blacktriangleright$] \textit{Loop relations:} The following elements are zero
\begin{equation*}
\begin{array}{ccccc}
a_1b_1, & b_1a_1+a_2b_2, & a_9b_9, & b_2a_2 - a_3b_3, & b_5a_5+b_6a_6+a_7b_7, \\
a_{10}b_{10}, & a_5b_5, & a_6b_6, & a_3b_3-a_4b_4, & b_7a_7+a_8b_8+2b_{10}a_{10}.
\end{array}
\end{equation*}
\end{itemize}
The grading of the algebra is just given by the length of the paths. (Note that all relations are homogeneous). Observe moreover that the algebra is quadratic, i.e. generated in degrees zero and one with relations in degree two, see Theorem~\ref{prop:generatedindegree1} for a direct proof of the fact that $\mathbb{D}_k$ is generated in degrees zero and one.

With the idempotents numbered as in \eqref{fig:quiv}, the graded Cartan matrix is
\begin{center}
\resizebox{\linewidth}{!}{
$\begin{pmatrix}
1&q&0&0&0&0&0&q^2\\
q&1+q^2&q&0&0&0&q^2&q+q^3\\
0&q&1+q^2&q&q&q^2&q+q^3&q^2\\
0&0&q&1+q^2&q^2&q+q^3&q^2&0\\
0&0&q&q^2&1+q^2&q+q^3&q^2&0\\
0&0&q^2&q+q^3&q+q^3&1+2q^2+q^4&q+q^3&0\\
0&q^2&q+q^3&q^2&q^2&q+q^3&1+2q^2+q^4&q+q^3\\
q^2&q+q^3&q^2&0&0&0&q+q^3&1+2q^2+q^4
\end{pmatrix}$
}
\end{center}

\subsection{Definition of the algebra}
The definition of the multiplication on $\mathbb{D}_\Lambda$ is similar to the type $\rm A$ version, i.e. to Khovanov's original definition, \cite{Khovtangles} with its generalization in \cite{Str09} and \cite{BS1}, which used the fact that the Frobenius algebra $R=\mC[x]/(x^2)$ defines a 2-dimensional TQFT $\cT$, hence assigns in a functorial way to a union $(S^1)^n$ of $n$ circles the tensor product $\cT((S^1)^n)=R^{\otimes n}$ of $n$ copies of $R$ and to a cobordisms $(S^1)^n\rightarrow (S^1)^m$ between two finite union of circles a linear map $\cT((S^1)^n)\rightarrow \cT((S^1)^m)$.  A basis vector in our algebra $\mathbb{D}_\Lambda$ corresponds to a fixed orientation of some circle diagram with, say $n$, circles and so can be identified canonically with a standard basis vector of $\cT((S^1)^n)$, see Proposition~\ref{coho}. This allows to use the linear maps attached to the cobordisms to define the multiplication of $\mathbb{D}_\Lambda$. In contrast to the type $\rm A$ case, this construction needs however some additional signs incorporated in a subtle way (encoded by the decorations on the diagrams), see Section~\ref{sec:explicitmult}. Depending on the viewpoint these signs destroy either the locality or force to consider the circles to be embedded in the plane. Therefore, general topological arguments using the TQFT-structure cannot be applied (for instance to deduce the associativity). 

 We give in Section~\ref{sec:idea} the main idea of the multiplication, but then have to work hard  to show the associativity by algebro-combinatorial methods, see Theorem~\ref{thm:surgeries_commute}. For the understanding of the multiplication itself, it is enough to have a look at the algebraic definition of the vector space underlying $\D$ in Section~\ref{sec:spaceofdiagrams} and at the explicit rule for the multiplication in the algebraic context in  Section \ref{sec:explicitmult} with Examples in Section~\ref{sec:examples}. We recommend to skip Section~\ref{sec:surgery} completely for the first reading.

The framework here is set up slightly more general than actually needed, but allows to work with more involved diagrams, similar to \cite{BS1}, \cite{BS3}. In this framework it is straight forward to define also bimodules over our algebras which correspond to projective functors on the category $\cO$ side. A detailed study of these functors will appear in a subsequent paper.
 
 \subsection{Origin of the decorated cup diagrams} The resemblance of our construction here with \cite{BS1}, but also its technical subtleties appear in fact from a (non-trivial) {\it folding} of the type $\rm A$ setup.  Our type $\rm D$ weights of length $k$ could be seen as antisymmetric type $\rm A$ weights of length $2k$ and our cup diagrams as certain symmetric  (with respect to  a middle vertical axis) type $\rm A$ cup diagrams.  It was already observed in \cite{LS} that such symmetric cup diagrams can be used to describe the parabolic Kazdhan-Lusztig polynomials of type  $({\rm D}_k, {\rm A}_{k-1})$. To obtain our decorated cup diagrams from a symmetric cup diagram of type $\rm A$ one could follow  \cite[5.2]{LS} and fold the diagram along its mirror axis to obtain a cup diagram of half the size. The folding identifies each cup/ray with its mirror image if it does not cross the vertical reflection line, and folds it into itself otherwise. More precisely, given a symmetric cup diagram with $2k$ endpoints of cups/rays (called vertices), numbered by $1,2,\ldots, 2k$ from left to right, then folding removes first all vertices labelled $1,2,\ldots, k$ and all cups/rays with at least one endpoint amongst these points. If $i_1<i_2 < \ldots < i_r$  are the  number of the vertices not attached to a cup/ray anymore, then we reconnect these vertices by putting a cup decorated with a $\bullet$ connecting $i_1$ with $i_2$, $i_3$ with $i_4$ etc. and finally put  a ray decorated with a $\bullet$ at vertex $i_r$ in case $r$ is odd. 
 For instance, 

\begin{eqnarray*}
\begin{tikzpicture}[thick,scale=0.50,snake=zigzag, line before snake = 2mm, line after snake = 2mm]
\draw (0,0) .. controls +(0,-2) and +(0,-2) .. +(6.5,0);
\draw (.5,0) .. controls +(0,-.35) and +(0,-.35) .. +(.5,0);
\draw (2,0) .. controls +(0,-.35) and +(0,-.35) .. +(.5,0);
\draw (3,0) .. controls +(0,-.35) and +(0,-.35) .. +(.5,0);
\draw (4,0) .. controls +(0,-.35) and +(0,-.35) .. +(.5,0);
\draw (5.5,0) .. controls +(0,-.35) and +(0,-.35) .. +(.5,0);
\draw (1.5,0) .. controls +(0,-1) and +(0,-1) .. +(3.5,0);
\draw[dashed,thin] (3.25,.1) -- (3.25,-1.8);
\draw[->, snake] (7.65,-.5) -- +(1.7,0);

\begin{scope}[xshift=6.65cm]
%\draw[dashed,thin] (3.25,.1) -- (3.25,-1.8);
\draw (4,0) .. controls +(0,-.35) and +(0,-.35) .. +(.5,0);
\draw (3.5,0) .. controls +(0,-1) and +(0,-1) .. +(1.5,0);
\draw (5.5,0) .. controls +(0,-.35) and +(0,-.35) .. +(.5,0);
\draw (6.5,0) -- (6.5,-1.5);
\fill (4.25,-.75) circle(3.5pt);
\fill (6.5,-.75) circle(3.5pt);
\end{scope}
\end{tikzpicture}
\end{eqnarray*}
In the following, we just record the folded diagram. To make this unambiguous we mark the cups (or rays) which got folded into itself by a dot, $\bullet$. By construction, dots can only occur on cups which are not nested and always to the left of possible rays, see Definition~\ref{def:decorated} for a precise definition. Such types of decorations are well-known tools in the theory of diagram algebras, see e.g. \cite{Green}, \cite{Martinblob}. In \cite{LS} it was also shown that on the space of such cup diagrams we have a natural action of the type $\rm D$ Temperley-Lieb algebra as defined in \cite{Green}. This algebra contains the type $\rm A$ Temperley-Lieb algebra as a subalgebra, plus one more generator which is usually displayed by a cap-cup diagram with a $\bullet$ on its cap and on its cup. The above mentioned bimodules can be used to categorify the type $\rm D$ Temperley-Lieb algebra. The underlying combinatorics was developed in \cite{LS}.

\subsection{Important properties of the algebras}
In Section~\ref{sec:cell} we study in detail the structure of the associative algebra $\mathbb{D}_\Lambda$. We establish its cellularity in the sense of Graham and Lehrer \cite{GL} in the graded version of \cite{MH} and determine explicitly the decomposition matrices $M_k$ by an easy counting formula in terms of parabolic Kazhdan-Lusztig polynomials of type $({\rm D}_k, {\rm A}_{k-1})$, see Lemma~\ref{lem:KLpolys}. It allows us to identify the Cartan matrix $C_k=M^t_kM_k$ with the Cartan matrix of Braden's algebra $\ABr$. This is then used in Section ~\ref{sec:iso_theorem} to show that the explicit assignment in Theorem~\ref{mainthm} defines indeed an isomorphism $\Phi_k$. Note that our generators are obtained from Braden's by some formal logarithm. This fact is responsible for two major advantages of our presentation in contrast to Braden's which will play an important role in the subsequent papers:
 \begin{itemize}
 \item[$\blacktriangleright$] the nice positive (Koszul) grading of $\mathbb{D}_k$  (which becomes invisible under the exponential map $\Phi_k^{-1}$); and
\item[$\blacktriangleright$] the Cartan matrix and decomposition numbers are totally explicit in our setup (but not really computable in terms of Braden's generators).
\end{itemize}

To emphasize that our results should not just be seen as straightforward generalizations of known results, let us indicate some  applications and connections which will appear in detail in the subsequent papers of the series. For the reader interested mainly in this application we recommend  to skip Sections~\ref{sec:surgery}--\ref{sdecomposition} and  pass to Section~\ref{sec:iso_theorem}.

\subsection{Connections to cyclotomic VW-algebras and Brauer algebras}
In part II of this series, \cite{ES2}, we develop in detail the graded versions of {level 2} cyclotomic Nazarov-Wenzl algebras and show their blocks are isomorphic to certain idempotent truncations of the algebras $\mathbb{D}_\Lambda$. In particular, they inherit a positive Koszul grading and a geometric interpretation in terms of perverse sheaves on isotropic Grassmannians. In the same paper together with \cite{ESKoszul}, we also relate them to non-semisimple versions of Brauer algebras \cite{CMD1}, \cite{CMD2} proving that Brauer algebras are Morita equivalent to idempotent truncations of sums of our algebras $\mathbb{D}_\Lambda$. 
This gives in particular a conceptual explanation of why {\it all} the Brauer algebras are determined by the combinatorics of Weyl groups of type $\rm D$, an amazing phenomenon first observed by Martin, Cox and DeVisscher, \cite{CoxdVi}. As an application we show in \cite{ESKoszul} that Brauer algebras for non-zero parameter $\delta$ are always Koszul. 

\subsection{Categorified coideal subalgebras}
In the same paper  \cite{ES2} we also study the action of translation functors in some detail. Here a completely new phenomenon appears. Besides their natural categorifications of Hecke algebras, translation functors were usually used in type $\rm A$ to categorify  actions of (super) quantum groups, see for instance \cite{BS3}, \cite{FKS}, \cite{Antonio} for specific examples in this context, or \cite{Mazcat} for an overview. In particular, they give standard examples of $2$-categorifications of Kac-Moody algebras in the sense of \cite{Rou2KM} and \cite{KL} and the combinatorics of the quantum group and the Hecke algebras are directly related, see e.g. \cite{FKK}, \cite{BS3} and \cite{Antonio}. This coincidence fails outside type $\rm A$. Instead we obtain the action of a quantized fixed point Lie algebra $\mathfrak{gl}(n)^\sigma$ for an involution $\sigma$. Although these algebras were studied in quite some detail by algebraic methods under the name {\it coideal subalgebras}, \cite{Letzter}, \cite{Kolb} it seems a geometric approach is  so far missing.  Our main Theorem~\ref{intro} will finally provide such a geometric approach and also gives, together with \cite{ES2},  an instance of a categorification of modules for these algebras. For these coideal subalgebras the graded versions of cyclotomic VW-algebras play the analogous role to cyclotomic {\it quiver Hecke algebra} or {\it Khovanov-Lauda algebras} attached to the universal enveloping algebra $\mathcal{U}(\mathfrak{g})$ in \cite{Rou2KM}, \cite{KL}. On the uncategorified level, the precise connection between these coideal subalgebras and the geometry of type $\rm D$ flag varieties was developed in \cite{Li}.

\subsection{Hermitian symmetric pairs}
Our focus on the type $\rm D$ case might look artificial. Instead one could consider all $G/P$ for classical pseudo hermitian symmetric cases, \cite{BHKostant}, \cite{BoeCollingwood}. In fact, the natural inclusions of algebraic groups $\op{SO}(2n+1,\mC)\hookrightarrow \op{SO}(2n+2,\mC)$ and
$\op{Sp}(2n,\mC)\hookrightarrow \op{SL}(2n,\mC)$ induce isomorphisms of the partial flag varieties for the pairs $(G,P)$ of type $({\rm B}_n, {\rm A}_{n-1})$ and $({\rm D}_{n+1}, {\rm A}_{n})$ respectively $({\rm C}_n,{\rm C}_{n-1})$ and $({\rm A}_{2n-1}, {\rm A}_{2n-2})$ which are compatible with the Schubert stratification, \cite[3.1]{BrionPolo}. Hence the corresponding categories of perverse sheaves are naturally isomorphic. 

Therefore, our algebras $\mathbb{D}_k$ together with the generalized Khovanov algebras from \cite{BS1} govern these cases as well,  see Section~ \ref{typeB}. Moreover it provides an easy closed formula for the type  $({\rm B}_n, {\rm A}_{n-1})$ Kazhdan-Lusztig polynomials, \eqref{B}. 
In particular, the non-simply laced case can be ``unfolded'' to the simply laced case for larger rank. For the remaining classical symmetric pairs, apart from type $({\rm D}_n,{\rm D}_{n-1})$ one might use the fact that the principal blocks of category $\cO$ for type ${\rm B}_n$ and ${\rm C}_n$ are equivalent via Soergel's combinatorial description, \cite{Sperv}. 
There is also a, in some sense, Langlands' dual picture of this unfolding relating different Springer fibres and Slodowy slices. To connect it with our algebras we want to stress that $\mathbb{D}_k$ has an alternative description as a convolution algebra using the cohomology of pairwise intersections of components of Springer fibres of type ${\rm D}_k$, see \cite{ES1}. In particular, the centre of our algebras  $\mathbb{D}_k$  can be identified with the cohomology ring of some type ${\rm D}$ Springer fibre, see \cite{BrundanSpringer}, \cite{Str09} for an analogous result in type $A$. For a detailed analysis of the corresponding cup diagram combinatorics and the folding procedure in terms of Springer theory we refer to \cite{Wilbert}.   

\subsection*{Acknowledgements} {We like to thank Jon Brundan, Ngau Lam, Antonio Sartori and Vera Serganova for many helpful discussions and Daniel Tubbenhauer for comments on the manuscript. We are in particular grateful to the referee for his/her extremely careful reading of the manuscript and for several helpful suggestions.}

\renc{\thetheorem}{\arabic{section}.\arabic{theorem}}
\section{The isotropic Grassmannian, weight and cup diagrams}
For the whole paper we fix a natural number $k$ and set $n=2k$. 
\subsection{The isotropic Grassmannian} \label{section21}
We start by recalling the isotropic Grassmannian and some of its combinatorics. 

We consider the group $\op{SL}(n,\mC)$ with standard Borel $B$ given by all upper triangular
matrices. Let $P_{k,k}$ be the standard parabolic subgroup given by all matrices $A=(a_{i,j})_{1\leq i,j\leq n}$ where $a_{i,j}=0$ for $i>k, j\leq
k$, and let $\mathcal{X}_{k,k}=\op{Gr}_k(\mC^n)=\op{SL}(n,\mC)/P_{k,k}$ be the corresponding {\it Grassmann variety} of $k$-dimensional subspaces in $\mC^n$.\\

Now fix on $\mC^n$ the non-degenerate quadratic form
$$Q(x_1, \ldots, x_{2k}) = x_1x_{n} + x_2x_{n-1} + \cdots + x_kx_{k+1}.$$
 Then the  space $\{V \in \mathcal{X}_{k,k} \mid Q_{|V} = 0\}$ has two connected components (namely the equivalence classes for the
relation $V\sim V'$ if $\dim{(V\cap V')}-k$ is even). The 
(isotropic) {\it Grassmannian of type ${\rm D}_k$}
is  the component  $Y_k$ containing the point $\{x_{k+1}=\cdots =x_n = 0\}$. The subgroup $\op{SO}(n,\mC)$ of
$\op{SL}(n,\mC)$ of transformations preserving $Q$ acts on $Y_k$. Moreover we have $$Y_k \cong\op{SO}(n,\mC)/(\op{SO}(n,\mC)\cap P_{k,k}).$$ The group $B_D=\op{SO}(n,\mC)\cap B$ is a Borel subgroup of $\op{SO}(n,\mC)$ and we fix the
stratification of $Y_k$ by $B_D$-orbits. Note that the latter are just the intersections $Y_\la=\Omega_\la\cap Y_k$ of the {\it Schubert cells}
$\Omega_\la$ of $\mathcal{X}_{k,k}$ with $Y_k$.
Fix the common labelling of Schubert cells of $\mathcal{X}_{k,k}$ by partitions whose Young diagrams fit into a $k\times k$-box (such that the dimension of the cell equals the number of boxes).\footnote{see e.g. \cite[9.4]{Fulton} with the notation $\tilde{\Omega}_\la$ there.} Then
$\Omega_\la\not=\emptyset$ precisely if its Young diagram is {\it symmetric} in the sense that it is fixed under reflection about the
diagonal and the number of boxes on the diagonal is even. Let $\Omega_k$ be the set of symmetric Young diagrams fitting into a $k\times k$-box.
For instance, the set $\Omega_4$ labelling the strata of $Y_4$ is the following.
\begin{eqnarray}
\label{Young}
{\tiny\Yvcentermath1\yng(4,4,4,4)\quad\quad\yng(4,4,2,2)\quad\quad\yng(4,3,2,1)\quad\quad\yng(4,2,1,1)\quad\quad\yng(3,3,2)\quad\quad\yng(3,2,1)
\quad\quad\yng(2,2)\quad\quad\emptyset}
\end{eqnarray}
\\
We encode $\la\in\Omega_k$ by an $\{\up,\down\}$-sequence $\mathbf{s}(\la)$ of length $k$ as follows: starting at the top right corner of the $k\times k$-square walk
along the right boundary of the diagram to the bottom left corner of the square - each step either downwards, encoded by a $\down$, or to the left, encoded
by an $\up$. We write the sequence from the right to the left. For instance, the leftmost and rightmost Young diagram in \eqref{Young} give rise to the sequences
\begin{eqnarray*}
\up\up\up\up\down\down\down\down&\text{respectively}&\down\down\down\down\up\up\up\up.
\end{eqnarray*}
If the diagram $\la$ labels a $B_D$-orbit $Y_\la$ then the sequence obtained in this way is automatically antisymmetric of length $2k$. In particular, it is enough to omit the second half and remember only the first (right) half which is then our $\mathbf{s}(\la)$. We denote by $\S=\{\mathbf{s}(\la)\mid \la\in\Omega_k\}$ the set of such sequences of length $k$.
%
%$\Ssym$ the set of all antisymmetric sequences obtained in this way, and by $\S$ those given by the (right) halves of any $s\in \Ssym$. Hence $\S$ consists of sequences of length $k$ with an even number of $\down$'s.
For instance the diagrams \eqref{Young} correspond to the elements in $\mathbb{S}_4$ as follows: 
\small
\begin{equation}
\label{oje}
{\down\down\down\down}\;,\;{\up\up\down\down}\;,\;{\up\down\up\down}\;,\;{\down\up\up\down}\;,\;
{\up\down\down\up}\;,\;{\down\up\down\up}\;,\;{\down\down\up\up}\;,\;
{\up\up\up\up}
\end{equation}
\normalsize
We now put these $\mathbf{s}(\la)$ onto a general framework of diagrammatic weights and then connect them with the usual combinatorics of the Weyl group.

%We will also use the set $\Sext$ of sequences of length $2n$ obtained from $\Ssym$ by extending the elements with $k$ minuses to the left and $k$ pluses to the right.
\subsection{Weights and linkage}
\label{linkage}
In the following we identify $\mZ_{> 0}$ with integral points, called {\it vertices}, on the positive numberline. A {\it (diagrammatic) weight} $\la$ is a diagram obtained by labelling each of the vertices by $\times$ (cross), $\circ$ (nought), $\down$ (down) or
$\up$ (up) such that the vertex $i$ is labelled $\circ$ for $i\gg 0$. For instance,
\begin{eqnarray}
\label{lla}
\la&=&\overset{1}{\makebox[1em]{$\down$}}
\overset{2}{\makebox[1em]{$\up$}}
\overset{3}{\makebox[1em]{$\up$}}
\overset{4}{\makebox[1em]{$\up$}}
\overset{5}{\makebox[1em]{$\up$}}
\overset{6}{\makebox[1em]{$\down$}}
\overset{7}{\makebox[1em]{$\down$}}
\overset{8}{\makebox[1em]{$\down$}}
\overset{9}{\makebox[1em]{$\down$}}
\overset{10}{\makebox[1em]{$\up$}}
\overset{\raisebox{1.25pt}{$\scriptstyle 11$}}{\makebox[1em]{$\circ$}}
\overset{\raisebox{1.25pt}{$\scriptstyle 12$}}{\makebox[1em]{$\circ$}} \, \cdots,\\
\label{lla2}
\la&=&{\makebox[1em]{$\up$}}
{\makebox[1em]{$\times$}}
{\makebox[1em]{$\times$}}
{\makebox[1em]{$\up$}}
{\makebox[1em]{$\circ$}}
{\makebox[1em]{$\down$}}
{\makebox[1em]{$\down$}}
{\makebox[1em]{$\circ$}}
{\makebox[1em]{$\up$}}
{\makebox[1em]{$\up$}}
{\makebox[1em]{$\circ$}}
{\makebox[1em]{$\circ$}} \, \cdots,
\end{eqnarray}
are examples of weights. The $\cdots$'s indicate that all further entries of the sequence equal $\circ$ and the label above the symbols indicates the positions. As in \eqref{lla2} we will usually omit these numbers. We denote by $\mX$ the set of diagrammatic weights. To $\lambda \in \mX$ we attach the sets
\begin{equation}
\label{Pquestion}
P_?(\lambda) = \{ i \in \mZ_{> 0} \mid \lambda_i = ? \}, \; \text{where   $? \in \{\up, \down, \times, \circ \}$ }.
\end{equation}
%\left\lbrace \begin{array}{cl}
%\{ i \in \mZ_{>0} \mid \lambda_i = * \} & \text{ if } \lambda \in \mX,
%\{ i \in \mZ_{\geq \frac{1}{2}} \mid \lambda_i = * \} & \text{ if } \lambda \in \mX_\frac{1}{2}. \end{array} \right.

For the weights $\eqref{lla}$ respectively \eqref{lla2} these subsets of  $\mZ_{> 0}$ are
\small
\begin{eqnarray*}
&P_\up(\lambda)=\{2,3,4,5,10\}, P_\down(\lambda)=\{1,6,7,8,9\}, P_\times(\lambda) = \emptyset,  P_\circ(\lambda) = \{i\mid i\geq 11 \};&\\
&P_\up(\lambda)=\{1,4,9,10\}, P_\down(\lambda)=\{6,7\}, P_\times(\lambda) = \{2,3\},  P_\circ(\lambda) = \{5,8, i\mid i \geq 11 \}.&
\end{eqnarray*}
\normalsize
Two diagrammatic weights $\la,\mu\in\mX$ are {\it linked} if $\mu$ is obtained from $\la$ by finitely many combinations of {\it basic linkage moves} of swapping neighboured labels $\up$ and $\down$ (ignoring symbols $\times$ and $\circ$) or swapping the pairs  $\up\up$ and $\down\down$ at the first two positions (again ignoring symbols $\times$ and $\circ$) in the sequence. The equivalence classes of the linkage relation are called {\it blocks}.

\begin{ex}
The weight $\la$ from \eqref{lla}
%$$\la={\down}\up\up\up\up\down\down\down\down\up\circ\circ \, \cdots$$
is linked to
$\mu={\down}\down\up\down\up\down\down\down\down\up\circ\circ \, \cdots$
but not to
$\mu'={\down}\down\up\up\up\down\down\down\down\up\circ\circ \, \cdots,$ since the parities of $\up$'s don't agree.
\end{ex}

A block $\Lambda$ is hence given by fixing the positions of all $\times$ and $\circ$ and the parity $\ov{|P_\up(\lambda)|}$ of the number of $\up$'s. Formally a block $\Lambda$ is the data of a parity, either $\ov{0}$ or $\ov{1}$, and a \emph{block diagram} $\Theta$, that is a sequence $\Theta_i$ of symbols $\times$, $\circ$, $\bu$, indexed by $i\in\mZ_{>0}$ with the same conditions as for diagrammatic weights, i.e. all but finitely many $\Theta_i$ are equal to $\circ$. Let $P_?(\Theta)$ for $?\in\{\bu,\times,\circ\}$ be defined analogously to \eqref{Pquestion}. Then 
the block $\Lambda$ is the equivalence class of weights
\begin{eqnarray}
\label{defblock}
\Lambda=\Lambda_\Theta^{\ov{\epsilon}} = \left\lbrace \begin{array}{l|c}
&\; P_\up(\lambda) \cup P_\down(\lambda) = P_\bu(\Theta),\\
\lambda \in \mX \;\; &\; P_\times(\lambda) = P_\times(\Theta)\text{, } P_\circ(\lambda) = P_\circ(\Theta),\\
&\; \ov{|P_\up(\lambda)|} = \ov{\epsilon}.
\end{array} \right\rbrace
\end{eqnarray}
For a block  $\Lambda$ with its block diagram $\Theta$ define $P_\times(\Lambda)=P_\times(\Theta)$, $P_\circ(\Lambda)=P_\circ(\Theta)$, and $P_\bu(\Lambda)=P_\bu(\Theta)$.

 A block $\Lambda$ is called {\it regular} if $P_\times(\Lambda) = \emptyset$ and {\it principal (of type ${\rm D}_k$)} if additionally $P_\bu(\Lambda) = \{1,\ldots,k\}$. A weight $\lambda$ is called {\it regular} if it belongs to a regular block. For instance $\eqref{lla}$ is a principal weight, and thus also a regular weight. For fixed $k$ the set of principal weights of type ${\rm D}_k$ decomposes into exactly two blocks, the {\it even} block $\Lambda_k^{\ov{0}}$, where each weight has an even number of $\up$'s, and the {\it odd} block $\Lambda_k^{\ov{1}}$, where each weight has an odd number of $\up$'s. Both blocks correspond to the same block diagram
 \begin{eqnarray*}
\Theta &=& \underset{k}{\underbrace{{\bu}\, \cdots \, \bu}}\circ\circ \, \cdots.
\end{eqnarray*}
From now on we will identify the set  $\S=\{\mathbf{s}(\la)\mid \la\in\Omega_k\}$  with a subset of principal weights by adding $\circ$'s at the vertices $i>k$. Then the  following example shows the two regular blocks for $k=4$.

\begin{ex}
\label{exn4}
For $k=4$ we get the following set of even regular weights with the set  $\op{C}_k^{\ov{0}}$ of cup diagrams
associated via Definition~\ref{decoratedcups} below,
\begin{eqnarray} \label{sequ1}
\begin{tikzpicture}[thick,scale=0.5]
\begin{scope}[xshift=-3cm]
\node at (.8,.8) {\small $\down\down\down\down$};
\draw (0,0) -- +(0,-.8);
\draw (.5,0) -- +(0,-.8);
\draw (1,0) -- +(0,-.8);
\draw (1.5,0) -- +(0,-.8);

\node at (3.8,.8) {\small $\up\up\down\down$};
\draw (3,0) .. controls +(0,-.5) and +(0,-.5) .. +(.5,0);
\fill (3.25,-.35) circle(2.5pt);
\draw (4,0) -- +(0,-.8);
\draw (4.5,0) -- +(0,-.8);

\node at (6.8,.8) {\small $\up\down\up\down$};
\draw (6,0) -- +(0,-.8);
\fill (6,-.4) circle(2.5pt);
\draw (6.5,0) .. controls +(0,-.5) and +(0,-.5) .. +(.5,0);
\draw (7.5,0) -- +(0,-.8);

\node at (9.8,.8) {\small $\down\up\up\down$};
\draw (9,0) .. controls +(0,-.5) and +(0,-.5) .. +(.5,0);
\draw (10,0) -- +(0,-.8);
\fill (10,-.4) circle(2.5pt);
\draw (10.5,0) -- +(0,-.8);
\end{scope}

\begin{scope}[xshift=9cm, yshift=2cm]
\node at (.8,-1.2) {\small $\up\down\down\up$};
\draw (0,-2) -- +(0,-.8);
\fill (0,-2.4) circle(2.5pt);
\draw (0.5,-2) -- +(0,-.8);
\draw (1,-2) .. controls +(0,-.5) and +(0,-.5) .. +(.5,0);

\node at (3.8,-1.2) {\small $\down\up\down\up$};
\draw (3,-2) .. controls +(0,-.5) and +(0,-.5) .. +(.5,0);
\draw (4,-2) .. controls +(0,-.5) and +(0,-.5) .. +(.5,0);

\node at (6.8,-1.2) {\small $\down\down\up\up$};
\draw (6,-2) .. controls +(0,-1) and +(0,-1) .. +(1.5,0);
\draw (6.5,-2) .. controls +(0,-.5) and +(0,-.5) .. +(.5,0);

\node at (9.8,-1.2) {\small $\up\up\up\up$};
\draw (9,-2) .. controls +(0,-.5) and +(0,-.5) .. +(.5,0);
\fill (9.25,-2.35) circle(2.5pt);
\draw (10,-2) .. controls +(0,-.5) and +(0,-.5) .. +(.5,0);
\fill (10.25,-2.35) circle(2.5pt);
\end{scope}
\end{tikzpicture}
\end{eqnarray}
and odd regular weights with the set  $\op{C}_k^{\ov{1}}$ of corresponding cup diagrams:
\begin{eqnarray} \label{sequ2}
\begin{tikzpicture}[thick,scale=.5]
\begin{scope}[xshift=-3cm]
\node at (.8,.8) {\small $\up\down\down\down$};
\draw (0,0) -- +(0,-.8);
\fill (0,-.4) circle(2.5pt);
\draw (.5,0) -- +(0,-.8);
\draw (1,0) -- +(0,-.8);
\draw (1.5,0) -- +(0,-.8);

\node at (3.8,.8) {\small $\down\up\down\down$};
\draw (3,0) .. controls +(0,-.5) and +(0,-.5) .. +(.5,0);
\draw (4,0) -- +(0,-.8);
\draw (4.5,0) -- +(0,-.8);

\node at (6.8,.8) {\small $\down\down\up\down$};
\draw (6,0) -- +(0,-.8);
\draw (6.5,0) .. controls +(0,-.5) and +(0,-.5) .. +(.5,0);
\draw (7.5,0) -- +(0,-.8);

\node at (9.8,.8) {\small $\up\up\up\down$};
\draw (9,0) .. controls +(0,-.5) and +(0,-.5) .. +(.5,0);
\fill (9.25,-0.35) circle(2.5pt);
\draw (10,0) -- +(0,-.8);
\fill (10,-.4) circle(2.5pt);
\draw (10.5,0) -- +(0,-.8);
\end{scope}

\begin{scope}[yshift=2cm, xshift=9cm]
\node at (.8,-1.2) {\small $\down\down\down\up$};
\draw (0,-2) -- +(0,-.8);
\draw (0.5,-2) -- +(0,-.8);
\draw (1,-2) .. controls +(0,-.5) and +(0,-.5) .. +(.5,0);

\node at (3.8,-1.2) {\small $\up\up\down\up$};
\draw (3,-2) .. controls +(0,-.5) and +(0,-.5) .. +(.5,0);
\fill (3.25,-2.35) circle(2.5pt);
\draw (4,-2) .. controls +(0,-.5) and +(0,-.5) .. +(.5,0);

\node at (6.8,-1.2) {\small $\up\down\up\up$};
\draw (6,-2) .. controls +(0,-1) and +(0,-1) .. +(1.5,0);
\draw (6.5,-2) .. controls +(0,-.5) and +(0,-.5) .. +(.5,0);
\fill (6.75,-2.75) circle(2.5pt);

\node at (9.8,-1.2) {\small $\down\up\up\up$};
\draw (9,-2) .. controls +(0,-.5) and +(0,-.5) .. +(.5,0);
\draw (10,-2) .. controls +(0,-.5) and +(0,-.5) .. +(.5,0);
\fill (10.25,-2.35) circle(2.5pt);
\end{scope}
\end{tikzpicture}
\end{eqnarray}
\end{ex}

Note, $\op{C}_k^{\ov{p}}$ is precisely the set of diagrams, where the number of dotted rays plus undotted cups is even respectively odd, depending if $p=0$ or $p=1$.

\begin{lemma}
\label{newlemma}
 The map $\la\mapsto \mathbf{s}(\la)$ defines a bijection between  $\Omega_k$ and  $\Lambda_k^{\ov{k}}$.
\end{lemma}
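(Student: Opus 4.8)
\emph{The plan} is to reinterpret the map $\la\mapsto\mathbf s(\la)$ through the boundary lattice path of $\la$ and to reduce the whole statement to a single parity count. A partition $\la$ in the $k\times k$-box is the same datum as the monotone staircase path running along its rim from the top right to the bottom left corner, and recording a leftward step by $\up$ and a downward step by $\down$ sets up a bijection between these partitions and the $\{\up,\down\}$-sequences of length $2k$ with exactly $k$ symbols $\up$ and $k$ symbols $\down$. Under this dictionary the transpose of $\la$ corresponds to reflecting the path in the main diagonal, which reverses the step order and interchanges $\up\leftrightarrow\down$; hence $\la$ is symmetric precisely when its length-$2k$ sequence $S$ is antisymmetric, i.e.\ $S_{2k+1-i}=\ov{S_i}$ with $\ov\up=\down$. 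An antisymmetric $S$ is determined by, and may be prescribed arbitrarily on, its right half, which is exactly $\mathbf s(\la)$. Thus $\la\mapsto\mathbf s(\la)$ is visibly a bijection from \emph{all} symmetric partitions in the box onto \emph{all} $\{\up,\down\}$-sequences of length $k$, and it only remains to match the defining conditions on the two sides.

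\emph{The key step} is the identity $|P_\up(\mathbf s(\la))|=k-d$, where $d$ is the number of boxes of $\la$ on the main diagonal, i.e.\ the Durfee size $\max\{i\mid\la_i\geq i\}$. The point is that the corner $(d,d)$ of the Durfee square lies on the boundary path: one has $\la_{d+1}\leq d\leq\la_d$, which is exactly the condition for the lattice point $(d,d)$ to sit on the staircase. Counting steps, one reaches $(d,d)$ from the starting corner by $k-d$ leftward and $d$ downward steps, for a total of $k$ steps; so $(d,d)$ is attained after precisely half of the $2k$ steps, and consequently the right half $\mathbf s(\la)$ consists of $k-d$ symbols $\up$ and $d$ symbols $\down$. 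In particular the parity of $|P_\up(\mathbf s(\la))|$ agrees with that of $k-d$.

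\emph{Assembling the bijection}: the subset $\Omega_k$ is cut out among symmetric partitions by the requirement that $d$ be even, whereas $\Lambda_k^{\ov k}$ is cut out among length-$k$ weights by $\ov{|P_\up(\la)|}=\ov k$. By the key step these two conditions correspond under the bijection of the first paragraph, since $d$ even $\iff|P_\up(\mathbf s(\la))|=k-d\equiv k\pmod 2$. Restricting that bijection then yields the claim, with injectivity and the fact that every weight of the correct parity is hit both immediate once this parity bookkeeping is in place. I expect the only genuinely delicate point to be the key step — pinning down that the Durfee corner is met exactly at the midpoint of the path (equivalently, the clean count $k-d$) and keeping the orientation conventions for the ``right half'' and for the weight positions consistent; the remaining ingredients (the path model of partitions and symmetric $\leftrightarrow$ antisymmetric) are standard and routine to check against the examples in \eqref{oje}.
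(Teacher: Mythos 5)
Your proposal is correct, and since the paper's own proof of this lemma is just the one-line assertion that it ``follows directly from the definitions,'' your argument is precisely the detailed unwinding of that claim rather than a genuinely different route: the path model, the equivalence symmetric diagram $\leftrightarrow$ antisymmetric sequence, and the parity bookkeeping are exactly what the authors leave implicit. Your key identity $|P_\up(\mathbf{s}(\la))|=k-d$ (the Durfee corner $(d,d)$ being hit after exactly $k$ of the $2k$ steps) is the correct and complete justification that the evenness of the diagonal of $\la\in\Omega_k$ matches the parity condition defining $\Lambda_k^{\ov{k}}$, as one can confirm against the examples in \eqref{oje}.
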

\begin{proof}
This follows directly from the definitions.
\end{proof}

%\begin{lemma} \label{bijection_S_Lambda}
%Assigning to $s=(x_1,\ldots, x_k)\in\S$ the diagrammatic weight $\la=\la(s)$ supported on the integers with $\la(s)_i = \up$ if $x_i = +$, $\la(s)_i = \down$ if $x_i=-$, and $\la(s)_i = \circ$ otherwise defines a bijection between $\S$ and $\Lambda_k^{\ov{0}}$.
%\end{lemma}
%\begin{proof}
%Obviously $\la(s)$ is principal and the assignment $s\mapsto \la(s)$ is injective. By cardinality count it is also surjective.
%\end{proof}
The basic linkage moves induce a partial ordering on each block, the {\it Bruhat order}, by declaring that changing a pair of labels $\up\down$ to $\down\up$ or a pair
$\down\down$ to $\up\up$, ignoring any $\times$ or $\circ$ in between, makes a weight smaller in this Bruhat order. Repeatedly applying the basic moves implies:

\begin{lemma}
\label{lem:Bruhat}
Changing a (not necessarily neighboured) pair of labels $\up\down$ to $\down\up$ or a pair
$\down\down$ to $\up\up$ makes a weight smaller in the Bruhat order.
\end{lemma}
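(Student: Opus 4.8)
The statement splits into two claims—one for an $\up\down\to\down\up$ exchange and one for a $\down\down\to\up\up$ exchange at arbitrary (relevant) positions—and I would treat them separately, the first serving as the engine for the second. Throughout I ignore the immovable symbols $\times,\circ$ and record only the subsequence of $\up$'s and $\down$'s; for a weight $\nu$ and $t\geq 0$ write $u_t(\nu)$ for the number of $\up$'s among its first $t$ relevant entries. The one observation driving everything is that a basic adjacent move $\up\down\to\down\up$ at relevant positions $(t,t+1)$ lowers $u_t$ by exactly $1$ and leaves all other $u_{t'}$ unchanged, whereas the basic front move $\down\down\to\up\up$ raises prefix counts. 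It is convenient to first record the resulting \emph{dominance criterion} for the suborder generated by the adjacent moves alone: $\mu$ is reachable from $\la$ by a chain of basic $\up\down\to\down\up$ moves (hence $\mu\leq\la$, with $\mu<\la$ once $\mu\neq\la$) if and only if $u_t(\mu)\le u_t(\la)$ for all $t$. The forward direction is the monotonicity just noted; the converse is the usual bubble-sort induction, locating the leftmost position where $\mu$ has a $\down$ but $\la$ an $\up$ and pushing that $\up$ rightward by adjacent moves.

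For the first claim, let $\la_i=\up$, $\la_j=\down$ with $i<j$ and let $\mu$ be obtained by exchanging these two labels. This leaves $u_t$ unchanged for $t$ before $i$ or at/after $j$ and lowers it by $1$ for the relevant indices in between, so $u_t(\mu)\le u_t(\la)$ everywhere and strictly somewhere; the dominance criterion gives $\mu<\la$. Equivalently, and in the spirit of ``repeatedly applying the basic moves'', one may induct on the number $r$ of relevant entries strictly between $i$ and $j$: locate the first $\down$ to the right of $i$, peel off one adjacent $\up\down\to\down\up$ move there, and apply the inductive hypothesis across the two strictly shorter gaps.

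For the second claim let $\la_i=\la_j=\down$ with $i<j$ and let $\mu$ flip both to $\up$; note $\mu$ lies in the same block, since the number of $\up$'s changes by $2$. Here the only move altering the $\up$-count is the single front move $\down\down\to\up\up$, so the plan is to sandwich exactly one such move between two adjacent-move descents. Concretely I would pass to the auxiliary weight $\tilde\la$ obtained from $\la$ by transporting the two chosen $\down$'s at $i,j$ to the first two relevant positions (delete $\la_i,\la_j$ from the relevant word and prepend $\down\down$). A short computation shows $u_t(\tilde\la)=u_{m}(\la)$ for a suitable $m\in[t-2,t]$, whence $u_t(\tilde\la)\le u_t(\la)$ and $\tilde\la\le\la$ by the dominance criterion; applying the basic front move produces $\tilde\la^{\bullet}<\tilde\la$ with $\up\up$ in front; and a second computation—observing that the $\up$'s lying in the window $(m,t]$ number at most $2-g_t$, where $g_t$ is the number of the two flipped indices that are $\le t$—gives $u_t(\mu)\le u_t(\tilde\la^{\bullet})$ for all $t$, hence $\mu\le\tilde\la^{\bullet}$. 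Chaining, $\mu\le\tilde\la^{\bullet}<\tilde\la\le\la$.

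The main obstacle is precisely this last bookkeeping: one must transport the \emph{correct} pair of $\down$'s to the front. The naive alternative of first sorting \emph{all} $\down$'s to the left and then applying the front move fails—the resulting weight need not dominate $\mu$ (e.g. for the relevant word $\down\up\down\up\down$ with the pair at positions $1,3$, full sorting yields $\up\up\down\up\up$ after the front move, which does not dominate $\mu=\up\up\up\up\down$)—so the two prefix-count inequalities genuinely force the tailored intermediate $\tilde\la$. Everything else reduces to the routine monotonicity of the statistics $u_t$ under the basic moves.
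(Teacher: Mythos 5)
Your proof is correct and in substance implements exactly what the paper intends: the paper gives no written argument beyond the remark that the lemma follows by ``repeatedly applying the basic moves,'' and your proof --- adjacent-swap chains (organized via the prefix-count dominance criterion) for the $\up\down\to\down\up$ case, and for $\down\down\to\up\up$ the tailored intermediate $\tilde\la$ obtained by moving only the two chosen $\down$'s to the front, applying the front move, and moving the resulting $\up$'s back out to positions $i,j$ --- is a correct, fully detailed realization of that one-line strategy, including the accurate observation that sorting \emph{all} $\down$'s to the front would fail. The only blemish is that your dominance criterion is asserted as an ``if and only if'' but is valid only for weights with the same number of $\up$'s (adjacent swaps preserve that count); all three of your applications satisfy this hypothesis, so nothing breaks, but it should be stated explicitly.
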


If we denote the weights from \eqref{oje} by $\la_1, \ldots, \la_8$ then $\la_1>\la_2>\la_3>\la_4,\la_5>\la_6>\la_7>\la_8$ in this Bruhat order.

\subsection{The Weyl group of type ${\rm D}_k$}
\label{typeD}
To make the connection with the Bruhat order on Coxeter groups, let $W=W(k)\cong (\mZ/2\mZ)^{k-1} \rtimes S_k$ be the Weyl group of type ${\rm D}_k$. It is generated by simple reflections $s_i$,  $0\leq i\leq k-1$ subject to the relations $s_i^2=e$ for
all $i$, and for $i,j\not=0$  $s_is_j=s_js_i$ if $|i-j|>1$ and $s_is_js_i=s_js_is_j$ if $|i-j|=1$, and additionally $s_0s_2s_0=s_2s_0s_2$
and $s_0s_j=s_js_0$ for $j\not=2$.

 It has two parabolic subgroups isomorphic to the symmetric group $S_k$, namely $W_{\ov{0}}$ generated by $s_i$, $i\not=0$ and $W_{\ov{1}}$ generated
 by $s_i$, $i\not=1$. Let $W^{\ov 0}$ and $W^{\ov 1}$ be the set of shortest coset representatives for $W_{\ov 0}\backslash W$ and $W_{\ov 1}\backslash W$
 respectively.

Let $p\in\{0,1\}$. The group $W$ acts from the right on the sets $\Lambda_k^{\ov{p}}$ as follows: $s_i$ for $i>1$ swaps the $i$-th and $(i+1)$-st label (from the left), and 
$s_1$ swaps the first two in case they are $\up\down$ or $\down\up$ and is the identity otherwise, whereas $s_0$ turns $\up\up$ into $\down\down$ and vice versa at the first two positions and is the identity otherwise. The sequence consisting of $k$ $\down$'s is
fixed by the parabolic $W_{\ov 0}$, whereas the sequence consisting of one $\up$ followed by $k-1$ $\down$'s is fixed by $W_{\ov 1}$. Sending the identity element $e\in W$ to one of these
sequences $s$ and then $w$ to $s.w$  defines canonical bijections $\Phi^{\ov p}: W^{\ov p}\longrightarrow\Lambda_k^{\ov{p}}$.  The elements from $W^{\ov p}$ corresponding to the weights in \eqref{sequ1} and \eqref{sequ2} are respectively
\begin{eqnarray}
{e}\quad {s_0}\quad {s_0s_2}\quad {s_0s_2s_1}\quad {s_0s_2s_3}\quad {s_0s_2s_3s_1}\quad {s_0s_2s_1s_3s_2}\quad {s_0s_2s_1s_3s_2s_0}\\
{e}\quad {s_1}\quad {s_1s_2}\quad {s_1s_2s_0}\quad {s_1s_2s_3}\quad {s_1s_2s_3s_0}\quad {s_1s_2s_3s_0s_2}\quad {s_1s_2s_3s_0s_2s_1}
\end{eqnarray}
More generally, blocks $\Lambda$ with $|P_\bu(\Lambda)|=k$ can be identified with $W$-orbits, where $W$ acts on the vertices not labelled  $\times$ or $\circ$. 

Consider the set $\Omega_k$ as poset equipped with the partial order given by containment of the corresponding Young diagrams, with the smaller diagram corresponding to the smaller element, and the set $\Lambda_k^{\ov{k}}$ equipped with the Bruhat order and the sets $W^{\ov{p}}$ with the reversed Bruhat order. Then the definitions directly imply:

\begin{lemma}
The assignment $\la\mapsto \mathbf{s} (\la)$ and the isomorphism $\Phi^{\overline{k}}$ define canonical bijections of posets between  $\Omega_k$ and $\Lambda_k^{\ov{k}}$, and $W^{\ov  k}$.
\end{lemma}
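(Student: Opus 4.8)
The plan is to use that the underlying maps of sets are already bijections---$\la\mapsto\mathbf{s}(\la)$ by Lemma~\ref{newlemma}, and $\Phi^{\ov k}$ by its construction---so that only compatibility with the three partial orders remains. As all three sets are finite, it suffices to match covering relations: I would show that each map carries covers to covers and, being a bijection, reflects them, so that it is an isomorphism (respectively an anti-isomorphism) of posets. The order reversal on $W^{\ov k}$ is precisely what turns the anti-isomorphism coming from $\Phi^{\ov k}$ into an order-preserving bijection.

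First I would record the one bookkeeping fact that drives everything, and which is immediate from comparing the definition of the basic linkage moves with the definition of the $W$-action: on a principal weight (no label $\times$, and $\circ$ only beyond position $k$) the basic linkage moves are exactly the nontrivial actions of the simple reflections. A neighboured $\up\down$-swap at positions $i,i+1$ with $i\ge 2$ is $s_i$, the same swap at positions $1,2$ is $s_1$, and the exchange $\up\up\leftrightarrow\down\down$ at the first two positions is $s_0$. Hence two weights are linked iff they lie in a common $W$-orbit, and the Bruhat order on $\Lambda_k^{\ov k}$---the transitive closure of the order-decreasing basic moves---is carried by the bijection $w\mapsto s.w$ onto the Bruhat order on the minimal coset representatives $W^{\ov k}$. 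Here $e$ maps to the (unique) maximal weight $s$, namely $\down\cdots\down$ if $\ov k=\ov 0$ and $\up\down\cdots\down$ if $\ov k=\ov 1$, and right multiplication by a simple reflection that increases length applies a basic move in the order-\emph{decreasing} direction; thus the length in $W^{\ov k}$ grows exactly as the weight drops, which is why the reversed Bruhat order on $W^{\ov k}$ matches the Bruhat order on $\Lambda_k^{\ov k}$.

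For the map $\mathbf{s}$ I would argue directly with the boundary-path encoding. A covering relation in $\Omega_k$, i.e. a minimal enlargement of a symmetric Young diagram keeping the number of diagonal boxes even, is of exactly two kinds: adding a box below the diagonal together with its mirror image above it, or performing the smallest diagonal enlargement forced by the even-diagonal condition. Reading off $\mathbf{s}$, the first kind changes some neighboured $\down\up$ into $\up\down$, i.e. an $s_i$-move, and the second changes $\up\up$ at the first two positions into $\down\down$, i.e. the $s_0$-move---precisely the two moves of the previous paragraph, each in its order-increasing direction (consistent with $\emptyset\mapsto\up\cdots\up$ being minimal and the full box $\mapsto\down\cdots\down$ being maximal). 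So the diagram grows exactly when the weight rises in Bruhat order, $\mathbf{s}$ matches covers with covers, and it is a poset isomorphism onto $\Lambda_k^{\ov k}$. Composing with $\Phi^{\ov k}$ yields the bijection with $W^{\ov k}$ under the reversed Bruhat order.

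The delicate point, and the one I would spend the most care on, is the behaviour at the first two vertices where the type ${\rm D}$ fork sits. There the two generators $s_0$ and $s_1$ and the parity constraint $\ov{|P_\up(\la)|}=\ov k$ interact, and one must check that they produce exactly the covers of the combinatorial Bruhat order---which by Lemma~\ref{lem:Bruhat} also permits nonadjacent swaps---with no spurious relations, so that this combinatorial order genuinely coincides with the Bruhat order on the parabolic quotient $W^{\ov k}$. Concretely I would verify that all three posets are graded by the common length function counting basic moves down from the maximal weight (equivalently $\ell$ on $W^{\ov k}$) and that the covering correspondence above is a bijection; granting this, the three posets are identified and the lemma follows, the rest being a routine unwinding of the definitions.
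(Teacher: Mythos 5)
Your proposal is correct and is essentially the paper's approach made explicit: the paper offers nothing beyond the phrase ``the definitions directly imply,'' and your cover-matching argument (covers in $\Omega_k$ are mirror pairs of off-diagonal boxes or a $2\times 2$ diagonal block, which under $\mathbf{s}$ become exactly the basic linkage moves, which in turn are exactly the nontrivial actions of $s_0,s_1,\dots,s_{k-1}$, with all directions consistent since the full box corresponds to the maximal weight $\down\cdots\down$) is precisely the unwinding the authors leave to the reader. The point you rightly isolate as delicate---that the transitive closure of the basic moves on $\Lambda_k^{\ov{k}}$ coincides with the Coxeter-theoretic Bruhat order on $W^{\ov{k}}$---is indeed the only non-definitional content, and your proposed verification closes it by standard parabolic-quotient facts: the quotient is graded by length, so its covers come from reflections, and reflections of type ${\rm D}_k$ act on weights by exactly the (possibly non-adjacent) moves of Lemma~\ref{lem:Bruhat}, giving one inclusion, while the lifting property for minimal coset representatives shows each basic move is a Bruhat relation, giving the other.
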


\begin{remark}
{\rm Note that flipping the first symbol in $ \mathbf{s}(\la)$ composed with the map from Lemma~\ref{newlemma}
would alternatively give us an isomorphism of posets between $\Omega_k$ and $\Lambda_k^{\ov{k+1}}$ and then also $W^{\ov  {k+1}}$. One should think of the second choice as in fact passing to the second natural choice of parabolic of Dynkin type ${\rm A}_{k-1}$ in the complex semisimple Lie algebra $\mg$ of type ${\rm D}_k$. It corresponds to different, but {\it canonically equivalent} parabolic category $\cO(\mg)$'s. For the main results of the paper it is therefore not relevant which parity we choose. The interplay of these two different equivalent parabolic categories with other Zuckerman functors plays an important role in \cite{ES2}. There,  the above equivalences are used to categorify a {\it skew Howe duality for quantum symmetric pairs} using special cases of {\it Letzter's coideal subalgebras} \cite{Letzter}. In that setup this canonical equivalence corresponds (after applying  Koszul duality) to the extra generator $B_0$ in the coideal subalgebra defined there. The grading of our algebras $\mathbb{D}_k$ is hereby crucial, since a shift in the grading up by one corresponds to the action of the parameter $q$ from the coideal subalgebra.}
\end{remark}

\section{Cup diagrams, $\la$-pairs  and the quiver}
The goal of this section is to introduce the required combinatorics to define the type $\rm D$ generalized Khovanov algebra and describe the graph of the quiver (Definition \ref{Extquivdef} and Corollary~\ref{CorCartan}).
Throughout this section we fix a block $\Lambda$ with its block diagram $\Theta$. We abbreviate
$$P_\times = P_\times(\Lambda), \qquad P_\circ = P_\circ(\Lambda),\qquad  P_\bu = P_\bu(\Lambda).$$
\subsection{Cup diagrams}
Consider the semi-infinite strip
\begin{eqnarray*}
R = \left\{(x,y) \in \mR_{\geq 0} \times \mR \, \middle| \, \frac{1}{2} \geq y \geq -\frac{1}{2}\right \} \subset \mR_{\geq 0} \times \mR
\end{eqnarray*}
as the union of the two smaller strips $R^- = \{ (x,y) \in R \mid 0 \geq y \geq - \frac{1}{2}\}$ and $R^+ = \{ (x,y) \in R \mid  \frac{1}{2} \geq y \geq 0 \}$.
For $? \in \{ \times , \circ, \bu\}$  consider  $P_?\subset R$ by sending $x\in P_? \subset\mZ_{>0}$ to $(x,0)$. 

Later on we will need the following statistic for $i \in P_\bu$
\begin{eqnarray}
\label{pso}
\pos(i)=\#\{ j \in P_\bu \mid j \leq i\} \in \mZ_{>0}.
\end{eqnarray}
Note, for principal blocks $\pos(i)=i$, the position on the number line.

\begin{definition}
\label{defarcs}
A subset $\gamma \subset R^-$ is called an \emph{arc} if there exists an injective continuous map $\tilde{\gamma}:[0,1] \rightarrow R$ with image $\gamma$ such that
\begin{itemize}
\item[$\blacktriangleright$] $\widetilde{\gamma}(0) \in P_\bu$,
\item[$\blacktriangleright$] $\widetilde{\gamma}(1) \in P_\bu$ ({\it cup condition}) or $\widetilde{\gamma}(1) \in \mR_{\geq 0} \times \{- \frac{1}{2}\}$  ({\it ray condition}),
\item[$\blacktriangleright$] $\widetilde{\gamma}( (0,1) ) \subset (R^-)^\circ$, the interior of $R^-$.
\end{itemize}
It is called a \emph{cup} if the cup condition is satisfied and is called a \emph{ray} otherwise.
\end{definition}

\begin{definition} \label{cup_diagram}
An \emph{undecorated cup diagram} $c$ (for the block diagram $\Theta$) is a finite union of arcs $\{ \gamma_1,\ldots,\gamma_r \}$, such that
\begin{itemize}
\item[$\blacktriangleright$] $\gamma_i \cap \gamma_j = \emptyset$ for $i \neq j$,
\item[$\blacktriangleright$] for every $(p,0) \in P_\bu$ there exists an arc $\gamma_{i(p)}$ such that $(p,0) \in \gamma_{i(p)}$.
\end{itemize}
\end{definition}

In the following we will always consider two diagrams as the same if they differ only by an isotopy of $R$ fixing $\mR_{\geq 0} \times \{0\}$ pointwise and $\mR_{\geq 0} \times \{-\frac{1}{2}\}$ as a set. Hence we usually draw cup diagrams by drawing cups as half circles and rays as straight lines.

We now introduce the notion of a decorated cup diagram, a generalisation of the setup from \cite{BS1}:

\begin{definition}
Let $c$ be an undecorated cup diagram with set of arcs $\{\gamma_1,\ldots, \gamma_r\}$. Then a \emph{decoration} of $c$ is a map
$${\rm deco}_c:\quad\{\gamma_1,\ldots, \gamma_r\} \rightarrow \{0,1\}.$$
If the value for $\gamma_i$ is $1$ we call the arc \emph{dotted}, otherwise we call it \emph{undotted}.

An undecorated cup diagram together with a decoration is called a \emph{decorated cup diagram}. We visualize this by putting a ``$\bullet$'' at an arbitrary interior point of the arc in the underlying undecorated cup diagram.
\end{definition}

\begin{ex}
\label{excup}
The following shows two examples of possible decorations for an undecorated cup diagram with the block diagram listed on the top.

\begin{eqnarray*}
\begin{tikzpicture}[thick,scale=0.50]
\node at (0,.5) {$\bu$};
\node at (1,.5) {$\bu$};
\node at (2,.5) {$\bu$};
\node at (3,.5) {$\circ$};
\node at (4,.5) {$\times$};
\node at (5,.5) {$\times$};
\node at (6,.5) {$\bu$};
\node at (7,.5) {$\bu$};
\node at (8,.5) {$\circ$};
\node at (9,.5) {$\cdots$};

\node at (-1,-.5) {i)};
\draw (0,0) .. controls +(0,-2) and +(0,-2) .. +(6,0);
\draw (1,0) .. controls +(0,-1) and +(0,-1) .. +(1,0);
\draw (7,0) -- (7,-1.5);

\begin{scope}[yshift=-2.5cm]
\node at (-1,-.5) {ii)};
\draw (0,0) .. controls +(0,-2) and +(0,-2) .. +(6,0);
\fill (3,-1.5) circle(3.5pt);
\draw (1,0) .. controls +(0,-1) and +(0,-1) .. +(1,0);
\draw (7,0) -- (7,-1.5);
\fill (7,-.75) circle(3.5pt);
\end{scope}

\end{tikzpicture}
\end{eqnarray*}
\end{ex}

\begin{definition}
\label{def:decorated}
A decorated cup diagram $c$ with set of arcs $\{\gamma_1,\ldots,\gamma_r\}$ and decoration ${\rm deco}_c$ is called \emph{admissible} if
${\rm deco}_c(\gamma_i)=1$ implies that an arbitrary point in the interior of $\gamma_i$ can be connected with the left boundary of $R$ by a path not intersecting any other arc \footnote{This condition arises also in the context of blob algebras as studied for instance in \cite{Martinblob}, and of generalized Temperley-Lieb algebras \cite{Green}. It is derived naturally from the theory in \cite{BS1} by a `folding' procedure, see \cite[5.2]{LS}.}.

For instance putting a dot on the interior cup in i) or ii) in Example~\ref{excup} would create decorated cup diagrams which are not admissible.

In the following we will refer to an admissible decorated cup diagram simply as a \textit{cup diagram}.
\end{definition}

\begin{remark}
\label{decorated}
By definition, the following does not occur in a cup diagram:
\begin{itemize}
\item[$\blacktriangleright$] a dotted cup nested inside another cup, or
\item[$\blacktriangleright$] a dotted cup or dotted ray to the right of a (dotted or undotted) ray.
\end{itemize}
\end{remark}

\begin{definition}
\label{decoratedcups}
For a weight $\la \in \Lambda$, the associated \emph{decorated cup diagram}, denoted $\underline \la$ is obtained by the following procedure.
\begin{enumerate}[(Cup 1)]
\item First connect neighboured vertices in $P_\bu$ labelled $\down\up$ successively by a cup (ignoring already joint vertices and vertices not in $P_\bu$) as long as possible. (Note that the result is independent of the order the connections are made).
\item Attach to each remaining $\down$ a vertical ray.
\item Connect from left to right pairs of two neighboured $\up$'s by cups.
\item If a single $\up$ remains, attach a vertical ray.
\item Finally define the decoration ${\rm deco}_c$ such that all arcs created in step (Cup 1) and (Cup 2) are undotted whereas the ones created in steps (Cup 3) and (Cup 4) are dotted.
\end{enumerate}
Note that, by construction, this is an admissible decorated cup diagram.
\end{definition}

We denote by $\mathcal{C}_\Lambda$ the set of cup diagrams associated with a block $\Lambda$. We will see in Lemma~\ref{lem:orient} that each such cup diagram corresponds to exactly one weight in $\Lambda$; for instance the cup diagram i) from Example~\ref{excup} corresponds to  $\down\down\up\circ\times\times\up\down\circ\cdots$, and the diagram ii) to  $\up\down\up\circ\times\times\up\up\circ\cdots$.

\subsection{$\la$-pairs and the arrows in the quiver}
Recall that we want to describe the quiver $\mathcal{Q}_k$ of the category $\op{Perv}_k$ of perverse sheaves on isotropic Grassmannians. Its vertices are labelled by Schubert varieties, respectively their Young diagrams $\la$ or associated weight $\mathbf{s}(\la)$, see Section~\ref{section21}. Hence we can also label them by the cup diagrams $\underline{\mathbf{s}(\la)}$ which we denote, by abuse of notation, by $\underline{\la}$ as well. The arrows attached to a vertex $\la$ will correspond to so-called $\la$-pairs, a notion introduced in \cite[1.6]{Braden} which we now adapt to our setting.

\begin{definition}
 \label{lapair}
Fix a block $\Lambda$. Let $\la \in \Lambda$ be a weight and $\gamma$ a cup in $\underline{\la}$ connecting the vertices $l_\gamma < r_\gamma$ in $P_\bu$.
With $\gamma$ we associate a pair of integers $(\alpha,\beta)\in\mathbb{Z} \times \mathbb{N}$ called a $\la$-\emph{pair} defined as
\begin{eqnarray}
 \label{lapair2}
(\alpha,\beta)&=&
\begin{cases}
(\pos(l_\gamma),\pos(r_\gamma))&\text{if $\gamma$ is undotted},\\
(-\pos(l_\gamma),\pos(r_\gamma))&\text{if $\gamma$ is dotted}.
\end{cases}
\end{eqnarray}
Given a $\la$-pair let $\la'$ be the weight obtained by changing $\down$ into $\up$ and $\up$ into $\down$ at the entries $l_\gamma$ and $r_\gamma$ of $\lambda$. In this case we write $\la\stackrel{(\alpha,\beta)}{\longrightarrow}\la'$ or short $\la\rightarrow\la'$ or equivalently $\la'\leftarrow \la$. Note that this implies $\la'>\la$. We abbreviate $\la\leftrightarrow\mu$ in case $\la\rightarrow\mu$ or $\la\leftarrow\mu$.
\end{definition}

The relation $\mu\leftarrow \la$ has a nice interpretation in terms of the associated cup diagrams $\underline\la, \underline\mu$: 

\begin{lemma}
\label{lapairscup}
Given weights $\la,\mu$. We have $\mu\leftarrow \la$ if and only if the corresponding cup diagrams $\underline\la$ and $\underline\mu$ differ precisely by one of the following local moves  $\underline\mu\leftarrow\underline\la$.

\begin{equation} \label{eqn:lambdapairlist}
\begin{tikzpicture}[thick,scale=0.65]
\draw (0,0) node[above]{$\down$} .. controls +(0,-.5) and +(0,-.5) .. +(.5,0) node[above]{$\up$};
\draw (1,0) node[above]{$\down$} .. controls +(0,-.5) and +(0,-.5) .. +(.5,0) node[above]{$\up$};
\draw[<-] (1.75,-.2) -- +(1,0);
\draw (3,0) node[above]{$\down$} .. controls +(0,-1) and +(0,-1) .. +(1.5,0) node[above]{$\up$};
\draw (3.5,0) node[above]{$\down$} .. controls +(0,-.5) and +(0,-.5) .. +(.5,0) node[above]{$\up$};

\begin{scope}[xshift=12cm]
\draw (0,0) node[above]{$\up$} .. controls +(0,-.5) and +(0,-.5) .. +(.5,0) node[above]{$\up$};
\fill (.25,-.36) circle(2.5pt);
\draw (1,0) node[above]{$\down$} .. controls +(0,-.5) and +(0,-.5) .. +(.5,0) node[above]{$\up$};
\draw[<-] (1.75,-.2) -- +(1,0);
\draw (3,0) node[above]{$\up$} .. controls +(0,-1) and +(0,-1) .. +(1.5,0) node[above]{$\up$};
\fill (3.75,-.74) circle(2.5pt);
\draw (3.5,0) node[above]{$\down$} .. controls +(0,-.5) and +(0,-.5) .. +(.5,0) node[above]{$\up$};
\end{scope}

\begin{scope}[xshift=6cm]
\draw (0,0) node[above]{$\up$} .. controls +(0,-1) and +(0,-1) .. +(1.5,0) node[above]{$\up$};
\draw (.5,0) node[above]{$\down$} .. controls +(0,-.5) and +(0,-.5) .. +(.5,0) node[above]{$\up$};
\fill (0.75,-.74) circle(2.5pt);
\draw[<-] (1.75,-.2) -- +(1,0);
\draw (3,0) node[above]{$\down$} .. controls +(0,-.5) and +(0,-.5) .. +(.5,0) node[above]{$\up$};
\draw (4,0) node[above]{$\up$} .. controls +(0,-.5) and +(0,-.5) .. +(.5,0) node[above]{$\up$};
\fill (4.25,-.36) circle(2.5pt);
\end{scope}

\begin{scope}[yshift=-2cm,xshift=6cm]
\draw (0,0) node[above]{$\down$} .. controls +(0,-1) and +(0,-1) .. +(1.5,0) node[above]{$\up$};
\draw (.5,0) node[above]{$\down$} .. controls +(0,-.5) and +(0,-.5) .. +(.5,0) node[above]{$\up$};
\draw[<-] (1.75,-.2) -- +(1,0);
\draw (3,0) node[above]{$\up$} .. controls +(0,-.5) and +(0,-.5) .. +(.5,0) node[above]{$\up$};
\fill (3.25,-.36) circle(2.5pt);
\draw (4,0) node[above]{$\up$} .. controls +(0,-.5) and +(0,-.5) .. +(.5,0) node[above]{$\up$};
\fill (4.25,-.36) circle(2.5pt);
\end{scope}

\begin{scope}[yshift=-4cm,xshift=-1cm]
\draw (0,0) node[above]{$\up$} .. controls +(0,-.5) and +(0,-.5) .. +(.5,0) node[above]{$\up$};
\fill (.25,-.36) circle(2.5pt);
\draw (1,0) node[above]{$\down$} -- +(0,-.8);
\draw[<-] (1.25,-.2) -- +(1,0);
\draw (2.5,0) node[above]{$\up$} -- +(0,-.8);
\fill (2.5,-.4) circle(2.5pt);
\draw (3,0) node[above]{$\down$} .. controls +(0,-.5) and +(0,-.5) .. +(.5,0) node[above]{$\up$};
\end{scope}

\begin{scope}[yshift=-4cm,xshift=4cm]
\draw (0,0) node[above]{$\up$} -- +(0,-.8);
\fill (0,-.4) circle(2.5pt);
\draw (.5,0) node[above]{$\down$} .. controls +(0,-.5) and +(0,-.5) .. +(.5,0) node[above]{$\up$};
\draw[<-] (1.25,-.2) -- +(1,0);
\draw (2.5,0) node[above]{$\down$} .. controls +(0,-.5) and +(0,-.5) .. +(.5,0) node[above]{$\up$};
\draw (3.5,0) node[above]{$\up$} -- +(0,-.8);
\fill (3.5,-.4) circle(2.5pt);
\end{scope}

\begin{scope}[yshift=-4cm,xshift=9cm]
\draw (0,0) node[above]{$\down$} .. controls +(0,-.5) and +(0,-.5) .. +(.5,0) node[above]{$\up$};
\draw (1,0) node[above]{$\down$} -- +(0,-.8);
\draw[<-] (1.25,-.2) -- +(1,0);
\draw (2.5,0) node[above]{$\down$} -- +(0,-.8);
\draw (3,0) node[above]{$\down$} .. controls +(0,-.5) and +(0,-.5) .. +(.5,0) node[above]{$\up$};
\end{scope}

\begin{scope}[yshift=-4cm,xshift=14cm]
\draw (0,0) node[above]{$\down$} -- +(0,-.8);
\draw (.5,0) node[above]{$\down$} .. controls +(0,-.5) and +(0,-.5) .. +(.5,0) node[above]{$\up$};
\draw[<-] (1.25,-.2) -- +(1,0);
\draw (2.5,0) node[above]{$\up$} .. controls +(0,-.5) and +(0,-.5) .. +(.5,0) node[above]{$\up$};
\fill (2.75,-.36) circle(2.5pt);
\draw (3.5,0) node[above]{$\up$} -- +(0,-.8);
\fill (3.5,-.4) circle(2.5pt);
\end{scope}

\begin{scope}[yshift=-6cm, xshift=4.5cm]
\draw (0,0) node[above]{$\up$} -- +(0,-.8);
\fill (0,-.4) circle(2.5pt);
\draw (.5,0) node[above]{$\down$} -- +(0,-.8);
\draw[<-] (.75,-.2) -- +(1,0);
\draw (2,0) node[above]{$\down$} .. controls +(0,-.5) and +(0,-.5) .. +(.5,0) node[above]{$\up$};
\end{scope}

\begin{scope}[yshift=-6cm, xshift=9.5cm]
\draw (0,0) node[above]{$\down$} -- +(0,-.8);
\draw (.5,0) node[above]{$\down$} -- +(0,-.8);
\draw[<-] (.75,-.2) -- +(1,0);
\draw (2,0) node[above]{$\up$} .. controls +(0,-.5) and +(0,-.5) .. +(.5,0) node[above]{$\up$};
\fill (2.25,-.36) circle(2.5pt);
\end{scope}
\end{tikzpicture}
\end{equation}
In particular $\la$ and $\mu$ are from the same block.
\end{lemma}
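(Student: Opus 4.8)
The plan is to prove both implications by a finite local analysis, reading off the change in the cup diagram directly from the construction in Definition~\ref{decoratedcups}. The starting observation is that the undotted cups of $\underline\la$ produced in step (Cup~1) are exactly the arcs of the bracket matching of the subsequence of $\down$'s and $\up$'s on $P_\bu$ (with $\down$ an opening and $\up$ a closing bracket), while the dotted cups of step (Cup~3) pair, from left to right, the $\up$'s left unmatched by this bracket matching. In particular, for a cup $\gamma$ with endpoints $l_\gamma<r_\gamma$, the labels strictly between $l_\gamma$ and $r_\gamma$ form a balanced $\down\up$-word whose induced arcs depend only on that subword.

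First I would establish the locality of the flip. Passing from $\la$ to $\mu$ only swaps the labels at $l_\gamma$ and $r_\gamma$, so the subword strictly inside $\gamma$ is unchanged; by the previous observation its nested (undotted) cups persist, and the part of $\la$ lying outside the bracket enclosing $\gamma$ is likewise untouched. Hence the recomputation of $\underline\mu$ via (Cup~1)--(Cup~4) differs from $\underline\la$ only inside a bounded window, namely at $\gamma$ itself together with the cup immediately enclosing it (which may break when $\gamma$ is an inner cup) and the nearest arc or unmatched vertex on each side of $l_\gamma$ and $r_\gamma$. This is the technical heart and the step I expect to be the main obstacle: making precise that the bracket matching of (Cup~1) and the left-to-right pairing of (Cup~3) are only perturbed in this window, so that the global recomputation reduces to a finite list of local pictures. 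Any nested structure inside the shown cups is common to $\underline\la$ and $\underline\mu$ and is simply carried along.

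Granting locality, the forward direction is a short enumeration of window types. If $\gamma$ is undotted its endpoints carry $\down,\up$ and become $\up,\down$; if $\gamma$ is dotted they carry $\up,\up$ and become $\down,\down$. Running (Cup~1)--(Cup~4) on the window, and distinguishing whether $l_\gamma$ or $r_\gamma$ has as nearest top-level neighbour a cup, an undotted ray, or the unique dotted ray of (Cup~4), produces in each case exactly one of the displayed pictures $\underline\mu\leftarrow\underline\la$. The admissibility constraints of Remark~\ref{decorated} (no dotted cup nested in a cup, and no dotted cup or dotted ray to the right of a ray) rule out all other window shapes and guarantee that the resulting diagrams are again admissible, so the list is both exhaustive and well defined.

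For the converse, each displayed move exhibits on its right-hand side a distinguished cup $\gamma$ in $\underline\la$ whose endpoints carry precisely the labels whose swap turns $\la$ into $\mu$ and $\underline\la$ into the left-hand diagram; thus $\mu$ is the weight attached to $\gamma$ in Definition~\ref{lapair}, giving $\mu\leftarrow\la$. Finally, since the swap fixes every $\times$- and $\circ$-label and alters the number of $\up$'s by $0$ in the $\down\up\leftrightarrow\up\down$ cases and by $2$ in the $\up\up\leftrightarrow\down\down$ cases, the parity $\ov{|P_\up(\la)|}$ is preserved and $\la,\mu$ lie in the same block.
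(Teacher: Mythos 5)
Your proposal is correct and takes essentially the same route as the paper: the paper's entire proof is the one-line remark that the lemma ``follows directly from the definitions,'' i.e.\ exactly the case-by-case verification from Definition~\ref{decoratedcups} and Definition~\ref{lapair} that you carry out. Your bracket-matching formulation, the locality of the perturbation, and the window enumeration (including the observation that a removed unmatched $\up$ and the newly created one occupy the same slot in the left-to-right pairing of (Cup~3), so the remaining dotted cups are untouched) are simply a fleshed-out account of that same direct verification.
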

\begin{proof}
This follows directly from the definitions.
\end{proof}

\begin{definition}
\label{Extquivdef}
The \emph{quiver associated with a block $\Lambda$} is the oriented graph $\mathcal{Q}(\Lambda)$ with vertex set the weights $\la\in\Lambda$ and arrows $\la\rightarrow \mu$ whenever $\la\leftrightarrow\mu$ in the sense of Definition~\ref{lapair}. (For the  example of a principal block for $k=4$ we refer to \eqref{fig:quiv}, where the arrows labelled $b$ correspond to the $\la$-pairs.)
\end{definition}

\begin{definition}
An \emph{undecorated cap diagram} (respectively \emph{decorated cap diagram}) $c'$ is obtained by reflecting an undecorated (respectively decorated) cup diagram $c$ along the horizontal line $\mR_{\geq 0} \times \{0\}$ inside $R$. The image of a cup in such a diagram is called a \emph{cap}. It is \emph{dotted} if the cup was dotted.

A decorated cap diagram is called \emph{admissible} if $c$ is admissible. In which case we just call it a {\it cap diagram}.
\end{definition}

We will denote the reflection at the horizontal line by ${}^*$ and use the notation $c^*$ to denote the cap diagram that we obtain from a given cup diagram $c$. In particular, we obtain a cap diagram associated with $\la$, denoted $\ov{\la} = \underline{\la}^*$ for any $\la\in\Lambda$. 
%Alternatively one can also define cap diagrams in the same way as cup diagrams by substituting $R^+$ for $R^-$ and $\mR_{\geq 0} \times \{ \frac{1}{2}\}$ for $\mR_{\geq 0} \times \{- \frac{1}{2}\}$ in Definition~\ref{defarcs}.

\begin{definition}
A \emph{decorated circle diagram} $d=c c'$ is the union of the arcs contained in a decorated cup diagram $c$ and the ones in a decorated cap diagram $c'$, i.e., visually we stack the cap diagram $c'$ on top of the cup diagram $c$.
The decorations of $c$ and $c'$ give the decoration for $d$.

It is called \emph{admissible} if both $c$ and $c'$ are admissible. Again we refer to admissible decorated circle diagrams simply as circle diagrams.
\end{definition}

A \emph{ connected component} of a decorated circle diagram $d$ with set of arcs $\{\gamma_1,\ldots,\gamma_r\}$ is a connected component of the subset $\bigcup_{1 \leq i \leq r} \gamma_i \subset R$. It is always a circle or a line.

\begin{definition}
A \emph{line} in a decorated circle diagram $d$ is a connected component of $d$ that intersects the boundary of the strip $R$. A \emph{circle} in a decorated circle diagram $d$ is a connected component of $d$ that does not intersect the boundary of the strip $R$. A line is \emph{propagating}, if one endpoint is at the top and the other at the bottom of the diagram.
\end{definition}

\subsection{Orientations, gradings and the space $\D$}

An \emph{oriented cup diagram}, denoted $c\mu$ is obtained from a cup diagram $c \in \mathcal{C}_\Lambda$ by putting a weight $\mu \in \Lambda$ on top of $c$ such that all arcs of $c$ are `oriented' in one of the ways displayed in \eqref{oriented}, and additionally we require that the vertices in $\mu$ labelled $\circ$ agree with $P_\circ$ and the vertices labelled $\times$ agree with $P_\times$.

Similarly, an \emph{oriented cap diagram} $\mu c'$ is obtained by putting such a weight $\mu$ below a cap diagram with $k$ vertices such that all arcs of $c'$ become `oriented' in one of the following ways:
\begin{eqnarray}
\label{oriented}
\begin{tikzpicture}[thick,>=angle 60,scale=0.8]
\node at (-1,-1.2) {degree};
\draw [>->] (0,0) .. controls +(0,-1) and +(0,-1) .. +(1,0) node at
+(0.5,-1.2) {0};
\draw [<-<] (2,0) .. controls +(0,-1) and +(0,-1) .. +(1,0) node at
+(0.5,-1.2) {1};
\draw [<-<] (4,-0.7) .. controls +(0,1) and +(0,1) .. +(1,0) node at
+(0.5,-0.5) {0};
\draw [>->] (6,-0.7) .. controls +(0,1) and +(0,1) .. +(1,0)  node at
+(0.5,-0.5) {1};
\draw [>-] (8,0) -- +(0,-0.7) node at +(0,-1.2) {0};
\draw [->] (9,0) -- +(0,-0.7) node at +(0,-1.2) {0};

\node at (-1,-3.2) {degree};
\draw [<->] (0,-2) .. controls +(0,-1) and +(0,-1) .. +(1,0) node at
+(0.5,-1.2) {0};
\fill (0.5,-2.75) circle(2.5pt);
\draw [>-<] (2,-2) .. controls +(0,-1) and +(0,-1) .. +(1,0) node at
+(0.5,-1.2) {1};
\fill (2.5,-2.76) circle(2.5pt);
\draw [>-<] (4,-2.7) .. controls +(0,1) and +(0,1) .. +(1,0) node at
+(0.5,-0.5) {0};
\fill (4.5,-1.93) circle(2.5pt);
\draw [<->] (6,-2.7) .. controls +(0,1) and +(0,1) .. +(1,0)  node at
+(0.5,-0.5) {1};
\fill (6.5,-1.95) circle(2.5pt);
\draw [-<] (8,-2) -- +(0,-0.7) node at +(0,-1.2) {0};
\fill (8,-2.35) circle(2.5pt);
\draw [<-] (9,-2) -- +(0,-0.7) node at +(0,-1.2) {0};
\fill (9,-2.35) circle(2.5pt);
\end{tikzpicture}
\end{eqnarray}

Additionally, as displayed in \eqref{oriented}, we assign a degree to each arc of an oriented cup/cap diagram. The cups/caps of degree 1 are called {\it clockwise} and of degree 0 are called {\it anticlockwise}. To make sense of the word orientation the $\bullet$ on a cup/cap should be seen as an orientation reversing point.

\begin{definition}
The \emph{degree} of an oriented cup diagram $c\mu $ (respectively cap diagram $c'\mu$) is the sum of the degrees of all arcs contained in $\mu c$ (respectively $c'\mu$), i.e. the number of clockwise oriented cups in $\mu c$ (respectively clockwise oriented caps in $c'\mu$).
\end{definition}

\begin{remark}
The definition of ``oriented diagram'' and ``degree''  also applies to non-admissible cup/cap diagrams as well as to all types of circle diagrams.
\end{remark}

Note that $\underline{\la}$ (respectively  $\ov{\la}$) is precisely the unique cup (respectively cap) diagram such that $\underline{\la}\la$ (respectively $\la\ov{\la}$) is an oriented cup/cap diagram of degree $0$, i.e. all its cups/caps are anticlockwise. Conversely we have the following:

\begin{lemma} \label{lem:orient}
Given a cup diagram $c$ with block sequence $\Theta$ one can find a unique weight $\la$ such that $c=\underline{\la}$. The corresponding parity is unique; and if $\mu$ is a weight with the same block sequence such that $\underline{\la}\mu$ is oriented then $\mu$ has the same parity, hence $\la$ and $\mu$ are in the same block $\Lambda$.

The parity of $\Lambda$ equals the number of dotted rays plus undotted cups in $c$ modulo two.
\end{lemma}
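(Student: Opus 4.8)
The plan is to recover $\la$ from $c$ as the unique ``degree-zero orientation'' of $c$, and to treat the four assertions separately: existence and uniqueness of $\la$, constancy of the parity over all orientations, and the counting formula.

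For uniqueness, observe that the construction in Definition~\ref{decoratedcups} admits an obvious left inverse. Reading an admissible decorated cup diagram, the label of each vertex $p\in P_\bu$ is forced by the arc through it: $\down$ at the left endpoint of an undotted cup and $\up$ at its right endpoint, $\up$ at both endpoints of a dotted cup, $\down$ at an undotted ray, and $\up$ at a dotted ray; outside $P_\bu$ the labels are dictated by $\Theta$. Applying this read-off to $c=\underline\la$ returns $\la$, so $\la\mapsto\underline\la$ is injective and any weight with associated cup diagram $c$ must be the weight just read off; this also yields that ``the corresponding parity is unique''. For existence I would define $\la$ by this read-off and then run Definition~\ref{decoratedcups} on it, checking that the output is again $c$.

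The heart of the argument, and the step I expect to be the main obstacle, is showing that step (Cup 1) reconstructs exactly the undotted cups of $c$. Here I would combine the non-crossing property of $c$ with the admissibility constraints of Remark~\ref{decorated}: no dotted cup is nested inside another arc, and no dotted cup or dotted ray lies to the right of a ray. Non-crossing forces the interior of every undotted cup to contain only nested undotted cups --- no ray, and by admissibility no dotted cup --- so peeling innermost undotted cups realizes the undotted cups of $c$ as a sequence of adjacent $\down\up$ removals, leaving exactly the dotted-cup endpoints and dotted rays (all $\up$, all on the left) followed by the undotted rays (all $\down$). Thus the leftover word is $\up^{a}\down^{b}$ with no adjacent $\down\up$, and by the confluence asserted in (Cup 1) this terminal matching is the one (Cup 1) produces. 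Admissibility also places all dotted cups to the left of the at most one dotted ray, so once the internal undotted cups are removed the dotted endpoints occur as consecutive pairs; hence (Cup 2) recreates the undotted rays, (Cup 3) pairs the dotted endpoints into the correct dotted cups, and (Cup 4) attaches the dotted ray if present. This gives $\underline\la=c$.

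It remains to analyze orientations. Any $\mu$ with $\underline\la\mu$ oriented arises from $\la$ by independently choosing, for each arc, one of the orientations in \eqref{oriented}: an undotted cup is $\down\up$ or $\up\down$ (one $\up$ either way), a dotted cup is $\up\up$ or $\down\down$ (an even number of $\up$'s either way), while \eqref{oriented} leaves a ray no choice of label, forcing $\down$ at an undotted ray and $\up$ at a dotted ray. Consequently flipping any cup changes $|P_\up(\mu)|$ by $0$ or $\pm 2$, so $\ov{|P_\up(\mu)|}$ is independent of the orientation and equals $\ov{|P_\up(\la)|}$; in particular $\mu$ and $\la$ lie in the same block $\Lambda$. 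Finally, reading the count off the degree-zero orientation $\la$ gives $|P_\up(\la)| = \#\{\text{undotted cups}\} + 2\,\#\{\text{dotted cups}\} + \#\{\text{dotted rays}\}$, so the parity of $\Lambda$ equals the number of dotted rays plus undotted cups modulo two.
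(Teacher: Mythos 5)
Your proposal is correct and follows essentially the same route as the paper: the weight you read off from $c$ is exactly the unique weight inducing the degree-zero orientation of $c$, which is how the paper chooses $\la$, and your parity arguments (flipping a cup changes the number of $\up$'s by $0$ or $\pm 2$; counting $\up$'s in the degree-zero orientation gives undotted cups plus dotted rays plus twice the dotted cups) coincide with the paper's. The only difference is one of detail: you carefully verify, using admissibility and the order-independence of (Cup 1), that Definition~\ref{decoratedcups} applied to the read-off weight actually reproduces $c$, a step the paper treats as immediate.
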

\begin{proof}
We choose $\la$ to be the unique weight which induces the orientation on $c$ of degree zero. If $\underline{\la}\mu$ is oriented, then $\mu$ differs from $\la$ by swapping pairs of $\down\up$ to $\up\down$ or $\up\up$ to $\down\down$. In any case, $\mu$ has the same  parity as $\la$, thus they are in the same block	. The number of $\up$'s, hence the parity equals the number of dotted rays plus undotted cups in $c$ modulo two.
\end{proof}

Write $\la\subset\mu$ if $\underline{\la}\mu$ is an oriented cup diagram, then our combinatorics is compatible with the Bruhat order in the following sense:
\begin{lemma}\label{lem:Bruhator} Fix a block $\Lambda$ and $\lambda,\mu \in \Lambda$.
\begin{enumerate}[1.)]
\item If $\la\subset\mu$ then $\la\leq \mu$ in the Bruhat order.
\item If $a \la b$ is an oriented circle diagram
then $a = \underline{\alpha}$ and $b = \overline{\beta}$ for unique weights
$\alpha,\beta$ with $\alpha \subset \la \supset \beta$.
\end{enumerate}
\end{lemma}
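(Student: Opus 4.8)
The plan is to reduce both parts to the already-established Lemma~\ref{lem:orient} together with the Bruhat-move description of Lemma~\ref{lem:Bruhat}; the only real work is to read off, from the two orientations of each arc in \eqref{oriented}, how an orienting weight differs from the weight defining the cup diagram.

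\smallskip\noindent\emph{Part 1.} Recall that $\underline\la\la$ is the unique orientation of $\underline\la$ in which every cup is anticlockwise (degree $0$): there each undotted cup reads $\down\up$ and each dotted cup reads $\up\up$, while the labels on the rays are forced. As recorded in the proof of Lemma~\ref{lem:orient}, any weight $\mu$ with $\underline\la\mu$ oriented is therefore obtained from $\la$ by keeping the ray labels and replacing the degree-$0$ reading of some cups by their degree-$1$ reading, i.e. by changing $\down\up$ to $\up\down$ on a subset of the undotted cups and $\up\up$ to $\down\down$ on a subset of the dotted cups. By Lemma~\ref{lem:Bruhat} the opposite substitutions, $\up\down$ to $\down\up$ and $\down\down$ to $\up\up$, strictly lower a weight in the Bruhat order; hence each of our substitutions raises it, and performing them successively gives $\la\leq\mu$.

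\smallskip\noindent\emph{Part 2.} I would cut the oriented circle diagram $a\la b$ along its middle line into the cup part $a\la$ and the cap part $\la b$. Since the chosen orientation of $a\la b$ restricts to a valid orientation on every arc and the labels on the middle line are the single weight $\la$, the diagram $a\la$ is an oriented cup diagram and $\la b$ is an oriented cap diagram. By the uniqueness statement in Lemma~\ref{lem:orient} the cup diagram $a$ equals $\underline\alpha$ for a unique weight $\alpha$ of the block $\Lambda$, and $\underline\alpha\la=a\la$ being oriented is exactly $\alpha\subset\la$. Writing $b=c^*$ with $c=b^*$ the underlying cup diagram, the same lemma gives $c=\underline\beta$ for a unique $\beta$, so $b=\underline\beta^{\,*}=\overline\beta$; reflecting the oriented cap diagram $\la\overline\beta$ across $\mR_{\geq0}\times\{0\}$ turns it into the oriented cup diagram $\underline\beta\la$, which is precisely $\beta\subset\la$, i.e. $\la\supset\beta$. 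The weights $\alpha$ and $\beta$ are unique because the weight realizing a given cup diagram is, by Lemma~\ref{lem:orient}.

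\smallskip
The genuinely delicate point sits in Part~1: one must be certain that ray positions cannot change their label, so that all discrepancies between $\la$ and $\mu$ lie on cups and are of exactly the two listed types, and that the induced Bruhat moves point in the direction making $\mu$ larger rather than smaller. Both facts come from a direct inspection of the degree assignments in \eqref{oriented}, but they are precisely where a direction error could slip in, so I would cross-check them against the principal block $k=4$ in \eqref{fig:quiv}, where the $b$-arrows are the $\la$-pairs and hence, by Definition~\ref{lapair}, always point from the smaller to the larger weight.
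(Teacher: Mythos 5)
Your proof is correct and follows essentially the same route as the paper: part 1.) via the observation that any orienting weight $\mu$ of $\underline{\la}$ arises from $\la$ by turning anticlockwise cups ($\down\up$ or $\up\up$) into clockwise ones ($\up\down$ or $\down\down$), which raises the weight by Lemma~\ref{lem:Bruhat}, and part 2.) by splitting $a\la b$ into an oriented cup and cap diagram and invoking the uniqueness of the weight realizing a given cup diagram (your appeal to Lemma~\ref{lem:orient} is the same construction the paper carries out directly, namely flipping all clockwise cups to anticlockwise). The extra checks you flag (ray labels being forced, the direction of the Bruhat moves, reflection preserving orientedness) are exactly the points the paper treats implicitly, so nothing is missing.
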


\begin{proof}
 For part 2.) we more precisely claim: if $c\la$ is an oriented cup diagram then $c = \underline{\alpha}$ for a unique weight $\alpha$ with $\alpha \subset\la$; if $\la c'$ is an oriented cap diagram then $c'= \overline{\beta}$
for a unique weight $\beta$ with $\la \subset \beta$. Indeed, $\alpha$ is the unique weight obtained by reversing the labels at each clockwise cup into an anticlockwise cup. Clearly, $\underline{\alpha}=c$ and hence $\alpha\subset\la$. Similarly for $\beta$.
If $\la\subset\mu$ then $\underline{\la}\mu$ is oriented, hence $\mu$ is obtained from $\la$ by (possibly) changing some of the anticlockwise cups into clockwise. This however means we either change a pair $\down\up$ into $\up\down$ or $\up\up$ into $\down\down$. In either case the weight gets bigger in the Bruhat order by Lemma~\ref{lem:Bruhat} and 1.) holds.
\end{proof}

As in \cite{BS1} we call the number of cups in $\underline{\la}$ the {\it defect} or {\it degree of atypicality} of $\la$, denoted $\op{def}(\la)$, and the maximum of all the defects
of elements in a block the defect of a block, (which equals $\frac{k}{2}$ or $\frac{k-1}{2}$ depending whether $k = | P_\bu(\Lambda)|$ is even or odd).
Note that in contrast to \cite{BS1} we work with finite blocks only, since infinite blocks would have automatically infinite defect. Lemma~\ref{lem:orient} implies that the defect counts the number of possible orientations of a cup diagram $\underline{\la}$, since each cup has precisely two orientations:
\begin{eqnarray}
\label{defectcount}
|\{\mu\mid \la\subset\mu\}|&=&2^{\op{def}(\la)}.
\end{eqnarray}

The following connects the notion of $\la$-pairs with the degree:

\begin{prop}
\label{degone}
If $\la\rightarrow \mu$ then $\la$ and $\mu$ are in the same block. Moreover, $\la\rightarrow \mu$ if and only if $\underline{\la}\mu$ has degree $1$
and $\la<\mu$ in the Bruhat order.
\end{prop}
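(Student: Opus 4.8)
The plan is to reduce everything to a single local observation about how the orientation of one cup behaves when its two endpoint labels are flipped, and then to read off both directions of the equivalence directly from the definitions. First I would record the key fact by inspecting the admissible orientations in \eqref{oriented}: for an undotted cup the degree-$0$ (anticlockwise) orientation has endpoint labels $\down\up$ and the degree-$1$ (clockwise) orientation has $\up\down$, while for a dotted cup the degree-$0$ orientation is $\up\up$ and the degree-$1$ orientation is $\down\down$. In both cases, passing from the degree-$0$ to the degree-$1$ orientation is exactly the operation of simultaneously interchanging $\down\leftrightarrow\up$ at the two endpoints of that cup, leaving every other vertex untouched. I would also recall that, by construction, $\underline{\la}$ is the unique cup diagram for which $\underline{\la}\la$ carries the all-anticlockwise (degree $0$) orientation.

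For the forward direction I would assume $\la\rightarrow\mu$, so that $\mu$ arises from $\la$ by flipping the labels at the two endpoints $l_\gamma<r_\gamma$ of a cup $\gamma$ of $\underline{\la}$, as in Definition~\ref{lapair}. By the local fact above, in $\underline{\la}\mu$ the cup $\gamma$ becomes clockwise (degree $1$) whereas every other cup retains the anticlockwise orientation it had in $\underline{\la}\la$; hence $\underline{\la}\mu$ is again an oriented cup diagram, and its degree equals exactly $1$. Lemma~\ref{lem:orient} then gives that $\mu$ has the same parity as $\la$, so they lie in the same block, and $\la\subset\mu$ together with part~1 of Lemma~\ref{lem:Bruhator} (equivalently, the remark following Definition~\ref{lapair}) yields $\la<\mu$.

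For the converse I would assume $\underline{\la}\mu$ is oriented of degree $1$. Since the degree counts the clockwise cups, there is exactly one clockwise cup $\gamma$ and all remaining cups are anticlockwise, while the ray orientations are forced and do not vary; this rigidity of the non-cup vertices is precisely what is encoded in the count \eqref{defectcount}. Consequently $\mu$ differs from the all-anticlockwise orientation $\la$ of $\underline{\la}$ only in that $\gamma$ is made clockwise, which by the local fact means $\mu$ is obtained from $\la$ by flipping the two endpoint labels of $\gamma$. This is exactly the $\la$-pair attached to $\gamma$, so $\la\rightarrow\mu$; note the hypothesis $\la<\mu$ is automatic here from $1>0$ and need only be checked for consistency.

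I expect the only genuinely delicate point to be the claim that distinct orientations of the fixed diagram $\underline{\la}$ differ solely at cup endpoints, i.e. that the ray orientations are rigid; but this is exactly the content of \eqref{defectcount}, and everything else is a direct match against the orientation table \eqref{oriented} combined with the description of $\underline{\la}$ via Definition~\ref{decoratedcups}.
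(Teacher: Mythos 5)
Your proof is correct and follows essentially the same route as the paper's: the forward direction turns the cup of the $\la$-pair clockwise while all other cups stay anticlockwise (giving degree $1$ and $\la<\mu$), and the converse reads off from the unique clockwise cup that $\mu$ is the endpoint-flip of $\la$, which is precisely a $\la$-pair move (the paper phrases this via Lemma~\ref{lapairscup}, you via Definition~\ref{lapair} -- the same content). Your additional observations -- that membership in the same block follows from Lemma~\ref{lem:orient}, that ray rigidity is the point underlying \eqref{defectcount}, and that $\la<\mu$ is automatic once the degree is $1$ -- are all accurate and only make explicit what the paper leaves implicit.
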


\begin{proof}
Assume $\la\rightarrow \mu$. Then by definition of a $\la$-pair we have a cup $C$ connecting vertices $i$ and $j$ ($i<j$) such that if $C$ is
undotted (respectively dotted) the weight $\mu$ differs from $\la$ only at these places with an $\up$ at position $i$ and $\down$ at position
$j$ (respectively a $\down$ at position $i$ and $\down$ at position $j$). Hence, in $\underline{\la}\mu$, the cup $C$ becomes
clockwise, while all other cups stay anticlockwise and thus it has degree $1$ and $\mu>\la$.

Assuming $\underline{\la}\mu$ has degree $1$ and $\la<\mu$. If $C$ is the unique clockwise cup, then $\mu$ equals $\la$ except at the vertices of $C$ where $\down\up$ gets replaced by $\up\down$ or $\up\up$ gets turned into $\down\down$. Hence the diagrams $\underline{\la}$ and $\underline{\mu}$ look locally as in Lemma~\ref{lapairscup}.
\end{proof}

\section{The graded vector space $\D$ with homogeneous basis $\B$}
Our goal is to introduce a type $\rm D$ analogue of the algebras studied in \cite{Str09}, \cite{BS1}.  It is a diagrammatic algebra $\D$ which,  in the special choice of a regular block $\Lambda$ , will be isomorphic to the endomorphism ring of a minimal projective generator in the category of perverse sheaves on the isotropic Grassmannian $Y_k$ equipped with the stratification by $B_D$-orbits. In this section we define and explore the underlying graded vector space.

\subsection{The space of circle diagrams} \label{sec:spaceofdiagrams}
Let $\Lambda$ be a fixed block.

\begin{definition}
Denote by
\begin{eqnarray}
\label{DefB}
\B&=&\left\{ \underline{\la}\nu\ov{\mu}\mid \la,\mu,\nu\in\Lambda,\quad\underline{\la}\nu \text{ and $\nu\ov{\mu}$ are
oriented}\right\},
\end{eqnarray}
the set of {\it oriented circle diagrams} associated with $\Lambda$. The {\it degree} of an element in $\B$
is the sum of the degrees of the cups and caps as displayed in \eqref{oriented}. The degree of a connected component in an oriented
circle diagram is the sum of the degrees of its cups and caps. We denote by $\D$ the graded complex vector space on the basis $\B$.
\end{definition}
Note that the $\underline{\la}\la\ov{\la}$ for $\la\in\Lambda$ span the degree zero part of $\D$.  For $\la,\mu\in \Lambda$ let ${}_\la(\D)_\mu{}$ be the subspace spanned by all $\underline{\la}\nu\ov{\mu}$, $\nu\in\Lambda$.

The most important special case will be the graded vector spaces $\mathbb{D}_k=\D$ associated with the principal blocks $\Lambda=\La_k^{\overline{0}}$ equipped with its distinct homogeneous basis $\mathbb{B}_k=\mathbb{B}_{\La_k^{\overline{0}}}$.

\begin{ex}
For $k=4$, the graded vector space $\mathbb{D}_k$ is already of  dimension $66$ with graded Poincare polynomial $p_{\mD_4}(q)=8+20q+24q^2+12q^3+2q^4$, and Cartan matrix given in Example~\ref{exB}.
\end{ex}

The vector space  $\D$ has the following alternative description which will be useful to describe the multiplication and to make the connection to geometry:

\begin{definition}
\label{relations}
For a cup or cap diagram $c$ and $i,j \in P_\bu(\Lambda)$ we write
\begin{itemize}
\item[$\blacktriangleright$] $i \CupConnect j$ if $c$ contains a cup/cap connecting $i$ and $j$,
\item[$\blacktriangleright$] $i \DCupConnect j$ if $c$ contains a dotted cup/dotted cap connecting $i$ and $j$,
\item[$\blacktriangleright$] $i \RayConnect$ if $c$ contains a ray that ends at $i$,
\item[$\blacktriangleright$] $i \DRayConnect$ if $c$ contains a dotted ray that ends at $i$.
\end{itemize}
\end{definition}

\begin{definition}
Let $\la,\mu\in\Lambda$ and assume the circle diagram $\un\la\ov{\mu}$ can be oriented. Let $I=I_ {\un\la\ov{\mu}}$ be the ideal in  $\mC [X_i \mid i \in P_\bu(\Lambda)]$ generated by 
\begin{center}
$X_a^2$, $X_a+X_b$ if $a \CupConnect b$, and $X_a-X_b$ if $a \DCupConnect b$, and finally $X_a$ if $a \RayConnect$ or $a \DRayConnect$. 
\end{center}
(Here the relations refer to both, $\ov\mu$ and $\underline{\la}$, and  $a,b \in P_\bu(\Lambda)$.) \\
\end{definition}

In the following we fix the grading on the rings  $\mC [X_i \mid i \in P_\bu(\Lambda)]\,/\,I_ {\un\la\ov{\mu}}$  induced by putting the $X_i$'s in degree $2$. 
\begin{prop}
\label{coho}
Let $\la,\mu\in\Lambda$ and consider the circle diagram $\un\la\ov{\mu}$. If $\un{\lambda}\ov{\mu}$ is not orientable we set $\cM(\un{\lambda}\ov{\mu})=\{0\}$, and otherwise
\begin{eqnarray*}
\cM(\un\la\ov{\mu})&=&\mC [X_i \mid i \in P_\bu(\Lambda)] \,/\,I_{\un\la\ov{\mu}} \;\langle {\rm mdeg}(\un\la\ov{\mu})\rangle
\end{eqnarray*}
with a shift $\langle {\rm mdeg}(\un\la\ov{\mu})\rangle$ of grading by ${\rm mdeg}(\un\la\ov{\mu})$ given in Definition \ref{def:minimaldegree} (using the conventions on graded modules as in Section~\ref{graded}).
\begin{enumerate}[1.)]
\item There is an isomorphism of graded vector space
\begin{eqnarray*}
\Psi_{\un\la \ov{\mu}}:\quad {}_\mu(\D)_\la{}& \longrightarrow &\cM(\un\la\ov{\mu}), \\
\underline{\la}\nu\ov{\mu}&\longmapsto&\prod_{C\in \mathscr{C}_{\rm clock}(\underline{\la}\nu\ov{\mu})}X_{t(C)}
\end{eqnarray*}
where $\mathscr{C}_{\rm clock}(\underline{\la}\nu\ov{\mu})$ is the set of clockwise oriented circle in $\underline{\la}\nu\ov{\mu}$ and $\op{t}(C)$ is the rightmost vertex belonging to a circle $C$. 
\item If $\un{\lambda}\ov{\mu}$ is orientable, the monomials of the form $X_{i_1} \cdots X_{i_s} \in \cM(\underline{\lambda}\ov{\mu})$, with $i_j \in P_\bu(\Lambda)$ distinct and $i_j=\op{t}(C_j)$ for some circle $C_j$, correspond under $\Psi_{\un\la \ov{\mu}}$ to the standard basis vectors from $\B$ in ${}_\mu(\D)_\la{}$.
\end{enumerate}
\end{prop}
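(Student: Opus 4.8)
The plan is to prove both statements simultaneously by exhibiting, on each side, a distinguished basis and checking that $\Psi_{\un\la\ov\mu}$ matches them. First I would dispose of the non-orientable case, where by definition $\cM(\un\la\ov\mu)=\{0\}$ and there is no weight $\nu$ making $\un\la\nu\ov\mu$ an element of $\B$, so that ${}_\mu(\D)_\la=0$ as well. For the orientable case I would decompose the underlying circle diagram $\un\la\ov\mu$ into its connected components, each of which is a circle or a line, and analyse the ring $\cM(\un\la\ov\mu)=\mC[X_i\mid i\in P_\bu(\Lambda)]/I_{\un\la\ov\mu}$ component by component.

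Next I would determine the structure of $\cM$. Every line contains at least one ray (this is exactly the condition of touching the boundary of $R$), and the generator $X_a$ attached to a ray-vertex $a$ lies in $I_{\un\la\ov\mu}$; propagating the relations $X_a\pm X_b$ along the line then forces $X_i=0$ in $\cM$ for every vertex $i$ on a line. On a circle $C$ the same relations identify all variables on $C$ with $\pm X_{\op{t}(C)}$, where $\op{t}(C)$ is the rightmost vertex; together with $X_{\op{t}(C)}^2=0$ this shows that the subalgebra generated by the variables on $C$ is a copy of $\mC[x]/(x^2)$. Consequently $\cM\cong\bigotimes_{C}\mC[X_{\op{t}(C)}]/(X_{\op{t}(C)}^2)$, the tensor product taken over the circles of $\un\la\ov\mu$, and the square-free monomials $\prod_{C\in S}X_{\op{t}(C)}$ indexed by subsets $S$ of the set of circles form a basis.

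The delicate point — and the step I expect to be the main obstacle — is that this tensor product description is only correct once one knows that the $\pm$-relations close up consistently around each circle. Going around $C$, an undotted arc gives $X_a=-X_b$ and a dotted arc gives $X_a=X_b$, so consistency is equivalent to $C$ carrying an even number of undotted arcs, equivalently (since a circle has an even number of arcs) an even number of dots. I would deduce this from orientability: a dot is an orientation-reversing point, so a closed component admits a consistent orientation $\nu$ if and only if it carries an even number of dots. This is precisely where the decorated (type $\rm D$) setting differs from the undecorated situation of \cite{BS1}; without it the factor for $C$ would collapse from $\mC[x]/(x^2)$ to $\mC$ and the dimension would come out wrong.

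It remains to match the two bases. Since rays fix the label of their vertex while each cup and cap admits exactly two orientations, an orientable circle diagram has its orientations $\nu$ in bijection with the subsets $S$ of its set of circles, the line components being rigid; this is the circle-diagram analogue of \eqref{defectcount} and follows from the analysis in Lemma~\ref{lem:orient}. Under this bijection $\Psi_{\un\la\ov\mu}$ sends the basis vector $\un\la\nu\ov\mu$, whose set of clockwise circles is $S$, to $\prod_{C\in S}X_{\op{t}(C)}$, i.e. to the square-free monomial indexed by $S$. Thus $\Psi_{\un\la\ov\mu}$ carries the standard basis of ${}_\mu(\D)_\la$ bijectively onto the monomial basis of $\cM$, which proves part 2.) and hence that $\Psi_{\un\la\ov\mu}$ is a linear isomorphism. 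Finally, for the grading I would take ${\rm mdeg}(\un\la\ov\mu)$ (Definition~\ref{def:minimaldegree}) to be the degree of the all-anticlockwise orientation $S=\emptyset$, and use the standard fact, verified as in \cite{BS1}, that reversing a single circle from anticlockwise to clockwise raises the total degree by exactly $2$. Then $\deg(\un\la\nu\ov\mu)={\rm mdeg}(\un\la\ov\mu)+2|S|$ matches the degree $2|S|$ of $\prod_{C\in S}X_{\op{t}(C)}$ after the shift $\langle{\rm mdeg}(\un\la\ov\mu)\rangle$, so $\Psi_{\un\la\ov\mu}$ is an isomorphism of graded vector spaces.
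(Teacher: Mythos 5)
Your proof is correct and follows essentially the same route as the paper: the monomials in the $X_{\op{t}(C)}$ generate $\cM(\un\la\ov{\mu})$, they are nonzero because orientability forces an even number of undotted arcs on each circle (the paper's Remark~\ref{stupidcount}, obtained exactly as you do from kink-removal plus the even-dot criterion of Lemma~\ref{lem:stupidlemmasub}), independence holds because the relations only couple variables within a single component (your tensor-product decomposition is just a more explicit packaging of this), and the grading matches via $\deg(\un\la\nu\ov{\mu})={\rm mdeg}(\un\la\ov{\mu})+2|\mathscr{C}_{\rm clock}(\un\la\nu\ov{\mu})|$. One small caveat: the degree-jump-by-$2$ fact you attribute to \cite{BS1} requires, in the decorated setting, the extra analysis of dotted circles carried out in Proposition~\ref{lem:stupidlemma} (the kink-removal argument of \cite{BS1} alone covers only undotted circles), but since that proposition is established before Proposition~\ref{coho} this is a citation imprecision rather than a gap.
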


The proof will be given at the end of the section.

\begin{ex}
\label{ps}
Consider the principal blocks $\Lap$ for $k=4$. The following circle diagrams $\un\la\ov{\mu}$ have exactly two possible orientations, anticlockwise or clockwise which are sent respectively to the elements $1, X_4\in \cM(\un\la\ov{\mu})$ under the isomorphism $\Psi_{\un\la \ov{\mu}}$. Similar for $\un\mu\ov{\la}$ with $\cM(\un\mu\ov{\la})=\cM(\un\la\ov{\mu})$.
\small
\begin{eqnarray*}
\label{1}
\begin{tikzpicture}[thick, scale=0.5]
\draw (0,-.1) .. controls +(0,1) and +(0,1) .. +(1,0);
%\fill (0.5,.63) circle(2.5pt);
\draw (2,-.1) .. controls +(0,1) and +(0,1) .. +(1,0);
%\fill (2.5,.63) circle(2.5pt);
\draw (1,-.1) .. controls +(0,-1) and +(0,-1) .. +(1,0);
\draw (0,-.1) .. controls +(0,-2.2) and +(0,-2.2) .. +(3,0);
%\fill (1.5,-1.75) circle(2.5pt);
\node at (4,-.5) {$\rightsquigarrow$};
\node at (13.8,0.2) {$\cM(\un\la\ov{\mu})=\mC[X_1,X_2,X_3,X_4]/I\cong \mC[X_4]/(X_4^2)$};
\node at (14.5,-1) {$I=(X_1^2,X_2^2,X_3^2,X_4^2,X_1+X_2,X_2+X_3,X_3+X_4,X_1+X_4)$};
\end{tikzpicture}\\
\label{2}
\begin{tikzpicture}[thick, scale=0.5]
\draw (0,-.1) .. controls +(0,1) and +(0,1) .. +(1,0);
\fill (0.5,.63) circle(4pt);
\draw (2,-.1) .. controls +(0,1) and +(0,1) .. +(1,0);
\fill (2.5,.63) circle(4pt);
\draw (1,-.1) .. controls +(0,-1) and +(0,-1) .. +(1,0);
\draw (0,-.1) .. controls +(0,-2.2) and +(0,-2.2) .. +(3,0);
%\fill (1.5,-1.75) circle(2.5pt);
\node at (4,-.5) {$\rightsquigarrow$};
\node at (13.8,0.2) {$\cM(\un\la\ov{\mu})=\mC[X_1,X_2,X_3,X_4]/I\cong \mC[X_4]/(X_4^2)$};
\node at (14.5,-1) {$I=(X_1^2,X_2^2,X_3^2,X_4^2,X_1-X_2,X_2+X_3,X_3-X_4,X_1+X_4)$};
\end{tikzpicture}\\
\label{3}
\begin{tikzpicture}[thick, scale=0.5]
\draw (0,-.1) .. controls +(0,1) and +(0,1) .. +(1,0);
\fill (0.5,.63) circle(4pt);
\draw (2,-.1) .. controls +(0,1) and +(0,1) .. +(1,0);
%\fill (2.5,.63) circle(2.5pt);
\draw (1,-.1) .. controls +(0,-1) and +(0,-1) .. +(1,0);
\draw (0,-.1) .. controls +(0,-2.2) and +(0,-2.2) .. +(3,0);
\fill (1.5,-1.75) circle(4pt);
\node at (4,-.5) {$\rightsquigarrow$};
\node at (13.8,0.2) {$\cM(\un\la\ov{\mu})=\mC[X_1,X_2,X_3,X_4]/I\cong \mC[X_4]/(X_4^2)$};
\node at (14.5,-1) {$I=(X_1^2,X_2^2,X_3^2,X_4^2,X_1-X_2,X_2+X_3,X_3+X_4,X_1-X_4)$};
\end{tikzpicture}\\
\label{4}
\begin{tikzpicture}[thick, scale=0.5]
\draw (0,-.1) .. controls +(0,1) and +(0,1) .. +(1,0);
%\fill (0.5,.63) circle(2.5pt);
\draw (2,-.1) .. controls +(0,1) and +(0,1) .. +(1,0);
\fill (2.5,.63) circle(4pt);
\draw (1,-.1) .. controls +(0,-1) and +(0,-1) .. +(1,0);
\draw (0,-.1) .. controls +(0,-2.2) and +(0,-2.2) .. +(3,0);
\fill (1.5,-1.75) circle(4pt);
\node at (4,-.5) {$\rightsquigarrow$};
\node at (13.8,0.2) {$\cM(\un\la\ov{\mu})=\mC[X_1,X_2,X_3,X_4]/I\cong \mC[X_4]/(X_4^2)$};
\node at (14.5,-1) {$I=(X_1^2,X_2^2,X_3^2,X_4^2,X_1+X_2,X_2+X_3,X_3-X_4,X_1-X_4)$};
\end{tikzpicture}
\end{eqnarray*}
\end{ex}
\normalsize

\begin{remark}
{\rm
In \cite{ES1}, see also \cite{Wilbert}, the case $\Lambda=\Lambda_k^{\ov{p}}$ is studied from a geometric point of view. Weights $\la$, $\mu$ are identified with fixed point of the natural $\mathbb{C}^*$-action on the (topological) Springer fibre of type ${\rm D}_k$ for the principal nilpotent of type ${\rm A}_{k-1}$, and the cup diagrams $\un{\la}$, $\un\mu$ are canonically identified with the cohomology of the closure of the corresponding attracting cells $\mathcal{A}_\la$, $\mathcal{A}_\mu$. The vector space $\cM(\un\mu\ov{\la})$ is then the cohomology of the intersection $\mathcal{A}_\la\cap \mathcal{A}_\mu$.
}
\end{remark}

\subsection{Properties of oriented circle diagrams}
The following is a crucial result linking orientability with decorations. 
\begin{lemma}
\label{lem:stupidlemmasub}
Fix a block $\Lambda$ and $\la,\mu\in\Lambda$. Then the following holds:
\begin{enumerate}[1.)]
\item The circle diagram $\un\la\ov\mu$ is orientable if and only if the number of dots is even on each of its circles and its propagating lines, and odd on each of its non-propagating lines.
\item 
In this case there are exactly $2^c$ possible orientations, where $c$ is the number of circles. They are obtained by choosing for each of the circles one out of its two possible orientations and for each ray the unique possible orientation. 
\end{enumerate}
\end{lemma}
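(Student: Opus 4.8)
The plan is to reduce orientability to a linear system over $\mZ/2\mZ$ and to analyse it one connected component at a time. I will encode an orientation of $\un\la\ov\mu$ by the weight $\nu$ placed on the vertices, recording $v_i=0$ when $\nu_i=\down$ and $v_i=1$ when $\nu_i=\up$ for $i\in P_\bu(\Lambda)$. Reading off the local pictures in \eqref{oriented}, the diagram is oriented exactly when: for each undotted cup or cap joining $i,j$ one has $v_i+v_j=1$; for each dotted cup or cap joining $i,j$ one has $v_i+v_j=0$; for each undotted ray at $i$ one has $v_i=0$; and for each dotted ray at $i$ one has $v_i=1$. The last two hold because \eqref{oriented} allows a ray only one orientation, and comparison with the degree-zero diagram $\un\la\la$, whose rays sit at a $\down$ when undotted and at the leftover $\up$ when dotted by Definition~\ref{decoratedcups}, fixes the forced label uniformly for cup- and cap-rays. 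Since every vertex lies on a unique connected component and each relation involves only vertices of one component, the system decouples over the circles and lines of $\un\la\ov\mu$; hence orientability is equivalent to solvability on each component, and the number of orientations is the product of the numbers of solutions over all components.

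For a circle I would sum, around the circle, the $\mZ/2\mZ$-relations of its arcs. The left-hand side telescopes to $2\sum v_i=0$, while the right-hand side equals the number of undotted arcs modulo two. As a circle carries an even number of arcs (cups and caps alternate, so there are equally many of each), solvability is equivalent to an even number of dotted arcs, that is, an even number of dots; and when solvable the solution set is a torsor under the single free choice of one vertex-value, giving exactly two orientations.

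For a line with successive vertices $a_1,\dots,a_m$ and end-rays at $a_1$ and $a_m$, the ray-relations fix $v_{a_1}$ and $v_{a_m}$ while the interior arc-relations propagate a value from one end to the other; solvability reduces to the single congruence $\sum_k\epsilon_k\equiv d_1+d_m\pmod 2$, where $\epsilon_k\in\{0,1\}$ records whether the $k$-th interior arc is undotted and $d_1,d_m$ record whether the end-rays are dotted. Counting dots gives (interior dots)$+d_1+d_m=(m-1)-\sum_k\epsilon_k+d_1+d_m$, congruent modulo two to $(m-1)+(\sum_k\epsilon_k+d_1+d_m)$; hence the line is orientable iff the total number of dots is congruent to $m-1$, in which case the orientation is unique. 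It then remains to match the parity of $m$ with the line type: tracing the component shows its arcs alternate between caps and cups, so the two ends lie on opposite sides of $R$ exactly when $m$ is odd and on the same side exactly when $m$ is even; equivalently propagating lines have $m$ odd and non-propagating lines have $m$ even. Substituting, a propagating line ($m-1$ even) is orientable iff it has an even number of dots, and a non-propagating line ($m-1$ odd) iff it has an odd number of dots.

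Assembling the three cases proves part 1.), and since only circles contribute a free binary choice while lines and rays are rigid, the count of orientations is $\prod_{\text{circles}}2=2^c$, proving part 2.). The step I expect to require the most care is the bookkeeping that ties the parity of $m$ to the propagating/non-propagating dichotomy, together with fixing the uniform forced label of cup- and cap-rays; once those conventions are pinned down, the two $\mZ/2\mZ$ summation arguments are routine.
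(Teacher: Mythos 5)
Your proof is correct and is in substance the paper's own argument recast as a linear system over $\mathbb{Z}/2\mathbb{Z}$: the paper likewise works component by component, treats dots as orientation-reversing (label-preserving) points, and derives the same parity criteria, with two free orientations per circle and a forced one per line (its treatment of non-propagating lines is a case split on the ray labels, which your single congruence replaces). The only step you pass over silently is that a solution of the local system really is a weight in the block $\Lambda$ (i.e.\ has the correct parity of $\up$'s), which the paper settles with a one-line appeal to Lemma~\ref{lem:orient}.
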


\begin{proof}
We first prove part 1.) assuming that the diagram has an orientation.  Then the number of dots on each circle and each propagating line has to be even, since each dot can be interpreted as an orientation reversing point.  For a non-propagating line we consider first the case where the labels at the two rays are the same. Then, again by interpreting dots as orientation reversing points the number of dots between them has to be odd. If they are opposite, i.e. $\up$ (attached to a ray with a dot) and $\down$ (attached to an undotted  ray), the number of points between them has to be even, hence altogether (with the dot attached to the ray) the total number is odd again. 

To show the converse assume that the conditions on the number of dots hold. Note that if we find an orientation by some weight $\nu$, then $\nu\in\Lambda$ by Lemma~\ref{lem:orient}. First note that a possible orientation of a circle is determined by the label ($\up$ or $\down$) at one of its vertices. Hence there are at most two orientations. That each choice defines in fact an orientation is clear if we interpret the dots as orientation reversing points, since by assumption their number is even. For lines we have at most one orientation defined uniquely by the label on a vertex attached to a ray.  Then the parity condition on the dots ensures again that this extends to a orientation on the line with the label at the second vertex attached to a ray the unique allowed one.  

We implicitly already proved the second statement.
\end{proof}

\begin{prop}
\label{lem:stupidlemma}
Fix a block $\Lambda$ and $\un\la\nu\ov\mu\in\B$. For a circle $C$ contained in  $\un\la\ov\mu$ its induced degree equals:
\begin{enumerate}[1.)]
\item  the number of its caps plus $1$, if its rightmost label is $\down$; and  
\item  the number of its caps minus $1$, if its rightmost label is $\up$.
\end{enumerate}
\end{prop}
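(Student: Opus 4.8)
The plan is to rewrite the degree of a circle as a purely local count and then extract the $\pm 1$ from the turning number of the circle viewed as a smooth plane curve. First I would record a local reformulation of the degrees in \eqref{oriented}: for every cup or cap occurring in an oriented diagram, its degree equals $1$ if the label at its \emph{right-hand} endpoint is $\down$ and $0$ if that label is $\up$, irrespective of whether the arc is dotted. To see this, note that by the remark preceding Lemma~\ref{lem:orient} the diagram $\underline\la\la$ is oriented of degree $0$, so every anticlockwise arc occurs in some $\underline\la$; by Definition~\ref{decoratedcups} an undotted such cup joins a pair $\down\up$ and a dotted such cup joins a pair $\up\up$, so in either case the right endpoint of a degree-$0$ arc carries $\up$. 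Reversing the orientation of a single arc turns $\down\up\mapsto\up\down$ and $\up\up\mapsto\down\down$ and raises its degree to $1$; in both degree-$1$ cases the right endpoint now carries $\down$. The same calibration applies to caps by reflecting in the central axis. Consequently the degree of a circle $C$ equals the number of its arcs (cups and caps together) whose right endpoint is labelled $\down$.

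Next I would compute this number via a turning-number argument. Since $C$ is a single closed curve made alternately of cups (in $R^-$) and caps (in $R^+$), it has equally many cups and caps; write $m$ for the number of caps, so $C$ has $2m$ arcs. Drawing each arc as a genuine half-circle meeting the central axis vertically, consecutive arcs glue without corners, and each arc is traversed either left-to-right or right-to-left. A cup traversed left-to-right, and a cap traversed right-to-left, rotate the tangent by $+\pi$ (anticlockwise), while a cup traversed right-to-left and a cap traversed left-to-right rotate it by $-\pi$ (clockwise); matching this against the local rule above shows that the $+\pi$-arcs are precisely the degree-$0$ (anticlockwise) arcs and the $-\pi$-arcs precisely the degree-$1$ (clockwise) ones. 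Hence, writing $d=\deg C$, the total turning of $C$ equals $\pi\big((2m-d)-d\big)=\pi(2m-2d)$.

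Finally, Whitney's theorem gives that the turning number of a simple closed curve is $+2\pi$ or $-2\pi$, so $2m-2d=\pm2$ and $d=m\mp1$. The sign is pinned down at the rightmost vertex $r$ of $C$: both arcs at $r$ run to the left, so $r$ is the globally rightmost point of the curve, where the tangent of a counterclockwise curve points upward. By the calibration $\up={}$``upward'', a counterclockwise $C$ has $\ell(r)=\up$ and total turning $+2\pi$, giving $d=m-1$; the clockwise orientation has $\ell(r)=\down$ and $d=m+1$. These are exactly cases 2.) and 1.) of the statement. Equivalently, one may prove case 2.) and deduce case 1.) from the orientation-reversal symmetry: reversing the orientation of $C$ exchanges clockwise and anticlockwise on every arc, hence replaces $d$ by $2m-d$ and simultaneously flips $\ell(r)$, and by Lemma~\ref{lem:stupidlemmasub} the two orientations realize precisely the two cases.

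I expect the main obstacle to be the bookkeeping in matching the combinatorial ``clockwise/anticlockwise'' labelling of \eqref{oriented} with the geometric sign of the turning ($-\pi$ for clockwise), together with checking that the half-circle arcs meet vertically so that no corner contributions enter the turning count; once these conventions are fixed, the topological input (Whitney's theorem) closes the argument immediately. A fully combinatorial alternative avoiding Whitney's theorem would be an induction on $m$, with the two-arc ``bigon'' circle (where both arcs share the rightmost vertex, giving $d=2\,[\ell(r)=\down]$) as base case and an innermost-cup surgery reducing $m$ by one while tracking the local rule; there the surgery bookkeeping is the delicate point.
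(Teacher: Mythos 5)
Your opening step is correct and is worth isolating: inspection of \eqref{oriented} does show that every arc, dotted or not, has degree $1$ exactly when the label at its \emph{right} endpoint is $\down$, so $\deg C$ equals the number of arcs of $C$ whose right endpoint carries $\down$. Your argument is also complete for circles without dots, where it is a pleasant geometric repackaging of the kink-removal argument of \cite{BS1} that the paper invokes for that case. The gap is the calibration step for dotted circles: you assert that the $+\pi$-arcs of the geometric traversal are precisely the degree-$0$ arcs. This is false, because a dot is an orientation-reversing point: the labels (hence the degrees) do \emph{not} define a consistent geometric orientation of a dotted circle, so the direction in which an arc is actually traversed is not determined by its degree. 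Concretely, in $\Lambda_4^{\ov 0}$ take $\la=\down\down\up\up$ (nested undotted cups $\{1,4\}$, $\{2,3\}$) and $\mu=\up\up\up\up$ (so $\ov\mu$ consists of the dotted caps $\{1,2\}$, $\{3,4\}$, cf.\ Example~\ref{exn4}), and orient the resulting single circle by $\nu=\down\down\up\up$. Then the cups $\{1,4\}$, $\{2,3\}$ and the dotted cap $\{3,4\}$ have degree $0$, while the dotted cap $\{1,2\}$ has degree $1$; the total degree is $1=2-1$, as the proposition asserts. The counterclockwise traversal runs $1\to 4\to 3\to 2\to 1$ with turnings $+\pi,+\pi,-\pi,+\pi$: the degree-$0$ cup $\{2,3\}$ turns by $-\pi$ and the degree-$1$ dotted cap $\{1,2\}$ turns by $+\pi$, so your calibration fails on both arcs.

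In this example the two errors cancel, which is why the total turning still agrees with your predicted value $\pi(n_0-n_1)$; but nothing in your argument establishes this cancellation, and in fact, granted the local degree rule and Whitney's theorem, the aggregate equality ``predicted turning $=$ actual turning'' together with your identification of $\up$ with ``upward at the rightmost vertex'' is a reformulation of the proposition itself. That identification has the same defect: because the label-versus-travel-direction correspondence flips at every dot, the label at the rightmost vertex need not a priori agree with the geometric direction of the counterclockwise traversal there, so both pillars of your proof are unjustified exactly in the dotted case --- which is the new content of this proposition compared to type $\rm A$. The paper avoids geometry entirely: after disposing of the undotted case by removing kinks \eqref{kink}, it removes dots \emph{in pairs}, using admissibility (Remark~\ref{decorated}) to locate one of the subdiagrams \eqref{snake} inside any dotted circle with more than two vertices, and then checks by hand that deleting that pair of dots (adjusting the orientation in between, keeping the outer labels) changes neither the degree, nor the number of caps, nor the rightmost label, so that induction reduces everything to the undotted case. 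Your proposed combinatorial fallback (induction on the number of caps via local surgeries) points in this direction, but the dotted-kink bookkeeping you defer is precisely where the actual work lies.
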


\begin{definition}
\label{clockanti}
We call $C$ with the induced orientation \emph{clockwise} respectively  \emph{anticlockwise} depending whether the degree is maximal respectively minimal possible \footnote{In case $C$ has no dots this agrees with the obvious notion of (anti)clockwise.}.
\end{definition}
\begin{proof}[Proof of Proposition~\ref{lem:stupidlemma}]
In case the circle $C$ has no dots, the  statement follows by the
same arguments as in \cite[Lemma 2.1]{BS1}: namely either $C$ consists out of precisely one cup and one cap, i.e.

\begin{center}
\begin{tikzpicture}[scale=.5,thick,>=angle 60]
\node at (-1,0) {$\deg\bigg($};
\draw (-.25,0) to +(2,0);
\draw (0,0) .. controls +(0,-1) and +(0,-1) .. +(1.5,0);
\draw (0,0) .. controls +(0,1) and +(0,1) .. +(1.5,0);
\node at (0.005,.1) {$\down$};
\node at (1.505,-.1) {$\up$};
\node at (2.7,0) {$\bigg) = 0$,};
\end{tikzpicture}
\hspace{1cm}
\begin{tikzpicture}[scale=.5,thick,>=angle 60]
\node at (-1,0) {$\deg\bigg($};
\draw (-.25,0) to +(2,0);
\draw (0,0) .. controls +(0,-1) and +(0,-1) .. +(1.5,0);
\draw (0,0) .. controls +(0,1) and +(0,1) .. +(1.5,0);
\node at (0.005,-.1) {$\up$};
\node at (1.505,.1) {$\down$};
\node at (2.7,0) {$\bigg) = 2$,};
\end{tikzpicture}
\end{center}
where the statement is clear, or it contains a kink which
can be removed using one of the following straightening rules:
\begin{equation}
\label{kink}
\begin{tikzpicture}[thick, snake=zigzag, line before snake = 2mm, line
after snake = 2mm]
\draw[thin] (-.25,0) -- +(1.5,0);
\begin{scope}[yscale=-1]
\draw (0,0) -- +(0,.3);
\draw[dotted, snake] (0,.3) -- +(0,.8);
\draw (0,0) .. controls +(0,-.5) and +(0,-.5) .. +(.5,0);
\draw (.5,0) .. controls +(0,.5) and +(0,.5) .. +(.5,0);
\draw (1,0) -- +(0,-.3);
\draw[dotted, snake] (1,-.3) -- +(0,-.8);
\end{scope}
\node at (0,-.1) {$\up$};
\node at (1.01,-.1) {$\up$};
\node at (.51,.08) {$\down$};

\draw [->,line join=round,decorate, decoration={zigzag,segment
length=4,amplitude=.9,post=lineto,post length=2pt}]  (1.5,0) -- +(.5,0);

\draw[thin] (2.25,0) -- +(.5,0);
\draw (2.5,0) -- +(0,.3);
\draw[dotted, snake] (2.5,.3) -- +(0,.8);
\draw (2.5,0) -- +(0,-.3);
\draw[dotted, snake] (2.5,-.3) -- +(0,-.8);
\node at (2.51,-.1) {$\up$};

\begin{scope}[xshift=5cm]
\draw[thin] (-.25,0) -- +(1.5,0);
\begin{scope}[yscale=-1]
\draw (0,0) -- +(0,.3);
\draw[dotted, snake] (0,.3) -- +(0,.8);
\draw (0,0) .. controls +(0,-.5) and +(0,-.5) .. +(.5,0);
\draw (.5,0) .. controls +(0,.5) and +(0,.5) .. +(.5,0);
\draw (1,0) -- +(0,-.3);
\draw[dotted, snake] (1,-.3) -- +(0,-.8);
\end{scope}
\node at (0,.1) {$\down$};
\node at (1.01,.1) {$\down$};
\node at (.51,-.1) {$\up$};

\draw [->,line join=round,decorate, decoration={zigzag,segment
length=4,amplitude=.9,post=lineto,post length=2pt}]  (1.5,0) -- +(.5,0);

\draw[thin] (2.25,0) -- +(.5,0);
\draw (2.5,0) -- +(0,.3);
\draw[dotted, snake] (2.5,.3) -- +(0,.8);
\draw (2.5,0) -- +(0,-.3);
\draw[dotted, snake] (2.5,-.3) -- +(0,-.8);
\node at (2.51,.1) {$\down$};
\end{scope}
\end{tikzpicture}
\end{equation}

The result is a new oriented circle diagram with two
fewer vertices than before. It is oriented in the same way as before, except that the circle $C$ 
has either one clockwise cup and one anticlockwise
cap less, or one anticlockwise cup and one clockwise
cap less. Hence both the degree as well as the number of caps decreases by $1$. Moreover we never change the label at the rightmost vertex. The claim follows therefore by induction.

Assume now $C$ has dots. The claim is obviously true for the
circles including two vertices:
\begin{center}
\begin{tikzpicture}[scale=.5,thick,>=angle 60]
\node at (-1,0) {$\deg\bigg($};
\draw (-.25,0) to +(2,0);
\draw (0,0) .. controls +(0,-1) and +(0,-1) .. +(1.5,0);
\node at (0.005,-.1) {$\up$};
\node at (1.505,-.1) {$\up$};
\fill (.75,-.76) circle(4pt);
\draw (0,0) .. controls +(0,1) and +(0,1) .. +(1.5,0);
\fill (.75,.76) circle(4pt);
\node at (2.7,0) {$\bigg) = 0$,};
\end{tikzpicture}
\hspace{1cm}
\begin{tikzpicture}[scale=.5,thick,>=angle 60]
\node at (-1,0) {$\deg\bigg($};
\draw (-.25,0) to +(2,0);
\draw (0,0) .. controls +(0,-1) and +(0,-1) .. +(1.5,0);
\fill (.75,-.76) circle(4pt);
\draw (0,0) .. controls +(0,1) and +(0,1) .. +(1.5,0);
\node at (0.005,.1) {$\down$};
\node at (1.505,.1) {$\down$};
\fill (.75,.76) circle(4pt);
\node at (2.7,0) {$\bigg) = 2$,};
\end{tikzpicture}
\end{center}

If there is a kink without dots as above then we remove it and argue again by induction. We therefore assume there are no such kinks,
i.e. each occurring kink has precisely one dot (either on the cap or the cup), since having a dot on both contradicts Remark
~\ref{decorated} if the component is a closed circle. Using Remark~\ref{decorated} again one easily verifies that a circle with more than
$2$ vertices and
no undotted kink contains at least one of the following diagrams as subdiagram
\begin{eqnarray}
\label{snake}
\usetikzlibrary{arrows}
\begin{tikzpicture}[thick,>=angle 60, scale=0.7,snake=zigzag, line before snake = 2mm, line after snake = 1mm]
\draw[dotted,snake] (0,0.1) to +(0,-1.4);
\draw (0,0) .. controls +(0,1) and +(0,1) .. +(1,0);
\node at (0.005,.1) {$\down$};
\node at (1.005,.1) {$\down$};
\fill (0.5,0.75) circle(2.5pt);
\draw (1,0) .. controls +(0,-1) and +(0,-1) .. +(1,0.1);
\draw (2,0) .. controls +(0,1) and +(0,1) .. +(1,0);
\node at (2.005,-.1) {$\up$};
\node at (3.005,-.1) {$\up$};
\fill (2.5,0.77) circle(2.5pt);
\draw[dotted,snake] (3,0.1) to +(0,-1.4);

\begin{scope}[xshift=4cm,y=-1cm]
\draw[dotted,snake] (0,0.1) to +(0,-1.4);;
\draw (0,0) .. controls +(0,1) and +(0,1) .. +(1,0);
\node at (0.005,-.1) {$\down$};
\node at (1.005,-.1) {$\down$};
\fill (0.5,0.75) circle(2.5pt);
\draw (1,0.1) .. controls +(0,-1) and +(0,-1) .. +(1,0.1);
\draw (2,0) .. controls +(0,1) and +(0,1) .. +(1,0);
\node at (2.005,.1) {$\up$};
\node at (3.005,.1) {$\up$};
\fill (2.5,0.77) circle(2.5pt);
\draw[dotted,snake] (3,0.1) to +(0,-1.4);
\end{scope}

\begin{scope}[xshift=8cm]
\draw[dotted,snake] (0,0.1) to +(0,-1.4);
\draw (0,0) .. controls +(0,1) and +(0,1) .. +(1,0);
\node at (0.005,-.1) {$\up$};
\node at (1.005,-.1) {$\up$};
\fill (0.5,0.75) circle(2.5pt);
\draw (1,0.1) .. controls +(0,-1) and +(0,-1) .. +(1,0.1);
\draw (2,0) .. controls +(0,1) and +(0,1) .. +(1,0);
\node at (2.005,.1) {$\down$};
\node at (3.005,.1) {$\down$};
\fill (2.5,0.77) circle(2.5pt);
\draw[dotted,snake] (3,0.1) to +(0,-1.4);
\end{scope}

\begin{scope}[xshift=12cm,y=-1cm]
\draw[dotted,snake] (0,0.1) to +(0,-1.4);
\draw (0,0) .. controls +(0,1) and +(0,1) .. +(1,0);
\node at (0.005,.1) {$\up$};
\node at (1.005,.1) {$\up$};
\fill (0.5,0.75) circle(2.5pt);
\draw (1,0) .. controls +(0,-1) and +(0,-1) .. +(1,0.1);
\draw (2,0) .. controls +(0,1) and +(0,1) .. +(1,0);
\node at (2.005,-.1) {$\down$};
\node at (3.005,-.1) {$\down$};
\fill (2.5,0.77) circle(2.5pt);
\draw[dotted,snake] (3,0.1) to +(0,-1.4);
\end{scope}
\end{tikzpicture}
\end{eqnarray}
We claim that the degree equals the degree of the circle obtained by removing the pair of dots and adjusting the orientations between them fixing the outer labels. Then by induction, we transferred our statements to the undotted case and we are done, since we again do not change the rightmost label. 
The possible orientations for \eqref{snake} are as follows (where $a$ stands for anticlockwise and $c$ for clockwise, cups/caps read from left to right):
\begin{itemize}
\item[$\blacktriangleright$] in the first two diagrams: with dots $(c,a,a)$, without dots  $(a,c,a)$, hence the total degree is $1$;
\item[$\blacktriangleright$]
in the last two diagram: with dots $(a,c,c)$, without dots $(c,a,c)$, hence the total degree is $2$.
\end{itemize}
Then the claim follows.
\end{proof}

\begin{remark}
\label{stupidcount}
By removing successively kinks as shown in \eqref{kink} we see that any circle has an even total number of cups plus caps. Together with Lemma~\ref{lem:stupidlemmasub} we obtain that every circle in an oriented circle diagram must have an even total number of undotted cups plus undotted caps.
\end{remark}

We are now also able to define the shift of grading in Proposition~\ref{coho}.

\begin{definition} \label{def:minimaldegree}
Let $\un\la\ov{\mu}$ be a circle diagram that can be equipped with an orientation. The degree of the diagram $\un\la \nu \ov{\mu}$ where the orientation is chosen such that all circles are oriented anticlockwise is called the {\it minimal degree} of $\un\la\ov{\mu}$ and denoted by ${\rm mdeg}(\un\la\ov{\mu})$.
\end{definition}

\begin{proof}[Proof of Proposition~\ref{coho}]
If $\un\la\ov{\mu}$ is not orientable, the statement is trivial. Assume otherwise. The map $\Psi_{\un\la \ov{\mu}}$ is a well-defined map of vector spaces. Moreover the degree of $\underline{\la}\nu\ov{\mu}$ equals  ${\rm mdeg}(\un\la\ov{\mu})+ 2|\mathscr{C}_{\rm clock}(\underline{\la}\nu\ov{\mu})|$ by Proposition~\ref{lem:stupidlemma} which is by definition the same degree as its image under $\Psi_{\un\la \ov{\mu}}$, hence $\Psi_{\un\la \ov{\mu}}$ is homogeneous of degree zero. It only remains to show that the basis $\B \cap {}_\mu(\D)_\la{}$ of ${}_\mu(\D)_\la{}$ is mapped to a basis. 
The relations from Definition~\ref{relations} imply that the monomials in the $X_i$'s, where $i$ runs through the set $J$ of rightmost points of the circles in   $\un\la\ov{\mu}$ generate $\cM(\un{\lambda}\ov{\mu})$ as a vector space. By Remark~\ref{stupidcount} the number of undotted cups plus undotted caps is even,  hence the relations imply that $X_i$ for $i\in J$ is non-zero in $\cM(\un{\lambda}\ov{\mu})$. Since the relations only couple indices in the same component of the diagram, the monomials in the $X_i$'s with $i\in J$ are linearly independent. 
\end{proof}

\subsection{The diagrammatic multiplication rule}
\label{sec:idea}
We fix a block $\Lambda$. To define the product of two basis elements $(a \la b)$ and $(c \mu d)$ in $\D$ we describe now what is called the {\it decorated generalized surgery procedure}. 

Take the two circle diagrams $(a \la b)$ and $(b^* \mu d)\in\D$ and put $(b^* \mu d)$  on top of $(a \la b)$ stitching all the lower vertical rays of  $(b^*\mu d)$ together with their mirror images in 
the top part of $(a \la b)$. In case the rays are dotted, eliminate these dots.
Since dots will always be removed in pairs, the new diagram will be orientable (in the obvious sense), see Definition \ref{def:stacked_diagram_orientation}. In fact it just  inherits an orientation from the original orientations.

Pick a symmetric pair $\gamma$ of a cup and a cap (possibly dotted) in the middle section of $( a \la b )( b^* \mu d )$ that can be connected without crossing rays or arcs and such that  to the right of $\gamma$ there are no dotted arcs. That means if we replace $\gamma$ by two vertical lines all involved diagrams remain admissible. 

Now perform this replacement, i.e. forgetting orientations for a while, cut open the cup and the cap in $\gamma$ and  stitch the loose ends together to form a pair of vertical line segments (always without dots even if there were dots on $\gamma$ before):
\begin{eqnarray} \label{pic:surgery}
\begin{tikzpicture}[thick, snake=zigzag, line before snake = 2mm, line after snake = 2mm]
\draw[thin] (-.25,0) -- +(1,0);
\draw[thin] (-.25,1.5) -- +(1,0);

\draw[thin, dashed] (.25,.38) -- +(0,.74);

\draw (0,1.5) .. controls +(0,-.5) and +(0,-.5) .. +(.5,0);
\draw (0,0) .. controls +(0,.5) and +(0,.5) .. +(.5,0);
\draw[->, snake] (1,.75) -- +(1,0);
\draw[thin] (2.25,0) -- +(1,0);
\draw (2.5,0) -- +(0,1.5);
\draw (3,0) -- +(0,1.5);
\draw[thin] (2.25,1.5) -- +(1,0);

\begin{scope}[xshift=5.5cm]
\draw[thin] (-.25,0) -- +(1,0);
\draw[thin] (-.25,1.5) -- +(1,0);

\draw[thin, dashed] (.25,.38) -- +(0,.74);

\draw (0,1.5) .. controls +(0,-.5) and +(0,-.5) .. +(.5,0);
\fill (0.25,1.125) circle(2.5pt);
\draw (0,0) .. controls +(0,.5) and +(0,.5) .. +(.5,0);
\fill (0.25,.375) circle(2.5pt);
\draw[->, snake] (1,.75) -- +(1,0);
\draw[thin] (2.25,0) -- +(1,0);
\draw (2.5,0) -- +(0,1.5);
\draw (3,0) -- +(0,1.5);
\draw[thin] (2.25,1.5) -- +(1,0);
\end{scope}
\end{tikzpicture}
\end{eqnarray}

Each time this procedure is performed, we define a linear map, ${\rm surg}_{D,D'}$, from the vector space with basis all orientations of the diagram $D$ we have before the procedure to the  the vector space with basis all orientations of the diagram  $D'$ we have after the procedure. Composing these maps until we reach a situation without any cup-cap-pairs left in the middle part will give the multiplication. 
That is the result of the multiplication of  $(a \la b)$ and $(b^* \mu d)$ will be a linear combination of oriented diagrams where the underlying shape is just obtained by applying \eqref{pic:surgery} to all cup-cap pairs.

The definition of the map ${\rm surg}_{D,D'}$ depends on whether the number of components increases, decreases or stays the same when passing from $D$ to $D'$. There are three possible scenarios:
\begin{itemize}
\item[$\blacktriangleright$] \textbf{Merge:} two components are replaced 
by one component, or
\item[$\blacktriangleright$] \textbf{Split:} one component is replaced 
by two components, or
\item[$\blacktriangleright$] \textbf{Reconnect:} two components are 
replaced by two new components.
\end{itemize}
To define the surgery map ${\rm surg}_{D,D'}$ attached to $\gamma$ denote by $i$ the left and by $j$ the right vertex of the cup (or equivalently the cap) in $\gamma$. Depending on these numbers $i<j$ we define ${\rm surg}_{D,D'}$ on the basis of orientations of $D$:\\[0.05cm]
\begin{mdframed}
\noindent
\textbf{Merge:} Assume components  
$C_1$ and $C_2$ are merged into component $C$.
\begin{itemize}
\item[$\blacktriangleright$]  If both are clockwise circles or one is a clockwise circle and the 
other a line, the result is zero.
\item[$\blacktriangleright$] If both circles are anticlockwise, then apply \eqref{pic:surgery} and 
orient the result anticlockwise.
\item[$\blacktriangleright$] If one component is anticlockwise and one is clockwise, apply 
\eqref{pic:surgery}, orient the result clockwise and also multiply with 
$(-1)^a$, where $a$ is defined in \eqref{mac}.
\end{itemize}
\noindent
\textbf{Split:} Assume component $C$ 
splits into $C_i$ and $C_j$ (containing the vertices at 
$i$ respectively $j$). If, after applying \eqref{pic:surgery}, the diagram is not orientable, the map is just zero. Hence assume that the result is orientable, then we have:
\begin{itemize}
\item[$\blacktriangleright$] If $C$ is clockwise, then apply \eqref{pic:surgery} and orient $C_i$ and 
$C_j$ clockwise. Finally multiply with $(-1)^{\pos(i)}(-1)^{a_r}$, where 
$a_r$ is defined in \eqref{splitc}.
\item[$\blacktriangleright$] If $C$ is anticlockwise, then apply \eqref{pic:surgery} and take two 
copies of the result. In one copy orient $C_j$ anticlockwise and $C_i$ 
clockwise and moreover multiply with $(-1)^{\pos(i)}(-1)^{a_i}$; in the other 
copy orient $C_i$ anticlockwise and $C_j$ clockwise and multiply with 
$(-1)^{\pos(i)}(-1)^{a_j}$, where $a_i$ and $a_j$ are defined in 
\eqref{splita}.
\end{itemize}

\noindent \textbf{Reconnect:} In this case two lines are transformed into two lines by applying \eqref{pic:surgery}. If the new diagram is orientable, necessarily with the same orientation as before, do so, otherwise the result is zero.
\end{mdframed}
\hfill\\[0.05cm]

For a component $C$ in an orientable diagram $D$ obtained by the above procedure we 
denote by $\op{t}(C)$ the rightmost vertex on $C$. For two vertices 
$r,s$ in $D$ connected by a sequence of arcs let $\op{a}_D(r,s)$ be the 
total number of undotted cups plus undotted caps on such a connection. 
(By Remark \ref{rmk:signonsequence} this is independent of the choice of the connection.)
\\[0.05cm]
\begin{mdframed}
Then the signs are defined as follows: in the merge case in question
\begin{eqnarray} \label{mac} 
a=\op{a}_D(\op{t}(C_r),\op{t}(C)),
\end{eqnarray}
where $C_r$ ($r\in\{1,2\}$) is the clockwise component (necessarily a circle); and for the split cases
\begin{eqnarray} \label{splitc} 
a_r=\op{a}_{D'}(r,\op{t}(C_r))+u,
\end{eqnarray}
where $r \in \{i,j\}$ such that $C_r$ does not contain $\op{t}(C)$ and $u=1$ if $\gamma$ is undotted and $u=0$ if it is dotted. Finally
\begin{eqnarray} \label{splita} 
a_j=\op{a}_{D'}(j,\op{t}(C_j)) \text{ and } a_i=\op{a}_{D'}(i,\op{t}(C_i))+u,
\end{eqnarray}
where $u=1$ if $\gamma$ is undotted and $u=0$ if $\gamma$ is dotted.
\end{mdframed}
\hfill\\[0.05cm]
For explicit examples see Section~\ref{sec:examples}. Note that the involved signs depend on the whole diagram and are responsible for the {\it non-locality} of the multiplication rule. In particular, the {\it associativity} of the multiplication becomes a non-obvious fact. Moreover it needs to be shown that the result of the multiplication {\it does not depend on the chosen order} of the cup-cap-pairs. For instance in the Examples ~\ref{ex:mergesplitexample}-\ref{ex:splitnotorientable} we also compare different orderings. Both facts follow for the type ${\rm A}$ version, from \cite{Khovtangles} and \cite{BS1}, easily from topological arguments, which are not applicable here. The next two section will deal with these two issues.

\section{Diagrammatic and algebraic surgery}
\label{sec:surgery}
For the formal definition of the multiplication in the Khovanov algebra $\D$ we first introduce the notion of a stacked circle diagram, which is slightly more general than the notion of circle diagrams, since several of them can be stacked on top of each other. Afterwards we define the combinatorial and algebraic surgery maps and prove the commutativity theorem for surgeries as in Theorem~\ref{thm:surgeries_commute} which finally implies the associativity of the multiplication.

\subsection{Stacked circle diagrams}
We fix a block $\Lambda$ with block diagram $\Theta$. 
\begin{definition}
Let $h\in\mZ_{\geq 0}$. An \emph{(admissible) stacked circle diagram} \emph{of height $h$}  for $\Lambda$  is a sequence, $\un{\lambda}(\mathbf{a})\ov{\mu}$, of circle diagrams of the form 
\begin{center}
$\un{\lambda}a_1^*$, $a_1a_2^*$, $\ldots$ , $a_{h-1}a_h^*, a_{h}\ov{\mu}$, 
\end{center}
such that $\lambda,\mu \in \Lambda$ and all $a_s$, for $1 \leq s \leq h$, are cup diagrams for  $\Theta$.  In case $h=0$ we just have an ordinary circle diagram $\un{\lambda}(\mathbf{a})\ov{\mu}=\un{\lambda}\ov{\mu}$. 
\end{definition}

We represent stacked circle diagrams diagrammatically by vertically stacking $h+1$ decorated circle diagrams on top of each other starting with $\un\la$ in the bottom and ending with $\ov\mu$ at the top such that the vertices for the $l$-th diagram have coordinates $(x,l-1)$ with $x\in\mZ_{>0}$ as in the circle diagram. Note that the middle sections consists of symmetric diagrams $a_s^*a_s$, $1\leq s\leq h$. We call the cap and cup diagrams  involved in these $\it{internal}$.

\begin{remark}
Note that the internal cup diagrams in a stacked circle diagram need not be diagrams obtained from weights in the block $\Lambda$, but only ones that are compatible with the block diagram $\Theta$. 
\end{remark}

The following generalizes the notion of oriented circle diagrams.

\begin{definition} \label{def:stacked_diagram_orientation}
Let $D=\un{\lambda}(\mathbf{a})\ov{\mu}$ be a stacked circle diagram of height $h$ for $\Lambda$. By an {\it orientation} of $D$ we mean a sequence $\bm{\nu} =(\nu_0, \ldots, \nu_h)$ of weights in $\Lambda$, such that if we label the vertices of $a_{s}a_{s+1}^*$ by $\nu_s$ (with $a_{0} = \underline{\lambda}$ and $a_{h+1}^*=\overline{\mu}$) all connected components in the vertically stacked diagram are oriented, where again the $\bullet$'s should be seen as orientation reversing points. \emph{An oriented stacked  circle diagram} is a stacked circle diagram $\un{\lambda}(\mathbf{a})\ov{\mu}$ together with an orientation $\bm{\nu}$, denoted by $\un{\lambda}(\mathbf{a},\bm{\nu})\ov{\mu}$.
We call $D$ \emph{orientable} if there exists an orientation of $D$. 
\end{definition}

To define the surgery procedure we introduce the notion of a surgery:

\begin{definition} \label{def:admissible_surgery}
Let $D$ and $D'$ be stacked circle diagrams of height $h>0$, say  $D=\un{\lambda}(\mathbf{a})\ov{\mu}$ and $D'=\un{\lambda}(\mathbf{b})\ov{\mu}$. Then we say that $D'$ \emph{is obtained from $D$ by a surgery} if there exists exactly one $1 \leq l \leq {h}$ such that $b_l \neq a_l$, and this $b_l$ is obtained from $a_l$ by replacing exactly one cup connecting positions $i < j$ in $a_l$ by two (undotted) rays. More precisely, in this case $D'$ is obtained from $D$ by \emph{a surgery at  positions $i < j$ on level $l$}. 
\end{definition}
Note that in this case $D$ and $D'$ differ only locally as in \eqref{pic:surgery}.

\begin{definition}
\label{MSR}
If the number of connected components decreases in a surgery from $D$ to $D'$ we call the surgery a \emph{merge}, if the number increases we call it a \emph{split}, and if the number stays the same we call it a \emph{reconnect}.
\end{definition}

As in the case of circle diagrams many of the formulas for stacked circle diagrams depend on a chosen rightmost vertex in each component, but in contrast to the circle diagram case such vertices are not unique. The restriction of only working with admissible diagrams ensures that at least the rightmost vertices in all occurring circles have the same label.  This will allow us to define (anti)clockwise circles in oriented stacked circle diagrams.  We first introduce so-called tags:

\begin{definition}
Let $D=\un{\lambda}(\mathbf{a})\ov{\mu}$ be a stacked circle diagram of height $h$ for $\Lambda$. A \emph{tag} $\op{t}$ of $D$ is an assignment 
\begin{eqnarray*}
\op{t} :\quad \mathscr{C}(D) &\longrightarrow& P_\bu(\Lambda)\times  \{0,\ldots,h\},
\end{eqnarray*}
where $\mathscr{C}(D)$ is the set of all circles in $D$, such that for each $C \in \mathscr{C}(D)$ we have $\op{t}(C) = (i,l) \in C$ with  $i$ maximal amongst all vertices contained in $C$. We call $\op{t}(C)$ the \emph{tag of $C$} (induced from $\op{t}$).
\end{definition}

The following sign is independent of the choice of tags.

\begin{lemma} \label{lem:sign_of_pos}
Let $D=\un{\lambda}(\mathbf{a})\ov{\mu}$ be an orientable stacked circle
diagram of height $h$ for $\Lambda$ with tag $\op{t}$. Let $0\leq l \leq
h$ and $i \in P_\bu(\Lambda)$ such that the connected component $C$
of $D$ containing $(i,l)$ is a circle.
Define
\begin{eqnarray*}
{\rm sign}_D(i,l) &=& (-1)^{\# \{ j \;\mid\; {\rm deco}(\gamma_j) = 0 \text{
and $\gamma_j$ is a cup/cap} \}}
\end{eqnarray*}
where $\gamma_1,\ldots,\gamma_t$ is a sequence of arcs in $D$ such that
their concatenation is a path from $(i,l)$ to $ \op{t}(C)$. Then ${\rm
sign}_D(i,l)$ is independent of $ \op{t}$ and the chosen sequence of arcs.
\end{lemma}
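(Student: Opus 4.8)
The plan is to fix an orientation of $D$ (which exists since $D$ is orientable) and to reinterpret the combinatorial sign as a comparison of orientation labels; both independence statements then become statements about these labels.

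First I would fix an orientation $\bm{\nu}$ of $D$, so that every vertex of $D$ carries a label $\up$ or $\down$. The key local input is the description of oriented arcs in \eqref{oriented}, already exploited in the proof of Lemma~\ref{lem:stupidlemmasub}: traversing an \emph{undotted} cup or cap reverses the $\up/\down$ label of its two endpoints, whereas traversing a \emph{dotted} cup or cap preserves it (a dot being an orientation-reversing point). Since a circle $C$ contains no rays and arcs only join vertices within one connected component, any connecting sequence $\gamma_1,\dots,\gamma_t$ for $(i,l)$ and $\op{t}(C)$ runs inside $C$ and consists of cups and caps only; the number of label changes along it equals the number of undotted $\gamma_j$. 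Hence
\[
{\rm sign}_D(i,l)=
\begin{cases}
+1 & \text{if $(i,l)$ and $\op{t}(C)$ carry the same label,}\\
-1 & \text{if they carry opposite labels.}
\end{cases}
\]
As the right-hand side only refers to the labels of the two endpoints, it does not depend on the chosen sequence of arcs. This settles independence of the connecting path.

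It remains to treat independence of the tag. By the formula above, ${\rm sign}_D(i,l)$ depends on $\op{t}$ only through the label of the vertex $\op{t}(C)$, and by definition any admissible value of $\op{t}(C)$ is a vertex of $C$ of maximal horizontal coordinate $i_{\max}$. Thus the statement reduces to the claim that \emph{all vertices of $C$ at horizontal position $i_{\max}$ carry the same label}. This is exactly where admissibility enters, and is the place I expect the real work to be: to the right of the column $x=i_{\max}$ there are no arcs, so each such vertex is a left-pointing cusp formed by a cup and a cap, and, with the orientation fixed, I would show that $C$ passes through all these cusps with the same vertical direction. Concretely I would compare two rightmost vertices that are consecutive along the column $x=i_{\max}$, use planarity (arcs do not cross) together with Remark~\ref{decorated} (no dotted cup is nested inside another, and dots lie to the left of rays) to control the dottedness of the arcs emanating to their left, and deduce that the arc of $C$ joining them carries an even number of undotted cups and caps, i.e.\ that they share a label.

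The easy part is the translation in the first two paragraphs; the main obstacle is the last claim, that all maximal-position vertices of a circle in an \emph{admissible} stacked diagram carry the same orientation label, with everything else being bookkeeping with the conventions of \eqref{oriented}. As an alternative route to path-independence one may avoid orientations altogether: the two ways around $C$ of joining $(i,l)$ to $\op{t}(C)$ together exhaust $C$, and a circle carries an even total number of undotted cups and caps (its total number of cups plus caps is even, as in Remark~\ref{stupidcount}, and its number of dots is even by Lemma~\ref{lem:stupidlemmasub}), so the two counts have equal parity.
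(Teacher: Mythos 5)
Your overall architecture is the same as the paper's: path-independence via the parity of undotted arcs on an orientable circle, and tag-independence reduced to comparing the rightmost vertices of $C$. The reformulation of ${\rm sign}_D$ through a fixed orientation (undotted arcs flip the $\up/\down$ label, dotted arcs preserve it) is correct, and it makes the path-independence half complete and clean; it is equivalent to the paper's appeal to Remark~\ref{stupidcount}, as you yourself note at the end.

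The gap is in the tag-independence step, which you rightly flag as ``the real work'' but then only announce rather than prove. Your reduction -- all vertices of $C$ in the column $x=i_{\max}$ carry the same label -- is exactly what must be shown (it is the statement of Corollary~\ref{cor:orientation_defined}, which the paper \emph{derives from} this lemma, so it cannot be quoted), but the sketch ``use planarity together with Remark~\ref{decorated} to control the dottedness \ldots\ and deduce an even number of undotted cups and caps'' omits the ingredient that actually produces the even count. Controlling dots is only half of the argument. The paper splits $C$ at the two extremal tags into $C_1$ (chosen to contain the cap at the top tag) and $C_2$, shows by planarity that all vertices strictly left of $i_{\max}$ lie on $C_1$ -- hence every rightmost vertex lies on $C_2$, and by admissibility $C_2$ carries no dotted arcs -- and then, crucially, uses that cups and caps alternate along a circle (each vertex lies on exactly one cup and one cap) together with the fact that $C_2$ begins with a cup and ends with a cap to conclude that $C_2$ has an \emph{even total} number of arcs. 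Only the combination ``no dotted arcs'' plus ``even total number of arcs'' gives ``even number of undotted arcs,'' i.e.\ equal labels. Your plan never addresses why the total number of cups plus caps on the connecting path is even (equivalently, why that path starts with a cup and ends with a cap), and ``$C$ passes through all these cusps with the same vertical direction'' is a restatement of the desired conclusion, not an argument for it. Without supplying this decomposition-and-alternation step, the proposal does not yet constitute a proof of tag-independence.
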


\begin{proof}
Since the circle $C$ is orientable, ${\rm sign}_D(i,l)$ does not depend on which sequence of arcs one chooses from $(l,i)$ to $\op{t}(C)$ by Remark~\ref{stupidcount}.

To show that it is independent of the chosen tag, note that if $\op{t}'$ is another tag then by definition the first coordinate agrees, i.e. $\op{t}(C)_1=\op{t}'(C)_1=i_{\rm max}$. Let $ \op{t}_{\rm max}$ and $\op{t}_{\rm min}$ be tags such that, $\op{t}_{\rm max}(C)_2$ is maximal and $\op{t}_{\rm min}(C)_2$ is minimal. If $\op{t}_{\rm max}(C)_2 = \op{t}_{\rm min}(C)_2$ then $\op{t}_{\rm max}(C) = \op{t}_{\rm min}(C)$ and there is only one possible image of $C$ under a tag and there is nothing to prove. Hence assume that the second coordinates are different.

We can decompose the circle $C$ into two connected components, $C_1$ and $C_2$, intersecting in $\op{t}_{\rm max}(C)$ and $\op{t}_{\rm min}(C)$ and chosen such that $C_1$ contains the cap containing $\op{t}_{\rm max}(C)$.

Since all vertices $(j,l')$ on the circle satisfy $j \leq i_{\rm max}$ by definition, the only possibility is that $C_1$ also contains the cup containing $\op{t}_{\rm min}(C)$ and all vertices $(j,l')$ on $C_1$ satisfy $j < i_{\rm max}$, except for $\op{t}_{\rm max}(C)$ and $\op{t}_{\rm min}(C)$. This implies that $\op{t}'(C) \in C_2$ for any tag $\op{t}'$. This implies that by admissibility $C_2$ cannot contain any dotted cups or caps. Since $C_2$ starts with an arc in the cup diagram containing $\op{t}_{\rm max}(C)$ and ends with an arc in the cap diagram containing $\op{t}_{\rm min}(C)$ it must contain an even total number of cups and caps. A similar argument also holds for any other tag $\op{t}'$ using a sequence of arcs connecting $\op{t}_{\rm max}(C)$ and $\op{t}'(C)$. This implies that ${\rm sign}_D(i,l)$ does not change when we change the tag.
\end{proof}

\begin{remark}\label{rmk:signonsequence}
In case two vertices  $(i,l)$ and $(i',l')$ lie on a common circle, the product  ${\rm sign}_D(i,l){\rm sign}_D(i',l')$ has an alternative expression, namely 
\begin{eqnarray}
\label{unverstformel}
{\rm sign}_D(i,l){\rm sign}_D(i',l') &=& (-1)^{\# \{ j \;\mid\; {\rm deco}(\gamma_j) = 0 \text{ and $\gamma_j$ is a cup/cap} \}}
\end{eqnarray}
where $\gamma_1,\ldots,\gamma_t$ is a sequence of arcs connecting the two vertices. Hence it can be computed by counting  undotted cups and caps on a sequence of arcs connecting the two vertices. 
\end{remark}

\begin{corollary} \label{cor:orientation_defined}
Let $D=\un{\lambda}(\mathbf{a},\bm{\nu})\ov{\mu}$ be an oriented stacked circle diagram of height $h$ and $C$ a circle in $D$. Let $\op{t},\op{t}'$ be tags of $D$ and let $\op{t}(C)=(i,l)$ and $\op{t}'(C)=(i,l')$. Then the symbols at the $i$-th vertex in $\nu_l$ and $\nu_{l'}$ agree.
\end{corollary}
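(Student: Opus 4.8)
The plan is to reduce the assertion $(\nu_l)_i=(\nu_{l'})_i$ to a parity count of undotted arcs along the circle $C$, and then to feed that count into the sign machinery of Lemma~\ref{lem:sign_of_pos} and Remark~\ref{rmk:signonsequence}; this is in effect the statement that makes ``(anti)clockwise circle'' well defined in the stacked setting. First I would record the local rule governing how the orientation symbol propagates along $C$. Reading off the oriented arcs listed in \eqref{oriented}, an undotted cup or cap can be oriented only when its two endpoints carry opposite labels ($\up$ at one, $\down$ at the other), whereas a dotted cup or cap can be oriented only when its two endpoints carry equal labels; this is precisely the statement that a $\bullet$ acts as an orientation-reversing point. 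Consequently, if $\gamma_1,\ldots,\gamma_t$ is any sequence of arcs inside $C$ forming a path from the vertex $(i,l)$ to the vertex $(i,l')$, then the symbol at $(i,l')$ equals the symbol at $(i,l)$ if and only if the number of \emph{undotted} cups/caps among $\gamma_1,\ldots,\gamma_t$ is even. Thus it suffices to show that this number is even.

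Next I would compute this parity. By Remark~\ref{rmk:signonsequence}, since $(i,l)$ and $(i,l')$ lie on the common circle $C$,
\[
(-1)^{\#\{\, j \mid {\rm deco}(\gamma_j)=0 \text{ and } \gamma_j \text{ a cup/cap}\,\}} \;=\; {\rm sign}_D(i,l)\,{\rm sign}_D(i,l').
\]
I would then observe that both $(i,l)$ and $(i,l')$ are values of genuine tags of $C$, namely $\op{t}$ and $\op{t}'$; in particular each has first coordinate equal to the maximal vertex $i_{\rm max}=i$ of $C$. Computing ${\rm sign}_D(i,l)$ with respect to the tag $\op{t}$, for which $\op{t}(C)=(i,l)$, the connecting path is empty, so the value is $(-1)^0=+1$; by the tag-independence established in Lemma~\ref{lem:sign_of_pos}, this value is then $+1$ for every tag. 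The same argument applied to $\op{t}'$ gives ${\rm sign}_D(i,l')=+1$. Hence the product on the right is $+1$, the number of undotted arcs on the connecting path is even, and by the first step the symbols $(\nu_l)_i$ and $(\nu_{l'})_i$ agree.

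The only genuine inputs beyond bookkeeping are the local flip rule and the already-proved independence of ${\rm sign}_D$ from the chosen tag; indeed the corollary is essentially a repackaging of Lemma~\ref{lem:sign_of_pos}. The point requiring care is that the two tags $\op{t},\op{t}'$ may sit at different heights $l\neq l'$, so one cannot compare labels vertex-by-vertex in a naive way; the whole force of the argument is that admissibility, encoded in Lemma~\ref{lem:sign_of_pos}, forces the undotted-arc count between any two tag vertices of a circle to be even. I expect the only step needing slight attention is a clean justification of the flip rule directly from \eqref{oriented} (together with the orientability hypothesis, which guarantees a path between the two vertices along $C$ exists and that the count is path-independent via Remark~\ref{stupidcount}); everything after that is a formal invocation of the two cited results.
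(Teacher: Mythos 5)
Your proof is correct and follows essentially the same route as the paper's: both reduce the claim to the fact that the number of undotted cups/caps on a path between the two tag positions is even, and then use the observation that undotted arcs are the only ones forcing a label change. The only difference is one of packaging — the paper cites this parity fact directly from the \emph{proof} of Lemma~\ref{lem:sign_of_pos}, whereas you rederive it from the lemma's \emph{statement}: tag-independence forces ${\rm sign}_D$ to equal $+1$ at every tag position (empty connecting path), and Remark~\ref{rmk:signonsequence} then turns the product of the two signs into the required parity, which is a slightly more modular version of the same argument.
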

\begin{proof}
By the proof of Lemma \ref{lem:sign_of_pos} the total number of undotted cups and caps between $(i,l)$ and $(i,l')$ is even. Since these are the only arcs that force a change from $\up$ to $\down$ or vice versa, the statement follows.
\end{proof}

\begin{definition} \label{def:componentorientation}
For an oriented stacked circle diagram $D$ and a tag $\op{t}$, a circle $C$ is called \emph{clockwise} if the symbol at $\op{t}(C)$ is $\down$ and \emph{anticlockwise} if it is $\up$. This is well-defined by Corollary \ref{cor:orientation_defined}. By definition we call the unique orientation of a line {\it anticlockwise}. Moreover we call a circle {\it small} if it only involves one cup and one cap.
\end{definition}

The following is a direct generalization of Definition~\ref{def:minimaldegree}.
\begin{definition} \label{def:minimaldegreestacked}
For an orientable  stacked circle diagram $D$ we denote by ${\rm mdeg}(D)$ the degree of the unique orientation such that all circles are oriented anticlockwise. (This exists by the same reason as in Lemma~\ref{lem:stupidlemmasub}.)
\end{definition}

Similar to the height $0$ case, see \eqref{DefB}, we define a vector space with a basis consisting of all possible oriented stacked circle diagram with a fixed internal part. 

\begin{definition}
For $\mathbf{a}=(a_1,\ldots,a_{h})$ a sequence of cup diagrams for  $\Theta$ set
$$ \mathbb{B}_\Lambda^\mathbf{a} = \left\{ \un{\lambda}(\mathbf{a},\bm{\nu})\ov{\mu} \mid  \text{oriented for } \lambda, \mu, \nu_i \in \Lambda \right\}.$$
By $\mathbb{D}_\Lambda^\mathbf{a}$ we denote the complex vector space with basis $\mathbb{B}_\Lambda^\mathbf{a}$ and for fixed $\la,\mu\in\Lambda$ by ${}_\lambda(\mathbb{D}_\Lambda^\mathbf{a})_\mu$ the subspace with basis consisting of oriented diagrams of the form  $\un{\lambda}(\mathbf{a},\bm{\nu})\ov{\mu}$ for arbitrary $\bm{\nu}$.
\end{definition}

\subsection{Diagrammatic surgery}

We first describe the surgery map more carefully diagrammatically, and afterwards give an algebraic interpretation of all the involved spaces and maps. The principle idea of the multiplication follows \cite{BS1}, but in contrast to the situation therein it can happen that the resulting diagrams can not be oriented at all in which case the result of the surgery procedure is declared to be zero.

Let still fix a block $\Lambda$. Let $D=\un{\lambda}(\mathbf{a})\ov{\mu}$ and $D'=\un{\lambda}(\mathbf{b})\ov{\mu}$ be two stacked circle diagrams of height $h$ for $\Lambda$ with $D'$ obtained from $D$ by a surgery at positions $i < j$ on level $l$. We now define the surgery map
\begin{eqnarray} \label{eqn:surgerymap}
{\rm surg}_{D,D'}: \quad{}_\lambda(\mathbb{D}_\Lambda^\mathbf{a})_\mu &\longrightarrow& {}_\lambda(\mathbb{D}_\Lambda^\mathbf{b})_\mu,
\end{eqnarray}
separately for each of the three possible scenarios Merge, Split, Reconnect in Definition~\ref{MSR}. For illustrating examples we refer to Section~\ref{sec:examples}.

\subsubsection{Merge:} 
\label{subsub}
Denote the component of $D$ containing $(i,l-1)$ by $C_{l-1}$, the component containing $(i,l)$ by $C_{l}$, and the component in $D'$ containing $(i,l-1)$ by $C$. The map ${\rm surg}_{D,D'}$ from \eqref{eqn:surgerymap} is given by
\begin{eqnarray*}
\un{\lambda} (\mathbf{a},\bm{\nu}) \ov{\mu} &\longmapsto& \left\lbrace \begin{array}{ll}
\un{\lambda} (\mathbf{b},\bm{\nu'}) \ov{\mu} &  \text{if $C_{l-1}$ and $C_l$ are both anticlockwise,}\\
\sigma_1 \un{\lambda} (\mathbf{b},\bm{\nu''}) \ov{\mu} &   \text{if $C_{l-1}$ is clockwise and}\\
& C_{l} \text{ is anticlockwise,} \\
\sigma_2 \un{\lambda} (\mathbf{b},\bm{\nu''}) \ov{\mu} &   \text{if $C_{l-1}$ is anticlockwise and}\\
&   \text{$C_{l}$ is clockwise,} \\
0 &   \text{if $C_{l-1}$ and $C_l$ are both clockwise,}
\end{array}\right.
\end{eqnarray*}
where $\bm{\nu'}$ respectively $\bm{\nu''}$ are obtained by changing $\bm{\nu}$ such that the component $C$ is oriented anticlockwise respectively clockwise. If $C$ cannot be oriented clockwise (i.e. it is a line) then the corresponding term is defined to be zero. Furthermore the signs $\sigma_1,\sigma_2\in\{\pm1\}$  are given by 
\begin{eqnarray*}
\sigma_1 &=& {\rm sign}_D(i,l-1){\rm sign}_{D'}(i,l-1) \quad \text{ and }\\
\sigma_2 &=& {\rm sign}_D(i,l){\rm sign}_{D'}(i,l).
\end{eqnarray*}
Instances of surgery maps in the merge case are the maps ${\rm surg}_{D,E_1}$ and ${\rm surg}_{D,E_2}$ in the two situations from Example \ref{ex:mergesplitexample}.

\begin{remark} \label{rem:interpretemerge}
The map ${\rm surg}_{D,D'}$ is based on the \emph{multiplication} map of the algebra $\mC[x]/(x^2)$ and the action on its trivial module $\mC = \mC y$ with basis $y$. 
\begin{equation*}
\begin{array}[t]{c|llll}
\mC[x]/(x^2)&1 \otimes 1 \mapsto 1,
& 1 \otimes x \mapsto x,
&
x \otimes 1 \mapsto x,
&
x \otimes x \mapsto 0, \\
\hline
\mC y &
 1 \otimes y \mapsto y, 
&& x \otimes y \mapsto 0, \nonumber
\end{array}
\end{equation*}
where $1$ is interpreted as an anticlockwise circle, $x$ as a clockwise circle and $y$ as a line, cf. \cite{BS1}. Merging a line with a circle looks as follows

\begin{tikzpicture}[thick,>=angle 60, scale=0.65]

\draw (0,-1) .. controls +(0,1) and +(0,1) .. +(1,0);
\draw[thin] (-.25,-1) -- +(1.5,0);
\node at (0.005,-1.1) {$\up$};
\node at (1.005,-1.1) {$\up$};
\fill (0.5,-.25) circle(2.5pt);
\draw (0,-1) .. controls +(0,-1) and +(0,-1) .. +(1,0);
\fill (0.5,-1.75) circle(2.5pt);
\draw (0,-3) .. controls +(0,1) and +(0,1) .. +(1,0);
\draw[thin] (-.25,-3) -- +(1.5,0);
\node at (0.005,-3.1) {$\up$};
\node at (1.005,-3.1) {$\up$};
\draw[dashed](0,-3) -- +(0,-1);
\draw[dashed] (1,-3) -- +(0,-1);
\fill (0.5,-2.25) circle(2.5pt);

\draw[|->] (1.5,-2) -- +(1,0);

\draw (3,-1) .. controls +(0,1) and +(0,1) .. +(1,0);
\fill (3.5,-.25) circle(2.5pt);
\draw[thin] (2.75,-1) -- +(1.5,0);
\node at (3.005,-1.1) {$\up$};
\node at (4.005,-1.1) {$\up$};
\draw (3,-1) -- +(0,-2);
\draw (4,-1) -- +(0,-2);
\draw[thin] (2.75,-3) -- +(1.5,0);
\node at (3.005,-3.1) {$\up$};
\node at (4.005,-3.1) {$\up$};
\draw[dashed] (3,-3) -- +(0,-1);
\draw[dashed] (4,-3) -- +(0,-1);

\draw[dotted,thick] (4.75,.5) -- +(0,-5);

\begin{scope}[xshift=5.5cm]
\draw (0,-1) .. controls +(0,1) and +(0,1) .. +(1,0);
\draw[thin] (-.25,-1) -- +(1.5,0);
\node at (0.005,-1.1) {$\up$};
\node at (1.005,-1.1) {$\up$};
\fill (0.5,-.25) circle(2.5pt);
\draw (0,-1) .. controls +(0,-1) and +(0,-1) .. +(1,0);
\fill (0.5,-1.75) circle(2.5pt);
\draw (0,-3) .. controls +(0,1) and +(0,1) .. +(1,0);
\draw[thin] (-.25,-3) -- +(1.5,0);
\node at (0.005,-2.9) {$\down$};
\node at (1.005,-2.9) {$\down$};
\draw[dashed](0,-3) -- +(0,-1);
\draw[dashed] (1,-3) -- +(0,-1);
\fill (0.5,-2.25) circle(2.5pt);

\draw[|->] (1.5,-2) -- +(1,0);

\draw (3,-1) .. controls +(0,1) and +(0,1) .. +(1,0);
\fill (3.5,-.25) circle(2.5pt);
\draw[thin] (2.75,-1) -- +(1.5,0);
\node at (3.005,-0.9) {$\down$};
\node at (4.005,-0.9) {$\down$};
\draw (3,-1) -- +(0,-2);
\draw (4,-1) -- +(0,-2);
\draw[thin] (2.75,-3) -- +(1.5,0);
\node at (3.005,-2.9) {$\down$};
\node at (4.005,-2.9) {$\down$};
\draw[dashed] (3,-3) -- +(0,-1);
\draw[dashed] (4,-3) -- +(0,-1);

\draw[dotted,thick] (4.75,.5) -- +(0,-5);

\end{scope}

\begin{scope}[xshift=11cm]
\draw (0,-1) .. controls +(0,1) and +(0,1) .. +(1,0);
\draw[thin] (-.25,-1) -- +(1.5,0);
\node at (0.005,-0.9) {$\down$};
\node at (1.005,-0.9) {$\down$};
\fill (0.5,-.25) circle(2.5pt);
\draw (0,-1) .. controls +(0,-1) and +(0,-1) .. +(1,0);
\fill (0.5,-1.75) circle(2.5pt);
\draw (0,-3) .. controls +(0,1) and +(0,1) .. +(1,0);
\draw[thin] (-.25,-3) -- +(1.5,0);
\node at (0.005,-3.1) {$\up$};
\node at (1.005,-3.1) {$\up$};
\draw[dashed](0,-3) -- +(0,-1);
\draw[dashed] (1,-3) -- +(0,-1);
\fill (0.5,-2.25) circle(2.5pt);

\draw[|->] (1.5,-2) -- +(1,0);

\node at (2.75,-2) {$0$};

\draw[dotted,thick] (3.5,.5) -- +(0,-5);
\end{scope}

\begin{scope}[xshift=15cm]
\draw (0,-1) .. controls +(0,1) and +(0,1) .. +(1,0);
\draw[thin] (-.25,-1) -- +(1.5,0);
\node at (0.005,-0.9) {$\down$};
\node at (1.005,-0.9) {$\down$};
\fill (0.5,-.25) circle(2.5pt);
\draw (0,-1) .. controls +(0,-1) and +(0,-1) .. +(1,0);
\fill (0.5,-1.75) circle(2.5pt);
\draw (0,-3) .. controls +(0,1) and +(0,1) .. +(1,0);
\draw[thin] (-.25,-3) -- +(1.5,0);
\node at (0.005,-2.9) {$\down$};
\node at (1.005,-2.9) {$\down$};
\draw[dashed](0,-3) -- +(0,-1);
\draw[dashed] (1,-3) -- +(0,-1);
\fill (0.5,-2.25) circle(2.5pt);

\draw[|->] (1.5,-2) -- +(1,0);

\node at (2.75,-2) {$0$};
\end{scope}
\end{tikzpicture}

Here the dashed lines indicate that this piece of the diagram is contained in a component that forms a line. The formulas for lines arise from the formulas for the circles by remembering that lines can be only oriented anticlockwise. In particular the last two pictures give zero.
\end{remark}

\subsubsection{Split:} Denote by $C$ and $C_a$ respectively the components in $D$ containing $(i,l)$ and in $D'$ containing $(a,l)$ for $a=i,j$. The map ${\rm surg}_{D,D'}$ from \eqref{eqn:surgerymap} is then given by
\begin{eqnarray*}
\un{\lambda} (\mathbf{a},\bm{\nu}) \ov{\mu} \longmapsto \left\lbrace \begin{array}{ll}
(-1)^{{\rm p}(i)} \left(\sigma_1 \un{\lambda} (\mathbf{b},\bm{\nu'}) \ov{\mu} + \sigma_2 \un{\lambda} (\mathbf{b},\bm{\nu''}) \ov{\mu}\right) & \text{if $C$ is  anticlockwise} \\
& \text{and } i \DCupConnect j \text{ in } D,\\
(-1)^{{\rm p}(i)} \left(\sigma_1 \un{\lambda} (\mathbf{b},\bm{\nu'}) \ov{\mu} - \sigma_2 \un{\lambda} (\mathbf{b},\bm{\nu''}) \ov{\mu}\right) &  \text{if $C$  is anticlockwise} \\
&\text{and }  i \CupConnect j \text{ in } D,\\
(-1)^{{\rm p}(i)} \sigma_3 \un{\lambda} (\mathbf{b},\bm{\nu'''}) \ov{\mu} & \text{if $C$  is clockwise,}
\vspace{2mm}\\
0 & \text{if $D'$ is not orientable,}
\end{array}\right.
\end{eqnarray*}
where $\bm{\nu'}$ respectively $\bm{\nu''}$ are obtained by changing $\bm{\nu}$ such that the $C_j$ is oriented clockwise and $C_i$ is oriented anticlockwise respectively $C_j$ anticlockwise and $C_i$ clockwise, $\bm{\nu'''}$ is obtained by changing $\bm{\nu}$ such that both $C_i$ and $C_j$ are oriented clockwise. Furthermore the signs $\sigma_i$ are given by 
\begin{eqnarray*}
\sigma_1 &=& {\rm sign}_{D'}(j,l),\\
\sigma_2 &=& {\rm sign}_{D'}(i,l), \text{ and }\\
\sigma_3 &=& {\rm sign}_{D}(i,l) {\rm sign}_{D'}(i,l) {\rm sign}_{D'}(j,l).
\end{eqnarray*}

\begin{remark}
Using Remark~\ref{unverstformel}, the first two cases can in fact be combined into one case via the formula
$$\un{\lambda}(\mathbf{a},\bm{\nu})\ov{\mu} \longmapsto (-1)^{{\pos}(i)} \left(\sigma_1 \un{\lambda} (\mathbf{b},\bm{\nu'}) \ov{\mu} + {\rm sign}_D(i,l){\rm sign}_D(j,l) \sigma_2 \un{\lambda} (\mathbf{b},\bm{\nu''}) \ov{\mu}\right),$$
because ${\rm sign}_D(i,l){\rm sign}_D(j,l)$ equals $1$ if the cup-cap-pair is dotted and $-1$ if it is undotted.
\end{remark}

Instances for surgery maps in the split case are the maps ${\rm surg}_{D,E_1}$ and ${\rm surg}_{D,E_2}$ in the two situations in Example \ref{ex:splitmergeexample}. Furthermore Example \ref{ex:splitnotorientable} shows a surgery map in a split case, namely ${\rm surg}_{D,E_1}$,  where the resulting diagram is not orientable.

\begin{remark}
In this case the surgery map is based on the \emph{comultiplication} map of the algebra $\mC[x]/(x^2)$ and its trivial comodule $\mC y$
\begin{equation}
\label{comult}
\begin{array}[t]{c|cc}
\mC[x]/(x^2)&
1 \mapsto 1 \otimes x + x \otimes 1, & x \mapsto x \otimes x,\\
\hline
\mC y & y \mapsto y \otimes x, &\nonumber
\end{array}
\end{equation}
with the same interpretation of $1$ and $x$ as above in Remark \ref{rem:interpretemerge}. Again of special note is the rule involving lines, for example

\begin{tikzpicture}[thick,>=angle 60, scale=0.525]

\draw (0,-1) -- +(0,1);
\draw (1,-1) .. controls +(0,1) and +(0,1) .. +(1,0);
\draw[thin] (-.25,-1) -- +(2.5,0);
\node at (0.005,-0.9) {$\down$};
\node at (1.005,-0.9) {$\down$};
\node at (2.005,-1.1) {$\up$};
\draw (0,-1) .. controls +(0,-1) and +(0,-1) .. +(1,0);
\fill (0.5,-1.75) circle(2.5pt);
\draw (2,-1) -- +(0,-2);
\draw (0,-3) .. controls +(0,1) and +(0,1) .. +(1,0);
\fill (0.5,-2.25) circle(2.5pt);
\draw (1,-3) .. controls +(0,-1) and +(0,-1) .. +(1,0);
\draw[thin] (-.25,-3) -- +(2.5,0);
\node at (0.005,-2.9) {$\down$};
\node at (1.005,-2.9) {$\down$};
\node at (2.005,-3.1) {$\up$};
\draw (0,-3) -- +(0,-1);

\draw[|->] (2.5,-2) -- +(1,0);

\draw (4,-1) -- +(0,1);
\draw (5,-1) .. controls +(0,1) and +(0,1) .. +(1,0);
\draw[thin] (3.75,-1) -- +(2.5,0);
\node at (4.005,-0.9) {$\down$};
\node at (5.005,-1.1) {$\up$};
\node at (6.005,-0.9) {$\down$};
\draw (4,-1) -- +(0,-2);
\draw (5,-1) -- +(0,-2);
\draw (6,-1) -- +(0,-2);
\draw (5,-3) .. controls +(0,-1) and +(0,-1) .. +(1,0);
\draw[thin] (3.75,-3) -- +(2.5,0);
\node at (4.005,-2.9) {$\down$};
\node at (5.005,-3.1) {$\up$};
\node at (6.005,-2.9) {$\down$};
\draw (4,-3) -- +(0,-1);

\draw[dotted,thick] (6.75,.5) -- +(0,-5);

\begin{scope}[xshift=7.5cm]
\draw (0,-1) -- +(0,1);
\fill (0,-.5) circle(2.5pt);
\draw (1,-1) .. controls +(0,1) and +(0,1) .. +(1,0);
\draw[thin] (-.25,-1) -- +(2.5,0);
\node at (0.005,-1.1) {$\up$};
\node at (1.005,-1.1) {$\up$};
\node at (2.005,-0.9) {$\down$};
\draw (0,-1) .. controls +(0,-1) and +(0,-1) .. +(1,0);
\fill (0.5,-1.75) circle(2.5pt);
\draw (2,-1) -- +(0,-2);
\draw (0,-3) .. controls +(0,1) and +(0,1) .. +(1,0);
\fill (0.5,-2.25) circle(2.5pt);
\draw (1,-3) .. controls +(0,-1) and +(0,-1) .. +(1,0);
\draw[thin] (-.25,-3) -- +(2.5,0);
\node at (0.005,-3.1) {$\up$};
\node at (1.005,-3.1) {$\up$};
\node at (2.005,-2.9) {$\down$};
\draw (0,-3) -- +(0,-1);
\fill (0,-3.5) circle(2.5pt);

\draw[|->] (2.5,-2) -- +(1,0);

\draw (4,-1) -- +(0,1);
\fill (4,-.5) circle(2.5pt);
\draw (5,-1) .. controls +(0,1) and +(0,1) .. +(1,0);
\draw[thin] (3.75,-1) -- +(2.5,0);
\node at (4.005,-1.1) {$\up$};
\node at (5.005,-1.1) {$\up$};
\node at (6.005,-0.9) {$\down$};
\draw (4,-1) -- +(0,-2);
\draw (5,-1) -- +(0,-2);
\draw (6,-1) -- +(0,-2);
\draw (5,-3) .. controls +(0,-1) and +(0,-1) .. +(1,0);
\draw[thin] (3.75,-3) -- +(2.5,0);
\node at (4.005,-3.1) {$\up$};
\node at (5.005,-3.1) {$\up$};
\node at (6.005,-2.9) {$\down$};
\draw (4,-3) -- +(0,-1);
\fill (4,-3.5) circle(2.5pt);

\draw[dotted,thick] (6.75,.5) -- +(0,-5);
\end{scope}

\begin{scope}[xshift=15cm]
\draw (0,-1) -- +(0,1);
\fill (0,-.5) circle(2.5pt);
\draw (1,-1) .. controls +(0,1) and +(0,1) .. +(1,0);
\draw[thin] (-.25,-1) -- +(2.5,0);
\node at (0.005,-1.1) {$\up$};
\node at (1.005,-0.9) {$\down$};
\node at (2.005,-1.1) {$\up$};
\draw (0,-1) .. controls +(0,-1) and +(0,-1) .. +(1,0);
\draw (2,-1) -- +(0,-2);
\draw (0,-3) .. controls +(0,1) and +(0,1) .. +(1,0);
\draw (1,-3) .. controls +(0,-1) and +(0,-1) .. +(1,0);
\draw[thin] (-.25,-3) -- +(2.5,0);
\node at (0.005,-3.1) {$\up$};
\node at (1.005,-2.9) {$\down$};
\node at (2.005,-3.1) {$\up$};
\draw (0,-3) -- +(0,-1);
\fill (0,-3.5) circle(2.5pt);

\draw[|->] (2.5,-2) -- +(1,0);

\draw (4,-1) -- +(0,1);
\fill (4,-.5) circle(2.5pt);
\draw (5,-1) .. controls +(0,1) and +(0,1) .. +(1,0);
\draw[thin] (3.75,-1) -- +(2.5,0);
\node at (4.005,-1.1) {$\up$};
\node at (5.005,-1.1) {$\up$};
\node at (6.005,-0.9) {$\down$};
\draw (4,-1) -- +(0,-2);
\draw (5,-1) -- +(0,-2);
\draw (6,-1) -- +(0,-2);
\draw (5,-3) .. controls +(0,-1) and +(0,-1) .. +(1,0);
\draw[thin] (3.75,-3) -- +(2.5,0);
\node at (4.005,-3.1) {$\up$};
\node at (5.005,-3.1) {$\up$};
\node at (6.005,-2.9) {$\down$};
\draw (4,-3) -- +(0,-1);
\fill (4,-3.5) circle(2.5pt);

\end{scope}
\end{tikzpicture}

Again we can think of this as a degenerate case of the first rule in \eqref{comult}, using that lines only admit anticlockwise orientations, which eliminates the second term in the formula for the comultiplication.
\end{remark}

\subsubsection{Reconnect:} This situation can only occur if the cup and cap lie on two distinct lines. In this case ${\rm surg}_{D,D'}$ is given by
$$\un{\lambda} (\mathbf{a},\bm{\nu}) \ov{\mu} \longmapsto \left\lbrace \begin{array}{ll}
\un{\lambda} (\mathbf{b},\bm{\nu}) \ov{\mu} & \text{if } \nu \text{ is an orientation for } D' \text{ and the }\\
& \text{two lines in $D$ were propagating,}\\
0 & \text{otherwise.}
\end{array}\right.$$

\begin{remark}
In the notation from Remark \ref{rem:interpretemerge}, this is the rule
\begin{eqnarray}
\label{yuk}
y \otimes y &\mapsto&
\left\{
\begin{array}{ll}
y\otimes y&\text{if both lines propagate and reconnecting}\\
&\text{as in \eqref{pic:surgery} gives an oriented diagram,}\\
0&\text{otherwise.}
\end{array}
\right.
\end{eqnarray}

Since lines have only one possible orientation, determined by the (forced) label on the rays at the ends of the line, the shape of the bottom cup diagram as well as the top cap diagram (in particular its rays) get not changed in the surgery  procedure; and therefore the labels don't change either. Hence the result is orientable if and only if we are in the first case of \eqref{yuk}. 
\end{remark}
\begin{example}
We now show some non-zero surgery moves for two lines.
\begin{eqnarray*}
%\label{twoprop}
\usetikzlibrary{arrows}
\usetikzlibrary{decorations.pathmorphing}
\begin{tikzpicture}[thick,>=angle 60,scale=.4]
\draw[thin] (-.5,2) -- +(6,0);
\draw[thin] (-.5,0) -- +(6,0);
\draw [<-] (0,2) -- +(0,1);
\draw [<-] (1,2) -- +(0,1);
\draw [->] (2,2) .. controls +(0,2) and +(0,2) .. +(3,0);
\draw [<-] (3,2) .. controls +(0,1) and +(0,1) .. +(1,0);
\draw [<-] (0,0) -- +(0,2);
\draw [<-] (3,0) -- +(0,2);
\draw [->] (4,0) -- +(0,2);
\draw [->] (1,2) .. controls +(0,-1) and +(0,-1) .. +(1,0);
\draw [<-] (5,0) -- +(0,2);
\draw [<-] (1,0) .. controls +(0,1) and +(0,1) .. +(1,0);
\draw (0,0) .. controls +(0,-1) and +(0,-1) .. +(1,0);
\fill (0.5,-.74) circle(4pt);
\draw [<-] (2,0) .. controls +(0,-1) and +(0,-1) .. +(1,0);
\draw [->] (4,-1) -- +(0,1);
\fill (4,-.55) circle(4pt);
\draw (5,-1) -- +(0,1);

\draw [->,line join=round,decorate, decoration={zigzag,segment length=8,amplitude=.9,post=lineto,post length=4pt}]  (5.5,1) -- +(2,0);

\begin{scope}[xshift=8cm]
\draw[thin] (-.5,2) -- +(6,0);
\draw[thin] (-.5,0) -- +(6,0);
\draw [<-] (0,2) -- +(0,1);
\draw [<-] (1,2) -- +(0,1);

\draw [<-] (0,0) -- +(0,2);
\draw [<-] (1,0) -- +(0,2);
\draw [->] (2,0) -- +(0,2);
\draw [<-] (3,0) -- +(0,2);
\draw [->] (4,0) -- +(0,2);
\draw [<-] (5,0) -- +(0,2);

\draw [->] (2,2) .. controls +(0,2) and +(0,2) .. +(3,0);
\draw [<-] (3,2) .. controls +(0,1) and +(0,1) .. +(1,0);
\draw (0,0) .. controls +(0,-1) and +(0,-1) .. +(1,0);
\fill (0.5,-.74) circle(4pt);
\draw [<-] (2,0) .. controls +(0,-1) and +(0,-1) .. +(1,0);
\draw [->] (4,-1) -- +(0,1);
\fill (4,-.55) circle(4pt);
\draw (5,-1) -- +(0,1);
\end{scope}

\begin{scope}[xshift=15.5cm]
\draw[thin] (-.5,2) -- +(6,0);
\draw[thin] (-.5,0) -- +(6,0);
\draw [<-] (0,2) -- +(0,1);
\draw [<-] (1,2) -- +(0,1);
\draw [->] (2,2) .. controls +(0,2) and +(0,2) .. +(3,0);
\draw [->] (3,2) .. controls +(0,1) and +(0,1) .. +(1,0);

\draw [<-] (0,0) -- +(0,2);
\draw [->] (3,0) -- +(0,2);
\draw [<-] (4,0) -- +(0,2);
\draw [->] (1,2) .. controls +(0,-1) and +(0,-1) .. +(1,0);
\draw [<-] (5,0) -- +(0,2);
\draw [<-] (1,0) .. controls +(0,1) and +(0,1) .. +(1,0);

\draw (0,0) .. controls +(0,-1) and +(0,-1) .. +(1,0);
\fill (0.5,-.74) circle(4pt);
\draw [<->] (2,0) .. controls +(0,-1) and +(0,-1) .. +(1,0);
\fill (2.5,-.74) circle(4pt);
\draw (4,-1) -- +(0,1);
\draw (5,-1) -- +(0,1);

\draw [->,line join=round,decorate, decoration={zigzag,segment length=8,amplitude=.9,post=lineto,post length=4pt}]  (5.5,1) -- +(2,0);

\begin{scope}[xshift=8cm]
\draw[thin] (-.5,2) -- +(6,0);
\draw[thin] (-.5,0) -- +(6,0);
\draw [<-] (0,2) -- +(0,1);
\draw [<-] (1,2) -- +(0,1);

\draw [<-] (0,0) -- +(0,2);
\draw [<-] (1,0) -- +(0,2);
\draw [->] (2,0) -- +(0,2);
\draw [->] (3,0) -- +(0,2);
\draw [<-] (4,0) -- +(0,2);
\draw [<-] (5,0) -- +(0,2);

\draw [->] (2,2) .. controls +(0,2) and +(0,2) .. +(3,0);
\draw [->] (3,2) .. controls +(0,1) and +(0,1) .. +(1,0);
\draw (0,0) .. controls +(0,-1) and +(0,-1) .. +(1,0);
\fill (0.5,-.74) circle(4pt);
\draw [<->] (2,0) .. controls +(0,-1) and +(0,-1) .. +(1,0);
\fill (2.5,-.74) circle(4pt);
\draw (4,-1) -- +(0,1);
\draw (5,-1) -- +(0,1);
\end{scope}
\end{scope}

\begin{scope}[yshift=-7cm]
\draw[thin] (-.5,2) -- +(6,0);
\draw[thin] (-.5,0) -- +(6,0);
\draw (0,2) -- +(0,1);
\fill (0,2.55) circle(4pt);
\draw [<-] (1,2) -- +(0,1);
\draw [->] (2,2) .. controls +(0,2) and +(0,2) .. +(3,0);
\draw [<-] (3,2) .. controls +(0,1) and +(0,1) .. +(1,0);
\draw [->] (0,0) -- +(0,2);
\draw [<-] (3,0) -- +(0,2);
\draw [->] (4,0) -- +(0,2);
\draw [->] (1,2) .. controls +(0,-1) and +(0,-1) .. +(1,0);
\draw [<-] (5,0) -- +(0,2);
\draw [<-] (1,0) .. controls +(0,1) and +(0,1) .. +(1,0);
\draw [<-] (0,0) .. controls +(0,-1) and +(0,-1) .. +(1,0);
\draw [<-] (2,0) .. controls +(0,-1) and +(0,-1) .. +(1,0);
\draw [->] (4,-1) -- +(0,1);
\fill (4,-.55) circle(4pt);
\draw (5,-1) -- +(0,1);

\draw [->,line join=round,decorate, decoration={zigzag,segment length=8,amplitude=.9,post=lineto,post length=4pt}]  (5.5,1) -- +(2,0);

\begin{scope}[xshift=8cm]
\draw[thin] (-.5,2) -- +(6,0);
\draw[thin] (-.5,0) -- +(6,0);
\draw (0,2) -- +(0,1);
\fill (0,2.55) circle(4pt);
\draw [<-] (1,2) -- +(0,1);
\draw [->] (0,0) -- +(0,2);
\draw [<-] (1,0) -- +(0,2);
\draw [->] (2,0) -- +(0,2);
\draw [<-] (3,0) -- +(0,2);
\draw [->] (4,0) -- +(0,2);
\draw [<-] (5,0) -- +(0,2);

\draw [->] (2,2) .. controls +(0,2) and +(0,2) .. +(3,0);
\draw [<-] (3,2) .. controls +(0,1) and +(0,1) .. +(1,0);
\draw [<-] (0,0) .. controls +(0,-1) and +(0,-1) .. +(1,0);
\draw [<-] (2,0) .. controls +(0,-1) and +(0,-1) .. +(1,0);
\draw [->] (4,-1) -- +(0,1);
\fill (4,-.55) circle(4pt);
\draw (5,-1) -- +(0,1);
\end{scope}
\end{scope}

\begin{scope}[xshift=15.5cm,yshift=-7cm]
\draw[thin] (-.5,2) -- +(6,0);
\draw[thin] (-.5,0) -- +(6,0);
\draw (0,2) -- +(0,1);
\fill (0,2.55) circle(4pt);
\draw [<-] (1,2) -- +(0,1);
\draw [->] (2,2) .. controls +(0,2) and +(0,2) .. +(3,0);
\draw [->] (3,2) .. controls +(0,1) and +(0,1) .. +(1,0);

\draw [->] (0,0) -- +(0,2);
\draw [->] (3,0) -- +(0,2);
\draw [<-] (4,0) -- +(0,2);
\draw [->] (1,2) .. controls +(0,-1) and +(0,-1) .. +(1,0);
\draw [<-] (5,0) -- +(0,2);
\draw [<-] (1,0) .. controls +(0,1) and +(0,1) .. +(1,0);

\draw [<-] (0,0) .. controls +(0,-1) and +(0,-1) .. +(1,0);
\draw [<->] (2,0) .. controls +(0,-1) and +(0,-1) .. +(1,0);
\fill (2.5,-.74) circle(4pt);
\draw (4,-1) -- +(0,1);
\draw (5,-1) -- +(0,1);

\draw [->,line join=round,decorate, decoration={zigzag,segment length=8,amplitude=.9,post=lineto,post length=4pt}]  (5.5,1) -- +(2,0);

\begin{scope}[xshift=8cm]
\draw[thin] (-.5,2) -- +(6,0);
\draw[thin] (-.5,0) -- +(6,0);
\draw (0,2) -- +(0,1);
\fill (0,2.55) circle(4pt);
\draw [<-] (1,2) -- +(0,1);

\draw [->] (0,0) -- +(0,2);
\draw [<-] (1,0) -- +(0,2);
\draw [->] (2,0) -- +(0,2);
\draw [->] (3,0) -- +(0,2);
\draw [<-] (4,0) -- +(0,2);
\draw [<-] (5,0) -- +(0,2);

\draw [->] (2,2) .. controls +(0,2) and +(0,2) .. +(3,0);
\draw [->] (3,2) .. controls +(0,1) and +(0,1) .. +(1,0);
\draw [<-] (0,0) .. controls +(0,-1) and +(0,-1) .. +(1,0);
\draw [<->] (2,0) .. controls +(0,-1) and +(0,-1) .. +(1,0);
\fill (2.5,-.74) circle(4pt);
\draw (4,-1) -- +(0,1);
\draw (5,-1) -- +(0,1);
\end{scope}
\end{scope}
\end{tikzpicture}
\end{eqnarray*}
\end{example}
This completes the diagrammatic definition of the surgery procedure.

\subsection{Reduction to circle diagrams without lines}\
 \label{annoying}
By Definition \ref{def:componentorientation} we call the unique orientation of a line anticlockwise. This is made precise in the following observation, which also explains the special multiplication rule for the reconnect case in a more conceptual way, see \cite{Str09} for a similar observation in type $\rm A$. 

Fix a block $\Lambda = \Lambda_\Theta^{\overline{\epsilon}}$ with $s = | P_\bu(\Lambda)|$. Define a new block sequence $\widehat{\Theta}$ by replacing $s$ times the symbol $\circ$ by $\bu$ to the far right, i.e. choose $r$ minimal with $r > i$ for all $i \in P_\times(\Lambda) \cup P_\bu(\Lambda)$ and set $\widehat{\Theta}_{r+j} = \bu$ for $0 \leq j < s$ and $\widehat{\Theta}_i=\Theta_i$ otherwise. An example (with $s=2$ and $r=4$) would look like this
\begin{eqnarray} \label{eqn:blockadd}
\Theta = \bu \bu \times \circ \circ \circ \circ \, \cdots \qquad \rightsquigarrow \qquad  \widehat{\Theta}=\bu \bu \times \bu \bu \circ \circ \, \cdots.
\end{eqnarray}
This procedure defines a new block $\widehat{\Lambda} = \Lambda_{\widehat{\Theta}}^{\overline{\epsilon+s}}$ and any oriented cup diagram $\un\la\nu$ for $\Lambda$ can be turned into a diagram $\un{\widehat{\la}}\widehat{\nu}$ for $\widehat{\Lambda}$ in the following way
\begin{enumerate}[{\it Step 1:}]
\item Set $\widehat{\nu}_{r+j}=\up$ for $0 \leq j < s$ and otherwise $\widehat{\nu}_i=\nu_i$.
\item Starting from right to left replace any ray in $\un\la$ by a cup connecting the position for the ray with the first (from the left) free symbol $\up$ at positions $r,\ldots,r+s-1$. We choose the decoration of the cup to be the decoration of the deleted ray.
\item If there are symbols at positions $r,\ldots,r+s-1$ not connected to a cup, then connect neighbouring symbols from left to right by dotted cups.
\end{enumerate}
Due to admissibility of the original diagram this procedure is well-defined, and yields an (admissible) oriented cup diagram without rays. Here are two examples for the block sequence in \eqref{eqn:blockadd}:
\begin{eqnarray*}
\begin{tikzpicture}[thick,>=angle 60, scale=0.525]

\node at (0.005,-0.9) {$\down$};
\node at (1.005,-0.9) {$\down$};
\draw (0,-1) .. controls +(0,-1) and +(0,-1) .. +(1,0);
\fill (0.5,-1.75) circle(2.5pt);
\node at (2.005,-1) {$\times$};

\node at (3,-1.25) {$\rightsquigarrow$};

\node at (4.005,-0.9) {$\down$};
\node at (5.005,-0.9) {$\down$};
\draw (4,-1) .. controls +(0,-1) and +(0,-1) .. +(1,0);
\fill (4.5,-1.75) circle(2.5pt);
\node at (6.005,-1) {$\times$};
\node at (7.005,-1.1) {$\up$};
\node at (8.005,-1.1) {$\up$};
\draw (7,-1) .. controls +(0,-1) and +(0,-1) .. +(1,0);
\fill (7.5,-1.75) circle(2.5pt);

\begin{scope}[xshift=12cm]

\node at (0.005,-0.9) {$\down$};
\node at (1.005,-0.9) {$\down$};
\draw (0,-1) -- +(0,-1);
\draw (1,-1) -- +(0,-1);
\node at (2.005,-1) {$\times$};

\node at (3,-1.25) {$\rightsquigarrow$};

\node at (4.005,-0.9) {$\down$};
\node at (5.005,-0.9) {$\down$};
\draw (4,-1) .. controls +(0,-2) and +(0,-2) .. +(4,0);
\draw (5,-1) .. controls +(0,-1) and +(0,-1) .. +(2,0);
\node at (6.005,-1) {$\times$};
\node at (7.005,-1.1) {$\up$};
\node at (8.005,-1.1) {$\up$};

\end{scope}
\end{tikzpicture}
\end{eqnarray*}
By definition, all newly created cups will be oriented anticlockwise. This construction can also be applied to cap diagrams. In the case of stacked circle diagrams we perform this procedure only for the top cap diagram and the bottom cup diagram, for all internal cup diagrams we simple add $s$ undotted rays at positions $r,\ldots,r+s-1$. In this case any newly created circles, especially those obtained by replacing lines, are oriented anticlockwise. In particular we have embeddings
\begin{eqnarray}
\label{iota}
\iota \,:\, {}_\lambda (\D^{\mathbf{a}})_\mu &\hookrightarrow& {}_{\widehat{\lambda}} (\mD_{\widehat{\Lambda}}^{\widehat{\mathbf{a}}})_{\widehat{\mu}},
\end{eqnarray}
sending an oriented stacked circle diagram to its extension. The images have as basis the oriented diagrams inside ${}_{\widehat{\lambda}} (\mD_{\widehat{\Lambda}}^{\widehat{\mathbf{a}}})_{\widehat{\mu}}$ such that all labels at positions $(r,l),\ldots,(r+s-1,l)$ for $0 \leq l \leq h$ are $\up$. We denote by $\pi$ the projections onto these subspaces sending oriented stacked circle diagrams not contained in the images to zero.

The surgery rules Merge and Split applied to lines can now be replaced by the surgery rule on these extended diagrams followed by a projection onto the vector space spanned by diagrams where all the labels at positions $r,\ldots,r+s-1$ are $\up$.

The interpretation of the reconnect rule for two lines $L,L'$ needs some further explanation. Observe that, because of admissibility, there are no non-propagating lines to the right of a propagating line. This implies that when extending the diagram, the propagating lines are connected in order from right to left to the first new symbols in order from left to right and each such line produces exactly one circle. Moreover, if a non-propagating line connects positions $i$ and $j$, there can be no further line attached to any position between $i$ and $j$ by admissibility. This implies that the two ends of non-propagating lines are connected to two neighbouring new vertices.\\ \noindent
$\blacktriangleright$ If $L$ and $L'$ are propagating then the surgery on the extended diagram is a Merge and creates an anticlockwise circle (by the first observation). If the orientations do not match already we need to change one of the orientation of one of the original circles to obtain the new orientation, which changes one of the newly created labels to $\down$ which is sent to zero by the projection.\\ \noindent
$\blacktriangleright$ If $L$ is propagating and $L'$ non-propagating then the surgery on the extended diagram is a Merge. Since the surgery is assumed to produce an admissible diagram, there can't be any dots to the right of the cup-cap-pair on either of the two lines. This implies that the cup/cap belonging to the propagating line is automatically oriented anticlockwise, whereas the one of the non-propagating line is automatically oriented clockwise. In this case we need to reorient, producing a diagram killed by the projection.\\ \noindent
$\blacktriangleright$ If $L$ and $L'$ are non-propagating both Merge and Split can occur. If the surgery on the extended diagram is a Split, it always produces one clockwise circle. The result is killed by the projection since the tag of the clockwise circle is one of the newly created vertices. In the Merge situation we have the following two possible scenarios (and their mirror images), where in case i) the black dashed part can't have any dots and is therefore not orientable, while in case ii) some of the red dashed parts have to carry dots, but the Merge reorients one of the circles, hence the result is again zero after applying the projection $\pi$.
\vspace{-0.5cm}
\begin{eqnarray*}
\begin{tikzpicture}[thick,>=angle 60,scale=.5]
\node at (-.75,1) {i)};
\draw[thin] (-.5,0) -- +(8,0);
\draw[-] (0,2) .. controls +(0,-1) and +(0,-1) .. +(1,0);
\draw[dashed] (2,2) .. controls +(0,.75) and +(0,.75) .. +(1,0);
\draw[dashed, red] (0,2) .. controls +(0,2.25) and +(0,2.25) .. +(5,0);
\draw[dashed, red] (1,2) .. controls +(0,1.5) and +(0,1.5) .. +(3,0);
\draw[dashed,<-] (2,0) -- +(0,2);
\node at (2.5,2.3) {\Large $\lightning$};
\draw[dashed,<-] (3,0) -- +(0,2);
\draw[dashed,red,>->] (4,0) -- +(0,2);
\draw[dashed,red,>->] (5,0) -- +(0,2);
\draw[dashed, red] (2,0) .. controls +(0,-1.5) and +(0,-1.5) .. +(3,0);
\draw[dashed, red] (3,0) .. controls +(0,-.75) and +(0,-.75) .. +(1,0);
\draw[-] (0,0) .. controls +(0,1) and +(0,1) .. +(1,0);
\draw[dashed, red] (1,0) .. controls +(0,-2.25) and +(0,-2.25) .. +(5,0);
\draw[dashed, red] (0,0) .. controls +(0,-3) and +(0,-3) .. +(7,0);
\draw[dashed, red] (6,0) .. controls +(0,1) and +(0,1) .. +(1,0);

\begin{scope}[xshift=11cm]
\node at (-.75,1) {ii)};
\draw[thin] (-.5,2) -- +(8,0);
\draw[thin] (-.5,0) -- +(8,0);
\draw[dashed, red] (2,2) .. controls +(0,1.5) and +(0,1.5) .. +(3,0);
\draw[dashed, red] (3,2) .. controls +(0,.75) and +(0,.75) .. +(1,0);
\draw[dashed, red] (1,2) .. controls +(0,2.25) and +(0,2.25) .. +(5,0);
\draw[dashed, red] (0,2) .. controls +(0,3) and +(0,3) .. +(7,0);
\draw[-<] (0,2) .. controls +(0,-1) and +(0,-1) .. +(1,0);
\draw[-<] (0,0) .. controls +(0,1) and +(0,1) .. +(1,0);
\draw[-<] (2,0) -- +(0,2);
\draw[-<] (3,0) -- +(0,2);
\draw[dashed,red,>->] (4,0) -- +(0,2);
\draw[dashed,red,>->] (5,0) -- +(0,2);
\draw[dashed,red,>->] (6,0) -- +(0,2);
\draw[dashed,red,>->] (7,0) -- +(0,2);
\draw[-] (1,0) .. controls +(0,-.75) and +(0,-.75) .. +(1,0);
\draw[-] (0,0) .. controls +(0,-1.5) and +(0,-1.5) .. +(3,0);
\draw[dashed, red] (4,0) .. controls +(0,-1) and +(0,-1) .. +(1,0);
\draw[dashed, red] (6,0) .. controls +(0,-1) and +(0,-1) .. +(1,0);
\end{scope}
\end{tikzpicture}
\end{eqnarray*}

To summarise:
\begin{prop}
Our original surgery procedure agrees with the following procedure: first extend the diagrams using $\iota$ from \eqref{iota}, then perform the extended surgery, followed by the projection $\pi$ and finally take the preimage $\iota^{-1}$ of the resulting diagram.
\end{prop}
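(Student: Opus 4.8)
The plan is to prove the equality of the two procedures by a case-by-case comparison of the surgery maps, organised according to the trichotomy Merge / Split / Reconnect and, within each case, according to whether the participating components are circles or lines. The starting observation is that by the construction of $\iota$ every line of a stacked circle diagram $D$ is turned into a circle of $\widehat D:=\iota(D)$, and every such newly created circle is oriented anticlockwise; hence $\widehat D$ contains no lines at all, and the extended surgery ${\rm surg}_{\widehat D,\widehat{D'}}$ is governed purely by the Merge and Split rules for circles. It therefore suffices to show that for each of the original rules one has ${\rm surg}_{D,D'}=\iota^{-1}\circ\pi\circ{\rm surg}_{\widehat D,\widehat{D'}}\circ\iota$, where one must in particular verify that $\pi\circ{\rm surg}_{\widehat D,\widehat{D'}}\circ\iota$ takes values in the image of $\iota$, so that the final $\iota^{-1}$ is defined.

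First I would dispose of the surgeries in which both components are circles. Here the surgery takes place entirely among the original vertices (those in $P_\bu(\Lambda)$), while the extra circles created by $\iota$ sit disjointly to the right and are untouched; in particular their tags, lying at the far-right positions $r,\dots,r+s-1$, remain labelled $\up$ throughout, so $\pi$ acts as the identity on the result. Since ${\rm sign}_D(i,l)$ is computed by counting undotted cups and caps along a connection inside a single component (Lemma~\ref{lem:sign_of_pos}, Remark~\ref{rmk:signonsequence}), and since $\iota$ alters none of the components taking part in the surgery, all the relevant signs, as well as the factor $(-1)^{\pos(i)}$, are literally unchanged. In these cases $\iota$ commutes strictly with the surgery and the identity is immediate.

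The substance of the argument is the line cases, and here the main obstacle is the sign bookkeeping forced by the extension. For a Merge or Split involving a line one uses that a propagating line extends to an anticlockwise circle whose tag is one of the new far-right vertices, while a non-propagating line, whose two ray-endpoints get joined to two neighbouring new $\up$-vertices, extends to a configuration that is forced to be clockwise; admissibility guarantees there are no dots to the right of the surgery site, which is what makes these forced orientations unambiguous. One then checks that the multiplication/comultiplication signs $\sigma_i$ and the factor $(-1)^{\pos(i)}$ of the extended circle rule, reinterpreted through these forced orientations, reproduce exactly the line entries of the original Merge and Split rules. The delicate point is that whenever reorienting a circle would flip a far-right label from $\up$ to $\down$, the corresponding term is annihilated by $\pi$, and this is precisely how the ``result is zero'' clauses of the original line rules arise.

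Finally, the Reconnect rule is treated by the explicit analysis preceding the statement: one distinguishes the three possibilities (both lines propagating, one propagating and one not, both non-propagating) and observes in each case how the extended surgery becomes a Merge or a Split and how $\pi$ selects the surviving term. The propagating/propagating case yields an anticlockwise circle, matching $y\otimes y\mapsto y\otimes y$ in \eqref{yuk}, whereas in the remaining cases a forced reorientation or a clockwise split circle is always killed by the projection, matching the vanishing clause of \eqref{yuk}. Assembling the four cases gives the asserted identity of the two procedures.
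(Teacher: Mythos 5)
Your overall strategy---reduce to the Merge/Split/Reconnect trichotomy, note that $\iota$ eliminates all lines so the extended surgery is governed by the circle rules, check that circle-only surgeries commute strictly with $\iota$ (the new circles sit disjointly to the right, and $\pos(i)$ and all signs from Lemma~\ref{lem:sign_of_pos} are unchanged since the new $\bu$'s are appended to the far right), and let $\pi$ account for every vanishing clause---is exactly the route the paper takes; its proposition is a summary of the discussion preceding it, which asserts the Merge/Split compatibility and works out Reconnect in precisely the three cases you list. However, one assertion in your argument is false and, if fed into the sign check you invoke, would make the Merge/Split verification fail: a non-propagating line does \emph{not} extend under $\iota$ to a clockwise configuration. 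By construction of $\iota$ every label at the new vertices $r,\dots,r+s-1$ is $\up$, and every circle created from a line (propagating or not) contains new vertices, so its tag is one of them; by Definition~\ref{def:componentorientation} such a circle is therefore \emph{anticlockwise}. Your sentence is even self-contradictory: the two ray-endpoints are joined to new $\up$-vertices, one of which is the tag, so the circle cannot be clockwise. Concretely, the error matters: merging an anticlockwise circle with a non-propagating line must return the merged line with coefficient $1$ (the rule $1\otimes y\mapsto y$); with your convention the extended merge would be (anticlockwise)$\,\cdot\,$(clockwise), producing a clockwise circle whose new labels are $\down$, which $\pi$ kills---giving $0$ instead of the correct answer.

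What \emph{is} true, and what the paper actually uses in the Reconnect analysis, is a statement about arcs rather than circles: when a propagating line $L$ and a non-propagating line $L'$ meet in a Merge of the extended diagrams, admissibility (no dots to the right of the surgery site on either line) forces the cup/cap of $\gamma$ lying on $L'$ to be oriented clockwise \emph{as an arc}, while the one lying on $L$ is anticlockwise; both circles are nevertheless anticlockwise as components. The resulting incompatibility of orientations forces a reorientation of one of the two circles, which flips a new label to $\down$ and is therefore annihilated by $\pi$---this is how the vanishing clause of \eqref{yuk} arises in that case. Replacing your circle-level claim by this arc-level statement repairs the argument; with that correction, your treatment of the remaining cases (clockwise splits and forced reorientations killed by $\pi$, the propagating/propagating merge surviving as an anticlockwise circle) coincides with the paper's.
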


Let $N( \mathbf{a},\lambda,\mu) \subset {}_{\widehat{\lambda}} (\mD_{\widehat{\Lambda}}^{\widehat{\mathbf{a}}})_{\widehat{\mu}}$ denote the complement spanned by the diagrams where at least one of the labels at a vertex greater than $r$ is $\down$. Then by definition of Merge and Split
$$ {\rm surg}_{\widehat{D},\widehat{D'}}\left(N(\bm a,\lambda,\mu) \right) \subset N(\bm b,\lambda,\mu),$$
where $D'=\un\la({\bm a})\ov{\mu}$ and $D'=\un\la({\bm b})\ov{\mu}$.
For the algebra $\mD_{\widehat{\Lambda}}$ from Theorem \ref{algebra_structure} this implies inclusions
$$ \bigoplus_{\lambda,\mu \in \Lambda} N(\emptyset,\lambda,\mu) \subset \bigoplus_{\lambda,\mu \in \Lambda} {}_{\widehat{\lambda}}(\mD_{\widehat{\Lambda}})_{\widehat{\mu}} \subset \mD_{\widehat{\Lambda}},$$
the first as an ideal, and the second as a subalgebra. Hence 
\begin{eqnarray} \label{eqn:algebra_as_quotient}
\D &\cong& \bigoplus_{\lambda,\mu \in \Lambda} {}_{\widehat{\lambda}}(\mD_{\widehat{\Lambda}})_{\widehat{\mu}} \, / \bigoplus_{\lambda,\mu \in \Lambda} N(\emptyset,\lambda,\mu)
\end{eqnarray}
as algebras. This isomorphism of algebras allows us (namely via the extended surgery rule) to express the multiplication in $\D$ in terms of circle diagrams without rays.

\subsection{Surgery and grading}
We show the following.

\begin{prop}
\label{forgotgraded}
Let $D$ and $D'$ be stacked circle diagrams such that  $D'$ is obtained from $D$ by a surgery. Then the map ${\rm surg}_{D,D'}$ is of degree zero.
\end{prop}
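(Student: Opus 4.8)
The plan is to show that each oriented diagram produced by ${\rm surg}_{D,D'}$ has the same degree as the input basis diagram, by localizing the computation to the one or two connected components that are actually altered by the surgery at positions $i<j$ on level $l$. Since the degree of an oriented (stacked) circle diagram is the sum of the degrees of its connected components, and the surgery leaves every component not meeting the arc $\gamma$ unchanged, it suffices to compare, before and after, the degrees of the components involved; the signs $\sigma_\bullet$ and $(-1)^{\pos(i)}$ in the definition of ${\rm surg}_{D,D'}$ are irrelevant for this purpose. The basic bookkeeping device is Proposition~\ref{lem:stupidlemma}: a circle $C$ contributes $\#\mathrm{caps}(C)-1$ to the degree if it is anticlockwise and $\#\mathrm{caps}(C)+1$ if it is clockwise, and the surgery \eqref{pic:surgery} destroys exactly one cup together with exactly one cap.

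First I treat the case in which all involved components are circles (no lines). In the \textbf{Merge} case the new circle $C$ satisfies $\#\mathrm{caps}(C)=\#\mathrm{caps}(C_{l-1})+\#\mathrm{caps}(C_l)-1$; substituting this into Proposition~\ref{lem:stupidlemma} one checks directly that both nonzero possibilities (both components anticlockwise, producing an anticlockwise $C$; or one of each, producing a clockwise $C$) preserve the total degree, while the both-clockwise case is sent to $0$. In the \textbf{Split} case one has instead $\#\mathrm{caps}(C_i)+\#\mathrm{caps}(C_j)=\#\mathrm{caps}(C)-1$, and the same substitution shows that the clockwise case (two clockwise circles) and each of the two terms of the anticlockwise case (one clockwise and one anticlockwise circle) again preserve the degree, the non-orientable subcase being $0$. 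Conceptually this is the statement that, under the isomorphism $\Psi$ of Proposition~\ref{coho}, the map is the multiplication respectively comultiplication of $R=\mC[x]/(x^2)$, of degrees $0$ and $+2$, while the grading shift by $\mathrm{mdeg}$ (Definition~\ref{def:minimaldegreestacked}) is unchanged in a merge and drops by $2$ in a split, so the two contributions cancel.

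It remains to incorporate lines, which is where the genuine work lies, since lines carry only their forced orientation and the naive cap-count is more delicate. Here I would invoke the reduction of Section~\ref{annoying}: by the Proposition there, ${\rm surg}_{D,D'}=\iota^{-1}\circ\pi\circ{\rm surg}_{\widehat{D},\widehat{D'}}\circ\iota$, where the extension $\iota$ of \eqref{iota} replaces every line by a circle all of whose newly created cups and caps are anticlockwise, and $\pi$ is the projection onto the extended subspace. Both $\iota$ and $\pi$ send homogeneous basis diagrams to homogeneous basis diagrams of the same degree (or to $0$), so they are of degree zero; after extension a \textbf{Reconnect} of two propagating lines becomes a \textbf{Merge} of two anticlockwise circles, and all mixed merge/split situations become genuine circle merges or splits, already handled above. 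The main obstacle is precisely this last step: one must verify that $\iota$ is honestly graded — that replacing a line by its extending anticlockwise circle does not alter the degree — and that the remaining reconnect subcases are exactly the ones killed by $\pi$, as spelled out in Section~\ref{annoying}. Granting this, degree preservation in the presence of lines follows from the circle computation, completing the proof.
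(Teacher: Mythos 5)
Your proposal is correct, but it takes a genuinely different route from the paper's proof. The paper argues configuration by configuration: it lists the four possible orientations of the cup--cap-pair (undotted in \eqref{eqn:cup-cap-config}, dotted in \eqref{4cases}), plus the three split shapes in \eqref{kidneys}, and for each one determines which circle must be reoriented using Lemma~\ref{lem:circledomanating} (a tag-containment lemma introduced solely for this purpose); degree preservation then comes from the cancellation between the reorientation cost ($\pm 2$) and the degree of the eliminated cup--cap-pair ($0$ or $2$). You bypass all of this: from the formula $\deg(C)=\#\mathrm{caps}(C)\mp 1$ of Proposition~\ref{lem:stupidlemma} and the single observation that a surgery destroys exactly one cup and one cap, the prescribed output orientations of Merge and Split give degree preservation by two lines of arithmetic, with no need to know which circle was reoriented, no dotted/undotted distinction, and no shape analysis; your $\mathrm{mdeg}$/TQFT remark (multiplication of degree $0$, comultiplication of degree $+2$ cancelled by the drop of $\mathrm{mdeg}$ by $2$) is also a correct conceptual account of the same cancellation. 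Both proofs treat lines identically via the extension of Section~\ref{annoying}, and your closing caveats (that $\iota$ is degree zero because all newly created cups and caps are anticlockwise, and that the problematic reconnect subcases are killed by $\pi$) are exactly what that section establishes. The one point you should make explicit is that Proposition~\ref{lem:stupidlemma} is stated only for circle diagrams of height $0$, whereas you apply it to circles winding through a stacked diagram; the kink-removal proof extends verbatim (using admissibility of each internal diagram and Corollary~\ref{cor:orientation_defined} for well-definedness of the orientation), and the paper's own proof silently relies on the same extension whenever it asserts that reorienting a circle changes the degree by exactly $2$, so this is a shared, routine gap rather than a defect of your approach --- but since your argument leans on the formula as its sole engine, it deserves a sentence. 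On balance your proof is shorter and eliminates an auxiliary lemma, at the price of making the stacked-diagram version of the degree formula load-bearing.
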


\begin{proof}
Let us first assume that the surgery is performed at an undotted cup-cap-pair then we have one of the following situations:
\begin{equation} \label{eqn:cup-cap-config}
i)\quad
\usetikzlibrary{arrows}
\begin{tikzpicture}[thick,>=angle 60, scale=0.8]
\draw [>->] (0,0) .. controls +(0,-1) and +(0,-1) .. +(1,0);
%\fill (0.5,-0.77) circle(2.5pt);
\draw [<-<] (0,-2) .. controls +(0,1) and +(0,1) .. +(1,0);
%\fill (0.5,-1.25) circle(2.5pt);
\end{tikzpicture}
\quad\quad
ii)
\quad
\usetikzlibrary{arrows}
\begin{tikzpicture}[thick,>=angle 60, scale=0.8]
\draw [<-<] (0,0) .. controls +(0,-1) and +(0,-1) .. +(1,0);
%\fill (0.5,-0.77) circle(2.5pt);
\draw [>->] (0,-2) .. controls +(0,1) and +(0,1) .. +(1,0);
%\fill (0.5,-1.22) circle(2.5pt);
\end{tikzpicture}
\quad\quad
iii)
\quad
\usetikzlibrary{arrows}
\begin{tikzpicture}[thick,>=angle 60, scale=0.8]
\draw [>->] (0,0) .. controls +(0,-1) and +(0,-1) .. +(1,0);
%\fill (0.5,-0.76) circle(2.5pt);
\draw [>->] (0,-2) .. controls +(0,1) and +(0,1) .. +(1,0);
%\fill (0.5,-1.22) circle(2.5pt);
\end{tikzpicture}
\quad\quad
iv)
\quad
\usetikzlibrary{arrows}
\begin{tikzpicture}[thick,>=angle 60, scale=0.8]
\draw [<-<] (0,0) .. controls +(0,-1) and +(0,-1) .. +(1,0);
%\fill (0.5,-0.76) circle(2.5pt);
\draw [<-<] (0,-2) .. controls +(0,1) and +(0,1) .. +(1,0);
%\fill (0.5,-1.24) circle(2.5pt);
\end{tikzpicture}
\end{equation}
We distinguish between the cases Merge, Split and Reconnect.

Assume first that we have a merge from two circles, say $C_1$ and $C_2 $ to a circle $C$. Assume $C_1$ contains the cup. If $C_1$ and $C_2$  are both oriented clockwise, there is nothing to check since the map is zero. In case one of the two components is a line we use Section~\ref{annoying} to think of the line as an anticlockwise circle.\\ \noindent
$\blacktriangleright$ \textit{Case i):} If both circles are anticlockwise, the surgery sticks them together and does not change the orientation. Removing the cup-cap-pair clearly preserves the degree. If one, say $C_a$, is clockwise then by Lemma~ \ref{lem:circledomanating} below it must contain all possible tags of $C$ and the surgery does not change its orientation, again removing the cup-cap-pair preserves the degree.\\ \noindent
$\blacktriangleright$ \textit{Case ii):} If both are anticlockwise, then by Lemma \ref{lem:circledomanating} both must contain the possible tags of $C$, which is impossible. If one, say $C_a$, is clockwise, then the other one, say $C_b$, contains all the possible tags. Hence surgery will stick them together and for the orientation we need to change all labels of vertices in both circles, preserving the degree and afterwards the cup-cap-pair eliminated is of degree zero as well.\\ \noindent
$\blacktriangleright$ \textit{Case iii):} If both are anticlockwise, then by Lemma \ref{lem:circledomanating} $C_2$ contains all possible tags of $C$, thus to obtain the orientation of the image we need to reorient $C_1$ increasing the degree by $2$, but afterwards the cup-cap-pair eliminated is of degree $2$ as well, giving us degree zero in total. If $C_1$ is clockwise and $C_2$ is anticlockwise, then by Lemma~\ref{lem:circledomanating} both contain all tags, which is impossible. If on the contrary $C_1$ is anticlockwise and $C_2$ clockwise then no matter which of the two circles contains a tag of $C$ we need to reorient $C_1$. We first increase the degree of the diagram by $2$ and afterwards delete a cup-cap-pair of degree $2$. If one component is a line then by definition this must be $C_2$ thus the argument is valid as well.\\ \noindent
$\blacktriangleright$ \textit{Case iv):} This is done exactly analogous to Case iii) by just switching the roles of $C_1$ and $C_2$.

Since we only used Lemma \ref{lem:circledomanating} to see which circle must be reoriented, putting dots on the cup-cap-pair in \eqref{eqn:cup-cap-config} and changing the left labels does not change any of the arguments. Hence the claim follows for merges.

Assume now that the surgery is a split. Let  $C$ be the original circle and assume the surgery is performed at positions $i < j$ at level $l$ and denote by $C_i$ the circle containing $(i,l)$ and by $C_j$ the circle containing $(j,l)$ inside $D'$. We can assume that $D'$ is orientable, otherwise the surgery is zero anyway. Observe that due to orientability it follows that Cases iii) and iv) are not possible. The general shapes of the possible configurations are 
\begin{equation}
\label{kidneys}
\begin{array}[t]{c}
\usetikzlibrary{arrows}
\begin{tikzpicture}[thick,>=angle 60,scale=.5]
\draw[thin] (1.5,2) -- +(4,0);
\draw[thin] (1.5,-.5) -- +(4,0);
\draw [<-] (2,2) .. controls +(0,2) and +(0,2) .. +(3,0);
\draw [->] (3,2) .. controls +(0,1) and +(0,1) .. +(1,0);

\draw [<-] (4,-.5) -- +(0,2.5);
\draw [->] (2,2) .. controls +(0,-
1) and +(0,-1) .. +(1,0);
\draw[dotted] (2.55,.3) -- +(0,.95);
\draw [<-] (2,-.5) .. controls +(0,1) and +(0,1) .. +(1,0);
\draw [->] (5,-.5) -- +(0,2.5);

\draw [->] (2,-.5) .. controls +(0,-2) and +(0,-2) .. +(3,0);
\draw [<-] (3,-.5) .. controls +(0,-1) and +(0,-1) .. +(1,0);

\begin{scope}[xshift=14cm,xscale=-1]
\draw[thin] (1.5,2) -- +(4,0);
\draw[thin] (1.5,-.5) -- +(4,0);
\draw [->] (2,2) .. controls +(0,2) and +(0,2) .. +(3,0);
\draw [<-] (3,2) .. controls +(0,1) and +(0,1) .. +(1,0);

\draw [->] (4,-.5) -- +(0,2.5);
\draw [<-] (2,2) .. controls +(0,-
1) and +(0,-1) .. +(1,0);
\draw[dotted] (2.55,.3) -- +(0,.95);
\draw [->] (2,-.5) .. controls +(0,1) and +(0,1) .. +(1,0);
\draw [<-] (5,-.5) -- +(0,2.5);

\draw [<-] (2,-.5) .. controls +(0,-2) and +(0,-2) .. +(3,0);
\draw [->] (3,-.5) .. controls +(0,-1) and +(0,-1) .. +(1,0);
\end{scope}

\begin{scope}[xshift=14cm]
\draw[thin] (1.5,2) -- +(4,0);
\draw[thin] (1.5,-.5) -- +(4,0);

\draw [<-] (2,2) .. controls +(0,1) and +(0,1) .. +(1,0);
\draw [<-] (4,2) .. controls +(0,1) and +(0,1) .. +(1,0);

\draw [<-] (2,-.5) -- +(0,2.5);
\draw [<-] (3,2) .. controls +(0,-1) and +(0,-1) .. +(1,0);
\draw[dotted] (3.55,.3) -- +(0,.95);
\draw [->] (3,-.5) .. controls +(0,1) and +(0,1) .. +(1,0);
\draw [->] (5,-.5) -- +(0,2.5);

\draw [->] (2,-.5) .. controls +(0,-1) and +(0,-1) .. +(1,0);
\draw [->] (4,-.5) .. controls +(0,-1) and +(0,-1) .. +(1,0);
\end{scope}
\end{tikzpicture}
\end{array}
\end{equation}
with possibly additionally some dots. Again if one of the two components is a line we use Section~\ref{annoying} to reduce it to the circle case.\\ \noindent
Consider  the first shape in \eqref{kidneys}. Assume $C$ is anticlockwise and the sequence of arcs connecting $(i,l)$ and a tag of $C$ contains an even number of dots then the cup-cap-pair is of Case i). The split will create $C_i$ anticlockwise and $C_j$ clockwise if we leave the orientation preserving the degree. Again, to obtain the second term one reorients both circles without changing the degree. If the original circle was the closure of a line then $C_i$ would be after the surgery, so the arguments are also valid in this case. If $C$ is clockwise and the condition on dots stays the same we see that the cup-cap-pair is of Case ii). The split creates $C_i$ clockwise and $C_j$ anticlockwise if we leave the orientation. Thus we need to reorient $C_j$ increasing the degree by $2$ which cancels with the fact that we deleted a degree $2$ cup-cap-pair. If we choose an odd number of dots and $C$ anticlockwise, then the cup-cap pair is of Case ii) and both $C_i$ and $C_j$ are anticlockwise with the original orientation. In this case we need to reorient one of them, but also deleted a degree $2$ cup-cap pair, thus preserving the degree. If we choose an odd number of dots and $C$ clockwise, both circles would be clockwise automatically.

Now consider the second shape in \eqref{kidneys}. If the circle $C$ is oriented anticlockwise the cup-cap-pair is of Case i) and then $C_i$ is clockwise and $C_j$ anticlockwise if stitched together with the same orientation and removing the degree $0$ cup-cap-pair. To obtain the second term in the surgery we need to reorient both circles afterwards, but the two degree changed cancel. If the circle is oriented clockwise the cup-cap-pair is of Case ii) we split into $C_i$ anticlockwise and $C_j$ clockwise if we preserve the orientation. Thus we need to change the label of all the vertices in $C_i$ increasing the degree by $2$, but this cancels with the fact that we deleted a degree $2$ cup-cap-pair. The arguments for the third shape in \eqref{kidneys} are analogous.

In case our cup-cap-pair is dotted, then we just consider the shapes displayed in \eqref{4cases} and apply the same arguments.

Assume finally that we have a reconnect. In this case the map is only non-zero for propagating lines  where the orientations match. We leave it to the reader to check that, since there are no dots to the right of the two propagating lines, we are back in Case i) and the claim is clear.
\end{proof}

\begin{lemma}\label{lem:circledomanating}
Let $\un\la(\bm a,\bm \nu)\ov{\mu}$ be an oriented stacked circle diagram which allows a Merge at positions $i<j$ at level $l$ resulting in a circle $C'$. Denote by $\gamma$ one of the two arcs connecting $(i,l)$ with $(j,l)$ or $(i,l-1)$ with $(j,l-1)$ and let $C$ be the circle containing $\gamma$. Then if $\gamma$ and $C$ have different orientations then $C$ contains all possible tags of $C'$.
\end{lemma}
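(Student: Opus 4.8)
The plan is to reduce everything to a combinatorial statement about a single circle and its rightmost vertices, in the spirit of the proof of Lemma~\ref{lem:sign_of_pos}. First I would normalise the situation: since the reflection $*$ interchanges cups and caps while preserving horizontal positions (hence tags) and the relation ``same versus different orientation'', I may assume that $\gamma$ is the \emph{cup} of the surgery pair, sitting at level $l$ and joining $(i,l)$ and $(j,l)$ with $i<j$, so that $C=C_l$ and the companion circle $C''=C_{l-1}$ carries the facing cap $\delta$ at $(i,l-1),(j,l-1)$. Using Section~\ref{annoying} I may also assume there are no lines (lines become anticlockwise circles under the embedding $\iota$ of \eqref{iota}), so that $C'=C\cup C''$ is a genuine circle, obtained by replacing $\gamma,\delta$ by the two vertical rays at $i$ and $j$ as in \eqref{pic:surgery}.

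Next I would record the two orientation rules I intend to use, both contained in \eqref{oriented} and Definition~\ref{def:componentorientation}: an individual cup or cap is anticlockwise exactly when its rightmost endpoint is labelled $\up$, and a circle is anticlockwise exactly when the label at its tag is $\up$. Combined with the fact that an undotted arc joins oppositely labelled vertices while a dotted arc joins equally labelled ones (Remark~\ref{rmk:signonsequence}), this yields the decisive reformulation: along $C$, the arc $\gamma$ and the circle $C$ carry \emph{different} orientations if and only if the number of undotted cups and caps on a path in $C$ from the right endpoint $(j,l)$ of $\gamma$ to a tag $\op{t}(C)$ is \emph{odd}. In particular $(j,l)$ cannot itself be a tag of $C$ (otherwise both orientations would be read at the same vertex and agree), which forces $\pos(\op{t}(C))>j$; thus $C$ already reaches strictly to the right of the surgery.

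The crux, and the step I expect to be the main obstacle, is to upgrade ``$C$ reaches past $j$ with odd parity'' to ``$C$ contains \emph{every} maximal-position vertex of $C'$''. Here I would argue by contraposition, copying the decomposition technique from the proof of Lemma~\ref{lem:sign_of_pos}: assuming some vertex of $C''$ attains the maximal position $i_{\max}$ of $C'$, I pick such a vertex together with a maximal-position vertex of $C$ and split $C'$ into the two arcs between them. Exactly as there, admissibility (no dotted cup nested in a cup, no dotted arc to the right of a ray, together with the standing surgery hypothesis that there are no dotted arcs to the right of $\gamma$) forces the ``inner'' arc, whose interior vertices all have position strictly below $i_{\max}$, to be undotted and to contain an even number of cups and caps, just as in the computation behind Remark~\ref{stupidcount}. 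Removing the two surgery rays, this inner arc restricts to a portion of $C$ joining $(j,l)$ to a maximal-position vertex of $C$ through an even, undotted stretch, contradicting the odd parity found in the previous paragraph. Hence no vertex of $C''$ can reach $i_{\max}$, so $\pos(\op{t}(C))=i_{\max}$ and every vertex of $C'$ of maximal position lies on $C$; that is, $C$ contains all possible tags of $C'$. The same decomposition read on the reflected diagram settles the cap case, and the line cases are already subsumed by the reduction of Section~\ref{annoying}.
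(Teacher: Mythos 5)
Your preliminary steps are sound: the reduction making $\gamma$ the cup of the pair, the reformulation ``$\gamma$ and $C$ have different orientations if and only if the number of undotted cups and caps on a path in $C$ from $(j,l)$ to a tag of $C$ is odd'' (Remark~\ref{rmk:signonsequence} combined with the rule that any cup or cap is anticlockwise exactly when its right endpoint is labelled $\up$), and the consequence that the maximal position of $C$ exceeds $j$. The proof breaks down at the crux. You assert that admissibility forces the arc of $C'$ whose interior vertices lie strictly below $i_{\max}$ to be undotted with an even number of cups and caps, ``exactly as in'' Lemma~\ref{lem:sign_of_pos}. But that lemma establishes these properties for the \emph{complementary} arc: there it is $C_2$, the arc containing all the tags and pinned against the vertical line through $i_{\max}$, that cannot carry dots, precisely because $C_1$ separates it from the left wall of the strip; the left-staying arc $C_1$ is the one on which dots \emph{are} admissible, since admissibility of a dot means exactly that the arc can be joined to the left boundary. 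So your key claim is false in general: a dotted cup at the far left of $C'$ --- a perfectly admissible configuration, and your derivation of the claim uses only admissibility, not the orientation hypothesis --- can lie on your ``inner'' arc. Once ``undotted'' fails, the passage from ``even number of cups and caps'' to ``even number of \emph{undotted} cups and caps'', which is what you need to contradict the odd parity, collapses.

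Two further problems would remain even if that claim were repaired. First, the decomposition of Lemma~\ref{lem:sign_of_pos} needs both splitting vertices to lie at the \emph{same} maximal position; in the situation you must exclude, all maximal-position vertices of $C'$ may lie on $C''$, so your $v$ sits at a strictly smaller position and neither arc between $v''$ and $v$ need avoid maximal-position vertices in its interior. Second, the parity transfer is unjustified: the inner arc is (a portion of $C''$) followed by a surgery ray followed by (a portion of $C$), and evenness of the total count says nothing about the count on the $C$-portion alone, since the $C''$-portion is uncontrolled. For contrast, the paper's proof runs along $C$ itself: it follows the path $\alpha$ from $(j,l)$ to a tag of $C$ and splits according to whether the arc of $C$ at the tag is a cap or a cup; in the cap case planarity alone traps $C''$ strictly to the left of the tag, while in the cup case the total number of cups and caps on $\alpha$ is automatically even, so the odd-undotted-parity hypothesis forces an odd number of \emph{dots} on $\alpha$, and it is the admissibility of such a dotted arc (its connection to the left wall, forcing it to wrap around $C''$) that traps $C''$. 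In other words, the orientation hypothesis has to enter through dotted arcs and a trapping argument for $C''$, not through an undottedness claim for a left-staying arc.
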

\begin{proof}
Let us first assume $\gamma$ is an undotted cup. Choose a  tag $(c,l')$ for $C$ and look at a path $\bm \alpha$ of arcs connecting $(j,l)$ and $(c,l')$  not containing $(i,l)$.  With  $\bm \alpha$ indicated by a dotted curve, the two possibilities  for the non-ray arc  closest to $(c,l')$ in $\bm \alpha$ are shown below, namely  a cap in i) and a cup in ii).
\begin{eqnarray} \label{eqn:dominatingcirles}
\begin{tikzpicture}[thick,decoration={zigzag,pre length=1mm,post length=1mm}]
\begin{scope}[xshift=4cm]
\node at (-.75,1) {ii)};

\draw[thick] (0,1) .. controls +(0,-.5) and +(0,-.5) .. +(.5,0);
\node at (-.2,.75) {$\gamma$};
\draw (0,0) .. controls +(0,.5) and +(0,.5) .. +(.5,0);

\draw[dotted] (.5,1) -- +(0,1);
\node at (.7,1.75) {$\alpha$};
\draw[dotted] (.5,2) .. controls +(0,.35) and +(0,.35) .. +(.5,0);
\draw[dotted] (1,2) -- +(0,-2);
\draw[dotted] (1.5,2) -- +(0,-2);
\draw[dotted] (1,0) .. controls +(0,-.5) and +(0,-.5) .. +(.5,0);

\draw[dashed] (0,1) -- +(0,1);
\draw[dashed] (0,2) .. controls +(0,.7) and +(0,.7) .. +(1.5,0);

\node at (1.9,1.7) {$\scriptstyle (c,l')$};

\draw[thin] (-.25,1) -- +(2,0);
\draw[thin] (-.25,0) -- +(2,0);
\draw[thin] (-.25,2) -- +(2,0);
\end{scope}

\begin{scope}
\node at (-.75,1) {i)};
\draw (0,1) .. controls +(0,-.5) and +(0,-.5) .. +(.5,0);
\draw (0,0) .. controls +(0,.5) and +(0,.5) .. +(.5,0);
\node at (.7,.75) {$\gamma$};

\draw[dotted] (1,2) .. controls +(0,.5) and +(0,.5) .. +(.5,0);
\draw[dotted] (.5,1.5) .. controls +(0,.2) and +(0,.2) .. +(.25,0);
\draw[dotted] (.75,1.5) .. controls +(0,-.2) and +(0,-.2) .. +(.25,0);
\draw[dotted] (.5,1) -- +(0,.5);
\draw[dotted] (1,1.5) -- +(0,.5);
\node at (.7,1.85) {$\alpha$};

\draw[dashed] (1.5,2) -- +(0,-2);
\draw[dashed] (-.25,0) .. controls +(0,-.5) and +(0,-.5) .. +(1.75,0);
\draw[dashed] (-.25,0) -- +(0,1);
\draw[dashed] (-.25,1) .. controls +(0,.2) and +(0,.2) .. +(.25,0);

\node at (1.9,1.7) {$\scriptstyle (c,l')$};

\draw[thin] (-.25,1) -- +(2,0);
\draw[thin] (-.25,0) -- +(2,0);
\draw[thin] (-.25,2) -- +(2,0);
\end{scope}
\end{tikzpicture}
\end{eqnarray}
In both cases the dashed connection indicates how the tag $(c,l')$ is connected to $(i,l)$. Since the point $(c,l')$ was assumed to be a tag of $C$, the dashed part cannot contain any points to the right of $(c,l')$ resulting in the given shape. In case i) it is obvious that all points of the circle containing the cap must be strictly to the left of $(c,l')$ proving the claim in this case. In case ii) it follows that the dotted segment must contain an odd number of dots since the two vertices $(j,l)$ and $(c,l')$ are assumed to be labelled differently. By admissibility this implies that the dotted segment must contain at least one dotted cup in $a_s$ for $s \leq l-1$, which implies again that the circle containing the cap can only contain vertices strictly to the left of $(c,l')$.

The case of the cap is done exactly in the same way and since we only looked at the label of the right vertex of the cup respectively cap and how it differs from the label at the tag it also makes no difference whether the cup/cap is dotted or not.
\end{proof}

\subsection{Algebraic surgery}

In this section we present an alternative description of the surgery maps in terms of commutative algebra. This approach is often better for general proofs and also connects to the geometry of Springer fibres, see \cite{ES1}, \cite{Wilbert}. We still fix a block $\Lambda$.

\begin{definition}
Let $D=\un{\lambda}(\mathbf{a})\ov{\mu}$ be a stacked circle diagram of height $h$ for $\Lambda$. We define the vector space $\cM(D)$, where $\cM(D)=\{0\}$ if $D$ cannot be equipped with an orientation, and otherwise set
\begin{eqnarray*}
\cM(D)=\mC[X_{(i,l)} \mid 0 \leq l \leq h, \, i \in P_\bu(\Lambda)] / I(D) \left\langle {\rm mdeg}(D) \right\rangle
\end{eqnarray*}
where ${\rm mdeg}$ is as in Definition \ref{def:minimaldegree} and the ideal $I(D)$ is generated by the following relations for $0 \leq l \leq h$ and $i,j \in P_\bu(\Lambda)$
\begin{eqnarray}\label{idealID}
\left\lbrace
\begin{array}{rl}
 X_{(i,l)}^2 & \\
 X_{(i,l)} + X_{(j,l)} & \text{ if $i \CupConnect j$ in $a_{l}a_{l+1}^*$},\\
 X_{(i,l)} - X_{(j,l)} & \text{ if $i \DCupConnect j$ in $a_{l}a_{l+1}^*$},\\
 X_{(i,h)} & \text{ if $i \RayConnect$ or $i \DRayConnect$ in $a_{h+1}^*$},\\
 X_{(i,0)} & \text{ if $i \RayConnect$ or $i \DRayConnect$ in $a_0$},\\
 X_{(i,l-1)} - X_{(i,l)} & \text{ if $i \RayConnect$ or $i \DRayConnect$ in $a_{l}$ and $l \notin \{0,h+1\}$}.
\end{array} \right\rbrace
\end{eqnarray}
Again we put for simplicity $a_0 = \un{\lambda}$ and $a_{h+1}^* = \ov{\mu}$.
\end{definition}

\begin{remark}
For stacked circle diagrams of height $0$ the definition agrees with the one from Proposition \ref{coho} when identifying $X_i$ with $X_{(i,0)}$.
\end{remark}

\begin{lemma} \label{lem:orientations_dimensions_agree}
The dimension ${\rm dim}(\cM(D))$ of $\cM(D)$ equals the number of possible orientations of $D$. In particular $\cM(D)\not=\{0\}$ if $D$ can be equipped with an orientation. 
\end{lemma}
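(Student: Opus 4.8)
The plan is to separate the non-orientable and orientable cases. If $D$ is not orientable, then by definition $\cM(D) = \{0\}$ has dimension $0$ and $D$ admits no orientation, so both sides vanish and there is nothing to prove. From now on I would assume $D$ is orientable and show that both quantities equal $2^c$, where $c$ is the number of circles in $D$.

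For the count of orientations I would first record the stacked analogue of part 2.) of Lemma~\ref{lem:stupidlemmasub}: choosing the label at a single vertex of a circle determines the orientation of that entire circle (propagate labels along its arcs, reading each $\bu$ as an orientation-reversing point), and orientability guarantees that both choices extend consistently; each line carries a forced label at its boundary ray and hence has a unique orientation. Thus an orientation is exactly a choice of one of the two orientations for each of the $c$ circles, giving $2^c$ in total.

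For the dimension of $\cM(D)$, the key observation is that the linear generators of $I(D)$ respect the decomposition of $D$ into connected components. Concretely, each generator $X_{(i,l)} \pm X_{(j,l')}$ relates two variables whose vertices are joined by a single arc of $D$ (a dotted or undotted cup/cap at level $l$, or a vertical ray identifying $(i,l-1)$ with $(i,l)$), while each generator $X_{(i,0)}$ or $X_{(i,h)}$ kills a variable sitting on a boundary ray. So I would analyze one component at a time. For a circle $C$, pick a reference vertex with variable $X_C$; walking along $C$ the linear relations successively express every variable on $C$ as $\varepsilon\,X_C$ with $\varepsilon = (-1)^{\#\{\text{undotted arcs traversed}\}}$, exactly as in Lemma~\ref{lem:sign_of_pos} and Remark~\ref{rmk:signonsequence}. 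This assignment is consistent, i.e.\ no relation $X_C = -X_C$ (which would force $X_C=0$) is induced, precisely because each circle carries an even number of undotted cups plus caps by Remark~\ref{stupidcount}, so the total sign around the loop is $+1$. Together with the surviving relation $X_C^2 = 0$, the circle $C$ contributes a factor $\mC[X_C]/(X_C^2)$ of dimension $2$. For a line $L$, orientability plays no role: $L$ reaches the top or bottom of the diagram, hence contains a ray in $a_0$ or $a_{h+1}^*$ whose variable is set to $0$, and propagating the identifications along $L$ forces every variable on $L$ to vanish, so $L$ contributes only the trivial factor $\mC$.

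Since the linear relations never couple variables on different components, these contributions are independent and I obtain a graded isomorphism $\cM(D) \cong \bigotimes_{C} \mC[X_C]/(X_C^2)$, up to the overall shift $\langle {\rm mdeg}(D)\rangle$ which does not affect dimensions, the product running over the circles $C$ of $D$. Hence $\dim \cM(D) = 2^c$, matching the count of orientations, which proves the lemma and in particular gives $\cM(D)\neq\{0\}$ whenever $D$ is orientable. I expect the main obstacle to be the bookkeeping in the third paragraph: making precise that the generators of $I(D)$ connect exactly the vertices lying on a common arc, in particular handling the vertical identifications from internal rays and the killing of boundary rays, and checking sign-consistency on each circle. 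Both are controlled by admissibility and by the parity statements already available (Remark~\ref{stupidcount}, Lemma~\ref{lem:sign_of_pos}), so once the correspondence between generators and arcs is set up cleanly the dimension count is immediate.
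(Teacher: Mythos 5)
Your proposal is correct and follows essentially the same route as the paper's proof: both reduce to the orientable case, observe that the relations of $I(D)$ only couple variables on a common connected component, show via the parity of undotted cups plus caps (Remark~\ref{stupidcount}, as in the proof of Proposition~\ref{coho}) that variables on a circle agree up to sign and are non-zero while those on a line vanish, and conclude $\cM(D)$ is a tensor product of a two-dimensional space per circle and a one-dimensional space per line, matching the $2^c$ orientations. Your write-up merely inlines the details the paper delegates to the proof of Proposition~\ref{coho}.
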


\begin{proof}
By definition $\cM(D)=\{0\}$ if $D$ is not orientable. If $D$ is orientable, the defining relations \eqref{idealID} of $\cM(D)$ only involve variables $X_{(i,l)}$, where the indices $(i,l)$ label vertices on the same component, the  $X_{(i,l)}$'s from different components are linearly independent or zero. Similar to the proof of Proposition~\ref{coho} it follows that the indeterminants belonging to the same component only differ by at most a sign and are non-zero in the case of circles. Which implies that $\cM(D)$ is isomorphic to a tensor product of vector spaces, one for each connected component of $D$, which are 1-dimensional for lines and 2-dimensional for circles, proving the statement.
\end{proof}

The following generalises Proposition~\ref{coho}.

\begin{prop}
Let $D=\un{\lambda}(\mathbf{a})\ov{\mu}$ be an admissible stacked circle diagram for $\Lambda$. Then there is a natural isomorphism of graded vector spaces
\begin{eqnarray} \label{eqn:diagram_algebraic}
\Psi_D:\quad{}_\lambda(\D^\mathbf{a})_\mu{}  &\longrightarrow & \cM(D) \nonumber\\ 
\underline{\la}(\mathbf{a},\bm{\nu})\ov{\mu}&\longmapsto& \prod_{C \in \mathscr{C}_{\rm clock}(\underline{\la}(\mathbf{a},\bm{\nu})\ov{\mu})}X_{\op{t}(C)},
\end{eqnarray}
where $\mathscr{C}_{\rm clock}(\underline{\la}(\mathbf{a},\bm{\nu})\ov{\mu})$ is the set of clockwise oriented circles in $\underline{\la}(\mathbf{a},\bm{\nu})\ov{\mu}$ and $\op{t}$ is a tag for $D$.
\end{prop}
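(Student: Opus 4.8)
The plan is to mirror the proof of Proposition~\ref{coho} for the height-$0$ case, the only genuinely new ingredient being that the rightmost vertices of a circle are no longer unique, so that the tag-dependent formula for $\Psi_D$ must be shown to be independent of the chosen tag. If $D$ is not orientable then both ${}_\lambda(\D^\mathbf{a})_\mu$ and $\cM(D)$ are zero by definition, so assume throughout that $D$ is orientable.

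\emph{Well-definedness.} First I would check that $X_{\op{t}(C)}$ does not depend on the tag $\op{t}$. If $(i,l)$ and $(i,l')$ are two possible tags of one circle $C$, they share the maximal first coordinate $i$, and the defining relations \eqref{idealID} give $X_{(i,l')}=(-1)^{e}X_{(i,l)}$ in $\cM(D)$, where $e$ counts the undotted cups and caps along a path in $C$ from $(i,l)$ to $(i,l')$. By the analysis in the proof of Lemma~\ref{lem:sign_of_pos}, this number $e$ is even; hence $X_{(i,l)}=X_{(i,l')}$ and $\prod_{C}X_{\op{t}(C)}$ is independent of $\op{t}$, so $\Psi_D$ is a well-defined linear map.

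\emph{Homogeneity.} Next I would establish the stacked analogue of Proposition~\ref{lem:stupidlemma}, namely
\[
\deg\bigl(\un{\la}(\mathbf{a},\bm{\nu})\ov{\mu}\bigr)={\rm mdeg}(D)+2\,\bigl|\mathscr{C}_{\rm clock}(\un{\la}(\mathbf{a},\bm{\nu})\ov{\mu})\bigr|.
\]
This reduces to the assertion that reorienting a single circle $C$ from anticlockwise to clockwise (a legitimate operation by Corollary~\ref{cor:orientation_defined}) raises the total degree by exactly $2$. Since a circle in a stacked diagram is, as an abstract circle, identical to one in the height-$0$ case, the kink-removal induction of Proposition~\ref{lem:stupidlemma} applies verbatim. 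As the indeterminates $X_{(i,l)}$ lie in degree $2$ and $\cM(D)$ carries the shift $\langle{\rm mdeg}(D)\rangle$ of Definition~\ref{def:minimaldegreestacked}, the displayed identity says exactly that $\Psi_D$ is homogeneous of degree zero.

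\emph{Bijectivity.} Finally I would argue by dimension. By Lemma~\ref{lem:orientations_dimensions_agree}, $\dim\cM(D)$ equals the number of orientations of $D$, which is $\dim {}_\lambda(\D^\mathbf{a})_\mu$, so it suffices to prove surjectivity. By the ray relations in \eqref{idealID} every variable attached to a line vanishes, while within a circle all variables agree up to sign with the one at the tag, which is nonzero since by Remark~\ref{stupidcount} each circle carries an even number of undotted cups plus caps. As the relations only couple indices on a common component, $\cM(D)$ is spanned by the squarefree monomials $\prod_{C\in S}X_{\op{t}(C)}$ indexed by subsets $S$ of the set of circles of $D$, and each such monomial is $\Psi_D$ of the basis diagram $\un{\la}(\mathbf{a},\bm{\nu})\ov{\mu}$ whose clockwise circles are precisely those in $S$. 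Hence $\Psi_D$ is surjective and, by the dimension count, an isomorphism. I expect the tag-independence of the first step to be the principal new obstacle, the remaining steps being level-by-level reductions to the height-$0$ results Proposition~\ref{coho} and Proposition~\ref{lem:stupidlemma}.
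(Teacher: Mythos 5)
Your proposal is correct and follows essentially the same route as the paper: tag-independence via the evenness of undotted cups/caps between two tags (this is exactly the content of Corollary~\ref{cor:orientation_defined}, proved through Lemma~\ref{lem:sign_of_pos}), homogeneity by the stacked analogue of Proposition~\ref{lem:stupidlemma}, and bijectivity by combining the dimension count of Lemma~\ref{lem:orientations_dimensions_agree} with the spanning argument from the proof of Proposition~\ref{coho}. The paper's own proof is just a terser citation of these same three ingredients.
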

\begin{proof}
That the map is independent from the chosen tag follows directly from Corollary \ref{cor:orientation_defined}. That it is an isomorphism follows from Lemma \ref{lem:orientations_dimensions_agree} and the same argument as in the proof of Proposition \ref{coho} which shows that the image of the standard basis is a basis.
\end{proof}

For two stacked circle diagrams $D=\un{\lambda}(\mathbf{a})\ov{\mu}$ and $D'=\un{\mu}(\mathbf{a'})\ov{\eta}$ of height $h$ respectively $h'$ denote by $D \circ D'=\un{\lambda}(\mathbf{b})\ov{\eta}$ the stacked circle diagram of height $h+h'+1$, where $\mathbf{b}=(a_1,\ldots,a_{h},b,a_1',\ldots,a_{h'}')$, with $b$ being equal to $\un{\mu}$ except that all rays are undotted.

\begin{lemma}
\label{lem:glue_stacked}
Let $D=\un{\lambda}(\mathbf{a})\ov{\mu}$ and $D'=\un{\mu}(\mathbf{a'})\ov{\eta}$ be stacked circle diagram of height $h$ respectively $h'$ and $D \circ D'=\un{\lambda}(\mathbf{b})\ov{\eta}$. Then there exists an injective map of graded vector spaces
\begin{eqnarray*}
{\rm glue}_{D,D'}:\quad {}_\lambda(\D^\mathbf{a})_\mu \otimes {}_\mu(\D^\mathbf{a'})_\eta \quad &\longrightarrow &{}_\lambda(\D^\mathbf{b})_\eta,\nonumber\\
\un{\lambda}(\mathbf{a},\bm{\nu})\ov{\mu} \otimes \un{\mu}(\mathbf{a'},\bm{\nu'})\ov{\eta} &\longmapsto& \un{\lambda}(\mathbf{b},\bm{\nu''})\ov{\eta},\\
& &\text{ with } \bm{\nu''} = (\nu_0,\ldots,\nu_h,\nu_0',\ldots,\nu_{h'}').
\end{eqnarray*}
\end{lemma}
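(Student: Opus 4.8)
The plan is to define $\mathrm{glue}_{D,D'}$ on basis vectors by the stated rule $\un{\la}(\mathbf{a},\bm{\nu})\ov{\mu}\otimes\un{\mu}(\mathbf{a'},\bm{\nu'})\ov{\eta}\mapsto\un{\la}(\mathbf{b},\bm{\nu''})\ov{\eta}$ and then to verify, in this order, that the image is again a basis vector (well-definedness), that the rule is degree preserving, and finally that it is injective. The starting observation is that $D\circ D'$ differs from the mere superposition of $D$ and $D'$ only at the new internal section $b^*b$ created at the junction: the top cap diagram $\ov{\mu}$ of $D$ reappears as the cap part $b^*$ of the circle diagram $a_hb^*$, the bottom cup diagram $\un{\mu}$ of $D'$ reappears as the cup part $b$ of $ba_1'^{*}$, and in passing to $b,b^*$ all rays are made undotted. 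Hence every cup and every cap of $D\circ D'$ is literally a cup or cap of $D$ (carrying labels among $\nu_0,\dots,\nu_h$) or of $D'$ (carrying labels among $\nu_0',\dots,\nu_{h'}'$), with unchanged position and decoration; only the rays along the junction are altered.

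First I would prove well-definedness, i.e.\ that $\bm{\nu''}$ is an orientation of $D\circ D'$. Since an orientation is determined by its underlying labels, it suffices to check that $\bm{\nu''}$ orients every connected component, which I would do arc by arc. All cups and caps are correctly oriented: for the arcs inherited from $D$ this holds because $\bm{\nu}$ orients $D$ and the caps of $b^*$ (together with their dots) coincide with those of $\ov{\mu}$ and keep the label $\nu_h$; symmetrically for the arcs inherited from $D'$ and the cups of $b$. The genuinely new arcs are the straight propagating strands of the junction, one for each vertex $p\in P_\bu$ left unpaired by $\mu$, obtained by concatenating the now undotted ray of $b^*$ at $(p,h)$ with the undotted ray of $b$ at $(p,h+1)$. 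The crucial point is that, since $\nu_h\ov{\mu}$ and $\un{\mu}\nu_0'$ are oriented, the ray of $\ov{\mu}$ (resp.\ of $\un{\mu}$) at $p$ forces $(\nu_h)_p=\mu_p$ (resp.\ $(\nu_0')_p=\mu_p$) by the ray conventions in \eqref{oriented} (cf.\ the proofs of Lemmas~\ref{lem:orient} and~\ref{lem:Bruhator}). Thus both ends of each junction strand carry the \emph{same} label $\mu_p$, which is exactly the condition for an undotted propagating strand to be consistently oriented. Consequently all components of $D\circ D'$ are oriented by $\bm{\nu''}$, so $\un{\la}(\mathbf{b},\bm{\nu''})\ov{\eta}\in\mathbb{B}_\Lambda^{\mathbf{b}}$. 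As a sanity check one may instead argue orientability through the dot-parity criterion of Lemma~\ref{lem:stupidlemmasub}, noting that undotting the junction removes dots in pairs (one on each side of every vertex $p$), so that the dot parity on each component of $D\circ D'$ is preserved; but the label computation above has the advantage of directly identifying $\bm{\nu''}$ itself as the orientation.

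Gradedness is then immediate: the degree of a basis diagram is the number of its clockwise cups and caps, the cup--cap sets of $D\circ D'$ and of $D,D'$ coincide (undotting rays changes neither the cups and caps nor their decorations), and the clockwise/anticlockwise status of each such arc is inherited from $D$ or $D'$ because its two labels are unchanged. Hence $\deg\big(\mathrm{glue}_{D,D'}(x\otimes y)\big)=\deg x+\deg y$, so that $\mathrm{glue}_{D,D'}$ is homogeneous of degree $0$; note that this additive bookkeeping is insensitive to whether the junction produces new circles.

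Finally, injectivity would follow formally: $\mathrm{glue}_{D,D'}$ sends the basis $\{\un{\la}(\mathbf{a},\bm{\nu})\ov{\mu}\otimes\un{\mu}(\mathbf{a'},\bm{\nu'})\ov{\eta}\}$ of the source to the family of basis vectors $\un{\la}(\mathbf{b},\bm{\nu''})\ov{\eta}$ of ${}_\la(\mathbb{D}_\Lambda^{\mathbf{b}})_\eta$, and the concatenation $\bm{\nu''}=(\nu_0,\dots,\nu_h,\nu_0',\dots,\nu_{h'}')$ determines $(\bm{\nu},\bm{\nu'})$ uniquely by splitting after level $h$. Thus distinct basis vectors of the source have distinct, hence linearly independent, images, and a linear map carrying a basis to linearly independent vectors is injective. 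The only genuinely delicate step is the junction analysis of the second paragraph—specifically the fact that making the boundary rays undotted is precisely compensated by the forced label agreement $(\nu_h)_p=\mu_p=(\nu_0')_p$; everything else is routine.
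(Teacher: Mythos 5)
Your proof is correct and follows the same route as the paper, whose entire proof is the one-line assertion that the map is well-defined, injective and of degree zero by definition; you have simply supplied the details behind that assertion. Your junction analysis — that undotting the boundary rays is compensated by the forced label agreement $(\nu_h)_p=\mu_p=(\nu_0')_p$ coming from the ray conventions in \eqref{oriented} — is exactly the point the paper treats as immediate (and which its subsequent remark about non-propagating lines forming clockwise circles alludes to), so there is no substantive difference in approach.
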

\begin{proof}
By definition the map is well-defined, and obviously injective and of degree zero.
\end{proof}

\begin{remark}
We avoid giving the algebraic definition of the glueing map, since its definition is more involved than the one using the diagrammatics. This is due to the fact that when stitching two stacked circle diagrams together two non-propagating lines might combine to form a circle (automatically oriented clockwise). This phenomenon is easy to describe in terms of diagrams, but cumbersome in algebraic terms.
\end{remark}

Internal parts consisting of rays only can be collapsed:

\begin{lemma}
\label{lem:collapsing_stacked}
Let $D=\un{\lambda}(\mathbf{a})\ov{\mu}$ be a stacked circle diagram of height $h$ such that $a_1, \ldots, a_{h}$ only consist of rays. Then there exists an isomorphism of graded vector spaces
\begin{eqnarray*}
{\rm collapse}_D:\quad {}_\lambda(\D^\mathbf{a})_\mu &\longrightarrow &{}_\lambda(\D)_\mu,\nonumber\\
\un{\lambda}(\mathbf{a},\bm{\nu})\ov{\mu} &\longmapsto & \un{\lambda} \nu_0 \ov{\mu}.
\end{eqnarray*}
\end{lemma}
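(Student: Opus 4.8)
The plan is to exploit the fact that when every internal cup diagram $a_1,\ldots,a_h$ consists only of rays, the entire internal part of $D=\un\lambda(\mathbf{a})\ov\mu$ degenerates into a family of vertical line segments, one through each vertex of $P_\bu(\Lambda)$, so that $D$ carries exactly the same combinatorial data as the height-$0$ diagram $\un\lambda\ov\mu$. Concretely, the first step is to analyse the middle sections. Each middle section is the symmetric diagram $a_s^*a_s$; since $a_s$ consists only of rays, so does its mirror $a_s^*$, and stitching the ray of $a_s^*$ emanating from $(i,s-1)$ to the corresponding ray of $a_s$ emanating from $(i,s)$ produces a single vertical segment from $(i,s-1)$ to $(i,s)$. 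Crucially this segment carries an even number of dots — either zero, or two when the ray of $a_s$ was dotted (its mirror being dotted as well) — so under the convention that $\bullet$'s reverse orientation it behaves like an undotted ray. Concatenating over $0\le s\le h$ shows that each $i\in P_\bu(\Lambda)$ contributes one vertical line running from height $0$ to height $h$, and that inserting these lines neither merges nor splits components. Hence the connected components of $D$ biject naturally with those of $\un\lambda\ov\mu$ (circles to circles, propagating and non-propagating lines preserved), and by Lemma~\ref{lem:stupidlemmasub} the two diagrams are orientable simultaneously.

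Next I would establish that the orientations themselves match up bijectively and that $\op{collapse}_D$ is graded. Given an orientation $\bm\nu=(\nu_0,\ldots,\nu_h)$ of $D$, the fact that each vertical segment through $i$ is effectively undotted forces the label at $i$ in $\nu_{s-1}$ to agree with the one in $\nu_s$ for every $s$; running over all $i\in P_\bu(\Lambda)$ and recalling that the $\times$ and $\circ$ positions are fixed throughout yields $\nu_0=\nu_1=\cdots=\nu_h$. Thus $\bm\nu$ is completely determined by $\nu_0$, and labelling $\un\lambda\ov\mu$ by $\nu_0$ orients the cups of $\un\lambda$ (which were oriented at height $0$ in $D$) and the caps of $\ov\mu$ (oriented at height $h$, i.e.\ by $\nu_h=\nu_0$), so $\un\lambda\nu_0\ov\mu$ is a genuine oriented circle diagram. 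Since the only cups and caps in $D$ are those of $\un\lambda$ and $\ov\mu$ (the internal arcs are degree-$0$ rays) and these are oriented by exactly the same labels before and after collapsing, the degree is preserved; in particular ${\rm mdeg}(D)={\rm mdeg}(\un\lambda\ov\mu)$ and $\op{collapse}_D$ is a degree-zero map.

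Finally, for the inverse I would observe that any orientation $\nu$ of $\un\lambda\ov\mu$ extends to the constant sequence $\bm\nu=(\nu,\ldots,\nu)$, which orients $D$ because the forced vertical segments propagate the labels consistently. Therefore $\op{collapse}_D$ sends the basis $\{\un\lambda(\mathbf{a},\bm\nu)\ov\mu\}$ of oriented diagrams bijectively onto the basis $\{\un\lambda\nu\ov\mu\}$, and being degree-preserving it is the asserted isomorphism of graded vector spaces. The only point requiring genuine care — and the main, if modest, obstacle — is the bookkeeping of dots on the internal rays: one must be sure that a dotted internal ray does not alter the label parity along its vertical segment, which is precisely the observation above that the symmetric middle section $a_s^*a_s$ always contributes dots in pairs. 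An essentially equivalent and perhaps cleaner route is to pass through the algebraic model of the preceding proposition: the relations $X_{(i,l-1)}-X_{(i,l)}$ arising from the internal rays identify all levels in $\cM(D)$, the surviving relations are exactly those defining $\cM(\un\lambda\ov\mu)$, and $\Psi$ intertwines $\op{collapse}_D$ with this identification.
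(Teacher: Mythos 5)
Your proof is correct and follows essentially the same route as the paper, whose entire argument is the observation that $\nu_0=\cdots=\nu_h$ is forced by the assumption on $D$ (so the map is well-defined) together with the remark that it is obviously of degree zero. Your dot-parity bookkeeping on the symmetric middle sections $a_s^*a_s$ and the explicit inverse via constant sequences simply spell out the details the paper leaves to the reader.
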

\begin{proof}
That the map is well-defined follows directly from the definitions, since $\nu_0 = \ldots = \nu_h$ by assumption on $D$. It is obviously of degree zero.
\end{proof}

\begin{lemma}
\label{lem:algebraic_merge}
Let $D=\un{\lambda}(\mathbf{a})\ov{\mu}$ and $D'=\un{\lambda}(\mathbf{b})\ov{\mu}$ be stacked circle diagrams such that $D'$ is obtained from $D$ by a surgery at level $l$ and positions $i < j$. Assume that the surgery is a merge. Then under the identification \eqref{eqn:diagram_algebraic} the surgery map from \eqref{eqn:surgerymap} is given by
\begin{eqnarray*}
\op{surg}_{D,D'}:\quad\cM(D)&\longrightarrow&\cM(D')\\
X_{(s,a)}&\longmapsto&X_{(s,a)}
\end{eqnarray*}
\end{lemma}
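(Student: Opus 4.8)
The plan is to establish that, under the isomorphisms $\Psi_D$ and $\Psi_{D'}$ from \eqref{eqn:diagram_algebraic}, the diagrammatic surgery map \eqref{eqn:surgerymap} becomes the map $\rho$ induced on generators by $X_{(s,a)}\mapsto X_{(s,a)}$; that is, to prove that the square
\[
\begin{array}{ccc}
{}_\la(\D^{\mathbf a})_\mu & \xrightarrow{\ \op{surg}_{D,D'}\ } & {}_\la(\D^{\mathbf b})_\mu\\
\Psi_D\big\downarrow\ & & \ \big\downarrow\Psi_{D'}\\
\cM(D) & \xrightarrow{\ \ \rho\ \ } & \cM(D')
\end{array}
\]
commutes. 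Throughout I write $\nu_s$ for the weight labelling the waist of $a_sa_{s+1}^*$. The cup of $a_l$ that is cut appears in $D$ both as a cup on the vertices $(i,l),(j,l)$, lying on the component $C_l$, and as a cap on $(i,l-1),(j,l-1)$, lying on the component $C_{l-1}$; in $D'$ these two components are joined into a single component $C$.

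First I would check that $\rho$ is well-defined, i.e.\ that $I(D)\subseteq I(D')$. All defining relations of the two ideals coincide away from the surgery site, so only the two relations of $I(D)$ coming from the cut cup-cap-pair need attention; in $D'$ they are replaced by the undotted ray relations $X_{(i,l-1)}-X_{(i,l)}$ and $X_{(j,l-1)}-X_{(j,l)}$. Using these, in $\cM(D')$ one has $X_{(i,l-1)}=X_{(i,l)}$ and $X_{(j,l-1)}=X_{(j,l)}$, so it suffices to verify the level-$l$ relation. If $C_{l-1}$ is a line the claim is immediate, since all variables on a line vanish; otherwise $C_{l-1}$ is a circle, and the path in $D'$ from $(i,l)$ to $(j,l)$ running up the ray, around $C_{l-1}$ avoiding the deleted cap, and back down the ray carries an odd (resp.\ even) number of undotted cups/caps when the cut arc is undotted (resp.\ dotted): a circle carries an even total number of undotted cups plus caps by the parity argument behind Lemma~\ref{lem:sign_of_pos} (cf.\ Remark~\ref{stupidcount}), and we have removed exactly the single cut cap. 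This yields $X_{(i,l)}=-X_{(j,l)}$ (resp.\ $X_{(i,l)}=X_{(j,l)}$), which is precisely the cut relation; hence $I(D)\subseteq I(D')$ and $\rho$ is a well-defined linear map, its homogeneity of degree zero following a posteriori from Proposition~\ref{forgotgraded} once the square is shown to commute.

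Next I would verify commutativity on the basis $\mathbb{B}_\Lambda^{\mathbf a}$. For $b=\un\la(\mathbf a,\bm{\nu})\ov\mu$ the image $\Psi_D(b)$ is the monomial $\prod_{C'\in\mathscr{C}_{\rm clock}}X_{\op{t}(C')}$; every circle other than $C_{l-1},C_l$ is untouched by the surgery, so its relations, tag and factor are unchanged in $\cM(D')$, and it remains to evaluate the contribution of the merging pair. Since in $\cM(D')$ the tags of $C_{l-1}$ and $C_l$ now lie on the merged circle $C$, Remark~\ref{rmk:signonsequence} gives $X_{\op{t}(C_{l-1})}={\rm sign}_{D'}(\op{t}(C_{l-1}))\,X_{\op{t}'(C)}$ and likewise for $C_l$, while $X_{\op{t}'(C)}^2=0$. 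Running through the four orientations of $(C_{l-1},C_l)$ therefore reproduces exactly the Frobenius multiplication $\bigl(\mC[x]/(x^2)\bigr)^{\otimes 2}\to\mC[x]/(x^2)$ of Remark~\ref{rem:interpretemerge} (anticlockwise $=1$, clockwise $=x$), and the cases where one component is a line are handled automatically because line-variables are zero. This matches the diagrammatic merge rule term by term, provided the signs agree.

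The sign comparison is the main obstacle, and I expect it to be the technical heart of the argument: I must show $\sigma_1={\rm sign}_{D'}(\op{t}(C_{l-1}))$ and $\sigma_2={\rm sign}_{D'}(\op{t}(C_l))$, where $\sigma_1={\rm sign}_D(i,l-1){\rm sign}_{D'}(i,l-1)$ and $\sigma_2={\rm sign}_D(i,l){\rm sign}_{D'}(i,l)$ are the merge signs. For $\sigma_1$ I would choose the path along $C_{l-1}$ from $(i,l-1)$ to the tag $\op{t}(C_{l-1})$ that avoids the cut cap; every arc of this path survives unchanged into $D'$ (the surgery only deletes the cap and inserts undotted rays), so its number of undotted cups/caps is computed identically in $D$ and in $D'$. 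By definition this number computes ${\rm sign}_D(i,l-1)$, while by Remark~\ref{rmk:signonsequence} applied in $D'$ it equals ${\rm sign}_{D'}(i,l-1){\rm sign}_{D'}(\op{t}(C_{l-1}))$; hence ${\rm sign}_{D'}(\op{t}(C_{l-1}))={\rm sign}_D(i,l-1){\rm sign}_{D'}(i,l-1)=\sigma_1$. The identical argument at level $l$, using a path along $C_l$ avoiding the cut cup, gives $\sigma_2$. Feeding these identities into the previous paragraph closes the commutativity of the square and proves the lemma.
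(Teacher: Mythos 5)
Your proposal is correct and follows essentially the same route as the paper: the heart of both arguments is the observation that a path of arcs joining the surgery vertices to any other vertex (or tag) of the merged components survives the surgery unchanged, so by Remark~\ref{rmk:signonsequence} the count of undotted cups and caps computes the same sign in $D$ and in $D'$, forcing the diagrammatic merge signs $\sigma_1,\sigma_2$ to cancel exactly against the change of tags. The paper's proof is terser---it only tracks a single variable $X_{(s,a)}$ through the composite $\Psi_{D'}\circ{\rm surg}_{D,D'}\circ\Psi_D^{-1}$ and invokes the unchanged path---whereas you additionally verify $I(D)\subseteq I(D')$ and run through the orientation cases explicitly, which fills in details the paper leaves implicit.
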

\begin{proof}
Let $C_l$ be the component of $D$ containing $(i,l)$, $C_{l-1}$ the one containing $(i,l-1)$, and $C$ the component of $D'$ containing both. Let $\op{t}_D$ be a tag of $D$ and $\op{t}_{D'}$ a tag of $D'$. Let first $(s,a)$ be a vertex in $C_l$ then using the identification \eqref{eqn:diagram_algebraic} and the diagrammatic surgery map
\begin{eqnarray*}
X_{(s,a)} &=& {\rm sign}_D(s,a) X_{\op{t}_D(C_l)} \\
&\longmapsto& {\rm sign}_D(s,a) {\rm sign}_D(i,l){\rm sign}_{D'}(i,l) X_{\op{t}_{D'}(C)}\\
& =& {\rm sign}_{D'}(s,a) {\rm sign}_{D'}(i,l) {\rm sign}_D(s,a) {\rm sign}_D(i,l) X_{(s,a)}.
\end{eqnarray*}
This final coefficient is equal to $1$, since there is a sequence of arcs connecting $(s,a)$ and $(i,l)$ that is not changed by the surgery. The case $(s,a)$ in $C_{l-1}$ is done in the same way.
\end{proof}

\begin{lemma}
\label{lem:algebraic_split}
Let $D=\un{\lambda}(\mathbf{a})\ov{\mu}$ and $D'=\un{\lambda}(\mathbf{b})\ov{\mu}$ be stacked circle diagrams such that $D'$ is obtained from $D$ by a surgery at level $l$ and positions $i < j$. Assume that the surgery is a split. Then under the identification \eqref{eqn:diagram_algebraic} the surgery map is given by
\begin{eqnarray*}
\op{surg}_{D,D'}:\quad\cM(D)&\longrightarrow&\cM(D')
\end{eqnarray*}
\begin{eqnarray*}
f&\longmapsto&
\begin{cases}
(-1)^{\pos(i)}{(X_{(j,l)}-X_{(i,l)})f}&\text{if $i \CupConnect j$ in $a_{l}$,}\\
(-1)^{\pos(i)}{(X_{(j,l)}+X_{(i,l)})f}&\text{if $i \DCupConnect j$ in $a_{l}$},\\
0&\text{if $D'$ is not orientable,}
\end{cases}
\end{eqnarray*}
where $\pos$ is calculated with respect to the fixed block $\Lambda$.
\end{lemma}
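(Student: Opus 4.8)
The plan is to transport the asserted formula through the isomorphisms $\Psi_D$ and $\Psi_{D'}$ of \eqref{eqn:diagram_algebraic}. Since ${\rm surg}_{D,D'}$ is a genuine linear map and $\Psi_D,\Psi_{D'}$ are isomorphisms, the composite $\Psi_{D'}\circ{\rm surg}_{D,D'}\circ\Psi_D^{-1}$ is a well-defined linear map $\cM(D)\to\cM(D')$, and it suffices to evaluate it on the monomials $\Psi_D(\un\la(\mathbf{a},\bm\nu)\ov\mu)$ coming from the oriented basis diagrams; checking that each is sent to $(-1)^{\pos(i)}(X_{(j,l)}\mp X_{(i,l)})$ times itself both identifies the map with the claimed multiplication and shows that this multiplication descends to $\cM(D)$ (the sign being $-$ if $i\CupConnect j$ and $+$ if $i\DCupConnect j$ in $a_l$). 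Exactly as in the merge case (Lemma~\ref{lem:algebraic_merge}) I will use two facts in $\cM(D')$: that $X_{(s,a)}^2=0$, and that for $(s,a)$ on a circle $C'$ one has $X_{(s,a)}={\rm sign}_{D'}(s,a)\,X_{\op{t}(C')}$, which is the content of the relations \eqref{idealID} read along a path of arcs, cf.\ Remark~\ref{rmk:signonsequence}. Lines are reduced to circles through Section~\ref{annoying}, so I may assume the component $C$ of $D$ carrying the surgered cup is a circle, splitting into circles $C_i\ni(i,l)$ and $C_j\ni(j,l)$ in $D'$.

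Set $m=\Psi_D(\un\la(\mathbf{a},\bm\nu)\ov\mu)=\prod_{C'\in\mathscr C_{\rm clock}}X_{\op{t}(C')}$. If $C$ is \emph{anticlockwise} it does not occur in $m$, and every circle other than $C$ is untouched by the surgery; hence by the Split rule (in the combined form of the Remark following it)
\[ \Psi_{D'}\big({\rm surg}_{D,D'}(\un\la(\mathbf{a},\bm\nu)\ov\mu)\big)=(-1)^{\pos(i)}\big(\sigma_1 X_{\op{t}(C_j)}\mp\sigma_2 X_{\op{t}(C_i)}\big)\,m, \]
with $\sigma_1={\rm sign}_{D'}(j,l)$ and $\sigma_2={\rm sign}_{D'}(i,l)$. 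Since $X_{(j,l)}=\sigma_1 X_{\op{t}(C_j)}$ and $X_{(i,l)}=\sigma_2 X_{\op{t}(C_i)}$ in $\cM(D')$, the right-hand side is exactly $(-1)^{\pos(i)}(X_{(j,l)}\mp X_{(i,l)})\,m$, and this case needs no further sign analysis.

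If $C$ is \emph{clockwise}, write $m=X_{\op{t}_D(C)}\,m'$ with $m'$ the product over the remaining clockwise circles, and let $r\in\{i,j\}$ be the index with $\op{t}_D(C)\in C_r$, so that $\op{t}_{D'}(C_r)=\op{t}_D(C)$. The Split rule now yields the single term with both $C_i,C_j$ clockwise, giving $\Psi_{D'}({\rm surg})=(-1)^{\pos(i)}\sigma_3\,X_{\op{t}(C_i)}X_{\op{t}(C_j)}m'$ with $\sigma_3={\rm sign}_D(i,l){\rm sign}_{D'}(i,l){\rm sign}_{D'}(j,l)$. On the algebraic side $(X_{(j,l)}\mp X_{(i,l)})X_{\op{t}_D(C)}$ has its summand on $C_r$ annihilated by $X^2=0$, and the surviving summand equals, up to the sign dictated by $\mp$, the product ${\rm sign}_{D'}(r',l)\,X_{\op{t}(C_i)}X_{\op{t}(C_j)}$ for $r'$ the other index. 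Matching coefficients reduces the claim to the sign identities ${\rm sign}_D(i,l)\,{\rm sign}_{D'}(j,l)=\epsilon_\gamma$ when $r=j$, and ${\rm sign}_D(i,l)={\rm sign}_{D'}(i,l)$ when $r=i$, where $\epsilon_\gamma=-1$ for an undotted and $+1$ for a dotted cup $\gamma$. Both hold because the surgery removes only $\gamma$ together with its mirror cap: a path computing ${\rm sign}_{D'}(j,l)$ inside $C_j$ is an unchanged sub-path of a path computing ${\rm sign}_D(i,l)$ in $D$ that first crosses $\gamma$ (contributing $\epsilon_\gamma$), and the repeated factor squares to $1$. This is the one genuinely non-formal step.

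Finally, if $D'$ is not orientable then $\cM(D')=\{0\}$, so both the surgery map and the multiplication are zero, matching the last case of the rule. For the omitted line cases, replacing each line by an anticlockwise circle via the embedding $\iota$ of \eqref{iota} turns a split of a line into a split of circles of the type treated above, and the projection $\pi$ kills exactly the terms that the diagrammatic Split rule declares to be zero; thus the circle computation delivers the general statement. The main obstacle is the clockwise sign bookkeeping of the third paragraph, namely correctly locating $\op{t}_D(C)$ on $C_i$ or $C_j$ and accounting for the removed arc $\gamma$.
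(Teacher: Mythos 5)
Your identification of the surgery map with multiplication-by-the-factor on the canonical basis is correct, and in substance it is the paper's own argument: for an anticlockwise component the diagrammatic signs $\sigma_1,\sigma_2$ are exactly the coefficients relating $X_{(j,l)},X_{(i,l)}$ to the tag variables of $C_j,C_i$ in $\cM(D')$; for a clockwise component one term dies by $X_{(s,a)}^2=0$ and the surviving term is rewritten in tags, producing $\sigma_3$; the non-orientable case is trivial since $\cM(D')=\{0\}$; lines are reduced to circles via Section~\ref{annoying}. Your two sign identities (${\rm sign}_D(i,l)\,{\rm sign}_{D'}(j,l)=\epsilon_\gamma$ when the old tag lands on $C_j$, and ${\rm sign}_D(i,l)={\rm sign}_{D'}(i,l)$ when it lands on $C_i$) are true and correctly justified by comparing paths that avoid the surgered cup-cap pair; the paper reaches the same conclusion slightly more economically by first replacing $X_{\op{t}_D(C)}$ with ${\rm sign}_D(i,l)X_{(i,l)}$ and only then multiplying.

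The genuine gap is your claim that evaluating on the monomial basis ``both identifies the map with the claimed multiplication and shows that this multiplication descends to $\cM(D)$.'' The second half is a non-sequitur. Knowing that the (automatically well-defined) conjugate $\Psi_{D'}\circ{\rm surg}_{D,D'}\circ\Psi_D^{-1}$ sends the class $[m]$ to $[(X_{(j,l)}\mp X_{(i,l)})\,m]$ for each \emph{chosen} monomial representative $m$ says nothing about $(X_{(j,l)}\mp X_{(i,l)})\cdot h$ for $h\in I(D)$; descent means precisely $(X_{(j,l)}\mp X_{(i,l)})\cdot I(D)\subseteq I(D')$, and this cannot be read off from values on a transversal of representatives. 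This containment is not a formality one can skip: it is what makes the statement ``$f\mapsto (-1)^{\pos(i)}(X_{(j,l)}\mp X_{(i,l)})f$'' meaningful for an arbitrary representative $f$, and it is exactly what is used later (e.g.\ in Cases 3b and 4b of Theorem~\ref{thm:surgeries_commute}, where composites of two surgeries are computed by multiplying polynomials and then simplifying with relations). Indeed your own clockwise computation multiplies the particular representative $X_{\op{t}_D(C)}m'$; without descent you may not replace it by the $I(D)$-equivalent ${\rm sign}_D(i,l)X_{(i,l)}m'$, nor apply the formula to outputs of earlier surgeries. The paper therefore opens its proof with this check, which is short but uses a specific feature of \eqref{idealID}: the only generators of $I(D)$ not already lying in $I(D')$ are the two coming from the surgered cup and cap; the cup generator $X_{(j,l)}\pm X_{(i,l)}$ times the factor gives $X_{(j,l)}^2-X_{(i,l)}^2\equiv 0$, and the cap generator $X_{(j,l-1)}\pm X_{(i,l-1)}$ is killed only after invoking the \emph{new} relations $X_{(i,l-1)}-X_{(i,l)}$ and $X_{(j,l-1)}-X_{(j,l)}$ of $I(D')$ created by the two rays of the surgery. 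Adding this paragraph (as in Lemma~\ref{lem:algebraic_merge}'s companion argument) closes the gap; as written, your proof establishes the formula only on the distinguished basis representatives.
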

\begin{proof}
The map is well-defined as a map from $\mC[X_{(i,l)} \mid 0 \leq l \leq h, \, i \in P_\bu(\Lambda)]$ to $\cM(D')$. We only need to check that those generators of $I(D)$ that belong to the cup-cap-pair between $(i,l)$ and $(j,l)$ respectively $(i,l-1)$ and $(j,l-1)$ are sent to zero, since all other generators are also generators of $I(D')$ and hence are sent to zero anyway. If the cup-cap-pair is undotted the generator in $I(D)$ corresponding to the cup is $X_{(j,l)}+X_{(i,l)}$. Hence multiplied with $X_{(j,l)}-X_{(i,l)}$ it is zero in $\cM(D')$. Now the generator from the cap, i.e. $X_{(j,l-1)}+X_{(i,l-1)}$ gets multiplied with $X_{(j,l)}-X_{(i,l)}$ which we verify to be zero in $\cM(D')$ by using the relations $X_{(i,l)}-X_{(i,l-1)}$ and $X_{(j,l)}-X_{(j,l-1)}$ in $\cM(D')$.  The same holds for the dotted case, and so the map is well-defined.

In case that the resulting diagram is not orientable the maps agree by definition. Thus assume that $D'$ is orientable. In the case of applying the map to an anticlockwise component, one observes that the signs in the diagrammatic definition of the surgery are exactly the ones by which $X_{(j,l)}$ and $X_{(i,l)}$ differ from a tag for their respective components;
whereas in case of a clockwise circle, the circle corresponds to ${\rm sign}_D(i,l)X_{(i,l)}$ under identification \eqref{eqn:diagram_algebraic}, which in both cases is mapped to $(-1)^{{\pos}(i)}{\rm sign}_D(i,l)X_{(i,l)}X_{(j,l)}$. Rewriting this in terms of tags for both circles gives the desired signs.
\end{proof}

\begin{remark}
As in the diagrammatic definition the signs in the first two cases in Lemma \ref{lem:algebraic_split} can be combined by using ${\rm sign}_D(i,l){\rm sign}_D(j,l)$ again.
\end{remark}

\subsection{Commutativity of surgeries}

In this section we show that if two successive surgeries can be performed in the opposite order then composing the surgery maps in any orders will yield the same result. This will allow to choose an arbitrary sequence of surgeries to define the multiplication.

\begin{definition}
Let $D=\un{\lambda}(\mathbf{a})\ov{\mu}$ be a stacked circle diagram of height $h$ for $\Lambda$ and $J \subset P_\bu(\Lambda) \times \{0,\ldots,h\}$. Two elements $x_1$ and $x_2$ of $J$ are \emph{directly connected in $D$ with respect to $J$} if there exists a sequence of arcs in $D$ connecting $x_1$ with $x_2$ without containing any $x \in J \setminus \{x_1,x_2\}$.
\end{definition}

For a cup-cap-pair in a stacked circle diagram $D$ connecting the vertices of $J= \{(i,l-1),(j,l-1),(i,l),(j,l)\}$, we say that two distinct vertices in $J$ are \emph{neighboured} if at least one of the two coordinates coincide, otherwise we say they are \emph{opposite}.

The next lemma is a technical tool to ensure that two vertices diagonally opposite of each other in a given outer cup-cap-pair cannot be directly connected with respect to the vertices of the pair.

\begin{lemma} \label{lem:opposite_cannot_connect}
Let $D=\un{\lambda}(\mathbf{a})\ov{\mu}$ be a stacked circle diagram of height $h$ for $\Lambda$. Assume that the vertices $i < j$ are connected by an outer cup in $a_l$ for some $1 \leq l \leq h$. Let $J=\{(i,l-1),(j,l-1),(i,l),(j,l)\}$. Then opposite vertices in $J$ are not directly connected with respect to $J$.
\end{lemma}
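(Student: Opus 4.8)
The plan is to argue by contradiction: suppose two opposite vertices of $J$, say $(i,l-1)$ and $(j,l)$, are directly connected with respect to $J$, witnessed by a simple path $P$ of arcs of $D$ avoiding the other two vertices $(i,l)$ and $(j,l-1)$. Write $\gamma$ for the given outer cup of $a_l$ joining $(i,l)$ and $(j,l)$, and $\gamma^{*}$ for its mirror cap in $a_l^{*}$ joining $(i,l-1)$ and $(j,l-1)$; these are the arcs that a surgery at level $l$ would cut. Since $P$ is connected it lies in a single connected component $K$ of $D$ (a circle or a line). At $(j,l)$ the two incident arcs are $\gamma$ and the other arc at that vertex (going up into $a_{l+1}^{*}$), and at $(i,l-1)$ they are $\gamma^{*}$ and the arc going down into $a_{l-1}$; since $K$ contains these two endpoints it contains both incident arcs at each, so $\gamma,\gamma^{*}\subset K$ and in particular all four vertices of $J$ lie on $K$.

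Two structural facts drive the argument. First, because $\gamma$ and $\gamma^{*}$ are single arcs of $K$, the pair $(i,l),(j,l)$ is adjacent on $K$ and the pair $(i,l-1),(j,l-1)$ is adjacent on $K$ (no other vertex of $J$ lies between them along $K$). Hence, reading the four vertices of $J$ in the cyclic order (if $K$ is a circle) or linear order (if $K$ is a line) induced by $K$, they split into the two blocks $\{(i,l),(j,l)\}$ and $\{(i,l-1),(j,l-1)\}$. Second, I would use that $\gamma$ is \emph{outer}: together with $\gamma^{*}$ it bounds, in the band between levels $l-1$ and $l$, a lens-shaped region $\Omega$ lying over the interval $[i,j]$ whose interior meets no arc of $D$. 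Indeed, any arc of $D$ in that band is a cup of $a_l$ or a cap of $a_l^{*}$, and one meeting the interior of $\Omega$ would have to enclose $\gamma$ (resp.\ $\gamma^{*}$), contradicting that $\gamma$ is outer, or be a ray strictly inside the cup $\gamma$, which is impossible. Closing $\Omega$ off by the two vertical segments $s_i$ (joining $(i,l-1)$ to $(i,l)$) and $s_j$ (joining $(j,l-1)$ to $(j,l)$) makes $\partial\Omega=\gamma\cup s_j\cup\gamma^{*}\cup s_i$ a Jordan curve with empty interior.

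Now comes the key planarity step. The region $\Omega$ is connected, open, and disjoint from $K$ (its boundary meets $K$ only along $\gamma,\gamma^{*}$), so it lies entirely on one side of $K$; consequently the auxiliary segments $s_i,s_j$, which lie in $\overline{\Omega}$, lie on the \emph{same} side of $K$ and, being the two sides of the empty lens, are disjoint. Regarding that side of $K$ as a disk with $K$ on its boundary (adding the point at infinity in the circle/exterior case), two disjoint arcs with boundary endpoints $\{(i,l),(i,l-1)\}$ and $\{(j,l),(j,l-1)\}$ cannot have interleaving endpoints. Among the orderings compatible with the two blocks above, the ordering $(i,l),(j,l),(i,l-1),(j,l-1)$ makes these two pairs interleave and is therefore excluded; what remains is the ``linking'' order $(i,l),(j,l),(j,l-1),(i,l-1)$. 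On $K$ with this order every path from $(i,l-1)$ to $(j,l)$ passes through $(i,l)$ or through $(j,l-1)$, and symmetrically every path from $(j,l-1)$ to $(i,l)$ passes through $(i,l-1)$ or $(j,l)$. Since $P\subset K$ this contradicts the assumption that $P$ avoids both $(i,l)$ and $(j,l-1)$, proving the lemma.

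The hard part is the planarity bookkeeping of the third paragraph, and in particular isolating the single place where the \emph{outer} hypothesis is indispensable: it is exactly what forces the lens $\Omega$ to be empty, so that $s_i,s_j$ can be placed on one side of $K$ without crossing an arc of $D$. I expect that once emptiness of $\Omega$ is in hand, the circle and line cases go through uniformly via the ``no interleaving of disjoint chords in a disk'' principle; the only extra care is to confirm that a \emph{line} component still produces the two blocks and that the interleaving order is the unique obstruction, both of which follow from the adjacency facts supplied by $\gamma$ and $\gamma^{*}$.
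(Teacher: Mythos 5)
Your proof is correct, and it exploits the hypothesis in exactly the same way the paper does: \emph{outer} forces the lens between $\gamma$ and $\gamma^{*}$ to be free of arcs of $D$, and planarity then yields the contradiction. The mechanics, however, are genuinely different. The paper draws a single auxiliary \emph{diagonal} curve from $(i,l)$ to $(j,l-1)$ through the empty lens and combines it with the hypothesized direct connection to form a closed curve; the remaining part of the component (the complementary path $Q$ joining $(j,l)$ and $(i,l-1)$ in the circle case, or the two tails running to the boundary of the strip in the line case) is then forced to cross either the connection or the auxiliary curve, which is impossible. You instead use the two \emph{vertical} side-walls $s_i,s_j$ of the lens as disjoint chords lying on one side of the component $K$, and invoke the non-crossing-chords-in-a-disk principle to pin down the order of the four vertices of $J$ along $K$ as the ``linking'' order, from which the contradiction is read off combinatorially. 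Your route is slightly longer but buys two things: a sharper structural conclusion (the cyclic/linear order of $J$ on $K$), and a uniform treatment of circles and lines via the ``disk on one side of $K$'' device, whereas the paper's one-sentence conclusion is terse about the line case (there the separated vertices need not be joined to each other, only to the strip boundary). One small point you should make explicit: the interiors of $s_i$ and $s_j$ must be shown disjoint from $K$ (no arc of $D$ crosses the side-walls) before they qualify as chords of the disk $\overline{U}$; this follows from the same emptiness of the lens, or simply by pushing each chord slightly into $\Omega$ rel its endpoints.
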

\begin{proof}
Without loss of generality, assume that $(i,l)$ and $(j,l-1)$ are directly connected in $D$ with respect to $J$. By assumption the cup-cap-pair is an outer cup-cap-pair in $a_l$. Hence we can add in an (auxiliary) curve connecting $(i,l)$ and $(j,l-1)$ without intersecting any arc of $D$ (see the dashed curve in the pictures below).

\begin{eqnarray*}
\begin{tikzpicture}[thick,decoration={zigzag,pre length=5mm,post length=5mm}]
\begin{scope}
\draw[dotted] (-.5,1) .. controls +(0,.35) and +(0,.35) .. +(.5,0);
\draw (0,1) .. controls +(0,-.5) and +(0,-.5) .. +(.5,0);
\draw (0,0) .. controls +(0,.5) and +(0,.5) .. +(.5,0);
\draw[dotted] (-.5,0) .. controls +(0,-.5) and +(0,-.5) .. +(1,0);
\draw[dotted] (-.5,1) -- +(0,-1);
\draw[thin] (-.75,1) -- +(1.5,0);
\draw[thin] (-.75,0) -- +(1.5,0);
\draw[snake,->] (1,.5) -- +(1,0);
\end{scope}

\begin{scope}[xshift=3cm]
\draw[dotted] (-.5,1) .. controls +(0,.35) and +(0,.35) .. +(.5,0);
\draw (0,1) .. controls +(0,-.5) and +(0,-.5) .. +(.5,0);
\draw (0,0) .. controls +(0,.5) and +(0,.5) .. +(.5,0);
\draw[thick, dashed] (0,1) .. controls +(-.3,-.75) and +(.3,.75) .. +(.5,-1);
\draw[dotted] (-.5,0) .. controls +(0,-.5) and +(0,-.5) .. +(1,0);
\draw[dotted] (-.5,1) -- +(0,-1);
\draw[thin] (-.75,1) -- +(1.5,0);
\draw[thin] (-.75,0) -- +(1.5,0);
\end{scope}

\begin{scope}[xshift=6cm]
\draw[dotted] (0,1) .. controls +(0,.5) and +(0,.5) .. +(1,0);
\draw (0,1) .. controls +(0,-.5) and +(0,-.5) .. +(.5,0);
\draw (0,0) .. controls +(0,.5) and +(0,.5) .. +(.5,0);
\draw[dotted] (.5,0) .. controls +(0,-.35) and +(0,-.35) .. +(.5,0);
\draw[dotted] (1,1) -- +(0,-1);
\draw[thin] (-.25,1) -- +(1.5,0);
\draw[thin] (-.25,0) -- +(1.5,0);
\draw[snake,->] (1.5,.5) -- +(1,0);
\end{scope}

\begin{scope}[xshift=9cm]
\draw[dotted] (0,1) .. controls +(0,.5) and +(0,.5) .. +(1,0);
\draw (0,1) .. controls +(0,-.5) and +(0,-.5) .. +(.5,0);
\draw (0,0) .. controls +(0,.5) and +(0,.5) .. +(.5,0);
\draw[dotted] (.5,0) .. controls +(0,-.35) and +(0,-.35) .. +(.5,0);
\draw[dotted] (1,1) -- +(0,-1);
\draw[thin] (-.25,1) -- +(1.5,0);
\draw[thin] (-.25,0) -- +(1.5,0);
\draw[thick, dashed] (0,1) .. controls +(-.3,-.75) and +(.3,.75) .. +(.5,-1);
\end{scope}

\end{tikzpicture}
\end{eqnarray*}
But now $(j,l)$ and $(i,l-1)$ cannot be connected without intersecting the auxiliary curve, which contradicts that the cup-cap-pair was outer. \end{proof}

Lemma \ref{lem:opposite_cannot_connect} restricts the possible configurations of connections between two distinct outer cup-cap-pairs.

\begin{lemma} \label{lem:two_pairs_connect}
Let $D=\un{\lambda}(\mathbf{a})\ov{\mu}$ be a stacked circle diagram of height $h$. Assume that the vertices $a < b$ are connected by an outer cup in $a_l$ and vertices $c < d$ by an outer cup in $a_{l'}$ for some $1 \leq l,l' \leq h$. Define 
\begin{align}
\label{Js}
J&=\,\{(a,l-1),(b,l-1),(a,l),(b,l)\}, \nonumber\\
J'&=\,\{(c,l'-1),(d,l'-1),(c,l'),(d,l')\}. 
\end{align}
Then two neighbouring vertices in $J$ are not directly connected  to two opposite vertices in $J'$ with respect to $J \cup J'$.
\end{lemma}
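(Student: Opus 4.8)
The plan is to argue by contradiction using the planarity of $D$ together with the Jordan curve theorem, exactly in the spirit of the proof of Lemma~\ref{lem:opposite_cannot_connect}. So suppose that there are two neighbouring vertices $x_1,x_2\in J$ and two opposite vertices $y_1,y_2\in J'$ such that $x_1$ is directly connected to $y_1$ and $x_2$ is directly connected to $y_2$ with respect to $J\cup J'$; the ``crossed'' matching $x_1\leftrightarrow y_2$, $x_2\leftrightarrow y_1$ is the same statement after relabelling $y_1,y_2$. Let $\alpha_1$ and $\alpha_2$ be the corresponding sequences of arcs in $D$, so that by definition their interiors avoid every vertex of $J\cup J'$ other than the four chosen endpoints. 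I first record that $\alpha_1$ and $\alpha_2$ may be taken disjoint: if $x_1,y_1$ and $x_2,y_2$ lie on different connected components this is automatic, while if they lie on a common circle the four points cut it into arcs and one checks that the vertex-avoiding choices of $\alpha_1,\alpha_2$ cannot overlap without forcing some vertex of $J\cup J'$ into an interior.

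Next I would produce two auxiliary curves, one in each of the two empty outer lenses. Since the cup connecting $c<d$ in $a_{l'}$ is outer, the lens bounded by this cup and the opposite cap contains no arc of $D$, so exactly as in Lemma~\ref{lem:opposite_cannot_connect} the diagonal $\kappa'$ joining the opposite corners $y_1,y_2$ can be drawn inside this lens meeting no arc of $D$; moreover $\kappa'$ separates the two remaining corners $z_1',z_2'$ of $J'$ from one another. Likewise, since $x_1,x_2$ are neighbouring corners of the outer lens attached to the cup $a<b$ in $a_l$, they are joined either by one of the two bounding arcs (a cup or a cap of $D$) or, by outerness, by an auxiliary segment hugging a side of that lens; call this curve $\kappa$ and note that it meets no arc of $D$ in its interior. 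The concatenation $\Gamma=\kappa\cup\alpha_1\cup\kappa'\cup\alpha_2$ then closes up into a simple closed curve running $x_1\to x_2\to y_2\to y_1\to x_1$, whose only pieces lying on $D$ are $\alpha_1$ and $\alpha_2$.

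Finally I would apply the Jordan curve theorem to $\Gamma$. Near the lens of $J'$ the curve $\Gamma$ coincides with the diagonal $\kappa'$, so the remaining corners $z_1',z_2'$ lie on opposite sides of $\Gamma$; say $z_1'$ is enclosed. Using outerness of the cup $c<d$ once more, one connects $z_1'$ to $z_2'$ along the boundary of its lens (through the cup, a vertical side, and the cap) without crossing any arc of $D$; since $z_1'$ is inside and $z_2'$ outside, this boundary route must cross $\Gamma$. But it can cross neither $\kappa$ nor $\kappa'$, which run through lens interiors free of arcs, and it cannot cross $\alpha_1$ or $\alpha_2$ transversally, because $D$ is planar and any crossing would force a shared vertex lying in $J\cup J'$, contradicting directness. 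This contradiction proves the lemma. I expect the \emph{main obstacle} to be precisely the planar bookkeeping in this last step: one must run through the cases according to which neighbouring pair $x_1,x_2$ is chosen, which opposite pair $y_1,y_2$ is chosen, and the relative order of the levels $l,l'$ and positions $a,b,c,d$, verifying in each that the enclosed corner genuinely forces a forbidden crossing. By the evident left--right and top--bottom symmetries of the two lenses it should suffice to treat one representative configuration in full, together with the possibly overlapping-paths subtlety noted above.
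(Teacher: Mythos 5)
There is a genuine gap in your final step, and it is fatal to the whole strategy. Your contradiction rests on the claim that the route joining the remaining corners $z_1',z_2'$ of the $J'$-lens "cannot cross $\Gamma$''. But $z_1'$ and $z_2'$ are \emph{opposite} corners of that lens (in the cyclic order along its boundary the corners alternate $y_1,z_1',y_2,z_2'$), so any route along the lens boundary must pass through $y_1$ or $y_2$ — and at that shared corner it genuinely crosses $\Gamma$. Concretely, at $y_1$ four curve-germs meet: the arc of $D$ along which $\alpha_1$ leaves $y_1$, the diagonal $\kappa'$, the cup of the $J'$-pair, and the auxiliary lens side. Their cyclic order around $y_1$ interleaves the pair used by $\Gamma$ (the $\alpha_1$-arc and $\kappa'$) with the pair used by your route (the cup and the side), so the crossing at $y_1$ is transversal. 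Pushing the route into the lens interior does not help: near $y_1$ the diagonal $\kappa'$ separates the cup side of the lens from the side-edge side, so you cross $\kappa'$ instead; likewise the straight route $z_1'\to z_2'$ through the lens crosses $\kappa'$ once. In every variant the route crosses $\Gamma$ exactly once, which is perfectly consistent with the Jordan curve theorem and the fact that $z_1'$, $z_2'$ lie on opposite sides — no contradiction follows. (What makes such arguments work in the paper's proof of Lemma~\ref{lem:opposite_cannot_connect} is a connecting path lying in $D$ itself, namely the complementary portion of the circle through the corners, which can cross neither the $D$-part of the closed curve, since $D$ is embedded, nor the diagonal, by outerness. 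Your proposal never produces such a path.)

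The paper avoids all of this planar bookkeeping by a one-line reduction to Lemma~\ref{lem:opposite_cannot_connect}. If the neighbouring pair $x_1,x_2\in J$ is joined by the cup (or cap) of the $J$-pair, then concatenating $\alpha_1^{-1}$, that arc, and $\alpha_2$ gives a sequence of arcs from $y_1$ to $y_2$ meeting no vertex of $J'$ other than its endpoints; hence two opposite vertices of $J'$ are directly connected with respect to $J'$, contradicting Lemma~\ref{lem:opposite_cannot_connect}. If instead $x_1,x_2$ are vertically neighboured (same position, levels $l-1$ and $l$), one first replaces the $J$ cup-cap pair by two vertical lines as in a surgery — this preserves $\alpha_1,\alpha_2$ and the outerness of the $J'$-pair — after which $x_1,x_2$ are joined by an arc and the same concatenation applies. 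You should adopt this reduction; the self-contained Jordan-curve approach, as you set it up, cannot be repaired without effectively re-proving Lemma~\ref{lem:opposite_cannot_connect} with the concatenated path playing the role of the direct connection.
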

\begin{proof}
Without loss of generality, assume that say $(a,l)$ is directly connected to $(d,l')$ and $(b,l)$ is directly connected to $(c,l'-1)$. This implies that $(d,l')$ and $(c,l'-1)$ are directly connected with respect to $J'$, contradicting Lemma~\ref{lem:opposite_cannot_connect}. If we assume instead that $(a,l-1)$ is directly connected to $(c,l'-1)$, we first change the cup-cap-pair in $D$ connecting $(a,l)$ and $(b,l)$ in $a_l$ into two lines (like in a surgery) and then argue as above.
\end{proof}

We now show that surgeries commute if they fit into a square configuration of four stacked circle diagrams obtained from each other by surgeries as follows:
\begin{eqnarray*}
\begin{tikzpicture}[thick,decoration={zigzag,pre length=5mm,post length=5mm}]
\begin{scope}
\node at (0,0) {$D$};
\node at (1,-.5) {$E_2$};
\node at (1,.5) {$E_1$};
\node at (2,0) {$F$};
\draw[thin,->] (.25,.1) -- +(.5,.3);
\draw[thin,->] (.25,-.1) -- +(.5,-.3);
\draw[thin,->] (1.25,.4) -- +(.5,-.3);
\draw[thin,->] (1.25,-.4) -- +(.5,.3);
\end{scope}
\end{tikzpicture}
\end{eqnarray*}

\begin{theorem} \label{thm:surgeries_commute}
Let $D=\un{\lambda}(\mathbf{a})\ov{\mu}$ be a stacked circle diagram. Assume $E_1=\un{\lambda}(\mathbf{b^{\scriptscriptstyle(1)}})\ov{\mu}$ and $E_2=\un{\lambda}(\mathbf{b^{\scriptscriptstyle(2)}})\ov{\mu}$ are obtained from $D$ by a surgery. Moreover, assume $F=\un{\lambda}(\mathbf{c})\ov{\mu}$ is obtained from both, $E_1$ and $E_2$, by a surgery. Then
\begin{eqnarray*}
{\rm surg}_{E_1,F} \circ {\rm surg}_{D,E_1}& =& {\rm surg}_{E_2,F} \circ {\rm surg}_{D,E_2}.
\end{eqnarray*}
\end{theorem}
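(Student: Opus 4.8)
The plan is to pass entirely into the algebraic model $\cM(-)$ of \eqref{eqn:diagram_algebraic} and to exploit that there every surgery map is linear over the common polynomial ring $R=\mC[X_{(i,l)}\mid 0\le l\le h,\ i\in P_\bu(\Lambda)]$. First I would dispose of the reconnect case: by the line-free reformulation of Section~\ref{annoying}, the four surgeries together with the maps $\iota$ and $\pi$ may be replaced by genuine merges and splits on diagrams without lines, so it suffices to treat the case in which each of the four surgery maps is a merge or a split. By Lemmas~\ref{lem:algebraic_merge} and \ref{lem:algebraic_split}, under the identifications $\Psi_D,\ldots,\Psi_F$ every such map $\cM(\cdot)\to\cM(\cdot')$ is the $R$-linear map sending $1$ to a multiplier $m$, where $m=1$ if the surgery is a merge and $m=(-1)^{\pos(i)}(X_{(j,l)}\mp X_{(i,l)})$ if it is a split at positions $i<j$ on level $l$ (the sign in $\mp$ recording dottedness of the cut pair). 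Since each $\cM(\cdot)$ is a cyclic $R$-module, both composites $\cM(D)\to\cM(F)$ are again $R$-linear and hence determined by the image of $1$. The theorem therefore reduces to the single congruence
\begin{equation*}
m_{E_1,F}\, m_{D,E_1}\;\equiv\;m_{E_2,F}\, m_{D,E_2}\pmod{I(F)},
\end{equation*}
where each factor is the multiplier recorded above.

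I would then organise the verification by the merge/split \emph{types} of the two surgeries, at the pairs $P$ and $Q$. The crucial point is that the linear forms $\ell_P=X_{(j_P,l_P)}\mp X_{(i_P,l_P)}$ and $\ell_Q$, and the signs $(-1)^{\pos(i_P)},(-1)^{\pos(i_Q)}$, depend only on the cut positions, their levels and the dottedness of the cut cup; these data are intrinsic to $P$ and $Q$ and are unchanged by performing the other surgery. By Lemma~\ref{lem:opposite_cannot_connect} the ``opposite'' pairing of the four vertices of a pair never occurs, so each single surgery is a merge or a split, and the type at one pair can only change when an external connection routes through the other pair. Moreover, since each surgery map is homogeneous of degree zero (Proposition~\ref{forgotgraded}) and $\deg m$ equals twice the number of split factors, both composites contain the same number of splits. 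Combined with the commutativity of $R$, this leaves only two situations: either the two orders split the same pair (and merge the other), in which case the multipliers are literally equal; or the orders swap which pair is split.

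The ``type swap'', where one order is ($P$ split, $Q$ merge) and the other ($P$ merge, $Q$ split), is the crux of the argument and the step I expect to require the most care. Here I must establish
\begin{equation*}
(-1)^{\pos(i_P)}\ell_P\;\equiv\;(-1)^{\pos(i_Q)}\ell_Q\pmod{I(F)}.
\end{equation*}
I would prove this by locating the vertices $i_P,j_P,i_Q,j_Q$ on a single connected component of $F$: a type swap forces the external connections between the two pairs into a very restricted form, and Lemma~\ref{lem:two_pairs_connect} rules out exactly those configurations that would connect neighbouring vertices of one pair to opposite vertices of the other, or that would separate these vertices onto different components of $F$. Reading off the cup/cap and ray relations generating $I(F)$ along this common component then identifies the two linear forms, and Remark~\ref{rmk:signonsequence} is used to track how the signs of the indeterminates $X$ propagate across undotted versus dotted arcs, so that the relative sign comes out to be precisely the discrepancy $(-1)^{\pos(i_P)}$ against $(-1)^{\pos(i_Q)}$. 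Matching these parities against the number of undotted arcs on the connecting path is the delicate bookkeeping on which the whole case turns.

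Assembling the cases yields the displayed congruence and hence the equality of the two composite surgery maps, proving the theorem. Finally, since the multiplication on $\D$ is defined by performing surgery at all cup-cap-pairs in the middle section in some chosen order, the commutativity of adjacent interchangeable surgeries just established shows the result is independent of that order, which is exactly the input needed to deduce the associativity of the product.
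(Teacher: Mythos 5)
Your overall strategy---pass to the algebraic model of \eqref{eqn:diagram_algebraic}, encode each merge/split as a multiplier via Lemmas~\ref{lem:algebraic_merge} and \ref{lem:algebraic_split}, and reduce commutativity to a congruence of multiplier products modulo $I(F)$ settled by a connectivity case analysis---is essentially the paper's proof. But there is a genuine gap: the reduction presupposes that $E_1$ and $E_2$ are orientable, so that every surgery map really is multiplication by a well-defined element of the polynomial ring. The split rule explicitly declares the map to be zero when the resulting diagram is not orientable, and this degenerate case genuinely occurs: in Example~\ref{ex:splitnotorientable} one order of surgeries passes through a non-orientable intermediate diagram, so that route is the zero map and has no multiplier at all (multiplication by the formal linear form $\ell_P$ need not send $I(D)$ into $I(F)$, so it does not even define a map $\cM(D)\to\cM(F)$). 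In that situation the theorem requires a separate proof that the \emph{other} route, a composition of two individually nonzero maps, also vanishes, i.e.\ that its multiplier product lies in $I(F)$. This is exactly the paper's dedicated ``orientability in Cases 4b and 6b'' argument: non-orientability of $E_1$ forces an odd number of undotted cups/caps along an arc sequence joining $(b,l-1)$ to $(b,l)$, which rewrites the live route's split multiplier as a multiple of $X_{(b,l-1)}-X_{(b,l)}$, and this element lies in $I(F)$ because the second surgery turns that cup-cap pair into vertical rays. Nothing in your plan supplies this step.

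Second, in your crucial ``type swap'' case the congruence $(-1)^{\pos(i_P)}\ell_P\equiv(-1)^{\pos(i_Q)}\ell_Q \pmod{I(F)}$ is \emph{false} for arbitrary stacked circle diagrams: substituting the relations of $I(F)$, the two sides differ by the sign $(-1)^{\pos(i_Q)-\pos(i_P)}\,{\rm sign}_D(j_P,l_P)\,{\rm sign}_D(j_Q,l_Q)$ of \eqref{signscomp}, and the unlabelled example on non-admissible diagrams in Section~\ref{sec:examples} exhibits diagrams where this sign is $-1$, so that the two composites honestly disagree. The paper closes this case only by invoking admissibility: it shows that admissibility forces two of the four arc sequences joining the two cup-cap pairs to carry no dots, which pins down the parity of (un)dotted cups and caps on all of them and makes the total sign equal to $1$. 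You flag the sign bookkeeping as the delicate point and cite Remark~\ref{rmk:signonsequence} and Lemma~\ref{lem:two_pairs_connect}, but you never invoke admissibility, which is the one hypothesis that makes the bookkeeping come out right; carried out as written, your verification would terminate in a residual sign that cannot be removed.
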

\begin{proof}
The proof requires to distinguish between a number of cases depending on how the two surgeries interact with each other. Admissibility only appears in the very last step of the proof to ensure that certain signs vanish.

Let $l,a,b$ be such that $E_1$ is obtained from $D$ by a surgery at level $l$ at positions $a < b$ (or equivalently $F$ is obtained from $E_2$ by a surgery at the same level and positions) and let $l',c,d$ be such that $E_2$ is obtained from $D$ by a surgery at level $l'$ at positions $c < d$ (or the same statement for $F$ and $E_1$). Define $J$ and $J'$ as in \eqref{Js} of Lemma \ref{lem:two_pairs_connect}. In the following ``directly connected'' always means directly connected with respect to $J \cup J'$.  We will distinguish the cases by how $(a,l)$ is connected to the other vertices in $J \cup J'$. By Lemma~\ref{lem:two_pairs_connect} there are $6$ cases (with some subcases): %which diagrammatically look as follows:
\begin{center}
\begin{tikzpicture}[thick,decoration={zigzag,pre length=1mm,post length=1mm},scale=0.96]
\begin{scope}
\draw[dotted] (0,1) .. controls +(0,.35) and +(0,.35) .. +(.5,0);
\draw (0,1) .. controls +(0,-.5) and +(0,-.5) .. +(.5,0);
\draw (0,0) .. controls +(0,.5) and +(0,.5) .. +(.5,0);
\draw[dotted] (0,0) -- +(0,-.4);
\draw[dotted] (.5,0) -- +(0,-.4);

\draw (1,-1) .. controls +(0,-.5) and +(0,-.5) .. +(.5,0);
\draw (1,-2) .. controls +(0,.5) and +(0,.5) .. +(.5,0);
\draw[dotted] (1,-1) -- +(0,.4);
\draw[dotted] (1.5,-1) -- +(0,.4);
\draw[dotted] (1,-2) -- +(0,-.4);
\draw[dotted] (1.5,-2) -- +(0,-.4);

\node at (0.75,2.2) {Case 1};

\node at (0,1.7) {$\scriptstyle a$};
\node at (.5,1.7) {$\scriptstyle b$};
\node at (1,1.7) {$\scriptstyle c$};
\node at (1.5,1.7) {$\scriptstyle d$};

\node at (-.55,1) {$\scriptstyle l$};
\node at (-.55,0) {$\scriptstyle l-1$};
\node at (-.55,-1) {$\scriptstyle l'$};
\node at (-.65,-2) {$\scriptstyle l'-1$};
\draw[thin] (-.25,1) -- +(2,0);
\draw[thin] (-.25,0) -- +(2,0);
\draw[thin] (-.25,-1) -- +(2,0);
\draw[thin] (-.25,-2) -- +(2,0);
\end{scope}

\begin{scope}[xshift=0cm,yshift=-5.5cm]
\draw[dotted] (-.25,1) .. controls +(0,.2) and +(0,.2) .. +(.25,0);
\draw[dotted] (-.25,0) .. controls +(0,-.2) and +(0,-.2) .. +(.25,0);
\draw[dotted] (-.25,1) -- +(0,-1);
\draw (0,1) .. controls +(0,-.5) and +(0,-.5) .. +(.5,0);
\draw (0,0) .. controls +(0,.5) and +(0,.5) .. +(.5,0);
\draw[dotted] (.5,1) -- +(0,.4);
\draw[dotted] (.5,0) -- +(0,-.4);

\draw (1,-1) .. controls +(0,-.5) and +(0,-.5) .. +(.5,0);
\draw (1,-2) .. controls +(0,.5) and +(0,.5) .. +(.5,0);
\draw[dotted] (1,-1) -- +(0,.4);
\draw[dotted] (1.5,-1) -- +(0,.4);
\draw[dotted] (1,-2) -- +(0,-.4);
\draw[dotted] (1.5,-2) -- +(0,-.4);

\node at (0.75,2.2) {Case 2};

\node at (0,1.7) {$\scriptstyle a$};
\node at (.5,1.7) {$\scriptstyle b$};
\node at (1,1.7) {$\scriptstyle c$};
\node at (1.5,1.7) {$\scriptstyle d$};

\node at (-.55,1) {$\scriptstyle l$};
\node at (-.55,0) {$\scriptstyle l-1$};
\node at (-.55,-1) {$\scriptstyle l'$};
\node at (-.65,-2) {$\scriptstyle l'-1$};

\draw[thin] (-.25,1) -- +(2,0);
\draw[thin] (-.25,0) -- +(2,0);
\draw[thin] (-.25,-1) -- +(2,0);
\draw[thin] (-.25,-2) -- +(2,0);
\end{scope}

\begin{scope}[xshift=2.5cm]
\draw[dotted] (0,1) .. controls +(0,.5) and +(0,.5) .. +(1.5,0);
\draw[dotted] (1.5,1) -- +(0,-2);
\draw[dotted] (.5,1) .. controls +(0,.2) and +(0,.2) .. +(.25,0);
\draw[dotted] (.75,1) -- +(0,-3);
\draw[dotted] (.75,-2) .. controls +(0,-.75) and +(0,-.75) .. +(.75,0);
\draw (0,1) .. controls +(0,-.5) and +(0,-.5) .. +(.5,0);
\draw (0,0) .. controls +(0,.5) and +(0,.5) .. +(.5,0);
\draw[dotted] (0,0) -- +(0,-.4);
\draw[dotted] (.5,0) -- +(0,-.4);

\draw (1,-1) .. controls +(0,-.5) and +(0,-.5) .. +(.5,0);
\draw (1,-2) .. controls +(0,.5) and +(0,.5) .. +(.5,0);
\draw[dotted] (1,-1) -- +(0,.4);
\draw[dotted] (1,-2) -- +(0,-.2);

\node at (0.75,2.2) {Case 3a};

\node at (0,1.7) {$\scriptstyle a$};
\node at (.5,1.7) {$\scriptstyle b$};
\node at (1,1.7) {$\scriptstyle c$};
\node at (1.5,1.7) {$\scriptstyle d$};

\draw[thin] (-.25,1) -- +(2,0);
\draw[thin] (-.25,0) -- +(2,0);
\draw[thin] (-.25,-1) -- +(2,0);
\draw[thin] (-.25,-2) -- +(2,0);
\end{scope}

\begin{scope}[xshift=2.5cm,yshift=-5.5cm]
\draw[dotted] (0,1) .. controls +(0,.5) and +(0,.5) .. +(1.5,0);
\draw[dotted] (1.5,1) -- +(0,-2);
\draw[dotted] (1,1) -- +(0,-2);
\draw[dotted] (.5,1) .. controls +(0,.35) and +(0,.35) .. +(.5,0);
\draw (0,1) .. controls +(0,-.5) and +(0,-.5) .. +(.5,0);
\draw (0,0) .. controls +(0,.5) and +(0,.5) .. +(.5,0);
\draw[dotted] (0,0) -- +(0,-2);
\draw[dotted] (.5,0) -- +(0,-2);

\draw (1,-1) .. controls +(0,-.5) and +(0,-.5) .. +(.5,0);
\draw (1,-2) .. controls +(0,.5) and +(0,.5) .. +(.5,0);
\draw[dotted] (0,-2) .. controls +(0,-.75) and +(0,-.75) .. +(1.5,0);
\draw[dotted] (.5,-2) .. controls +(0,-.5) and +(0,-.5) .. +(.5,0);

\node at (0.75,2.2) {Case 3b};

\node at (0,1.7) {$\scriptstyle a$};
\node at (.5,1.7) {$\scriptstyle b$};
\node at (1,1.7) {$\scriptstyle c$};
\node at (1.5,1.7) {$\scriptstyle d$};

\draw[thin] (-.25,1) -- +(2,0);
\draw[thin] (-.25,0) -- +(2,0);
\draw[thin] (-.25,-1) -- +(2,0);
\draw[thin] (-.25,-2) -- +(2,0);
\end{scope}

\begin{scope}[xshift=10cm]
\draw[dotted] (0,1) .. controls +(0,.5) and +(0,.5) .. +(1.75,0);
\draw[dotted] (1.75,1) -- +(0,-3);
\draw[dotted] (1.5,-2) .. controls +(0,-.2) and +(0,-.2) .. +(.25,0);
\draw[dotted] (.5,1) .. controls +(0,.2) and +(0,.2) .. +(.25,0);
\draw[dotted] (.75,1) -- +(0,-3);
\draw[dotted] (.75,-2) .. controls +(0,-.2) and +(0,-.2) .. +(.25,0);
\draw (0,1) .. controls +(0,-.5) and +(0,-.5) .. +(.5,0);
\draw (0,0) .. controls +(0,.5) and +(0,.5) .. +(.5,0);
\draw[dotted] (0,0) -- +(0,-.4);
\draw[dotted] (.5,0) -- +(0,-.4);

\draw (1,-1) .. controls +(0,-.5) and +(0,-.5) .. +(.5,0);
\draw (1,-2) .. controls +(0,.5) and +(0,.5) .. +(.5,0);
\draw[dotted] (1,-1) -- +(0,.4);
\draw[dotted] (1.5,-1) -- +(0,.4);

\node at (0.75,2.2) {Case 6a};

\node at (0,1.7) {$\scriptstyle a$};
\node at (.5,1.7) {$\scriptstyle b$};
\node at (1,1.7) {$\scriptstyle c$};
\node at (1.5,1.7) {$\scriptstyle d$};

\draw[thin] (-.25,1) -- +(2,0);
\draw[thin] (-.25,0) -- +(2,0);
\draw[thin] (-.25,-1) -- +(2,0);
\draw[thin] (-.25,-2) -- +(2,0);
\end{scope}

\begin{scope}[xshift=5cm]
\draw[dotted] (0,1) .. controls +(0,.5) and +(0,.5) .. +(1,0);
\draw[dotted] (1,1) -- +(0,-2);
\draw[dotted] (.5,1) .. controls +(0,.2) and +(0,.2) .. +(.25,0);
\draw[dotted] (.75,1) -- +(0,-3);
\draw[dotted] (.75,-2) .. controls +(0,-.75) and +(0,-.75) .. +(1,0);
\draw (0,1) .. controls +(0,-.5) and +(0,-.5) .. +(.5,0);
\draw (0,0) .. controls +(0,.5) and +(0,.5) .. +(.5,0);
\draw[dotted] (0,0) -- +(0,-.4);
\draw[dotted] (.5,0) -- +(0,-.4);

\draw[dotted] (1.5,-1) .. controls +(0,.2) and +(0,.2) .. +(.25,0);
\draw[dotted] (1.75,-1) -- +(0,-1);
\draw (1,-1) .. controls +(0,-.5) and +(0,-.5) .. +(.5,0);
\draw (1,-2) .. controls +(0,.5) and +(0,.5) .. +(.5,0);
\draw[dotted] (1,-1) -- +(0,.4);
\draw[dotted] (1,-2) -- +(0,-.2);
\draw[dotted] (1.5,-2) -- +(0,-.2);

\node at (0.75,2.2) {Case 4a};

\node at (0,1.7) {$\scriptstyle a$};
\node at (.5,1.7) {$\scriptstyle b$};
\node at (1,1.7) {$\scriptstyle c$};
\node at (1.5,1.7) {$\scriptstyle d$};

\draw[thin] (-.25,1) -- +(2,0);
\draw[thin] (-.25,0) -- +(2,0);
\draw[thin] (-.25,-1) -- +(2,0);
\draw[thin] (-.25,-2) -- +(2,0);
\end{scope}

\begin{scope}[xshift=5cm,yshift=-5.5cm]
\draw[dotted] (0,1) .. controls +(0,.5) and +(0,.5) .. +(1,0);
\draw[dotted] (1,1) -- +(0,-2);
\draw[dotted] (.5,1) .. controls +(0,.2) and +(0,.2) .. +(.25,0);
\draw[dotted] (.75,1) -- +(0,-3);
\draw[dotted] (.75,-2) .. controls +(0,-.2) and +(0,-.2) .. +(.25,0);
\draw (0,1) .. controls +(0,-.5) and +(0,-.5) .. +(.5,0);
\draw (0,0) .. controls +(0,.5) and +(0,.5) .. +(.5,0);
\draw[dotted] (0,0) -- +(0,-2);
\draw[dotted] (.5,0) -- +(0,-2);
\draw[dotted] (.5,-2) .. controls +(0,-.5) and +(0,-.5) .. +(1,0);
\draw[dotted] (0,-2) .. controls +(0,-.75) and +(0,-.75) .. +(1.75,0);

\draw[dotted] (1.5,-1) .. controls +(0,.2) and +(0,.2) .. +(.25,0);
\draw[dotted] (1.75,-1) -- +(0,-1);
\draw (1,-1) .. controls +(0,-.5) and +(0,-.5) .. +(.5,0);
\draw (1,-2) .. controls +(0,.5) and +(0,.5) .. +(.5,0);

\node at (0.75,2.2) {Case 4b};

\node at (0,1.7) {$\scriptstyle a$};
\node at (.5,1.7) {$\scriptstyle b$};
\node at (1,1.7) {$\scriptstyle c$};
\node at (1.5,1.7) {$\scriptstyle d$};

\draw[thin] (-.25,1) -- +(2,0);
\draw[thin] (-.25,0) -- +(2,0);
\draw[thin] (-.25,-1) -- +(2,0);
\draw[thin] (-.25,-2) -- +(2,0);
\end{scope}

\begin{scope}[xshift=7.5cm]
\draw[dotted] (-.25,1) .. controls +(0,.2) and +(0,.2) .. +(.25,0);
\draw[dotted] (-.25,1) -- +(0,-3);
\draw[dotted] (.5,1) .. controls +(0,.35) and +(0,.35) .. +(.5,0);
\draw[dotted] (1,1) -- +(0,-2);
\draw[dotted] (-.25,-2) .. controls +(0,-.75) and +(0,-.75) .. +(1.25,0);
\draw (0,1) .. controls +(0,-.5) and +(0,-.5) .. +(.5,0);
\draw (0,0) .. controls +(0,.5) and +(0,.5) .. +(.5,0);
\draw[dotted] (0,0) -- +(0,-.4);
\draw[dotted] (.5,0) -- +(0,-.4);

\draw (1,-1) .. controls +(0,-.5) and +(0,-.5) .. +(.5,0);
\draw (1,-2) .. controls +(0,.5) and +(0,.5) .. +(.5,0);
\draw[dotted] (1.5,-1) -- +(0,.4);
\draw[dotted] (1.5,-2) -- +(0,-.4);

\node at (0.75,2.2) {Case 5a};

\node at (0,1.7) {$\scriptstyle a$};
\node at (.5,1.7) {$\scriptstyle b$};
\node at (1,1.7) {$\scriptstyle c$};
\node at (1.5,1.7) {$\scriptstyle d$};

\draw[thin] (-.25,1) -- +(2,0);
\draw[thin] (-.25,0) -- +(2,0);
\draw[thin] (-.25,-1) -- +(2,0);
\draw[thin] (-.25,-2) -- +(2,0);
\end{scope}

\begin{scope}[xshift=7.5cm,yshift=-5.5cm]
\draw[dotted] (-.25,1) .. controls +(0,.2) and +(0,.2) .. +(.25,0);
\draw[dotted] (-.25,1) -- +(0,-3);
\draw[dotted] (.5,1) .. controls +(0,.55) and +(0,.55) .. +(1.25,0);
\draw[dotted] (1.75,1) -- +(0,-3);
\draw[dotted] (-.25,-2) .. controls +(0,-.75) and +(0,-.75) .. +(1.25,0);
\draw[dotted] (1.5,-2) .. controls +(0,-.2) and +(0,-.2) .. +(.25,0);
\draw (0,1) .. controls +(0,-.5) and +(0,-.5) .. +(.5,0);
\draw (0,0) .. controls +(0,.5) and +(0,.5) .. +(.5,0);

\draw[dotted] (.5,0) .. controls +(0,-.2) and +(0,-.2) .. +(.25,0);
\draw[dotted] (.75,0) .. controls +(0,.35) and +(0,.35) .. +(.75,0);
\draw[dotted] (.75,-1) .. controls +(0,.2) and +(0,.2) .. +(.25,0);
\draw[dotted] (0,-1) .. controls +(0,-.35) and +(0,-.35) .. +(.75,0);
\draw[dotted] (0,0) -- +(0,-1);
\draw[dotted] (1.5,0) -- +(0,-1);

\draw (1,-1) .. controls +(0,-.5) and +(0,-.5) .. +(.5,0);
\draw (1,-2) .. controls +(0,.5) and +(0,.5) .. +(.5,0);

\node at (0.75,2.2) {Case 5b};

\node at (0,1.7) {$\scriptstyle a$};
\node at (.5,1.7) {$\scriptstyle b$};
\node at (1,1.7) {$\scriptstyle c$};
\node at (1.5,1.7) {$\scriptstyle d$};

\draw[thin] (-.25,1) -- +(2,0);
\draw[thin] (-.25,0) -- +(2,0);
\draw[thin] (-.25,-1) -- +(2,0);
\draw[thin] (-.25,-2) -- +(2,0);
\end{scope}

\begin{scope}[xshift=10cm,yshift=-5.5cm]
\draw[dotted] (0,1) .. controls +(0,.5) and +(0,.5) .. +(1.75,0);
\draw[dotted] (1.75,1) -- +(0,-3);
\draw[dotted] (1.5,-2) .. controls +(0,-.2) and +(0,-.2) .. +(.25,0);
\draw[dotted] (.5,1) .. controls +(0,.35) and +(0,.35) .. +(1,0);
\draw[dotted] (1.5,1) -- +(0,-2);
\draw[dotted] (0,-2) .. controls +(0,-.5) and +(0,-.5) .. +(1,0);
\draw[dotted] (0,0) -- +(0,-2);

\draw[dotted] (.5,-.5) .. controls +(0,-.2) and +(0,-.2) .. +(.25,0);
\draw[dotted] (.75,-.5) .. controls +(0,.2) and +(0,.2) .. +(.25,0);
\draw[dotted] (.5,0) -- +(0,-.5);
\draw[dotted] (1,-.5) -- +(0,-.5);

\draw (0,1) .. controls +(0,-.5) and +(0,-.5) .. +(.5,0);
\draw (0,0) .. controls +(0,.5) and +(0,.5) .. +(.5,0);

\draw (1,-1) .. controls +(0,-.5) and +(0,-.5) .. +(.5,0);
\draw (1,-2) .. controls +(0,.5) and +(0,.5) .. +(.5,0);

\node at (0.75,2.2) {Case 6b};

\node at (0,1.7) {$\scriptstyle a$};
\node at (.5,1.7) {$\scriptstyle b$};
\node at (1,1.7) {$\scriptstyle c$};
\node at (1.5,1.7) {$\scriptstyle d$};

\draw[thin] (-.25,1) -- +(2,0);
\draw[thin] (-.25,0) -- +(2,0);
\draw[thin] (-.25,-1) -- +(2,0);
\draw[thin] (-.25,-2) -- +(2,0);
\end{scope}

\end{tikzpicture}
\end{center}
\noindent
\textit{$\blacktriangleright$ Case 1: $(a,l)$ is directly connected to $(b,l)$:}
In this case ${\rm surg}_{D,E_1}$ and ${\rm surg}_{E_2,F}$, the surgeries involving the cup-cap-pair around the indices in $J$, are both merges. Furthermore it is easy to check that the types of both ${\rm surg}_{E_1,F}$ and ${\rm surg}_{D,E_2}$ also agree. Thus the composites agree in both cases.\\ \noindent
\textit{$\blacktriangleright$ Case 2: $(a,l)$ is directly connected to $(a,l-1)$:}
In this case ${\rm surg}_{D,E_1}$ and ${\rm surg}_{E_2,F}$ are both splits that multiply with the same expression, given in Lemma~\ref{lem:algebraic_split}, independent of the other surgery. Again it follows that ${\rm surg}_{E_1,F}$ and ${\rm surg}_{D,E_2}$ are both also of the same type and the two composites agree.

These are the only two possibilities how $(a,l)$ can be directly connected to vertices in $J$ by Lemma~\ref{lem:opposite_cannot_connect}.  These arguments are of course also valid for any of the other vertices instead of $(a,l)$.  Hence we may assume 
\begin{equation}
\label{ass}
\text{\it all vertices in $J$ are directly connected to vertices in $J'$.}
\end{equation}
\noindent 
\textit{$\blacktriangleright$ Case 3: $(a,l)$ is directly connected to $(d,l')$:}
By Lemma~\ref{lem:two_pairs_connect} there are two possibilities how $(b,l)$ can be connected to vertices in $J'$, namely $(b,l)$ is directly connected to $(d,l'-1)$, see Case 3a, or  $(b,l)$ is directly connected to $(c,l')$, see Case 3b. In the first case we are done by \eqref{ass}, since we observe that the remaining two vertices in $J$ or the remaining two in $J'$ must be connected to each other.  In the latter case Lemma \ref{lem:two_pairs_connect} implies that $(a,l-1)$ is connected to $(d,l'-1)$, and $(b,l-1)$ to $(c,l'-1)$. Then ${\rm surg}_{D,E_1}$ and ${\rm surg}_{D,E_2}$ are merges, while ${\rm surg}_{E_1,F}$ and ${\rm surg}_{E_2,F}$ are splits. Using Lemmas \ref{lem:algebraic_merge} and \ref{lem:algebraic_split} we know that if $F$ is orientable 
\begin{eqnarray*}
&{\rm surg}_{E_1,F} \circ {\rm surg}_{D,E_1} (f) = {\small (-1)^{{\pos}(c)}}(X_{(d,l')} + {\rm sign}_{D}(c,l'){\rm sign}_{D}(d,l') X_{(c,l')})f,&
\end{eqnarray*}
whereas
\begin{equation*}
{\rm surg}_{E_2,F} \circ {\rm surg}_{D,E_2}(f) = (-1)^{{\pos}(a)}(X_{(b,l)} +{\rm sign}_{D}(a,l){\rm sign}_{D}(b,l) X_{(a,l)})f,
\end{equation*}
where we use the observation that the signs do not depend on whether we determine them in $D$, $E_1$ or $E_2$. Using the  assumption on how the vertices are connected we know that in $\cM(F)$ the following equalities hold
\begin{eqnarray}
\label{formulas}
X_{(d,l')} = {\scriptstyle {\rm sign}_{D}(a,l'){\rm sign}_{D}(a,l)}X_{(a,l)}, \quad X_{(c,l')} = {\scriptstyle {\rm sign}_{D}(c,l'){\rm sign}_{D}(b,l)}X_{(b,l)}.
\end{eqnarray}
We are allowed to determine the signs in $D$ due to the fact that there are connections between $(d,l')$ and $(a,l)$ respectively $(c,l')$ and $(b,l)$ which are not changed by the surgeries.
Substituting \eqref{formulas} into ${\rm surg}_{E_1,F} \circ {\rm surg}_{D,E_1}(f)$ we see that it differs from ${\rm surg}_{E_2,F} \circ {\rm surg}_{D,E_2}(f)$ by the scalar
$$(-1)^{{\pos}(c)-{\pos}(a)} {\rm sign}_{D}(b,l) {\rm sign}_{D}(d,l').$$
We will argue at the end of the proof why this term is $1$ if we use that all diagrams are admissible.  \\ \noindent
\textit{$\blacktriangleright$ Case 4: $(a,l)$ is directly connected to $(c,l')$:}
By Lemma \ref{lem:two_pairs_connect} there are again two possibilities how $(b,l)$ can be connected to vertices in $J'$, namely Case 4a, where $(b,l)$ is directly connected to $(d,l')$ in which case we are done by \eqref{ass}, or Case 4b, where $(b,l)$ is directly connected to $(c,l'-1)$.
In this case Lemma~\ref{lem:two_pairs_connect} forces that $(a,l-1)$ is connected to $(d,l')$ and $(b,l-1)$ to $(d,l'-1)$. Then ${\rm surg}_{D,E_1}$ and ${\rm surg}_{D,E_2}$ are splits, while ${\rm surg}_{E_1,F}$ and ${\rm surg}_{E_2,F}$ are merges. In contrast to Case 3b, here we have to make sure that if one of the two surgeries produces a non-orientable diagram then the result of the other split map is zero as well. Let us first assume that both splits produce orientable diagrams. Then we have
\begin{eqnarray}
{\rm surg}_{D,E_1} (f) = (-1)^{{\pos}(a)}(X_{(b,l)} +{\rm sign}_{D}(a,l){\rm sign}_{D}(b,l) X_{(a,l)})f\label{4one},\\
{\rm surg}_{D,E_2}(f)=(-1)^{{\pos}(c)}(X_{(d,l')} + {\rm sign}_{D}(c,l'){\rm sign}_{D}(d,l') X_{(c,l')})f.\label{4two}
\end{eqnarray}
Again we use the assumptions on how the vertices are connected to obtain for $\cM(E_2)$ the following equalities  
$$X_{(d,l')} = {\scriptstyle {\rm sign}_{D}(d,l'){\rm sign}_{D}(b,l-1)}X_{(b,l-1)}, \quad X_{(c,l')} = {\scriptstyle {\rm sign}_{D}(c,l'){\rm sign}_{D}(b,l)} X_{(b,l)}.$$
Substituting these into equality \eqref{4two} and using additionally  the relation $X_{(b,l-1)} = {\scriptstyle {\rm sign}_{D}(a,l-1){\rm sign}_{D}(b,l-1)} X_{(a,l-1)}$ in $\cM(E_2)$ we obtain
\begin{eqnarray} \label{eqn:surgery4b}
& &{\rm surg}_{D,E_2}(f)\\
&=&(-1)^{{\pos}(c)}( {\scriptstyle {\rm sign}_{D}(d,l'){\rm sign}_{D}(b,l-1)}X_{(b,l-1)} + {\scriptstyle {\rm sign}_{D}(d,l') {\rm sign}_{D}(b,l)} X_{(b,l)})f \nonumber \\
&=&(-1)^{{\pos}(c)}( {\scriptstyle {\rm sign}_{D}(d,l'){\rm sign}_{D}(a,l-1)}X_{(a,l-1)} + {\scriptstyle {\rm sign}_{D}(d,l') {\rm sign}_{D}(b,l)} X_{(b,l)})f. \nonumber
\end{eqnarray}
Since we assumed that $E_1$ is orientable, it holds 
$${\rm sign}_{D}(d,l'){\rm sign}_{D}(a,l) = {\rm sign}_{D}(d,l'){\rm sign}_{D}(a,l-1).$$
Applying the second surgery and using $X_{(a,l-1)}=X_{(a,l)}$ in $\cM(F)$ we obtain
\begin{eqnarray*}
{\rm surg}_{E_2,F} \circ {\rm surg}_{D,E_2} = \left({\scriptstyle (-1)^{{\pos}(c)-{\pos}(a)} {\rm sign}_{D}(b,l) {\rm sign}_{D}(d,l')}\right) {\rm surg}_{E_1,F} \circ {\rm surg}_{D,E_1}.
\end{eqnarray*}
We will deal with the expression involving signs at the end of the proof.\noindent\\
\textit{$\blacktriangleright$ Case 5: $(a,l)$ is directly connected to $(c,l'-1)$:}
Again, by Lemma~\ref{lem:two_pairs_connect} either $(b,l)$ is directly connected to $(c,l')$, see Case 5a, in which case we are done by \eqref{ass}, or $(b,l)$ is directly connected to $(d,l'-1)$, see {Case 5b}. This latter case is done analogously to Case 3b. As a result one obtains the same expression involving signs.  \\ \noindent
\textit{$\blacktriangleright$ Case 6: $(a,l)$ is directly connected to $(d,l'-1)$:}
This behaves parallel to Case 4. The Case 6a, where $(b,l)$ is directly connected to $(c,l'-1)$, is again done by assumption \eqref{ass}.
The Case 6b, where $(b,l)$ is directly connected to $(d,l')$ is completely parallel to Case 4b; the two maps in question differ by the same sign as in Case 4b. Moreover, we observe that only one of the two sequences of surgeries might yield an orientable sequence of diagrams. We now deal first with this orientability question.\noindent\\
\textit{$\blacktriangleright$ Orientability in Cases 4b and 6b:}
Let us consider Case 4b and assume that $E_1$ is not orientable (the proof for $E_2$ being not orientable is exactly analogous). We rewrite the surgery map from Case 4b by rewriting the last line of \eqref{eqn:surgery4b} as
\begin{eqnarray*}
& & {\rm surg}_{D,E_2}(f) \\
&=& (-1)^{{\pos}(c)} {\scriptstyle {\rm sign}_{D}(d,l') {\rm sign}_{D}(b,l-1)} (X_{(b,l-1)} + {\scriptstyle {\rm sign}_{D}(b,l-1){\rm sign}_{D}(b,l)} X_{(b,l)})f.
\end{eqnarray*}
Since both, $(b,l-1)$ and $(b,l)$, are on the same connected component in $D$, ${\rm sign}_{D}(b,l-1){\rm sign}_{D}(b,l)$ agrees with $(-1)^z$ where $z$ is the number of undotted arcs in the following sequence ${\bm{\alpha}}$ of arcs: first take the direct connection from $(b,l-1)$ to $(d,l'-1)$, then from $(d,l'-1)$ to $(c,l'-1)$ and finally from $(c,l'-1)$ to $(b,l)$. By assumption this forms a sequence of arcs connecting $(b,l-1)$ and $(b,l)$ that will neither go through $(a,l-1)$ nor through $(a,l)$. Since we assumed that $E_1$ is not orientable, $z$ must be odd since  ${\bm\alpha}$ together with the new arc in $E_1$ connecting $(b,l-1)$ and $(b,l)$ forms one of the two new connected components in $E_1$. But this implies
$ {\rm surg}_{E_2,F}{\rm surg}_{D,E_2}(f) = 0$ 
and we are done. For Case 6b one argues in exactly the same way.

To finish the proof we exploit the admissibility of all involved diagrams and  show that the total signs appearing in 3b, 4b, 5b, and 6b equal $1$.  \\ \noindent
\textit{$\blacktriangleright$ Admissible diagrams and signs in Cases 3b, 4b, 5b, and 6b:} In these cases the two composites of surgery maps are either zero or differ by
\begin{equation}
\label{signscomp}
(-1)^{{\pos}(c)-{\pos}(a)} {\rm sign}_{D}(b,l) {\rm sign}_{D}(d,l').
\end{equation}
We claim this equals $1$. We only give the details for the Cases 3b and 4b, since the other two (5b and 6b) are done completely analogously.

In Case 3b, the vertices in $J\cup J'$ are connected such that ${\pos}(a)-{\pos}(c)$ is even. Furthermore all direct connections (i.e. the sequence of arcs connecting two vertices in $J \cup J'$ without going through any of the other vertices in $J \cup J'$) must contain an odd total number of cups and caps. This implies that 
$${\rm sign}_{D}(b,l) {\rm sign}_{D}(d,l') = (-1)^{ \# \{ \text{dotted cups on connection from} (b,l) \text{ to } (d,l') \} }.$$
There are two possible connections from $(b,l)$ to $(d,l')$, namely 
\begin{eqnarray}
&&(b,l) \rightarrow (c,l') \rightarrow (d,l')  \label{eqn:pathone}\\
\text{ and } &&(b,l) \rightarrow (a,l) \rightarrow (d,l').\label{eqn:pathtwo}
\end{eqnarray}
Since \eqref{eqn:pathone} and \eqref{eqn:pathtwo} form an orientable circle in $D$,  the parity of the total number of dotted cups and caps on each must be the same by Lemma~\ref{lem:stupidlemmasub}. On the other hand there are the connections
\begin{eqnarray}
&&(b,l-1) \rightarrow (c,l'-1) \rightarrow (d,l'-1)  \label{eqn:paththree} \\
\text{ and } &&(b,l-1) \rightarrow (a,l-1) \rightarrow (d,l'-1).\label{eqn:pathfour}
\end{eqnarray}
Assuming that $F$ is orientable, as otherwise the surgery is zero, we obtain that the parity of dotted cups and caps on \eqref{eqn:paththree} agrees with the one on \eqref{eqn:pathone}. Similarly the one for \eqref{eqn:pathfour} and \eqref{eqn:pathtwo} agrees, hence all of them agree.
In this way we obtain two non-intersecting arc sequences from $(b,l)$ to $(d,l')$ and two from $(b,l-1)$ to $(d,l'-1)$. An easy observation shows that two of these sequences may not contain any dotted cups/caps since we assumed both surgeries to be admissible. Which in turn implies that the parity of number of dotted cups/caps is even and hence \eqref{signscomp} equals $1$.

In Case 4b we argue in a similar manner, the only difference is that now ${\pos}(c)-{\pos}(a)$ is odd and the total number of cups and caps in the following four connections is odd
\begin{eqnarray*}
(b,l-1) \rightarrow (d,l'-1), & (b,l-1) \rightarrow (a,l-1) \rightarrow (d,l'), \\
(b,l) \rightarrow (c,l'-1) \rightarrow (d,l'-1), & (b,l) \rightarrow (a,l) \rightarrow (c,l') \rightarrow (d,l').
\end{eqnarray*}
Moreover, by orientability,  they all have the same parity for the total number of dotted cups and caps. But there are two connections that are not allowed to have any dots due to admissibility. Therefore, the parity of the total number of undotted cups and caps must be odd in each case. Hence the two signs in \eqref{signscomp} cancel and we obtain $1$ again. Hence the claim is proved and the theorem established.
\end{proof}

\section{The generalized Khovanov algebra $\D$}
In this section we finally finish the construction of the graded generalized Khovanov algebra $\D$ of type $\rm D$ and illustrate the multiplication rule in a few examples. Again we fix a block $\Lambda$.
\subsection{Multiplication in $\D$} \label{sec:multiplication} The multiplication on $\D$ can be illustrated by the following commutative diagram:
\begin{eqnarray*}
\begin{xy}
  \xymatrix{
{}_{\lambda_0}(\D^\mathbf{a^{\scriptscriptstyle(1)}})_{\lambda_1} \otimes \ldots \otimes {}_{\lambda_{t-1}}(\D^\mathbf{a^{(t)}})_{\lambda_t} \ar@{-->}[rr]^{\qquad {\rm mult}} \ar@{^{(}->}[d]^{\rm glue} & & {}_{\lambda_0}(\D)_{\lambda_t} \\
{}_{\lambda_0}(\D^\mathbf{b})_{\lambda_t} \ar[rr]^{\qquad {\rm surg}} & & {}_{\lambda_0}(\D^\mathbf{e})_{\lambda_t} \ar[u]^{\rm collapse}
}
\end{xy}
\end{eqnarray*}
Here $\mathbf{b}=(b_1,\ldots,b_p)$ is the sequence $(\mathbf{a^{\scriptscriptstyle(1)}},\un{\lambda_1}',\mathbf{a^{\scriptscriptstyle(2)}},\un{\lambda_2}',\ldots, \un{\lambda_{t-1}}',\mathbf{a^{(t)}})$ with $\un{\lambda_i}'$ being $\un{\lambda_i}$ but with dots on rays removed, and $\mathbf{e}$ is the sequence of the same length as $\mathbf{b}$ but with  internal rays only.

\begin{definition}\label{def:multiplication}
For stacked circle diagrams $D=\un{\lambda}(\mathbf{a})\ov{\mu}$, $D'=\un{\eta}(\mathbf{a'})\ov{\nu}$ let
$$ {\rm mult}_{D,D'}: {}_\lambda(\D^\mathbf{a})_\mu \otimes {}_\eta(\D^\mathbf{a'})_\nu \longrightarrow {}_\lambda(\D)_\nu$$
be the linear map defined as ${\rm mult}_{D,D'}=0$ if $\mu \neq \eta$ and otherwise as follows: let $E_0=D \circ D' = \un{\lambda}(\mathbf{b^0})\ov{\eta}$ and choose $E_i=\un{\lambda}(\mathbf{b^i})\ov{\eta}$ for $1 \leq i \leq t$, a sequence of stacked circle diagrams such that $E_i$ is obtained from $E_{i-1}$ by a surgery\footnote{
The artificially looking choice of only allowing {\it admissible} surgeries between {\it admissible} stacked circle diagrams becomes transparent and consistent with \cite{BS1} if we use the approach from \cite{LS}. Rewriting the dotted cup diagrams in terms of symmetric cup diagrams, as in \cite{LS}, turns neighboured cup-cap-pairs into nested ones and our admissibility assures that we only use those that are applicable for surgery in the sense of \cite{BS1}.
} such that $\mathbf{b^t}$ consists of cup diagrams containing only rays. Then
\begin{eqnarray} \label{eqn:multiplication}
{\rm mult}_{D,D'} = {\rm collapse}_{E_t} \circ {\rm surg}_{E_t,E_{t-1}} \circ \ldots \circ {\rm surg}_{E_0,E_1} \circ {\rm glue}_{D,D'}.
\end{eqnarray} 
By Theorem \ref{thm:surgeries_commute} this is independent of the choice of sequence of surgeries and stacked circle diagrams $E_i$. By Proposition \ref{forgotgraded} it is of degree zero. 
\end{definition}

Using $\D^\mathbf{a} = \bigoplus_{\lambda,\mu \in \Lambda} {}_\lambda(\D^\mathbf{a})_\mu$, the map \eqref{eqn:multiplication} gives rise to  
\begin{eqnarray*}
{\rm mult}:\,\,\D^\mathbf{a} \otimes \D^\mathbf{a'}&\longrightarrow&\D.
\end{eqnarray*}
Specialising to the case of circle diagrams we obtain maps
$$ {\rm mult}_{\un{\lambda}\ov{\mu},\un{\eta}\ov{\nu}}: {}_\lambda(\D)_\mu \otimes {}_\eta(\D)_\nu \longrightarrow {}_\lambda(\D)_\nu,$$
for $\lambda,\mu,\eta,\nu \in \Lambda$ which extend to a common map ${\rm mult} : \D \otimes \D \longrightarrow \D$.

\begin{theorem}
\label{algebra_structure}
The map ${\rm mult}:\D \otimes \D \rightarrow \D$ defines a graded associative unitary algebra structure on $\D$ with pairwise orthogonal primitive
idempotents ${}_{\lambda} \mathbbm{1}_\la = \underline{\la} \la \overline{\la}$ for $\la\in\La$ spanning the semisimple part of degree zero and unit $\mathbbm{1}=\sum_{\la\in \La} {}_{\lambda} \mathbbm{1}_\la$.
\end{theorem}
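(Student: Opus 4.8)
The plan is to verify the algebra axioms one at a time, taking for granted that ${\rm mult}$ is already well defined (independent of the chosen surgery sequence by Theorem~\ref{thm:surgeries_commute}) and homogeneous of degree zero (Proposition~\ref{forgotgraded}), as recorded in Definition~\ref{def:multiplication}. The genuinely substantial point is \emph{associativity}; the unit property and the statements about the idempotents are then comparatively formal.

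For associativity fix three basis diagrams $x=\un{\la}\nu_1\ov{\mu}$, $y=\un{\mu}\nu_2\ov{\eta}$ and $z=\un{\eta}\nu_3\ov{\xi}$ (all products vanish unless the cap diagram of one factor matches the cup diagram of the next, so this is the only case to consider). First I would record that the glueing operation $\circ$ is associative on stacked circle diagrams, so that $x\circ y\circ z=\un{\la}(\mathbf c)\ov{\xi}$ with $\mathbf c=(\un\mu',\un\eta')$ (rays undotted) is unambiguously defined, and that ${\rm glue}$ is correspondingly associative by Lemma~\ref{lem:glue_stacked}. The strategy is then to show that \emph{both} $(xy)z$ and $x(yz)$ are computed by performing, on the single stacked diagram $x\circ y\circ z$, the full collection of surgeries reducing all internal cup diagrams to rays, followed by ${\rm collapse}$. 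The two bracketings correspond to the same total set of surgeries carried out in two different orders --- in $(xy)z$ one first exhausts the surgeries at the $\un\mu$-interface and in $x(yz)$ those at the $\un\eta$-interface --- and Theorem~\ref{thm:surgeries_commute}, applied repeatedly to transpose adjacent commuting surgeries, identifies the two resulting maps.

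The main obstacle is precisely the intermediate ${\rm collapse}$-and-reglue built into Definition~\ref{def:multiplication}: computing $xy$ in isolation takes place inside the height-one diagram $x\circ y$, whereas inside $x\circ y\circ z$ the components meeting the $\un\mu$-level may also run through the $z$-part. I would resolve this by proving a compatibility lemma stating that a surgery performed at the $\un\mu$-interface of $x\circ y\circ z$ agrees, under ${\rm glue}_{-,z}$, with the corresponding surgery performed on $x\circ y$; here the algebraic description of the surgery maps as multiplication/comultiplication on $\mC[X]/(X^2)$ from Lemmas~\ref{lem:algebraic_merge} and~\ref{lem:algebraic_split} is the cleanest tool, since in that language a surgery only alters the polynomial factor attached to the affected components and is visibly unaffected by the adjacent block. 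Once the low surgeries are finished the $\un\mu$-level consists only of rays and may be collapsed by Lemma~\ref{lem:collapsing_stacked}, reproducing exactly $(xy)\circ z$; the remaining surgeries then compute $(xy)z$. The symmetric argument yields $x(yz)$, and Theorem~\ref{thm:surgeries_commute} closes the case.

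For the unit I would show that ${}_{\la}\mathbbm{1}_\la=\un{\la}\la\ov{\la}$ is a two-sided identity. Left multiplication of ${}_\la\mathbbm{1}_\la$ with a basis element $\un{\la}\nu\ov{\mu}$ glues to the diagram whose internal section $\ov{\la}\,\un{\la}$ is a union of small circles, each oriented anticlockwise since $\un\la\la$ has degree zero; passing to the algebraic picture via $\Psi$ (Proposition~\ref{coho}) these anticlockwise circles correspond to the unit $1$, and the sequence of splits reducing them to rays followed by ${\rm collapse}$ returns $\un{\la}\nu\ov{\mu}$ unchanged. The right identity is checked identically, and summing gives $\mathbbm{1}=\sum_{\la\in\La}{}_\la\mathbbm{1}_\la$. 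Finally, since ${\rm mult}_{D,D'}=0$ unless the relevant weights match, the product ${}_\la\mathbbm{1}_\la\cdot{}_\mu\mathbbm{1}_\mu$ vanishes for $\la\neq\mu$, while the unit computation gives ${}_\la\mathbbm{1}_\la\cdot{}_\la\mathbbm{1}_\la={}_\la\mathbbm{1}_\la$; hence the ${}_\la\mathbbm{1}_\la$ form a system of pairwise orthogonal idempotents summing to $\mathbbm{1}$. They span the degree-zero part of $\D$ (the $\un{\la}\la\ov{\la}$ were already noted to span it), which is therefore commutative semisimple. Primitivity follows because $\D$ is positively graded with $\dim\bigl({}_\la\mathbbm{1}_\la\,\D\,{}_\la\mathbbm{1}_\la\bigr)_0=1$: the corner ring ${}_\la\mathbbm{1}_\la\D{}_\la\mathbbm{1}_\la$ is then local, so its identity ${}_\la\mathbbm{1}_\la$ admits no decomposition into two nonzero orthogonal idempotents. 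This establishes all assertions of the theorem.
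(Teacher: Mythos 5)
Your overall strategy is the same as the paper's: the paper's entire proof consists of observing that the idempotent and unit statements are clear from the definitions, that gradedness is Proposition~\ref{forgotgraded}, and that ``the associativity follows from Theorem~\ref{thm:surgeries_commute}''. You are fleshing out that last line via the triple-glued stacked diagram, and you correctly isolate the point the paper glosses over: computing $xy$ first forces an intermediate ${\rm collapse}$ and re-${\rm glue}$, so one must compare surgeries performed inside $x\circ y$ with the corresponding surgeries performed inside $x\circ y\circ z$ before Theorem~\ref{thm:surgeries_commute} (which only reorders surgeries \emph{within one fixed} stacked diagram) can be applied.

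The gap is in your justification of precisely that compatibility lemma. The claim that in the language of Lemmas~\ref{lem:algebraic_merge} and~\ref{lem:algebraic_split} a surgery ``only alters the polynomial factor attached to the affected components and is visibly unaffected by the adjacent block'' is false as stated, because the affected components can run \emph{through} the adjacent block. Concretely: if the cup and cap of the surgery pair at the $\un{\mu}$-interface lie on two components which in $x\circ y$ are lines ending at rays of $\ov{\eta}$, then after gluing $z$ those lines may be joined through the $z$-part into a single circle. The surgery is then a merge or a reconnect in $x\circ y$ but a split in $x\circ y\circ z$, and the corresponding maps do not agree termwise --- a reconnect of non-propagating lines is zero, a merge is the identity on generators, while a split multiplies by $(-1)^{\pos(i)}(X_{(j,l)}\mp X_{(i,l)})$; only the \emph{full composites} after all remaining surgeries agree. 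Note also that the reconnect case has no algebraic description at all (the Lemmas cover merge and split only), and the paper explicitly declines to give an algebraic formula for ${\rm glue}$. Repairing this requires the line-elimination mechanism of Section~\ref{annoying}: embed via $\iota$ into the enlarged block so that all lines close up into circles, check compatibility of surgeries with $\iota$ and the projection $\pi$, and only then compare merges and splits of genuine circles. Without this (or an equivalent case analysis of the line rules) the compatibility lemma, hence associativity, is not established. A smaller slip: in your unit computation the surgeries absorbing the small anticlockwise circles of $\ov{\la}\,\un{\la}$ are merges, not splits, and in the case where the ambient component is a clockwise circle the merge carries a sign $(-1)^a$; one still needs the (easy) check that the merged circle and the clockwise circle share a tag, so that $a=0$ and the product really is $\un{\la}\nu\ov{\mu}$ on the nose.
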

\begin{proof}
It is clear from the definitions that the only circle diagrams of degree zero in $\B$ are the diagrams of the form ${}_{\lambda} \mathbbm{1}_\la$ and
\begin{eqnarray*}
{}_{\lambda} \mathbbm{1}_\la ( a \mu b )=
\begin{cases}
\,( a \mu b )&\text{if $\underline{\la}=a$,}\\
\,0&\text{otherwise,}
\end{cases}
&\quad&
( a \mu b ) {}_{\lambda} \mathbbm{1}_\la
=
\begin{cases}
\, ( a \mu b ) &\text{if $b = \overline{\la}$,}\\
\, 0&\text{otherwise,}
\end{cases}
\end{eqnarray*}
for $\la \in \La$ and any basis vector $(a \mu b)\in \D$.
This implies that $\{{}_{\lambda} \mathbbm{1}_\la \:|\:\la \in \La\}$ is a set of mutually orthogonal idempotents whose sum is the identity in  $\D$. The multiplication is graded by Proposition~\ref{forgotgraded} above. The associativity follows from Theorem \ref{thm:surgeries_commute}.
\end{proof}

\subsection{Explicit multiplication rules}
\label{sec:explicitmult}
%Before finishing this section with some examples illustrating the multiplication and a generating set, l
Let us summarise the algebraic multiplication in the special case of circle diagrams. \\[0.25cm]

\begin{mdframed}
To multiply two oriented circle diagrams rewrite both as polynomials in the $X_{(s,0)}$'s respectively $X_{(s,1)}$'s using Proposition \ref{coho}. Use the map $X_{(s,a)} \longmapsto X_s$ for all $s$ and  $a \in \{0,1\}$ and multiply the results. Then perform the surgeries on the stacked circle diagram as in \eqref{pic:surgery}:

$$\begin{array}{ll}
\text{\bf Merge:} & \text{leave as it is,}\\
\text{\bf Split:} & \text{multiply with} \\
&\begin{tikzpicture}[thick,scale=0.4]
\draw (0,0) node[above]{$i$} .. controls +(0,-1) and +(0,-1) ..
+(1.5,0) node[above]{$j$};
\draw (0,-2) .. controls +(0,1) and +(0,1) .. +(1.5,0);
\node at (-6,-1) {$(-1)^{\pos(i)}(X_{j}-X_{i})$ if};
\end{tikzpicture}
\\
&\begin{tikzpicture}[thick,scale=0.4]
\draw (0,0) node[above]{$i$} .. controls +(0,-1) and +(0,-1) ..
+(1.5,0) node[above]{$j$};
\fill (0.75,-.75) circle(4pt);
\draw (0,-2) .. controls +(0,1) and +(0,1) .. +(1.5,0);
\fill (0.75,-1.25) circle(4pt);
\node at (-6,-1) {$(-1)^{\pos(i)}(X_{j}+X_{i})$ if};
\end{tikzpicture}\\
\text{\bf Reconnect:} & \text{if the lines are not propagating with}\\
& \text{matching orientations, multiply with } 0.
\end{array}$$
\end{mdframed}
\vspace{0.25cm}
\begin{corollary}
\label{atypicality}
The algebra $\D$ depends up to canonical isomorphism only on the atypicality or defect of the block, not on the block itself.
\end{corollary}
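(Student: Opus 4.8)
The plan is to pin down the only datum of a block $\Lambda=\Lambda_\Theta^{\ov\epsilon}$ that $\D$ can actually see. I will show that $\D$ depends, up to a canonical graded algebra isomorphism, only on the cardinality $|P_\bu(\Lambda)|$; since $\op{def}(\Lambda)=\lfloor |P_\bu(\Lambda)|/2\rfloor$ (as recorded before \eqref{defectcount}), this is precisely the asserted dependence on the atypicality and not on the block itself. The key observation is that \emph{every} ingredient of the multiplication of Definition~\ref{def:multiplication} refers to $\Lambda$ only through the \emph{ordered} set $P_\bu(\Lambda)$ and the cup/cap/ray-and-dot combinatorics of diagrams: the index set $\mathcal{C}_\Lambda$, the spaces $\cM(D)$ with the identifications $\Psi_D$ of Proposition~\ref{coho}, the algebraic surgery maps of Lemmas~\ref{lem:algebraic_merge}--\ref{lem:algebraic_split}, and the maps ${\rm glue}$, ${\rm collapse}$. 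The symbols $\times$ and $\circ$ enter nowhere except implicitly through the statistic $\pos$ of \eqref{pso}, which merely records the place of a $\bu$-vertex in the left-to-right order on $P_\bu$.

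First I would compare two blocks $\Lambda,\Lambda'$ with $|P_\bu(\Lambda)|=|P_\bu(\Lambda')|$ and equal parity. Let $\theta\colon P_\bu(\Lambda)\to P_\bu(\Lambda')$ be the unique order-preserving bijection. Relabelling vertices by $\theta$ carries admissible cup diagrams to admissible cup diagrams of the same shape and decoration (admissibility constrains only nesting and dots, Remark~\ref{decorated}), hence induces bijections of cap, circle and oriented circle diagrams and a graded vector-space isomorphism $\Phi_\theta\colon\D\to\mathbb{D}_{\Lambda'}$ preserving degrees and orientability (Lemma~\ref{lem:stupidlemmasub}). Multiplicativity follows because $\theta$ intertwines all surgery maps: merges are $X_{(s,a)}\mapsto X_{(s,a)}$, and the split coefficient $(-1)^{\pos(i)}(X_{(j,l)}\pm X_{(i,l)})$ is preserved since $\pos_\Lambda(i)=\#\{j\in P_\bu(\Lambda)\mid j\le i\}=\#\{j'\in P_\bu(\Lambda')\mid j'\le\theta(i)\}=\pos_{\Lambda'}(\theta(i))$. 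This removes all dependence on $\Theta$ beyond $|P_\bu|$ and parity.

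The essential step is to eliminate the parity, i.e.\ to build a canonical isomorphism $\mathbb{D}_{\Lambda_\Theta^{\ov 0}}\cong\mathbb{D}_{\Lambda_\Theta^{\ov 1}}$ for a fixed block diagram $\Theta$. Write $p=\min P_\bu(\Theta)$. On diagrams I propose the involution $\tau$ toggling the decoration of every arc meeting the column over $p$ and fixing all other arcs; as $p$ is globally leftmost, each such arc is an outer (non-nested) cup/cap or the leftmost ray, so $\tau$ preserves admissibility, and on a cup diagram it changes the number of (dotted rays $+$ undotted cups) by $\pm1$, flipping the parity. Thus $\tau$ gives a bijection $\mathcal{C}_{\Lambda_\Theta^{\ov 0}}\to\mathcal{C}_{\Lambda_\Theta^{\ov 1}}$. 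To realise it algebraically, consider the ring automorphism $\rho$ of $\mC[X_{(i,l)}]$ negating every $X_{(p,l)}$ and fixing the rest. It sends $I(D)$ to $I(\tau D)$: it fixes $X_{(p,l)}^2$, interchanges $X_{(p,l)}+X_{(q,l)}$ (undotted) with $-(X_{(p,l)}-X_{(q,l)})$ (dotted), and fixes ray relations $X_{(p,l)}$ up to sign; all other generators are untouched. Hence $\rho$ descends to $\cM(D)\to\cM(\tau D)$, and it intertwines the surgery maps: merges commute with $\rho$ trivially, while a split either avoids $p$ (the sign $(-1)^{\pos(i)}(X_{(j,l)}\mp X_{(i,l)})$ is $\rho$-fixed) or has $i=p$, where $\rho$ carries the undotted rule $(-1)^{\pos(p)}(X_{(j,l)}-X_{(p,l)})$ in $D$ exactly onto the dotted rule $(-1)^{\pos(p)}(X_{(j,l)}+X_{(p,l)})$ in $\tau D$. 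Under the $\Psi_D$ this is the required graded isomorphism, and composing with a suitable $\Phi_\theta$ completes the argument.

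The main obstacle I anticipate lies in this parity step, precisely in verifying that the diagrammatic involution $\tau$ is faithfully realised, at every level of a stacked diagram, by the single automorphism $\rho$; this reformulation is exactly what lets me avoid tracking the non-local signs $\op{a}_D$ of the diagrammatic rule by hand. Two bookkeeping points should be recorded to make it airtight: that $X_{(p,l)}$ never occurs as $X_{\op{t}(C)}$ for a circle $C$ (the tag $\op{t}(C)$ is the rightmost vertex, whereas $p$ is globally leftmost), so $\rho$ is the identity on the standard monomial bases and the isomorphism is visibly the one induced by $\tau$ on $\B$; and that toggling both arcs over $p$ changes the dot-count of the component through $p$ by $0$ or $\pm2$, so orientability is preserved by Lemma~\ref{lem:stupidlemmasub}. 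Everything else is a routine transport of structure along $\theta$ and $\tau$.
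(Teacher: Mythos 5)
Your proposal is correct, and its first half is precisely the paper's own proof: the paper observes that equal atypicality forces $|P_\bu(\Lambda)|=|P_\bu(\Lambda')|$ and then declares the identification $X_{i_s}\mapsto X_{i'_s}$ along the order-preserving bijection of $\bu$-vertices to be the canonical isomorphism; your $\Phi_\theta$, together with the check that $\pos$ is preserved and hence all surgery maps are intertwined, is exactly this step with the routine verifications spelled out. Where you genuinely go beyond the paper is the parity step, which the paper's proof does not address at all: its assertion that after relabelling the cup diagrams and basis vectors ``are the same up to some vertices labelled $\circ$ or $\times$'' holds only when the two blocks have the same parity, since for a fixed block diagram $\Theta$ the sets $\mathcal{C}_{\Lambda_\Theta^{\ov 0}}$ and $\mathcal{C}_{\Lambda_\Theta^{\ov 1}}$ differ (already for $|P_\bu|=2$: two undotted rays and a dotted cup, versus a dotted ray plus an undotted ray and an undotted cup), yet these two blocks have the same defect, so the statement as written does cover them. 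Your involution $\tau$ at the leftmost $\bu$-vertex, realised algebraically by the sign automorphism $\rho\colon X_{(p,l)}\mapsto -X_{(p,l)}$, supplies exactly the missing isomorphism, and your two bookkeeping points are the right ones: tags sit at rightmost vertices, so $\rho$ fixes all standard monomials and the map is visibly the one induced by $\tau$ on $\B$; and dot-parities of components are unchanged, so orientability, and hence the vanishing convention for $\cM$, matches on both sides. In short, the paper's proof buys brevity but, read literally, only proves independence of the block within a fixed parity; yours proves the full claim of dependence on $|P_\bu(\Lambda)|$ alone.

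Three small points to tighten. First, besides Merge and Split you should record that the Reconnect rule is also intertwined; this is immediate since $\tau$ changes neither shapes, nor which lines are propagating, nor orientability. Second, in a stacked diagram several \emph{distinct} components can meet the column over $p$ (the arcs over $p$ at different levels need not be connected to one another), so your ``$0$ or $\pm 2$'' count must be run separately on each such component; it works because each component meets that column in chains whose interior arcs are undotted internal rays and whose two ends are toggled cups/caps or boundary rays, so every component contains an even number of toggled arcs. Third, both you and the paper silently read ``same atypicality'' as ``same $|P_\bu|$'': the defect $\lfloor|P_\bu(\Lambda)|/2\rfloor$ by itself does not determine $|P_\bu(\Lambda)|$ (blocks with $|P_\bu|=2d$ and $|P_\bu|=2d+1$ share defect $d$ but their algebras have different numbers of idempotents), so this is the only reading under which the corollary can hold; that is a looseness of the statement, not of your argument.
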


\begin{proof}
Given two blocks $\La$ and $\La'$ with the same atypicality then $|P_\bu(\Lambda)|=|P_\bu(\Lambda')|$ and hence the cup diagrams and basis vectors in $\D$ and $\mathbb{D}_{\La'}$ are the same up to some vertices labelled $\circ$ or $\times$. If $P_\bu(\Lambda)=\{i_1<\ldots<i_r\}$ and  $P_\bu(\Lambda')=\{i'_1<\ldots<i'_k\}$ then the identification $X_{i_s}\mapsto X_{i'_s}$ for $1\leq s\leq r$ defines the canonical isomorphism.
\end{proof}

\begin{corollary}
\label{antiaut}
The assignment $a\la b\mapsto (a \la b)^*= b^* \la a^*$, on elements from $\B$ in \eqref{DefB}, defines a graded algebra anti-automorphism of $\D$.
\end{corollary}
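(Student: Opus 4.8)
The plan is to identify the operation $a\la b \mapsto b^*\la a^*$ with the geometric top--bottom reflection $\rho$ of circle diagrams across the line $\mR_{\geq 0}\times\{0\}$, and to show that this reflection is compatible with every ingredient of the multiplication from Definition~\ref{def:multiplication}.

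First I would record the easy structural facts. The assignment $*$ is an involution on $\B$: since reflection is an involution on cup and cap diagrams, $(a\la b)^{**}=a\la b$, so $*$ extends to a linear bijection of $\D$. It preserves the grading because, reading off \eqref{oriented}, the degree of an oriented arc depends only on the pair of labels at its two endpoints, and the rule is identical for cups and caps; as $\rho$ interchanges cups and caps but leaves the orientation weight $\la$ (hence every vertex label) untouched, each arc keeps its degree and the total degree is unchanged. Finally $*$ fixes the unit and each idempotent, since ${}_\lambda\mathbbm{1}_\la=\un\la\la\ov\la$ and $\ov\la=\un\la^*$ give ${}_\lambda\mathbbm{1}_\la^*=\un\la\la\ov\la$.

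The heart of the argument is the anti-homomorphism property $(xy)^*=y^*x^*$, which it suffices to check on basis elements $x=(a\la b)$, $y=(c\mu d)$. Both sides vanish unless $c=b^*$, so assume $x=(a\la b)$ and $y=(b^*\mu d)$. Here I would observe that $\rho$ applied to the height-one stacked diagram $D\circ D'$ used to compute $xy$ yields exactly the stacked diagram $\rho(D')\circ\rho(D)=(d^*\mu b)\circ(b^*\la a^*)$ used to compute $y^*x^*$: reflection reverses the stacking order and carries the two orientation weights $\la,\mu$ to their reflected positions. The key lemma to prove is that $\rho$ intertwines each surgery step: if $E\to E'$ is a surgery at positions $i<j$ on some level $l$, then $\rho(E)\to\rho(E')$ is a surgery of the same Merge/Split/Reconnect type at positions $i<j$ on the reflected level, and under the identifications $\cM(E)\cong\cM(\rho(E))$, $X_{(i,l)}\mapsto X_{(i,h-l)}$, the surgery maps of Lemmas~\ref{lem:algebraic_merge} and~\ref{lem:algebraic_split} agree. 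This works because the connectivity (hence the component types, the clockwise/anticlockwise labels determined by the rightmost vertex, and whether a line is propagating) is preserved by $\rho$; because the merge map $X_{(s,a)}\mapsto X_{(s,a)}$ and the split map $f\mapsto(-1)^{\pos(i)}(X_{(j,l)}\mp X_{(i,l)})f$ are written purely in these reflection-stable data; and because $\rho$ likewise commutes with ${\rm glue}$ (in reversed order) and with ${\rm collapse}$. Assembling these intertwiners along the chain of surgeries in Definition~\ref{def:multiplication} — the result being independent of the chosen chain by Theorem~\ref{thm:surgeries_commute} — gives $(xy)^*=y^*x^*$.

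The main obstacle is the sign bookkeeping in this intertwining lemma, concentrated in the Split and Merge maps. I expect it to reduce to two invariances: $\pos(i)$ depends only on the horizontal coordinate $i$ and is therefore untouched by $\rho$; and ${\rm sign}_D(i,l)$, which by Lemma~\ref{lem:sign_of_pos} and Remark~\ref{rmk:signonsequence} counts undotted cups \emph{plus} undotted caps along a connecting path, is unchanged when $\rho$ swaps cups and caps (an undotted cup becomes an undotted cap and vice versa, so the parity of the count is preserved). Once these two points are nailed down, every sign $\sigma_i$ appearing in the diagrammatic Merge/Split rules matches on the two sides, and the remaining verification is routine; combined with the bijectivity and grading-preservation of $*$ from the first step, this establishes that $*$ is a graded anti-automorphism.
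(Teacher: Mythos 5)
Your proposal is correct and follows essentially the same route as the paper: the paper's proof also identifies $(a\la b)^{*}=b^{*}\la a^{*}$ with the reflection of the diagram in the horizontal line (keeping the weights) and observes that the explicit multiplication rules of Section~\ref{sec:explicitmult} — merge unchanged, split by $(-1)^{\pos(i)}(X_{j}\mp X_{i})$ depending only on positions and dottedness, reconnect depending only on propagation and orientability, all of which are mirror-invariant — send the computation of $(a\la b)(c\mu d)$ to the mirrored computation of $(d^{*}\mu c^{*})(b^{*}\la a^{*})$. The only difference is one of detail: what the paper declares ``obvious'' you verify carefully at the level of the surgery machinery (invariance of $\pos$, of ${\rm sign}_D$, and compatibility with glue, collapse and Theorem~\ref{thm:surgeries_commute}), which is a legitimate expansion of the same argument rather than a different one.
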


\begin{proof}
Note that $b^* \la a^*$ is the diagram $a\la b$ reflected in the real line, but with the weight $\la$ fixed. According to the above multiplication rules the product $(d^* \mu c^*)(b^* \la a^*)$ gives precisely the diagrams mirror the ones from $(a^* \la b^*)(c^* \mu d^*)$, except that weights are kept, i.e. not reflected. Then the claim is obvious.
\end{proof}

\subsection{Examples for the multiplication} \label{sec:examples} Since the definition of the multiplication is rather involved, we present here some explicit  examples. For simplicity we assume that the position of the leftmost vertex is $1$. These examples also illustrate the importance of Theorem~\ref{thm:surgeries_commute}.

\begin{example} \label{ex:mergesplitexample}
The following gives an example of a multiplication and illustrates that it does not depend on the order we choose for the cup-cap-pairs.

\begin{tikzpicture}[thick,decoration={zigzag,pre length=1mm,post length=1mm}]
\begin{scope}
\draw (0,1) .. controls +(0,-.5) and +(0,-.5) .. +(.5,0);
\draw (.5,1) .. controls +(0,.5) and +(0,.5) .. +(.5,0);
\draw (1,1) .. controls +(0,-.5) and +(0,-.5) .. +(.5,0);
\draw (0,1) .. controls +(0,.75) and +(0,.75) .. +(1.5,0);
\draw[thin] (-.25,1) -- +(2,0);
\node at (.005,1.1) {$\down$};
\node at (.505,0.9) {$\up$};
\node at (1.005,1.1) {$\down$};
\node at (1.505,0.9) {$\up$};

\draw (0,0) .. controls +(0,.5) and +(0,.5) .. +(.5,0);
\draw (.5,0) .. controls +(0,-.5) and +(0,-.5) .. +(.5,0);
\draw (1,0) .. controls +(0,.5) and +(0,.5) .. +(.5,0);
\draw (0,0) .. controls +(0,-.75) and +(0,-.75) .. +(1.5,0);
\draw[thin] (-.25,0) -- +(2,0);
\node at (.005,.1) {$\down$};
\node at (.505,-.1) {$\up$};
\node at (1.005,.1) {$\down$};
\node at (1.505,-.1) {$\up$};
\end{scope}

\begin{scope}[xshift=2.25cm,yshift=1cm]
\node[rotate=45] at (.2,.6) {$\scriptstyle {\rm surg}_{D,E_1}$};
\draw[|->] (0,0) -- +(.7,.7);
\end{scope}
\begin{scope}[xshift=2.25cm]
\node[rotate=-45] at (.2,-.8) {$\scriptstyle {\rm surg}_{D,E_2}$};
\draw[|->] (0,0) -- +(.7,-.7);
\end{scope}

\begin{scope}[xshift=3.5cm, yshift=1.5cm]
\draw (0,1) .. controls +(0,-.5) and +(0,-.5) .. +(.5,0);
\draw (.5,1) .. controls +(0,.5) and +(0,.5) .. +(.5,0);
\draw (0,1) .. controls +(0,.75) and +(0,.75) .. +(1.5,0);
\draw[thin] (-.25,1) -- +(2,0);
\node at (.005,1.1) {$\down$};
\node at (.505,0.9) {$\up$};
\node at (1.005,1.1) {$\down$};
\node at (1.505,0.9) {$\up$};

\draw (1,0) -- +(0,1);
\draw (1.5,0) -- +(0,1);

\draw (0,0) .. controls +(0,.5) and +(0,.5) .. +(.5,0);
\draw (.5,0) .. controls +(0,-.5) and +(0,-.5) .. +(.5,0);
\draw (0,0) .. controls +(0,-.75) and +(0,-.75) .. +(1.5,0);
\draw[thin] (-.25,0) -- +(2,0);
\node at (.005,.1) {$\down$};
\node at (.505,-.1) {$\up$};
\node at (1.005,.1) {$\down$};
\node at (1.505,-.1) {$\up$};
\end{scope}

\begin{scope}[xshift=3.5cm, yshift=-1.5cm]
\draw (1,1) .. controls +(0,-.5) and +(0,-.5) .. +(.5,0);
\draw (.5,1) .. controls +(0,.5) and +(0,.5) .. +(.5,0);
\draw (0,1) .. controls +(0,.75) and +(0,.75) .. +(1.5,0);
\draw[thin] (-.25,1) -- +(2,0);
\node at (.005,1.1) {$\down$};
\node at (.505,0.9) {$\up$};
\node at (1.005,1.1) {$\down$};
\node at (1.505,0.9) {$\up$};

\draw (0,0) -- +(0,1);
\draw (0.5,0) -- +(0,1);

\draw (1,0) .. controls +(0,.5) and +(0,.5) .. +(.5,0);
\draw (.5,0) .. controls +(0,-.5) and +(0,-.5) .. +(.5,0);
\draw (0,0) .. controls +(0,-.75) and +(0,-.75) .. +(1.5,0);
\draw[thin] (-.25,0) -- +(2,0);
\node at (.005,.1) {$\down$};
\node at (.505,-.1) {$\up$};
\node at (1.005,.1) {$\down$};
\node at (1.505,-.1) {$\up$};
\end{scope}

\begin{scope}[xshift=5.75cm,yshift=1.7cm]
\node[rotate=-45] at (.6,-.2) {$\scriptstyle {\rm surg}_{E_1,F}$};
\draw[|->] (0,0) -- +(.7,-.7);
\end{scope}
\begin{scope}[xshift=5.75cm,yshift=-0.7cm]
\node[rotate=45] at (.6,0) {$\scriptstyle {\rm surg}_{E_2,F}$};
\draw[|->] (0,0) -- +(.7,.7);
\end{scope}

\begin{scope}[xshift=7cm]
\node at (0,.5) {$-$};
\end{scope}
\begin{scope}[xshift=7.5cm]
\draw (.5,1) .. controls +(0,.5) and +(0,.5) .. +(.5,0);
\draw (0,1) .. controls +(0,.75) and +(0,.75) .. +(1.5,0);
\draw[thin] (-.25,1) -- +(2,0);
\node at (.005,0.9) {$\up$};
\node at (.505,1.1) {$\down$};
\node at (1.005,0.9) {$\up$};
\node at (1.505,1.1) {$\down$};

\draw (0,0) -- +(0,1);
\draw (0.5,0) -- +(0,1);
\draw (1,0) -- +(0,1);
\draw (1.5,0) -- +(0,1);

\draw (.5,0) .. controls +(0,-.5) and +(0,-.5) .. +(.5,0);
\draw (0,0) .. controls +(0,-.75) and +(0,-.75) .. +(1.5,0);
\draw[thin] (-.25,0) -- +(2,0);
\node at (.005,-.1) {$\up$};
\node at (.505,.1) {$\down$};
\node at (1.005,-.1) {$\up$};
\node at (1.505,.1) {$\down$};
\end{scope}
\begin{scope}[xshift=9.5cm]
\node at (0,.5) {$+$};
\end{scope}
\begin{scope}[xshift=10cm]
\draw (.5,1) .. controls +(0,.5) and +(0,.5) .. +(.5,0);
\draw (0,1) .. controls +(0,.75) and +(0,.75) .. +(1.5,0);
\draw[thin] (-.25,1) -- +(2,0);
\node at (.005,1.1) {$\down$};
\node at (.505,0.9) {$\up$};
\node at (1.005,1.1) {$\down$};
\node at (1.505,0.9) {$\up$};

\draw (0,0) -- +(0,1);
\draw (0.5,0) -- +(0,1);
\draw (1,0) -- +(0,1);
\draw (1.5,0) -- +(0,1);

\draw (.5,0) .. controls +(0,-.5) and +(0,-.5) .. +(.5,0);
\draw (0,0) .. controls +(0,-.75) and +(0,-.75) .. +(1.5,0);
\draw[thin] (-.25,0) -- +(2,0);
\node at (.005,.1) {$\down$};
\node at (.505,-.1) {$\up$};
\node at (1.005,.1) {$\down$};
\node at (1.505,-.1) {$\up$};
\end{scope}
\end{tikzpicture}

The next example is similar, but with a different number of dotted arcs.

\begin{tikzpicture}[thick,decoration={zigzag,pre length=1mm,post length=1mm}]
\begin{scope}
\draw (0,1) .. controls +(0,-.5) and +(0,-.5) .. +(.5,0);
\fill (.25,0.635) circle(2pt);
\draw (.5,1) .. controls +(0,.5) and +(0,.5) .. +(.5,0);
\draw (1,1) .. controls +(0,-.5) and +(0,-.5) .. +(.5,0);
\draw (0,1) .. controls +(0,.75) and +(0,.75) .. +(1.5,0);
\fill (.75,1.55) circle(2pt);
\draw[thin] (-.25,1) -- +(2,0);
\node at (0.005,0.9) {$\up$};
\node at (0.505,0.9) {$\up$};
\node at (1.005,1.1) {$\down$};
\node at (1.505,0.9) {$\up$};

\fill (.25,.365) circle(2pt);
\draw (0,0) .. controls +(0,.5) and +(0,.5) .. +(.5,0);
\draw (.5,0) .. controls +(0,-.5) and +(0,-.5) .. +(.5,0);
\draw (1,0) .. controls +(0,.5) and +(0,.5) .. +(.5,0);
\draw (0,0) .. controls +(0,-.75) and +(0,-.75) .. +(1.5,0);
\fill (.75,-.55) circle(2pt);
\draw[thin] (-.25,0) -- +(2,0);
\node at (.005,-.1) {$\up$};
\node at (.505,-.1) {$\up$};
\node at (1.005,.1) {$\down$};
\node at (1.505,-.1) {$\up$};
\end{scope}

\begin{scope}[xshift=2.25cm,yshift=1cm]
\node[rotate=45] at (.2,.6) {$\scriptstyle {\rm surg}_{D,E_1}$};
\draw[|->] (0,0) -- +(.7,.7);
\end{scope}
\begin{scope}[xshift=2.25cm]
\node[rotate=-45] at (.2,-.8) {$\scriptstyle {\rm surg}_{D,E_2}$};
\draw[|->] (0,0) -- +(.7,-.7);
\end{scope}

\begin{scope}[xshift=3.5cm, yshift=1.5cm]
\draw (0,1) .. controls +(0,-.5) and +(0,-.5) .. +(.5,0);
\fill (.25,0.635) circle(2pt);
\draw (.5,1) .. controls +(0,.5) and +(0,.5) .. +(.5,0);
\draw (0,1) .. controls +(0,.75) and +(0,.75) .. +(1.5,0);
\fill (.75,1.55) circle(2pt);
\draw[thin] (-.25,1) -- +(2,0);
\node at (0.005,0.9) {$\up$};
\node at (0.505,0.9) {$\up$};
\node at (1.005,1.1) {$\down$};
\node at (1.505,0.9) {$\up$};

\draw (1,0) -- +(0,1);
\draw (1.5,0) -- +(0,1);

\fill (.25,.365) circle(2pt);
\draw (0,0) .. controls +(0,.5) and +(0,.5) .. +(.5,0);
\draw (.5,0) .. controls +(0,-.5) and +(0,-.5) .. +(.5,0);
\draw (0,0) .. controls +(0,-.75) and +(0,-.75) .. +(1.5,0);
\fill (.75,-.55) circle(2pt);
\draw[thin] (-.25,0) -- +(2,0);
\node at (.005,-.1) {$\up$};
\node at (.505,-.1) {$\up$};
\node at (1.005,.1) {$\down$};
\node at (1.505,-.1) {$\up$};
\end{scope}

\begin{scope}[xshift=3.5cm, yshift=-1.5cm]
\draw (1,1) .. controls +(0,-.5) and +(0,-.5) .. +(.5,0);
\draw (.5,1) .. controls +(0,.5) and +(0,.5) .. +(.5,0);
\draw (0,1) .. controls +(0,.75) and +(0,.75) .. +(1.5,0);
\fill (.75,1.55) circle(2pt);
\draw[thin] (-.25,1) -- +(2,0);
\node at (0.005,0.9) {$\up$};
\node at (0.505,0.9) {$\up$};
\node at (1.005,1.1) {$\down$};
\node at (1.505,0.9) {$\up$};

\draw (0,0) -- +(0,1);
\draw (0.5,0) -- +(0,1);

\draw (1,0) .. controls +(0,.5) and +(0,.5) .. +(.5,0);
\draw (.5,0) .. controls +(0,-.5) and +(0,-.5) .. +(.5,0);
\draw (0,0) .. controls +(0,-.75) and +(0,-.75) .. +(1.5,0);
\fill (.75,-.55) circle(2pt);
\draw[thin] (-.25,0) -- +(2,0);
\node at (.005,-.1) {$\up$};
\node at (.505,-.1) {$\up$};
\node at (1.005,.1) {$\down$};
\node at (1.505,-.1) {$\up$};
\end{scope}

\begin{scope}[xshift=5.75cm,yshift=1.7cm]
\node[rotate=-45] at (.6,-.2) {$\scriptstyle {\rm surg}_{E_1,F}$};
\draw[|->] (0,0) -- +(.7,-.7);
\end{scope}
\begin{scope}[xshift=5.75cm,yshift=-0.7cm]
\node[rotate=45] at (.6,0) {$\scriptstyle {\rm surg}_{E_2,F}$};
\draw[|->] (0,0) -- +(.7,.7);
\end{scope}

\begin{scope}[xshift=7cm]
\node at (0,.5) {$-$};
\end{scope}
\begin{scope}[xshift=7.5cm]
\draw (.5,1) .. controls +(0,.5) and +(0,.5) .. +(.5,0);
\draw (0,1) .. controls +(0,.75) and +(0,.75) .. +(1.5,0);
\fill (.75,1.55) circle(2pt);
\draw[thin] (-.25,1) -- +(2,0);
\node at (.005,1.1) {$\down$};
\node at (.505,1.1) {$\down$};
\node at (1.005,0.9) {$\up$};
\node at (1.505,1.1) {$\down$};

\draw (0,0) -- +(0,1);
\draw (0.5,0) -- +(0,1);
\draw (1,0) -- +(0,1);
\draw (1.5,0) -- +(0,1);

\draw (.5,0) .. controls +(0,-.5) and +(0,-.5) .. +(.5,0);
\draw (0,0) .. controls +(0,-.75) and +(0,-.75) .. +(1.5,0);
\fill (.75,-.55) circle(2pt);
\draw[thin] (-.25,0) -- +(2,0);
\node at (0.005,.1) {$\down$};
\node at (.505,.1) {$\down$};
\node at (1.005,-.1) {$\up$};
\node at (1.505,.1) {$\down$};
\end{scope}
\begin{scope}[xshift=9.5cm]
\node at (0,.5) {$+$};
\end{scope}
\begin{scope}[xshift=10cm]
\draw (.5,1) .. controls +(0,.5) and +(0,.5) .. +(.5,0);
\draw (0,1) .. controls +(0,.75) and +(0,.75) .. +(1.5,0);
\fill (.75,1.55) circle(2pt);
\draw[thin] (-.25,1) -- +(2,0);
\node at (0.005,0.9) {$\up$};
\node at (.505,0.9) {$\up$};
\node at (1.005,1.1) {$\down$};
\node at (1.505,0.9) {$\up$};

\draw (0,0) -- +(0,1);
\draw (0.5,0) -- +(0,1);
\draw (1,0) -- +(0,1);
\draw (1.5,0) -- +(0,1);

\draw (.5,0) .. controls +(0,-.5) and +(0,-.5) .. +(.5,0);
\draw (0,0) .. controls +(0,-.75) and +(0,-.75) .. +(1.5,0);
\fill (.75,-.55) circle(2pt);
\draw[thin] (-.25,0) -- +(2,0);
\node at (.005,-.1) {$\up$};
\node at (.505,-.1) {$\up$};
\node at (1.005,.1) {$\down$};
\node at (1.505,-.1) {$\up$};
\end{scope}
\end{tikzpicture}\hfill\\
Note that we always multiply two degree $1$ elements resulting in a linear combination of degree $2$ elements.
\end{example}

\begin{example}\label{ex:splitmergeexample}
In the previous Example~\ref{ex:mergesplitexample} the first of the two involved surgeries was always a merge while the second was a split. The following is an example for the reversed situation.

\begin{tikzpicture}[thick,decoration={zigzag,pre length=1mm,post length=1mm},scale=0.7]
\begin{scope}
\draw (0,1) .. controls +(0,-.5) and +(0,-.5) .. +(.5,0);
\draw (.5,1) .. controls +(0,.5) and +(0,.5) .. +(.5,0);
\draw (1.5,1) .. controls +(0,-.5) and +(0,-.5) .. +(.5,0);
\draw (2,1) .. controls +(0,.5) and +(0,.5) .. +(.5,0);
\draw (0,1) .. controls +(0,.75) and +(0,.75) .. +(1.5,0);
\draw[thin] (-.25,1) -- +(3,0);
\node at (.005,1.1) {$\down$};
\node at (.505,0.9) {$\up$};
\node at (1.005,1.1) {$\down$};
\node at (1.505,0.9) {$\up$};
\node at (2.005,1.1) {$\down$};
\node at (2.505,0.9) {$\up$};

\draw (1,0) -- +(0,1);
\draw (2.5,0) -- +(0,1);

\draw (0,0) .. controls +(0,.5) and +(0,.5) .. +(.5,0);
\draw (1.5,0) .. controls +(0,.5) and +(0,.5) .. +(.5,0);
\draw (1,0) .. controls +(0,-.5) and +(0,-.5) .. +(.5,0);
\draw (.5,0) .. controls +(0,-.75) and +(0,-.75) .. +(1.5,0);
\draw (0,0) .. controls +(0,-1) and +(0,-1) .. +(2.5,0);
\draw[thin] (-.25,0) -- +(3,0);
\node at (.005,.1) {$\down$};
\node at (.505,-.1) {$\up$};
\node at (1.005,.1) {$\down$};
\node at (1.505,-.1) {$\up$};
\node at (2.005,.1) {$\down$};
\node at (2.505,-.1) {$\up$};
\end{scope}

\begin{scope}[xshift=3.25cm,yshift=1cm]
\node[rotate=45] at (.2,.6) {$\scriptscriptstyle {\rm surg}_{D,E_1}$};
\draw[|->] (0,0) -- +(.7,.7);
\end{scope}
\begin{scope}[xshift=3.25cm]
\node[rotate=-45] at (.2,-.8) {$\scriptscriptstyle {\rm surg}_{D,E_2}$};
\draw[|->] (0,0) -- +(.7,-.7);
\end{scope}

\begin{scope}[xshift=4.5cm,yshift=1.5cm]
\node at (0,.5) {$-$};
\end{scope}
\begin{scope}[xshift=8.55cm,yshift=1.5cm]
\draw (0,1) .. controls +(0,-.5) and +(0,-.5) .. +(.5,0);
\draw (.5,1) .. controls +(0,.5) and +(0,.5) .. +(.5,0);
\draw (2,1) .. controls +(0,.5) and +(0,.5) .. +(.5,0);
\draw (0,1) .. controls +(0,.75) and +(0,.75) .. +(1.5,0);
\draw[thin] (-.25,1) -- +(3,0);
\node at (.005,1.1) {$\down$};
\node at (.505,0.9) {$\up$};
\node at (1.005,1.1) {$\down$};
\node at (1.505,0.9) {$\up$};
\node at (2.005,0.9) {$\up$};
\node at (2.505,1.1) {$\down$};

\draw (1,0) -- +(0,1);
\draw (1.5,0) -- +(0,1);
\draw (2,0) -- +(0,1);
\draw (2.5,0) -- +(0,1);

\draw (0,0) .. controls +(0,.5) and +(0,.5) .. +(.5,0);
\draw (1,0) .. controls +(0,-.5) and +(0,-.5) .. +(.5,0);
\draw (.5,0) .. controls +(0,-.75) and +(0,-.75) .. +(1.5,0);
\draw (0,0) .. controls +(0,-1) and +(0,-1) .. +(2.5,0);
\draw[thin] (-.25,0) -- +(3,0);
\node at (.505,.1) {$\down$};
\node at (.005,-.1) {$\up$};
\node at (1.005,.1) {$\down$};
\node at (1.505,-.1) {$\up$};
\node at (2.505,.1) {$\down$};
\node at (2.005,-.1) {$\up$};
\end{scope}

\begin{scope}[xshift=8cm,yshift=1.5cm]
\node at (0,.5) {$-$};
\end{scope}
\begin{scope}[xshift=5cm,yshift=1.5cm]
\draw (0,1) .. controls +(0,-.5) and +(0,-.5) .. +(.5,0);
\draw (.5,1) .. controls +(0,.5) and +(0,.5) .. +(.5,0);
\draw (2,1) .. controls +(0,.5) and +(0,.5) .. +(.5,0);
\draw (0,1) .. controls +(0,.75) and +(0,.75) .. +(1.5,0);
\draw[thin] (-.25,1) -- +(3,0);
\node at (.505,1.1) {$\down$};
\node at (.005,0.9) {$\up$};
\node at (1.505,1.1) {$\down$};
\node at (1.005,0.9) {$\up$};
\node at (2.005,1.1) {$\down$};
\node at (2.505,0.9) {$\up$};

\draw (1,0) -- +(0,1);
\draw (1.5,0) -- +(0,1);
\draw (2,0) -- +(0,1);
\draw (2.5,0) -- +(0,1);

\draw (0,0) .. controls +(0,.5) and +(0,.5) .. +(.5,0);
\draw (1,0) .. controls +(0,-.5) and +(0,-.5) .. +(.5,0);
\draw (.5,0) .. controls +(0,-.75) and +(0,-.75) .. +(1.5,0);
\draw (0,0) .. controls +(0,-1) and +(0,-1) .. +(2.5,0);
\draw[thin] (-.25,0) -- +(3,0);
\node at (.005,.1) {$\down$};
\node at (.505,-.1) {$\up$};
\node at (1.505,.1) {$\down$};
\node at (1.005,-.1) {$\up$};
\node at (2.005,.1) {$\down$};
\node at (2.505,-.1) {$\up$};
\end{scope}

\begin{scope}[xshift=4.5cm,yshift=-1.5cm]
\node at (0,.5) {$-$};
\end{scope}
\begin{scope}[xshift=5cm,yshift=-1.5cm]
\draw (.5,1) .. controls +(0,.5) and +(0,.5) .. +(.5,0);
\draw (1.5,1) .. controls +(0,-.5) and +(0,-.5) .. +(.5,0);
\draw (2,1) .. controls +(0,.5) and +(0,.5) .. +(.5,0);
\draw (0,1) .. controls +(0,.75) and +(0,.75) .. +(1.5,0);
\draw[thin] (-.25,1) -- +(3,0);
\node at (1.505,1.1) {$\down$};
\node at (1.005,0.9) {$\up$};
\node at (.505,1.1) {$\down$};
\node at (0.005,0.9) {$\up$};
\node at (2.505,1.1) {$\down$};
\node at (2.005,0.9) {$\up$};

\draw (0,0) -- +(0,1);
\draw (.5,0) -- +(0,1);
\draw (1,0) -- +(0,1);
\draw (2.5,0) -- +(0,1);

\draw (1,0) .. controls +(0,-.5) and +(0,-.5) .. +(.5,0);
\draw (1.5,0) .. controls +(0,.5) and +(0,.5) .. +(.5,0);
\draw (.5,0) .. controls +(0,-.75) and +(0,-.75) .. +(1.5,0);
\draw (0,0) .. controls +(0,-1) and +(0,-1) .. +(2.5,0);
\draw[thin] (-.25,0) -- +(3,0);
\node at (2.505,.1) {$\down$};
\node at (1.005,-.1) {$\up$};
\node at (.505,.1) {$\down$};
\node at (2.005,-.1) {$\up$};
\node at (1.505,.1) {$\down$};
\node at (.005,-.1) {$\up$};
\end{scope}

\begin{scope}[xshift=8cm,yshift=-1.5cm]
\node at (0,.5) {$+$};
\end{scope}
\begin{scope}[xshift=8.5cm,yshift=-1.5cm]
\draw (.5,1) .. controls +(0,.5) and +(0,.5) .. +(.5,0);
\draw (1.5,1) .. controls +(0,-.5) and +(0,-.5) .. +(.5,0);
\draw (2,1) .. controls +(0,.5) and +(0,.5) .. +(.5,0);
\draw (0,1) .. controls +(0,.75) and +(0,.75) .. +(1.5,0);
\draw[thin] (-.25,1) -- +(3,0);
\node at (.005,1.1) {$\down$};
\node at (.505,0.9) {$\up$};
\node at (1.005,1.1) {$\down$};
\node at (1.505,0.9) {$\up$};
\node at (2.005,1.1) {$\down$};
\node at (2.505,0.9) {$\up$};

\draw (0,0) -- +(0,1);
\draw (.5,0) -- +(0,1);
\draw (1,0) -- +(0,1);
\draw (2.5,0) -- +(0,1);

\draw (1,0) .. controls +(0,-.5) and +(0,-.5) .. +(.5,0);
\draw (1.5,0) .. controls +(0,.5) and +(0,.5) .. +(.5,0);
\draw (.5,0) .. controls +(0,-.75) and +(0,-.75) .. +(1.5,0);
\draw (0,0) .. controls +(0,-1) and +(0,-1) .. +(2.5,0);
\draw[thin] (-.25,0) -- +(3,0);
\node at (.005,.1) {$\down$};
\node at (.505,-.1) {$\up$};
\node at (1.005,.1) {$\down$};
\node at (1.505,-.1) {$\up$};
\node at (2.005,.1) {$\down$};
\node at (2.505,-.1) {$\up$};
\end{scope}

\begin{scope}[xshift=11.75cm,yshift=1.7cm]
\node[rotate=-45] at (.6,-.2) {$\scriptscriptstyle {\rm surg}_{E_1,F}$};
\draw[|->] (0,0) -- +(.7,-.7);
\end{scope}
\begin{scope}[xshift=11.75cm,yshift=-0.7cm]
\node[rotate=45] at (.6,0) {$\scriptscriptstyle {\rm surg}_{E_2,F}$};
\draw[|->] (0,0) -- +(.7,.7);
\end{scope}

\begin{scope}[xshift=13cm]
\node at (0,.5) {$\scriptstyle -2$};
\end{scope}
\begin{scope}[xshift=13.5cm]
\draw (.5,1) .. controls +(0,.5) and +(0,.5) .. +(.5,0);
\draw (2,1) .. controls +(0,.5) and +(0,.5) .. +(.5,0);
\draw (0,1) .. controls +(0,.75) and +(0,.75) .. +(1.5,0);
\draw[thin] (-.25,1) -- +(3,0);
\node at (.505,1.1) {$\down$};
\node at (.005,0.9) {$\up$};
\node at (1.505,1.1) {$\down$};
\node at (1.005,0.9) {$\up$};
\node at (2.505,1.1) {$\down$};
\node at (2.005,0.9) {$\up$};

\draw (0,0) -- +(0,1);
\draw (.5,0) -- +(0,1);
\draw (1,0) -- +(0,1);
\draw (2.5,0) -- +(0,1);
\draw (1.5,0) -- +(0,1);
\draw (2,0) -- +(0,1);

\draw (1,0) .. controls +(0,-.5) and +(0,-.5) .. +(.5,0);
\draw (.5,0) .. controls +(0,-.75) and +(0,-.75) .. +(1.5,0);
\draw (0,0) .. controls +(0,-1) and +(0,-1) .. +(2.5,0);
\draw[thin] (-.25,0) -- +(3,0);
\node at (.505,.1) {$\down$};
\node at (.005,-.1) {$\up$};
\node at (1.505,.1) {$\down$};
\node at (1.005,-.1) {$\up$};
\node at (2.505,.1) {$\down$};
\node at (2.005,-.1) {$\up$};
\end{scope}
\end{tikzpicture}

We could also include some decorations (as long as we make sure that all the diagrams stay admissible): \hfill\\

\begin{tikzpicture}[thick,decoration={zigzag,pre length=1mm,post length=1mm},scale=0.7]
\begin{scope}
\draw (0,1) .. controls +(0,-.5) and +(0,-.5) .. +(.5,0);
\draw (.5,1) .. controls +(0,.5) and +(0,.5) .. +(.5,0);
\draw (1.5,1) .. controls +(0,-.5) and +(0,-.5) .. +(.5,0);
\draw (2,1) .. controls +(0,.5) and +(0,.5) .. +(.5,0);
\draw (0,1) .. controls +(0,.75) and +(0,.75) .. +(1.5,0);
\draw[thin] (-.25,1) -- +(3,0);
\node at (.005,0.9) {$\up$};
\node at (.505,0.9) {$\up$};
\node at (1.005,1.1) {$\down$};
\node at (1.505,0.9) {$\up$};
\node at (2.005,1.1) {$\down$};
\node at (2.505,0.9) {$\up$};

\fill (0.25,.365) circle(2pt);
\fill (0.25,.635) circle(2pt);
\fill (.75,1.55) circle(2pt);
\fill (1.25,-0.75) circle(2pt);

\draw (1,0) -- +(0,1);
\draw (2.5,0) -- +(0,1);

\draw (0,0) .. controls +(0,.5) and +(0,.5) .. +(.5,0);
\draw (1.5,0) .. controls +(0,.5) and +(0,.5) .. +(.5,0);
\draw (1,0) .. controls +(0,-.5) and +(0,-.5) .. +(.5,0);
\draw (.5,0) .. controls +(0,-.75) and +(0,-.75) .. +(1.5,0);
\draw (0,0) .. controls +(0,-1) and +(0,-1) .. +(2.5,0);
\draw[thin] (-.25,0) -- +(3,0);
\node at (.005,-.1) {$\up$};
\node at (.505,-.1) {$\up$};
\node at (1.005,.1) {$\down$};
\node at (1.505,-.1) {$\up$};
\node at (2.005,.1) {$\down$};
\node at (2.505,-.1) {$\up$};
\end{scope}

\begin{scope}[xshift=3.25cm,yshift=1cm]
\node[rotate=45] at (.2,.6) {$\scriptscriptstyle {\rm surg}_{D,E_1}$};
\draw[|->] (0,0) -- +(.7,.7);
\end{scope}
\begin{scope}[xshift=3.25cm]
\node[rotate=-45] at (.2,-.8) {$\scriptscriptstyle {\rm surg}_{D,E_2}$};
\draw[|->] (0,0) -- +(.7,-.7);
\end{scope}

\begin{scope}[xshift=4.5cm,yshift=1.5cm]
\node at (0,.5) {$-$};
\end{scope}
\begin{scope}[xshift=5cm,yshift=1.5cm]
\draw (0,1) .. controls +(0,-.5) and +(0,-.5) .. +(.5,0);
\draw (.5,1) .. controls +(0,.5) and +(0,.5) .. +(.5,0);
\draw (2,1) .. controls +(0,.5) and +(0,.5) .. +(.5,0);
\draw (0,1) .. controls +(0,.75) and +(0,.75) .. +(1.5,0);
\draw[thin] (-.25,1) -- +(3,0);
\node at (.005,0.9) {$\up$};
\node at (.505,0.9) {$\up$};
\node at (1.005,1.1) {$\down$};
\node at (1.505,0.9) {$\up$};
\node at (2.005,0.9) {$\up$};
\node at (2.505,1.1) {$\down$};

\fill (0.25,.365) circle(2pt);
\fill (0.25,.635) circle(2pt);
\fill (.75,1.55) circle(2pt);
\fill (1.25,-0.75) circle(2pt);

\draw (1,0) -- +(0,1);
\draw (1.5,0) -- +(0,1);
\draw (2,0) -- +(0,1);
\draw (2.5,0) -- +(0,1);

\draw (0,0) .. controls +(0,.5) and +(0,.5) .. +(.5,0);
\draw (1,0) .. controls +(0,-.5) and +(0,-.5) .. +(.5,0);
\draw (.5,0) .. controls +(0,-.75) and +(0,-.75) .. +(1.5,0);
\draw (0,0) .. controls +(0,-1) and +(0,-1) .. +(2.5,0);
\draw[thin] (-.25,0) -- +(3,0);
\node at (.505,.1) {$\down$};
\node at (.005,.1) {$\down$};
\node at (1.005,.1) {$\down$};
\node at (1.505,-.1) {$\up$};
\node at (2.505,.1) {$\down$};
\node at (2.005,-.1) {$\up$};
\end{scope}

\begin{scope}[xshift=8cm,yshift=1.5cm]
\node at (0,.5) {$-$};
\end{scope}
\begin{scope}[xshift=8.5cm,yshift=1.5cm]
\draw (0,1) .. controls +(0,-.5) and +(0,-.5) .. +(.5,0);
\draw (.5,1) .. controls +(0,.5) and +(0,.5) .. +(.5,0);
\draw (2,1) .. controls +(0,.5) and +(0,.5) .. +(.5,0);
\draw (0,1) .. controls +(0,.75) and +(0,.75) .. +(1.5,0);
\draw[thin] (-.25,1) -- +(3,0);
\node at (.005,1.1) {$\down$};
\node at (.505,1.1) {$\down$};
\node at (1.505,1.1) {$\down$};
\node at (1.005,0.9) {$\up$};
\node at (2.005,1.1) {$\down$};
\node at (2.505,0.9) {$\up$};

\fill (0.25,.365) circle(2pt);
\fill (0.25,.635) circle(2pt);
\fill (.75,1.55) circle(2pt);
\fill (1.25,-0.75) circle(2pt);

\draw (1,0) -- +(0,1);
\draw (1.5,0) -- +(0,1);
\draw (2,0) -- +(0,1);
\draw (2.5,0) -- +(0,1);

\draw (0,0) .. controls +(0,.5) and +(0,.5) .. +(.5,0);
\draw (1,0) .. controls +(0,-.5) and +(0,-.5) .. +(.5,0);
\draw (.5,0) .. controls +(0,-.75) and +(0,-.75) .. +(1.5,0);
\draw (0,0) .. controls +(0,-1) and +(0,-1) .. +(2.5,0);
\draw[thin] (-.25,0) -- +(3,0);
\node at (.005,-.1) {$\up$};
\node at (.505,-.1) {$\up$};
\node at (1.505,.1) {$\down$};
\node at (1.005,-.1) {$\up$};
\node at (2.005,.1) {$\down$};
\node at (2.505,-.1) {$\up$};
\end{scope}

\begin{scope}[xshift=4.5cm,yshift=-1.5cm]
\node at (0,.5) {$-$};
\end{scope}
\begin{scope}[xshift=5cm,yshift=-1.5cm]
\draw (.5,1) .. controls +(0,.5) and +(0,.5) .. +(.5,0);
\draw (1.5,1) .. controls +(0,-.5) and +(0,-.5) .. +(.5,0);
\draw (2,1) .. controls +(0,.5) and +(0,.5) .. +(.5,0);
\draw (0,1) .. controls +(0,.75) and +(0,.75) .. +(1.5,0);
\draw[thin] (-.25,1) -- +(3,0);
\node at (1.505,1.1) {$\down$};
\node at (1.005,0.9) {$\up$};
\node at (.505,1.1) {$\down$};
\node at (.005,1.1) {$\down$};
\node at (2.505,1.1) {$\down$};
\node at (2.005,0.9) {$\up$};

\fill (.75,1.55) circle(2pt);
\fill (1.25,-0.75) circle(2pt);

\draw (0,0) -- +(0,1);
\draw (.5,0) -- +(0,1);
\draw (1,0) -- +(0,1);
\draw (2.5,0) -- +(0,1);

\draw (1,0) .. controls +(0,-.5) and +(0,-.5) .. +(.5,0);
\draw (1.5,0) .. controls +(0,.5) and +(0,.5) .. +(.5,0);
\draw (.5,0) .. controls +(0,-.75) and +(0,-.75) .. +(1.5,0);
\draw (0,0) .. controls +(0,-1) and +(0,-1) .. +(2.5,0);
\draw[thin] (-.25,0) -- +(3,0);
\node at (2.505,.1) {$\down$};
\node at (1.005,-.1) {$\up$};
\node at (.505,.1) {$\down$};
\node at (2.005,-.1) {$\up$};
\node at (1.505,.1) {$\down$};
\node at (.005,.1) {$\down$};
\end{scope}

\begin{scope}[xshift=8cm,yshift=-1.5cm]
\node at (0,.5) {$+$};
\end{scope}
\begin{scope}[xshift=8.5cm,yshift=-1.5cm]
\draw (.5,1) .. controls +(0,.5) and +(0,.5) .. +(.5,0);
\draw (1.5,1) .. controls +(0,-.5) and +(0,-.5) .. +(.5,0);
\draw (2,1) .. controls +(0,.5) and +(0,.5) .. +(.5,0);
\draw (0,1) .. controls +(0,.75) and +(0,.75) .. +(1.5,0);
\draw[thin] (-.25,1) -- +(3,0);
\node at (.005,0.9) {$\up$};
\node at (.505,0.9) {$\up$};
\node at (1.005,1.1) {$\down$};
\node at (1.505,0.9) {$\up$};
\node at (2.005,1.1) {$\down$};
\node at (2.505,0.9) {$\up$};

\fill (.75,1.55) circle(2pt);
\fill (1.25,-0.75) circle(2pt);

\draw (0,0) -- +(0,1);
\draw (.5,0) -- +(0,1);
\draw (1,0) -- +(0,1);
\draw (2.5,0) -- +(0,1);

\draw (1,0) .. controls +(0,-.5) and +(0,-.5) .. +(.5,0);
\draw (1.5,0) .. controls +(0,.5) and +(0,.5) .. +(.5,0);
\draw (.5,0) .. controls +(0,-.75) and +(0,-.75) .. +(1.5,0);
\draw (0,0) .. controls +(0,-1) and +(0,-1) .. +(2.5,0);
\draw[thin] (-.25,0) -- +(3,0);
\node at (.005,-.1) {$\up$};
\node at (.505,-.1) {$\up$};
\node at (1.005,.1) {$\down$};
\node at (1.505,-.1) {$\up$};
\node at (2.005,.1) {$\down$};
\node at (2.505,-.1) {$\up$};
\end{scope}

\begin{scope}[xshift=11.75cm,yshift=1.7cm]
\node[rotate=-45] at (.6,-.2) {$\scriptscriptstyle {\rm surg}_{E_1,F}$};
\draw[|->] (0,0) -- +(.7,-.7);
\end{scope}
\begin{scope}[xshift=11.75cm,yshift=-0.7cm]
\node[rotate=45] at (.6,0) {$\scriptscriptstyle {\rm surg}_{E_2,F}$};
\draw[|->] (0,0) -- +(.7,.7);
\end{scope}

\begin{scope}[xshift=13cm]
\node at (0,.5) {$\scriptstyle -2$};
\end{scope}
\begin{scope}[xshift=13.5cm]
\draw (.5,1) .. controls +(0,.5) and +(0,.5) .. +(.5,0);
\draw (2,1) .. controls +(0,.5) and +(0,.5) .. +(.5,0);
\draw (0,1) .. controls +(0,.75) and +(0,.75) .. +(1.5,0);
\draw[thin] (-.25,1) -- +(3,0);
\node at (.005,1.1) {$\down$};
\node at (.505,1.1) {$\down$};
\node at (1.505,1.1) {$\down$};
\node at (1.005,0.9) {$\up$};
\node at (2.505,1.1) {$\down$};
\node at (2.005,0.9) {$\up$};

\fill (.75,1.55) circle(2pt);
\fill (1.25,-0.75) circle(2pt);

\draw (0,0) -- +(0,1);
\draw (.5,0) -- +(0,1);
\draw (1,0) -- +(0,1);
\draw (2.5,0) -- +(0,1);
\draw (1.5,0) -- +(0,1);
\draw (2,0) -- +(0,1);

\draw (1,0) .. controls +(0,-.5) and +(0,-.5) .. +(.5,0);
\draw (.5,0) .. controls +(0,-.75) and +(0,-.75) .. +(1.5,0);
\draw (0,0) .. controls +(0,-1) and +(0,-1) .. +(2.5,0);
\draw[thin] (-.25,0) -- +(3,0);
\node at (.005,.1) {$\down$};
\node at (.505,.1) {$\down$};
\node at (1.505,.1) {$\down$};
\node at (1.005,-.1) {$\up$};
\node at (2.505,.1) {$\down$};
\node at (2.005,-.1) {$\up$};
\end{scope}
\end{tikzpicture}
\end{example}

Since we only defined the surgery maps for admissible diagrams, not all surgery maps in the next example are formally defined.  It should however illustrate what goes wrong if one would ignore the admissibility assumptions and define the surgery in the obvious way for all stacked circle diagrams.

\begin{example}
To illustrate that the admissibility assumptions are really necessary to get a well-defined muliplication, we apply the surgery maps formally ignoring whether we actually stay in admissible diagrams. One observes that the two maps differ exactly by a sign (namely the sign $w$ which in the proof of  Theorem~\ref{thm:surgeries_commute} turned out to be $1$ for admissible diagrams). \hfill\\

\begin{tikzpicture}[thick,decoration={zigzag,pre length=1mm,post length=1mm}]
\begin{scope}
\draw (0,1) .. controls +(0,-.5) and +(0,-.5) .. +(.5,0);
\draw (.5,1) .. controls +(0,.5) and +(0,.5) .. +(.5,0);
\draw (1,1) .. controls +(0,-.5) and +(0,-.5) .. +(.5,0);
\fill (1.25,0.635) circle(2pt);
\draw (0,1) .. controls +(0,.75) and +(0,.75) .. +(1.5,0);
\fill (.75,1.55) circle(2pt);
\draw[thin] (-.25,1) -- +(2,0);
\node at (0.005,0.9) {$\up$};
\node at (.505,1.1) {$\down$};
\node at (1.005,0.9) {$\up$};
\node at (1.505,0.9) {$\up$};

\draw (0,0) .. controls +(0,.5) and +(0,.5) .. +(.5,0);
\draw (.5,0) .. controls +(0,-.5) and +(0,-.5) .. +(.5,0);
\draw (1,0) .. controls +(0,.5) and +(0,.5) .. +(.5,0);
\fill (1.25,.365) circle(2pt);
\draw (0,0) .. controls +(0,-.75) and +(0,-.75) .. +(1.5,0);
\fill (.75,-.55) circle(2pt);
\draw[thin] (-.25,0) -- +(2,0);
\node at (.005,-.1) {$\up$};
\node at (.505,.1) {$\down$};
\node at (1.005,-.1) {$\up$};
\node at (1.505,-.1) {$\up$};
\end{scope}

\begin{scope}[xshift=2.25cm,yshift=1cm]
\node[rotate=45] at (.2,.6) {$\scriptstyle {\rm surg}_{D,E_1}$};
\draw[|->] (0,0) -- +(.7,.7);
\end{scope}
\begin{scope}[xshift=2.25cm]
\node[rotate=-45] at (.2,-.8) {$\scriptstyle {\rm surg}_{D,E_2}$};
\draw[|->] (0,0) -- +(.7,-.7);
\end{scope}

\begin{scope}[xshift=3.5cm, yshift=1.5cm]
\draw (0,1) .. controls +(0,-.5) and +(0,-.5) .. +(.5,0);
\draw (.5,1) .. controls +(0,.5) and +(0,.5) .. +(.5,0);
\draw (0,1) .. controls +(0,.75) and +(0,.75) .. +(1.5,0);
\fill (.75,1.55) circle(2pt);
\draw[thin] (-.25,1) -- +(2,0);
\node at (0.005,0.9) {$\up$};
\node at (.505,1.1) {$\down$};
\node at (1.005,0.9) {$\up$};
\node at (1.505,0.9) {$\up$};

\draw (1,0) -- +(0,1);
\draw (1.5,0) -- +(0,1);

\draw (0,0) .. controls +(0,.5) and +(0,.5) .. +(.5,0);
\draw (.5,0) .. controls +(0,-.5) and +(0,-.5) .. +(.5,0);
\draw (0,0) .. controls +(0,-.75) and +(0,-.75) .. +(1.5,0);
\fill (.75,-.55) circle(2pt);
\draw[thin] (-.25,0) -- +(2,0);
\node at (.005,-.1) {$\up$};
\node at (.505,.1) {$\down$};
\node at (1.005,-.1) {$\up$};
\node at (1.505,-.1) {$\up$};
\end{scope}

\begin{scope}[xshift=3.5cm, yshift=-1.5cm]
\draw (1,1) .. controls +(0,-.5) and +(0,-.5) .. +(.5,0);
\fill (1.25,0.635) circle(2pt);
\draw (.5,1) .. controls +(0,.5) and +(0,.5) .. +(.5,0);
\draw (0,1) .. controls +(0,.75) and +(0,.75) .. +(1.5,0);
\fill (.75,1.55) circle(2pt);
\draw[thin] (-.25,1) -- +(2,0);
\node at (0.005,0.9) {$\up$};
\node at (.505,1.1) {$\down$};
\node at (1.005,0.9) {$\up$};
\node at (1.505,0.9) {$\up$};

\draw (0,0) -- +(0,1);
\draw (0.5,0) -- +(0,1);

\draw (1,0) .. controls +(0,.5) and +(0,.5) .. +(.5,0);
\fill (1.25,0.365) circle(2pt);
\draw (.5,0) .. controls +(0,-.5) and +(0,-.5) .. +(.5,0);
\draw (0,0) .. controls +(0,-.75) and +(0,-.75) .. +(1.5,0);
\fill (.75,-.55) circle(2pt);
\draw[thin] (-.25,0) -- +(2,0);
\node at (.005,-.1) {$\up$};
\node at (.505,.1) {$\down$};
\node at (1.005,-.1) {$\up$};
\node at (1.505,-.1) {$\up$};
\end{scope}

\begin{scope}[xshift=5.5cm,yshift=2cm]
\node at (.5,.2) {$\scriptstyle {\rm surg}_{E_1,F}$};
\draw[|->] (0,0) -- +(1,0);
\end{scope}
\begin{scope}[xshift=5.5cm,yshift=-1cm]
\node at (.5,-.4) {$\scriptstyle {\rm surg}_{E_2,F}$};
\draw[|->] (0,0) -- +(1,0);
\end{scope}

\begin{scope}[xshift=7cm,yshift=1.5cm]
\node at (0,.5) {$+$};
\end{scope}
\begin{scope}[xshift=7.5cm,yshift=1.5cm]
\draw (.5,1) .. controls +(0,.5) and +(0,.5) .. +(.5,0);
\draw (0,1) .. controls +(0,.75) and +(0,.75) .. +(1.5,0);
\fill (.75,1.55) circle(2pt);
\draw[thin] (-.25,1) -- +(2,0);
\node at (0.005,1.1) {$\down$};
\node at (.505,1.1) {$\down$};
\node at (1.005,0.9) {$\up$};
\node at (1.505,1.1) {$\down$};

\draw (0,0) -- +(0,1);
\draw (0.5,0) -- +(0,1);
\draw (1,0) -- +(0,1);
\draw (1.5,0) -- +(0,1);

\draw (.5,0) .. controls +(0,-.5) and +(0,-.5) .. +(.5,0);
\draw (0,0) .. controls +(0,-.75) and +(0,-.75) .. +(1.5,0);
\fill (.75,-.55) circle(2pt);
\draw[thin] (-.25,0) -- +(2,0);
\node at (.005,.1) {$\down$};
\node at (.505,.1) {$\down$};
\node at (1.005,-.1) {$\up$};
\node at (1.505,.1) {$\down$};
\end{scope}
\begin{scope}[xshift=9.5cm,yshift=1.5cm]
\node at (0,.5) {$+$};
\end{scope}
\begin{scope}[xshift=10cm,yshift=1.5cm]
\draw (.5,1) .. controls +(0,.5) and +(0,.5) .. +(.5,0);
\draw (0,1) .. controls +(0,.75) and +(0,.75) .. +(1.5,0);
\fill (.75,1.55) circle(2pt);
\draw[thin] (-.25,1) -- +(2,0);
\node at (.005,0.9) {$\up$};
\node at (.505,0.9) {$\up$};
\node at (1.005,1.1) {$\down$};
\node at (1.505,0.9) {$\up$};

\draw (0,0) -- +(0,1);
\draw (0.5,0) -- +(0,1);
\draw (1,0) -- +(0,1);
\draw (1.5,0) -- +(0,1);

\draw (.5,0) .. controls +(0,-.5) and +(0,-.5) .. +(.5,0);
\draw (0,0) .. controls +(0,-.75) and +(0,-.75) .. +(1.5,0);
\fill (.75,-.55) circle(2pt);
\draw[thin] (-.25,0) -- +(2,0);
\node at (.005,.1) {$\up$};
\node at (.505,.1) {$\up$};
\node at (1.005,.1) {$\down$};
\node at (1.505,-.1) {$\up$};
\end{scope}

\begin{scope}[xshift=7cm,yshift=-1.5cm]
\node at (0,.5) {$-$};
\end{scope}
\begin{scope}[xshift=7.5cm,yshift=-1.5cm]
\draw (.5,1) .. controls +(0,.5) and +(0,.5) .. +(.5,0);
\draw (0,1) .. controls +(0,.75) and +(0,.75) .. +(1.5,0);
\fill (.75,1.55) circle(2pt);
\draw[thin] (-.25,1) -- +(2,0);
\node at (.005,1.1) {$\down$};
\node at (.505,1.1) {$\down$};
\node at (1.005,0.9) {$\up$};
\node at (1.505,1.1) {$\down$};

\draw (0,0) -- +(0,1);
\draw (0.5,0) -- +(0,1);
\draw (1,0) -- +(0,1);
\draw (1.5,0) -- +(0,1);

\draw (.5,0) .. controls +(0,-.5) and +(0,-.5) .. +(.5,0);
\draw (0,0) .. controls +(0,-.75) and +(0,-.75) .. +(1.5,0);
\fill (.75,-.55) circle(2pt);
\draw[thin] (-.25,0) -- +(2,0);
\node at (.005,.1) {$\down$};
\node at (.505,.1) {$\down$};
\node at (1.005,-.1) {$\up$};
\node at (1.505,.1) {$\down$};
\end{scope}
\begin{scope}[xshift=9.5cm,yshift=-1.5cm]
\node at (0,.5) {$-$};
\end{scope}
\begin{scope}[xshift=10cm,yshift=-1.5cm]
\draw (.5,1) .. controls +(0,.5) and +(0,.5) .. +(.5,0);
\draw (0,1) .. controls +(0,.75) and +(0,.75) .. +(1.5,0);
\fill (.75,1.55) circle(2pt);
\draw[thin] (-.25,1) -- +(2,0);
\node at (.005,0.9) {$\up$};
\node at (.505,0.9) {$\up$};
\node at (1.005,1.1) {$\down$};
\node at (1.505,0.9) {$\up$};

\draw (0,0) -- +(0,1);
\draw (0.5,0) -- +(0,1);
\draw (1,0) -- +(0,1);
\draw (1.5,0) -- +(0,1);

\draw (.5,0) .. controls +(0,-.5) and +(0,-.5) .. +(.5,0);
\draw (0,0) .. controls +(0,-.75) and +(0,-.75) .. +(1.5,0);
\fill (.75,-.55) circle(2pt);
\draw[thin] (-.25,0) -- +(2,0);
\node at (.005,-.1) {$\up$};
\node at (.505,-.1) {$\up$};
\node at (1.005,.1) {$\down$};
\node at (1.505,-.1) {$\up$};
\end{scope}
\end{tikzpicture}

Here the surgery starting from a non-admissible diagram was a split, but we could also have a merge:

\begin{tikzpicture}[thick,decoration={zigzag,pre length=1mm,post length=1mm}]
\begin{scope}
\draw (0,1) .. controls +(0,-.5) and +(0,-.5) .. +(.5,0);
\draw (.5,1) .. controls +(0,.5) and +(0,.5) .. +(.5,0);
\draw (1,1) .. controls +(0,-.5) and +(0,-.5) .. +(.5,0);
\fill (1.25,0.635) circle(2pt);
\draw (0,1) .. controls +(0,.75) and +(0,.75) .. +(1.5,0);
\fill (.75,1.55) circle(2pt);
\draw[thin] (-.25,1) -- +(2,0);
\node at (0.005,0.9) {$\up$};
\node at (.505,1.1) {$\down$};
\node at (1.005,0.9) {$\up$};
\node at (1.505,0.9) {$\up$};

\draw (0,0) .. controls +(0,.5) and +(0,.5) .. +(.5,0);
\draw (.5,0) .. controls +(0,-.5) and +(0,-.5) .. +(.5,0);
\draw (1,0) .. controls +(0,.5) and +(0,.5) .. +(.5,0);
\fill (1.25,.365) circle(2pt);
\draw (0,0) .. controls +(0,-.75) and +(0,-.75) .. +(1.5,0);
\fill (.75,-.55) circle(2pt);
\draw[thin] (-.25,0) -- +(2,0);
\node at (.005,.1) {$\down$};
\node at (.505,-.1) {$\up$};
\node at (1.005,.1) {$\down$};
\node at (1.505,.1) {$\down$};
\end{scope}

\begin{scope}[xshift=2.25cm,yshift=1cm]
\node[rotate=45] at (.2,.6) {$\scriptstyle {\rm surg}_{D,E_1}$};
\draw[|->] (0,0) -- +(.7,.7);
\end{scope}
\begin{scope}[xshift=2.25cm]
\node[rotate=-45] at (.2,-.8) {$\scriptstyle {\rm surg}_{D,E_2}$};
\draw[|->] (0,0) -- +(.7,-.7);
\end{scope}

\begin{scope}[xshift=3.5cm, yshift=1.5cm]
\draw (0,1) .. controls +(0,-.5) and +(0,-.5) .. +(.5,0);
\draw (.5,1) .. controls +(0,.5) and +(0,.5) .. +(.5,0);
\draw (0,1) .. controls +(0,.75) and +(0,.75) .. +(1.5,0);
\fill (.75,1.55) circle(2pt);
\draw[thin] (-.25,1) -- +(2,0);
\node at (.005,1.1) {$\down$};
\node at (.505,0.9) {$\up$};
\node at (1.005,1.1) {$\down$};
\node at (1.505,1.1) {$\down$};

\draw (1,0) -- +(0,1);
\draw (1.5,0) -- +(0,1);

\draw (0,0) .. controls +(0,.5) and +(0,.5) .. +(.5,0);
\draw (.5,0) .. controls +(0,-.5) and +(0,-.5) .. +(.5,0);
\draw (0,0) .. controls +(0,-.75) and +(0,-.75) .. +(1.5,0);
\fill (.75,-.55) circle(2pt);
\draw[thin] (-.25,0) -- +(2,0);
\node at (.005,.1) {$\down$};
\node at (.505,-.1) {$\up$};
\node at (1.005,.1) {$\down$};
\node at (1.505,.1) {$\down$};
\end{scope}

\begin{scope}[xshift=3.5cm, yshift=-1.5cm]
\draw (1,1) .. controls +(0,-.5) and +(0,-.5) .. +(.5,0);
\fill (1.25,0.635) circle(2pt);
\draw (.5,1) .. controls +(0,.5) and +(0,.5) .. +(.5,0);
\draw (0,1) .. controls +(0,.75) and +(0,.75) .. +(1.5,0);
\fill (.75,1.55) circle(2pt);
\draw[thin] (-.25,1) -- +(2,0);
\node at (.005,1.1) {$\down$};
\node at (.505,0.9) {$\up$};
\node at (1.005,1.1) {$\down$};
\node at (1.505,1.1) {$\down$};

\draw (0,0) -- +(0,1);
\draw (0.5,0) -- +(0,1);

\draw (1,0) .. controls +(0,.5) and +(0,.5) .. +(.5,0);
\fill (1.25,0.365) circle(2pt);
\draw (.5,0) .. controls +(0,-.5) and +(0,-.5) .. +(.5,0);
\draw (0,0) .. controls +(0,-.75) and +(0,-.75) .. +(1.5,0);
\fill (.75,-.55) circle(2pt);
\draw[thin] (-.25,0) -- +(2,0);
\node at (.005,.1) {$\down$};
\node at (.505,-.1) {$\up$};
\node at (1.005,.1) {$\down$};
\node at (1.505,.1) {$\down$};
\end{scope}

\begin{scope}[xshift=5.5cm,yshift=2cm]
\node at (.5,.2) {$\scriptstyle {\rm surg}_{E_1,F}$};
\draw[|->] (0,0) -- +(1,0);
\end{scope}
\begin{scope}[xshift=5.5cm,yshift=-1cm]
\node at (.5,-.4) {$\scriptstyle {\rm surg}_{E_2,F}$};
\draw[|->] (0,0) -- +(1,0);
\end{scope}

\begin{scope}[xshift=7cm,yshift=1.5cm]
\node at (0,.5) {$+$};
\end{scope}
\begin{scope}[xshift=7.5cm,yshift=1.5cm]
\draw (.5,1) .. controls +(0,.5) and +(0,.5) .. +(.5,0);
\draw (0,1) .. controls +(0,.75) and +(0,.75) .. +(1.5,0);
\fill (.75,1.55) circle(2pt);
\draw[thin] (-.25,1) -- +(2,0);
\node at (0.005,1.1) {$\down$};
\node at (1.005,1.1) {$\down$};
\node at (0.505,0.9) {$\up$};
\node at (1.505,1.1) {$\down$};

\draw (0,0) -- +(0,1);
\draw (0.5,0) -- +(0,1);
\draw (1,0) -- +(0,1);
\draw (1.5,0) -- +(0,1);

\draw (.5,0) .. controls +(0,-.5) and +(0,-.5) .. +(.5,0);
\draw (0,0) .. controls +(0,-.75) and +(0,-.75) .. +(1.5,0);
\fill (.75,-.55) circle(2pt);
\draw[thin] (-.25,0) -- +(2,0);
\node at (.005,.1) {$\down$};
\node at (1.005,.1) {$\down$};
\node at (.505,-.1) {$\up$};
\node at (1.505,.1) {$\down$};
\end{scope}

\begin{scope}[xshift=7cm,yshift=-1.5cm]
\node at (0,.5) {$-$};
\end{scope}
\begin{scope}[xshift=7.5cm,yshift=-1.5cm]
\draw (.5,1) .. controls +(0,.5) and +(0,.5) .. +(.5,0);
\draw (0,1) .. controls +(0,.75) and +(0,.75) .. +(1.5,0);
\fill (.75,1.55) circle(2pt);
\draw[thin] (-.25,1) -- +(2,0);
\node at (.005,1.1) {$\down$};
\node at (1.005,1.1) {$\down$};
\node at (.505,0.9) {$\up$};
\node at (1.505,1.1) {$\down$};

\draw (0,0) -- +(0,1);
\draw (0.5,0) -- +(0,1);
\draw (1,0) -- +(0,1);
\draw (1.5,0) -- +(0,1);

\draw (.5,0) .. controls +(0,-.5) and +(0,-.5) .. +(.5,0);
\draw (0,0) .. controls +(0,-.75) and +(0,-.75) .. +(1.5,0);
\fill (.75,-.55) circle(2pt);
\draw[thin] (-.25,0) -- +(2,0);
\node at (.005,.1) {$\down$};
\node at (1.005,.1) {$\down$};
\node at (.505,-.1) {$\up$};
\node at (1.505,.1) {$\down$};
\end{scope}
\end{tikzpicture}
\end{example}

The final example illustrates the problem with orientability when multiple surgeries are possible, as discussed in the proof of Theorem~\ref{thm:surgeries_commute}.

\begin{example} \label{ex:splitnotorientable}
This example shows a special phenomenon which can occur if we compare the two compositions of two surgeries.
\begin{center}
\begin{tikzpicture}[thick,decoration={zigzag,pre length=1mm,post length=1mm}, scale=0.8]
\begin{scope}
\draw (0,1) .. controls +(0,-.5) and +(0,-.5) .. +(.5,0);
\draw (.5,1) .. controls +(0,.5) and +(0,.5) .. +(.5,0);
\draw (1.5,1) .. controls +(0,-.5) and +(0,-.5) .. +(.5,0);
\draw (2,1) .. controls +(0,.5) and +(0,.5) .. +(.5,0);
\draw (0,1) .. controls +(0,.75) and +(0,.75) .. +(1.5,0);
\draw[thin] (-.25,1) -- +(3,0);
\node at (.005,0.9) {$\up$};
\node at (.505,0.9) {$\up$};
\node at (1.005,0.9) {$\up$};
\node at (1.505,1.1) {$\down$};
\node at (2.005,0.9) {$\up$};
\node at (2.505,0.9) {$\up$};

\fill (0.25,.365) circle(2pt);
\fill (0.25,.635) circle(2pt);
\fill (2.25,1.365) circle(2pt);
\fill (1.25,-0.75) circle(2pt);

\draw (1,0) -- +(0,1);
\draw (2.5,0) -- +(0,1);

\draw (0,0) .. controls +(0,.5) and +(0,.5) .. +(.5,0);
\draw (1.5,0) .. controls +(0,.5) and +(0,.5) .. +(.5,0);
\draw (1,0) .. controls +(0,-.5) and +(0,-.5) .. +(.5,0);
\draw (.5,0) .. controls +(0,-.75) and +(0,-.75) .. +(1.5,0);
\draw (0,0) .. controls +(0,-1) and +(0,-1) .. +(2.5,0);
\draw[thin] (-.25,0) -- +(3,0);
\node at (.005,-.1) {$\up$};
\node at (.505,-.1) {$\up$};
\node at (1.005,.1) {$\down$};
\node at (1.505,-.1) {$\up$};
\node at (2.005,.1) {$\down$};
\node at (2.505,-.1) {$\up$};
\end{scope}

\begin{scope}[xshift=3.25cm,yshift=1cm]
\node[rotate=45] at (.2,.6) {$\scriptscriptstyle {\rm surg}_{D,E_1}$};
\draw[|->] (0,0) -- +(.7,.7);
\end{scope}
\begin{scope}[xshift=3.25cm]
\node[rotate=-45] at (.2,-.8) {$\scriptscriptstyle {\rm surg}_{D,E_2}$};
\draw[|->] (0,0) -- +(.7,-.7);
\end{scope}

\begin{scope}[xshift=4.9cm,yshift=1.5cm]
\node at (0,.5) {$\scriptstyle 0$ \small since};
\end{scope}
\begin{scope}[xshift=6cm,yshift=1.5cm]
\draw (0,1) .. controls +(0,-.5) and +(0,-.5) .. +(.5,0);
\draw (.5,1) .. controls +(0,.5) and +(0,.5) .. +(.5,0);
\draw (2,1) .. controls +(0,.5) and +(0,.5) .. +(.5,0);
\draw (0,1) .. controls +(0,.75) and +(0,.75) .. +(1.5,0);
\draw[thin] (-.25,1) -- +(3,0);

\fill (0.25,.365) circle(2pt);
\fill (0.25,.635) circle(2pt);
\fill (2.25,1.365) circle(2pt);
\fill (1.25,-0.75) circle(2pt);

\draw (1,0) -- +(0,1);
\draw (1.5,0) -- +(0,1);
\draw (2,0) -- +(0,1);
\draw (2.5,0) -- +(0,1);

\draw (0,0) .. controls +(0,.5) and +(0,.5) .. +(.5,0);
\draw (1,0) .. controls +(0,-.5) and +(0,-.5) .. +(.5,0);
\draw (.5,0) .. controls +(0,-.75) and +(0,-.75) .. +(1.5,0);
\draw (0,0) .. controls +(0,-1) and +(0,-1) .. +(2.5,0);
\draw[thin] (-.25,0) -- +(3,0);
\node at (4.4,0.5) {\small is not orientable};
\end{scope}

\begin{scope}[xshift=4.5cm,yshift=-1.5cm]
\node at (0,.5) {$\scriptstyle -$};
\end{scope}
\begin{scope}[xshift=5cm,yshift=-1.5cm]
\draw (.5,1) .. controls +(0,.5) and +(0,.5) .. +(.5,0);
\draw (1.5,1) .. controls +(0,-.5) and +(0,-.5) .. +(.5,0);
\draw (2,1) .. controls +(0,.5) and +(0,.5) .. +(.5,0);
\draw (0,1) .. controls +(0,.75) and +(0,.75) .. +(1.5,0);
\draw[thin] (-.25,1) -- +(3,0);
\node at (1.005,0.9) {$\up$};
\node at (1.505,0.9) {$\up$};
\node at (.505,1.1) {$\down$};
\node at (.005,1.1) {$\down$};
\node at (2.005,1.1) {$\down$};
\node at (2.505,1.1) {$\down$};

\fill (1.25,-0.75) circle(2pt);
\fill (2.25,1.365) circle(2pt);

\draw (0,0) -- +(0,1);
\draw (.5,0) -- +(0,1);
\draw (1,0) -- +(0,1);
\draw (2.5,0) -- +(0,1);

\draw (1,0) .. controls +(0,-.5) and +(0,-.5) .. +(.5,0);
\draw (1.5,0) .. controls +(0,.5) and +(0,.5) .. +(.5,0);
\draw (.5,0) .. controls +(0,-.75) and +(0,-.75) .. +(1.5,0);
\draw (0,0) .. controls +(0,-1) and +(0,-1) .. +(2.5,0);
\draw[thin] (-.25,0) -- +(3,0);
\node at (2.505,.1) {$\down$};
\node at (1.005,-.1) {$\up$};
\node at (.505,.1) {$\down$};
\node at (2.005,-.1) {$\up$};
\node at (1.505,.1) {$\down$};
\node at (.005,.1) {$\down$};
\end{scope}

\begin{scope}[xshift=8cm,yshift=-1.5cm]
\node at (0,.5) {$\scriptstyle +$};
\end{scope}
\begin{scope}[xshift=8.5cm,yshift=-1.5cm]
\draw (.5,1) .. controls +(0,.5) and +(0,.5) .. +(.5,0);
\draw (1.5,1) .. controls +(0,-.5) and +(0,-.5) .. +(.5,0);
\draw (2,1) .. controls +(0,.5) and +(0,.5) .. +(.5,0);
\draw (0,1) .. controls +(0,.75) and +(0,.75) .. +(1.5,0);
\draw[thin] (-.25,1) -- +(3,0);
\node at (.005,0.9) {$\up$};
\node at (.505,0.9) {$\up$};
\node at (1.005,1.1) {$\down$};
\node at (1.505,1.1) {$\down$};
\node at (2.005,0.9) {$\up$};
\node at (2.505,0.9) {$\up$};

\fill (2.25,1.365) circle(2pt);
\fill (1.25,-0.75) circle(2pt);

\draw (0,0) -- +(0,1);
\draw (.5,0) -- +(0,1);
\draw (1,0) -- +(0,1);
\draw (2.5,0) -- +(0,1);

\draw (1,0) .. controls +(0,-.5) and +(0,-.5) .. +(.5,0);
\draw (1.5,0) .. controls +(0,.5) and +(0,.5) .. +(.5,0);
\draw (.5,0) .. controls +(0,-.75) and +(0,-.75) .. +(1.5,0);
\draw (0,0) .. controls +(0,-1) and +(0,-1) .. +(2.5,0);
\draw[thin] (-.25,0) -- +(3,0);
\node at (.005,-.1) {$\up$};
\node at (.505,-.1) {$\up$};
\node at (1.005,.1) {$\down$};
\node at (1.505,-.1) {$\up$};
\node at (2.005,.1) {$\down$};
\node at (2.505,-.1) {$\up$};
\end{scope}

\begin{scope}[xshift=11.75cm,yshift=-1cm]
\node at (.5,-.4) {$\scriptstyle {\rm surg}_{E_2,F}$};
\draw[|->] (0,0) -- +(1,0);
\end{scope}

\begin{scope}[xshift=13.3cm,yshift=-1.5cm]
\node at (0,.5) {$\scriptstyle 0$};
\end{scope}
\end{tikzpicture}
\end{center}
In one order the first surgery produces a non-orientable diagram, hence is zero, whereas in the other order the first surgery produces something non-zero, but contained in the kernel of the second surgery.
\end{example}

\subsection{Generating set}
\label{sec:generators}
In addition to the idempotents ${}_{\lambda} \mathbbm{1}_\la$, the following elements related to the notion of $\la$-pairs are of special importance. (For their degrees see \eqref{oriented}).

\begin{definition}\label{def:specialelements}
Let $\lambda,\mu \in \Lambda$ and $\la \rightarrow \mu$, see \eqref{eqn:lambdapairlist}. 
\begin{itemize} 
\item[$\blacktriangleright$] Let ${}_{\mu} \mathbbm{1}_\la = \underline{\mu} \mu \overline{\la}$. Note that ${\rm deg}({}_{\mu} \mathbbm{1}_\la) = {\rm mdeg}(\underline{\mu}\ov{\la})=1$.

\item[$\blacktriangleright$] Let ${}_{\la} \mathbbm{1}_{\mu} = \underline{\la}\mu \overline{\mu}$. Note that ${\rm deg}({}_{\la} \mathbbm{1}_\mu) = {\rm mdeg}(\underline{\la}\ov{\mu})=1$.

\item[$\blacktriangleright$] Let ${}_{\mu} X_{\la}$, respectively ${}_{\la} X_{\mu}$, be the basis vector obtained from ${}_{\mu} \mathbbm{1}_\la$, respectively ${}_{\mu} \mathbbm{1}_{\la}$, by reversing the orientation of the circle containing the cup defining the relation $\la \rightarrow \mu$.

\item[$\blacktriangleright$] Let $X_{i,\la}$ be the basis vector obtained from ${}_{\lambda} \mathbbm{1}_{\lambda}$ by reversing the orientation of the circle containing the vertex $i$ and multiplying with ${\rm sign}_{\underline{\la}\ov{\la}}(i,0)$. If there is no such circle we declare  $X_{i,\la}=0.$
\end{itemize}
\end{definition}

By Proposition \ref{degone} the elements of degree $1$ in $\D$ are precisely the elements ${}_{\mu} \mathbbm{1}_\la$ attached to $\lambda$-pairs. In fact, together with the idempotents ${}_{\la} \mathbbm{1}_\la$ they generate $\D$:

\begin{theorem} \label{prop:generatedindegree1}
The algebra $\D$ is generated by its degree $0$ and $1$ part, i.e. by $\{ {}_\la \mathbbm{1}_\la \mid \la \in \Lambda \} \cup \{ {}_\la \mathbbm{1}_\mu \mid \la,\mu \in \Lambda \text{ and } \lambda \leftrightarrow \mu \}$.
\end{theorem}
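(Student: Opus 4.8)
The plan is to prove the statement by induction on the degree of a basis vector $\un\la\nu\ov\mu\in\B$, showing that each one lies in the subalgebra $A\subseteq\D$ generated by the degree $0$ and degree $1$ parts. By Proposition~\ref{degone} the degree $1$ part of $\D$ is spanned precisely by the elements ${}_\mu\mathbbm{1}_\la$ and ${}_\la\mathbbm{1}_\mu$ attached to $\la$-pairs $\la\rightarrow\mu$, so the two sets in the statement do span degrees $0$ and $1$; the content is that products of them reach every higher basis vector. Degrees $0$ and $1$ form the base of the induction. Recall from Lemma~\ref{lem:Bruhator} that $\un\la\nu\ov\mu\in\B$ forces $\la\subset\nu\supset\mu$, so the middle weight $\nu$ dominates both outer weights.

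First I would record a \emph{composition lemma}: for any $\la,\mu,\nu$ with $\la\subset\nu\supset\mu$,
$$\un\la\nu\ov\nu \cdot \un\nu\nu\ov\mu = \un\la\nu\ov\mu ,$$
because in the glued stacked diagram the matching internal $\nu$-layer consists only of small circles that are merged back together, a sequence of merges which by Lemma~\ref{lem:algebraic_merge} introduces neither signs nor additional terms. Since $\deg(\un\la\nu\ov\nu)+\deg(\un\nu\nu\ov\mu)=\deg(\un\la\nu\ov\mu)$, this reduces the problem to generating the two ``one--sided'' diagrams $\un\la\nu\ov\nu$ and $\un\nu\nu\ov\mu$. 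The graded anti-automorphism ${}^*$ of Corollary~\ref{antiaut} interchanges these two types, so it suffices to treat the upper one, $\un\nu\nu\ov\mu$ with $\mu\subset\nu$.

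For such an element, $\mu\subset\nu$ together with Lemma~\ref{lem:Bruhator} means that $\nu$ is obtained from $\mu$ by flipping anticlockwise caps to clockwise, so there is a saturated chain of $\la$-pairs $\mu=\mu_0\rightarrow\mu_1\rightarrow\cdots\rightarrow\mu_k=\nu$ with $k=\deg(\un\nu\nu\ov\mu)$. I would then study the product of degree $1$ generators
$$ {}_{\mu_k}\mathbbm{1}_{\mu_{k-1}}\cdot {}_{\mu_{k-1}}\mathbbm{1}_{\mu_{k-2}}\cdots {}_{\mu_1}\mathbbm{1}_{\mu_0}\ \in\ {}_{\nu}(\D)_{\mu},$$
which is homogeneous of degree $k$ by Proposition~\ref{forgotgraded}. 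Passing to the polynomial model of Proposition~\ref{coho}, each factor corresponds to $1\in\cM$, and the boxed multiplication rule of Section~\ref{sec:explicitmult} shows that performing the intermediate surgeries multiplies successively by a linear form $(-1)^{\pos(i)}(X_j\mp X_i)$ at each split and does nothing at each merge. Thus the product equals, up to sign, a product of such linear forms in $\cM(\un\nu\ov\mu)$, whose expansion I expect to contain the monomial $\Psi_{\un\nu\ov\mu}(\un\nu\nu\ov\mu)$ together with monomials attached to middle weights strictly below $\nu$ in the Bruhat order.

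The main obstacle is precisely the bookkeeping in this last step: one must check that the monomial $\Psi_{\un\nu\ov\mu}(\un\nu\nu\ov\mu)$ really occurs with an invertible coefficient, so that $\un\nu\nu\ov\mu$ can be solved for, and that every other monomial in the expansion is a basis vector lower in a suitable order on the degree $k$ one-sided diagrams, hence already in $A$ by a descending induction. This is where the non-locality of the multiplication enters, and it is controlled using the sign-independence statements (Lemma~\ref{lem:sign_of_pos} and Remark~\ref{rmk:signonsequence}) together with the algebraic surgery formulas (Lemmas~\ref{lem:algebraic_merge} and~\ref{lem:algebraic_split}). The special elements of Definition~\ref{def:specialelements}, in particular the identity $\pm\,{}_\la\mathbbm{1}_\mu\cdot{}_\mu\mathbbm{1}_\la=X_{i,\la}$ coming from a single $\la$-pair, provide the $k=1$ anchor and pin down the relevant signs. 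Once this triangularity and the nonvanishing of the leading coefficient are established, the induction closes and $A=\D$.
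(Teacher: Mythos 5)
Your argument rests on the ``composition lemma'' $\un\la\nu\ov\nu\cdot\un\nu\nu\ov\mu=\un\la\nu\ov\mu$, and this lemma is false: the internal $\nu$-layer does \emph{not} bound small circles in general, and the surgeries performed there are not all merges. Concretely, take $k=4$, $\la=\mu=\down\down\up\up$ (so $\un\la$ is the pair of nested undotted cups $(1,4),(2,3)$) and $\nu=\down\up\down\up$ (so $\un\nu$ is the pair of side-by-side cups $(1,2),(3,4)$); then $\la\subset\nu\supset\mu$, and the two factors are the degree-one generators ${}_\la\mathbbm{1}_\nu$ and ${}_\nu\mathbbm{1}_\la$. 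In the glued stacked diagram each factor is a single anticlockwise circle that crosses the middle layer \emph{twice}, so the first surgery is a merge but the second one is a \textbf{split}, and one computes
$$ {}_\la\mathbbm{1}_\nu\cdot{}_\nu\mathbbm{1}_\la \;=\; \un\la\nu\ov\la\,-\,\un\la\nu'\ov\la, \qquad \nu'=\up\down\up\down , $$
two terms rather than one; this is literally the first computation in the paper's Example~\ref{ex:mergesplitexample}. In general, whenever a single component of one factor contains more than one middle cup--cap pair, splits occur and produce the error terms $(\dagger)$ of Theorem~\ref{cellular}. Note moreover that these error terms have middle weight strictly \emph{greater} than $\nu$ in the Bruhat order (here $\nu'>\nu$), so the triangularity you invoke later (``monomials attached to middle weights strictly below $\nu$'') points in the wrong direction. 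Finally, the second half of your argument is a plan rather than a proof: the existence of a saturated chain of $\la$-pairs of length exactly $\deg(\nu\ov\mu)$, the occurrence of the target monomial $\Psi_{\un\nu\ov\mu}(\un\nu\nu\ov\mu)$ with coefficient $\pm1$, and the closing induction are all asserted (``I expect'') but never established -- and this is exactly where the whole difficulty of the theorem lives.

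For contrast, the paper's proof is organized precisely so that \emph{no split ever occurs}. It first proves that the elements $X_{i,\la}$ lie in the subalgebra $A$ (Lemma~\ref{lem:generatingclockwisecircles}; its inductive step is exactly your two-term product above, read as $\pm X_{i,\la}\pm X_{j,\la}$ with the outer term handled first), then uses these to reduce any $\un\la\nu\ov\mu$ to the all-anticlockwise orientation $\un\la\nu_{\rm min}\ov\mu$, and finally inducts on ${\rm mdeg}(\un\la\ov\mu)$: for a non-small circle one chooses a nested pair of cups and a weight $\eta$ with $\la\leftrightarrow\eta$ so that the single product ${}_\la\mathbbm{1}_\eta\cdot\un\eta\nu'\ov\mu$ involves only merges of anticlockwise circles, hence equals exactly one basis vector with no signs and no extra summands. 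If you wish to salvage your route, you must replace your composition lemma by the correct cellular statement $\un\la\nu\ov\nu\cdot\un\nu\nu\ov\mu=\pm\un\la\nu\ov\mu+(\dagger)$ with $(\dagger)$ supported on strictly larger middle weights (Theorem~\ref{cellular}), determine the sign, and then run an honest downward induction on the middle weight -- i.e.\ actually prove the triangularity and unitriangularity you currently only postulate.
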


The first step of the proof is the following observation.
\begin{lemma} \label{lem:generatingclockwisecircles}
Let $\lambda \in \Lambda$ and $i \in P_\bu(\Lambda)$. Then $X_{i,\lambda}$ is contained in the subalgebra of $\D$ generated by the elements of degree $1$.
\end{lemma}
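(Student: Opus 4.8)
The goal is to show that each $X_{i,\lambda}$ lies in the subalgebra generated by degree-$1$ elements. Recall from Definition~\ref{def:specialelements} that $X_{i,\lambda}$ is obtained from the idempotent ${}_\lambda\mathbbm{1}_\lambda$ by reversing the orientation of the circle $C$ through vertex $i$ (scaled by a sign), so that under the isomorphism $\Psi_{\underline\lambda\ov\lambda}$ of Proposition~\ref{coho} it corresponds to the variable $X_i$ in $\cM(\underline\lambda\ov\lambda)$. The natural strategy is to realise $X_{i,\lambda}$ as a product of two degree-$1$ elements attached to a single $\lambda$-pair. The obvious candidates are the elements ${}_\lambda\mathbbm{1}_\mu$ and ${}_\mu\mathbbm{1}_\lambda$ associated with a pair $\lambda\leftrightarrow\mu$, whose product lands in ${}_\lambda(\D)_\lambda$ and has degree $2$, hence is a scalar combination of ${}_\lambda\mathbbm{1}_\lambda$-type elements and the various $X_{j,\lambda}$.

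\textbf{First steps.} First I would pick a cup $\gamma$ in $\underline\lambda$ that is incident to $i$, or more precisely a $\lambda$-pair whose associated cup, after the surgery, has $i$ as its rightmost vertex (or otherwise controls the circle $C$). Writing $\mu$ for the weight with $\lambda\rightarrow\mu$ determined by this cup, I would compute the product ${}_\lambda\mathbbm{1}_\mu\cdot{}_\mu\mathbbm{1}_\lambda$ using the explicit multiplication rule from Section~\ref{sec:explicitmult}. This product is computed by gluing $\underline\lambda\mu\ov\mu$ on top of $\underline\mu\mu\ov\lambda$ (note the middle weights agree, so the gluing is nonzero), then performing the single surgery that undoes the cup-cap pair created by $\gamma$ and its mirror. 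Concretely, in $\cM$-terms the two factors correspond to $1$ in their respective component, and the surgery on the shared cup is a \emph{split} of the circle containing $\gamma$, which by the boxed rule multiplies by $(-1)^{\pos(l_\gamma)}(X_{r_\gamma}-X_{l_\gamma})$ (undotted case) or $(-1)^{\pos(l_\gamma)}(X_{r_\gamma}+X_{l_\gamma})$ (dotted case). Using the relations defining $\cM(\underline\lambda\ov\lambda)$ — namely $X_a+X_b=0$ for an undotted cup and $X_a-X_b=0$ for a dotted cup connecting $a,b$ — this evaluates to $\pm 2(-1)^{\pos(l_\gamma)}X_{i}$ up to sign, i.e. to a nonzero scalar multiple of $X_{i,\lambda}$.

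\textbf{Reducing the general vertex to a cup-endpoint.} The computation above handles $i$ when $i$ is an endpoint of a cup in $\underline\lambda$. For a general $i\in P_\bu(\Lambda)$ lying on a cup $C$ of $\underline\lambda$, every vertex on $C$ contributes (up to sign) the same variable in $\cM(\underline\lambda\ov\lambda)$ by the defining relations, so $X_{i,\lambda}$ and $X_{t(C),\lambda}$ agree up to a sign already built into the $\sign$-factor in Definition~\ref{def:specialelements}; thus it suffices to treat a single convenient endpoint of each cup, and the product above does exactly this. If $i$ lies on a \emph{ray} of $\underline\lambda$ rather than a cup, then there is no circle through $i$ in $\underline\lambda\ov\lambda$ and $X_{i,\lambda}=0$ by definition, so nothing is to be proved. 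Hence every relevant $X_{i,\lambda}$ is, up to an explicit nonzero scalar, a product ${}_\lambda\mathbbm{1}_\mu\cdot{}_\mu\mathbbm{1}_\lambda$ of two degree-$1$ generators.

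\textbf{The main obstacle.} The delicate point will be bookkeeping the signs and confirming the scalar is genuinely nonzero: one must check that the split is orientable (so the map is not the zero map), that the coefficient $(-1)^{\pos(i)}$ together with the $\sign$-normalisation in Definition~\ref{def:specialelements} combine to give $X_{i,\lambda}$ rather than its negative, and that the factor of $2$ is harmless over $\mC$. A secondary subtlety is ensuring the chosen $\lambda$-pair exists and that $\underline\mu$ differs from $\underline\lambda$ exactly by the local move of Lemma~\ref{lapairscup} at the cup $\gamma$, so that the single surgery in the product is precisely the split isolating the circle $C$. Once the sign computation via Lemma~\ref{lem:algebraic_split} and Remark~\ref{unverstformel} is carried out, the statement follows.
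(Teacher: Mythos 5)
Your overall strategy -- realising $X_{i,\la}$ as a product ${}_\la\mathbbm{1}_\mu\cdot{}_\mu\mathbbm{1}_\la$ attached to the $\la$-pair of the cup $\gamma$ through $i$ -- is the same as the paper's, but your central computation contains a genuine error. First, a bookkeeping point: the surgeries in this product are performed at the cup-cap pairs of $\ov{\mu}\,\underline{\mu}$ in the middle of the stacked diagram, and $\underline{\mu}$ never has a cup at the positions $(l_\gamma,r_\gamma)$ (such a cup would be clockwise in $\underline{\mu}\mu$, contradicting $\deg(\underline{\mu}\mu)=0$); so the split factor is $(-1)^{\pos(a)}(X_b\mp X_a)$ for the endpoints $a<b$ of a cup of $\underline{\mu}$, not of $\gamma$. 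More seriously, you then collapse this factor to $\pm 2X_i$ by invoking the relation ``$X_a+X_b=0$ for an undotted cup of $\underline{\la}$''. That relation is not available here: the very reason this surgery is a split is that $a$ and $b$ end up on \emph{different} circles of $\underline{\la}\ov{\la}$, and the defining relations of $\cM(\underline{\la}\ov{\la})$ only couple variables lying on the same component. Concretely, if $\gamma$ is nested inside another cup (first rows of \eqref{eqn:lambdapairlist}), the product equals $\pm X_{i,\la}\pm X_{j,\la}$ with $j$ on the enclosing circle -- a sum of two linearly independent basis vectors, not a scalar multiple of $X_{i,\la}$; in particular no factor $2$ ever appears.

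Your computation is correct only when the circle containing $\gamma$ is \emph{outer} (not nested inside another circle, no dotted circle to its right, no line to its left), which is exactly the case where the local move is one from the last row of \eqref{eqn:lambdapairlist}: there the second variable in the split factor sits on a line and hence vanishes, giving ${}_\la\mathbbm{1}_\mu\cdot{}_\mu\mathbbm{1}_\la=\pm X_{i,\la}$. For every other circle the two-term output is unavoidable, and the missing ingredient is an induction on the nesting structure of $\underline{\la}\ov{\la}$: treat outer circles first, then for a circle nested in (or to the left of the dotted) circle $C'$ write ${}_\la\mathbbm{1}_\mu\cdot{}_\mu\mathbbm{1}_\la=\pm X_{i,\la}\pm X_{j,\la}$ with $j\in C'$ and subtract the already-established $X_{j,\la}$. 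This induction is precisely how the paper's proof closes the argument; without it, your proposal only proves the lemma for vertices lying on outer circles.
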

\begin{proof}
Assume that $X_{i,\lambda} \neq 0$ and let $\gamma$ denote the cup in $\un{\la}$ containing $i$ and let $C$ be the circle in $\un{\la}\ov{\la}$ containing $\gamma$. 

Assume first that the $C$ is an {\it outer circle}, i.e. it is not contained in any other circle and there are no dotted circles to the right of $C$ and no lines to the left of $C$ \footnote{This definition of outer circle makes sense when we work with symmetric diagrams as in \cite{LS} where it would turn just into a circle which is not nested inside any other circle.}. Then there exists a unique $\mu \in \Lambda$ such that $\lambda \rightarrow \mu$ and such that the endpoints of $\gamma$ are precisely the positions where $\lambda$ and $\mu$ differ and this is given by a local move as displayed in the last row of \eqref{eqn:lambdapairlist}. Checking the multiplication one sees that in this case
$$ {}_{\la} \mathbbm{1}_{\mu} \cdot {}_{\mu} \mathbbm{1}_{\la} = \pm X_{i,\lambda}.$$

If on the other hand $C$ is not an outer circle,  that means nested in some other circle $C'$ or to the left of some dotted circle $C'$. Assume that the claim is already shown for all $X_{j,\lambda}$ where $j$ is in such a $C'$. Then there exists $\mu \in \Lambda$ such that $\lambda \rightarrow \mu$ and such that the endpoints of $\gamma$ are precisely the positions where $\lambda$ and $\mu$ differ and the differences is given by a local move as in the first three rows of \eqref{eqn:lambdapairlist}. Going through all the cases one observes that the multiplication gives
$$ {}_{\la} \mathbbm{1}_{\mu} \cdot {}_{\mu} \mathbbm{1}_{\la} = \pm X_{i,\lambda} \pm X_{j,\lambda},$$
for $j$ contained in a component that also contains $C$. But since the second summand is already in the subalgebra, the claim follows by induction.
\end{proof}

\begin{proof}[Proof of Theorem~\ref{prop:generatedindegree1}]
Denote by $A$ the subalgebra generated by the degree $0$ and $1$ elements. It suffices to show that if  $\lambda,\mu,\nu \in \Lambda$ such that $\un{\la}\nu\ov{\mu}$ is oriented, then it is contained in $A$. If $\un{\la}\ov{\mu}$ contains lines we use Section~\ref{annoying} and embed $\D$ into $\mathbb{D}_{\widehat{\Lambda}}$. In this way we can restrict ourselves to consider diagrams involving circles only. To not overburden the notation we will omit the $\widehat{\cdot}$ in all notations throughout the proof.

By Lemma \ref{lem:generatingclockwisecircles} it holds that $X_{i,\lambda} \in A$, which together with the multiplcation rules for merges immediately implies that the claim follows if we show that $\un{\la}\nu_{\rm min}\ov{\mu} \in A$ where $\nu_{\rm min}$ is the orientation such that all circles in $\un{\la}\ov{\mu}$ are oriented anticlockwise, since $\un{\la}\nu\ov{\mu}$ can be obtained by multiplying $\un{\la}\nu_{\rm min}\ov{\mu}$ with various $X_{i,\lambda}$'s from the left. Hence we assume that $\nu$ is such that all circles are oriented anticlockwise.

We prove the claim by induction on the minimal degree of $\un{\la}\ov{\mu}$.\\ \noindent
$\blacktriangleright$ \textit{Case ${\rm mdeg}(\un{\la}\ov{\mu})=0$:} Then $\mu = \la$ and $\un{\la}\nu\ov{\mu}={}_\la \mathbbm{1}_\la \in A$ by definition of $A$.\\ \noindent
$\blacktriangleright$ \textit{Case ${\rm mdeg}(\un{\la}\ov{\mu}) \geq 1$:} Then $\un{\la}\ov{\mu}$ must contain a non-small circle $C$, i.e. a circle containing at least two cups respectively caps. Hence $C$ must contain a pair of cups or a pair of caps $\gamma_1$ and $\gamma_2$ nested in each other. Without loss of generality we assume it is a pair of cups. Choose $\gamma_1$ such that it is not contained in any other cup of $C$ and $\gamma_2$ such that it is only contained in $\gamma_1$. Then there exists an $\eta \in \Lambda$ such that $\lambda \leftrightarrow \eta$ and the cups $\gamma_1$ and $\gamma_2$ are part of the local move as shown in the first two rows of \eqref{eqn:lambdapairlist}. In particular
${\rm deg}({}_\lambda \mathbbm{1}_\eta)=1$, hence it is by induction in $A$. Observe that in $\un{\eta}\ov{\mu}$ the circle $C$ is replaced by two circles and furthermore ${\rm mdeg}(\un{\eta}\ov{\mu}) = {\rm mdeg}(\un{\la}\ov{\mu})-1$. By induction, all oriented diagrams in ${}_\eta(\D)_\mu$ of this degree are contained in $A$. Let $\un{\eta}\nu'\ov{\mu}$ denote the element of minimal degree, i.e. all circles oriented anticlockwise. Then
$$ {}_\lambda \mathbbm{1}_\eta \cdot \un{\eta}\nu'\ov{\mu} = \un{\lambda}\nu\ov{\mu},$$
since the multiplication just involves merges and they are all merging two anticlockwise circles. The claim follows since ${}_\lambda \mathbbm{1}_\eta \in A$ by induction.

In case of strictly positive degree one must be a bit more careful if one or both right endpoints of $\gamma_1$ and $\gamma_2$ are in the ``extended'' part of the diagram, i.e. positions $r,\ldots, r+s-1$ in Section~\ref{annoying}. In this case one must choose $\eta$ in such a way that the orientation at these endpoints does not need to be changed, which is always possible, to make sure that ${}_\lambda \mathbbm{1}_\eta$ and $\underline{\eta}\nu'\overline{\mu}$ are again contained in the subalgebra given in Section~\ref{annoying}.
\end{proof}

\section{Cellularity and projective-injective modules for $\D$}
\label{sec:cell}
The following theorem is an analogue of \cite[Theorem 3.1]{BS1} and the proof follows in principle the proof there. However, because of non-locality and signs the arguments are slightly more involved.
\subsection{Cellularity}
We still fix a block $\Lambda$ and consider the algebra $\D$ with homogeneous basis $\B$ from \eqref{DefB}.
\begin{theorem}\label{cellular}
Let $( a \la b )$ and
$( c \mu d )$ be basis vectors of $\D$.
Then,
$$
( a \la b )
( c \mu d ) =
\left\{
\begin{array}{ll}
0&\text{if $b \neq c^*$,}\\
s_{a \la b}(\mu) ( a \mu d ) + (\dagger)&\text{if $b = c^*$
and $a \mu$ is oriented,}\\
(\dagger)&\text{otherwise,}
\end{array}
\right.
$$
where
\begin{enumerate}[1.)]
\item
$(\dagger)$ denotes a linear combination of basis vectors from $\B$ of the form $( a \nu d)$
for $\nu > \mu$;
\item
the scalar $s_{a \la b}(\mu) \in \{0,1,-1\}$ depends on
$a \la b$ and $\mu$ but not on $d$.
\end{enumerate}
\end{theorem}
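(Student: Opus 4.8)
The plan is to reduce the statement to a claim about a single surgery step and then argue by induction on the number of surgeries performed during the multiplication. First I would dispose of the trivial case: when $b \neq c^*$, the map ${\rm mult}_{a\la b, c\mu d}$ is zero by Definition~\ref{def:multiplication}, so the product vanishes and there is nothing to prove. Assuming $b = c^*$, I would rewrite the computation using the commutative-diagram description of the multiplication in Section~\ref{sec:multiplication}: glue the two diagrams via ${\rm glue}$, perform a sequence of surgeries ${\rm surg}_{E_{i-1},E_i}$ reducing the internal part to rays only, and finally apply ${\rm collapse}$. Since by Theorem~\ref{thm:surgeries_commute} the result is independent of the chosen order, I am free to pick any convenient surgery sequence, which will matter for controlling the ordering statement 1.).

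The key structural observation is that the underlying shape of the resulting diagrams is determined entirely by $a$, $d$, and the internal shapes, \emph{not} by the orientations $\la$, $\mu$, $\nu$. Thus every basis vector appearing in the product has the form $(a\,\nu\,d)$ for various weights $\nu$, which immediately gives the correct shape for both the leading term and the error term $(\dagger)$. I would then track the degree: the input vectors $(a\la b)$ and $(c\mu d)$ have their degrees, and by Proposition~\ref{forgotgraded} each surgery map (and hence ${\rm mult}$) is homogeneous of degree zero. The central point is a Bruhat-order monotonicity claim for each surgery: whenever a surgery produces a new oriented diagram whose weight differs from the ``minimal'' one, the new weight is strictly larger in the Bruhat order. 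This follows from Lemma~\ref{lem:Bruhator}, since reorienting an anticlockwise circle to clockwise (which is what a split or a merge with a clockwise component does) corresponds precisely to changing pairs $\down\up \mapsto \up\down$ or $\up\up \mapsto \down\down$, each of which raises the weight by Lemma~\ref{lem:Bruhat}. I would make the coefficient $s_{a\la b}(\mu)$ precise by identifying the unique term in the surgery output whose weight equals $\mu$: this is the term in which no circle containing the relevant cup-cap-pair gets reoriented upward, and tracking the signs $\sigma_i$, $(-1)^{\pos(i)}$ from the Merge and Split rules through the whole sequence shows this coefficient lies in $\{0,1,-1\}$.

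For the $d$-independence in 2.), I would invoke the algebraic description of the surgery maps via $\cM(D)$ in Lemmas~\ref{lem:algebraic_merge} and~\ref{lem:algebraic_split}: there the maps act either as the identity on variables or as multiplication by $(-1)^{\pos(i)}(X_{(j,l)} \pm X_{(i,l)})$, expressions that depend only on the positions $i<j$ of the cup-cap-pair at the relevant level and on the internal combinatorics to the left of $d$, but whose coefficient on the $\mu$-indexed leading term is unaffected by the top cap diagram $d$. Concretely, the leading coefficient is computed from ${\rm sign}$-data on connections internal to the bottom and middle portions of the stacked diagram, and changing $d$ only alters which higher-$\nu$ terms appear, not the scalar attached to $(a\,\mu\,d)$.

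The main obstacle I expect is pinning down that the scalar $s_{a\la b}(\mu)$ is genuinely $\pm 1$ or $0$ rather than some larger integer, given that the Split rule can introduce factors of $2$ (as visible in the Cartan matrix and the examples, e.g. the coefficient $-2$ in Example~\ref{ex:splitmergeexample}). The resolution is that such larger coefficients only ever attach to the \emph{higher} terms $(\dagger)$ with $\nu > \mu$, because a factor of $2$ arises exactly when two distinct orientations collapse onto a common clockwise diagram, and that common diagram is strictly above $\mu$ in the Bruhat order; the genuinely leading $\mu$-term always comes from a single orientation choice and thus carries coefficient in $\{0,\pm 1\}$. Verifying this cleanly will require a careful induction where the inductive hypothesis asserts both the leading-term normalization and the upper-triangularity simultaneously, so that at each surgery step the $\pm 1$ leading coefficient is preserved while all multiplicities are absorbed into the $(\dagger)$ part.
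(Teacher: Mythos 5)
Your overall architecture coincides with the paper's: dispose of $b\neq c^*$ by definition, fix a convenient surgery order via Theorem~\ref{thm:surgeries_commute}, prove that each surgery can only raise the top weight in the Bruhat order, and show that the top weight survives each surgery in at most one output term with a sign independent of $d$ — this is exactly the skeleton of Propositions~\ref{topweight}, \ref{tauprop} and Corollary~\ref{tau}. However, your justification of the central monotonicity step has a genuine gap. You assert that reorienting an anticlockwise circle ``corresponds precisely to changing pairs $\down\up\mapsto\up\down$ or $\up\up\mapsto\down\down$'', so that Lemma~\ref{lem:Bruhat} applies directly. That is only true for small circles (one cup and one cap). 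By Proposition~\ref{lem:stupidlemma}, an anticlockwise circle containing $c\geq 2$ cups has degree equal to (number of caps)$\,-1>0$, so even in its minimal orientation some of its arcs are clockwise; flipping the whole circle therefore also performs weight-\emph{lowering} local moves ($\up\down\mapsto\down\up$ on those arcs), and dotted arcs behave differently again. That the net effect on each weight line is nevertheless an increase is precisely the content of the paper's Lemma~\ref{lem:anticlock}, whose proof requires the kink-removal induction \eqref{kinktocircle}; it does not follow from Lemma~\ref{lem:Bruhat} or Lemma~\ref{lem:Bruhator} alone. Moreover, one still has to check, case by case for merges, splits and reconnects with all possible decorations (the bulk of Propositions~\ref{topweight} and \ref{tauprop}), that every surgery output is obtained from the input by reorienting anticlockwise circles only; in a merge of an anticlockwise with a clockwise circle, which component keeps its orientation is dictated by the location of the tags (Lemma~\ref{lem:circledomanating}), not by a uniform rule.

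The $d$-independence in 2.) also needs more than you give. The signs $\sigma_i$ and ${\rm sign}_D(\cdot)$ in the surgery maps are computed along connections to tags, and tags are rightmost vertices of components which may run through the cap diagram $d$, so it is not automatic that the coefficient of the $\mu$-term is unaffected by $d$. The paper's resolution (Proposition~\ref{tauprop} with Corollary~\ref{tau}) is to show that whether $\tau$ survives a given surgery, that it then survives exactly once, and the sign it carries (depending only on whether the cup-cap pair $\gamma$ is dotted) are governed by the conditions $(\tau 1)$, $(\tau 2)$, which are read off entirely below the top weight line and hence are manifestly independent of $d$. Relatedly, your description of the leading term (``the term in which no circle containing the cup-cap pair gets reoriented'') misses case $(\tau 2)$: there the component containing $\capg$ \emph{is} reoriented, yet $\tau$ survives because that component never meets the top weight line. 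Your handling of the factors of $2$ (as in Example~\ref{ex:splitmergeexample}) is correct in spirit — multiplicity greater than one can only accumulate on terms strictly above $\mu$ — but it rests on exactly the two points above, so they are where the real work of the proof lies.
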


We first state an easy fact and deduce a few consequences of the theorem:

\begin{corollary}\label{ideal}
The product
$( a \la b )( c \mu d )$ of two basis vectors of $\D$
is a linear combination of vectors of the form
$( a \nu d)\in\B$ with $\la \leq \nu \geq \mu$.
\end{corollary}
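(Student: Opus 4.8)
The plan is to read off one of the two inequalities directly from Theorem~\ref{cellular} and to obtain the other by applying the very same theorem to the ``reflected'' product via the graded anti-automorphism of Corollary~\ref{antiaut}.

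First I would dispose of the trivial case: if $b\neq c^*$ then $(a\la b)(c\mu d)=0$ by Theorem~\ref{cellular}, and there is nothing to prove. So assume $b=c^*$. By Theorem~\ref{cellular} the product is then a linear combination of basis vectors of the shape $(a\nu d)$ — the bottom cup diagram $a$ and the top cap diagram $d$ are untouched by the surgery procedure — and moreover every occurring $\nu$ satisfies $\nu\geq\mu$, since both the (possible) leading term $s_{a\la b}(\mu)(a\mu d)$ and the error term $(\dagger)$ only involve weights $\nu\geq\mu$. This already yields the inequality $\nu\geq\mu$.

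It remains to show $\nu\geq\la$ for every $\nu$ that occurs. Here I would use that $*$ from Corollary~\ref{antiaut} is a graded algebra anti-automorphism sending a basis vector $x\sigma y$ to $y^*\sigma x^*$, keeping the middle weight $\sigma$ fixed. Writing $(a\la b)(c\mu d)=\sum_\nu r_\nu\,(a\nu d)$ and applying $*$ gives
\[
(d^*\mu c^*)(b^*\la a^*)=\sum_\nu r_\nu\,(d^*\nu a^*),
\]
because $*$ reverses the order of the two factors and preserves the inner weights. The left-hand side is once more a product of two basis vectors, whose \emph{right} factor now carries the weight $\la$. Applying Theorem~\ref{cellular} to this product, every basis vector $(d^*\nu a^*)$ that appears must satisfy $\nu\geq\la$. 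Since $*$ permutes the basis $\B$, comparing coefficients shows that $r_\nu\neq 0$ forces $\nu\geq\la$.

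Combining the two conclusions, each $(a\nu d)$ appearing in $(a\la b)(c\mu d)$ satisfies $\la\leq\nu$ and $\nu\geq\mu$, i.e.\ $\la\leq\nu\geq\mu$, as claimed. I do not anticipate any serious obstacle: the proof is a short formal deduction from the cellularity theorem, and the only point meriting a little care is to confirm that the middle weight is genuinely fixed by $*$, so that the indexing weight $\nu$ on the left of the displayed identity really matches the one on the right — this is exactly the content of Corollary~\ref{antiaut}.
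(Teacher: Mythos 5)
Your proof is correct and takes essentially the same route as the paper: both obtain $\nu\geq\mu$ from Theorem~\ref{cellular} applied directly to $(a\la b)(c\mu d)$, and obtain $\nu\geq\la$ by applying the same theorem to the reflected product $(d^*\mu c^*)(b^*\la a^*)$ and transporting the conclusion back through the anti-automorphism $*$ of Corollary~\ref{antiaut}. The only difference is the order of operations (you apply $*$ to the product expansion first, the paper applies the theorem to the reflected product first), which is an immaterial reorganization of the identical argument.
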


\begin{proof}
By Theorem~\ref{cellular}(i),
$( a \la b ) ( c \mu d )$ is a linear combination
of $( a \nu d)$'s for various $\nu \geq \mu$
and
$( d^* \mu c^* ) ( b^* \la a^* )$
is a linear combination of $( d^* \nu a^*)$'s for various
$\nu \geq \la$.
Applying the anti-automorphism $*$ from Corollary~\ref{antiaut}
to the latter statement gives
that
$( a \la b ) ( c \mu d )$ is a linear combination
of $( a \nu d)$'s for various $\nu \geq \la$ too.
\end{proof}

\begin{corollary}\label{iscell}
The algebra $\D$ is a cellular algebra in the sense of \cite{GL} with cell datum $(\La, M, C, *)$ where
\begin{enumerate}[i)]
\item
$M(\la)$ denotes $\left\{\alpha \in \La\:|\:\alpha
\subset \la\right\}$
for each $\la \in \La$;
\item
$C$ is defined by setting
$C^\la_{\alpha,\beta}=( \underline{\alpha} \la \overline{\beta} )$
for $\la \in \La$ and $\alpha,\beta \in M(\la)$;
\item $*$ is the anti-automorphism from Corollary~\ref{antiaut}.
\end{enumerate}
\end{corollary}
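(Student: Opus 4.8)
The plan is to deduce Corollary~\ref{iscell} directly from the cellularity axioms of \cite{GL}, using Theorem~\ref{cellular} as the essential multiplicative input. Recall that a cell datum $(\Lambda, M, C, *)$ requires: the $C^\la_{\alpha,\beta}$ to form a basis of $\D$ as $(\la,\alpha,\beta)$ range over the admissible indices; the map $*$ to be an anti-automorphism sending $C^\la_{\alpha,\beta}$ to $C^\la_{\beta,\alpha}$; and the crucial \emph{cell condition} that for any $x\in\D$,
\begin{equation*}
x\, C^\la_{\alpha,\beta} \equiv \sum_{\alpha'\in M(\la)} r_x(\alpha',\alpha)\, C^\la_{\alpha',\beta} \pmod{\D(>\la)},
\end{equation*}
where the scalars $r_x(\alpha',\alpha)$ are independent of $\beta$, and $\D(>\la)$ is the span of all $C^\mu_{\gamma,\delta}$ with $\mu>\la$.

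First I would verify the basis property: by Lemma~\ref{lem:Bruhator}(2), every oriented circle diagram in $\B$ is uniquely of the form $\underline{\alpha}\la\ov{\beta}$ with $\alpha\subset\la\supset\beta$, i.e. $\alpha,\beta\in M(\la)$, so $C$ precisely re-indexes the basis $\B$ of $\D$ from \eqref{DefB}; this is a bijection. The anti-automorphism axiom is immediate from Corollary~\ref{antiaut}, since $(\underline{\alpha}\la\ov{\beta})^*=\underline{\beta}\la\ov{\alpha}=C^\la_{\beta,\alpha}$, and $*$ is an involution fixing $\Lambda$ pointwise. Next I would identify the two-sided ideal filtration: the subspaces $\D(\geq\la)=\mathrm{span}\{C^\mu_{\gamma,\delta}\mid \mu\geq\la\}$ and $\D(>\la)$ are two-sided ideals by Corollary~\ref{ideal}, which tells us any product $(\underline\alpha\,\nu\,\ov\beta)(\underline\gamma\,\mu\,\ov\delta)$ lies in the span of $(\underline\alpha\,\xi\,\ov\delta)$ with $\xi\geq\nu$ and $\xi\geq\mu$; in particular multiplying a $C^\la$-vector by anything produces only $C^\xi$-vectors with $\xi\geq\la$, so these spans are closed under multiplication on both sides.

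The heart of the argument is the cell condition, which I would establish from Theorem~\ref{cellular}. It suffices to check it for $x=(\underline{\alpha'}\,\nu\,\ov{\gamma})$ a basis vector. Computing $x\cdot C^\la_{\alpha,\beta}=(\underline{\alpha'}\,\nu\,\ov{\gamma})(\underline{\alpha}\,\la\,\ov{\beta})$, Theorem~\ref{cellular} gives zero unless $\ov\gamma=\underline\alpha^*=\ov\alpha$, i.e. $\gamma=\alpha$, and otherwise yields $s_{\underline{\alpha'}\nu\ov{\alpha}}(\la)\,(\underline{\alpha'}\,\la\,\ov{\beta})$ plus a combination of terms $(\underline{\alpha'}\,\xi\,\ov{\beta})$ with $\xi>\la$, the latter lying in $\D(>\la)$. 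Thus modulo $\D(>\la)$ we obtain $s_{\underline{\alpha'}\nu\ov{\alpha}}(\la)\,C^\la_{\alpha',\beta}$, and by part~(2) of Theorem~\ref{cellular} the scalar $s_{\underline{\alpha'}\nu\ov{\alpha}}(\la)$ is independent of $\beta$. Setting $r_x(\alpha',\alpha)$ equal to this scalar gives exactly the required congruence.

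The main obstacle is purely bookkeeping rather than conceptual: I must confirm that $\D(>\la)$ as defined via the cell basis coincides with the span of the $(\dagger)$-terms appearing in Theorem~\ref{cellular}, and that the leading term $(\underline{\alpha'}\,\la\,\ov\beta)$ is genuinely a cell basis vector, i.e. that $\alpha'\subset\la\supset\beta$. The latter holds because $a\mu$ being oriented in the theorem forces $\alpha'\subset\la$ by Lemma~\ref{lem:Bruhator}, and $\beta\subset\la$ was already assumed. One should also note the harmless index convention: the theorem is phrased with the resulting weight called $\mu$, whereas here the surviving middle weight is $\la$; matching these names carefully is the only place where an error could creep in. Once this alignment is fixed, all four axioms are verified and the corollary follows.
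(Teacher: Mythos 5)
Your proposal is correct and follows essentially the same route as the paper: conditions (C-1)--(C-3) are read off from finiteness of $\La$, the definition of $\B$ (your appeal to Lemma~\ref{lem:Bruhator}(2) makes the re-indexing precise), and Corollary~\ref{antiaut}, while (C-4) is verified on basis vectors by feeding the product $(\underline{\alpha'}\nu\ov{\gamma})(\underline{\alpha}\la\ov{\beta})$ into Theorem~\ref{cellular}, whose parts 1.) and 2.) supply both the $(\dagger)$-terms lying in $\D(>\la)$ and the independence of the scalar from $\beta$. The only cosmetic slip is attributing $\alpha'\subset\la$ to Lemma~\ref{lem:Bruhator} when it is immediate from the definition of $\subset$ (orientability of $\underline{\alpha'}\la$), and your extra remarks on the ideal property via Corollary~\ref{ideal} are harmless additions not needed for the axioms as stated.
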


Before we prove the corollary let us first recall the relevant definitions from
\cite{GL}. A {\em cellular algebra} means an associative unital algebra $H$
together with a {\em cell datum} $(\La, M, C, *)$ such that
\begin{enumerate}[(C-1)]
\item
$\La$ is a partially ordered set and $M(\la)$
is a finite set
for each $\la \in \La$;
\item
$C:\dot\bigcup_{\la \in \La} M(\la)
\times M(\la) \rightarrow H,
(\alpha,\beta) \mapsto C^\la_{\alpha,\beta}$ is an injective map
whose image is a basis for $H$;
\item
the map $*:H \rightarrow H$
is an algebra anti-automorphism such that
$(C^\la_{\alpha,\beta})^* = C_{\beta,\alpha}^\la$
for all $\la \in \La$ and $\alpha, \beta \in M(\la)$;
\item
if $\mu \in \La$ and $\gamma, \delta \in M(\la)$
then for any $x \in H$ we have that
$$
x C_{\gamma,\delta}^\mu \equiv \sum_{\gamma' \in M(\mu)} r_x(\gamma',\gamma) C_{\gamma',\delta}^\mu
\pmod{H(> \mu)}$$
where the scalar $r_x(\gamma',\gamma)$ is independent of $\delta$ and
$H(> \mu)$ denotes the subspace of $H$
generated by $\{C_{\gamma'',\delta''}^\nu\:|\:\nu > \mu,
\gamma'',\delta'' \in M(\nu)\}$.
\end{enumerate}

\begin{proof}[Proof of Corollary~\ref{iscell}]
Condition (C-1) is clear as $\La$ itself is a finite set;
Condition (C-2) is a consequence of the definition of $\B$ from \eqref{DefB}; and since $*$ is an anti-automorphism we get (C-3).
Finally to verify (C-4) it suffices to consider the case that $x = C^\la_{\alpha,\beta}$ for some $\la \in \La$ and
$\alpha, \beta \in M(\la)$. If $\beta = \gamma$ and $\alpha \subset \mu$, then
Theorem~\ref{cellular} 1.) and 2.) shows that
$$
C_{\alpha,\beta}^\la C_{\gamma,\delta}^\mu \equiv
s_{\underline{\alpha} \la \overline{\beta}}(\mu) C_{\alpha, \delta}^\mu
\pmod{{\D}{(> \mu)}}
$$
where $s_{\underline{\alpha} \la \overline{\beta}}(\mu)$
is independent of $\delta$;
otherwise, we have that
$$
C_{\alpha, \beta}^\la C_{\gamma, \delta}^\mu \equiv 0
\pmod{\D(> \mu)}.
$$
Taking
$$
r_x(\gamma', \gamma)
=
\left\{
\begin{array}{ll}
s_{\underline{\alpha}\la\overline{\beta}}(\mu)&
\text{if $\gamma' = \alpha, \beta = \gamma$ and $\alpha \subset \mu$,}\\
0&\text{otherwise,}
\end{array}\right.
$$
we deduce that (C-4) holds.
\end{proof}

\begin{remark}
{\rm
Corollary~\ref{iscell} together with the definition of the grading directly implies that our cellular basis is in fact a graded cellular basis in the sense of Hu and Mathas, \cite{MH}.}
\end{remark}

The following is the analogue of the Khovanov arc algebra from \cite{BS1} and appears in the context of Springer fibres and resolutions of singularities, \cite{ES1} as well as in the representation theory of Lie superalgebras, \cite{ES2}.

\begin{corollary}
\label{Khovalg}
Let $\Lambda$ be a block and $e=\sum {}_{\lambda} \mathbbm{1}_\la$, where $\la$ runs over all weights $\la\in\La$ such that the associated cup diagram has $\op{def}(\La)$ cups, see \eqref{defectcount}. Then the algebra $\mathbb{H}_\La=e\D e$ is again cellular. 
\end{corollary}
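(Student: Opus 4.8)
The plan is to present $\mathbb{H}_\La=e\D e$ as an idempotent truncation of the cellular algebra $\D$ by a $*$-invariant idempotent, and to transport the cell datum $(\La,M,C,*)$ of Corollary~\ref{iscell} across this truncation. Write $\La_{\rm max}=\{\la\in\La\mid \op{def}(\la)=\op{def}(\La)\}$ for the set of weights of maximal defect, so that $e=\sum_{\la\in\La_{\rm max}}{}_\la\mathbbm{1}_\la$. The first point I would record is that $e^*=e$: each ${}_\la\mathbbm{1}_\la=\underline\la\la\overline\la$ is fixed by the anti-automorphism $*$ of Corollary~\ref{antiaut}. Consequently $*$ restricts to an algebra anti-automorphism of $\mathbb{H}_\La$, since for $x\in\D$ we have $(exe)^*=e^*x^*e^*=e\,x^*\,e\in\mathbb{H}_\La$.

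Next I would pin down a basis of $\mathbb{H}_\La$. By Lemma~\ref{lem:Bruhator}(2) every element of $\B$ has the form $(\underline\alpha\,\mu\,\overline\beta)$ with $\alpha\subset\mu\supset\beta$, and by the idempotent action rules recorded in the proof of Theorem~\ref{algebra_structure} one has $e\,(\underline\alpha\,\mu\,\overline\beta)\,e=(\underline\alpha\,\mu\,\overline\beta)$ if $\alpha,\beta\in\La_{\rm max}$ and $e\,(\underline\alpha\,\mu\,\overline\beta)\,e=0$ otherwise. Hence $\{(\underline\alpha\,\mu\,\overline\beta)\mid \mu\in\La,\ \alpha,\beta\in\La_{\rm max},\ \alpha\subset\mu\supset\beta\}$ is a basis of $\mathbb{H}_\La$. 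I therefore propose the cell datum $(\La^e,M^e,C^e,*)$ with $\La^e=\{\mu\in\La\mid M(\mu)\cap\La_{\rm max}\neq\emptyset\}$ equipped with the order induced from $\La$, with $M^e(\mu)=M(\mu)\cap\La_{\rm max}$, and with $C^{e,\mu}_{\alpha,\beta}=e\,C^\mu_{\alpha,\beta}\,e=(\underline\alpha\,\mu\,\overline\beta)$ for $\alpha,\beta\in M^e(\mu)$. Conditions (C-1) and (C-2) are then immediate from the description of the basis, and (C-3) follows from $e^*=e$ together with $(C^{e,\mu}_{\alpha,\beta})^*=C^{e,\mu}_{\beta,\alpha}$, exactly as in the proof of Corollary~\ref{iscell}.

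The substantive point is (C-4), which I would deduce from the corresponding axiom for $\D$. Fix $\mu\in\La^e$, $\gamma,\delta\in M^e(\mu)$ and $x\in\mathbb{H}_\La$, so $x=exe$ and in particular $xe=ex=x$. Applying (C-4) for $\D$ to the product $x\,C^\mu_{\gamma,\delta}$ and then multiplying both sides by $e$ on the left and on the right (which preserves the congruence because $\D(>\mu)$ is a two-sided ideal, cf.\ Corollary~\ref{ideal}) yields
\begin{equation*}
x\,C^{e,\mu}_{\gamma,\delta}\ \equiv\ \sum_{\gamma'\in M(\mu)} r_x(\gamma',\gamma)\, e\,C^\mu_{\gamma',\delta}\,e \pmod{e\,\D(>\mu)\,e}.
\end{equation*}
Since $\delta\in\La_{\rm max}$, the truncation computation of the preceding paragraph gives $e\,C^\mu_{\gamma',\delta}\,e=C^{e,\mu}_{\gamma',\delta}$ when $\gamma'\in\La_{\rm max}$ and $0$ otherwise, so the right-hand sum collapses to $\sum_{\gamma'\in M^e(\mu)} r_x(\gamma',\gamma)\,C^{e,\mu}_{\gamma',\delta}$ with the very same scalars $r_x(\gamma',\gamma)$, which remain independent of $\delta$. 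The same computation identifies the error ideal $e\,\D(>\mu)\,e$ with the span $\mathbb{H}_\La(>\mu)$ of the vectors $C^{e,\nu}_{\gamma'',\delta''}$ for $\nu>\mu$, so (C-4) holds and $\mathbb{H}_\La$ is cellular. The only step where care is genuinely needed is this final bookkeeping: one must check that conjugation by $e$ sends each cell basis vector of $\D$ either to a cell basis vector of $\mathbb{H}_\La$ or to zero, and that it carries the filtration ideal $\D(>\mu)$ onto $\mathbb{H}_\La(>\mu)$. Both facts are forced by $e$ being a sum of diagonal idempotents together with $e^*=e$, so no real difficulty arises beyond this verification.
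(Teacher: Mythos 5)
Your proof is correct and takes essentially the same route as the paper: the paper disposes of the statement in one line by citing the general fact that cellularity passes to idempotent truncations $e\D e$ when $e^*=e$ (the result of K\"onig and Xi referenced there), and your argument is precisely an inline verification of that fact for this particular $e$. The special shape of $e$ as a sum of the diagonal degree-zero idempotents ${}_\la\mathbbm{1}_\la$ is what makes your direct check of (C-1)--(C-4) go through cleanly --- conjugation by $e$ sends each cell basis vector to itself or to zero and identifies $e\,\D(>\mu)\,e$ with $\mathbb{H}_\La(>\mu)$ --- so your write-up is a self-contained expansion of the citation rather than a different proof.
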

\begin{proof}
The first part follows from the cellularity of $\D$, since cellularity behaves well under idempotent truncation, see e.g. \cite[Proposition 4.3]{KoenigXi}.
\end{proof}

Corollary~\ref{projinj} will identify the algebra   $\mathbb{H}_\La$ as the endomorphism ring of the sum of all indecomposable projective-injective $\mathbb{D}_\Lambda$-modules.

\begin{remark}
The formulation of Theorem~\ref{cellular} differs from \cite[Theorem~3.1]{BS1} slightly, since we omitted the analogue of \cite[Theorem 3.1(iii)]{BS1}. This is because that statement is in fact erroneous as stated. It holds however in case $\mu=\la$ with the notation there also in our case here.
\end{remark}

For the next Lemma we assume  that we are given an oriented stacked circle diagram $\un{\lambda}(\mathbf{a},\bm{\nu})\ov{\mu}$ of height $1$ which  appears on the way in the multiplication in Definition \ref{def:multiplication} (before applying the last collapsing map) and that $C$ is an oriented circle in  $\un{\lambda}(\mathbf{a},\bm{\nu})\ov{\mu}$. We use the Bruhat order from Lemma~\ref{lem:Bruhat}.

\begin{lemma}
\label{lem:anticlock}
Assume $C$ is anticlockwise and let $\nu$ be the subsequence of $\nu_0$ or of $\nu_1$ attached to the vertices in $C$.  Then swapping the orientation of $C$ makes $\nu$  bigger in the Bruhat order.
\end{lemma}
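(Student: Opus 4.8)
The statement compares the orientation of a single circle $C$ with the induced weight $\nu$ at the vertices of $C$, and claims that reversing an anticlockwise circle to clockwise strictly raises $\nu$ in the Bruhat order. The natural strategy is to reduce to the local picture of one circle and use the combinatorial characterization of (anti)clockwise already available. By Definition~\ref{def:componentorientation} (and Definition~\ref{clockanti} for the height $0$ case), the circle $C$ is anticlockwise precisely when the symbol at its tag $\op{t}(C)$ — the rightmost vertex — is $\up$, and clockwise when it is $\down$. So reversing $C$ changes the rightmost label from $\up$ to $\down$ and, simultaneously, swaps the labels at \emph{all} the other vertices of $C$ (since a dot is an orientation-reversing point and an undotted cup/cap forces opposite endpoints). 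The plan is to read off exactly which $\up$/$\down$ get swapped and then invoke Lemma~\ref{lem:Bruhat}, which tells us that replacing a pair $\up\down$ by $\down\up$ or a pair $\down\down$ by $\up\up$ raises a weight in the Bruhat order.

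\textbf{Key steps.} First I would fix a tag $\op{t}(C)=(i,l)$, the rightmost vertex, and record that in the anticlockwise orientation this vertex carries $\up$. By Lemma~\ref{lem:stupidlemmasub} (part 2, on counting orientations) together with Proposition~\ref{lem:stupidlemma} and its proof via the straightening rules \eqref{kink}, a circle has an even total number of undotted cups plus undotted caps (Remark~\ref{stupidcount}); this is what guarantees that the two orientations are globally consistent and that reversing $C$ is the simultaneous label-swap at every vertex of $C$. Next I would pass along the circle from the rightmost vertex and pair up its vertices: walking along $C$, each arc either preserves or reverses the $\up$/$\down$ value (dotted arcs reverse, undotted arcs of the same type reverse, etc.), and reversing the orientation flips every label. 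The crucial arithmetic point is that among the vertices of $C$, the rightmost one sits at $\up$ in the anticlockwise case and $\down$ after reversal; since every vertex flips, I would show that the net effect of the reversal is to turn the subsequence $\nu$ into one obtained by a sequence of the elementary Bruhat-raising moves of Lemma~\ref{lem:Bruhat}. Concretely, I would argue that the reversal can be realized as changing some collection of $\up\down$-pairs to $\down\up$ and $\down\down$-pairs to $\up\up$, reading the vertices of $C$ from left to right, which is exactly the content of Lemma~\ref{lem:Bruhat} and hence strictly increases the weight (strictness follows because the rightmost label genuinely changes from $\up$ to $\down$, so $\nu$ is not fixed).

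\textbf{Main obstacle.} The delicate part is bookkeeping the effect of the \emph{dots}: an undotted cup forces its two endpoints to carry opposite labels in an orientation, whereas a dotted cup forces equal labels, and the global reversal flips all of them at once. I expect the main work to be a clean verification that, after the flip, the induced change on the subsequence $\nu$ decomposes into Bruhat-raising moves rather than Bruhat-lowering ones — i.e.\ that one never accidentally produces a $\down\up \to \up\down$ or $\up\up \to \down\down$ change. The key leverage is the position of the tag: because $\op{t}(C)$ is the rightmost vertex of $C$, all other vertices lie strictly to its left, so when its label drops from $\up$ to $\down$, the accompanying swaps at the leftward vertices can be organized so that each elementary move is of the Bruhat-increasing type. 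I would make this precise by induction on the number of cups in $C$, peeling off the innermost nested cup-cap pair (as in the straightening rules \eqref{kink} used in the proof of Proposition~\ref{lem:stupidlemma}), reducing to a smaller circle and the base case of a small circle with two vertices, where the claim $\up\up \to \down\down$ or $\up\down\to\down\up$ is immediate from inspection of \eqref{oriented}.
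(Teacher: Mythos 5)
Your overall strategy — reduce to two-vertex circles and handle the general case by peeling off cup--cap kinks — is the same as the paper's, but as written your argument has two genuine problems, one of direction and one of substance.

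First, you quote Lemma~\ref{lem:Bruhat} backwards. That lemma says that changing $\up\down$ to $\down\up$ or $\down\down$ to $\up\up$ makes a weight \emph{smaller}; the raising moves are the opposite ones, $\down\up \to \up\down$ and $\up\up \to \down\down$. These are exactly the moves a reversal of an anticlockwise circle performs (an anticlockwise undotted small circle reads $\down\up$, a dotted one reads $\up\up$), which is how the paper's base case works. You instead assert that the reversal realizes $\up\down \to \down\up$ and $\down\down \to \up\up$ and that these raise the weight, and in your final base-case check you list ``$\up\up \to \down\down$ or $\up\down \to \down\up$'' — one raising and one lowering move. Since the whole content of the lemma is directional, this is not a cosmetic slip: as written the chain of claims is internally inconsistent and does not establish the inequality.

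Second, and more importantly, you never supply a correct mechanism for decomposing the global label-flip into disjoint two-vertex flips that are \emph{all} raising. Pairing the vertices by the cups of $C$ fails: in the circle with cups $(1,4),(2,3)$ and caps $(1,2),(3,4)$, the anticlockwise weight is $\down\up\down\up$, so the nested cup $(2,3)$ reads $\up\down$, i.e.\ is clockwise, and flipping that pair alone is a \emph{lowering} move. Pairing adjacent vertices left-to-right also fails once dots are present: for the circle with dotted cups $(1,2),(3,4)$ and undotted caps $(2,3),(1,4)$, the anticlockwise weight is $\down\down\up\up$, and the adjacent pairs give one lowering and one raising move; the correct pairing here is the nested one $(2,3),(1,4)$. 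The paper resolves exactly this issue with the kink-to-circle rules \eqref{kinktocircle}, which detach small \emph{anticlockwise} circles while keeping the weight and the vertex set unchanged, so that flipping $C$ becomes flipping a disjoint union of small anticlockwise circles, each a raising move. Your fallback — induction using the straightening rules \eqref{kink} — does not work: those rules delete two vertices (so the weight sequence changes and you can no longer compare the same $\nu$), and the deleted pair can sit in the clockwise configuration $\up\down$, whose flip is lowering, so the inductive step would not close. The missing idea is precisely the weight-preserving re-pairing of \eqref{kinktocircle}.
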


\begin{proof}
The statement is clear if the circle is small, Definition~\ref{def:componentorientation}. Indeed, $\nu$ moves from $\down\up$ to $\up\down$ in the undotted case and from $\up\up$ to $\down\down$ in the dotted case. Otherwise we first remove all undotted kinks at the cost of an extra small anticlockwise circle according to the rules \eqref{kinktocircle} (i)-(ii) below not changing $\nu$.  As a result we created from $C$ only small circles, or at least one kink of the form \eqref{kinktocircle} (iii)-(iv). Again we remove these kinks using the rules (iii)-(iv) below. Observe that the weight stays the same and the newly created small circles are all anticlockwise. 
 \begin{equation}
 \label{kinktocircle}
 \begin{tikzpicture}[thick, snake=zigzag, line before snake = 2mm, line after snake = 2mm]
\node at (-.5,0) {i)};
\draw (0,0) -- +(0,.3);
\draw[dotted, snake] (0,.3) -- +(0,.8);
\draw (0,0) .. controls +(0,-.5) and +(0,-.5) .. +(.5,0);
\draw (.5,0) .. controls +(0,.5) and +(0,.5) .. +(.5,0);
\draw (1,0) -- +(0,-.3);
\draw[dotted, snake] (1,-.3) -- +(0,-.8);
\node at (.01,-.05) {$\up$};
\node at (1.01,-.05) {$\up$};
\node at (.51,0) {$\down$};

\draw[->] (1.5,-.2) -- +(1,0);

\draw (3,0) -- +(0,.3);
\draw[dotted, snake] (3,.3) -- +(0,.8);
\draw (3.5,0) .. controls +(0,-.5) and +(0,-.5) .. +(.5,0);
\draw (3.5,0) .. controls +(0,.5) and +(0,.5) .. +(.5,0);
\draw (3,0) -- +(0,-.3);
\draw[dotted, snake] (3,-.3) -- +(0,-.8);
\node at (3.01,-.05) {$\up$};
\node at (4.01,-.05) {$\up$};
\node at (3.51,0) {$\down$};

\begin{scope}[xshift=6cm]
\node at (-.5,0) {ii)};
\draw (0,0) -- +(0,.3);
\draw[dotted, snake] (0,.3) -- +(0,.8);
\draw (0,0) .. controls +(0,-.5) and +(0,-.5) .. +(.5,0);
\draw (.5,0) .. controls +(0,.5) and +(0,.5) .. +(.5,0);
\draw (1,0) -- +(0,-.3);
\draw[dotted, snake] (1,-.3) -- +(0,-.8);
\node at (.01,0) {$\down$};
\node at (1.01,0) {$\down$};
\node at (.51,-.05) {$\up$};

\draw[->] (1.5,-.2) -- +(1,0);

\draw (4,0) -- +(0,.3);
\draw[dotted, snake] (4,.3) -- +(0,.8);
\draw (3,0) .. controls +(0,-.5) and +(0,-.5) .. +(.5,0);
\draw (3,0) .. controls +(0,.5) and +(0,.5) .. +(.5,0);
\draw (4,0) -- +(0,-.3);
\draw[dotted, snake] (4,-.3) -- +(0,-.8);
\node at (3.01,0) {$\down$};
\node at (4.01,0) {$\down$};
\node at (3.51,-.05) {$\up$};

\end{scope}

\begin{scope}[yshift=-3cm]
\node at (-.5,0) {iii)};
\draw (0,0) -- +(0,.3);
\draw[dotted, snake] (0,.3) -- +(0,.8);
\draw (0,0) .. controls +(0,-.5) and +(0,-.5) .. +(.5,0);
\draw (.5,0) .. controls +(0,.5) and +(0,.5) .. +(.5,0);
\fill (.75,.365) circle(2.5pt);
\draw (1,0) -- +(0,-.3);
\draw[dotted, snake] (1,-.3) -- +(0,-.8);
\node at (.01,0) {$\down$};
\node at (1.01,-.05) {$\up$};
\node at (.51,-.05) {$\up$};

\draw[->] (1.5,-.2) -- +(1,0);

\draw (4,0) -- +(0,.3);
\draw[dotted, snake] (4,.3) -- +(0,.8);
\fill (4,.365) circle(2.5pt);
\draw (3,0) .. controls +(0,-.5) and +(0,-.5) .. +(.5,0);
\draw (3,0) .. controls +(0,.5) and +(0,.5) .. +(.5,0);
\draw (4,0) -- +(0,-.3);
\draw[dotted, snake] (4,-.3) -- +(0,-.8);
\node at (3.01,0) {$\down$};
\node at (4.01,-.05) {$\up$};
\node at (3.51,-.05) {$\up$};

\end{scope}

\begin{scope}[xshift=6cm, yshift=-3cm]
\node at (-.5,0) {iv)};
\draw (0,0) -- +(0,.3);
\draw[dotted, snake] (0,.3) -- +(0,.8);
\fill (.25,-.365) circle(2.5pt);
\draw (0,0) .. controls +(0,-.5) and +(0,-.5) .. +(.5,0);
\draw (.5,0) .. controls +(0,.5) and +(0,.5) .. +(.5,0);
\draw (1,0) -- +(0,-.3);
\draw[dotted, snake] (1,-.3) -- +(0,-.8);
\node at (.01,0) {$\down$};
\node at (.51,0) {$\down$};
\node at (1.01,-.05) {$\up$};

\draw[->] (1.5,-.2) -- +(1,0);

\draw (3,0) -- +(0,.3);
\draw[dotted, snake] (3,.3) -- +(0,.8);
\fill (3,-.365) circle(2.5pt);
\draw (3.5,0) .. controls +(0,-.5) and +(0,-.5) .. +(.5,0);
\draw (3.5,0) .. controls +(0,.5) and +(0,.5) .. +(.5,0);
\draw (3,0) -- +(0,-.3);
\draw[dotted, snake] (3,-.3) -- +(0,-.8);
\node at (3.01,0) {$\down$};
\node at (3.51,0) {$\down$};
\node at (4.01,-.05) {$\up$};

\end{scope}
\end{tikzpicture}
 \end{equation}
 The result is a collection of small anticlockwise circles and swapping the orientation of $C$ means swapping the orientation of all these new circles. But this obviously increases the weight.
\end{proof}

For the next proposition we assume again that we are given an oriented stacked circle diagram $D=\un{\lambda}(\mathbf{a},\bm{\nu})\ov{\mu}$ of height $1$ which  appears on the way in the multiplication \eqref{eqn:multiplication} of $(a \la b ) ( c \mu d )$. Let $\tau=\nu_1$ be its {top weight} (the weight on the top number line). Assume there is at least one more surgery to apply to $\un{\lambda}(\mathbf{a},\bm{\nu})\ov{\mu}$ and let $\gamma$ be the corresponding cup-cap-pair. Let $\cupg$ and $\capg$ the cup respectively the cap in $\gamma$.  In this setup we claim the following crucial step for the proof of Theorem~\ref{cellular} with the Bruhat order from Lemma~\ref{lem:Bruhat}.

\begin{prop}
\label{topweight}
The top weight of each diagram obtained at the end of
the surgery procedure at $\gamma$ is greater than or equal to $\tau$ in the Bruhat order.
\end{prop}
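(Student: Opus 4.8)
The plan is to track how the top weight is affected by the single surgery step at $\gamma$, decomposing the change into two independent effects: the purely local recabling of the cup-cap-pair $\gamma$ into two vertical rays as in \eqref{pic:surgery}, and the reorientation of whole circles forced by the Merge/Split/Reconnect rules. Since $D$ has height $1$, the cup $\cupg$ sits at the two top-line vertices $(i,1),(j,1)$ and the cap $\capg$ at $(i,0),(j,0)$; only these two positions of the underlying shape on the top line are altered by the surgery, while the top cap diagram $\ov{\mu}$ and all other cups of $a_1$ are untouched. First I would dispose of the Reconnect case: there the orientation is literally unchanged (or the term is zero), so the top weight stays equal to $\tau$.

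For Merge and Split the key principle is that every reorientation prescribed by the rules turns an \emph{anticlockwise} circle into a \emph{clockwise} one, never the reverse. I would verify this directly from the boxed rules: in a Merge the output is anticlockwise when both inputs are (no flip), and clockwise in the mixed case, where precisely the anticlockwise input circle is reoriented; in a Split of an anticlockwise circle each of the two resulting terms makes exactly one of $C_i,C_j$ clockwise, and a Split of a clockwise circle keeps both clockwise. By Lemma~\ref{lem:anticlock}, reversing an anticlockwise circle to clockwise replaces the relevant subword of the top weight by a sequence of Bruhat-increasing moves $\down\up\rightsquigarrow\up\down$ and $\up\up\rightsquigarrow\down\down$ (in the sense of Lemma~\ref{lem:Bruhat}). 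Hence each such flip can only increase the top weight.

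It then remains to show that the local recabling itself contributes nothing to the top weight once the forced reorientation has been carried out. For this I would check that, after reorienting as dictated, the labels at $(i,0)$ and $(i,1)$ (respectively $(j,0)$ and $(j,1)$) agree, so that cutting $\cupg$ and $\capg$ into the two undotted rays of \eqref{pic:surgery} is label-preserving on the top line: the ray inherits the common label and no top-line symbol is changed beyond what the circle flips already changed. Concretely, in each undotted subcase one reads off from \eqref{oriented} that an anticlockwise (resp.\ clockwise) cup carries $\down\up$ (resp.\ $\up\down$) and checks that the reorientation chosen by the rule is exactly the one making these endpoints compatible with the cap endpoints; the dotted subcases are identical after recording that the surgery deletes the two dots of $\gamma$ and using that orientability of a circle only depends on the dot-count modulo two (Lemma~\ref{lem:stupidlemmasub}). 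Combining the two effects, the top weight of every output diagram is obtained from $\tau$ by a (possibly empty) sequence of the Bruhat-increasing elementary moves above, so it is $\geq\tau$. The main obstacle is precisely this last bookkeeping in the Split case: there one must treat both terms simultaneously, confirm that the two prescribed single-circle flips are anticlockwise$\to$clockwise relative to $\tau$, and, in the dotted split, rule out a spurious decrease by using the dot-parity/orientability criterion to see that the non-orientable configurations are exactly those assigned the value zero.
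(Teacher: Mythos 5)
Your overall strategy is the same skeleton as the paper's proof: realize each surgery output as a combination of (a) a label-preserving replacement of $\gamma$ by two vertical lines and (b) whole-circle reorientations that are all anticlockwise-to-clockwise, then conclude via Lemma~\ref{lem:anticlock} (with non-orientable outputs giving zero). However, two of your intermediate claims fail, and fixing them requires precisely the case distinction you never make, namely whether the orientations of $\cupg$ and $\capg$ are compatible or incompatible (the paper's cases i)/iii) versus ii)/iv) in \eqref{4casesundotted} and \eqref{4cases}). First, your claim that a merge of two anticlockwise circles involves ``no flip'' is false in the incompatible cases: there the labels at $(i,0),(i,1)$ and $(j,0),(j,1)$ do \emph{not} agree, so bare recabling is not label-preserving; the rule's output (the anticlockwise merged circle) is only obtained after flipping one whole input circle — the one not containing the tag of the merged circle — and the top weight can then strictly increase. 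This is exactly why the proposition asserts $\geq\tau$ rather than $=\tau$, and your own label-agreement check in the third paragraph would hit a contradiction at this point with no instruction for how to proceed.

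Second, your rigid order ``reorient first, then recable'' cannot realize several cases: the mixed merge in the compatible cases (recabling yields an anticlockwise merged circle which must be flipped \emph{afterwards}), the split of an anticlockwise circle when recabling produces two anticlockwise circles (each output term flips exactly one of them \emph{after} recabling; flipping $C$ beforehand would flip both), and the split of a clockwise circle in the mixed subcase. In all of these, flipping an input circle first destroys the label-compatibility your recabling step needs, so the flip must come after; conversely, in the incompatible merge cases the flip must come before. Neither obstruction is fatal — every required flip is still anticlockwise-to-clockwise, so Lemma~\ref{lem:anticlock} (applied in the pre- or post-recabling diagram as appropriate) still yields the inequality — but your proof as written does not go through. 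The paper's proof handles this by organizing the argument around the four orientation types of $\gamma$, deciding in each whether the flip precedes or follows the recabling and using the tag to identify which circle is flipped; note also that in a split the incompatible orientations force non-orientability (hence the zero map) in the undotted cases too, not only in the dotted split as your last sentence suggests.
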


\begin{proof}[Proof of Proposition~\ref{topweight}]
Let us first assume that all the diagrams involved do not contain any lines or rays. 
\subsubsection*{Case 1: Merge}  We  consider the case where two circles get merged. In this case either the result is zero or the merge produces a unique diagram. \hfill\\
$\blacktriangleright$ {\it Assume $\gamma$ is undotted.} Consider the four possible orientations of $\gamma$:
\begin{equation}
\label{4casesundotted}
i)\quad
\usetikzlibrary{arrows}
\begin{tikzpicture}[thick,>=angle 60, scale=0.8]
\draw [>->] (0,0) .. controls +(0,-1) and +(0,-1) .. +(1,0);
\draw [<-<] (0,-2) .. controls +(0,1) and +(0,1) .. +(1,0);
\end{tikzpicture}
\quad\quad
ii)
\quad
\usetikzlibrary{arrows}
\begin{tikzpicture}[thick,>=angle 60,scale=0.8]
\draw [>->] (0,0) .. controls +(0,-1) and +(0,-1) .. +(1,0);
\draw [>->] (0,-2) .. controls +(0,1) and +(0,1) .. +(1,0);
\end{tikzpicture}
\quad\quad
iii)
\quad
\usetikzlibrary{arrows}
\begin{tikzpicture}[thick,>=angle 60,scale=0.8]
\draw [<-<] (0,0) .. controls +(0,-1) and +(0,-1) .. +(1,0);
\draw [>->] (0,-2) .. controls +(0,1) and +(0,1) .. +(1,0);
\end{tikzpicture}
\quad\quad
iv)
\quad
\usetikzlibrary{arrows}
\begin{tikzpicture}[thick,>=angle 60,scale=0.8]
\draw [<-<] (0,0) .. controls +(0,-1) and +(0,-1) .. +(1,0);
\draw [<-<] (0,-2) .. controls +(0,1) and +(0,1) .. +(1,0);
\end{tikzpicture}
\end{equation}
In case both involved circles are clockwise the merge gives zero and there is nothing to do. 

Otherwise start with Cases i) and iii). If the original circles are both anticlockwise, then the merge just joins them together preserving the weights. In case one is anticlockwise and the other is clockwise replacing $\gamma$ by two straight lines produces a circle which could be clockwise (in which case this is the result of the merge and the weights are preserved) or anticlockwise. In the latter case we swap its orientation and are done by Lemma~\ref{lem:anticlock}. 

Consider now Cases ii) and iv). Pick a tag of the merged circle. If both circles are anticlockwise let $C$ be the original circle not containing this tag. Then the merge can be obtained by first swapping the orientation of $C$ and then joining the circles by replacing $\gamma$ by two straight lines. Again by Lemma~\ref{lem:anticlock} the weight can only increase or stay the same. If exactly one circle was anticlockwise we first swap the orientation of this circle and then replace $\gamma$ by two straight lines. We claim that the result agrees with the merge. Indeed, choose a tag of the merged circle. If it lies on the clockwise circle then it determines the orientation of the merged circle. If not, then the orientation of the merged circle is the opposite of this circle. In any case, our claim holds. By Lemma~\ref{lem:anticlock}   the weight increases.  \hfill\\
$\blacktriangleright$ {\it Assume $\gamma$ is dotted.} We now have the following possible orientations of $\gamma$ 
\begin{equation}
\label{4cases}
i)\quad
\usetikzlibrary{arrows}
\begin{tikzpicture}[thick,>=angle 60, scale=0.8]
\draw [>-<] (0,0) .. controls +(0,-1) and +(0,-1) .. +(1,0);
\fill (0.5,-0.77) circle(2.5pt);
\draw [<->] (0,-2) .. controls +(0,1) and +(0,1) .. +(1,0);
\fill (0.5,-1.25) circle(2.5pt);
\end{tikzpicture}
\quad\quad
ii)
\quad
\usetikzlibrary{arrows}
\begin{tikzpicture}[thick,>=angle 60,scale=0.8]
\draw [>-<] (0,0) .. controls +(0,-1) and +(0,-1) .. +(1,0);
\fill (0.5,-0.77) circle(2.5pt);
\draw [>-<] (0,-2) .. controls +(0,1) and +(0,1) .. +(1,0);
\fill (0.5,-1.22) circle(2.5pt);
\end{tikzpicture}
\quad\quad
iii)
\quad
\usetikzlibrary{arrows}
\begin{tikzpicture}[thick,>=angle 60,scale=0.8]
\draw [<->] (0,0) .. controls +(0,-1) and +(0,-1) .. +(1,0);
\fill (0.5,-0.76) circle(2.5pt);
\draw [>-<] (0,-2) .. controls +(0,1) and +(0,1) .. +(1,0);
\fill (0.5,-1.22) circle(2.5pt);
\end{tikzpicture}
\quad\quad
iv)
\quad
\usetikzlibrary{arrows}
\begin{tikzpicture}[thick,>=angle 60,scale=0.8]
\draw [<->] (0,0) .. controls +(0,-1) and +(0,-1) .. +(1,0);
\fill (0.5,-0.76) circle(2.5pt);
\draw [<->] (0,-2) .. controls +(0,1) and +(0,1) .. +(1,0);
\fill (0.5,-1.24) circle(2.5pt);
\end{tikzpicture}
\end{equation}
and we can argue exactly as in the undotted case.
\subsubsection*{Case 2: Split}
Now let us assume that the surgery is a Split. Fix a tag $\op{t}(C)$ of the circle $C$ containing $\gamma$. We assume that the split creates orientable circles, since otherwise there is nothing to do. 
Assume first $\gamma$ is of the form i) or iii) in \eqref{4casesundotted} or  \eqref{4cases}. If $C$ is anticlockwise, then replacing $\gamma$ by two straight lines creates either two anticlockwise circles or one anticlockwise and one clockwise. In the first case changing the orientation of either one of the circles gives the resulting two diagrams of the split and we are done by Lemma~\ref{lem:anticlock}. In the second case the created diagram is one of the diagrams which appear in the split. The second diagram is obtained by changing the orientation of both circles. But this is the same as changing the orientation of $C$ from anticlockwise to clockwise before replacing $\gamma$ by two straight lines. Hence again Lemma~\ref{lem:anticlock} gives the result. If $C$ is clockwise then  replacing $\gamma$ by two straight lines creates either two clockwise circles preserving the weight or one anticlockwise and one clockwise circle in which case we should swap the orientation of the anticlockwise one to obtain the same result as Split. Then we can again apply  Lemma~\ref{lem:anticlock}. 
Assume now $\gamma$ is of the form ii) or iv) in \eqref{4casesundotted} or \eqref{4cases}. Observe that the orientability of $C$ implies that replacing $\gamma$ by two straight lines creates non-orientable circles and the result of the surgery is zero. 
In any case the resulting diagrams have top weights smaller or equal to the original weight $\nu_1$ on the top. In case multiple oriented circle diagrams appear each weight appears at most once by construction (consider the orientation of $\cupg$). The general case involving lines can be reduced to the case of circles only using Section~\ref{annoying}.  The proposition follows.
\end{proof}

To keep the proof of the following proposition  slightly simpler we assume that the order of our multiplication   \eqref{eqn:multiplication} is such that we take  in each step
\begin{equation}
\label{order}
\text{\it the rightmost cup-cap pair where surgery can be applied.} 
\end{equation}

\begin{prop}
\label{tauprop}
In the setup of Proposition ~\ref{topweight} and \eqref{order} we have that the weight $\tau$ appears after a single surgery procedure if and only if we are in one of the following two situations
\begin{enumerate}[($\tau$1)]
\item both $\cupg$ and $\capg$ are oriented anticlockwise, or
\item $\cupg$ is clockwise and $\capg$ is anticlockwise and the component containing $\capg$ does not intersect the top weight line. 
\end{enumerate}
Moreover, $\tau$ appears here exactly once. In all other cases the occurring weights are strictly smaller than $\tau$.
\end{prop}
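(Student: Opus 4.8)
The plan is to analyse the single surgery at $\gamma$ term by term, using throughout the guiding principle that the straight-line replacement \eqref{pic:surgery} preserves every vertex label, so that a given output term has top weight exactly $\tau$ if and only if the passage from $D$ to that term requires no reorientation of a circle meeting the top number line. Indeed, by Lemma~\ref{lem:anticlock} reorienting an anticlockwise circle strictly raises the weight along its vertices, and by Proposition~\ref{topweight} every output top weight is $\geq\tau$; hence ``no top-meeting reorientation'' is equivalent to ``weight equal to $\tau$''. First I would invoke Section~\ref{annoying} to reduce to diagrams without lines or rays (lines becoming anticlockwise circles and the reconnect rule becoming a merge), leaving only the Merge and Split scenarios. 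I also record the local dictionary from \eqref{oriented}: a cup or cap, dotted or not, is anticlockwise exactly when its right-hand endpoint carries the label $\up$; and that after surgery the two new vertical segments force $(i,0),(i,1)$ and likewise $(j,0),(j,1)$ to carry equal labels. Finally, in the merge I would use the convention \eqref{order} together with admissibility (in the spirit of Lemma~\ref{lem:circledomanating}) to arrange that the label at the right endpoint of $\cupg$ agrees with a tag of its circle, and similarly for $\capg$, so that the clockwise/anticlockwise type of each of these arcs coincides with that of the circle carrying it.

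For the Merge, $\cupg$ lies on the circle $C_l$ containing the top vertices $(i,1),(j,1)$, so $C_l$ always meets the top line, while $\capg$ lies on a distinct circle $C_{l-1}$. Running through the merge rule of Section~\ref{subsub}: if both circles are anticlockwise the merge introduces no reorientation and reproduces $\tau$, which by the dictionary is precisely the case that $\cupg$ and $\capg$ are both anticlockwise, i.e. ($\tau$1); if exactly one is anticlockwise the rule turns that one clockwise, and since $C_l$ meets the top line, reorienting it (the case $\cupg$ anticlockwise) strictly raises $\tau$, whereas reorienting $C_{l-1}$ (the case $\cupg$ clockwise, $\capg$ anticlockwise) reproduces $\tau$ exactly when $C_{l-1}$ misses the top line, i.e. ($\tau$2); and if both are clockwise the result is $0$. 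This yields exactly the two listed possibilities in the merge case.

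For the Split, $\cupg$ and $\capg$ lie on a single circle $C$ which, after $\gamma$ is replaced by the two vertical segments, separates into $C_i$ and $C_j$, each meeting the top line (through $(i,1)$ and $(j,1)$ respectively). Here I would follow the split analysis already carried out in the proof of Proposition~\ref{topweight}, organised by the orientation forms \eqref{4casesundotted} and \eqref{4cases}: forms ii) and iv) make the split non-orientable and contribute $0$, while for forms i) and iii) exactly one of the two output terms is the one in which the straight-line replacement already produces the output orientation and so needs no reorientation. A label computation, using that $\cupg$ and $\capg$ are mirror images, that the vertical segments equate the $i$- and the $j$-labels, and that the dot-parity making the result orientable constrains the connecting arcs, identifies this distinguished $\tau$-term with the configuration in which $\cupg$ and $\capg$ are both anticlockwise, that is ($\tau$1); the other term reorients a top-meeting circle and strictly increases the weight. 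Since in a split the component of $\capg$ is $C$ itself, which always meets the top line, condition ($\tau$2) cannot arise here, consistently with the statement.

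For the uniqueness clause, the two output terms of a split are distinguished by the orientation of the circle $C_j$ carrying $(j,1)$, hence by the label at $(j,1)$, so at most one of them can have top weight $\tau$; a merge produces a single term. Together with the existence established above this gives that $\tau$ occurs exactly once in cases ($\tau$1) and ($\tau$2). In every remaining configuration $\tau$ does not occur among the outputs, so by Proposition~\ref{topweight} all occurring top weights are strictly greater than $\tau$. I expect the split analysis of the third paragraph to be the main obstacle: the delicate point is to verify, through the interplay of the decorations with the label-equalities imposed by the new vertical segments, that the unique $\tau$-preserving term is governed by the orientations of $\cupg$ and $\capg$ individually rather than by the global clockwise/anticlockwise type of $C$, and to pin down precisely which of the forms i)/iii) produces the orientable result.
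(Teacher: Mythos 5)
Your overall architecture is reasonable: the reduction via Section~\ref{annoying}, the label dictionary (an arc is anticlockwise exactly when its right endpoint is $\up$), and the guiding principle that an output term has top weight $\tau$ precisely when no circle meeting the top line has its labels flipped are all correct and in the spirit of the paper. The fatal problem is the \emph{bridging claim} on which your whole merge analysis rests, namely that under \eqref{order} and admissibility the arc orientation of $\cupg$ (resp.\ $\capg$) coincides with the orientation of the circle carrying it. This is false, and the paper never asserts it; Lemma~\ref{lem:circledomanating} exists precisely to control what happens when the two orientations \emph{differ}. Concretely, take $k=4$, $\Lambda=\Lambda_4^{\ov 0}$, $\la=\down\down\up\up$, $\eta=\mu=\up\up\up\up$, so that $\un\eta$ consists of the two dotted cups $\{1,2\},\{3,4\}$ and $\un\la$ of the undotted cups $\{1,4\},\{2,3\}$, and compute $\bigl(\un\la\,(\up\up\down\down)\,\ov\eta\bigr)\cdot\bigl(\un\eta\,\eta\,\ov\eta\bigr)$. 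The order \eqref{order} forces the first surgery at the pair $\{3,4\}$ (a merge), and the resulting height-one diagram before the surgery at $\gamma=\{1,2\}$ carries the orientation $\nu_0=\nu_1=\up\up\down\down$. In it both $\cupg$ and $\capg$ (the dotted pair at $\{1,2\}$) are \emph{anticlockwise as arcs}, but the circle through $\capg$ --- which runs through $(2,0)$, the cup $\{2,3\}$ of $\un\la$, up the vertical at $3$, across the dotted cap $\{3,4\}$ of $\ov\mu$, down the vertical at $4$ and back along the cup $\{1,4\}$ of $\un\la$ --- has its tag at position $4$ labelled $\down$, i.e.\ it is \emph{clockwise}, while the circle through $\cupg$ is anticlockwise and meets the top line.

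Your merge recipe now misfires on exactly this configuration: you would argue ``exactly one circle is anticlockwise, it is the cup's circle, it meets the top line, hence it gets reoriented and $\tau$ strictly increases'', so $\tau$ does not occur. In fact the clockwise output orientation of the merged circle coincides with \emph{all} input labels (the merged tag lies on the clockwise circle, and the labels are consistent across the two new vertical segments because $\cupg$ and $\capg$ have equal arc orientations), so nothing changes and $\tau=\up\up\down\down$ is reproduced --- as it must be, both because we are in case ($\tau$1) of the proposition and because the second factor is the idempotent ${}_\eta\mathbbm{1}_\eta$. The underlying misconception is that ``orient the result clockwise'' in the merge rule means ``flip the labels of the anticlockwise input circle''; it does not. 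Which input circle's labels actually flip is governed by where the merged tag sits and by whether the input labels match across the new vertical segments (equivalently, whether $\cupg$ and $\capg$ have the \emph{same arc orientation}), and this is exactly the point where arc orientations, not circle orientations, control the answer. The paper copes with this by a case analysis on the arc orientations of $\cupg,\capg$ crossed with six local shapes, using \eqref{order} and Lemma~\ref{lem:circledomanating}-type arguments to locate the tags; your proof would need such an analysis in place of the bridging claim, in the merge case just as much as in the split case that you yourself flag as the expected obstacle.
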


\begin{corollary}
\label{tau}
In the notation of Proposition ~\ref{topweight} we have that 
the total number of
diagrams produced with top weight equal to $\tau$
is either zero or one, independent of the cap diagram $d$. It appears with a sign depending on whether $\gamma$ is dotted or not.
\end{corollary}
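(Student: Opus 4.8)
The goal is to prove Corollary~\ref{tau}, which counts diagrams with top weight exactly $\tau$ after a single surgery at $\gamma$. The plan is to deduce this essentially directly from Proposition~\ref{tauprop} together with the multiplication rules spelled out in Section~\ref{sec:idea} and their algebraic reformulations in Lemmas~\ref{lem:algebraic_merge} and~\ref{lem:algebraic_split}.

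First I would observe that Proposition~\ref{topweight} guarantees every diagram produced at $\gamma$ has top weight $\geq\tau$, and Proposition~\ref{tauprop} pins down precisely when the value $\tau$ is actually attained: exactly in the two configurations ($\tau$1) and ($\tau$2), and in each of these the weight $\tau$ is produced exactly once. Thus the count of diagrams with top weight equal to $\tau$ is either $0$ (if neither ($\tau$1) nor ($\tau$2) occurs) or $1$ (if one of them does). I would note that ($\tau$1) and ($\tau$2) are mutually exclusive, since ($\tau$1) requires $\cupg$ anticlockwise while ($\tau$2) requires $\cupg$ clockwise, so there is no possibility of the two contributions overlapping or adding up. This already yields the dichotomy ``zero or one'' in the statement.

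Next I would address the independence from the cap diagram $d$. The configurations ($\tau$1) and ($\tau$2) are conditions on the orientations of $\cupg$ and $\capg$ and on whether the component of $\capg$ meets the top weight line; by the setup of Proposition~\ref{topweight}, the cup-cap-pair $\gamma$ and the circle diagram $\un{\lambda}(\mathbf{a},\bm{\nu})\ov{\mu}$ under consideration are determined by the order convention \eqref{order} and by the weights $\la,\mu,\nu$ occurring before this surgery step, none of which depend on $d$ — indeed the bottom cup diagram $a=\underline{\la}$ and the intermediate data are fixed once $(a\la b)$ and the left factor $\mu$ are fixed. Hence whether we are in case ($\tau$1), ($\tau$2), or neither, is independent of $d$, and so is the resulting count. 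This matches the corresponding claim in Theorem~\ref{cellular}~2.) that the scalar $s_{a\la b}(\mu)$ does not depend on $d$.

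Finally I would treat the sign. In case ($\tau$1), both $\cupg$ and $\capg$ are anticlockwise, so the surgery is governed by the anticlockwise/anticlockwise rules: for a merge (Lemma~\ref{lem:algebraic_merge}) the map is sign-free, while for a split (Lemma~\ref{lem:algebraic_split}) the relevant coefficient is $(-1)^{\pos(i)}$ times a factor which is $+1$ if $\gamma$ is dotted and $-1$ if $\gamma$ is undotted, reflecting $X_{(j,l)}+X_{(i,l)}$ versus $X_{(j,l)}-X_{(i,l)}$. In case ($\tau$2), one component is clockwise and one anticlockwise, and the $\tau$-producing term again carries the sign $(-1)^{\pos(i)}(-1)^{a_r}$ (merge) or the corresponding split sign, whose dependence on the dottedness of $\gamma$ is the same dichotomy. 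The main obstacle here is bookkeeping: I must check that in whichever of the two cases occurs, the unique $\tau$-term survives with a nonzero coefficient in $\{+1,-1\}$ and that the $\pm$ is controlled solely by whether $\gamma$ is dotted, using Remark~\ref{unverstformel} to see that ${\rm sign}_D(i,l){\rm sign}_D(j,l)$ equals $1$ or $-1$ exactly according to dottedness. Assembling these observations gives that the number of diagrams with top weight $\tau$ is zero or one, independent of $d$, appearing with a sign determined by the decoration of $\gamma$, as claimed.
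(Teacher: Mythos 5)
Your proof is correct and follows essentially the same route as the paper, whose own argument is just the one-line observation that the claim follows from Proposition~\ref{tauprop} and the multiplication rules of Section~\ref{sec:explicitmult}, since the conditions ($\tau$1) and ($\tau$2) are checked on data (orientations of $\cupg$, $\capg$ and connectivity of the component of $\capg$ below the top weight line) that do not involve $d$. Your expanded bookkeeping of the merge/split signs and the dotted/undotted dichotomy is consistent with Lemmas~\ref{lem:algebraic_merge} and~\ref{lem:algebraic_split}; only note that the sign-parity fact you invoke is Remark~\ref{rmk:signonsequence} (whose displayed formula is \eqref{unverstformel}), a harmless citation slip.
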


\begin{proof}
This follows from Proposition ~\ref{tauprop} and the multiplication rules, Section~\ref{sec:explicitmult},  noting that the conditions ($\tau$1) and  ($\tau$2) are independent of $d$.
\end{proof}

We first deduce Theorem~\ref{cellular} and then prove Proposition~\ref{tauprop}.

\begin{proof}[Proof of Theorem~\ref{cellular}]
By definition, $( a \la b ) ( c \mu d ) = 0$ if $b \neq c^*$, so assume
$b=c^*$ from now on. By applying Proposition~\ref{topweight} repeatedly, starting with $\tau = \mu$ at the first step,
it follows that
$(a \la b) (c \mu d)$ is a linear combination of
$(a \nu d)$'s for $\nu \geq \mu$.
Assuming $a \mu$ is oriented,
Corollary~\ref{tau} applied repeatedly
implies that the coefficient
$s_{a \la b}(\mu)$ of the basis vector
$(a \mu d)$ in the product is zero or $\pm 1$
independent of the cap diagram $d$. Note also that the result does not depend on the specific choice \eqref{order} for the order of the surgeries by Theorem~\ref{thm:surgeries_commute}.
This proves 1.) and 2.) in Theorem~\ref{cellular}. 
\end{proof}

\begin{proof}[Proof of Proposition~\ref{tauprop}]
We first assume that no involved diagram contains rays. For our surgery on $\gamma$ we are then in one of the 6 basic situations:
\begin{eqnarray*}
\label{H}
\begin{tikzpicture}[thick,>=angle 60,scale=.45]

\begin{scope}
\node at (1.25,0.75) {i)};
\draw[thin] (1.75,2) -- +(3.5,0);
\draw[thin] (1.75,-.5) -- +(3.5,0);
\draw [-] (2,2) .. controls +(0,2) and +(0,2) .. +(3,0);
\draw [-] (3,2) .. controls +(0,1) and +(0,1) .. +(1,0);

\draw [-] (2,-.5) -- +(0,2.5);
\draw [-] (3,2) .. controls +(0,-1) and +(0,-1) .. +(1,0);
\draw[dotted] (3.55,.3) -- +(0,.95);
\draw [-] (3,-.5) .. controls +(0,1) and +(0,1) .. +(1,0);
\draw [-] (5,-.5) -- +(0,2.5);

\draw [-] (2,-.5) .. controls +(0,-1) and +(0,-1) .. +(1,0);
\draw [-] (4,-.5) .. controls +(0,-1) and +(0,-1) .. +(1,0);
\end{scope}

\begin{scope}[xshift=5cm]
\node at (1.25,0.75) {ii)};
\draw[thin] (1.75,2) -- +(3.5,0);
\draw[thin] (1.75,-.5) -- +(3.5,0);
\draw [-] (2,2) .. controls +(0,1) and +(0,1) .. +(1,0);
\draw [-] (4,2) .. controls +(0,1) and +(0,1) .. +(1,0);

\draw [-] (2,-.5) -- +(0,2.5);
\draw [-] (3,2) .. controls +(0,-1) and +(0,-1) .. +(1,0);
\draw[dotted] (3.55,.3) -- +(0,.95);
\draw [-] (3,-.5) .. controls +(0,1) and +(0,1) .. +(1,0);
\draw [-] (5,-.5) -- +(0,2.5);

\draw [-] (2,-.5) .. controls +(0,-2) and +(0,-2) .. +(3,0);
\draw [-] (3,-.5) .. controls +(0,-1) and +(0,-1) .. +(1,0);
\end{scope}

\begin{scope}[xshift=10cm]
\node at (1.25,0.75) {iii)};
\draw[thin] (1.75,2) -- +(3.5,0);
\draw[thin] (1.75,-.5) -- +(3.5,0);

\draw [-] (2,2) .. controls +(0,1) and +(0,1) .. +(1,0);
\draw [-] (4,2) .. controls +(0,1) and +(0,1) .. +(1,0);

\draw [-] (2,-.5) -- +(0,2.5);
\draw [-] (3,2) .. controls +(0,-1) and +(0,-1) .. +(1,0);
\draw[dotted] (3.55,.3) -- +(0,.95);
\draw [-] (3,-.5) .. controls +(0,1) and +(0,1) .. +(1,0);
\draw [-] (5,-.5) -- +(0,2.5);

\draw [-] (2,-.5) .. controls +(0,-1) and +(0,-1) .. +(1,0);
\draw [-] (4,-.5) .. controls +(0,-1) and +(0,-1) .. +(1,0);
\end{scope}

\begin{scope}[xshift=15cm]
\node at (1.25,0.75) {iv)};
\draw[thin] (1.75,2) -- +(3.5,0);
\draw[thin] (1.75,-.5) -- +(3.5,0);

\draw [-] (2,2) .. controls +(0,2) and +(0,2) .. +(3,0);
\draw [-] (3,2) .. controls +(0,1) and +(0,1) .. +(1,0);

\draw [-] (4,-.5) -- +(0,2.5);
\draw [-] (2,2) .. controls +(0,-1) and +(0,-1) .. +(1,0);
\draw[dotted] (2.55,.3) -- +(0,.95);
\draw [-] (2,-.5) .. controls +(0,1) and +(0,1) .. +(1,0);
\draw [-] (5,-.5) -- +(0,2.5);

\draw [-] (2,-.5) .. controls +(0,-2) and +(0,-2) .. +(3,0);
\draw [-] (3,-.5) .. controls +(0,-1) and +(0,-1) .. +(1,0);
\end{scope}

\begin{scope}[xshift=20cm]
\node at (1.25,0.75) {v)};
\draw[thin] (1.75,2) -- +(3.5,0);
\draw[thin] (1.75,-.5) -- +(3.5,0);

\draw [-] (2,2) .. controls +(0,2) and +(0,2) .. +(3,0);
\draw [-] (3,2) .. controls +(0,1) and +(0,1) .. +(1,0);

\draw [-] (2,-.5) -- +(0,2.5);
\draw [-] (4,2) .. controls +(0,-1) and +(0,-1) .. +(1,0);
\draw[dotted] (4.55,.3) -- +(0,.95);
\draw [-] (4,-.5) .. controls +(0,1) and +(0,1) .. +(1,0);
\draw [-] (3,-.5) -- +(0,2.5);

\draw [-] (2,-.5) .. controls +(0,-2) and +(0,-2) .. +(3,0);
\draw [-] (3,-.5) .. controls +(0,-1) and +(0,-1) .. +(1,0);
\end{scope}

\begin{scope}[xshift=25cm]
\node at (1.25,0.75) {vi)};
\draw[thin] (1.75,2) -- +(1.5,0);
\draw[thin] (1.75,-.5) -- +(1.5,0);

\draw [-] (2,2) .. controls +(0,1) and +(0,1) .. +(1,0);

\draw [-] (2,2) .. controls +(0,-1) and +(0,-1) .. +(1,0);
\draw[dotted] (2.55,.3) -- +(0,.95);
\draw [-] (2,-.5) .. controls +(0,1) and +(0,1) .. +(1,0);

\draw [-] (2,-.5) .. controls +(0,-1) and +(0,-1) .. +(1,0);
\end{scope}
\end{tikzpicture}
\end{eqnarray*}
These diagrams should be interpreted only up to homeomorphism; in particular the circles represented in the pictures may well
cross both weight lines many more times than indicated and moreover might have many dots.

We distinguish three situations depending on the orientation of $\gamma$.\\[0.15cm]
\noindent
$\blacktriangleright$ {\it Case 1: $\capg$ is clockwise:}  
We claim here that the result will never have $\tau$ as its top weight. 
For i) we distinguish the two possible orientations 
\begin{eqnarray*}
\label{AA}
\begin{tikzpicture}[thick,>=angle 60,scale=.3]
\begin{scope}
\draw[thin] (1.5,2) -- +(4,0);
\draw[thin] (1.5,-.5) -- +(4,0);
\draw [-] (2,2) .. controls +(0,2) and +(0,2) .. +(3,0);
\draw [->] (3,2) .. controls +(0,1) and +(0,1) .. +(1,0);

\draw [-] (2,-.5) -- +(0,2.5);
\draw [<-] (3,2) .. controls +(0,-1) and +(0,-1) .. +(1,0);
\draw[dotted] (3.55,.3) -- +(0,.95);
\draw [-] (3,-.5) .. controls +(0,1) and +(0,1) .. +(1,0);
\draw [-] (5,-.5) -- +(0,2.5);

\draw [-] (2,-.5) .. controls +(0,-1) and +(0,-1) .. +(1,0);
\draw [-] (4,-.5) .. controls +(0,-1) and +(0,-1) .. +(1,0);
\end{scope}

\begin{scope}[xshift=6cm]
\draw[thin] (1.5,2) -- +(4,0);
\draw[thin] (1.5,-.5) -- +(4,0);
\draw [-] (2,2) .. controls +(0,2) and +(0,2) .. +(3,0);
\draw [<-] (3,2) .. controls +(0,1) and +(0,1) .. +(1,0);

\draw [-] (2,-.5) -- +(0,2.5);
\draw [->] (3,2) .. controls +(0,-1) and +(0,-1) .. +(1,0);
\draw[dotted] (3.55,.3) -- +(0,.95);
\draw [-] (3,-.5) .. controls +(0,1) and +(0,1) .. +(1,0);
\draw [-] (5,-.5) -- +(0,2.5);

\draw [-] (2,-.5) .. controls +(0,-1) and +(0,-1) .. +(1,0);
\draw [-] (4,-.5) .. controls +(0,-1) and +(0,-1) .. +(1,0);
\end{scope}
\end{tikzpicture}
\end{eqnarray*}
of the internal circle. In the first diagram the internal circle is clockwise, hence the other circle must be anticlockwise to get something non-zero. But then the labels on the outer circle, and thus also $\tau$, change during the merge, as the outer circle contains all tags of the merged circle. Applying the merge to the second diagram will swap the labels on one circle, hence $\tau$ is not preserved. The same arguments apply also to ii). 
For the third diagram the following is easy to verify using \eqref{order}: Given a tag $\op{t}(C)$ for the circle $C$ and consider the oriented diagram $D$ obtained by replacing $\gamma$ by two straight lines. Then out of the two new circles created from $C$, the circle {\it not containing} $\op{t}(C)$ is anticlockwise. But then the orientation of $D$ cannot occur at the end of the split and thus also $\tau$ must have been changed. Indeed, if 
$C$ was anticlockwise then the split creates two oppositely oriented circles, and so either the one with the tag or the one without the tag must have a different orientation than in $D$. If it was clockwise, then the circle without the tag must have a different orientation than in $D$.
In iv) and v) the split creates an inner and an outer circle. If $\tau$ is preserved, the inner circle must be anticlockwise. But since the outer circle contains any tag of the original circle $C$ we are done by the same argument as in iii).
In vi) let $C_1$ be the circle containing $\capg$ and $C_2$ containing  $\cupg$. If $C_2$ is clockwise, then the result of the merge is either zero  or the orientation of $C_1$ gets swapped and so $\tau$ is not preserved. In case $C_2$ is anticlockwise one easily checks using \eqref{order} that $C_2$ crosses the top weight line and contains all tags. Now if $C_1$ is clockwise, the merge changes the orientation of 
$C_1$ and thereby also $\tau$. If it is anticlockwise, then $\cupg$ must be anticlockwise, hence its orientation gets switched during the merge. 
The claim is proved. \\[0.15cm]
\noindent
$\blacktriangleright$ {\it Case 2: $\capg$ and $\cupg$ are both anticlockwise.} 
Here we claim that the weight $\tau$ appears in exactly one of the resulting diagrams as a top weight. 
In  i) and ii), the orientation of the internal circle is anticlockwise. Hence the result of the merge agrees with  just replacing $\gamma$ by two straight lines and so $\tau$ is preserved. For the diagram iii) the following is easy to verify using \eqref{order}: Given a tag $\op{t}(C)$ for the circle $C$ consider the oriented diagram $D$ obtained by replacing $\gamma$ by two straight lines. Then out of the two new circles produced from $C$, the circle not containing $\op{t}(C)$ is clockwise. Independent of the orientation of  $C$, the diagram $D$ appears as a result of the Split (either as a summand in case $C$ was anticlockwise) or on its own (if $C$ was clockwise). In iv) and v) the split creates an inner and an outer circle. Replacing $\gamma$ by two straight lines creates a diagram $D$ with top weight $\tau$ and such that the internal circle is clockwise. Again independent of the orientation of  $C$, the diagram $D$ appears as a result of the Split (either as a summand in case $C$ was anticlockwise) or on its own (if $C$ was clockwise). In  vi) the result of the merge is either zero or just obtained by replacing $\gamma$ by two straight lines and so $\tau$ is preserved. The claim follows.\\[0.15cm]
\noindent
$\blacktriangleright$ {\it Case 3: $\capg$ is anticlockwise and $\cupg$ is clockwise.} 
We claim that the result will have exactly one diagram with top weight $\tau$ if the circle containing $\capg$ does not intersect the top weight line, and will produce no diagram with top weight $\tau$ otherwise.
In i) the inner circle is clockwise, hence to get non-zero the outer circle must be anticlockwise, and so changes its orientation during the merge. Thus $\tau$ is not preserved as claimed. 
In ii) the inner circle is anticlockwise, hence the merged circle would inherit the orientation of the outer circle and the inner circle swaps orientation. Therefore the top weight $\tau$ is preserved if and only if the inner circle does not cross the top weight line. 
In  iii)-v) the diagram resulting from the split is not orientable, hence the map is zero. In vi) one can check easily that if the circle $C$ containing $\capg$ does not cross the top line then it is anticlockwise and our merge would just replace $\gamma$ by two straight lines, preserving $\tau$. Otherwise $C$ contains any tag of the merged circle and the other circle must be clockwise. So the merge either gives zero or the top weight changes under the merge (that is in case $C$ is anticlockwise).  
The claim follows.

Note that our arguments also work in general for arbitrary circle diagrams using Section~\ref{annoying}. The proposition follows.
\end{proof}

\section{Quasi-hereditarity of $\D$ and graded decomposition numbers}\label{sdecomposition}

We briefly describe the representation theory of the algebras $\D$ for any block $\La$, construct their cell modules and provide explicit closed formulae for $q$-decomposition numbers
which simultaneously describe  composition multiplicities of
cell modules and  cell filtration
multiplicities of projective indecomposable modules.
We will deduce that the category ${\D}\MOD$
of finite dimensional graded $\D$-modules is a graded highest weight
category.

\subsection{Graded modules, projectives and irreducibles}
\label{graded}
If $A$ is a $\mZ$-graded finite dimensional algebra
and $M=\bigoplus_{j \in \mZ} M_j$ is a {\em graded} $A$-module,
i.e. $A_i M_j \subseteq M_{i+j}$,
then we write $M\langle j \rangle$ for the same module but with new grading defined by
$M\langle j \rangle_i = M_{i-j}$. For graded modules $M$ and $N$, we define
\begin{equation}\label{homdef}
\hom_{A}(M, N) = \bigoplus_{j \in \mZ} \hom_{A}(M,N)_j
\end{equation}
where $\hom_{A}(M,N)_j$ denotes all homogeneous $A$-module
homomorphisms of degree $j$, meaning that they map $M_i$ into $N_{i+j}$ for
each $i \in \mZ$.
Later we might also work with (not necessarily unital) $\mZ$-graded algebras with many idempotents (which means they come with a given system
$\{e_\la\:|\:\la\in \La'\}$ of mutually orthogonal idempotents
such that
$
A = \bigoplus_{\la,\mu \in \La'} e_\la  A e_\mu.)
$
By an {\em $A$-module} we mean then a left $A$-module
$M$ such that $
M = \bigoplus_{\la \in \La'} e_\la M;
$
and the notions about graded modules generalize. (Note that if $A$ is finite dimensional, our definitions of $A$-modules agree).

Let $A\gMOD$ be the category of all {\it finite dimensional graded $A$-modules} $M =
\bigoplus_{j \in \mZ} M_j$. Together with degree zero morphisms this is an abelian category.

Now fix an arbitrary block $\La$ and consider the graded algebra $A=\D$. Let ${\D}^{+}$ be the sum of all
components of strictly positive degree, so
\begin{equation}\label{degzero}
\D / {\D}^{+} \cong \bigoplus_{\la \in \La} \mC
\end{equation}
as an algebra, with a basis given by the images of
all the idempotents $e_\la$, $\la\in\La$. The image of $e_\la$
spans a one dimensional graded $\D$-module which we denote by $L(\la)$.
Thus, $L(\la)$ is a copy of the field concentrated in degree $0$, and $(a \mu
b)\in \D$ acts on $L(\la)$ as multiplication by $1$ if $a \mu b =
\underline{\la} \la \overline{\la}$, or as zero otherwise. The modules
\begin{equation}\label{simples}
\{L(\la)\langle j \rangle\:|\:\la \in \La, j\in \mZ\}
\end{equation}
give a complete set of isomorphism classes of simple modules in $\D\gMOD$.
For any graded $\D$-module $M$ we let
${M}^\circledast$ denote its graded dual, which means that
$({M}^\circledast)_{j} = \hom_{\mC}(M_{-j},\mC)$ and $x \in \D$ acts on $f
\in {M}^\circledast$ by $(xf)(m) = f(x^* m)$, where $*$ is the antiautomorphism from Corollary~\ref{antiaut}.
Clearly we have for each $\la \in \La$
\begin{equation}
\label{simplesselfdual}
{L(\la)}^\circledast
 \cong L(\la).
\end{equation}

For $\la \in \La$, let $P(\la) = \D e_\la$. This is a graded $\D$-module
with basis $$\left\{(\underline{\nu} \mu \overline\la)\:\big|\:\text{for all
}\nu,\mu \in \La \text{ such that }\nu \subset \mu \supset \la\right\}$$ and is with the natural surjection a
projective cover of $L(\la)$. The modules
\begin{equation}\label{pims}
\{P(\la)\langle j \rangle\:|\:\la \in \La, j\in \mZ\}
\end{equation}
give a full set of indecomposable projective objects in $\D\gMOD$.

\subsection{Grothendieck groups}
The {\it Grothendieck group} of $\D\gMOD$, denoted $K_0(\D)$, is the free $\mZ$-module on isomorphism classes $[M]$ of objects $M$ in $\D\gMOD$ modulo the relation $[M]=[M']+[M'']$ whenever there is a short exact sequence in $\D\gMOD$ with $M$ as middle and $M'$, $M''$ as outer terms; then $K_0(\D)$ has a basis given by the $\{L(\la)\langle j \rangle\:|\:\la \in \La, j\in \mZ\}$. Let $[\op{proj}({\D})]$ be the subgroup generated by the classes of the projective
indecomposable modules from \eqref{pims}. They both carry a free $\mZ[q,q^{-1}]$-module structure by setting
$$
q^j [M] = [M\langle j
\rangle].
$$
In particular, there are $c_{\la,\mu}(q)\in\mZ[q,q^{-1}]$ such that
\begin{equation}\label{decm}
[P(\mu)] = \sum_{\la \in \La} c_{\la,\mu}(q) [L(\la)].
\end{equation}
The  matrix $C_\La(q) = (c_{\la,\mu}(q))_{\la,\mu \in\La}$ is called the {\em $q$-Cartan matrix} of $\D$.

\begin{lemma}
\label{Cartanmatrix}
The entries in the Cartan matrix of $\D$ are explicitly given as follows:
\begin{equation}\label{exp}
c_{\la,\mu}(q) =
\sum_{\la \subset \nu \supset \mu}
q^{\deg(\underline{\la}\nu\overline{\mu})}.
\end{equation}
\end{lemma}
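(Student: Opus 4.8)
The goal is to compute the graded Cartan matrix entries $c_{\la,\mu}(q)$ defined via $[P(\mu)] = \sum_\la c_{\la,\mu}(q)[L(\la)]$. My approach is to read off the composition factors of the projective $P(\mu) = \D e_\mu$ directly from its explicit basis, which is already given in Section~\ref{graded} as
\[
\left\{(\underline{\nu}\,\sigma\,\overline{\mu}) \mid \nu,\sigma \in \La,\ \nu \subset \sigma \supset \mu\right\}.
\]
Since each basis vector of $\D$ is homogeneous, the graded character (equivalently, the class in the Grothendieck group) of $P(\mu)$ is simply the generating function counting these basis vectors by their degree. The key point is that a simple module $L(\la)$ sits on top of the summand $\D e_\la$, and the multiplicity of $L(\la)\langle j\rangle$ in $P(\mu)$ as a graded composition factor is governed by how $P(\mu)$ decomposes under the left action of the idempotents $e_\la$.

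\emph{First} I would use that $\D$ has trivial degree-zero part modulo $\D^+$, namely $\D/\D^+ \cong \bigoplus_{\la\in\La}\mC$ with the images of the $e_\la$ as a basis (equation~\eqref{degzero}), so that the simple modules are the one-dimensional $L(\la)$. Because the algebra is positively graded with semisimple degree-zero part, each graded piece of $P(\mu)$ contributes its dimension directly to the graded composition multiplicities: the multiplicity of $L(\la)\langle j\rangle$ in $P(\mu)$ equals $\dim\big(e_\la\, P(\mu)_j\big)$, the dimension of the degree-$j$ part of $e_\la \D e_\mu = {}_\la(\D)_\mu$. \emph{Then} I would invoke Lemma~\ref{lem:Bruhator}(2): every basis vector $a\la' b$ of ${}_\la(\D)_\mu$ has $a = \underline{\la}$ and $b = \overline{\mu}$ forced, so a basis of ${}_\la(\D)_\mu$ is exactly
\[
\left\{\underline{\la}\,\nu\,\overline{\mu} \mid \nu\in\La,\ \la\subset\nu\supset\mu\right\}.
\]
Summing $q^{\deg(\underline{\la}\nu\overline{\mu})}$ over this basis gives precisely the right-hand side of \eqref{exp}, so the identity
\[
c_{\la,\mu}(q) = \sum_{\la\subset\nu\supset\mu} q^{\deg(\underline{\la}\nu\overline{\mu})}
\]
drops out.

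\emph{The main thing to verify carefully} is the passage from ``graded dimension of $e_\la P(\mu)$'' to ``graded composition multiplicity $c_{\la,\mu}(q)$.'' This is the one place where positivity of the grading is essential: I would argue that, in a positively graded algebra with $\D_0$ semisimple and one simple $L(\la)$ per idempotent, the radical filtration refines the grading filtration, and each basis vector $\underline{\la}\nu\overline{\mu}$ of degree $j$ accounts for exactly one composition factor $L(\la)\langle j\rangle$. Concretely, $e_\la$ acting on the left picks out those basis vectors whose bottom cup diagram is $\underline{\la}$, and applying the idempotent functor $M\mapsto e_\la M$ to a composition series of $P(\mu)$ shows that $[e_\la P(\mu)] = \sum_j \dim(e_\la P(\mu)_j)\,q^j\,[L(\la)]$ in the appropriate sense, so the coefficient of $[L(\la)]$ in $[P(\mu)]$ is the graded dimension of ${}_\la(\D)_\mu$. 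This is routine given the structure already established, but it is the conceptual heart of the argument rather than the combinatorics, which is immediate from Lemma~\ref{lem:Bruhator}.
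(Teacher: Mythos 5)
Your proof is correct and takes essentially the same route as the paper: the paper also reduces everything to the graded dimension of $\hom_{\D}(P(\la),P(\mu)) = e_\la\D e_\mu = {}_\la(\D)_\mu$, whose homogeneous basis $\left\{\underline{\la}\nu\overline{\mu} \mid \la\subset\nu\supset\mu\right\}$ immediately gives \eqref{exp}. The only difference is that you work with $e_\la P(\mu)$ directly and spell out (via the grading filtration) the standard fact that graded composition multiplicities equal graded dimensions of idempotent truncations, which the paper simply asserts in the form $c_{\la,\mu}(q)=\sum_j q^j\dim\hom_{\D}(P(\la),P(\mu))_j$; since $\hom_{\D}(\D e_\la,\D e_\mu)\cong e_\la\D e_\mu$, these are the same computation.
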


\begin{proof}
To determine its entries note first that
\begin{equation}\label{cmat}
c_{\la,\mu}(q) = \sum_{j \in \mZ} q^j \dim \hom_{\D}(P(\la),P(\mu))_j.
\end{equation}
Since $\hom_{\D}(P(\la), P(\mu)) = \hom_{\D} (\D e_\la, \D e_\mu)= e_\la
\D e_\mu
$ and $e_\la \D e_\mu$ has basis
$\left\{(\underline{\la}\nu\overline{\mu})\:\big|\:
\nu \in \La \text{ such that }\la \subset\nu\supset\mu\right\}$ the claim follows.
\end{proof}

Note that $c_{\la,\mu}(q)$ is in fact a polynomial in $\mathbb{N}[q]$ with constant coefficient equal to $1$ if $\la = \mu$ and equal to $0$
otherwise, see Example~\ref{exB}.

Now we introduce {\em cell modules} in the sense of \cite{GL}. The construction is totally analogous to \cite{BS1}, hence we just recall the results.

\begin{definition}
\label{cellmods}
For $\mu\in \La$, define $V(\mu)$ to be the vector space on homogeneous basis
\begin{equation}
\left\{
(c \mu | \:\big|\:
\text{for all oriented cup diagrams $c \mu$}\right\},
\end{equation}
where the degree of the vector $(c \mu |$ is the degree $\deg(c\mu)$ of the oriented cup diagram.
We make $V(\mu)$ into a graded $\D$-module by declaring
for any basis vector $(a \la b)$ of $\D$ that
\begin{equation}\label{Actby}
(a \la b) (c \mu| =
\left\{
\begin{array}{ll}
s_{a \la b}(\mu)  (a \mu|
&\text{if $b^* = c$ and $a \mu$ is oriented,}\\
0&\text{otherwise.}
\end{array}\right.
\end{equation}
where $s_{a \la b}(\mu) \in \{0,\pm 1\}$ is the scalar
from Theorem~\ref{cellular}; hence the action is well-defined. We call $V(\mu)$ the {\it cell module of highest weight $\mu$}.
\end{definition}

\begin{theorem}[Cell module filtration of projectives]
\label{qh1}
For $\la \in \La$,
enumerate the $2^{\op{def}(\la)}$ distinct elements of the set
$\{\mu \in \La\:|\:\mu \supset \la\}$
as $\mu_1,\mu_2,\dots,\mu_r= \la$ 
so that $\mu_i > \mu_j$ implies $i < j$.
Let $M(0) = \{0\}$ and for $i=1,\dots,r$ define
$M(i)$ to be the subspace of $P(\la)$
generated by $M(i-1)$ and the vectors
$$
\left\{
(c \mu_i \overline{\la} ) \:\big|\:
\text{for all oriented cup diagrams $c \mu_i$}\right\}.
$$
Then
$
\{0\} = M(0) \subset M(1) \subset\cdots\subset M(r) = P(\la)
$
is a filtration of $P(\la)$ as a $\D$-module such that for each $i=1,\dots, r$:
$$
M(i) / M(i-1) \cong V(\mu_i) \langle \deg(\mu_i
\overline{\la})\rangle.
$$
\end{theorem}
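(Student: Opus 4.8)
The plan is to read off the filtration directly from the cellular structure established in Theorem~\ref{cellular}, organizing the standard basis of $P(\la)=\D e_\la$ according to its \emph{middle} weight. Recall that $P(\la)$ has basis $\{(\underline{\nu}\mu\overline{\la})\mid \nu\subset\mu\supset\la\}$, and by Lemma~\ref{lem:Bruhator} the oriented cup diagrams $c\mu_i$ are in bijection with the weights $\nu\subset\mu_i$ via $c=\underline{\nu}$. Hence the vectors $\{(c\mu_i\overline{\la})\mid c\mu_i\text{ oriented}\}$ adjoined at the $i$-th step are precisely the basis vectors of $P(\la)$ with middle weight equal to $\mu_i$, and ranging over all $i$ these exhaust the basis of $P(\la)$ (using that the $2^{\op{def}(\la)}$ weights $\mu\supset\la$ are exactly the possible middle weights by \eqref{defectcount}). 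Thus $M(i)$ is simply the span of all basis vectors whose middle weight lies in $\{\mu_1,\dots,\mu_i\}$, and $M(i)/M(i-1)$ acquires as basis the images of $\{(c\mu_i\overline{\la})\mid c\mu_i\text{ oriented}\}$.

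First I would check that each $M(i)$ is a $\D$-submodule. Left multiplication of a basis vector $(\underline{\nu}\mu_j\overline{\la})$ with $j\le i$ by a basis element $(a\la' b)$ is controlled by Theorem~\ref{cellular}: the product is either zero or a linear combination of vectors $(a\nu'\overline{\la})$ with $\nu'\ge\mu_j$ in the Bruhat order, the leading term $\nu'=\mu_j$ occurring only when $b=\underline{\nu}^*=\overline{\nu}$ and $a\mu_j$ is oriented. Since each $(a\nu'\overline{\la})$ appearing is a genuine basis vector of $\D$, Lemma~\ref{lem:Bruhator} forces $\la\subset\nu'$, so $\nu'$ lies among the weights $\supset\la$; and $\nu'\ge\mu_j$ combined with the enumeration convention ($\mu_i>\mu_j\Rightarrow i<j$) gives $\nu'=\mu_{j'}$ with $j'\le j\le i$. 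Therefore every such product lies in $M(i)$, the chain is increasing, and $M(r)=P(\la)$.

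Next I would produce the comparison map $\Phi_i\colon V(\mu_i)\langle\deg(\mu_i\overline{\la})\rangle\to M(i)/M(i-1)$, $(c\mu_i|\mapsto(c\mu_i\overline{\la})+M(i-1)$. By construction it is a linear bijection on bases, and it is grading-preserving because the degree of $(c\mu_i\overline{\la})$ in $P(\la)$ is $\deg(c\mu_i)+\deg(\mu_i\overline{\la})$, which matches the degree of $(c\mu_i|$ after the shift $\langle\deg(\mu_i\overline{\la})\rangle$. For $\D$-equivariance I would apply Theorem~\ref{cellular} once more: for a basis element $(a\la'b)$, the product $(a\la'b)(c\mu_i\overline{\la})$ equals $s_{a\la'b}(\mu_i)(a\mu_i\overline{\la})$ plus a combination of terms $(a\nu'\overline{\la})$ with $\nu'>\mu_i$ when $b=c^*$ and $a\mu_i$ is oriented, and lies entirely in that higher span otherwise. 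The higher terms have strictly larger middle weight, hence strictly smaller index, so they lie in $M(i-1)$ and vanish in the quotient; the surviving term reproduces exactly the action \eqref{Actby} defining $V(\mu_i)$. Here I would record that $\mu_i\overline{\la}$ is orientable since $\mu_i\supset\la$, so $(a\mu_i\overline{\la})$ is a bona fide basis vector of $M(i)$ whenever $a\mu_i$ is oriented, and that its image in the quotient is the basis vector corresponding to $(a\mu_i|$. Hence $\Phi_i$ intertwines the two actions and is an isomorphism of graded $\D$-modules.

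The one genuinely delicate point — the step I expect to demand the most care — is the bookkeeping ensuring that all the $(\dagger)$ correction terms of Theorem~\ref{cellular} fall into $M(i-1)$. This rests on three facts used in tandem: the \emph{strictness} of the inequality $\nu'>\mu_i$ in the higher terms, the order-reversing identification between the Bruhat order and the enumeration index, and the confinement of every occurring middle weight to the set $\{\mu\mid\mu\supset\la\}$ via Lemma~\ref{lem:Bruhator}. Once these are in place the argument is formal, and the displayed chain is a filtration of $P(\la)$ with subquotients $M(i)/M(i-1)\cong V(\mu_i)\langle\deg(\mu_i\overline{\la})\rangle$, as claimed.
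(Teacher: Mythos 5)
Your proof is correct and is essentially the argument the paper invokes: the paper's own proof simply defers to \cite[Theorem 5.1]{BS1}, whose argument is exactly what you wrote out — organize the basis $\{(\underline{\nu}\mu\overline{\la})\}$ of $P(\la)$ by middle weight, use Theorem~\ref{cellular} together with Lemma~\ref{lem:Bruhator} and the index-reversing enumeration to see that each $M(i)$ is a submodule and that the $(\dagger)$ terms die in the quotient, and then match the surviving leading term with the cell module action \eqref{Actby} under the degree shift $\langle\deg(\mu_i\overline{\la})\rangle$. No gaps to report.
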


\begin{proof}
Just apply the same arguments as in \cite[Theorem 5.1]{BS1}.
\end{proof}

\begin{theorem}[Composition factors of cell modules]
\label{qh2}
For $\mu \in \La$, let $N(j)$ be the submodule of $V(\mu)$ spanned by all
graded pieces of degree $\geq j$. Then we have a filtration
$$
V(\mu) = N(0) \supseteq N(1) \supseteq N(2) \supseteq \cdots
$$
as a $\D$-module with
$N(j) / N(j+1) \cong \displaystyle\bigoplus_{\substack{\la  \subset \mu\text{\,with}\\
\deg(\underline{\la} \mu) = j}}
  L(\la) \langle j \rangle
$ for each $j \geq 0$.
\end{theorem}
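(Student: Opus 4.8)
\textit{Proof proposal for Theorem~\ref{qh2}.}
The plan is to exploit that $\D$ is a non-negatively graded algebra whose degree-zero part is semisimple, so that the grading filtration on any graded module has semisimple subquotients. First I would record the basis of $V(\mu)$ in a useful form: by Lemma~\ref{lem:Bruhator}(2.) every oriented cup diagram $c\mu$ satisfies $c=\underline{\la}$ for a unique weight $\la$ with $\la\subset\mu$, so the homogeneous basis of $V(\mu)$ from Definition~\ref{cellmods} is $\{(\underline{\la}\mu|\,\mid\,\la\subset\mu\}$, with $(\underline{\la}\mu|$ sitting in degree $\deg(\underline{\la}\mu)$. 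In particular the degree-$j$ graded piece $V(\mu)_j$ has as basis exactly those $(\underline{\la}\mu|$ with $\la\subset\mu$ and $\deg(\underline{\la}\mu)=j$.

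Next I would check that $N(j)$ is a graded $\D$-submodule. This is immediate from positivity of the grading: since every homogeneous element of $\D$ has non-negative degree, acting on a vector of degree $\geq j$ produces a vector of degree $\geq j$. Hence the chain $V(\mu)=N(0)\supseteq N(1)\supseteq\cdots$ is a filtration by submodules, which terminates as $V(\mu)$ is finite dimensional, and there is a canonical identification of graded vector spaces $N(j)/N(j+1)\cong V(\mu)_j$, concentrated in degree $j$. On this subquotient the positive-degree part $\D^{+}$ acts as zero, because it raises degree and thus maps $V(\mu)_j$ into $N(j+1)$; so $N(j)/N(j+1)$ is naturally a module over $\D/\D^{+}\cong\bigoplus_{\la\in\La}\mC$ from \eqref{degzero}, i.e. a semisimple module built from the simples \eqref{simples}.

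It then remains to read off the multiplicities, i.e. to compute the action of the degree-zero idempotents $e_\la={}_\la\mathbbm{1}_\la$ on the basis. Using the cell action \eqref{Actby}, the product ${}_\la\mathbbm{1}_\la\,(\underline{\alpha}\mu|$ is nonzero only if $(\overline{\la})^{*}=\underline{\la}=\underline{\alpha}$, forcing $\alpha=\la$, in which case it equals $s_{\underline{\la}\la\overline{\la}}(\mu)\,(\underline{\la}\mu|$. The scalar here is $1$: by the idempotent description in the proof of Theorem~\ref{algebra_structure} we have ${}_\la\mathbbm{1}_\la(a\nu b)=(a\nu b)$ whenever $a=\underline{\la}$, and applying this to Theorem~\ref{cellular} gives $s_{\underline{\la}\la\overline{\la}}(\mu)=1$ for $\la\subset\mu$. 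Thus $e_\la$ acts on $V(\mu)_j$ as the projection $(\underline{\alpha}\mu|\mapsto\delta_{\la,\alpha}(\underline{\la}\mu|$, so each line $\mC(\underline{\la}\mu|$ with $\deg(\underline{\la}\mu)=j$ is a $\D$-submodule isomorphic to $L(\la)\langle j\rangle$, and
\[
N(j)/N(j+1)\;\cong\;\bigoplus_{\substack{\la\subset\mu\text{ with}\\ \deg(\underline{\la}\mu)=j}}L(\la)\langle j\rangle,
\]
as claimed.

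The argument is essentially formal once the basis and the idempotent action are pinned down, and it runs parallel to \cite[Theorem~5.2]{BS1}; I expect the only genuinely delicate point to be the verification that the cellular scalar $s_{\underline{\la}\la\overline{\la}}(\mu)$ equals $1$ and that there are no off-diagonal contributions to the $\D_0$-action, since this is where the signs and non-locality of the multiplication could in principle interfere. I would isolate this as a short lemma (or simply invoke the idempotent property from Theorem~\ref{algebra_structure} together with Theorem~\ref{cellular}) so that the remaining bookkeeping with the grading shift $\langle j\rangle$ is routine.
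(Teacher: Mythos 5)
Your proof is correct and takes essentially the same route as the paper: the paper's own proof of Theorem~\ref{qh2} simply invokes the arguments of \cite[Theorem 5.2]{BS1}, which are exactly the steps you spell out (the grading filtration is a filtration by submodules since $\D$ is positively graded, $\D^{+}$ kills each layer so it is a module over $\D/\D^{+}\cong\bigoplus_{\la\in\La}\mC$, and the idempotents ${}_\la\mathbbm{1}_\la$ act diagonally on the basis $\{(\underline{\la}\mu|\mid\la\subset\mu\}$). Your isolation of the scalar $s_{\underline{\la}\la\overline{\la}}(\mu)=1$, obtained by comparing Theorem~\ref{cellular} with the idempotent formulas in the proof of Theorem~\ref{algebra_structure}, is precisely the point where the type $\rm D$ signs could have interfered, and your treatment of it is sound.
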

\begin{proof}
Apply the same arguments as in \cite[Theorem 5.2]{BS1}.
\end{proof}

These polynomials and the resulting {\em $q$-decomposition matrix}
\begin{equation}\label{decmat}
M_\La(q) = (d_{\la,\mu}(q))_{\la,\mu \in \La}
\end{equation}
encode by Theorems~\ref{qh1} and~\ref{qh2} the multiplicities of cell modules in projectives and of irreducibles in cell modules; we have
\begin{align}
[V(\mu)] &= \sum_{\la \in \La} d_{\la,\mu}(q) [L(\la)],\label{dmat}&[P(\la)] &= \sum_{\mu \in \La} d_{\la,\mu}(q) [V(\mu)],
\end{align}
in the Grothendieck group $K_0(\D)$.

The $q$-decomposition matrix $M_\La(q)$ is upper
unitriangular when rows and columns are ordered in some way refining the Bruhat order since by Lemma~\ref{lem:Bruhator} $\la \subset \mu$ implies $\la \leq \mu$.
Note that 
\begin{eqnarray}
\label{KLformula}
d_{\la,\mu}(q)&=&
\begin{cases}
q^d&\text{if $\underline{\la}\mu$ is oriented of degree $d$, and} \\
0&\text{if $\underline{\la}\mu$ is not oriented}.
\end{cases}
\end{eqnarray}
\begin{ex}
The decomposition matrix for the principal block of type ${\rm D}_4$ is given as follows (where we again omit the $\circ$'s in the weight diagrams):
\begin{equation}\label{qdec}
\begin{array}{c|cccccccc}
&
\scriptstyle{\down\down\down\down}&
\scriptstyle{\up\up\down\down}&
\scriptstyle{\up\down\up\down}&
\scriptstyle{\down\up\up\down}&
\scriptstyle{\up\down\down\up}&
\scriptstyle{\down\up\down\up}&
\scriptstyle{\down\down\up\up}&
\scriptstyle{\up\up\up\up}\\
\hline
\scriptstyle{\down\down\down\down}&1&q&0&0&0&0&0&q^2\\
\scriptstyle{\up\up\down\down}&0&1&q&0&0&0&q^2&q\\
\scriptstyle{\up\down\up\down}&0&0&1&q&q&q^2&q&0\\
\scriptstyle{\down\up\up\down}&0&0&0&1&0&q&0&0\\
\scriptstyle{\up\down\down\up}&0&0&0&0&1&q&0&0\\
\scriptstyle{\down\up\down\up}&0&0&0&0&0&1&q&0\\
\scriptstyle{\down\down\up\up}&0&0&0&0&0&0&1&q\\
\scriptstyle{\up\up\up\up}&0&0&0&0&0&0&0&1
\end{array}
\end{equation}
\end{ex}

We can restrict ourself to study the principal blocks:
\begin{lemma}
\label{lem:KLpolys}
\begin{enumerate}[1.)]
\item The q-decomposition matrix $M_\La(q)$ depends (up to relabelling rows and columns) only on the atypicality of the block.
\item In case $\La=\Lap$, the entries of ${M}_\La(q)$ are the parabolic Kazhdan-Lusztig polynomials, denoted $n_{x,y}(q)$ in \cite{SoergelKL} for $x,y\in W^{\ov{p}}$ from Section~\ref{typeD}.
\end{enumerate}
\end{lemma}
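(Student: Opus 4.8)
The plan is to prove Lemma~\ref{lem:KLpolys} by combining the explicit combinatorial formula for the decomposition numbers already established in \eqref{KLformula} with two independent identifications: a reduction to principal blocks via the defect-only dependence of $\D$, and a matching of our combinatorial degrees against the recursion computing parabolic Kazhdan--Lusztig polynomials of type $({\rm D}_k,{\rm A}_{k-1})$.

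For part 1.), the key observation is that Corollary~\ref{atypicality} already shows that $\D$ depends up to canonical isomorphism only on the defect $\op{def}(\La)$ of the block. Under the canonical isomorphism $X_{i_s}\mapsto X_{i'_s}$ described there, the cup diagrams, the sets $M(\mu)$, and the degrees $\deg(\underline\la\nu\ov\mu)$ are all preserved, since positions labelled $\circ$ or $\times$ contribute nothing to any arc or orientation. Hence the first step is simply to observe that the formula \eqref{KLformula} for $d_{\la,\mu}(q)$ is invariant under this relabelling: it depends only on whether $\underline\la\mu$ is orientable and, if so, on the number of clockwise caps, both of which are intrinsic to the underlying arc combinatorics and thus to the defect. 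So $M_\La(q)$ is, up to permuting rows and columns, determined by $\op{def}(\La)$ alone. This part is essentially immediate.

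For part 2.), I would fix $\La=\Lap$ and use the bijection $\Phi^{\ov p}\colon W^{\ov p}\to\Lap$ from Section~\ref{typeD}, translating the statement into a claim about $n_{x,y}(q)$ for $x,y\in W^{\ov p}$. The strategy is to match our closed formula \eqref{KLformula} with the combinatorial description of parabolic Kazhdan--Lusztig polynomials for $({\rm D}_k,{\rm A}_{k-1})$. The natural route is via \cite{LS}, where it is shown that precisely these parabolic KL-polynomials are computed by a cup-diagram (equivalently symmetric type $\rm A$ cup-diagram) combinatorics; our decorated cup diagrams are, by the folding procedure recalled in the introduction and in Definition~\ref{def:decorated}, exactly the folded images of those symmetric diagrams. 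Thus the plan is: (i) recall the formula from \cite{LS} expressing $n_{x,y}(q)$ as $q^{d}$ when the associated (symmetric) cup diagram for $x$ is orientable by the weight of $y$ with $d$ clockwise caps, and $0$ otherwise; (ii) check that folding intertwines our orientability condition and degree from Lemma~\ref{lem:stupidlemmasub} and \eqref{oriented} with theirs, so that the folded degree statistic agrees with the unfolded one; (iii) conclude $d_{\la,\mu}(q)=n_{x,y}(q)$ under $\Phi^{\ov p}$.

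The main obstacle is step (ii): verifying that the degree statistic is preserved under folding. One must check carefully that an undotted cup/cap in the folded picture corresponds to a symmetric pair of cups/caps in the type $\rm A$ picture, while a dotted arc corresponds to an arc folded into itself across the mirror axis, and that the ``clockwise cap'' count matches on both sides after this identification — in particular that the orientation-reversing role of the dots (emphasised in \eqref{oriented} and Lemma~\ref{lem:stupidlemmasub}) is exactly what is needed to reproduce the symmetric type $\rm A$ orientation combinatorics. The parity bookkeeping of Remark~\ref{stupidcount} and the compatibility of the Bruhat orders from the lemma following Example~\ref{exn4} should make the unitriangularity match automatically, so that the main content reduces to this degree-matching. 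Once (ii) is in place, part 2.) follows, and the example \eqref{qdec} serves as a consistency check against the known type ${\rm D}_4$ polynomials.
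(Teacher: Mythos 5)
Your proposal follows essentially the same route as the paper: part 1.) is deduced from Corollary~\ref{atypicality} (isomorphic algebras have identical decomposition numbers, up to the relabelling of rows and columns), and part 2.) is exactly the main result of \cite{LS}. The folding/degree-matching verification you flag as the main obstacle in step (ii) is already carried out in \cite[5.2]{LS}, which is why the paper's proof simply cites that reference rather than redoing the comparison of orientation statistics.
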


\begin{proof}
By Corollary~\ref{atypicality} the algebras for two different blocks with the same atypicality are isomorphic and hence the decomposition numbers are of course the same. The second statement is the main result of \cite{LS}.
\end{proof}

As in \cite[Theorem 5.3.]{BS1} we deduce
\begin{theorem}\label{ghw}
The category ${\D}\MOD$ is a positively
graded highest weight category with duality in the sense of Cline, Parshall and
Scott \cite{CPS}. 

For $\la \in \La$ (the weight poset), the irreducible, standard, costandard,
indecomposable projective and indecomposable injective objects are respectively the modules
$L(\la), V(\la), {V(\la)}^\circledast$, $P(\la)$ and ${P(\la)}^\circledast$. In particular, the algebra $\D$ is a
positively graded quasi-hereditary algebra.
\end{theorem}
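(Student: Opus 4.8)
The plan is to follow the strategy of \cite[Theorem 5.3]{BS1}, since by now all of the structural ingredients are in place: $\D$ is a graded cellular algebra (Corollary~\ref{iscell}, together with the Remark identifying the basis as a graded cellular basis in the sense of \cite{MH}), its cell modules $V(\mu)$ are constructed in Definition~\ref{cellmods}, the degree-zero part $\D/\D^+$ is semisimple by \eqref{degzero} and Theorem~\ref{algebra_structure}, and the graded dual $\circledast$ induced by the anti-automorphism $*$ of Corollary~\ref{antiaut} fixes the simples by \eqref{simplesselfdual}. The only genuine work is to upgrade cellularity to quasi-heredity and then to read off the highest weight datum.

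First I would show that each cell module $V(\mu)$ has irreducible head $L(\mu)$. Setting $j=0$ in Theorem~\ref{qh2} gives $N(0)/N(1)\cong\bigoplus_{\la\subset\mu,\ \deg(\underline{\la}\mu)=0}L(\la)$. Now an orientation $\underline{\la}\mu$ of degree $0$ has all cups anticlockwise, and by the uniqueness statement preceding Lemma~\ref{lem:orient} the only weight inducing the all-anticlockwise orientation on $\underline{\la}$ is $\la$ itself; hence $\deg(\underline{\la}\mu)=0$ forces $\la=\mu$, and the top layer of $V(\mu)$ is exactly $L(\mu)$. Equivalently, by \eqref{KLformula} the $q$-decomposition matrix $M_\La(q)$ is unitriangular with $d_{\mu,\mu}(q)=1$. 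Thus the cell modules are pairwise non-isomorphic with simple heads, which is precisely the Graham--Lehrer nondegeneracy condition \cite{GL} for the cellular algebra $\D$ to be quasi-hereditary with the $V(\la)$ as standard objects; in particular the graded Cartan matrix factors through $M_\La(q)$ and its transpose (the relation $C_k=M_k^tM_k$ quoted in the introduction, compatible with \eqref{exp} and \eqref{dmat}), so it is nondegenerate.

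With quasi-heredity established, I would then identify the highest weight structure on $\D\gMOD$. The weight poset is $(\La,\le)$ with the Bruhat order, which refines the cellular order because $\la\subset\mu$ implies $\la\le\mu$ by Lemma~\ref{lem:Bruhator}. The standard and costandard objects are $V(\la)$ and its graded dual $V(\la)^\circledast$, the indecomposable projective is $P(\la)=\D e_\la$, and its dual $P(\la)^\circledast$ is the indecomposable injective. The defining filtration property is supplied by Theorem~\ref{qh1}: $P(\la)$ carries a cell (standard) filtration whose top section is $V(\la)$ with graded multiplicity $1$, while all other sections are $V(\mu)\langle\deg(\mu\overline{\la})\rangle$ with $\mu\supset\la$, hence $\mu>\la$. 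Positivity of the grading is forced by \eqref{degzero} and Theorem~\ref{algebra_structure}, since $\D_0$ semisimple makes all $\Ext^1$ between simples live in strictly positive degree; together with the simple-fixing duality this exhibits $\D\gMOD$ as a positively graded highest weight category \emph{with duality} in the sense of \cite{CPS}, and graded BGG reciprocity follows by comparing \eqref{decm} with \eqref{dmat}.

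Since essentially every step is a citation of a result already proved, with the \cite{BS1} arguments transported verbatim, there is no serious analytic obstacle here. The one point that genuinely requires the type-$\rm D$ input rather than a formal appeal to \cite{BS1} is the identification of the head of the cell module, that is, the degree-$0$ analysis above: it rests on the uniqueness of the degree-zero orientation, which is exactly where the decorated-cup-diagram combinatorics of Lemma~\ref{lem:orient} enter. Everything else -- the well-definedness of the $\D$-action on $V(\mu)$ via the scalars $s_{a\la b}(\mu)$, and the cell filtration of $P(\la)$ -- is already guaranteed by Theorems~\ref{cellular}, \ref{qh1} and~\ref{qh2}, so the write-up amounts to assembling these citations in the order indicated above.
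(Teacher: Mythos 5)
Your proposal is correct and takes essentially the same route as the paper: the paper's entire proof of Theorem~\ref{ghw} is the phrase ``As in \cite[Theorem 5.3]{BS1} we deduce'', which implicitly assembles Theorems~\ref{qh1} and~\ref{qh2}, the (graded) cellularity of Corollary~\ref{iscell}, the unitriangularity of $M_\La(q)$ from \eqref{KLformula}, and the duality $\circledast$ exactly as you do. Your only added content --- the degree-zero head analysis of $V(\mu)$ via the uniqueness of the all-anticlockwise orientation --- is precisely the type-$\rm D$ input that makes the transported \cite{BS1} argument work, so the write-up is a correct fleshed-out version of the paper's proof rather than a different one.
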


We also like to state the following observation
\begin{corollary}
\label{lem:endo_commutative}
Let $\La$ be a block and $\la\in\La$.
The endomorphism ring $\op{End}_{\La}(P(\la))$ of $P(\lambda)$ is a non-negatively graded commutative algebra.
\end{corollary}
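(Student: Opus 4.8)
The goal is to prove that $\op{End}_\La(P(\la))$ is a non-negatively graded commutative algebra. The non-negativity of the grading is immediate: $\D$ is a positively graded algebra by Theorem~\ref{algebra_structure}, so $\op{End}_\La(P(\la)) \cong e_\la \D e_\la$ inherits a grading concentrated in non-negative degrees (indeed the degree zero part is spanned by the idempotent ${}_\la\mathbbm{1}_\la$). Thus the entire content of the statement is the \emph{commutativity}.

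The plan is to use the explicit identification $\op{End}_\La(P(\la)) \cong e_\la \D e_\la$, which has as a homogeneous basis the oriented circle diagrams $\{\underline{\la}\,\nu\,\ov{\la} \mid \la \subset \nu\}$ (i.e. the diagonal block ${}_\la(\D)_\la$). The key tool is Proposition~\ref{coho}, which gives a graded vector space isomorphism $\Psi_{\underline{\la}\ov{\la}} \colon {}_\la(\D)_\la \liso \cM(\underline{\la}\ov{\la})$, where $\cM(\underline{\la}\ov{\la}) = \mC[X_i \mid i \in P_\bu(\La)]/I_{\underline{\la}\ov{\la}}$ is manifestly a commutative algebra. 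First I would show that $\Psi_{\underline{\la}\ov{\la}}$ is not merely a linear isomorphism but an algebra isomorphism, where the multiplication on the source is the one induced from $\D$. Commutativity of $\op{End}_\La(P(\la))$ then follows at once from commutativity of the polynomial quotient $\cM(\underline{\la}\ov{\la})$.

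To verify that $\Psi_{\underline{\la}\ov{\la}}$ is multiplicative, I would appeal to the explicit algebraic multiplication rule summarised in Section~\ref{sec:explicitmult}: to multiply two basis vectors of ${}_\la(\D)_\la$ one rewrites each as a monomial in the $X_{(s,0)}$'s and $X_{(s,1)}$'s via Proposition~\ref{coho}, identifies $X_{(s,a)} \mapsto X_s$, multiplies the resulting polynomials, and then performs the surgeries. For a product landing back in ${}_\la(\D)_\la$, every surgery on the stacked diagram $\underline{\la}(\underline{\la}')\ov{\la}$ is a \emph{merge} (the top and bottom shapes are identical, being $\underline{\la}$ and $\ov{\la}$, so no cup-cap pair can split a circle into two or reconnect genuine lines in a way escaping this block), and Lemma~\ref{lem:algebraic_merge} shows that merges act as the identity $X_{(s,a)} \mapsto X_{(s,a)}$ under the algebraic identification. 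Hence the product in ${}_\la(\D)_\la$ corresponds exactly to the product of polynomials in $\cM(\underline{\la}\ov{\la})$, so $\Psi_{\underline{\la}\ov{\la}}$ is an algebra map sending the unit ${}_\la\mathbbm{1}_\la$ to $1$.

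The main obstacle is the bookkeeping needed to confirm that in the self-dual block ${}_\la(\D)_\la$ only merges occur and that the signs arising in the general surgery formulas all collapse to $+1$ under the identification, so that the correspondence with $\cM(\underline{\la}\ov{\la})$ is genuinely sign-free. This is controlled by Lemma~\ref{lem:algebraic_merge}, where the accumulated sign is shown to equal $1$ because $(s,a)$ and $(i,l)$ are joined by a sequence of arcs unchanged by the surgery; I would check that the same cancellation applies across the full multiplication $\op{mult}_{\underline{\la}\ov{\la},\underline{\la}\ov{\la}}$, using Theorem~\ref{thm:surgeries_commute} to guarantee order-independence. Once these points are in place, commutativity transfers from $\cM(\underline{\la}\ov{\la})$ and the corollary follows.
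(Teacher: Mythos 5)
Your proof is correct and takes essentially the same approach as the paper's own (one-line) proof: the paper likewise derives non-negativity of the grading from $\op{End}_{\La}(P(\la))\cong e_\la\D e_\la$ being a summand of the positively graded algebra $\D$, and derives commutativity from the observation that the multiplication on ${}_\la(\D)_\la$ is just a composition of merges. Your elaboration — that by Proposition~\ref{coho} and Lemma~\ref{lem:algebraic_merge} these merges realize the product as ordinary multiplication in the commutative ring $\cM(\un{\la}\ov{\la})$ — is exactly the mechanism implicit in the paper's remark.
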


\begin{proof}
As a summand of a positively graded algebra it is positively graded. By definition of the multiplication, see Definition \ref{def:multiplication}, it is commutative (the multiplication is just a composition of merges).
\end{proof}

\section{The categories $\Perv$ and $\cO_0^\p(\mathfrak{so}(2k))$ diagrammatically} \label{sec:iso_theorem}
Let us finally turn back to the isotropic Grassmannian $Y_k$ from Section ~\ref{section21}. Let $\Perv_k$ be the category of perverse sheaves on $Y_k$ constructible with respect to the Schubert stratification, that is with respect to $B_D$-orbits, see \cite{Braden} for details. By the localization theorem and Riemann-Hilbert correspondence, see \cite{BuchHottaetal}, this category is equivalent to the principal block  $\cO_0^\p(\mg)$ of the parabolic category $\cO^\p(\mg)$ for the semisimple Lie algebra $\mg=\mathfrak{so}(2k)$ of type ${\rm D}_k$, where $\p$ is one of our maximal parabolic subalgebras, say the one corresonding to $W_{\ov{0}}$. For details we refer to \cite{Humphreys}. The simple objects in either category are naturally labelled by $W^{\ov{0}}$. Attached to $w\in W^{\ov 0}$ we have the simple intersection cohomology
complex $\mathcal{I}_w$ corresponding to the Schubert variety labelled $w$ via \eqref{Young} and the simple module $L(w)$ with highest
weight $w\cdot0$. (As usual $W$ acts on weights via the `dot-action' $w\cdot\la=w(\la+\rho)-\rho$). Let $P(w)$ be the projective cover of
$L(w)$. Then $$P=\bigoplus_{w\in W^{\ov{0}}} P(w)$$ is a minimal projective generator of $\cO_0^\p(\mg)$, see \cite{Humphreys} for more details.

\subsection{The isomorphism theorem}
The main theorem is the
following

\begin{theorem}
\label{thm:main}
Let $k\geq 4$. There is an isomorphism of algebras $\End_\mg(P)\cong\mathbb{D}_{\Lambda_k^{\ov{0}}}$, hence there are equivalences of categories
\begin{align}
\cO_0^\p(\mathfrak{so}(2k))\cong\Perv_k\cong\mathbb{D}
_{\Lambda_k^{\ov{0}}}\Mod
\end{align}
which identify the simple objects $L(w)$, $\mathcal{I}_w$ and $\mathcal{L}(w)$ and their projective covers respectively.
\end{theorem}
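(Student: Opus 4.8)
The plan is to reduce the entire statement to a single explicit algebra isomorphism and then transport it along the standard category equivalences. The equivalence $\cO_0^\p(\mathfrak{so}(2k))\cong\Perv_k$ is already available from the localization theorem together with the Riemann--Hilbert correspondence (\cite{BuchHottaetal}), as recalled just before the theorem, and under it the minimal projective generator $P$ of $\cO_0^\p$ corresponds to a minimal projective generator of $\Perv_k$. Hence $\End_\mg(P)$ is isomorphic to the algebra $\ABr$ that Braden attaches to $\Perv_k$ in \cite{Braden} (presented by the quiver with relations of Definition~\ref{Bradenalgebra}). Consequently it suffices to produce an isomorphism of algebras $\Phi_k\colon \mathbb{D}_{\Lambda_k^{\ov 0}}\iso\ABr$; the chain of category equivalences then follows by passing to finite-dimensional module categories, and the labelling of objects is tracked through the bijections between $W^{\ov 0}$, the symmetric Young diagrams $\Omega_k$, and the principal weights $\Lambda_k^{\ov 0}$ from Section~\ref{section21}.

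To build $\Phi_k$ I would first match the two quivers: by Theorem~\ref{intro} and Corollary~\ref{samegraph} the underlying graph of the quiver of $\mathbb{D}_k$ coincides with Braden's, so $\Phi_k$ sends idempotent to idempotent along the common vertex set (cup diagrams $\un\la$, equivalently $w\in W^{\ov 0}$). Since $\mathbb{D}_k$ is generated in degrees $0$ and $1$ by Theorem~\ref{prop:generatedindegree1}, it is then enough to prescribe $\Phi_k$ on the degree-$1$ generators ${}_\mu\mathbbm{1}_\la$ attached to $\la$-pairs (Lemma~\ref{lapairscup}) and to verify that the defining relations are respected. This is precisely the explicit assignment of Theorem~\ref{mainthm}: one does \emph{not} send our generators to Braden's on the nose, but applies a formal logarithm on the loop part, which is exactly what turns Braden's inhomogeneous presentation into the positively graded one. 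The verification splits into checking that the images satisfy the diamond relations, the non-extendibility relations, and the loop relations of $\mathbb{D}_k$. I expect this to be the main obstacle: it is elementary but lengthy, and the logarithmic correction on the loops must be inserted so that all signs and, in particular, the loop relations reproduce Braden's relations.

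Once $\Phi_k$ is known to be a well-defined graded algebra homomorphism, I would prove bijectivity by a dimension count. Surjectivity is immediate, since the images of the idempotents and the degree-$1$ arrows generate $\ABr$ (which is generated by the same quiver data). For injectivity it suffices to show $\dim\mathbb{D}_k=\dim\ABr$. Here I use that the graded Cartan matrix of $\mathbb{D}_k$ is given by Lemma~\ref{Cartanmatrix} and factors as $C_k=M_k^tM_k$, where $M_k$ is the $q$-decomposition matrix, which by Lemma~\ref{lem:KLpolys} records the parabolic Kazhdan--Lusztig polynomials of type $(\mathrm D_k,\mathrm A_{k-1})$. These are exactly the polynomials governing the Cartan invariants of $\Perv_k$, hence of $\ABr$; specialising at $q=1$ yields $\dim\mathbb{D}_k=\dim\ABr$. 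A surjection between finite-dimensional algebras of equal dimension is an isomorphism, so $\Phi_k$ is an isomorphism, and by construction it is graded, which simultaneously equips $\ABr$ (and hence $\Perv_k$) with the geometric Koszul grading of \cite{BGS}.

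For the identification of objects I would trace idempotents: $\Phi_k$ carries the primitive idempotent ${}_\la\mathbbm{1}_\la$ to the idempotent cutting out the projective cover $P(w)$, where $\la=\mathbf{s}(\la)$ corresponds to $w\in W^{\ov 0}$. Therefore the equivalences send $P(\la)\mapsto P(w)$, and passing to simple tops---which by Theorem~\ref{ghw} are the one-dimensional modules $L(\la)$---identifies $\mathcal{L}(w)$, $L(w)$ and $\mathcal{I}_w$ under the common combinatorial labelling. This completes the proof apart from the relation check, which I regard as the only genuinely delicate step.
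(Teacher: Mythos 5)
Your overall architecture matches the paper's: invoke Braden's theorem (Theorem~\ref{thmBraden}) to replace $\End_\mg(P)$ by $\ABr$, construct an explicit isomorphism between $\ABr$ and $\mathbb{D}_k$ involving a formal exponential/logarithm on the loops, get surjectivity from generation in degrees $0$ and $1$ (Theorem~\ref{prop:generatedindegree1}), and conclude injectivity from equality of dimensions via the Kazhdan--Lusztig description of both Cartan matrices (Corollary~\ref{CorCartan}, Lemma~\ref{lem:KLpolys}); the identification of objects via idempotents is also as in the paper. The problem lies in your well-definedness step, and it comes from the direction in which you set up the map.

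You propose $\Phi_k\colon \mathbb{D}_{\Lambda_k^{\ov 0}}\rightarrow\ABr$, prescribed on the degree-$0$ and degree-$1$ elements of $\mathbb{D}_k$, and then want to ``verify that the defining relations [diamond, non-extendibility, loop] of $\mathbb{D}_k$ are respected.'' But no presentation of $\mathbb{D}_k$ by generators and relations is available: Theorem~\ref{prop:generatedindegree1} only says that $\mathbb{D}_k$ is generated in degrees $0$ and $1$, i.e.\ is a quotient of the path algebra of its quiver; it gives no control over the ideal of relations, so there is nothing concrete to check a map out of $\mathbb{D}_k$ against. The explicit relations displayed in Section~\ref{exB} are stated only for $k=4$, and in the paper's logic such a presentation is a \emph{consequence} of the isomorphism theorem, not an input --- using it here would be circular. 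This is precisely why the paper runs the map in the opposite direction: Braden's algebra is the one defined by generators and relations (Definition~\ref{Bradenalgebra}), so one defines $\Phi_k\colon\ABr\rightarrow\mathbb{D}_k$ on Braden's generators $e_\lambda$, $t_{\alpha,\lambda}$, $p(\lambda,\mu)$ (Theorem~\ref{mainthm}) and verifies that their images satisfy Braden's relations (R-1)--(R-5); this is Proposition~\ref{welldefined}, with the diamond relations handled by Lemmas~\ref{stupidcalcc}, \ref{stupidcalb} and \ref{stupidcalca}. Once the direction is corrected, the rest of your argument (surjectivity because the idempotents, the $X_{\alpha,\lambda}$ and hence the ${}_\lambda\mathbbm{1}_\mu$ lie in the image, injectivity by the Cartan-matrix dimension count) goes through exactly as in the paper. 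A secondary soft spot in your version: even surjectivity of your reversed map is not ``immediate,'' since Braden's $t_{\alpha,\lambda}$ are additional generators of $\ABr$ and one would still have to argue, via the loop relations, that they lie in the subalgebra generated by the $e_\lambda$ and $p(\lambda,\mu)$.
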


\begin{remark}
{\rm Note that, although the theorem only deals with the principal block $\Lambda_k^{\ov{0}}$, by Corollary~\ref{atypicality} it gives in fact a geometric description for all blocks $\La$.  Also note that our equivalence ends up naturally in right $\End_\mg(P)$-modules or $\End_\mg(P)^{\op{opp}}$-modules, but using the duality in category $\cO$ we can identify $\End_\mg(P)^{\op{opp}}=\End_\mg(P)$ which we will do from now on.
}
\end{remark}

The following is a type $\rm{D}$ analogue of Khovanov's original arc algebra.
\begin{corollary}
\label{projinj}
Let $\La=\Lambda_k^{\ov{0}}$ then the algebra $\mathbb{H}_\La$ from Lemma~\ref{Khovalg} is the endomorphism algebra of the sum of all indecomposable projective-injective $\D$-modules.
\end{corollary}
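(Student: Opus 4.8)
The plan is to reduce the assertion to identifying which indecomposable projectives $P(\lambda)$ are injective, and then to match that set of labels with the idempotent $e$ of Lemma~\ref{Khovalg}. First I would record the formal reduction. Let $\Xi\subseteq\Lambda$ denote the set of $\lambda$ for which $P(\lambda)$ is projective-injective, pick one such $\lambda$ per isomorphism class, and set $e'=\sum_{\lambda\in\Xi}{}_\lambda\mathbbm{1}_\lambda$. Then the sum of all indecomposable projective-injectives is $\D e'$, and, exactly as in the computation $\Hom_\D(\D e_\lambda,\D e_\mu)=e_\lambda\D e_\mu$ in the proof of Lemma~\ref{Cartanmatrix}, its endomorphism algebra is $e'\D e'$. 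Since $\mathbb{H}_\Lambda=e\D e$ with $e$ the sum of ${}_\lambda\mathbbm{1}_\lambda$ over the weights whose cup diagram has $\op{def}(\Lambda)$ cups, everything reduces to the set equality
\[
\Xi=\{\lambda\in\Lambda \mid \op{def}(\lambda)=\op{def}(\Lambda)\}.
\]

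For the characterization I would use that, by Theorem~\ref{ghw}, $\D\MOD$ is a highest weight category equipped with a duality $\circledast$ fixing the simple modules, so that $I(\lambda)=P(\lambda)^\circledast$. In this situation an indecomposable projective $P(\lambda)$ is injective if and only if $\SOC P(\lambda)\cong L(\lambda)$: the ``if'' direction because $P(\lambda)$ then embeds into its injective hull $I(\lambda)$, which has the same graded dimension as $P(\lambda)^\circledast=I(\lambda)$, forcing the embedding to be an isomorphism. The task is thus to show that $\SOC P(\lambda)\cong L(\lambda)$ precisely when $\underline{\lambda}$ carries the maximal number of cups. The available tools are the cell filtration of $P(\lambda)$ from Theorem~\ref{qh1}, the composition series of the cell modules from Theorem~\ref{qh2}, and the explicit formula $d_{\lambda,\mu}(q)=q^{\deg(\underline{\lambda}\mu)}$ of \eqref{KLformula}; together they control which $L(\mu)$ can appear in top degree of a projective, and one checks that such $\mu$ are forced to be of maximal defect, while for maximal-defect $\lambda$ the top-degree part of $P(\lambda)$ is a single copy of $L(\lambda)$.

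Probably cleaner, and the route I would actually take, is to transport the question across the equivalence $\D\Mod\cong\cO_0^\p(\mathfrak{so}(2k))$ of Theorem~\ref{thm:main} and invoke Irving's description of the projective-injective modules in a regular parabolic block of category $\cO$: they are exactly the projective covers of the simple modules whose highest weights are self-dual in Irving's sense. It then remains to verify, using the dictionary of Section~\ref{section21} between $W^{\ov 0}$, the weights $\mathbf{s}(\lambda)$ and their cup diagrams, that these distinguished weights are precisely the maximal-defect ones; this is a direct combinatorial comparison. Either way, once $\Xi$ is identified the equality $e'=e$, and hence $e'\D e'=\mathbb{H}_\Lambda$, is immediate.

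The step I expect to be the main obstacle is the forward implication, that maximal defect forces $\SOC P(\lambda)\cong L(\lambda)$ (equivalently, injectivity). Checking this directly from the surgery multiplication is delicate precisely because of the non-local signs emphasised throughout Section~\ref{sec:surgery}, so I would lean on Theorem~\ref{thm:main} and Irving's theorem to supply the answer on the category $\cO$ side. The reverse implication, that projectives of smaller defect have a socle labelled by a strictly larger-defect weight and so cannot be injective, then follows routinely from the explicit decomposition numbers \eqref{KLformula} together with Theorem~\ref{qh2}.
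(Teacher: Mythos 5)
Your preferred route (the second one) coincides with the paper's: reduce to identifying the set $\Xi$ of labels of indecomposable projective-injectives, note that the endomorphism algebra of their sum is $e'\D e'$, transport the identification of $\Xi$ through the equivalence of Theorem~\ref{thm:main}, and invoke Irving's characterization \cite[4.3]{Irvingselfdual} of projective-injective modules in parabolic category $\cO$. Up to that point your proposal matches the paper's proof of Corollary~\ref{projinj}.

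The genuine gap is the step you dismiss as ``a direct combinatorial comparison'' using only the dictionary of Section~\ref{section21}. Irving's criterion is not stated in terms of cup diagrams: it says $P(w\cdot\mu)$ is projective-injective if and only if $w$ lies in the same Kazhdan--Lusztig \emph{right cell} as the longest element $w_0^{\ov{0}}$ of $W^{\ov{0}}$, and identifying that cell is not immediate from the weight/cup-diagram bijections of Section~\ref{section21}. The paper closes this gap with real input: the projective-injectives are exactly the summands of translations of $P(w_0^{\ov{0}}\cdot\mu)$; by \cite[Theorem 3.10]{LS} the Hecke-algebra action of translation functors factors through the type ${\rm D}$ Temperley--Lieb algebra acting diagrammatically on cup diagrams; this action can never decrease the number of cups; and every maximal-defect cup diagram is reachable, by \cite[Remark 5.24]{LS}. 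Without this (or an equivalent computation of the right cell of $w_0^{\ov{0}}$ for the pair $({\rm D}_k,{\rm A}_{k-1})$), the identification $\Xi=\{\lambda\mid\op{def}(\lambda)=\op{def}(\Lambda)\}$ is unproved. Your first route suffers from the same missing idea in a different guise: Theorems~\ref{qh1} and~\ref{qh2} together with \eqref{KLformula} control composition factors and their degrees, but they do not by themselves determine socles, so the implication ``maximal defect $\Rightarrow$ $\SOC P(\lambda)\cong L(\lambda)$'' --- which you yourself flag as the main obstacle --- cannot be extracted from them without further cell-theoretic or diagrammatic input.
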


\begin{proof}
Let $\la\in\La$ be of maximal defect with indecomposable projective module $P(\la)$. Under the equivalence of Theorem~\ref{thm:main}
it corresponds to an indecomposable projective module $P(\la)$ in $\cO_0^\p(\mg)$. Let $\la=w\cdot\mu$ with $\mu$ the dominant weight in the same block and $w\in W^{\ov{0}}$. By Irving's characterization of projective-injective modules in parabolic category $\cO$, \cite[4.3]{Irvingselfdual} $P(\la)$ is projective-injective, if and only if $w$ is in the same Kazhdan-Lusztig right cell as the longest element $w_0^{\ov{0}}$ in $W^{\ov{0}}$. Hence the indecomposable projective-injective modules $P(x\cdot\mu)$ are precisely those which can be obtained from $P(w_0^{\ov{0}}\cdot\mu)$ by applying the Hecke algebra action given by translation functors and taking summands. 

By
\cite[Theorem 3.10]{LS} the action of the Hecke algebra factors through the type $\rm D$ Temperley-Lieb algebra and we can identify the Kazhdan-Lusztig basis with our cup diagrams and the action of the Temperley-Lieb algebra purely diagrammatically. Note that $P(w_0^1\cdot\mu)$ corresponds to the cup diagram $C$ of maximal defect with all cups dotted and we ask which cup diagrams can be obtained from it by acting with the Temperley-Lieb algebra. It is obvious that the defect cannot decrease. On the other hand one can easily verify that every cup diagram of maximal defect can be obtained, see \cite[Remark 5.24]{LS}. Then the lemma follows.
\end{proof}

\subsection{Braden's algebra $\ABr$}
To prove Theorem~\ref{thm:main} we will identify $\mathbb{D}_k$ with Braden's algebra $\ABr$ defined below and use  \cite[Theorem 1.7.2]{Braden}:
\begin{theorem}
\label{thmBraden}
Let $k\geq 4$. There is an equivalence of categories $$\ABr\Mod\cong\Perv_k$$ which identifies $\ABr$ with the endomorphism ring of a minimal projective generator of $\Perv_k$.
\end{theorem}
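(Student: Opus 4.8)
The plan is to reduce the statement to an explicit presentation of the endomorphism algebra of a projective generator, and then to read that presentation off from the geometry of the Schubert stratification.

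\emph{Reduction.} Since $Y_k$ is stratified by the finitely many affine cells $Y_\la$, the category $\Perv_k$ is a finite abelian category: it has finitely many isomorphism classes of simple objects, namely the intersection cohomology complexes $\mathcal I_w$ for $w\in W^{\ov 0}$, every object has finite length, and there are enough projectives and injectives (this is standard, cf. \cite{BGS}, \cite{CPS}). Fixing a minimal projective generator $P$ and setting $A=\End_{\Perv_k}(P)^{\mathrm{op}}$, the functor $\Hom_{\Perv_k}(P,-)\colon \Perv_k\liso A\Mod$ is an equivalence. Thus the assertion that $\ABr$ is the endomorphism ring of a minimal projective generator, together with the claimed equivalence, both reduce to the single algebraic statement $A\cong\ABr$. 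It therefore suffices to present $A$ by a quiver with relations and to identify this with Braden's definition of $\ABr$.

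\emph{The quiver.} The vertices are the simple objects $\mathcal I_w$, equivalently the symmetric partitions in the $k\times k$ box of Section~\ref{section21}. The arrows should form a basis of $\Ext^1(\mathcal I_x,\mathcal I_y)$, which I would compute locally: using the attracting-cell / hyperbolic localization picture associated with a suitable $\mC^*$-action (the Springer-fibre model alluded to in the remark after Proposition~\ref{coho}), the computation of $\Ext^1$ between neighbouring simples reduces to codimension-one incidences of Schubert cells. Carrying this out should reproduce exactly the $\la$-pairs of Definition~\ref{lapair}, hence the two mutually opposite families of arrows drawn in $\mathcal Q_k$.

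\emph{The relations and the conclusion.} The defining relations of $\ABr$ (the diamond, non-extendibility, and loop relations) should be obtained by computing $\Ext^2$ together with the Yoneda products, again after localizing to the rank $\le 2$ local models, where the ambient isotropic Grassmannian degenerates into small pieces that are either of type ${\rm A}$ (handled as in the classical arc-algebra case) or a single small ${\rm D}$-type situation that can be treated by hand. Once one knows that these relations hold in $A$, one finishes by a dimension count: the graded dimension of $\bigoplus_{x,y}\Hom_{\Perv_k}(P(x),P(y))$ is determined by the decomposition numbers, which by \cite{LS} are the parabolic Kazhdan-Lusztig polynomials of type $({\rm D}_k,{\rm A}_{k-1})$; comparing this with the dimension of the path algebra of $\mathcal Q_k$ modulo Braden's relations shows that the natural surjection from the latter onto $A$ is an isomorphism. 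The main obstacle is precisely this relation step: verifying that the listed relations are complete, so that no further relations are forced, and that the coefficients are correct, in particular the factor $2$ appearing in the loop relations, which encodes a genuinely type-${\rm D}$ phenomenon and is the part least amenable to a purely formal argument.
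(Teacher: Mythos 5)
There is nothing to compare your proposal against: the paper does not prove this statement at all. Theorem~\ref{thmBraden} is Braden's theorem, quoted verbatim from \cite[Theorem 1.7.2]{Braden} and used as a black box; the paper's actual contribution begins after it, namely the explicit isomorphism $\Phi_k\colon\ABr\to\mathbb{D}_k$ of Theorem~\ref{mainthm}, from which Theorem~\ref{thm:main} follows. So what you have written is an attempt to reconstruct Braden's original geometric argument, and it has to be judged on its own terms.

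On those terms it has concrete gaps. First, your plan to compute the quiver from ``codimension-one incidences of Schubert cells'' fails in type ${\rm D}$: the arrows attached to $\la$-pairs coming from dotted cups connect strata whose dimensions differ by much more than one. In the $k=4$ quiver \eqref{fig:quiv}, the arrows $a_9,b_9$ join the vertices labelled $2$ and $8$, whose Schubert varieties correspond via \eqref{Young} and \eqref{oje} to the Young diagrams $(4,4,2,2)$ and $\emptyset$, a codimension difference of $12$ (similarly $a_{10},b_{10}$ join strata differing in codimension $6$). No local analysis confined to codimension-one boundary strata can produce these arrows, so your $\Ext^1$ step is not merely unproved but based on a wrong heuristic. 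Second, the concluding dimension count is circular: to deduce that the surjection from the path algebra of $\mathcal{Q}_k$ modulo Braden's relations onto $\End(P)^{\mathrm{op}}$ is injective, you need an upper bound on the dimension of that abstractly presented algebra, and computing it is precisely the hard combinatorial problem. In the present paper this dimension is only accessible \emph{a posteriori}: either through the explicit diagram basis of $\mathbb{D}_k$, or through Corollary~\ref{CorCartan}, whose identification of the Cartan matrix of $\ABr$ via \cite[Theorem 3.11.4]{BGS} already presupposes Braden's theorem. Third, you concede yourself that completeness of the relations and the coefficient $2$ in the loop relations lie outside your argument; since those are exactly the content of the theorem, the proposal restates the difficulty rather than resolving it.
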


We first define the algebra  $\ABr$ and then state the explicit Isomorphism Theorem, see Theorem~\ref{mainthm}, from which Theorem \ref{thm:main} then follows. 

\begin{definition}
Let $\Lambda$ be a block and $\la \in \Lambda$ be a weight. Given a $\la$-pair $(\alpha,\beta)$ corresponding to a cup $C$ (see Definition~\ref{lapair}), we say it {\it has a parent}, if $C$ is nested in another (dotted or undotted) cup or if otherwise there is a dotted cup to the right of $C$. We call then the minimal cup containing $C$ respectively the leftmost dotted cup to the right of $C$ and its associated $\la$-pair the {\it parent} of $C$; we denote them by $C'$ and $(\alpha', \beta')$ respectively.
All the possible examples are shown in \eqref{parent1}-\eqref{parent4}.
\end{definition}

\begin{definition}
\begin{itemize}
\item[$\blacktriangleright$] A {\it diamond} in $\Lambda_k^{\ov{0}}$ is a quadruple $(\lambda^{\scriptscriptstyle(1)},\lambda^{\scriptscriptstyle(2)},\lambda^{\scriptscriptstyle(3)},\lambda^{\scriptscriptstyle(4)})$ of elements in $\Lambda_k^{\ov{0}}$ such that $\lambda^{\scriptscriptstyle(1)} \leftrightarrow \lambda^{\scriptscriptstyle(2)} \leftrightarrow \lambda^{\scriptscriptstyle(3)} \leftrightarrow \lambda^{\scriptscriptstyle(4)} \leftrightarrow \lambda^{\scriptscriptstyle(1)}$ and all elements are pairwise distinct. We depict them as follows
\begin{eqnarray*}
\begin{tikzpicture}[thick,scale=0.5]
\node at (1.25,-.5) {$\lambda^{\scriptscriptstyle(1)}$};
\node at (-.25,-2.5) {$\lambda^{\scriptscriptstyle(2)}$};
\node at (2.75,-2.5) {$\lambda^{\scriptscriptstyle(4)}$};
\node at (1.25,-4.4) {$\lambda^{\scriptscriptstyle(3)}$};
\draw[<->] (.8,-1.1) -- +(-.8,-.8);
\draw[<->] (1.7,-1.1) -- +(.8,-.8);
\draw[<->] (-.2,-3) -- +(.8,-.8);
\draw[<->] (2.5,-3) -- +(-.8,-.8);
\end{tikzpicture}
\end{eqnarray*}
A diamond of the form $(\lambda^{(\sigma(1))},\lambda^{(\sigma(2))},\lambda^{(\sigma(3))},\lambda^{(\sigma(4))})$ for $\sigma \in S_4$ such that $\sigma(1) \equiv \sigma(3)\, {\rm mod}\, 2$ and $\sigma(2) \equiv \sigma(4)\, {\rm mod}\, 2$ is said to be \emph{equivalent} to $(\lambda^{\scriptscriptstyle(1)},\lambda^{\scriptscriptstyle(2)},\lambda^{\scriptscriptstyle(3)},\lambda^{\scriptscriptstyle(4)})$.
\item[$\blacktriangleright$] For fixed $k$, a triple $\la^{\scriptscriptstyle(1)} \leftrightarrow \la^{\scriptscriptstyle(2)} \leftrightarrow \la^{\scriptscriptstyle(3)}$ in $\Lambda_{k}^{\ov{0}}$ {\it cannot be extended to a diamond} if there is no diamond $(\la^{\scriptscriptstyle(1)},\la^{\scriptscriptstyle(2)},\la^{\scriptscriptstyle(3)},\la^{\scriptscriptstyle(4)})$ in $\Lambda_{k}^{\ov{0}}$.
\item[$\blacktriangleright$] For fixed $k$, a triple $\la^{\scriptscriptstyle(1)} \leftrightarrow \la^{\scriptscriptstyle(2)} \leftrightarrow \la^{\scriptscriptstyle(3)}$ in $\Lambda_{k}^{\ov{0}}$ {\it can be enlarged} to a diamond if it cannot be extended but, after extending the weights $\la^{(i)}$ to $\widehat{\la}^{(i)}$ by adding some fixed number of $\down$'s to the left and $\up$'s to the right, the resulting triple can be extended to a diamond $(\widehat{\la}^{\scriptscriptstyle(1)},\widehat{\la}^{\scriptscriptstyle(2)},\widehat{\la}^{\scriptscriptstyle(3)},\widehat{\la}^{\scriptscriptstyle(4)})$ in $\Lambda_{m}^{\ov{0}}$ for some $m>k$.
\end{itemize}
\end{definition}

\begin{ex}
\label{extended}
The triple $(\down\down\down\down,\up\up\down\down,\up\down\up\down)$ can not be extended to a diamond in $\Lambda_{4}^{\ov{0}}$, but it can be enlarged to a diamond $(\down\down\down\down\up\up,\up\up\down\down\up\up,\up\down\up\down\up\up,
\down\down\up\down\up\down)\in\Lambda_{6}^{\ov{0}}$. In other words, the Young diagrams
\begin{eqnarray}
{\tiny\Yvcentermath1\yng(4,4,4,4)\quad\quad\yng(4,4,2,2)\quad\quad\yng(4,3,2,1)
\quad\quad\yng(6,5,4,4,2,1)}
\end{eqnarray}
fit into a $6\times 6$-box, but only three of them into a $4\times 4$-box.
\end{ex}

\begin{ex}
For instance, up to equivalence the diamonds in \eqref{fig:quiv} are 
(\circled{3},\circled{4},\circled{6},\circled{5}), (\circled{3},\circled{4},\circled{6},\circled{7}), (\circled{3},\circled{5},\circled{6},\circled{7}), (\circled{2},\circled{3},\circled{7},\circled{8}), whereas here are the triples that cannot be extended to a diamond, but can be enlarged: 
\begin{equation*}
\begin{array}{cccc}
(\circled{1},\circled{2},\circled{3}),& (\circled{3},\circled{2},\circled{1}),& (\circled{2},\circled{3},\circled{4}),& (\circled{4},\circled{3},\circled{2}),\\[0.1cm] (\circled{2},\circled{3},\circled{5}),& (\circled{5},\circled{3},\circled{2}),& (\circled{6},\circled{7},\circled{8}),& (\circled{8},\circled{7},\circled{6}).
\end{array}
\end{equation*}
\end{ex}

\begin{definition}
\label{Bradenalgebra}
Braden's algebra $\ABr$ is the unitary associative $\mC$-algebra with generators
$$\{e_\la\mid\la\in \Lambda_k^{\ov{0}} \}\cup\{p(\la,\la')\mid \la,\la'\in \Lambda_k^{\ov{0}}, \la\leftrightarrow\la'\}\cup
\{t_{\alpha,\la}\mid \la\in \Lambda_k^{\ov{0}} ,\alpha\in\mZ-\{0\} \}$$
subject to relations (for all $\lambda, \mu, \nu \in \Lambda_k^{\ov{0}}$ and $\alpha, \beta \in \mZ$): 
\begin{enumerate}[(\text{R}-1)]
\item \label{rel1} {\it Idempotent relations:}
\begin{equation*}
\begin{array}{lrclclrcl}
a.)&e_\lambda e_\nu &=& \delta_{\lambda,\nu} e_\lambda, & \; &b.)& \sum_{\la\in \Lambda_k^{\ov{0}}} e_\la&=&1, \\
c.)&e_{\nu}p(\la,\mu)&=&\delta_{\nu,\la}p(\la,\mu), & \; &d.)& p(\la,\mu)e_{\nu}&=&\delta_{\mu,\nu}p(\la,\mu), \\
e.)&e_{\nu}t_{\la,\alpha}&=&\delta_{\nu,\la}t_{\la,\alpha}, & \; &f.)& t_{\la,\alpha}e_{\nu} &=& \delta_{\nu,\la}t_{\la,\alpha};
\end{array}
\end{equation*}
\item {\it The commutative subalgebra:}\label{rel4a}
\begin{equation*}
\begin{array}{crclccrcl}
a.)&t_{\alpha,\la} &=& e_\la \text{ if } | \alpha | > k, & \; &b.)&t_{\lambda,\alpha} t_{\nu,\beta} &=& t_{\nu,\beta} t_{\lambda,\alpha}, \\
c.)&t_{\alpha,\la}t_{-\alpha,\la} &=& e_\lambda, &\;&d.)& t_{\alpha,\la}t_{\beta,\la} &=& e_\la \text{ if } (\alpha,\beta) \text{ is a }\la\text{-pair};
\end{array}
\end{equation*}
\item \label{rel4b} {\it Arrow relations:} $p(\la,\mu)t_{\alpha,\mu} = t_{\alpha,\la}p(\la,\mu)$;
\item \label{rel6} {\it Loop relations:}
Suppose $\la\stackrel{(\alpha,\beta)}\longrightarrow\la'$. Then
  \begin{eqnarray*}
  \op{m}(\la',\la)^{(-1)^\beta}\;=\;t_{\alpha,\la'}t_{\zeta,\la'} \quad \text{and} \quad \op{m}(\la,\la')^{(-1)^\beta}\;=\;t_{\alpha,\la}t_{\zeta,\la}
  \end{eqnarray*}
where $\op{m}(\la,\la')=e_\la+p(\la,\la')p(\la',\la)$ and
$$ \zeta = \left\lbrace \begin{array}{ll} -\alpha', & \text{if } \alpha < -\beta' < \beta < -\alpha' \\ \beta' & \text{otherwise},
\end{array} \right.$$
in case the parent $(\alpha',\beta')$ of $(\alpha,\beta)$ exists, and $t_{\zeta,\la}=e_\lambda$ otherwise.
\item \label{rel7}
{\it Diamond relations:}
\begin{enumerate}[i)]
\item If $(\la^{\scriptscriptstyle(1)},\la^{\scriptscriptstyle(2)},\la^{\scriptscriptstyle(3)},\la^{\scriptscriptstyle(4)})$ is a diamond in $\Lambda_{k}^{\ov{0}}$ then
  \begin{eqnarray*}
    p(\la^{\scriptscriptstyle(3)},\la^{\scriptscriptstyle(2)})p(\la^{\scriptscriptstyle(2)},\la^{\scriptscriptstyle(1)})&=&p(\la^{\scriptscriptstyle(3)},\la^{\scriptscriptstyle(4)})p(\la^{\scriptscriptstyle(4)},\la^{\scriptscriptstyle(1)}).
  \end{eqnarray*}
\item
  Given a triple $\la^{\scriptscriptstyle(1)} \leftrightarrow \la^{\scriptscriptstyle(2)} \leftrightarrow \la^{\scriptscriptstyle(3)}$ in $\Lambda_{k}^{\ov{0}}$ which cannot be extended but can be enlarged to a diamond then
 \begin{eqnarray}
\label{zero}
    p(\la^{\scriptscriptstyle(3)},\la^{\scriptscriptstyle(2)})p(\la^{\scriptscriptstyle(2)},\la^{\scriptscriptstyle(1)})&=&0.
  \end{eqnarray}
\item Given a triple $\la^{\scriptscriptstyle(1)} \stackrel{(\alpha,\beta)}{\rightarrow} \la^{\scriptscriptstyle(2)} \stackrel{(\gamma,\delta)}{\rightarrow} \la^{\scriptscriptstyle(3)}$ in $\Lambda_{k}^{\ov{0}}$ such that
    \begin{itemize}
    \item[$\blacktriangleright$] $\alpha<0$ (hence $(\alpha,\beta)$ corresponds to a dotted cup),
    \item[$\blacktriangleright$] $(\gamma,\delta)$ is not a $\la^{\scriptscriptstyle(1)}$-pair, and
    \item[$\blacktriangleright$] the triple cannot be extended to a diamond, then
    \end{itemize}
      \begin{eqnarray*}
    p(\la^{\scriptscriptstyle(3)},\la^{\scriptscriptstyle(2)})p(\la^{\scriptscriptstyle(2)},\la^{\scriptscriptstyle(1)})&=\,0\,=&p(\la^{\scriptscriptstyle(1)},\la^{\scriptscriptstyle(2)})p(\la^{\scriptscriptstyle(2)},\la^{\scriptscriptstyle(3)}).
  \end{eqnarray*}
\end{enumerate}
\end{enumerate}
\end{definition}

\begin{ex} 
\label{triples}
The triple $(\la^{\scriptscriptstyle(1)},\la^{\scriptscriptstyle(2)},\la^{\scriptscriptstyle(3)})=(\up\up\up\up,\down\down\up\up,\down\up\down\up)$, or equivalently $(\circled{8},\circled{7},\circled{6})$ in \eqref{fig:quiv}), satisfies all conditions from
(R-5) iii). In particular it cannot be extended to a diamond. If we replace however  $\la^{\scriptscriptstyle(3)}$ by $\mu = \up\down\up\down$, that is $\circled{3}$ in \eqref{fig:quiv}, then $\la^{\scriptscriptstyle(1)} \rightarrow \la^{\scriptscriptstyle(2)} \rightarrow \mu$ still satisfies the first two conditions in (R-5) iii), but now can be extended to the diamond $(\la^{\scriptscriptstyle(1)},\la^{\scriptscriptstyle(2)},\mu,\eta)$ with $\eta = \up\up\down\down$ ($\circled{2}$ in \eqref{fig:quiv}).
Moreover we have in the notation from Theorem \ref{prop:generatedindegree1}:
\begin{equation*}
\begin{tikzpicture}[thick, scale=0.5]
\begin{scope}
\node at (-2,0){${}_{\la^{\scriptscriptstyle(1)}}\mathbbm{1}_{\la^{\scriptscriptstyle(2)}}=$};
\node at (0,.1){$\down$};
\node at (1,.1){$\down$};
\node at (2,-.1){$\up$};
\node at (3,-.1){$\up$};
\draw (0,0) .. controls +(0,-1) and +(0,-1) .. +(1,0);
\fill (0.5,-.73) circle(3.5pt);
\draw (2,0) .. controls +(0,-1) and +(0,-1) .. +(1,0);
\fill (2.5,-.73) circle(3.5pt);
\draw (1,0) .. controls +(0,1) and +(0,1) .. +(1,0);
\draw (0,0) .. controls +(0,2.2) and +(0,2.2) .. +(3,0);
\end{scope}
\begin{scope}[xshift=8cm]
\node at (-2,0){${}_{\la^{\scriptscriptstyle(2)}}\mathbbm{1}_{\la^{\scriptscriptstyle(3)}}=$};
\node at (0,.1){$\down$};
\node at (1,-.1){$\up$};
\node at (2,.1){$\down$};
\node at (3,-.1){$\up$};
\draw (0,0) .. controls +(0,1) and +(0,1) .. +(1,0);
\draw (2,0) .. controls +(0,1) and +(0,1) .. +(1,0);
\draw (1,0) .. controls +(0,-1) and +(0,-1) .. +(1,0);
\draw (0,0) .. controls +(0,-2.2) and +(0,-2.2) .. +(3,0);
\end{scope}
\begin{scope}[xshift=16cm]
\node at (-2,0){$\un{\la^{\scriptscriptstyle(1)}}\ov{\la^{\scriptscriptstyle(3)}}=$};
\draw (0,0) .. controls +(0,1) and +(0,1) .. +(1,0);
\fill (0.5,-.73) circle(3.5pt);
\draw (2,0) .. controls +(0,1) and +(0,1) .. +(1,0);
\fill (2.5,-.73) circle(3.5pt);
\draw (0,0) .. controls +(0,-1) and +(0,-1) .. +(1,0);
\draw (2,0) .. controls +(0,-1) and +(0,-1) .. +(1,0);
\end{scope}
\end{tikzpicture}
\end{equation*}
Since $\un{\la^{\scriptscriptstyle(1)}}\ov{\la^{\scriptscriptstyle(3)}}$ is not orientable, it follows ${}_{\la^{\scriptscriptstyle(1)}}\mathbbm{1}_{\la^{\scriptscriptstyle(2)}} \cdot {}_{\la^{\scriptscriptstyle(2)}}\mathbbm{1}_{\la^{\scriptscriptstyle(3)}} = 0$ and since $\la^{\scriptscriptstyle(1)} \neq \la^{\scriptscriptstyle(3)}$ it holds ${}_{\la^{\scriptscriptstyle(2)}}\mathbbm{1}_{\la^{\scriptscriptstyle(3)}} \cdot {}_{\la^{\scriptscriptstyle(1)}}\mathbbm{1}_{\la^{\scriptscriptstyle(2)}} = 0$. While for the diamond we have
\begin{equation*}
\begin{tikzpicture}[thick,scale=0.5]
\begin{scope}
\node at (-2,0){${}_{\la^{\scriptscriptstyle(2)}}\mathbbm{1}_{\mu}=$};
\node at (0,-.1){$\up$};
\node at (1,.1){$\down$};
\node at (2,-.1){$\up$};
\node at (3,.1){$\down$};
\draw (0,0) -- +(0,.7);
\draw (1,0) .. controls +(0,1) and +(0,1) .. +(1,0);
\draw (3,0) -- +(0,.7);
\fill (0,.25) circle(3.5pt);
\draw (1,0) .. controls +(0,-1) and +(0,-1) .. +(1,0);
\draw (0,0) .. controls +(0,-2.2) and +(0,-2.2) .. +(3,0);
\end{scope}

\begin{scope}[xshift=8cm]
\node at (-2,0){${}_{\la^{\scriptscriptstyle(1)}}\mathbbm{1}_{\eta}=$};
\node at (0,-.1){$\up$};
\node at (1,-.1){$\up$};
\node at (2,.1){$\down$};
\node at (3,.1){$\down$};
\draw (0,0) .. controls +(0,1) and +(0,1) .. +(1,0);
\fill (.5,-.73) circle(3.5pt);
\draw (2,0) .. controls +(0,-1) and +(0,-1) .. +(1,0);
\fill (2.5,-.73) circle(3.5pt);
\draw (0,0) .. controls +(0,-1) and +(0,-1) .. +(1,0);
\fill (.5,.73) circle(3.5pt);
\draw (2,0) -- +(0,.7);
\draw (3,0) -- +(0,.7);
\end{scope}

\begin{scope}[xshift=16cm]
\node at (-2,0){${}_{\eta}\mathbbm{1}_{\mu}=$};
\node at (0,-.1){$\up$};
\node at (1,-.1){$\up$};
\node at (2,.1){$\down$};
\node at (3,.1){$\down$};
\draw (0,0) -- +(0,.7);
\draw (1,0) .. controls +(0,1) and +(0,1) .. +(1,0);
\draw (3,0) -- +(0,.7);
\fill (0,.25) circle(3.5pt);
\draw (0,0) .. controls +(0,-1) and +(0,-1) .. +(1,0);
\fill (.5,-.73) circle(3.5pt);
\draw (2,0) -- +(0,-.7);
\draw (3,0) -- +(0,-.7);
\end{scope}
\end{tikzpicture}
\end{equation*}
which gives $ {}_{\la^{\scriptscriptstyle(1)}} \mathbbm{1}_{\la^{\scriptscriptstyle(2)}} \cdot {}_{\la^{\scriptscriptstyle(2)}}\mathbbm{1}_{\mu} = \un{\la^{\scriptscriptstyle(1)}} \eta \ov{\mu} = {}_{\la^{\scriptscriptstyle(1)}} \mathbbm{1}_{\eta} \cdot {}_{\eta}\mathbbm{1}_{\mu}.$ This is an example for the diagrammatic analogue of Relation (R-5) i). 
\end{ex}

\subsection{$\la$-pairs and the quiver}
The formulation in Definition~\ref{Bradenalgebra} is already adapted to our setup and slightly different from the original definition in \cite{Braden}. To match Definition~\ref{Bradenalgebra} with \cite[1.7]{Braden} we need the following key technical lemma which compares our notion of $\la$-pair from Definition~\ref{lapair} with Braden's. 

We start with some notation. Given a weight $\la\in \Lambda_k^{\ov{0}}$ recall the associated antisymmetric sequence $\mathbf{s}(\lambda) = (s_i)_{-k \leq i \leq k}$  from Section \ref{section21}. We extend this sequence by infinitely many $\down$'s to the left and infinitely many $\up$'s to the right to get an infinite sequence $\widetilde{\la}$ indexed by half-integers such that the anti-symmetry line $L$ passes between -$\frac{1}{2}$ and $\frac{1}{2}$, in formulas $\widetilde{\la}=(S_i)_{i \in\mathbb{Z}+\frac{1}{2}}$, where
$$
S_{i-\frac{1}{2}}= \left\lbrace \begin{array}{ll}
- & \text{if } i < -k, \\
- & \text{if } s_i = \down \text{ and } -k \leq i \leq k, \\
+ & \text{if } s_i = \up \text{ and } -k \leq i \leq k, \\
+ & \text{if } i > k.
\end{array} \right.
$$

We switch notations from $\up$ and $\down$ to $+$ and $-$ in the sequence $\widetilde{\lambda}$ to make the comparison to \cite{Braden} more convenient.

For the next lemma and its proof recall the Definitions~\ref{decoratedcups}-\ref{lapairscup} for $\la$-pairs.
\begin{lemma}
\label{lem:comparelapairs}
Let $\la,\la' \in \Lambda_k^{\ov{0}}$ and $-k\leq i,j\leq k$ then
\begin{eqnarray}
\la\stackrel{(i,j)}{\longrightarrow}\la' \,\, \text{ if and only if }\,\,\widetilde{\la}\stackrel{(i',j')}{\longrightarrow}\widetilde{\la}'
\end{eqnarray}
with the right hand side in the terminology of \cite{Braden} where $i'=i-\frac{1}{2}$, $j'=j-\frac{1}{2}$.
\end{lemma}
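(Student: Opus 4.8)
The plan is to translate both notions of $\la$-pair into explicit combinatorial conditions on the finite sequence $\mathbf{s}(\la)=(s_i)$ and on its antisymmetric extension $\widetilde\la=(S_{i-\frac12})$, and then to match them by a direct case analysis according to whether the cup of $\underline\la$ defining the pair is undotted or dotted. First I would recall Braden's definition from \cite[1.6]{Braden}: a pair of half-integers $i'<j'$ is a $\widetilde\la$-pair precisely when the signs $S_{i'},S_{j'}$ are the prescribed ones and the signs strictly between $i'$ and $j'$ satisfy a balancing (matched-bracket) condition, the associated move $\widetilde\la\to\widetilde{\la'}$ being the antisymmetric sign swap determined by $(i',j')$ together with its mirror $(-j',-i')$. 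Independently I would read off from Definition~\ref{decoratedcups} the bracket description of the cups of $\underline\la$: regarding $\down$ as an opening and $\up$ as a closing symbol on the vertices of $P_\bu$, the undotted cups produced by (Cup~1) are exactly the matched $\down\cdots\up$ pairs, while the dotted cups of (Cup~3) pair up, from the left, the unmatched $\up$'s. This is precisely the folding dictionary of \cite[5.2]{LS} recalled in the introduction, under which $\underline\la$ is the fold of the antisymmetric cup diagram obtained from $\widetilde\la$ by the same bracket matching (using the tails of $-$'s on the left and $+$'s on the right and the antisymmetry $S_{-p}=-S_p$); an undotted cup lies strictly to the right of the symmetry line $L$, whereas a dotted cup records a pair of nested cups crossing $L$.

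Then I would carry out the comparison. For an undotted cup at $l_\gamma<r_\gamma$ the matched pair sits at positions $l_\gamma-\tfrac12<r_\gamma-\tfrac12$, both positive, with $S_{l_\gamma-\frac12}=-$ and $S_{r_\gamma-\frac12}=+$; since $\alpha=\pos(l_\gamma)=l_\gamma$ and $\beta=\pos(r_\gamma)=r_\gamma$ in the principal block, this is exactly Braden's pair $(i',j')=(\alpha-\tfrac12,\beta-\tfrac12)$, and the swap $\down\up\mapsto\up\down$ at $l_\gamma,r_\gamma$ together with its mirror is Braden's move. For a dotted cup I would track how the reflection across $L$ turns the two $L$-crossing cups attached to $l_\gamma$ and $r_\gamma$ into Braden's straddling pair; here $\alpha=-\pos(l_\gamma)<0$ records that the left end lies across $L$, and one verifies that $i'=\alpha-\tfrac12$ is indeed Braden's left index and that the local move $\up\up\mapsto\down\down$ at $l_\gamma,r_\gamma$ coincides with the antisymmetric swap prescribed by $(i',j')$. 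The admissibility constraints of Remark~\ref{decorated} (no dotted cup nested inside another, dotted cups to the left of all rays) then correspond exactly to the positional constraints built into Braden's definition, which closes the equivalence; the cases $|i|>k$ or $|j|>k$ are excluded on both sides by the hypothesis.

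The main obstacle I anticipate is the dotted (straddling) case. There the left index $i'=\alpha-\tfrac12=-\pos(l_\gamma)-\tfrac12$ is \emph{not} itself one of the positions whose sign is altered by the move (those are $\pm(l_\gamma-\tfrac12)$ and $\pm(r_\gamma-\tfrac12)$), so the identification of our move $\up\up\mapsto\down\down$ with Braden's swap at $(i',j')$ cannot be read off naively but must be deduced from the behaviour of the bracket matching across $L$ and from the antisymmetry $S_{-p}=-S_p$. Keeping the half-integer shift $i\mapsto i-\tfrac12$ and the sign of $\alpha$ consistent through this reflection, and checking that the balancing condition on the symbols between the endpoints matches on both sides of $L$, is the delicate part; once this bookkeeping is in place the remaining verifications are a routine unwinding of Definitions~\ref{lapair} and~\ref{decoratedcups}.
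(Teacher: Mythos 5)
Your proposal is correct and follows essentially the same route as the paper's proof: it recalls Braden's two defining conditions, matches the first with the undotted cups of (Cup~1) and the second with the dotted cups of (Cup~3), and settles the dotted case exactly as the paper does, namely by reading Braden's straddling pairs as the two nested cups crossing the symmetry line $L$ in the symmetric diagrams of \cite[5.2]{LS}, which fold into a single dotted cup. The only detail the paper makes explicit that you should fold into your dotted-case bookkeeping is the counting function $\op{b}(\pm,a,b)$ encoding the balancing conditions, together with the parity requirement that $\alpha+\tfrac{1}{2}$ be even, which is what guarantees the two $L$-crossing cups fold into a dotted cup under the rules of \cite[5.2]{LS}.
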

\begin{proof}
For a weight $\la$ with associated sequence $(S_i)_{i \in\mathbb{Z}+\frac{1}{2}}$ and $a,b\in\mathbb{Z}+\frac{1}{2}$ with $a<b$, we set $\op{b}(\pm,a,b)=|\{r\mid a< r< b, S_r=\pm\}|$.
A $\la$-pair in the sense of \cite{Braden} is a pair $(\alpha,\beta)\in(\mathbb{Z}+\frac{1}{2})^2$ such that either
\begin{enumerate}[i)]
\item $0<\alpha<\beta$ and $S_\alpha=-$ and $S_\beta=+$ and $\op{b}(-,\alpha,\beta)=\op{b}(+,\alpha,\beta)$, or
\item $0<-\alpha<0<\beta$ with $\alpha+\frac{1}{2}$ even and $S_\alpha=S_\beta=+$ and  moreover $\op{b}(+,\alpha,\beta+1)=\op{b}(-,\alpha,\beta+1)-1$ and $\op{b}(-,\gamma,\nu)=\op{b}(\pm,\gamma,\nu)$ for the pairs $(\gamma,\nu)\in\{(-\alpha, \alpha),(-\beta,\beta)\}$.\hfill\\ (In this case we automatically have $S_{-\alpha}=S_{-\beta}=-$)
\end{enumerate}
For  $\la,\la' \in \Lambda_k^{\ov{0}}$, the case i) obviously corresponds precisely to cups of type (Cup 1), hence to our $\la$ pairs $(i,j)$ for $0<i<j$. We claim case ii) corresponds to cups of type (Cup 3), hence to our $\la$ pairs $(-i,j)$ for $0<-i<j$. The conditions on the pairs $(-\alpha, \alpha),(-\beta,\beta)$ means they correspond to two cups crossing the middle line $L$ in the symmetric cup diagrams from \cite{LS}. Since $\alpha+\frac{1}{2}$ is even, these two cups get turned into a dotted cup using the rules from \cite[5.2]{LS}.
\end{proof}
\begin{remark}
{\rm
One should note that Braden has an infinite number of $\la$-pairs for a single non-truncated weight $\lambda$, but only a finite number with $\la'\in \Lambda_k^{\ov{0}}$ as well. Our definition of $\la$-pairs produces only these relevant pairs.
}
\end{remark}
\begin{corollary}
\label{CorCartan}
Let $k\geq 4$ and $\La={\Lambda_k^{\ov{0}}}$. The algebra $\ABr$ agrees with the algebra defined in \cite[1.7]{Braden}. The Cartan matrix of $\ABr$ agrees with the Cartan matrix of $\mathbb{D}_{\Lambda}.$
\end{corollary}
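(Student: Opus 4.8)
The plan is to prove Corollary~\ref{CorCartan} in two logically separate pieces. The first statement—that $\ABr$ as presented in Definition~\ref{Bradenalgebra} coincides with Braden's algebra from \cite[1.7]{Braden}—is essentially a bookkeeping comparison of two presentations on the same generating set, and the content of this comparison is already isolated in Lemma~\ref{lem:comparelapairs}. The second statement—that the Cartan matrices agree—will follow formally once we know the two algebras are isomorphic, since the Cartan matrix is a Morita invariant (indeed an isomorphism invariant for the endomorphism ring of a minimal projective generator). So the real work is the first piece, and the second is a short deduction.

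For the first piece, I would proceed as follows. Braden's generators in \cite[1.7]{Braden} are indexed by his $\la$-pairs, which for a non-truncated weight form an infinite family, whereas our generators $p(\la,\la')$, $t_{\alpha,\la}$ and $e_\la$ are indexed by the finite data relevant to $\Lambda_k^{\ov 0}$. The key point is Lemma~\ref{lem:comparelapairs}, which sets up the dictionary $i'=i-\tfrac12$, $j'=j-\tfrac12$ between our $\la$-pairs (Definition~\ref{lapair}) and Braden's, under the extension $\la\mapsto\widetilde\la$ by infinitely many $\down$'s to the left and $\up$'s to the right. First I would use this bijection together with the Remark following the lemma—that only finitely many of Braden's $\la$-pairs land back in $\Lambda_k^{\ov 0}$—to match the generators of the two presentations. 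Then I would go through Braden's relations one by one and identify each with the corresponding relation (R-\ref{rel1})--(R-\ref{rel7}) in Definition~\ref{Bradenalgebra}: the idempotent relations are immediate, the commutative subalgebra relations (R-\ref{rel4a}) and the arrow relations (R-\ref{rel4b}) translate directly, and the loop relations (R-\ref{rel6}) require using the explicit form of the parent $(\alpha',\beta')$ and the case distinction defining $\zeta$. The diamond relations (R-\ref{rel7}), including the ``cannot be extended but can be enlarged'' cases, must be matched against Braden's corresponding higher relations; here the notions of diamond, of triples that cannot be extended, and of triples that can be enlarged (illustrated in Example~\ref{extended}) are precisely the combinatorial reformulation of Braden's conditions under the half-integer shift.

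The step I expect to be the main obstacle is exactly this matching of the diamond and loop relations, and in particular verifying that the sign conventions and the exponents $(-1)^\beta$ in (R-\ref{rel6}) agree with Braden's after the shift $\beta\mapsto\beta-\tfrac12$, together with confirming that the ``enlargement'' mechanism in (R-\ref{rel7})ii)--iii) reproduces Braden's relations that implicitly reference weights in larger rank $\Lambda_m^{\ov 0}$, $m>k$. This is where the reindexing of dotted versus undotted cups (case ii) of Lemma~\ref{lem:comparelapairs}, corresponding to type (Cup 3)) interacts most delicately with the parity conditions $\sigma(1)\equiv\sigma(3)$, $\sigma(2)\equiv\sigma(4)\ \mathrm{mod}\ 2$ in the definition of equivalent diamonds. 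Once the presentations are identified, the equality of Cartan matrices follows: by Theorem~\ref{thmBraden} both $\ABr$ and $\mathbb{D}_\Lambda$ are endomorphism rings of minimal projective generators of $\Perv_k$, so they have the same Cartan matrix; alternatively, and more directly for our purposes, the Cartan matrix of $\mathbb{D}_\Lambda$ is computed explicitly in Lemma~\ref{Cartanmatrix} and factors as $C_k=M_k^tM_k$ through the decomposition matrix of Lemma~\ref{lem:KLpolys}, which is Braden's by the parabolic Kazhdan--Lusztig interpretation, so the two Cartan matrices coincide entrywise.
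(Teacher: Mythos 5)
Your treatment of the first statement follows the paper: the identification of $\ABr$ with the algebra of \cite[1.7]{Braden} is exactly Lemma~\ref{lem:comparelapairs} plus a routine translation of the generators and relations under the half-integer shift, which is all the paper itself invokes. The problem lies in your treatment of the second statement. Both your opening framing (``the Cartan matrices agree \dots will follow formally once we know the two algebras are isomorphic, since the Cartan matrix is a Morita invariant'') and your primary deduction (``by Theorem~\ref{thmBraden} both $\ABr$ and $\mathbb{D}_\Lambda$ are endomorphism rings of minimal projective generators of $\Perv_k$'') are circular at this point of the paper. Theorem~\ref{thmBraden} concerns $\ABr$ only; the assertion that $\mathbb{D}_\Lambda$ is also the endomorphism ring of a minimal projective generator of $\Perv_k$ --- equivalently, that $\ABr\cong\mathbb{D}_\Lambda$ --- is Theorem~\ref{thm:main}, and its proof (via Theorem~\ref{mainthm}) \emph{uses} Corollary~\ref{CorCartan}: the explicit map $\Phi_k$ is shown to be surjective, and equality of the Cartan matrices is precisely what gives equality of dimensions and hence injectivity. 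So you may not assume the isomorphism, nor any geometric description of $\mathbb{D}_\Lambda$, when proving this corollary.

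The argument that does work is the one you relegate to an ``alternatively'', and it is the paper's actual proof: compare the two Cartan matrices through their factorizations $C_\Lambda=M_\Lambda^tM_\Lambda$. On the diagrammatic side this factorization comes from the quasi-hereditary structure, \eqref{dmat} (Theorems~\ref{qh1} and~\ref{qh2}), and Lemma~\ref{lem:KLpolys} identifies the entries of $M_\Lambda$ with the parabolic Kazhdan-Lusztig polynomials of type $({\rm D}_k,{\rm A}_{k-1})$. On Braden's side the same factorization, with the same Kazhdan-Lusztig entries, is a geometric fact about $\Perv_k$, namely \cite[Theorem 3.11.4 (i)]{BGS} applied to the algebra of Theorem~\ref{thmBraden}; your phrase ``which is Braden's by the parabolic Kazhdan-Lusztig interpretation'' is exactly this input, and it must be cited as such, since it is the only place where geometry enters and it cannot be replaced by Morita-type reasoning. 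With the circular route deleted and this reference made explicit, your proposal coincides with the paper's proof.
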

\begin{proof}
The first part follows from Lemma~\ref{lem:comparelapairs} and the definitions.\footnote{We tried to clarify the misleading formulation of the analogue of (R-5) iii) in \cite{Braden}.} The second statement follows directly from the fact that both Cartan matrices are of the form $C_\La=M_\La^tM_\La$, where the entries of the decomposition matrix $M_\La$ are parabolic Kazhdan-Lusztig polynomials of type $({\rm D}_k,{\rm A}_{k-1})$, see \cite[Theorem 3.11.4 (i)]{BGS} and Lemma~\ref{lem:KLpolys} respectively.
\end{proof}
We directly deduce that the underlying graph of the quiver describing the category $\Perv_k$ is precisely determined by the $\la$=pairs. ( For the definition of $\mathcal{Q}(\Lambda)$ see Definition~\ref{Extquivdef}.)
\begin{corollary}
\label{samegraph}
The graph $\mathcal{Q}_k$ underlying the quiver of  $\ABr$ is $\mathcal{Q}(\Lambda_k^{\ov{0}})$.
\end{corollary}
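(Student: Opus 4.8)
The plan is to read both sides of the asserted equality off from their definitions and to bridge them with the $\la$-pair comparison already in hand. First I would reduce the statement to a statement about $\ABr$. By Corollary~\ref{CorCartan} our algebra $\ABr$ coincides with Braden's algebra from \cite[1.7]{Braden}, and by Theorem~\ref{thmBraden} (equivalently Theorem~\ref{thm:main}) this algebra is the endomorphism ring of a minimal projective generator of $\Perv_k$. Hence $\mathcal{Q}_k$, the quiver describing $\Perv_k$, is precisely the Gabriel (Ext-)quiver of $\ABr$, and it suffices to determine its underlying graph from the presentation in Definition~\ref{Bradenalgebra}. On the other side, Definition~\ref{Extquivdef} tells us that $\mathcal{Q}(\Lambda_k^{\ov 0})$ has vertex set $\Lambda_k^{\ov 0}$ and an arrow $\la\to\mu$ exactly when $\la\leftrightarrow\mu$ in the sense of Definition~\ref{lapair}; under $\la\mapsto\un\la$ the two vertex sets are identified, so only the edges remain to be matched.

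Next I would isolate the arrows of $\ABr$ from its generators. The only generators that can contribute arrows are the $p(\la,\la')$, which by construction are indexed exactly by Braden's $\la$-pairs. The idempotents $e_\la$ sit in degree zero, and the auxiliary generators $t_{\alpha,\la}$ satisfy, via the loop relations (R-4), $t_{\alpha,\la}\in e_\la+ p(\la,\la')p(\la',\la)\cdot(\ldots)$, so that $t_{\alpha,\la}-e_\la$ lies in the square of the Jacobson radical $J$ and contributes nothing to $J/J^2$; in particular one may present $\ABr$ as a path algebra modulo relations after eliminating the $t$'s. Since all defining relations (R-2)--(R-5) are homogeneous of degree at least two, the ideal they generate is admissible, so no arrow $p(\la,\la')$ becomes redundant in $J/J^2$. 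Consequently the underlying graph of the quiver of $\ABr$ carries exactly one edge joining $\la$ and $\la'$ for each pair that is a $\la$-pair in Braden's sense, and no others.

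Finally I would invoke Lemma~\ref{lem:comparelapairs}, which identifies Braden's $\la$-pairs with ours: $\la\leftrightarrow\la'$ in the sense of Definition~\ref{lapair} if and only if the associated sequences $\widetilde\la,\widetilde{\la}'$ form a $\la$-pair in the terminology of \cite{Braden}. Combining this identification of edges with the identification of vertices above, the underlying graph of $\mathcal{Q}_k$ is $\mathcal{Q}(\Lambda_k^{\ov 0})$, as claimed. I expect no serious obstacle here: the genuine content has already been carried out in Lemma~\ref{lem:comparelapairs}, and the only point requiring care is the bookkeeping of the previous paragraph, namely confirming that the extra generators $t_{\alpha,\la}$ and the degree-$\geq 2$ loop and diamond relations neither create nor destroy edges of the Ext-quiver. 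The positive grading transported from $\mathbb{D}_{\Lambda_k^{\ov 0}}$, which exhibits the $p$'s as the degree-one part and the $t_{\alpha,\la}-e_\la$ as genuinely higher degree, makes this transparent; alternatively one may simply quote Braden's own description of his algebra as a quiver algebra with this underlying graph.
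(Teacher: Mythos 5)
Your overall route is the same as the paper's. The paper offers no separate argument for this corollary: it reads it off from Corollary~\ref{CorCartan} (which, via Lemma~\ref{lem:comparelapairs}, identifies $\ABr$ with the algebra of \cite[1.7]{Braden}), from the fact that Braden presents his algebra as a quiver algebra whose arrows are exactly the generators $p(\la,\mu)$ indexed by \emph{his} $\la$-pairs, and from Lemma~\ref{lem:comparelapairs} again to convert those into the diagrammatic $\la$-pairs that define $\mathcal{Q}(\Lambda_k^{\ov 0})$ in Definition~\ref{Extquivdef}. That is precisely your skeleton, and your closing remark that ``one may simply quote Braden's own description of his algebra as a quiver algebra with this underlying graph'' is in effect what the paper does.

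The caveat concerns your attempt to make the middle step self-contained from Definition~\ref{Bradenalgebra}. The claim that the relations (R-2)--(R-5) are ``homogeneous of degree at least two'' is not literally true: the presentation carries no grading a priori. Relation (R-2)a sets a generator equal to an idempotent, and (R-3) equates two expressions whose degree-one parts only cancel once one already knows $t_{\alpha,\la}-e_\la$ is of higher order; the grading you invoke to make this ``transparent'' is the one transported through $\Phi_k$, i.e.\ through Theorem~\ref{mainthm}, which in the paper is established \emph{after} this corollary (not circular, but a forward reference). Moreover, the loop relations (R-4) only determine \emph{products} $t_{\alpha,\la}t_{\zeta,\la}$, so deducing $t_{\alpha,\la}-e_\la\in J^2$ generator by generator needs an induction over parents (starting from cups with no parent, where $t_{\zeta,\la}=e_\la$ and the relation does solve for $t_{\alpha,\la}$); and for a vertex $\alpha$ lying on a ray/line of $\un\la\,\ov\la$ no loop relation at $\la$ applies at all, so there one must either propagate along (R-3) through neighbouring weights or fall back on the geometric input. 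As written, then, the admissibility step of your second paragraph is not established; the proposal stands only because your stated alternative --- citing Braden's quiver description --- closes exactly this hole, and is the paper's actual argument.
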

In case $k=4$ the quiver is explicitly given in \eqref{exB}.

\subsection{Diamonds}
We now classify the possible diamonds diagrammatically.

\begin{prop}
\label{diamondsclass}
\begin{enumerate}[1.)]
\item Up to equivalence and any possible decorations with dots the following local configurations are the only possible diamonds where the relevant parts do not contain any rays. 
\begin{eqnarray}
\label{diamonds2}
\begin{tikzpicture}[thick,scale=0.4]
\draw (2,0) .. controls +(0,-.5) and +(0,-.5) .. +(.5,0);
\draw (3,0) .. controls +(0,-.5) and +(0,-.5) .. +(.5,0);
\draw (4,0) .. controls +(0,-.5) and +(0,-.5) .. +(.5,0);

\draw[<->] (2.5,-1) -- +(-1,-1);
\draw[<->] (4,-1) -- +(1,-1);

\draw (0,-2.5) .. controls +(0,-1) and +(0,-1) .. +(1.5,0);
\draw (.5,-2.5) .. controls +(0,-.5) and +(0,-.5) .. +(.5,0);
\draw (2,-2.5) .. controls +(0,-.5) and +(0,-.5) .. +(.5,0);

\draw (4,-2.5) .. controls +(0,-1.25) and +(0,-1.25) .. +(2.5,0);
\draw (4.5,-2.5) .. controls +(0,-1) and +(0,-1) .. +(1.5,0);
\draw (5,-2.5) .. controls +(0,-.5) and +(0,-.5) .. +(.5,0);

\draw[<->] (2.5,-4.5) -- +(-1,1);
\draw[<->] (4,-4.5) -- +(1,1);

\draw (2,-5) .. controls +(0,-1) and +(0,-1) .. +(2.5,0);
\draw (2.5,-5) .. controls +(0,-.5) and +(0,-.5) .. +(.5,0);
\draw (3.5,-5) .. controls +(0,-.5) and +(0,-.5) .. +(.5,0);

\begin{scope}[xshift=8cm]
\draw (2,0) .. controls +(0,-.5) and +(0,-.5) .. +(.5,0);
\draw (3,0) .. controls +(0,-.5) and +(0,-.5) .. +(.5,0);
\draw (4,0) .. controls +(0,-.5) and +(0,-.5) .. +(.5,0);

\draw[<->] (2.5,-1) -- +(-1,-1);
\draw[<->] (4,-1) -- +(1,-1);

\draw (1,-2.5) .. controls +(0,-1) and +(0,-1) .. +(1.5,0);
\draw (0,-2.5) .. controls +(0,-.5) and +(0,-.5) .. +(.5,0);
\draw (1.5,-2.5) .. controls +(0,-.5) and +(0,-.5) .. +(.5,0);

\draw (4,-2.5) .. controls +(0,-1.25) and +(0,-1.25) .. +(2.5,0);
\draw (4.5,-2.5) .. controls +(0,-1) and +(0,-1) .. +(1.5,0);
\draw (5,-2.5) .. controls +(0,-.5) and +(0,-.5) .. +(.5,0);

\draw[<->] (2.5,-4.5) -- +(-1,1);
\draw[<->] (4,-4.5) -- +(1,1);

\draw (2,-5) .. controls +(0,-1) and +(0,-1) .. +(2.5,0);
\draw (2.5,-5) .. controls +(0,-.5) and +(0,-.5) .. +(.5,0);
\draw (3.5,-5) .. controls +(0,-.5) and +(0,-.5) .. +(.5,0);
\end{scope}

\begin{scope}[xshift=16cm]
\draw (2,0) .. controls +(0,-.5) and +(0,-.5) .. +(.5,0);
\draw (3,0) .. controls +(0,-.5) and +(0,-.5) .. +(.5,0);
\draw (4,0) .. controls +(0,-.5) and +(0,-.5) .. +(.5,0);

\draw[<->] (2.5,-1) -- +(-1,-1);
\draw[<->] (4,-1) -- +(1,-1);

\draw (0,-2.5) .. controls +(0,-1) and +(0,-1) .. +(1.5,0);
\draw (.5,-2.5) .. controls +(0,-.5) and +(0,-.5) .. +(.5,0);
\draw (2,-2.5) .. controls +(0,-.5) and +(0,-.5) .. +(.5,0);

\draw (4,-2.5) .. controls +(0,-.5) and +(0,-.5) .. +(.5,0);
\draw (5,-2.5) .. controls +(0,-1) and +(0,-1) .. +(1.5,0);
\draw (5.5,-2.5) .. controls +(0,-.5) and +(0,-.5) .. +(.5,0);

\draw[<->] (2.5,-4.5) -- +(-1,1);
\draw[<->] (4,-4.5) -- +(1,1);

\draw (2,-5) .. controls +(0,-1) and +(0,-1) .. +(2.5,0);
\draw (2.5,-5) .. controls +(0,-.5) and +(0,-.5) .. +(.5,0);
\draw (3.5,-5) .. controls +(0,-.5) and +(0,-.5) .. +(.5,0);
\end{scope}
\end{tikzpicture}
\end{eqnarray}
\begin{eqnarray}
\label{diamonds1}
\begin{tikzpicture}[thick,scale=0.4]
\draw (2.5,0) .. controls +(0,-.5) and +(0,-.5) .. +(.5,0);
\draw (3.5,0) .. controls +(0,-.5) and +(0,-.5) .. +(.5,0);
\draw (4.5,0) .. controls +(0,-.5) and +(0,-.5) .. +(.5,0);
\draw (5.5,0) .. controls +(0,-.5) and +(0,-.5) .. +(.5,0);

\draw[<->] (3,-1) -- +(-1,-1);
\draw[<->] (5.5,-1) -- +(1,-1);

\draw (0,-2.5) .. controls +(0,-1) and +(0,-1) .. +(1.5,0);
\draw (.5,-2.5) .. controls +(0,-.5) and +(0,-.5) .. +(.5,0);
\draw (2,-2.5) .. controls +(0,-.5) and +(0,-.5) .. +(.5,0);
\draw (3,-2.5) .. controls +(0,-.5) and +(0,-.5) .. +(.5,0);

\draw (5,-2.5) .. controls +(0,-.5) and +(0,-.5) .. +(.5,0);
\draw (6,-2.5) .. controls +(0,-.5) and +(0,-.5) .. +(.5,0);
\draw (7,-2.5) .. controls +(0,-1) and +(0,-1) .. +(1.5,0);
\draw (7.5,-2.5) .. controls +(0,-.5) and +(0,-.5) .. +(.5,0);

\draw[<->] (3,-4.5) -- +(-1,1);
\draw[<->] (5.5,-4.5) -- +(1,1);

\draw (2.5,-5) .. controls +(0,-1) and +(0,-1) .. +(1.5,0);
\draw (3,-5) .. controls +(0,-.5) and +(0,-.5) .. +(.5,0);
\draw (4.5,-5) .. controls +(0,-1) and +(0,-1) .. +(1.5,0);
\draw (5,-5) .. controls +(0,-.5) and +(0,-.5) .. +(.5,0);

\begin{scope}[xshift=10cm]
\draw (2.5,0) .. controls +(0,-.5) and +(0,-.5) .. +(.5,0);
\draw (3.5,0) .. controls +(0,-.5) and +(0,-.5) .. +(.5,0);
\draw (4.5,0) .. controls +(0,-.5) and +(0,-.5) .. +(.5,0);
\draw (5.5,0) .. controls +(0,-.5) and +(0,-.5) .. +(.5,0);

\draw[<->] (3,-1) -- +(-1,-1);
\draw[<->] (5.5,-1) -- +(1,-1);

\draw (0,-2.5) .. controls +(0,-1.25) and +(0,-1.25) .. +(3.5,0);
\draw (0.5,-2.5) .. controls +(0,-1) and +(0,-1) .. +(2.5,0);
\draw (1,-2.5) .. controls +(0,-.5) and +(0,-.5) .. +(.5,0);
\draw (2,-2.5) .. controls +(0,-.5) and +(0,-.5) .. +(.5,0);

\draw (5,-2.5) .. controls +(0,-.5) and +(0,-.5) .. +(.5,0);
\draw (6,-2.5) .. controls +(0,-1) and +(0,-1) .. +(1.5,0);
\draw (6.5,-2.5) .. controls +(0,-.5) and +(0,-.5) .. +(.5,0);
\draw (8,-2.5) .. controls +(0,-.5) and +(0,-.5) .. +(.5,0);

\draw[<->] (3,-4.5) -- +(-1,1);
\draw[<->] (5.5,-4.5) -- +(1,1);

\draw (2.5,-5) .. controls +(0,-1.5) and +(0,-1.5) .. +(3.5,0);
\draw (3,-5) .. controls +(0,-1.25) and +(0,-1.25) .. +(2.5,0);
\draw (3.5,-5) .. controls +(0,-1) and +(0,-1) .. +(1.5,0);
\draw (4,-5) .. controls +(0,-.5) and +(0,-.5) .. +(.5,0);
\end{scope}

\begin{scope}[xshift=20cm]
\draw (2.5,0) .. controls +(0,-.5) and +(0,-.5) .. +(.5,0);
\draw (3.5,0) .. controls +(0,-1) and +(0,-1) .. +(1.5,0);
\draw (4,0) .. controls +(0,-.5) and +(0,-.5) .. +(.5,0);
\draw (5.5,0) .. controls +(0,-.5) and +(0,-.5) .. +(.5,0);

\draw[<->] (3,-1) -- +(-1,-1);
\draw[<->] (5.5,-1) -- +(1,-1);

\draw (0,-2.5) .. controls +(0,-1) and +(0,-1) .. +(2.5,0);
\draw (0.5,-2.5) .. controls +(0,-.5) and +(0,-.5) .. +(.5,0);
\draw (1.5,-2.5) .. controls +(0,-.5) and +(0,-.5) .. +(.5,0);
\draw (3,-2.5) .. controls +(0,-.5) and +(0,-.5) .. +(.5,0);

\draw (6,-2.5) .. controls +(0,-1) and +(0,-1) .. +(2.5,0);
\draw (5,-2.5) .. controls +(0,-.5) and +(0,-.5) .. +(.5,0);
\draw (6.5,-2.5) .. controls +(0,-.5) and +(0,-.5) .. +(.5,0);
\draw (7.5,-2.5) .. controls +(0,-.5) and +(0,-.5) .. +(.5,0);

\draw[<->] (3,-4.5) -- +(-1,1);
\draw[<->] (5.5,-4.5) -- +(1,1);

\draw (2.5,-5) .. controls +(0,-1) and +(0,-1) .. +(3.5,0);
\draw (3,-5) .. controls +(0,-.5) and +(0,-.5) .. +(.5,0);
\draw (4,-5) .. controls +(0,-.5) and +(0,-.5) .. +(.5,0);
\draw (5,-5) .. controls +(0,-.5) and +(0,-.5) .. +(.5,0);
\end{scope}

\begin{scope}[yshift=-7cm]
\draw (2.5,0) .. controls +(0,-.5) and +(0,-.5) .. +(.5,0);
\draw (3.5,0) .. controls +(0,-.5) and +(0,-.5) .. +(.5,0);
\draw (4.5,0) .. controls +(0,-1) and +(0,-1) .. +(1.5,0);
\draw (5,0) .. controls +(0,-.5) and +(0,-.5) .. +(.5,0);

\draw[<->] (3,-1) -- +(-1,-1);
\draw[<->] (5.5,-1) -- +(1,-1);

\draw (0,-2.5) .. controls +(0,-1.25) and +(0,-1.25) .. +(3.5,0);
\draw (0.5,-2.5) .. controls +(0,-1) and +(0,-1) .. +(1.5,0);
\draw (1,-2.5) .. controls +(0,-.5) and +(0,-.5) .. +(.5,0);
\draw (2.5,-2.5) .. controls +(0,-.5) and +(0,-.5) .. +(.5,0);

\draw (5,-2.5) .. controls +(0,-.5) and +(0,-.5) .. +(.5,0);
\draw (6,-2.5) .. controls +(0,-1) and +(0,-1) .. +(2.5,0);
\draw (6.5,-2.5) .. controls +(0,-.5) and +(0,-.5) .. +(.5,0);
\draw (7.5,-2.5) .. controls +(0,-.5) and +(0,-.5) .. +(.5,0);

\draw[<->] (3,-4.5) -- +(-1,1);
\draw[<->] (5.5,-4.5) -- +(1,1);

\draw (2.5,-5) .. controls +(0,-1) and +(0,-1) .. +(3.5,0);
\draw (3,-5) .. controls +(0,-.5) and +(0,-.5) .. +(.5,0);
\draw (4,-5) .. controls +(0,-.5) and +(0,-.5) .. +(.5,0);
\draw (5,-5) .. controls +(0,-.5) and +(0,-.5) .. +(.5,0);
\end{scope}

\begin{scope}[xshift=10cm,yshift=-7cm]
\draw (4.5,0) .. controls +(0,-.5) and +(0,-.5) .. +(.5,0);
\draw (5.5,0) .. controls +(0,-.5) and +(0,-.5) .. +(.5,0);
\draw (2.5,0) .. controls +(0,-1) and +(0,-1) .. +(1.5,0);
\draw (3,0) .. controls +(0,-.5) and +(0,-.5) .. +(.5,0);

\draw[<->] (3,-1) -- +(-1,-1);
\draw[<->] (5.5,-1) -- +(1,-1);

\draw (0,-2.5) .. controls +(0,-1.25) and +(0,-1.25) .. +(3.5,0);
\draw (1.5,-2.5) .. controls +(0,-1) and +(0,-1) .. +(1.5,0);
\draw (2,-2.5) .. controls +(0,-.5) and +(0,-.5) .. +(.5,0);
\draw (0.5,-2.5) .. controls +(0,-.5) and +(0,-.5) .. +(.5,0);

\draw (5.5,-2.5) .. controls +(0,-.5) and +(0,-.5) .. +(.5,0);
\draw (5,-2.5) .. controls +(0,-1) and +(0,-1) .. +(2.5,0);
\draw (6.5,-2.5) .. controls +(0,-.5) and +(0,-.5) .. +(.5,0);
\draw (8,-2.5) .. controls +(0,-.5) and +(0,-.5) .. +(.5,0);

\draw[<->] (3,-4.5) -- +(-1,1);
\draw[<->] (5.5,-4.5) -- +(1,1);

\draw (2.5,-5) .. controls +(0,-1) and +(0,-1) .. +(3.5,0);
\draw (3,-5) .. controls +(0,-.5) and +(0,-.5) .. +(.5,0);
\draw (4,-5) .. controls +(0,-.5) and +(0,-.5) .. +(.5,0);
\draw (5,-5) .. controls +(0,-.5) and +(0,-.5) .. +(.5,0);
\end{scope}

\begin{scope}[xshift=20cm,yshift=-7cm]
\draw (2.5,0) .. controls +(0,-.5) and +(0,-.5) .. +(.5,0);
\draw (3.5,0) .. controls +(0,-1) and +(0,-1) .. +(2.5,0);
\draw (4,0) .. controls +(0,-.5) and +(0,-.5) .. +(.5,0);
\draw (5,0) .. controls +(0,-.5) and +(0,-.5) .. +(.5,0);

\draw[<->] (3,-1) -- +(-1,-1);
\draw[<->] (5.5,-1) -- +(1,-1);

\draw (0,-2.5) .. controls +(0,-1) and +(0,-1) .. +(3.5,0);
\draw (0.5,-2.5) .. controls +(0,-.5) and +(0,-.5) .. +(.5,0);
\draw (1.5,-2.5) .. controls +(0,-.5) and +(0,-.5) .. +(.5,0);
\draw (2.5,-2.5) .. controls +(0,-.5) and +(0,-.5) .. +(.5,0);

\draw (5,-2.5) .. controls +(0,-.5) and +(0,-.5) .. +(.5,0);
\draw (6,-2.5) .. controls +(0,-1.25) and +(0,-1.25) .. +(2.5,0);
\draw (6.5,-2.5) .. controls +(0,-1) and +(0,-1) .. +(1.5,0);
\draw (7,-2.5) .. controls +(0,-.5) and +(0,-.5) .. +(.5,0);

\draw[<->] (3,-4.5) -- +(-1,1);
\draw[<->] (5.5,-4.5) -- +(1,1);

\draw (2.5,-5) .. controls +(0,-1.25) and +(0,-1.25) .. +(3.5,0);
\draw (3,-5) .. controls +(0,-.5) and +(0,-.5) .. +(.5,0);
\draw (4,-5) .. controls +(0,-1) and +(0,-1) .. +(1.5,0);
\draw (4.5,-5) .. controls +(0,-.5) and +(0,-.5) .. +(.5,0);
\end{scope}

\begin{scope}[xshift=10cm,yshift=-14cm]
\draw (3,0) .. controls +(0,-.5) and +(0,-.5) .. +(.5,0);
\draw (2.5,0) .. controls +(0,-1) and +(0,-1) .. +(2.5,0);
\draw (4,0) .. controls +(0,-.5) and +(0,-.5) .. +(.5,0);
\draw (5.5,0) .. controls +(0,-.5) and +(0,-.5) .. +(.5,0);

\draw[<->] (3,-1) -- +(-1,-1);

\draw[<->] (5.5,-1) -- +(1,-1);

\draw (0,-2.5) .. controls +(0,-1) and +(0,-1) .. +(3.5,0);
\draw (0.5,-2.5) .. controls +(0,-.5) and +(0,-.5) .. +(.5,0);
\draw (1.5,-2.5) .. controls +(0,-.5) and +(0,-.5) .. +(.5,0);
\draw (2.5,-2.5) .. controls +(0,-.5) and +(0,-.5) .. +(.5,0);

\draw (8,-2.5) .. controls +(0,-.5) and +(0,-.5) .. +(.5,0);
\draw (5,-2.5) .. controls +(0,-1.25) and +(0,-1.25) .. +(2.5,0);
\draw (5.5,-2.5) .. controls +(0,-1) and +(0,-1) .. +(1.5,0);
\draw (6,-2.5) .. controls +(0,-.5) and +(0,-.5) .. +(.5,0);

\draw[<->] (3,-4.5) -- +(-1,1);
\draw[<->] (5.5,-4.5) -- +(1,1);

\draw (2.5,-5) .. controls +(0,-1.25) and +(0,-1.25) .. +(3.5,0);
\draw (3.5,-5) .. controls +(0,-.5) and +(0,-.5) .. +(.5,0);
\draw (3,-5) .. controls +(0,-1) and +(0,-1) .. +(1.5,0);
\draw (5,-5) .. controls +(0,-.5) and +(0,-.5) .. +(.5,0);
\end{scope}
\end{tikzpicture}
\end{eqnarray}
\item The possible diamonds with rays are obtained from these by allowing only vertices in a fixed smaller interval and forgetting cups connecting not allowed vertices only and turn cups which connect an allowed with a not allowed vertex into dotted or undotted rays depending if the vertex which gets removed is to the left or to the right of the allowed interval.
\end{enumerate}
\end{prop}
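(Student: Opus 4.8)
The plan is to treat a diamond as a closed $4$-cycle of the elementary cup/cap moves and to classify such cycles by the relative position of the two arcs at the Bruhat-minimal corner. Each edge $\la^{(i)}\leftrightarrow\la^{(i+1)}$ is, by Lemma~\ref{lapairscup}, one of the local moves listed in \eqref{eqn:lambdapairlist}, and by Proposition~\ref{degone} the lower of the two weights carries a cup at the two affected vertices. Since every such move strictly raises the Bruhat order (Lemma~\ref{lem:Bruhat}), the cycle has a Bruhat-minimal weight $m$, and I would first check that both cycle-neighbours of $m$ lie strictly above it, so that they arise from $m$ by flipping two distinct cups $C,C'$ of $\underline m$. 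A diamond is then encoded by the datum $(m,C,C')$ together with the choice of the remaining corner closing the cycle, and the stated equivalence relation is precisely the freedom to interchange $C\leftrightarrow C'$ and to reverse the roles of $m$ and its opposite corner. Because the moves only touch the vertices of $C$ and $C'$ and the vertices strictly between them, the whole diamond is supported on a bounded window, which reduces the problem to a finite local enumeration.

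The heart of part~1 is this enumeration. Two cups of one cup diagram are disjoint arcs, so $C$ and $C'$ are either side by side or nested. For each relative position I would recompute, via the algorithm of Definition~\ref{decoratedcups} and keeping track of admissibility (Remark~\ref{decorated}), the cup diagrams of $C(m)$ and $C'(m)$ and decide whether the cycle closes, i.e.\ whether $C(m)$ and $C'(m)$ admit a common neighbour other than $m$. When the two flips do not interfere one obtains a ``commuting square'' with closing corner $C'C(m)=CC'(m)$; when $C$ and $C'$ are nested or immediately adjacent, flipping one reorganises the cup diagram near the other, the naive commuting corner becomes inadmissible or non-orientable, and the cycle instead closes through a different weight. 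Carrying this out should show that the admissible configurations are exactly those drawn in \eqref{diamonds2} (three local shapes) and \eqref{diamonds1}, once one remembers that dots are placed deterministically by Definition~\ref{decoratedcups} and so only contribute ``up to decoration''. I expect this bookkeeping to be the main obstacle: the delicate points are to verify that the list is exhaustive (no admissible relative position of $C,C'$ is missed), that each listed shape genuinely closes to four \emph{distinct} weights, and that the reorganisation of the cup diagram after the first flip is tracked correctly, so that the second move is a $\la$-pair exactly in the cases retained.

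For part~2 I would invoke the extension/restriction device of Section~\ref{annoying}. Adjoining a block of vertices far to the right turns every ray into a cup---dotted or undotted according to the ray's decoration---compatibly with the $\la$-pair moves, so any diamond containing rays is the truncation of a ray-free diamond in a larger principal block. Conversely, fixing a smaller interval of allowed vertices, discarding the cups with both endpoints outside it, and converting each cup with a single endpoint inside into a ray (dotted if the discarded endpoint lies to the left, undotted if to the right) sends the shapes of \eqref{diamonds2}--\eqref{diamonds1} to exactly the configurations asserted in part~2. The only remaining check is that this truncation preserves admissibility and the four relations $\la^{(i)}\leftrightarrow\la^{(i+1)}$, which is immediate from the local description of the moves in Lemma~\ref{lapairscup}.
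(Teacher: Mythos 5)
Your plan for part 1 is, up to a mirror symmetry, the paper's own proof: the paper also reduces to a finite local enumeration anchored at an extremal corner of the $4$-cycle, and also handles decorations exactly as you propose, by first classifying the undotted shapes and then adding all admissible configurations of dots. The only organisational difference is the choice of anchor: the paper takes the Bruhat-\emph{maximal} corner $\la^{\scriptscriptstyle(1)}$, records each of its two adjacent edges as the \emph{pair} of cups of $\un{\la^{\scriptscriptstyle(1)}}$ which that edge merges, and splits into the cases $\gamma=\delta$ (impossible), $|\gamma\cap\delta|=1$ and $\gamma\cap\delta=\emptyset$, each settled by an explicit table of configurations; you take the minimal corner, where each edge flips a single cup. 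Be aware that your top-level dichotomy ``$C,C'$ side by side or nested'' is coarser than what actually controls closure of the cycle: the relevant invariant is how the pairs $\{C,\mathrm{parent}(C)\}$ and $\{C',\mathrm{parent}(C')\}$ intersect (this is precisely the paper's $|\gamma\cap\delta|$ seen from below), and the analysis must also produce the ``chain'' diamonds -- for instance the middle shape of \eqref{diamonds2}, where the minimal and maximal corners are \emph{adjacent} in the cycle and the closing corner lies strictly between the two upper neighbours of $m$ rather than above both; your ``recompute and decide'' step can absorb this, but that is exactly where the whole content of the paper's two tables lives, so the proposal as written defers rather than performs the heart of the argument. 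Where you genuinely depart from the paper is part 2: the paper settles it by a direct case-by-case check (explicitly left to the reader), whereas you reduce to the ray-free case via the extension $\iota$ of Section~\ref{annoying}, using that it intertwines $\la$-pair moves so that every diamond with rays is a truncation of a ray-free diamond in a larger block. This is a valid and more systematic route, and it is consistent with the stated dot/ray conversion rule because a dotted ray carries the label $\up$ in the minimal orientation and therefore extends to a dotted cup and truncates back to a dotted ray; its price is the (routine but necessary) verification that the extension commutes with the cup-diagram algorithm of Definition~\ref{decoratedcups} simultaneously on all four corners, so that both the extension of a ray-diamond and the truncation of a ray-free one remain diamonds. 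Finally, one small correction: dots are determined by the \emph{weight}, not by the undotted shape, so your phrase ``dots are placed deterministically'' should be read, as in the paper, as ``the classification is of undotted shapes together with all consistent decorations''.
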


\begin{proof}
One easily verifies that \eqref{diamonds1} and \eqref{diamonds2} give indeed diamonds and also when we restrict the vertices to an interval. To see that these are all let us first
consider the case where the relevant pieces are cups only, hence the arrow correspond to $\la$-pairs of the form
\begin{eqnarray}
\begin{tikzpicture}[thick,scale=0.7]
\draw (0,0) node[above]{} .. controls +(0,-.5) and +(0,-.5) .. +(.5,0) node[above]{};
\draw (1,0) node[above]{} .. controls +(0,-.5) and +(0,-.5) .. +(.5,0) node[above]{};
\draw[<->] (1.75,-.2) -- +(1,0);
\draw (3,0) node[above]{} .. controls +(0,-1) and +(0,-1) .. +(1.5,0) node[above]{};
\draw (3.5,0) node[above]{} .. controls +(0,-.5) and +(0,-.5) .. +(.5,0) node[above]{};
\end{tikzpicture}
\end{eqnarray}
with all possible configurations of dots. To simplify arguments we first ignore all dots and consider the possible configurations
\begin{eqnarray*}
\begin{tikzpicture}[thick,scale=0.7]
\node (A) at (0,-1.25) {$\gamma = \{ \gamma_1,\gamma_2\}$};
\node (B) at (4.5,-1.25) {$\delta = \{ \delta_1,\delta_2\}$};
\draw[<->] (2,-1) node[above]{\quad $\lambda^{\scriptscriptstyle(1)}$} -- +(-1,-1) node[below]{$\lambda^{\scriptscriptstyle(2)}$};
\draw[<->] (2.5,-1) -- +(1,-1) node[below]{\quad $\lambda^{\scriptscriptstyle(4)}$};
\end{tikzpicture}
\end{eqnarray*}
where we number, from left to right according to their endpoints, the cups in $\underline{\lambda^{\scriptscriptstyle(1)}}$ involved in either of the two moves and indicate by the pairs $\gamma$ and $\delta$ the two pairs of vertices where the labels get changed. \hfill\\

Then there are the following cases:

\begin{enumerate}[i)]
\item $\gamma=\delta$: This would imply $\lambda^{\scriptscriptstyle(2)} = \lambda^{\scriptscriptstyle(4)}$ which is not allowed.
\item $|\gamma \cap \delta | = 1$: Then one can verify directly that the only possible configurations are the following\hfill\\

%ones listed in Table~\ref{table_conf_1} below.

\begin{table}[!h]
\caption{Configurations for $|\gamma \cap \delta | = 1$}
\label{table_conf_1}
\begin{tabular}{|c|c|c|c|c|c|c|}
\hline $\lambda^{\scriptscriptstyle(1)}$ &
\begin{tikzpicture}[thick,scale=0.4]
\node (B) at (0,.1) {$\quad$};

\draw (0,0) .. controls +(0,-.5) and +(0,-.5) .. +(.5,0);
\draw (1,0) .. controls +(0,-.5) and +(0,-.5) .. +(.5,0);
\draw (2,0) .. controls +(0,-.5) and +(0,-.5) .. +(.5,0);

\node (A) at (0,-1.15) {$\quad$};
\end{tikzpicture}
&
\begin{tikzpicture}[thick,scale=0.4]
\node (B) at (0,.1) {$\quad$};

\draw (0,0) .. controls +(0,-.5) and +(0,-.5) .. +(.5,0);
\draw (1,0) .. controls +(0,-.5) and +(0,-.5) .. +(.5,0);
\draw (2,0) .. controls +(0,-.5) and +(0,-.5) .. +(.5,0);

\node (A) at (0,-1.15) {$\quad$};
\end{tikzpicture}
&
\begin{tikzpicture}[thick,scale=0.4]
\node (B) at (0,.1) {$\quad$};

\draw (0,0) .. controls +(0,-.5) and +(0,-.5) .. +(.5,0);
\draw (1,0) .. controls +(0,-.5) and +(0,-.5) .. +(.5,0);
\draw (2,0) .. controls +(0,-.5) and +(0,-.5) .. +(.5,0);

\node (A) at (0,-1.15) {$\quad$};
\end{tikzpicture}
&
\begin{tikzpicture}[thick,scale=0.4]
\node (B) at (0,.1) {$\quad$};

\draw (0,0) .. controls +(0,-1) and +(0,-1) .. +(2.5,0);
\draw (0.5,0) .. controls +(0,-.5) and +(0,-.5) .. +(.5,0);
\draw (1.5,0) .. controls +(0,-.5) and +(0,-.5) .. +(.5,0);

\node (A) at (0,-1.15) {$\quad$};
\end{tikzpicture}
&
\begin{tikzpicture}[thick,scale=0.4]
\node (B) at (0,.1) {$\quad$};

\draw (0,0) .. controls +(0,-1) and +(0,-1) .. +(2.5,0);
\draw (0.5,0) .. controls +(0,-.5) and +(0,-.5) .. +(.5,0);
\draw (1.5,0) .. controls +(0,-.5) and +(0,-.5) .. +(.5,0);

\node (A) at (0,-1.15) {$\quad$};
\end{tikzpicture}
&
\begin{tikzpicture}[thick,scale=0.4]
\node (B) at (0,.1) {$\quad$};

\draw (0,0) .. controls +(0,-1) and +(0,-1) .. +(2.5,0);
\draw (0.5,0) .. controls +(0,-.5) and +(0,-.5) .. +(.5,0);
\draw (1.5,0) .. controls +(0,-.5) and +(0,-.5) .. +(.5,0);

\node (A) at (0,-1.15) {$\quad$};
\end{tikzpicture}\\
\hline $\gamma$ & $\{1,2 \}$ & $\{1,3 \}$ & $\{1,2 \}$ & $\{1,2 \}$ & $\{1,3 \}$ & $\{1,2 \}$\\
\hline $\delta$ & $\{2,3 \}$ & $\{2,3 \}$ & $\{1,3 \}$ & $\{2,3 \}$ & $\{2,3 \}$ & $\{1,3 \}$\\
\hline $\lambda^{\scriptscriptstyle(3)}$ &
\begin{tikzpicture}[thick,scale=0.4]
\node (B) at (0,.1) {$\quad$};

\draw (0,0) .. controls +(0,-1) and +(0,-1) .. +(2.5,0);
\draw (0.5,0) .. controls +(0,-.5) and +(0,-.5) .. +(.5,0);
\draw (1.5,0) .. controls +(0,-.5) and +(0,-.5) .. +(.5,0);

\node (A) at (0,-1.15) {$\quad$};
\end{tikzpicture}
&
\begin{tikzpicture}[thick,scale=0.4]
\node (B) at (0,.1) {$\quad$};

\draw (0,0) .. controls +(0,-1) and +(0,-1) .. +(2.5,0);
\draw (0.5,0) .. controls +(0,-.5) and +(0,-.5) .. +(.5,0);
\draw (1.5,0) .. controls +(0,-.5) and +(0,-.5) .. +(.5,0);

\node (A) at (0,-1.15) {$\quad$};
\end{tikzpicture}
&
\begin{tikzpicture}[thick,scale=0.4]
\node (B) at (0,.1) {$\quad$};

\draw (0,0) .. controls +(0,-1) and +(0,-1) .. +(2.5,0);
\draw (0.5,0) .. controls +(0,-.5) and +(0,-.5) .. +(.5,0);
\draw (1.5,0) .. controls +(0,-.5) and +(0,-.5) .. +(.5,0);

\node (A) at (0,-1.15) {$\quad$};
\end{tikzpicture}
&
\begin{tikzpicture}[thick,scale=0.4]
\node (B) at (0,.1) {$\quad$};

\draw (0,0) .. controls +(0,-.5) and +(0,-.5) .. +(.5,0);
\draw (1,0) .. controls +(0,-.5) and +(0,-.5) .. +(.5,0);
\draw (2,0) .. controls +(0,-.5) and +(0,-.5) .. +(.5,0);

\node (A) at (0,-1.15) {$\quad$};
\end{tikzpicture}
&
\begin{tikzpicture}[thick,scale=0.4]
\node (B) at (0,.1) {$\quad$};

\draw (0,0) .. controls +(0,-.5) and +(0,-.5) .. +(.5,0);
\draw (1,0) .. controls +(0,-.5) and +(0,-.5) .. +(.5,0);
\draw (2,0) .. controls +(0,-.5) and +(0,-.5) .. +(.5,0);

\node (A) at (0,-1.15) {$\quad$};
\end{tikzpicture}
&
\begin{tikzpicture}[thick,scale=0.4]
\node (B) at (0,.1) {$\quad$};

\draw (0,0) .. controls +(0,-.5) and +(0,-.5) .. +(.5,0);
\draw (1,0) .. controls +(0,-.5) and +(0,-.5) .. +(.5,0);
\draw (2,0) .. controls +(0,-.5) and +(0,-.5) .. +(.5,0);

\node (A) at (0,-1.15) {$\quad$};
\end{tikzpicture} \\
\hline
\cline{1-4} $\lambda^{\scriptscriptstyle(1)}$ &
\begin{tikzpicture}[thick,scale=0.4]
\node (B) at (0,.1) {$\quad$};

\draw (1,0) .. controls +(0,-1) and +(0,-1) .. +(1.5,0);
\draw (0,0) .. controls +(0,-.5) and +(0,-.5) .. +(.5,0);
\draw (1.5,0) .. controls +(0,-.5) and +(0,-.5) .. +(.5,0);

\node (A) at (0,-1.15) {$\quad$};
\end{tikzpicture}
&
\begin{tikzpicture}[thick,scale=0.4]
\node (B) at (0,.1) {$\quad$};

\draw (0,0) .. controls +(0,-1.25) and +(0,-1.25) .. +(2.5,0);
\draw (0.5,0) .. controls +(0,-1) and +(0,-1) .. +(1.5,0);
\draw (1,0) .. controls +(0,-.5) and +(0,-.5) .. +(.5,0);

\node (A) at (0,-1.15) {$\quad$};
\end{tikzpicture}
&
\begin{tikzpicture}[thick,scale=0.4]
\node (B) at (0,.1) {$\quad$};

\draw (0,0) .. controls +(0,-1) and +(0,-1) .. +(1.5,0);
\draw (.5,0) .. controls +(0,-.5) and +(0,-.5) .. +(.5,0);
\draw (2,0) .. controls +(0,-.5) and +(0,-.5) .. +(.5,0);

\node (A) at (0,-1.15) {$\quad$};
\end{tikzpicture}
\\
\cline{1-4} $\gamma$ & $\{1,2 \}$ & $\{1,2 \}$ & $\{1,2 \}$\\
\cline{1-4} $\delta$ & $\{2,3 \}$ & $\{2,3 \}$ & $\{2,3 \}$\\
\cline{1-4} $\lambda^{\scriptscriptstyle(3)}$ &
\begin{tikzpicture}[thick,scale=0.4]
\node (B) at (0,.1) {$\quad$};

\draw (0,0) .. controls +(0,-1) and +(0,-1) .. +(1.5,0);
\draw (.5,0) .. controls +(0,-.5) and +(0,-.5) .. +(.5,0);
\draw (2,0) .. controls +(0,-.5) and +(0,-.5) .. +(.5,0);

\node (A) at (0,-1.15) {$\quad$};
\end{tikzpicture}
&
\begin{tikzpicture}[thick,scale=0.4]
\node (B) at (0,.1) {$\quad$};

\draw (0,0) .. controls +(0,-.5) and +(0,-.5) .. +(.5,0);
\draw (1,0) .. controls +(0,-.5) and +(0,-.5) .. +(.5,0);
\draw (2,0) .. controls +(0,-.5) and +(0,-.5) .. +(.5,0);

\node (A) at (0,-1.15) {$\quad$};
\end{tikzpicture}
&
\begin{tikzpicture}[thick,scale=0.4]
\node (B) at (0,.1) {$\quad$};

\draw (1,0) .. controls +(0,-1) and +(0,-1) .. +(1.5,0);
\draw (0,0) .. controls +(0,-.5) and +(0,-.5) .. +(.5,0);
\draw (1.5,0) .. controls +(0,-.5) and +(0,-.5) .. +(.5,0);

\node (A) at (0,-1.15) {$\quad$};
\end{tikzpicture}\\
\cline{1-4}
\end{tabular}
\end{table}
\hfill\\
\item $\gamma \cap \delta  = \emptyset$: then all possible configurations are listed in the following table.

%Table~\ref{table_conf_2} below.
\end{enumerate}

\hfill\\

\begin{table}[!h]
\caption{Configurations for $\gamma \cap \delta  = \emptyset$}
\label{table_conf_2}
\begin{tabular}{|c|c|c|c|c|c|c|}
\hline $\lambda^{\scriptscriptstyle(1)}$ &
\begin{tikzpicture}[thick,scale=0.4]
\node (B) at (0,.1) {$\quad$};

\draw (0,0) .. controls +(0,-.5) and +(0,-.5) .. +(.5,0);
\draw (1,0) .. controls +(0,-.5) and +(0,-.5) .. +(.5,0);
\draw (2,0) .. controls +(0,-1) and +(0,-1) .. +(1.5,0);
\draw (2.5,0) .. controls +(0,-.5) and +(0,-.5) .. +(.5,0);

\node (A) at (0,-1.15) {$\quad$};
\end{tikzpicture}
&
\begin{tikzpicture}[thick,scale=0.4]
\draw (0,0) .. controls +(0,-.5) and +(0,-.5) .. +(.5,0);
\draw (1,0) .. controls +(0,-.5) and +(0,-.5) .. +(.5,0);
\draw (2,0) .. controls +(0,-.5) and +(0,-.5) .. +(.5,0);
\draw (3,0) .. controls +(0,-.5) and +(0,-.5) .. +(.5,0);

\node (A) at (0,-1.15) {$\quad$};
\end{tikzpicture}
&
\begin{tikzpicture}[thick,scale=0.4]
\draw (0,0) .. controls +(0,-.5) and +(0,-.5) .. +(.5,0);
\draw (1,0) .. controls +(0,-.5) and +(0,-.5) .. +(.5,0);
\draw (2,0) .. controls +(0,-.5) and +(0,-.5) .. +(.5,0);
\draw (3,0) .. controls +(0,-.5) and +(0,-.5) .. +(.5,0);

\node (A) at (0,-1.15) {$\quad$};
\end{tikzpicture}
&
\begin{tikzpicture}[thick,scale=0.4]
\draw (0,0) .. controls +(0,-1) and +(0,-1) .. +(1.5,0);
\draw (0.5,0) .. controls +(0,-.5) and +(0,-.5) .. +(.5,0);
\draw (2,0) .. controls +(0,-1) and +(0,-1) .. +(1.5,0);
\draw (2.5,0) .. controls +(0,-.5) and +(0,-.5) .. +(.5,0);

\node (A) at (0,-1.15) {$\quad$};
\end{tikzpicture}
&
\begin{tikzpicture}[thick,scale=0.4]
\draw (0,0) .. controls +(0,-1.25) and +(0,-1.25) .. +(3.5,0);
\draw (0.5,0) .. controls +(0,-1) and +(0,-1) .. +(1.5,0);
\draw (1,0) .. controls +(0,-.5) and +(0,-.5) .. +(.5,0);
\draw (2.5,0) .. controls +(0,-.5) and +(0,-.5) .. +(.5,0);

\node (A) at (0,-1.15) {$\quad$};
\end{tikzpicture}
&
\begin{tikzpicture}[thick,scale=0.4]
\draw (0,0) .. controls +(0,-1.5) and +(0,-1.5) .. +(3.5,0);
\draw (0.5,0) .. controls +(0,-1.25) and +(0,-1.25) .. +(2.5,0);
\draw (1,0) .. controls +(0,-1) and +(0,-1) .. +(1.5,0);
\draw (1.5,0) .. controls +(0,-.5) and +(0,-.5) .. +(.5,0);

\node (A) at (0,-1.15) {$\quad$};
\end{tikzpicture}

\\
\hline $\gamma$ & $\{1,2 \}$ & $\{1,2 \}$ & $\{1,4 \}$ & $\{1,2 \}$ & $\{2,3 \}$ & $\{1,2 \}$\\
\hline $\delta$ & $\{3,4 \}$ & $\{1,2 \}$ & $\{2,3 \}$ & $\{3,4 \}$ & $\{1,4 \}$ & $\{3,4 \}$\\
\hline $\lambda^{\scriptscriptstyle(3)}$ &
\begin{tikzpicture}[thick,scale=0.4]
\draw (0,0) .. controls +(0,-1) and +(0,-1) .. +(1.5,0);
\draw (.5,0) .. controls +(0,-.5) and +(0,-.5) .. +(.5,0);
\draw (2,0) .. controls +(0,-.5) and +(0,-.5) .. +(.5,0);
\draw (3,0) .. controls +(0,-.5) and +(0,-.5) .. +(.5,0);

\node (A) at (0,-1.15) {$\quad$};
\end{tikzpicture}
&
\begin{tikzpicture}[thick,scale=0.4]
\draw (0,0) .. controls +(0,-1) and +(0,-1) .. +(1.5,0);
\draw (0.5,0) .. controls +(0,-.5) and +(0,-.5) .. +(.5,0);
\draw (2,0) .. controls +(0,-1) and +(0,-1) .. +(1.5,0);
\draw (2.5,0) .. controls +(0,-.5) and +(0,-.5) .. +(.5,0);

\node (A) at (0,-1.15) {$\quad$};
\end{tikzpicture}
&
\begin{tikzpicture}[thick,scale=0.4]
\draw (0,0) .. controls +(0,-1.5) and +(0,-1.5) .. +(3.5,0);
\draw (0.5,0) .. controls +(0,-1.25) and +(0,-1.25) .. +(2.5,0);
\draw (1,0) .. controls +(0,-1) and +(0,-1) .. +(1.5,0);
\draw (1.5,0) .. controls +(0,-.5) and +(0,-.5) .. +(.5,0);

\node (A) at (0,-1.15) {$\quad$};
\end{tikzpicture}
&
\begin{tikzpicture}[thick,scale=0.4]
\draw (0,0) .. controls +(0,-.5) and +(0,-.5) .. +(.5,0);
\draw (1,0) .. controls +(0,-.5) and +(0,-.5) .. +(.5,0);
\draw (2,0) .. controls +(0,-.5) and +(0,-.5) .. +(.5,0);
\draw (3,0) .. controls +(0,-.5) and +(0,-.5) .. +(.5,0);

\node (A) at (0,-1.15) {$\quad$};
\end{tikzpicture}
&
\begin{tikzpicture}[thick,scale=0.4]
\draw (0,0) .. controls +(0,-1) and +(0,-1) .. +(2.5,0);
\draw (0.5,0) .. controls +(0,-.5) and +(0,-.5) .. +(.5,0);
\draw (1.5,0) .. controls +(0,-.5) and +(0,-.5) .. +(.5,0);
\draw (3,0) .. controls +(0,-.5) and +(0,-.5) .. +(.5,0);

\node (A) at (0,-1.15) {$\quad$};
\end{tikzpicture}
&
\begin{tikzpicture}[thick,scale=0.4]
\node (B) at (0,.1) {$\quad$};

\draw (0,0) .. controls +(0,-.5) and +(0,-.5) .. +(.5,0);
\draw (1,0) .. controls +(0,-.5) and +(0,-.5) .. +(.5,0);
\draw (2,0) .. controls +(0,-.5) and +(0,-.5) .. +(.5,0);
\draw (3,0) .. controls +(0,-.5) and +(0,-.5) .. +(.5,0);

\node (A) at (0,-1.15) {$\quad$};
\end{tikzpicture}
\\
\hline
\cline{1-4}
$\lambda^{\scriptscriptstyle(1)}$ &
\begin{tikzpicture}[thick,scale=0.4]
\node (B) at (0,.1) {$\quad$};

\draw (0,0) .. controls +(0,-.5) and +(0,-.5) .. +(.5,0);
\draw (1,0) .. controls +(0,-1) and +(0,-1) .. +(1.5,0);
\draw (1.5,0) .. controls +(0,-.5) and +(0,-.5) .. +(.5,0);
\draw (3,0) .. controls +(0,-.5) and +(0,-.5) .. +(.5,0);

\node (A) at (0,-1.15) {$\quad$};
\end{tikzpicture}
&
\begin{tikzpicture}[thick,scale=0.4]
\draw (0,0) .. controls +(0,-.5) and +(0,-.5) .. +(.5,0);
\draw (1,0) .. controls +(0,-1) and +(0,-1) .. +(2.5,0);
\draw (1.5,0) .. controls +(0,-.5) and +(0,-.5) .. +(.5,0);
\draw (2.5,0) .. controls +(0,-.5) and +(0,-.5) .. +(.5,0);

\node (A) at (0,-1.15) {$\quad$};
\end{tikzpicture}
&
\begin{tikzpicture}[thick,scale=0.4]
\draw (0,0) .. controls +(0,-1.25) and +(0,-1.25) .. +(3.5,0);
\draw (0.5,0) .. controls +(0,-1) and +(0,-1) .. +(2.5,0);
\draw (1,0) .. controls +(0,-.5) and +(0,-.5) .. +(.5,0);
\draw (2,0) .. controls +(0,-.5) and +(0,-.5) .. +(.5,0);

\node (A) at (0,-1.15) {$\quad$};
\end{tikzpicture}

\\
\cline{1-4} $\alpha$ & $\{1,4 \}$ & $\{1,2 \}$ & $\{3,4 \}$ \\
\cline{1-4} $\beta$ & $\{2,3 \}$ & $\{3,4 \}$ & $\{1,4 \}$ \\
\cline{1-4} $\lambda^{\scriptscriptstyle(3)}$ &
\begin{tikzpicture}[thick,scale=0.4]
\node (B) at (0,.1) {$\quad$};

\draw (0,0) .. controls +(0,-1.25) and +(0,-1.25) .. +(3.5,0);
\draw (0.5,0) .. controls +(0,-1) and +(0,-1) .. +(2.5,0);
\draw (1,0) .. controls +(0,-.5) and +(0,-.5) .. +(.5,0);
\draw (2,0) .. controls +(0,-.5) and +(0,-.5) .. +(.5,0);

\node (A) at (0,-1.15) {$\quad$};
\end{tikzpicture}
&
\begin{tikzpicture}[thick,scale=0.4]
\draw (0,0) .. controls +(0,-1.25) and +(0,-1.25) .. +(3.5,0);
\draw (1.5,0) .. controls +(0,-1) and +(0,-1) .. +(1.5,0);
\draw (2,0) .. controls +(0,-.5) and +(0,-.5) .. +(.5,0);
\draw (0.5,0) .. controls +(0,-.5) and +(0,-.5) .. +(.5,0);

\node (A) at (0,-1.15) {$\quad$};
\end{tikzpicture}
&
\begin{tikzpicture}[thick,scale=0.4]
\draw (0,0) .. controls +(0,-.5) and +(0,-.5) .. +(.5,0);
\draw (1,0) .. controls +(0,-1) and +(0,-1) .. +(1.5,0);
\draw (1.5,0) .. controls +(0,-.5) and +(0,-.5) .. +(.5,0);
\draw (3,0) .. controls +(0,-.5) and +(0,-.5) .. +(.5,0);

\node (A) at (0,-1.15) {$\quad$};
\end{tikzpicture}
\\
\cline{1-4}
\end{tabular}
\end{table}
\hfill\\
Then part 1.) of the proposition follows by adding all possible configurations of dots to the diamonds; while 2.) follows from the definitions and an easy case-by-case argument which is left to the reader.
\end{proof}

\subsection{The main Isomorphism Theorem}
To prove Theorem~\ref{thm:main} it is enough to establish an isomorphism of algebras $\ABr\cong \mathbb{D}_k = \mathbb{D}_{\Lambda_k^{\ov{0}}}$. 

Recalling the special elements from Definition~\ref{def:specialelements}, our Theorem \ref{thm:main} can be refined as follows:
\begin{theorem}
\label{mainthm}
Let $k\geq 4$. The map $\Phi=\Phi_k$, defined on generators by
\begin{eqnarray} \label{isops}
\Phi_k \,: \,\ABr & \longrightarrow & \mathbb{D}_k \\ 
e_\lambda & \longmapsto & {}_{\lambda} \mathbbm{1}_{\lambda}, \nonumber \\
t_{\alpha,\lambda} & \longmapsto &
\begin{cases}
{}_{\lambda} \mathbbm{1}_{\lambda} + X_{\alpha,\lambda} & \qquad \,\, \text{ if } 1 \leq \alpha \leq k, \\
{}_{\lambda} \mathbbm{1}_{\lambda} - X_{-\alpha,\lambda} & \qquad \,\, \text{ if } -k \leq \alpha \leq -1, \\
{}_{\lambda} \mathbbm{1}_{\lambda} & \qquad \,\,  \text{ otherwise,}
\end{cases}\nonumber\\
p(\lambda,\mu) & \longmapsto & {}_{\lambda} \mathbbm{1}_{\mu} + \frac{1}{2} (-1)^j {}_{\lambda} X_{\mu} \qquad  \text{ if } \lambda \stackrel{(\pm i,j)}\longleftrightarrow \mu, \nonumber
\end{eqnarray}
is a (well-defined) isomorphism of algebras, equipping Braden's algebra $\ABr$ with a non-negative $\mathbb{Z}$-grading.
\end{theorem}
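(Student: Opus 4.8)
The plan is to establish the two assertions separately: first that $\Phi_k$ is a well-defined algebra homomorphism, i.e.\ that it respects the defining relations (R-1)--(R-5) of $\ABr$, and then that the resulting map $\Phi_k\colon\ABr\to\mathbb{D}_k$ is bijective. Throughout I would compute all products on the target side by means of the explicit multiplication rules of Section~\ref{sec:explicitmult} together with the commutative-algebra model $\cM(\un\la\ov\mu)$ of Proposition~\ref{coho}, identifying $X_{i,\lambda}$ with the variable $X_i$ (up to sign) acting on the relevant circle. The comparison of the two notions of $\lambda$-pair in Lemma~\ref{lem:comparelapairs} guarantees that the generators $p(\lambda,\mu)$ and $t_{\alpha,\lambda}$ indexed as in \cite{Braden} genuinely correspond to the arrows and vertex-data of our quiver, so that the relations may be checked diagram by diagram.

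For well-definedness, the idempotent relations (R-1) are immediate from orthogonality of the ${}_\lambda\mathbbm{1}_\lambda$ (Theorem~\ref{algebra_structure}) together with the bimodule decomposition ${}_\lambda(\mathbb{D}_k)_\mu$. The relations (R-2) describing the commutative subalgebra reduce, for $\lambda=\nu$, to computations inside the commutative ring $e_\lambda\mathbb{D}_k e_\lambda=\op{End}(P(\lambda))$ (Corollary~\ref{lem:endo_commutative}), the case $\lambda\neq\nu$ being zero on both sides by (R-1); here $t_{\alpha,\lambda}t_{-\alpha,\lambda}=e_\lambda$ and the $\lambda$-pair relation (R-2)\,d) follow at once from $X_{i,\lambda}^2=0$ and from the ideal relations $X_a\pm X_b$ attached to cups in $\cM(\un\la\ov\la)$, while $t_{\alpha,\lambda}=e_\lambda$ for $|\alpha|>k$ holds because there is no vertex $\alpha$ and hence $X_{\alpha,\lambda}=0$. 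The arrow relations (R-3) amount to checking that the $\cM$-actions of $X_\alpha$ before and after the arrow ${}_\lambda\mathbbm{1}_\mu$ agree, which is again a short surgery computation.

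The genuine work lies in the loop relations (R-4) and the diamond relations (R-5). For (R-4) one must expand $\op{m}(\lambda,\lambda')^{(-1)^\beta}=(e_\lambda+p(\lambda,\lambda')p(\lambda',\lambda))^{\pm1}$, evaluate the double arrow $p(\lambda,\lambda')p(\lambda',\lambda)$ in $\mathbb{D}_k$, and match the result against $t_{\alpha,\lambda}t_{\zeta,\lambda}$, where the auxiliary index $\zeta$ is governed by whether the $\lambda$-pair $(\alpha,\beta)$ has a parent and by the precise nested/dotted configuration of that parent; this is exactly the point where the case list \eqref{parent1}--\eqref{parent4} and the exponent $(-1)^\beta$ must be reconciled with the orientation-reversing dots and with the signs $(-1)^{\pos(i)}$ appearing in the split rule. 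For (R-5) I would invoke the diagrammatic classification of all diamonds in Proposition~\ref{diamondsclass}: relation (R-5)\,i) becomes the statement that the two length-two paths around any diamond in \eqref{diamonds1}--\eqref{diamonds2} yield the same oriented circle diagram, verified by performing the two merges (cf.\ Example~\ref{triples}), whereas (R-5)\,ii) and iii) become the statement that the corresponding composite vanishes, which follows because the relevant diagram $\un{\la^{\scriptscriptstyle(1)}}\ov{\la^{\scriptscriptstyle(3)}}$ is non-orientable (again as in Example~\ref{triples}). Carrying the signs uniformly through all of these cases --- in particular reconciling Braden's exponent $(-1)^\beta$ with our decoration-dependent signs --- is the main obstacle; the argument will be case-heavy but elementary.

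For the isomorphism, surjectivity comes almost for free. Since $\Phi_k(t_{\alpha,\lambda})-\Phi_k(e_\lambda)=X_{\alpha,\lambda}$ for $1\le\alpha\le k$, every $X_{\alpha,\lambda}$ lies in the image $A=\IM\Phi_k$. Given $\lambda\leftrightarrow\mu$, left-multiplying $\Phi_k(p(\lambda,\mu))={}_\lambda\mathbbm{1}_\mu+\tfrac12(-1)^j{}_\lambda X_\mu$ by $X_{i,\lambda}$ with $i$ an endpoint of the defining cup annihilates the second summand (reversing the same circle twice realises $X_i^2=0$) and returns $\pm\,{}_\lambda X_\mu$; hence ${}_\lambda X_\mu\in A$ and therefore ${}_\lambda\mathbbm{1}_\mu\in A$. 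Thus $A$ contains all degree $0$ and degree $1$ elements, so $A=\mathbb{D}_k$ by Theorem~\ref{prop:generatedindegree1}. Finally, both algebras are basic with primitive orthogonal idempotents $e_\lambda$, so each has dimension equal to the sum of the entries of its Cartan matrix at $q=1$; these Cartan matrices coincide by Corollary~\ref{CorCartan}, whence $\dim\ABr=\dim\mathbb{D}_k$ and the surjection $\Phi_k$ is an isomorphism. Transporting the positive grading of $\mathbb{D}_k$ (Theorem~\ref{algebra_structure}) through $\Phi_k^{-1}$ then equips $\ABr$ with the asserted non-negative $\mathbb{Z}$-grading.
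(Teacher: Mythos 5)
Your proposal follows essentially the same route as the paper's own proof: well-definedness is established relation by relation using the $\cM$-model of Proposition~\ref{coho}, the parent cases \eqref{parent1}--\eqref{parent4} for (R-4), and the diamond classification of Proposition~\ref{diamondsclass} together with non-orientability for (R-5) (this is exactly the content of Proposition~\ref{welldefined} and Lemmas~\ref{stupidcalcc}--\ref{stupidcalca}); surjectivity then comes from generation in degrees $0$ and $1$ (Theorem~\ref{prop:generatedindegree1}), and injectivity from the equality of dimensions via the Cartan matrices (Corollary~\ref{CorCartan}). The only difference is presentational: you defer the sign-heavy case verifications for (R-4) and (R-5) that the paper works out explicitly in tables, but you correctly identify the cases, the tools, and where the difficulty (reconciling Braden's $(-1)^\beta$ with the decoration-dependent signs) lies.
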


\begin{proof}
The most involved part of the proof is the well-definedness of $\Phi$ which follows from Proposition~\ref{welldefined} below. By definition the image of $\Phi$ contains all idempotents ${}_{\lambda} \mathbbm{1}_{\lambda}$ and by looking at $\Phi(t_{\alpha,\la}^2)$ also all $X_{\alpha,\lambda}$. Since  ${}_{\lambda} \mathbbm{1}_{\mu}$ is  contained in the vector space span of $\Phi(p(\lambda,\mu))$ and $X_{\alpha,\lambda}\Phi(p(\lambda,\mu))$ it is in the image of $\Phi$ as well. Therefore, by Theorem~\ref{prop:generatedindegree1} the map $\Phi$ is surjective. But then it is already an isomorphism, since by Corollary~\ref{CorCartan} the two algebras have the same dimension.
\end{proof}
\begin{example}
We illustrate here Relation (R-\ref{rel7}) i) by an explicit example  in $\mathbb{D}_6$ keeping carefully track of signs. Consider the diamond
\begin{eqnarray*}
\begin{tikzpicture}[thick,scale=0.7]
\draw (2,0) .. controls +(0,-.5) and +(0,-.5) .. +(.5,0);
\draw (3,0) .. controls +(0,-.5) and +(0,-.5) .. +(.5,0);
\draw (4,0) .. controls +(0,-.5) and +(0,-.5) .. +(.5,0);
\fill (4.25,-.35) circle(2.5pt);
\node at (0.8,-.2) {$\underline{\lambda^{\scriptscriptstyle (1)}}\;=$};

\draw[<-] (2.5,-1) -- +(-1,-1);
\draw[->] (4,-1) -- +(1,-1);
\node at (1.2,-1.3) {$(2,3)$};
\node at (5.3,-1.3) {$(1,2)$};

\draw (0,-2.5) .. controls +(0,-1) and +(0,-1) .. +(1.5,0);
\draw (.5,-2.5) .. controls +(0,-.5) and +(0,-.5) .. +(.5,0);
\draw (2,-2.5) .. controls +(0,-.5) and +(0,-.5) .. +(.5,0);
\fill (2.25,-2.85) circle(2.5pt);
\node at (-1.3,-2.7) {$\underline{\lambda^{\scriptscriptstyle (2)}}\;=$};

\draw (4,-2.5) .. controls +(0,-1.25) and +(0,-1.25) .. +(2.5,0);
\fill (5.25,-3.45) circle(2.5pt);
\draw (4.5,-2.5) .. controls +(0,-1) and +(0,-1) .. +(1.5,0);
\draw (5,-2.5) .. controls +(0,-.5) and +(0,-.5) .. +(.5,0);
\node at (7.5,-2.7) {$=\;\underline{\lambda^{\scriptscriptstyle (4)}}$};

\draw[<-] (2.5,-4.7) -- +(-1,1);
\draw[<-] (4,-4.7) -- +(1,1);
\node at (1.2,-4.3) {$(1,4)$};
\node at (5.3,-4.3) {$(3,4)$};

\draw (2,-5) .. controls +(0,-1) and +(0,-1) .. +(2.5,0);
\fill (3.25,-5.75) circle(2.5pt);
\draw (2.5,-5) .. controls +(0,-.5) and +(0,-.5) .. +(.5,0);
\draw (3.5,-5) .. controls +(0,-.5) and +(0,-.5) .. +(.5,0);
\node at (1.1,-5.2) {$\underline{\lambda^{\scriptscriptstyle (3)}}\;=$};
\end{tikzpicture}
\end{eqnarray*}
We now verify that the following diamond relation holds in $\mathbb{D}_6$.
\begin{eqnarray*}
    \Phi(p(\la^{\scriptscriptstyle(3)},\la^{\scriptscriptstyle(2)}))\Phi(p(\la^{\scriptscriptstyle(2)},\la^{\scriptscriptstyle(1)}))&=&\Phi(p(\la^{\scriptscriptstyle(3)},\la^{\scriptscriptstyle(4)}))\Phi(p(\la^{\scriptscriptstyle(4)},\la^{\scriptscriptstyle(1)})).
\end{eqnarray*}
For the left hand side we calculate
\begin{eqnarray*}
\left(
\hskip-.3cm \begin{array}{c}\begin{tikzpicture}[thick,scale=0.5]
\draw (0,0) .. controls +(0,1) and +(0,1) .. +(1.5,0);
\draw (.5,0) .. controls +(0,.5) and +(0,.5) .. +(.5,0);
\draw (2,0) .. controls +(0,.5) and +(0,.5) .. +(.5,0);
\fill (2.25,.35) circle(2.5pt);
\node at (0,0) {$\scriptstyle \up$};
\node at (.5,0) {$\scriptstyle \down$};
\node at (1,0) {$\scriptstyle \up$};
\node at (1.5,0) {$\scriptstyle \down$};
\node at (2,0) {$\scriptstyle \up$};
\node at (2.5,0) {$\scriptstyle \up$};
\draw (0,0) .. controls +(0,-1) and +(0,-1) .. +(2.5,0);
\fill (1.25,-.75) circle(2.5pt);
\draw (0.5,0) .. controls +(0,-.5) and +(0,-.5) .. +(.5,0);
\draw (1.5,0) .. controls +(0,-.5) and +(0,-.5) .. +(.5,0);
\end{tikzpicture}\end{array}\hskip-.3cm
+\frac{1}{2}
\hskip-.3cm \begin{array}{c}\begin{tikzpicture}[thick,scale=0.5]
\draw (0,0) .. controls +(0,1) and +(0,1) .. +(1.5,0);
\draw (.5,0) .. controls +(0,.5) and +(0,.5) .. +(.5,0);
\draw (2,0) .. controls +(0,.5) and +(0,.5) .. +(.5,0);
\fill (2.25,.35) circle(2.5pt);
\node at (0,0) {$\scriptstyle \down$};
\node at (.5,0) {$\scriptstyle \down$};
\node at (1,0) {$\scriptstyle \up$};
\node at (1.5,0) {$\scriptstyle \up$};
\node at (2,0) {$\scriptstyle \down$};
\node at (2.5,0) {$\scriptstyle \down$};
\draw (0,0) .. controls +(0,-1) and +(0,-1) .. +(2.5,0);
\fill (1.25,-.75) circle(2.5pt);
\draw (0.5,0) .. controls +(0,-.5) and +(0,-.5) .. +(.5,0);
\draw (1.5,0) .. controls +(0,-.5) and +(0,-.5) .. +(.5,0);
\end{tikzpicture}\end{array}\hskip-.3cm
\right)\left(
\hskip-.3cm \begin{array}{c}\begin{tikzpicture}[thick,scale=0.5]
\draw (0,0) .. controls +(0,.5) and +(0,.5) .. +(.5,0);
\draw (1,0) .. controls +(0,.5) and +(0,.5) .. +(.5,0);
\draw (2,0) .. controls +(0,.5) and +(0,.5) .. +(.5,0);
\fill (2.25,.35) circle(2.5pt);
\node at (0,0) {$\scriptstyle \down$};
\node at (.5,0) {$\scriptstyle \up$};
\node at (1,0) {$\scriptstyle \down$};
\node at (1.5,0) {$\scriptstyle \up$};
\node at (2,0) {$\scriptstyle \up$};
\node at (2.5,0) {$\scriptstyle \up$};
\draw (0,0) .. controls +(0,-1) and +(0,-1) .. +(1.5,0);
\draw (.5,0) .. controls +(0,-.5) and +(0,-.5) .. +(.5,0);
\draw (2,0) .. controls +(0,-.5) and +(0,-.5) .. +(.5,0);
\fill (2.25,-.35) circle(2.5pt);
\end{tikzpicture}\end{array}\hskip-.3cm
-\frac{1}{2}
\hskip-.3cm \begin{array}{c}\begin{tikzpicture}[thick,scale=0.5]
\draw (0,0) .. controls +(0,.5) and +(0,.5) .. +(.5,0);
\draw (1,0) .. controls +(0,.5) and +(0,.5) .. +(.5,0);
\draw (2,0) .. controls +(0,.5) and +(0,.5) .. +(.5,0);
\fill (2.25,.35) circle(2.5pt);
\node at (0,0) {$\scriptstyle \up$};
\node at (.5,0) {$\scriptstyle \down$};
\node at (1,0) {$\scriptstyle \up$};
\node at (1.5,0) {$\scriptstyle \down$};
\node at (2,0) {$\scriptstyle \down$};
\node at (2.5,0) {$\scriptstyle \down$};
\draw (0,0) .. controls +(0,-1) and +(0,-1) .. +(1.5,0);
\draw (.5,0) .. controls +(0,-.5) and +(0,-.5) .. +(.5,0);
\draw (2,0) .. controls +(0,-.5) and +(0,-.5) .. +(.5,0);
\fill (2.25,-.35) circle(2.5pt);
\end{tikzpicture}\end{array}\hskip-.3cm
\right)=
\hskip-.3cm \begin{array}{c}\begin{tikzpicture}[thick,scale=0.5]
\draw (0,0) .. controls +(0,.5) and +(0,.5) .. +(.5,0);
\draw (1,0) .. controls +(0,.5) and +(0,.5) .. +(.5,0);
\draw (2,0) .. controls +(0,.5) and +(0,.5) .. +(.5,0);
\fill (2.25,.35) circle(2.5pt);
\node at (0,0) {$\scriptstyle \up$};
\node at (.5,0) {$\scriptstyle \down$};
\node at (1,0) {$\scriptstyle \up$};
\node at (1.5,0) {$\scriptstyle \down$};
\node at (2,0) {$\scriptstyle \up$};
\node at (2.5,0) {$\scriptstyle \up$};
\draw (0,0) .. controls +(0,-1) and +(0,-1) .. +(2.5,0);
\fill (1.25,-.75) circle(2.5pt);
\draw (0.5,0) .. controls +(0,-.5) and +(0,-.5) .. +(.5,0);
\draw (1.5,0) .. controls +(0,-.5) and +(0,-.5) .. +(.5,0);
\end{tikzpicture}\end{array}\hskip-.3cm
+
\hskip-.3cm \begin{array}{c}\begin{tikzpicture}[thick,scale=0.5]
\draw (0,0) .. controls +(0,.5) and +(0,.5) .. +(.5,0);
\draw (1,0) .. controls +(0,.5) and +(0,.5) .. +(.5,0);
\draw (2,0) .. controls +(0,.5) and +(0,.5) .. +(.5,0);
\fill (2.25,.35) circle(2.5pt);
\node at (0,0) {$\scriptstyle \down$};
\node at (.5,0) {$\scriptstyle \up$};
\node at (1,0) {$\scriptstyle \down$};
\node at (1.5,0) {$\scriptstyle \up$};
\node at (2,0) {$\scriptstyle \down$};
\node at (2.5,0) {$\scriptstyle \down$};
\draw (0,0) .. controls +(0,-1) and +(0,-1) .. +(2.5,0);
\fill (1.25,-.75) circle(2.5pt);
\draw (0.5,0) .. controls +(0,-.5) and +(0,-.5) .. +(.5,0);
\draw (1.5,0) .. controls +(0,-.5) and +(0,-.5) .. +(.5,0);
\end{tikzpicture}\end{array}\hskip-.3cm
\end{eqnarray*}
\noindent where the signs are determined by the second component in the $\la$-pairs. For the right hand side we calculate
\begin{eqnarray*}
\left(
\hskip-.3cm \begin{array}{c}\begin{tikzpicture}[thick,scale=0.5]
\draw (0,0) .. controls +(0,1.25) and +(0,1.25) .. +(2.5,0);
\fill (1.25,.95) circle(2.5pt);
\draw (0.5,0) .. controls +(0,1) and +(0,1) .. +(1.5,0);
\draw (1,0) .. controls +(0,.5) and +(0,.5) .. +(.5,0);
\node at (0,0) {$\scriptstyle \up$};
\node at (.5,0) {$\scriptstyle \down$};
\node at (1,0) {$\scriptstyle \up$};
\node at (1.5,0) {$\scriptstyle \down$};
\node at (2,0) {$\scriptstyle \up$};
\node at (2.5,0) {$\scriptstyle \up$};
\draw (0,0) .. controls +(0,-1) and +(0,-1) .. +(2.5,0);
\fill (1.25,-.75) circle(2.5pt);
\draw (0.5,0) .. controls +(0,-.5) and +(0,-.5) .. +(.5,0);
\draw (1.5,0) .. controls +(0,-.5) and +(0,-.5) .. +(.5,0);
\end{tikzpicture}\end{array}\hskip-.3cm
+\frac{1}{2}
\hskip-.3cm \begin{array}{c}\begin{tikzpicture}[thick,scale=0.5]
\draw (0,0) .. controls +(0,1.25) and +(0,1.25) .. +(2.5,0);
\fill (1.25,.95) circle(2.5pt);
\draw (0.5,0) .. controls +(0,1) and +(0,1) .. +(1.5,0);
\draw (1,0) .. controls +(0,.5) and +(0,.5) .. +(.5,0);
\node at (0,0) {$\scriptstyle \up$};
\node at (.5,0) {$\scriptstyle \up$};
\node at (1,0) {$\scriptstyle \down$};
\node at (1.5,0) {$\scriptstyle \up$};
\node at (2,0) {$\scriptstyle \down$};
\node at (2.5,0) {$\scriptstyle \up$};
\draw (0,0) .. controls +(0,-1) and +(0,-1) .. +(2.5,0);
\fill (1.25,-.75) circle(2.5pt);
\draw (0.5,0) .. controls +(0,-.5) and +(0,-.5) .. +(.5,0);
\draw (1.5,0) .. controls +(0,-.5) and +(0,-.5) .. +(.5,0);
\end{tikzpicture}\end{array}\hskip-.3cm
\right)\left(
\hskip-.3cm \begin{array}{c}\begin{tikzpicture}[thick,scale=0.5]
\draw (0,0) .. controls +(0,.5) and +(0,.5) .. +(.5,0);
\draw (1,0) .. controls +(0,.5) and +(0,.5) .. +(.5,0);
\draw (2,0) .. controls +(0,.5) and +(0,.5) .. +(.5,0);
\fill (2.25,.35) circle(2.5pt);
\node at (0,0) {$\scriptstyle \up$};
\node at (.5,0) {$\scriptstyle \down$};
\node at (1,0) {$\scriptstyle \down$};
\node at (1.5,0) {$\scriptstyle \up$};
\node at (2,0) {$\scriptstyle \up$};
\node at (2.5,0) {$\scriptstyle \up$};
\draw (0,0) .. controls +(0,-1.25) and +(0,-1.25) .. +(2.5,0);
\fill (1.25,-.95) circle(2.5pt);
\draw (0.5,0) .. controls +(0,-1) and +(0,-1) .. +(1.5,0);
\draw (1,0) .. controls +(0,-.5) and +(0,-.5) .. +(.5,0);
\end{tikzpicture}\end{array}\hskip-.3cm
+\frac{1}{2}
\hskip-.3cm \begin{array}{c}\begin{tikzpicture}[thick,scale=0.5]
\draw (0,0) .. controls +(0,.5) and +(0,.5) .. +(.5,0);
\draw (1,0) .. controls +(0,.5) and +(0,.5) .. +(.5,0);
\draw (2,0) .. controls +(0,.5) and +(0,.5) .. +(.5,0);
\fill (2.25,.35) circle(2.5pt);
\node at (0,0) {$\scriptstyle \down$};
\node at (.5,0) {$\scriptstyle \up$};
\node at (1,0) {$\scriptstyle \down$};
\node at (1.5,0) {$\scriptstyle \up$};
\node at (2,0) {$\scriptstyle \down$};
\node at (2.5,0) {$\scriptstyle \down$};
\draw (0,0) .. controls +(0,-1.25) and +(0,-1.25) .. +(2.5,0);
\fill (1.25,-.95) circle(2.5pt);
\draw (0.5,0) .. controls +(0,-1) and +(0,-1) .. +(1.5,0);
\draw (1,0) .. controls +(0,-.5) and +(0,-.5) .. +(.5,0);
\end{tikzpicture}\end{array}\hskip-.3cm
\right)=
\hskip-.3cm \begin{array}{c}\begin{tikzpicture}[thick,scale=0.5]
\draw (0,0) .. controls +(0,.5) and +(0,.5) .. +(.5,0);
\draw (1,0) .. controls +(0,.5) and +(0,.5) .. +(.5,0);
\draw (2,0) .. controls +(0,.5) and +(0,.5) .. +(.5,0);
\fill (2.25,.35) circle(2.5pt);
\node at (0,0) {$\scriptstyle \up$};
\node at (.5,0) {$\scriptstyle \down$};
\node at (1,0) {$\scriptstyle \up$};
\node at (1.5,0) {$\scriptstyle \down$};
\node at (2,0) {$\scriptstyle \up$};
\node at (2.5,0) {$\scriptstyle \up$};
\draw (0,0) .. controls +(0,-1) and +(0,-1) .. +(2.5,0);
\fill (1.25,-.75) circle(2.5pt);
\draw (0.5,0) .. controls +(0,-.5) and +(0,-.5) .. +(.5,0);
\draw (1.5,0) .. controls +(0,-.5) and +(0,-.5) .. +(.5,0);
\end{tikzpicture}\end{array}\hskip-.3cm
+
\hskip-.3cm \begin{array}{c}\begin{tikzpicture}[thick,scale=0.5]
\draw (0,0) .. controls +(0,.5) and +(0,.5) .. +(.5,0);
\draw (1,0) .. controls +(0,.5) and +(0,.5) .. +(.5,0);
\draw (2,0) .. controls +(0,.5) and +(0,.5) .. +(.5,0);
\fill (2.25,.35) circle(2.5pt);
\node at (0,0) {$\scriptstyle \down$};
\node at (.5,0) {$\scriptstyle \up$};
\node at (1,0) {$\scriptstyle \down$};
\node at (1.5,0) {$\scriptstyle \up$};
\node at (2,0) {$\scriptstyle \down$};
\node at (2.5,0) {$\scriptstyle \down$};
\draw (0,0) .. controls +(0,-1) and +(0,-1) .. +(2.5,0);
\fill (1.25,-.75) circle(2.5pt);
\draw (0.5,0) .. controls +(0,-.5) and +(0,-.5) .. +(.5,0);
\draw (1.5,0) .. controls +(0,-.5) and +(0,-.5) .. +(.5,0);
\end{tikzpicture}\end{array}\hskip-.3cm
\end{eqnarray*}
where now all signs are positive. 
Note for instance that the different signs in the second factor for both sides are taken care of by the different signs in the surgeries occurring on both sides.
\end{example}

\subsection{Well-definedness of the map $\Phi$}

The final step is to show that $\Phi$ is well-defined.
\begin{prop}
\label{welldefined}
The map $\Phi$ from Theorem \ref{mainthm} is well-defined.
\end{prop}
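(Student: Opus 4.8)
The plan is to verify that $\Phi$ respects each of the defining relations (R-\ref{rel1})--(R-\ref{rel7}) of $\ABr$ from Definition~\ref{Bradenalgebra}; since $\ABr$ is presented by these generators and relations, showing that the images $\Phi(e_\lambda)$, $\Phi(t_{\alpha,\lambda})$, $\Phi(p(\lambda,\mu))$ satisfy them verbatim in $\mathbb{D}_k$ is exactly what well-definedness means. The central computational device will be the identification $\Psi_{\underline{\la}\ov{\mu}}$ of Proposition~\ref{coho}, which turns products of diagrams in ${}_\mu(\mathbb{D}_k)_\la$ into explicit elements of the quotient polynomial rings $\cM(\underline{\la}\ov{\mu})$, together with the explicit merge/split rules of Section~\ref{sec:explicitmult} and their algebraic reformulations in Lemmas~\ref{lem:algebraic_merge} and~\ref{lem:algebraic_split}. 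Under $\Psi_{\underline{\la}\ov{\la}}$ the diagonal block ${}_\lambda(\mathbb{D}_k)_\lambda = e_\la\mathbb{D}_k e_\la$ becomes the commutative ring $\cM(\underline{\la}\ov{\la}) \cong \bigotimes_C \mathbb{C}[x_C]/(x_C^2)$, one tensor factor per circle $C$ of $\underline{\la}\ov{\la}$, and under this identification $X_{i,\lambda}$ maps to the variable $X_i$: the sign ${\rm sign}_{\underline{\la}\ov{\la}}(i,0)$ built into Definition~\ref{def:specialelements} is precisely what converts the reoriented-circle diagram, which equals $X_{\op{t}(C)}$, into $X_i$ via the cup relations in $\cM(\underline{\la}\ov{\la})$.

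First I would dispose of the easy relations. The idempotent relations (R-\ref{rel1}) are immediate from the orthogonality and the left/right absorption of the ${}_\lambda\mathbbm{1}_\lambda$ recorded in Theorem~\ref{algebra_structure}. The commutative-subalgebra relations (R-\ref{rel4a}) become computations in $\cM(\underline{\la}\ov{\la})$: commutativity (b) is Corollary~\ref{lem:endo_commutative}; relation (a) is the ``otherwise'' branch in the definition of $\Phi(t_{\alpha,\lambda})$; and (c), (d) reduce to the identities $(1+X_\alpha)(1-X_\alpha)=1$ and, for a $\lambda$-pair $(\alpha,\beta)$, $(1\pm X_\alpha)(1\pm X_\beta)=1$, which hold because $X_\alpha^2=0$ together with the relation $X_\alpha+X_\beta=0$ (undotted cup) or $X_\alpha-X_\beta=0$ (dotted cup) in $\cM(\underline{\la}\ov{\la})$.

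The substance lies in the relations involving the off-diagonal generators $p(\lambda,\mu)\mapsto {}_\lambda\mathbbm{1}_\mu+\tfrac12(-1)^j\,{}_\lambda X_\mu$, whose products pass through the nonlocal signed surgery procedure. For the arrow relations (R-\ref{rel4b}), both sides amount to multiplying $\Psi(p(\lambda,\mu))\in\cM(\underline{\la}\ov{\mu})$ by the class $X_\alpha$, from the $\mu$-side and the $\lambda$-side respectively; they agree because in $\cM(\underline{\la}\ov{\mu})$ the class $X_\alpha$ is independent of the level. For the loop relations (R-\ref{rel6}) I would expand $\op{m}(\lambda,\lambda')=e_\lambda+p(\lambda,\lambda')p(\lambda',\lambda)$, note that $p(\lambda,\lambda')p(\lambda',\lambda)$ is a degree-$2$ element obtained by one merge followed by one split, and compare the resulting $1+(\text{nilpotent})$ (or its inverse, as dictated by the exponent $(-1)^\beta$) with the prescribed product $t_{\alpha,\lambda}t_{\zeta,\lambda}$ of diagonal generators; the parent datum $(\alpha',\beta')$ entering $\zeta$ corresponds to the nesting/dottedness configurations of cups recalled just before Definition~\ref{Bradenalgebra}. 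Finally, for the diamond relations (R-\ref{rel7}) I would invoke the complete classification in Proposition~\ref{diamondsclass}: relation (i) is reduced to the finite list of local pictures in \eqref{diamonds1}--\eqref{diamonds2} (with all admissible dottings and their ray-degenerations from part~2 of that proposition), on each of which both length-two composites are products of two surgeries to be evaluated and matched --- the two $\tfrac12(-1)^j$ coefficients conspiring with the surgery signs exactly as in the explicit $\mathbb{D}_6$ computation preceding this proposition; relations (ii) and (iii) assert the vanishing of a composite, which follows because the intermediate circle diagram $\underline{\la^{\scriptscriptstyle(1)}}\ov{\la^{\scriptscriptstyle(3)}}$ is non-orientable (as in Example~\ref{triples}), so $\Psi$ sends the product to $0$.

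I expect the diamond relation (R-\ref{rel7})(i) and the loop relation (R-\ref{rel6}) to be the main obstacle, for the same reason associativity was delicate: the surgery signs are genuinely nonlocal, and the half-integer coefficients $\tfrac12(-1)^j$ are pinned down only by demanding simultaneous consistency across all of these relations. Concretely, the hard point is to check that for every diamond in the classification the two paths produce identical signed combinations of degree-$2$ diagrams, and that the coefficients are compatible with the two distinct orientation-reversals $X_{i,\lambda}$ available on a merged circle. Organising the sign bookkeeping through the tag-independent quantity ${\rm sign}_D$ of Lemma~\ref{lem:sign_of_pos} and its product form in Remark~\ref{rmk:signonsequence} should keep this manageable, but it remains an unavoidable case check over the list in Proposition~\ref{diamondsclass}.
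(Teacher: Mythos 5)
Your proposal follows essentially the same route as the paper's own proof: a relation-by-relation verification using the algebraic model $\cM(\cdot)$ of Proposition~\ref{coho} and Corollary~\ref{lem:endo_commutative} for (R-1)--(R-3), the parent-cup configurations for the loop relations (R-4), non-orientability of $\underline{\la^{\scriptscriptstyle(1)}}\ov{\la^{\scriptscriptstyle(3)}}$ for the vanishing diamond relations (R-5)(ii),(iii), and a finite case check over the classification of Proposition~\ref{diamondsclass} for (R-5)(i). The only difference is that the paper carries out the deferred computations explicitly (the calculations for configurations \eqref{parent1}--\eqref{parent4} and the decoration tables in Lemmas~\ref{stupidcalcc}--\ref{stupidcalca}), which your outline correctly identifies as the necessary remaining work.
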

\begin{proof} We check that the map $\Phi$ respects all the relations of $\ABr$.\\ \noindent
$\blacktriangleright$ ({\text R}-1): Clearly the ${}_{\lambda} \mathbbm{1}_{\lambda}$'s form a set of pairwise orthogonal idempotents with $\sum_\lambda {}_{\lambda} \mathbbm{1}_{\lambda} = 1$ by Theorem~\ref{algebra_structure}. Hence part a.) and b.) in (\text{R}-1) hold. The others from (\text{R}-1) hold by definition.\\ \noindent
$\blacktriangleright$ ({\text R}-2): By definition $\Phi(t_{\alpha,\la}) = {}_{\lambda} \mathbbm{1}_{\lambda}$ if $| \alpha | > k$. That $\Phi(t_{\alpha,\la}t_{-\alpha,\la})= {}_{\lambda} \mathbbm{1}_{\lambda}$ follows immediately since  $\Phi(t_{\alpha,\la}t_{-\alpha,\la})={}_{\lambda} \mathbbm{1}_{\lambda}-X_{\alpha,\lambda}^2$ and  $X_{\alpha,\lambda}^2=0$ by definition. Hence parts a.) and c.) hold. We have (with the appropriate sign choice)
\begin{eqnarray*}
\Phi(t_{\alpha,\lambda} t_{\beta,\mu}) &=& \left({}_{\lambda} \mathbbm{1}_{\lambda} \pm X_{\alpha,\lambda}\right)
\left({}_{\mu} \mathbbm{1}_{\mu}\pm X_{\beta,\mu}\right)\\
&=& \begin{cases}
\quad 0 & \text{if } \lambda \neq \mu\\
{}_{\lambda} \mathbbm{1}_{\lambda} \pm\alpha X_{\alpha,\lambda} \pm X_{\beta,\lambda} +
(-1)^{\alpha+\beta}X_{\alpha,\lambda}X_{\beta,\lambda} & \text{otherwise.\quad}
\end{cases}
\end{eqnarray*}
and similarly for $\Phi(t_{\beta,\lambda} t_{\alpha,\mu})$ with analogous signs. (We do not care which signs we need, but know from Section~\ref{sec:explicitmult} that we get the same in both cases.) Hence to verify b.) it is enough to show $X_{\alpha,\lambda}X_{\beta,\lambda} = X_{\beta,\lambda}X_{\alpha,\lambda}.$
But this is obviously true by Lemma~\ref{lem:endo_commutative}, thus $\Phi(t_{\alpha,\lambda} t_{\beta,\mu}) = \Phi(t_{\beta,\mu} t_{\alpha,\lambda})$.

Assume now that $(\alpha,\beta)$ is a $\la$-pair. We start with the first case in \eqref{lapair2}. Then the vertices $\alpha$ and $\beta$ are connected by an undotted cup in $\underline\la\ov{\la}$ and we have by Proposition~\ref{coho} $X_{\alpha,\lambda} =-X_{\beta,\lambda}$ and thus
$$\Phi(t_{\alpha,\lambda} t_{\beta,\lambda})={}_{\lambda} \mathbbm{1}_{\lambda}-X_{\alpha,\lambda}X_{\alpha,\lambda}={}_{\lambda} \mathbbm{1}_{\lambda}.$$
If instead we are in the second case of \eqref{lapair2} then the vertices $-\alpha$ and $\beta$ are connected by a dotted cup and we obtain 
$$\Phi(t_{-\alpha,\lambda} t_{\beta,\lambda}) = \left({}_{\lambda} \mathbbm{1}_{\lambda} - X_{\alpha,\lambda}\right)\left({}_{\lambda} \mathbbm{1}_{\lambda} + X_{\beta,\lambda}\right).
$$
But since $\alpha$ and $\beta$ are connected by a dotted cup in $\underline\la\ov{\lambda}$ we have by Proposition~\ref{coho} $X_{\alpha,\lambda} =X_{\beta,\lambda}$ and thus again
$$\Phi(t_{\alpha,\lambda} t_{\beta,\lambda})={}_{\lambda} \mathbbm{1}_{\lambda}-X_{\alpha,\lambda}X_{\alpha,\lambda}={}_{\lambda} \mathbbm{1}_{\lambda}.$$
and so the last equality of (R-2) holds.\\ \noindent
$\blacktriangleright$ ({\text R}-3):
For $\lambda \stackrel{(\pm i,j)}\longleftrightarrow \mu$ and using the definition of $\Phi$ from Theorem \ref{mainthm}, we obtain
$$ \Phi(p(\lambda,\mu)t_{\alpha,\mu}) = {}_{\lambda} \mathbbm{1}_{\mu} \Phi(t_{\alpha,\mu}) + \frac{1}{2}(-1)^j{}_{\lambda} X_{\mu}
\Phi(t_{\alpha,\mu})$$
and
$$ \Phi(t_{\alpha,\lambda}p(\lambda,\mu)) = \Phi(t_{\alpha,\lambda}){}_{\lambda} \mathbbm{1}_{\mu} +
\frac{1}{2}(-1)^j\Phi(t_{\alpha,\lambda}) {}_{\lambda} X_{\mu}.$$
Since ${}_{\lambda} \mathbbm{1}_{\mu}$ and ${}_{\lambda} X_{\mu}$ are basis vectors whose underlying cup diagram looks like ${}_{\mu} \mathbbm{1}_{\mu}$
except for a local change given by the $\lambda$-pair, the circle diagram $\underline{\la}\ov{\mu}$ contains only vertical lines obtained by gluing two rays, small circles and one extra component $C$ containing the defining cup for $\la\stackrel{(\alpha,\beta)}\leftrightarrow\mu$. Assume first that $C$ is a circle. Then it involves 4 vertices, let us assume on positions $a_1<a_2<a_3<a_4$. We now compare multiplication of ${}_{\lambda} \mathbbm{1}_{\mu}$ and ${}_{\lambda}
    X_{\mu}$ by the image of $t_{\alpha,\lambda}$ from the left or $t_{\alpha,\mu}$ from the right. There are two cases:
\begin{enumerate}[i)]
\item
$\alpha \not\in \{a_1,a_2,a_3,a_4\}$: then both multiplications change the
    orientation of one small anticlockwise circle and multiply with the same overall sign or they both annihilate the diagram.
\item
$\alpha \in \{a_1,a_2,a_3,a_4\}$: then both multiplications change the orientation of $C$ to clockwise with the same overall sign or both annihilate  the diagram in case $C$ was already clockwise.
\end{enumerate}
If $C$ is not a circle, both multiplications annihilate the diagram. Hence  ({\text R}-3) holds.\\ \noindent
$\blacktriangleright$ ({\text R}-4):
Consider first the cases where the relevant changes from $\la$ to $\mu$ involve no rays. The possible ${}_{\lambda} \mathbbm{1}_{\mu}{}_{\lambda}, X_{\mu}$ and  ${}_{\mu} \mathbbm{1}_{\lambda}$, ${}_{\mu} X_{\lambda}$ are displayed in Example~\ref{ps} when putting anticlockwise respectively clockwise orientation and possibly take the mirror image of the shape.
We go now through all cases $\mu\leftarrow\la$ as displayed in Example~\ref{ps} and describe the maps algebraically, see \eqref{parent1}-\eqref{parent4},  via the identification from Proposition~\ref{coho}. Note that $\pos(i)=i$ holds since we are in the principal block. We start with the case

\begin{equation}
\label{parent1}
\begin{tikzpicture}[thick,scale=0.7]
\draw (0,0) node[above]{$$} .. controls +(0,-.5) and +(0,-.5) ..
+(.5,0) node[above]{};
\draw (1,0) node[above]{$$} .. controls +(0,-.5) and +(0,-.5) ..
+(.5,0) node[above]{};
\draw[<-] (1.75,-.2) -- +(1,0);
\draw (3,0) node[above]{$\alpha'$} .. controls +(0,-1) and +(0,-1) .. +(1.5,0)
node[above]{$\beta'$};
\draw (3.5,0) node[above]{$\alpha$} .. controls +(0,-.5) and +(0,-.5) ..
+(.5,0) node[above]{$\beta$};
\end{tikzpicture}
\end{equation}
Then the image of $\op{m}(\mu,\la)=1+p(\mu,\la)p(\la,\mu)$ equals
$$\left({}_{\mu} \mathbbm{1}_{\lambda}+\frac{1}{2}(-1)^\beta {}_{\mu} X_{\la}\right)
\left({}_{\mu} \mathbbm{1}_{\mu}+\frac{1}{2}(-1)^\beta {}_{\la} X_{\mu}\right)$$ which equals, using the surgery rules, the following expression in $\cM(\underline{\la}\ov{\la})$:
\begin{eqnarray*}
&&1+(-1)^\alpha(1+(-1)^\beta X_{\beta'})(X_{\beta}-X_\alpha)\\
&=&1+(-1)^\alpha(X_{\beta}-X_\alpha)-(-1)^{\alpha+\beta}X_\alpha X_{\beta'}\\
&=&1-(-1)^\alpha(X_{\beta'}+X_\alpha)+X_\alpha X_{\beta'}\\
&=&1+(-1)^\beta(X_{\beta'}+X_\alpha)+X_\alpha X_{\beta'}
\end{eqnarray*}
since $\alpha+\beta$ is odd.
On the other hand $\Phi\left((t_{\alpha,\lambda} t_{\beta',\lambda})^{({-1})^\beta}\right)$
corresponds to $(1+(-1)^\beta X_\alpha)(1+(-1)^\beta X_{\beta'})=1+(-1)^\beta(X_{\beta'}+X_\alpha)+X_\alpha X_{\beta'}.$ Hence the required relation for $\op{m}(\mu,\la)$ holds.\\
Similar calculations give $$\Phi(e_\la+p(\la,\mu)p(\mu,\la))=\Phi\left(\left(t_{\alpha,\la} t_{\beta',\la}\right)^{({-1})^\beta}\right)$$ corresponding to
\begin{eqnarray*}
1+(-1)^\beta(X_{\beta'}+X_\alpha)+X_\alpha X_{\beta'}
&=&1+(-1)^\beta(X_{\beta'}-X_{\beta})-X_\beta X_{\beta'}
\end{eqnarray*}
and so the claim follows in this case as well.
For the remaining cases we just list the corresponding polynomials in $\cM(\underline{\mu}\ov{\mu})$ and $\cM(\underline{\la}\ov{\la})$ respectively, since the calculations are completely analogous:

\begin{equation}
\label{parent2}
\begin{tikzpicture}[thick,scale=0.7]
\draw (0,0) node[above]{} .. controls +(0,-.5) and +(0,-.5) .. +(.5,0)
node[above]{};
\fill (.25,-.36) circle(2.5pt);
\draw (1,0) node[above]{$$} .. controls +(0,-.5) and +(0,-.5) ..
+(.5,0) node[above]{$$};
\draw[<-] (1.75,-.2) -- +(1,0);
\draw (3,0) node[above]{$\alpha'$} .. controls +(0,-1) and +(0,-1) .. +(1.5,0)
node[above]{$\beta'$};
\fill (3.75,-.74) circle(2.5pt);
\draw (3.5,0) node[above]{$\alpha$} .. controls +(0,-.5) and +(0,-.5) ..
+(.5,0) node[above]{$\beta$};
\node at (11,0)
{$\begin{cases}
\; 1+(-1)^\beta(X_{\beta'}-X_\beta)-X_{\beta'}X_\beta\in\cM(\underline{\mu}\ov{\mu})\\
\; 1+(-1)^\beta(X_\alpha+X_{\beta'}) +X_\alpha X_{\beta'}\in\cM(\underline{\la}\ov{\la})
\end{cases}$};
\end{tikzpicture}
\end{equation}

\begin{equation}
\label{parent3}
\begin{tikzpicture}[thick,scale=0.7]
\draw (0,0) node[above]{$$} .. controls +(0,-1) and +(0,-1) .. +(1.5,0)
node[above]{};
\draw (.5,0) node[above]{$$} .. controls +(0,-.5) and +(0,-.5) ..
+(.5,0) node[above]{};
\fill (0.75,-.74) circle(2.5pt);
\draw[<-] (1.75,-.2) -- +(1,0);
\draw (3,0) node[above]{$\alpha$} .. controls +(0,-.5) and +(0,-.5) ..
+(.5,0) node[above]{$\beta$};
\draw (4,0) node[above]{$\alpha'$} .. controls +(0,-.5) and +(0,-.5) .. +(.5,0)
node[above]{$\beta'$};
\fill (4.25,-.36) circle(2.5pt);
\node at (11,0)
{$\begin{cases}
\;1+(-1)^\beta(X_{\beta'}+X_{\alpha'})+X_{\beta'}X_{\alpha'}\in\cM(\underline{\mu}\ov{\mu})\\
\;1+(-1)^\beta (X_{\beta'}-X_\beta)-X_\beta X_{\beta'}\in\cM(\underline{\la}\ov{\la})
\end{cases}$};
\end{tikzpicture}
\end{equation}

\begin{eqnarray}
\label{parent4}
\begin{tikzpicture}[thick,scale=0.7]
\draw (0,0) node[above]{$$} .. controls +(0,-1) and +(0,-1) .. +(1.5,0)
node[above]{};
\draw (.5,0) node[above]{$$} .. controls +(0,-.5) and +(0,-.5) ..
+(.5,0) node[above]{};
\draw[<-] (1.75,-.2) -- +(1,0);
\draw (3,0) node[above]{$\alpha$} .. controls +(0,-.5) and +(0,-.5) .. +(.5,0)
node[above]{$\beta$};
\fill (3.25,-.36) circle(2.5pt);
\draw (4,0) node[above]{$\alpha'$} .. controls +(0,-.5) and +(0,-.5) .. +(.5,0)
node[above]{$\beta'$};
\fill (4.25,-.36) circle(2.5pt);
\node at (11,0)
{$\begin{cases}
1+(-1)^\beta(X_{\alpha}+X_{\beta'})+X_{\alpha'}X_{\beta'}\in\cM(\un\mu\ov\mu)\\
1+(-1)^\beta(X_{\alpha'}-X_\beta)-X_{\beta}X_{\beta'}\in\cM(\un\la\ov\la)
\end{cases}$};
\end{tikzpicture}
\\ \nonumber
\end{eqnarray}

The relations for $\la$-pairs involving also rays as the relevant pieces are the same, except that the corresponding variables $X$ should be set to zero and so some of the generators of type $t_\gamma$ act by an idempotent, see Section~\ref{annoying}.\\ \noindent
$\blacktriangleright$ ({\text R}-5): These relations follow from Lemmas~\ref{stupidcalcc}-\ref{stupidcalca} below.
\noindent\\
Hence the assignment \eqref{isops} defines an algebra homomorphism.
\end{proof}

\begin{lemma}
\label{stupidcalcc}
Let $k\geq 4$ and assume that $(\la^{\scriptscriptstyle(1)},\la^{\scriptscriptstyle(2)},\la^{\scriptscriptstyle(3)})$ is a triple as in Definition~\ref{Bradenalgebra} (R-\ref{rel7}) iii). With $\Phi$ as in \eqref{isops} we have
  \begin{eqnarray*}
    &\Phi(p(\la^{\scriptscriptstyle(3)},\la^{\scriptscriptstyle(2)}))\Phi(p(\la^{\scriptscriptstyle(2)},\la^{\scriptscriptstyle(1)}))=0=\Phi(p(\la^{\scriptscriptstyle(1)},\la^{\scriptscriptstyle(2)}))\Phi(p(\la^{\scriptscriptstyle(2)},\la^{\scriptscriptstyle(3)})).&
  \end{eqnarray*}
\end{lemma}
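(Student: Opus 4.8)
The plan is to reduce the vanishing of both products to a single combinatorial fact about orientability, and then to establish that fact by a case analysis organised along the classification of diamonds in Proposition~\ref{diamondsclass}.

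First I would unwind the images under $\Phi$ from \eqref{isops}. Since $\la^{\scriptscriptstyle(1)}\stackrel{(\alpha,\beta)}\to\la^{\scriptscriptstyle(2)}\stackrel{(\gamma,\delta)}\to\la^{\scriptscriptstyle(3)}$, each of $\Phi(p(\la^{\scriptscriptstyle(3)},\la^{\scriptscriptstyle(2)}))$ and $\Phi(p(\la^{\scriptscriptstyle(2)},\la^{\scriptscriptstyle(1)}))$ is a linear combination of the two basis vectors ${}_{\la^{\scriptscriptstyle(3)}}\mathbbm{1}_{\la^{\scriptscriptstyle(2)}},{}_{\la^{\scriptscriptstyle(3)}}X_{\la^{\scriptscriptstyle(2)}}$ resp. ${}_{\la^{\scriptscriptstyle(2)}}\mathbbm{1}_{\la^{\scriptscriptstyle(1)}},{}_{\la^{\scriptscriptstyle(2)}}X_{\la^{\scriptscriptstyle(1)}}$ of Definition~\ref{def:specialelements}. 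The key observation is that passing from $\mathbbm{1}$ to $X$ only reverses the orientation of one circle and leaves the underlying cup and cap diagrams unchanged. Hence all four resulting products have cup part $\un{\la^{\scriptscriptstyle(3)}}$ and cap part $\ov{\la^{\scriptscriptstyle(1)}}$, and by Corollary~\ref{ideal} each lies in the subspace of $\D$ spanned by the oriented circle diagrams with underlying shape $\un{\la^{\scriptscriptstyle(3)}}\ov{\la^{\scriptscriptstyle(1)}}$. By Proposition~\ref{coho} this subspace is isomorphic to $\cM(\un{\la^{\scriptscriptstyle(3)}}\ov{\la^{\scriptscriptstyle(1)}})$, which is $\{0\}$ as soon as $\un{\la^{\scriptscriptstyle(3)}}\ov{\la^{\scriptscriptstyle(1)}}$ is not orientable. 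Therefore it suffices to prove that $\un{\la^{\scriptscriptstyle(3)}}\ov{\la^{\scriptscriptstyle(1)}}$ is \emph{not orientable}: then $\Phi(p(\la^{\scriptscriptstyle(3)},\la^{\scriptscriptstyle(2)}))\Phi(p(\la^{\scriptscriptstyle(2)},\la^{\scriptscriptstyle(1)}))=0$. The second identity comes for free, since $\un{\la^{\scriptscriptstyle(1)}}\ov{\la^{\scriptscriptstyle(3)}}$ is the image of $\un{\la^{\scriptscriptstyle(3)}}\ov{\la^{\scriptscriptstyle(1)}}$ under the reflection $*$ of Corollary~\ref{antiaut}, and the dot-parity criterion of Lemma~\ref{lem:stupidlemmasub} is invariant under $*$; thus non-orientability of one yields non-orientability of the other.

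It then remains to prove this combinatorial claim, which I expect to be the main obstacle. Using Lemma~\ref{lapairscup} I would record how $\un{\la^{\scriptscriptstyle(2)}}$ differs from $\un{\la^{\scriptscriptstyle(1)}}$ and $\un{\la^{\scriptscriptstyle(3)}}$ from $\un{\la^{\scriptscriptstyle(2)}}$ by the two local moves of \eqref{eqn:lambdapairlist}. The hypothesis $\alpha<0$ forces the first move to reverse a \emph{dotted} cup, and the hypothesis that $(\gamma,\delta)$ is not a $\la^{\scriptscriptstyle(1)}$-pair forces the second move to act on a cup that was only created by the first move; so the two moves genuinely interact and their endpoints sit inside a common local window. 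Exactly these windows were enumerated in the tables of Proposition~\ref{diamondsclass}: the configurations that close up into a diamond are precisely those listed there, and the assumption that the triple \emph{cannot be extended to a diamond} discards all of them. For each of the finitely many surviving configurations I would overlay $\un{\la^{\scriptscriptstyle(3)}}$ on $\ov{\la^{\scriptscriptstyle(1)}}$, follow the connected component through the two altered regions and count dots; the unmatched dot of the cup of $(\alpha,\beta)$ produces either a circle carrying an odd number of dots or a non-propagating line carrying an even number, which by Lemma~\ref{lem:stupidlemmasub} obstructs any orientation. Example~\ref{triples} is the representative instance of this bookkeeping, and the general argument is a case-by-case repetition of it.

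Finally I would note that the argument is insensitive to rays: by part 2.) of Proposition~\ref{diamondsclass}, together with the extension device of Section~\ref{annoying}, any configuration involving rays arises from a ray-free one by restricting the admissible vertices to a subinterval, and non-orientability is preserved under this restriction. Hence lines require no separate treatment, and the claim, and with it Lemma~\ref{stupidcalcc}, follows.
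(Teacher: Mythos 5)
Your proposal follows essentially the same route as the paper's proof: you make explicit the reduction that the paper leaves implicit (both products live in ${}_{\la^{\scriptscriptstyle(3)}}(\D)_{\la^{\scriptscriptstyle(1)}}$ resp.\ ${}_{\la^{\scriptscriptstyle(1)}}(\D)_{\la^{\scriptscriptstyle(3)}}$, which vanish once $\un{\la^{\scriptscriptstyle(3)}}\ov{\la^{\scriptscriptstyle(1)}}$ is non-orientable), and your localization is exactly the paper's: $\alpha<0$ forces the first move to be the dotted-cup move \eqref{parent4}, and the hypothesis that $(\gamma,\delta)$ is not a $\la^{\scriptscriptstyle(1)}$-pair forces the second move onto one of the two cups created by the first move, after which the dichotomy ``extends to a diamond (excluded by hypothesis) versus non-orientable composite diagram (product zero)'' is the paper's argument, with Example~\ref{triples} as the model case. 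Your use of the anti-automorphism $*$ to get the second identity from the first, and the explicit appeal to Proposition~\ref{coho}, are welcome clarifications of the paper's terse ``obviously zero''; the citation of the tables in Proposition~\ref{diamondsclass} is looser than needed (once the second cup is known to be one of the two newly created ones, the enumeration is just two cases), but harmless.

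The one genuine flaw is your treatment of rays. You reduce the rayed case to the ray-free case by \emph{restriction} (cutting cups into rays) and assert that ``non-orientability is preserved under this restriction.'' That principle is false in general: by Lemma~\ref{lem:stupidlemmasub} a circle carrying an odd number of dots is non-orientable, but cutting it open can produce a non-propagating line with an odd number of dots, which is orientable; so restriction can destroy the obstruction. There is also a second, related issue of direction: to cover a rayed triple you must know it \emph{is} the restriction of a ray-free triple to which your case analysis applies, and closing rays into cups is precisely the ``enlargement'' operation which can create new diamond extensions, so the ray-free parent need not satisfy the hypotheses you discarded configurations by. The paper avoids both problems by reducing in the opposite direction (extending the weights so that rays close into cups, compatibly with the algebra embedding). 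Your proof is nevertheless salvageable without any reduction at all: in every surviving configuration (second move on the inner created cup, or first move of the form ``dotted cup plus dotted ray''), the obstructing component of $\un{\la^{\scriptscriptstyle(3)}}\ov{\la^{\scriptscriptstyle(1)}}$ is a circle formed by a cup and a cap on the same two vertices of the altered window, carrying exactly one dot; it is a genuine circle regardless of rays elsewhere in the diagram, so the direct dot-count you promise does close the argument --- but as written, the justification of the ray step is incorrect.
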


\begin{proof}
Let us first assume that the corresponding cup diagrams $\underline{\la^{(i)}}$ have no rays. The restriction $\alpha<0$ forces $\la^{\scriptscriptstyle(2)}\leftarrow \la^{\scriptscriptstyle(1)}$ to be locally of the form \eqref{parent4}. Let $C_1, C_2$ be the two cups (the outer and inner respectively) in the left picture of \eqref{parent4}. Since $(\gamma,\delta)$ is a $\la^{\scriptscriptstyle(2)}$-pair, but not a $\la^{\scriptscriptstyle(1)}$-pair, it must be given by one of the cups $C_1$ or $C_2$. The triple $(\la^{\scriptscriptstyle(1)},\la^{\scriptscriptstyle(2)},\la^{\scriptscriptstyle(3)})$ look then locally like $(\la^{\scriptscriptstyle(1)},\la^{\scriptscriptstyle(2)},\la^{\scriptscriptstyle(3)})$ respectively $(\mu^{\scriptscriptstyle(1)},\mu^{\scriptscriptstyle(2)},\mu^{\scriptscriptstyle(3)})$ in Example~\ref{triples}. Hence in the case of $C_1$ the compositions in the lemma are obviously zero, whereas in case of $C_2$ the triple is not of the required form. Obviously, (by adding $\up$'s to the left and $\down$'s to the right of the diagram), the general case can be deduced from the case where no rays occur.
\end{proof}

\begin{lemma}
\label{stupidcalb}
Let $k\geq 4$ and assume that $(\la^{\scriptscriptstyle(1)},\la^{\scriptscriptstyle(2)},\la^{\scriptscriptstyle(3)})$ is a triple as in Definition~\ref{Bradenalgebra} (\text{R}-\ref{rel7}) ii). With $\Phi$ as in \eqref{isops} we have
  \begin{eqnarray*}
    \Phi(p(\la^{\scriptscriptstyle(3)},\la^{\scriptscriptstyle(2)}))\Phi(p(\la^{\scriptscriptstyle(2)},\la^{\scriptscriptstyle(1)}))&=0=&\Phi(p(\la^{\scriptscriptstyle(1)},\la^{\scriptscriptstyle(2)}))\Phi(p(\la^{\scriptscriptstyle(2)},\la^{\scriptscriptstyle(3)})).
  \end{eqnarray*}
\end{lemma}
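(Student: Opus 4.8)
The plan is to reduce both displayed identities to the non-orientability of a single circle diagram, and then to establish that non-orientability from the diamond classification. First I would observe that $\Phi$ intertwines the reflection anti-automorphism $*$ of Corollary~\ref{antiaut}: a direct check from Definition~\ref{def:specialelements} gives $({}_\lambda\mathbbm{1}_\mu)^*={}_\mu\mathbbm{1}_\lambda$ and $({}_\lambda X_\mu)^*={}_\mu X_\lambda$ (both reverse the same defining circle, whose vertex set is fixed by the shared $\la$-pair), while the sign $(-1)^j$ in \eqref{isops} is attached to the $\la$-pair and so is unchanged when the arrow is reversed. Hence $\Phi(p(\lambda,\mu))^*=\Phi(p(\mu,\lambda))$, and since $*$ is an anti-automorphism it sends $\Phi(p(\la^{(3)},\la^{(2)}))\Phi(p(\la^{(2)},\la^{(1)}))$ to $\Phi(p(\la^{(1)},\la^{(2)}))\Phi(p(\la^{(2)},\la^{(3)}))$. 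Thus one product vanishes if and only if the other does, and it suffices to treat the first.

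This first product lies in ${}_{\la^{(3)}}(\D)_{\la^{(1)}}$, and by the multiplication rule recalled in Section~\ref{sec:explicitmult} every basis vector appearing in it has underlying shape $\un{\la^{(3)}}\ov{\la^{(1)}}$. Such a vector $\un{\la^{(3)}}\nu\ov{\la^{(1)}}$ lies in $\B$ only when $\un{\la^{(3)}}\ov{\la^{(1)}}$ is orientable; consequently the product is automatically zero as soon as $\un{\la^{(3)}}\ov{\la^{(1)}}$ admits no orientation. So the whole lemma comes down to the single claim that $\un{\la^{(3)}}\ov{\la^{(1)}}$ is non-orientable. This is exactly the mechanism already used for relation (R-5) iii) in Example~\ref{triples} and visible in Example~\ref{extended}.

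To prove non-orientability I would exploit the hypothesis directly. Since the triple can be enlarged to a diamond $(\widehat{\la}^{(1)},\widehat{\la}^{(2)},\widehat{\la}^{(3)},\widehat{\la}^{(4)})$ in some $\Lambda_m^{\ov 0}$, the weight $\widehat{\la}^{(4)}$ orients $\un{\widehat{\la}^{(1)}}\ov{\widehat{\la}^{(3)}}$ (as in the diamond computation of Example~\ref{triples}), so by Lemma~\ref{lem:stupidlemmasub} each circle of $\un{\widehat{\la}^{(1)}}\ov{\widehat{\la}^{(3)}}$ carries an even number of dots. The enlargement adds $\down$'s on the left and $\up$'s on the right to all of $\la^{(1)},\la^{(2)},\la^{(3)}$ simultaneously, so $\widehat{\la}^{(1)}$ and $\widehat{\la}^{(3)}$ agree on the added vertices, and truncating back to $\{1,\dots,k\}$ deletes precisely the arcs having an endpoint in the added region, turning them into rays. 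I would then feed this into the explicit classification: by Proposition~\ref{diamondsclass} the enlarged configuration is, up to equivalence and admissible decoration, one of the local shapes in \eqref{diamonds2}--\eqref{diamonds1}, and its second part records exactly how truncation produces rays. Non-extendability in $\Lambda_k^{\ov 0}$ forces the arc through a deleted vertex (the one carrying the $\la^{(4)}$-move) to lie on a component that previously closed into a circle; after the cut that component becomes a line, and a dot-count that was even on the circle now violates the parity required of a line in Lemma~\ref{lem:stupidlemmasub}. Hence $\un{\la^{(3)}}\ov{\la^{(1)}}$ cannot be oriented, and the product is zero.

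The main obstacle is making this parity flip uniform rather than checking it shape by shape. The honest route is to run through the finitely many truncated diamonds coming from Proposition~\ref{diamondsclass}, track the dots under the admissibility constraints of Remark~\ref{decorated}, and verify the offending component in each; the bookkeeping of dots along arcs is where errors could creep in. A cleaner alternative I would try first is purely intrinsic: if $\un{\la^{(1)}}\ov{\la^{(3)}}$ were orientable, then Lemma~\ref{lem:stupidlemmasub} together with Lemma~\ref{lem:orient} would supply intermediate weights $\nu$ with $\un{\la^{(1)}}\nu\ov{\la^{(3)}}$ oriented, and one would like to extract from a second such orientation a fourth weight $\la^{(4)}\neq\la^{(2)}$ closing the diamond inside $\Lambda_k^{\ov 0}$, contradicting non-extendability. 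The delicate point there is to confirm that the additional orientation really yields a genuinely new weight joined to both $\la^{(1)}$ and $\la^{(3)}$ by degree-one $\la$-pairs, and not merely $\la^{(2)}$ over again; if this can be secured it dispenses with the case analysis entirely.
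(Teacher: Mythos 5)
Your two preliminary reductions are sound and match the paper's strategy. The observation that $\Phi(p(\la,\mu))^*=\Phi(p(\mu,\la))$ (since the $\la$-pair and hence the sign $(-1)^j$ in \eqref{isops} is shared by both directions of the arrow) combined with Corollary~\ref{antiaut} correctly reduces the lemma to one of the two products; and since the product lies in ${}_{\la^{\scriptscriptstyle(3)}}(\D)_{\la^{\scriptscriptstyle(1)}}$, which vanishes when the shape $\un{\la^{\scriptscriptstyle(3)}}\ov{\la^{\scriptscriptstyle(1)}}$ admits no orientation, everything indeed hinges on non-orientability of that circle diagram. This is precisely the mechanism the paper uses (compare Example~\ref{triples}).

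The genuine gap is that this non-orientability is never actually proved, and your proposed uniform ``parity flip'' mechanism is flawed in two places. First, the claim that $\widehat{\la}^{\scriptscriptstyle(4)}$ orients $\un{\widehat{\la}^{\scriptscriptstyle(1)}}\ov{\widehat{\la}^{\scriptscriptstyle(3)}}$ is not automatic: $\la\rightarrow\mu$ guarantees that $\un{\la}\mu$ is oriented, but not that $\un{\mu}\la$ is (e.g.\ for the first local move of \eqref{eqn:lambdapairlist} one of the two fails), so orientability of the enlarged diagram depends on the directions of the four arrows in the diamond and needs an argument. Second, and more seriously, the asserted mechanism---a circle with an even number of dots is cut into a single line whose parity then violates Lemma~\ref{lem:stupidlemmasub}---is not what happens even in the motivating Example~\ref{extended}: there the relevant component of $\un{\widehat{\la}^{\scriptscriptstyle(3)}}\ov{\widehat{\la}^{\scriptscriptstyle(1)}}$ is a \emph{non-propagating line} carrying one dot, and truncation splits it into \emph{two} lines (one propagating with a dot, one non-propagating without). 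A parity violation does occur, but since Lemma~\ref{lem:stupidlemmasub} demands opposite parities for propagating and non-propagating lines, and truncation both deletes dots and changes the propagating status, the bookkeeping cannot be dispatched by the slogan you give. You concede that the ``honest route'' is a shape-by-shape check through the classification, but that check---which is the actual content of the paper's proof: showing $\tilde{\la}^{\scriptscriptstyle(4)}$ carries exactly one extra symbol ($\up$ left of position $1$ or $\down$ right of position $k$), restricting the possible local moves accordingly, listing the resulting enlarged diamonds with their truncations, and reading off non-orientability in each case---is not carried out, and your alternative intrinsic argument is explicitly left unresolved. As it stands the proposal is a correct plan with the decisive verification missing.
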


\begin{proof}
By assumption the triple can not be extended to a diamond, but can be enlarged to a diamond $(\tilde{\la}^{\scriptscriptstyle(1)},\tilde{\la}^{\scriptscriptstyle(2)},\tilde{\la}^{\scriptscriptstyle(3)},\tilde{\la}^{\scriptscriptstyle(4)})$. In particular, $\tilde{\la}^{\scriptscriptstyle(4)}$ must contains at least one $\down$ at position larger then $k$ or at least one $\up$ at position smaller than $1$ and only one of these two cases can occur.
Assume we have such an $\up$, then there is only one, since assuming there are two or more we have to find
\begin{eqnarray}
\label{leftright}
\tilde{\la}^{\scriptscriptstyle(1)}\leftrightarrow \tilde{\la}^{\scriptscriptstyle(4)}\leftrightarrow \tilde{\la}^{\scriptscriptstyle(3)}
\end{eqnarray}
with both $\leftrightarrow$ converting these $\up$'s into $\down$'s. But at most two $\up$'s can be switched and there is only one way to do so, which implies $\tilde{\la}^{\scriptscriptstyle(1)}=\tilde{\la}^{\scriptscriptstyle(3)}$, a contradiction.
Hence we need
two $\leftrightarrow$'s as in \eqref{leftright} which either remove our special (hence leftmost) $\up$ or at least move it to the right. Without loss of generality we may assume there are no rays. If we take one of the form \eqref{parent1}, then there is no second possibility. Hence they must be of the form \eqref{parent3} or \eqref{parent4} respectively. Then locally the enlarged diamond has to be of the form displayed on the left
\begin{eqnarray}
\label{nodiamonds}
\begin{tikzpicture}[thick,,scale=.5]
\draw[<->] (2.5,-1) -- +(-1,-1);
\draw[<->] (4,-1) -- +(1,-1);
\draw[<->] (2.5,-4.5) -- +(-1,1);
\draw[<->] (4,-4.5) -- +(1,1);

\begin{scope}[shift={(2,0)}]
\draw [dashed,red] (.25,-.75) -- +(0,1);
\draw (0,0) .. controls +(0,-1) and +(0,-1) .. +(2.5,0);
\draw (.5,0) .. controls +(0,-.5) and +(0,-.5) .. +(.5,0);
\draw (1.5,0) .. controls +(0,-.5) and +(0,-.5) .. +(.5,0);
\end{scope}

\begin{scope}[shift={(0,-2.5)}]
\draw [dashed,red] (.25,-.75) -- +(0,1);
\draw (1,0) .. controls +(0,-1) and +(0,-1) .. +(1.5,0);
\draw (0,0) .. controls +(0,-.5) and +(0,-.5) .. +(.5,0);
\draw (1.5,0) .. controls +(0,-.5) and +(0,-.5) .. +(.5,0);
\end{scope}

\begin{scope}[shift={(2,-5)}]
\draw [dashed,red] (.25,-.75) -- +(0,1);
\draw (0,0) .. controls +(0,-.5) and +(0,-.5) .. +(.5,0);
\draw (1,0) .. controls +(0,-.5) and +(0,-.5) .. +(.5,0);
\draw (2,0) .. controls +(0,-.5) and +(0,-.5) .. +(.5,0);
\fill (1.25,-.36) circle(2.5pt);
\fill (2.25,-.36) circle(2.5pt);
\end{scope}

\begin{scope}[shift={(4,-2.5)}]
\draw [dashed,red] (.25,-.75) -- +(0,1);
\draw (0,0) node[above]{\tiny $\wedge$} .. controls +(0,-1) and +(0,-1) .. +(1.5,0);
\draw (.5,0) .. controls +(0,-.5) and +(0,-.5) .. +(.5,0);
\draw (2,0) .. controls +(0,-.5) and +(0,-.5) .. +(.5,0);
\fill (.75,-.75) circle(2.5pt);
\fill (2.25,-.36) circle(2.5pt);
\end{scope}

\begin{scope}[xshift=8cm]
\draw[<->] (1.5,-1) -- +(0,-1);
\draw[<->] (1.5,-4.5) -- +(0,1);

\begin{scope}[shift={(0,0)}]
\draw (.5,0) .. controls +(0,-.5) and +(0,-.5) .. +(.5,0);
\draw (1.5,0) .. controls +(0,-.5) and +(0,-.5) .. +(.5,0);
\draw (2.5,0) -- +(0,-.7);
\end{scope}

\begin{scope}[shift={(0,-2.5)}]
\draw (.5,0) -- +(0,-.7);
\draw (1,0) .. controls +(0,-1) and +(0,-1) .. +(1.5,0);
\draw (1.5,0) .. controls +(0,-.5) and +(0,-.5) .. +(.5,0);
\end{scope}

\begin{scope}[shift={(0,-5)}]
\draw (.5,0) -- +(0,-.7);
\draw (1,0) .. controls +(0,-.5) and +(0,-.5) .. +(.5,0);
\draw (2,0) .. controls +(0,-.5) and +(0,-.5) .. +(.5,0);
\fill (1.25,-.36) circle(2.5pt);
\fill (2.25,-.36) circle(2.5pt);
\end{scope}
\end{scope}

\draw[thick, dotted] (12,1) -- +(0,-8);

\begin{scope}[xshift=13cm]
\draw[<->] (2.5,-1) -- +(-1,-1);
\draw[<->] (4,-1) -- +(1,-1);
\draw[<->] (2.5,-4.5) -- +(-1,1);
\draw[<->] (4,-4.5) -- +(1,1);

\begin{scope}[shift={(2,.5)}]
\draw [dashed,red] (1.25,.2) -- +(0,-1.5);
\draw (0,0) .. controls +(0,-1.5) and +(0,-1.5) .. +(2.5,0);
\draw (.5,0) .. controls +(0,-1) and +(0,-1) .. +(1.5,0);
\draw (1,0) .. controls +(0,-.5) and +(0,-.5) .. +(.5,0);
\end{scope}

\begin{scope}[shift={(0,-2.5)}]
\draw [dashed,red] (1.25,.2) -- +(0,-.9);
\draw (0,0) .. controls +(0,-.5) and +(0,-.5) .. +(.5,0);
\draw (1,0) .. controls +(0,-.5) and +(0,-.5) .. +(.5,0);
\draw (2,0) .. controls +(0,-.5) and +(0,-.5) .. +(.5,0);
\fill (.25,-.36) circle(2.5pt);
\fill (2.25,-.36) circle(2.5pt);
\end{scope}

\begin{scope}[shift={(4,-2.5)}]
\draw [dashed,red] (1.25,.2) -- +(0,-1.1);
\draw (0,0) .. controls +(0,-1) and +(0,-1) .. +(2.5,0);
\draw (.5,0) .. controls +(0,-.5) and +(0,-.5) .. +(.5,0);
\draw (1.5,0) node[above]{\tiny $\vee$} .. controls +(0,-.5) and +(0,-.5) .. +(.5,0);
\end{scope}

\begin{scope}[shift={(2,-5)}]
\draw [dashed,red] (1.25,.2) -- +(0,-1);
\draw (0,0) .. controls +(0,-1) and +(0,-1) .. +(1.5,0);
\draw (.5,0) .. controls +(0,-.5) and +(0,-.5) .. +(.5,0);
\draw (2,0) .. controls +(0,-.5) and +(0,-.5) .. +(.5,0);
\fill (.75,-.75) circle(2.5pt);
\fill (2.25,-.36) circle(2.5pt);
\end{scope}

\begin{scope}[xshift=8cm]
\draw[<->] (.5,-1) -- +(0,-1);
\draw[<->] (.5,-4.5) -- +(0,1);

\begin{scope}[shift={(0,.5)}]
\draw (0,0) -- +(0,-.7);
\draw (.5,0) -- +(0,-.7);
\draw (1,0) -- +(0,-.7);
\end{scope}

\begin{scope}[shift={(0,-2.5)}]
\draw (0,0) .. controls +(0,-.5) and +(0,-.5) .. +(.5,0);
\draw (1,0) -- +(0,-.7);
\fill (.25,-.36) circle(2.5pt);
\end{scope}

\begin{scope}[shift={(0,-5)}]
\draw (0,0) -- +(0,-.7);
\draw (.5,0) .. controls +(0,-.5) and +(0,-.5) .. +(.5,0);
\fill (0,-.35) circle(2.5pt);
\end{scope}
\end{scope}
\end{scope}

\end{tikzpicture}
\end{eqnarray}
with the triple $(\underline{\tilde{\la}^{\scriptscriptstyle(1)}},\underline{\tilde{\la}^{\scriptscriptstyle(2)}},
\underline{\tilde{\la}^{\scriptscriptstyle(3)}})$ displayed next to it (the additional points are to the left of the dashed line). In particular, the circle diagrams $\underline{\tilde{\la}^{\scriptscriptstyle(1)}}\overline{\tilde{\la}^{\scriptscriptstyle(3)}}$ and $\underline{\tilde{\la}^{\scriptscriptstyle(3)}}\overline{\tilde{\la}^{\scriptscriptstyle(1)}}$ cannot be oriented, hence the claim of the lemma follows in this case.

Now assume we have a special $\down$ (automatically the rightmost one) which either has to be removed or be moved to the left. Then the possible moves (with the position of our special $\down$ indicated) are of the form
\begin{equation}
\label{downout}
\begin{tikzpicture}[thick,scale=0.5]
\draw (0,0) node[above]{$$} .. controls +(0,-.5) and +(0,-.5) ..
+(.5,0) node[above]{};
\draw (1,0) node[above]{$\down$} .. controls +(0,-.5) and +(0,-.5) ..
+(.5,0) node[above]{};
\draw[<->] (1.75,-.2) -- +(1,0);
\draw (3,0) node[above]{} .. controls +(0,-1) and +(0,-1) .. +(1.5,0)
node[above]{};
\draw (3.5,0) node[above]{} .. controls +(0,-.5) and +(0,-.5) ..
+(.5,0) node[above]{};
\end{tikzpicture}
\quad
\begin{tikzpicture}[thick,scale=0.5]
\draw (0,0) node[above]{} .. controls +(0,-.5) and +(0,-.5) .. +(.5,0)
node[above]{};
\fill (.25,-.36) circle(2.5pt);
\draw (1,0) node[above]{$\down$} .. controls +(0,-.5) and +(0,-.5) ..
+(.5,0) node[above]{};
\draw[<->] (1.75,-.2) -- +(1,0);
\draw (3,0) node[above]{$$} .. controls +(0,-1) and +(0,-1) .. +(1.5,0)
node[above]{};
\fill (3.75,-.74) circle(2.5pt);
\draw (3.5,0) node[above]{} .. controls +(0,-.5) and +(0,-.5) ..
+(.5,0) node[above]{};
\end{tikzpicture}
\quad
\begin{tikzpicture}[thick,scale=0.5]
\draw (0,0) node[above]{$$} .. controls +(0,-1) and +(0,-1) .. +(1.5,0)
node[above]{};
\draw (.5,0) node[above]{$\down$} .. controls +(0,-.5) and +(0,-.5) ..
+(.5,0) node[above]{};
\fill (0.75,-.74) circle(2.5pt);
\draw[<->] (1.75,-.2) -- +(1,0);
\draw (3,0) node[above]{} .. controls +(0,-.5) and +(0,-.5) ..
+(.5,0) node[above]{};
\draw (4,0) node[above]{} .. controls +(0,-.5) and +(0,-.5) .. +(.5,0)
node[above]{};
\fill (4.25,-.36) circle(2.5pt);
\end{tikzpicture}
\quad
\begin{tikzpicture}[thick,scale=0.5]
\draw (0,0) node[above]{} .. controls +(0,-1) and +(0,-1) .. +(1.5,0)
node[above]{};
\draw (.5,0) node[above]{$\down$} .. controls +(0,-.5) and +(0,-.5) ..
+(.5,0) node[above]{};
\draw[<->] (1.75,-.2) -- +(1,0);
\draw (3,0) node[above]{} .. controls +(0,-.5) and +(0,-.5) .. +(.5,0)
node[above]{};
\fill (3.25,-.36) circle(2.5pt);
\draw (4,0) node[above]{} .. controls +(0,-.5) and +(0,-.5) .. +(.5,0)
node[above]{};
\fill (4.25,-.36) circle(2.5pt);
\end{tikzpicture}
\end{equation}
One can easily verify that the possible extended diamonds must involve the first move. (For instance the third move in \eqref{downout} is only possible if the special cup is nested inside a dotted cup, but then the second move is impossible).
We are left with the following diamonds and the second diamond in \eqref{nodiamonds} (the additional points are now to the right of the dashed line).
\begin{eqnarray*}
\begin{tikzpicture}[thick,,scale=.5]
\draw[<->] (2.5,-1) -- +(-1,-1);
\draw[<->] (4,-1) -- +(1,-1);
\draw[<->] (2.5,-4.5) -- +(-1,1);
\draw[<->] (4,-4.5) -- +(1,1);

\begin{scope}[shift={(2,0)}]
\draw [dashed,red] (1.75,.2) -- +(0,-1);
\draw (1,0) .. controls +(0,-1) and +(0,-1) .. +(1.5,0);
\draw (0,0) .. controls +(0,-.5) and +(0,-.5) .. +(.5,0);
\draw (1.5,0) .. controls +(0,-.5) and +(0,-.5) .. +(.5,0);
\fill (.25,-.36) circle(2.5pt);
\end{scope}

\begin{scope}[shift={(0,-2.5)}]
\draw [dashed,red] (1.75,.2) -- +(0,-1);
\draw (0,0) .. controls +(0,-1) and +(0,-1) .. +(2.5,0);
\draw (.5,0) .. controls +(0,-.5) and +(0,-.5) .. +(.5,0);
\draw (1.5,0) .. controls +(0,-.5) and +(0,-.5) .. +(.5,0);
\fill (1.25,-.75) circle(2.5pt);
\end{scope}

\begin{scope}[shift={(4,-2.5)}]
\draw [dashed,red] (1.75,.2) -- +(0,-1);
\draw (0,0) .. controls +(0,-.5) and +(0,-.5) .. +(.5,0);
\draw (1,0) .. controls +(0,-.5) and +(0,-.5) .. +(.5,0);
\draw (2,0) node[above]{\tiny $\vee$} .. controls +(0,-.5) and +(0,-.5) .. +(.5,0);
\fill (.25,-.36) circle(2.5pt);
\end{scope}

\begin{scope}[shift={(2,-5)}]
\draw [dashed,red] (1.75,.2) -- +(0,-1.5);
\draw (0,0) .. controls +(0,-1.5) and +(0,-1.5) .. +(2.5,0);
\draw (.5,0) .. controls +(0,-1) and +(0,-1) .. +(1.5,0);
\draw (1,0) .. controls +(0,-.5) and +(0,-.5) .. +(.5,0);
\fill (1.25,-1.13) circle(2.5pt);
\end{scope}

\begin{scope}[xshift=8cm]
\draw[<->] (.75,-1) -- +(0,-1);
\draw[<->] (.75,-4.5) -- +(0,1);

\begin{scope}[shift={(0,0)}]
\draw (0,0) .. controls +(0,-.5) and +(0,-.5) .. +(.5,0);
\draw (1,0) -- +(0,-.7);
\draw (1.5,0) -- +(0,-.7);
\fill (.25,-.36) circle(2.5pt);
\end{scope}

\begin{scope}[shift={(0,-2.5)}]
\draw (0,0) -- +(0,-.7);
\draw (.5,0) .. controls +(0,-.5) and +(0,-.5) .. +(.5,0);
\draw (1.5,0) -- +(0,-.7);
\fill (0,-.35) circle(2.5pt);
\end{scope}

\begin{scope}[shift={(0,-5)}]
\draw (0,0) -- +(0,-.7);
\draw (.5,0) -- +(0,-.7);
\draw (1,0) .. controls +(0,-.5) and +(0,-.5) .. +(.5,0);
\fill (0,-.35) circle(2.5pt);
\end{scope}
\end{scope}

\draw[thick, dotted] (12,1) -- +(0,-8);

\begin{scope}[xshift=13cm]
\draw[<->] (2.5,-1) -- +(-1,-1);
\draw[<->] (4,-1) -- +(1,-1);
\draw[<->] (2.5,-4.5) -- +(-1,1);
\draw[<->] (4,-4.5) -- +(1,1);

\begin{scope}[shift={(2,.5)}]
\draw [dashed,red] (1.25,.2) -- +(0,-1.5);
\draw (0,0) .. controls +(0,-1.5) and +(0,-1.5) .. +(2.5,0);
\draw (.5,0) .. controls +(0,-1) and +(0,-1) .. +(1.5,0);
\draw (1,0) .. controls +(0,-.5) and +(0,-.5) .. +(.5,0);
\fill (1.25,-1.13) circle(2.5pt);
\end{scope}

\begin{scope}[shift={(0,-2.5)}]
\draw [dashed,red] (1.25,.2) -- +(0,-1);
\draw (0,0) .. controls +(0,-.5) and +(0,-.5) .. +(.5,0);
\draw (1,0) .. controls +(0,-.5) and +(0,-.5) .. +(.5,0);
\draw (2,0) .. controls +(0,-.5) and +(0,-.5) .. +(.5,0);
\fill (2.25,-.36) circle(2.5pt);
\end{scope}

\begin{scope}[shift={(4,-2.5)}]
\draw [dashed,red] (1.25,.2) -- +(0,-1.1);
\draw (0,0) .. controls +(0,-1) and +(0,-1) .. +(2.5,0);
\draw (.5,0) .. controls +(0,-.5) and +(0,-.5) .. +(.5,0);
\draw (1.5,0) node[above]{\tiny $\vee$} .. controls +(0,-.5) and +(0,-.5) .. +(.5,0);
\fill (1.25,-.75) circle(2.5pt);
\end{scope}

\begin{scope}[shift={(2,-5)}]
\draw [dashed,red] (1.25,.2) -- +(0,-1);
\draw (0,0) .. controls +(0,-1) and +(0,-1) .. +(1.5,0);
\draw (.5,0) .. controls +(0,-.5) and +(0,-.5) .. +(.5,0);
\draw (2,0) .. controls +(0,-.5) and +(0,-.5) .. +(.5,0);
\fill (2.25,-.36) circle(2.5pt);
\end{scope}

\begin{scope}[xshift=8cm]
\draw[<->] (.5,-1) -- +(0,-1);
\draw[<->] (.5,-4.5) -- +(0,1);

\begin{scope}[shift={(0,.5)}]
\draw (0,0) -- +(0,-.7);
\draw (.5,0) -- +(0,-.7);
\draw (1,0) -- +(0,-.7);
\fill (0,-.35) circle(2.5pt);
\end{scope}

\begin{scope}[shift={(0,-2.5)}]
\draw (0,0) .. controls +(0,-.5) and +(0,-.5) .. +(.5,0);
\draw (1,0) -- +(0,-.7);
\end{scope}

\begin{scope}[shift={(0,-5)}]
\draw (0,0) -- +(0,-.7);
\draw (.5,0) .. controls +(0,-.5) and +(0,-.5) .. +(.5,0);
\end{scope}
\end{scope}
\end{scope}

\end{tikzpicture}
\end{eqnarray*}
In all cases the circle diagrams $\underline{\tilde{\la}^{\scriptscriptstyle(1)}}\overline{\tilde{\la}^{\scriptscriptstyle(3)}}$ and $\underline{\tilde{\la}^{\scriptscriptstyle(3)}}\overline{\tilde{\la}^{\scriptscriptstyle(1)}}$ cannot be oriented. Since the compositions are zero if no rays are involved they are obviously also zero in the general case by Section~\ref{annoying}. The lemma follows.
\end{proof}

The following will ensure the compatibility of $\Phi_k$ with Relation (\text{R}-\ref{rel7}) i).

\begin{lemma}
\label{stupidcalca}
Let $k\geq 4$ and assume that $(\la^{\scriptscriptstyle(1)},\la^{\scriptscriptstyle(2)},\la^{\scriptscriptstyle(3)},\la^{\scriptscriptstyle(4)})$ is a diamond in $\Lambda_{k}^{\ov{0}}$. With $\Phi$ as in \eqref{isops} we have
  \begin{eqnarray*}
    \Phi(p(\la^{\scriptscriptstyle(3)},\la^{\scriptscriptstyle(2)}))\Phi(p(\la^{\scriptscriptstyle(2)},\la^{\scriptscriptstyle(1)}))&=&\Phi(p(\la^{\scriptscriptstyle(3)},\la^{\scriptscriptstyle(4)}))\Phi(p(\la^{\scriptscriptstyle(4)},\la^{\scriptscriptstyle(1)})).
  \end{eqnarray*}
\end{lemma}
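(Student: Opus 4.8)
The plan is to reduce the identity to a finite check over the local diamond configurations classified in Proposition~\ref{diamondsclass}, and then to verify each configuration by passing to the commutative-algebra model of Proposition~\ref{coho}. First I would use part 2.) of Proposition~\ref{diamondsclass} together with Section~\ref{annoying} to dispose of all diamonds involving rays: the embedding of $\D$ into $\mathbb{D}_{\widehat\Lambda}$ turns the relevant rays into (possibly dotted) cups, and both products in the claimed identity are unchanged under this embedding, so I may assume that all four diagrams $\un{\la^{\scriptscriptstyle(1)}},\dots,\un{\la^{\scriptscriptstyle(4)}}$ consist of cups only, i.e. the diamond is (up to equivalence and decoration by dots) one of the shapes displayed in \eqref{diamonds2} and \eqref{diamonds1}. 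Since the multiplication only modifies the two cups taking part in the moves together with the surrounding small circles, it then suffices to compute both products locally on these finitely many shapes.

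Next I would compute both sides inside ${}_{\la^{\scriptscriptstyle(3)}}(\mathbb{D}_k)_{\la^{\scriptscriptstyle(1)}}\cong\cM(\un{\la^{\scriptscriptstyle(3)}}\ov{\la^{\scriptscriptstyle(1)}})$ via the isomorphism $\Psi$ from Proposition~\ref{coho}. Expanding each factor as $\Phi(p(\la^{\scriptscriptstyle(a)},\la^{\scriptscriptstyle(b)}))={}_{\la^{\scriptscriptstyle(a)}}\mathbbm{1}_{\la^{\scriptscriptstyle(b)}}+\tfrac12(-1)^{\beta}\,{}_{\la^{\scriptscriptstyle(a)}}X_{\la^{\scriptscriptstyle(b)}}$, where $(\pm\alpha,\beta)$ is the governing $\la$-pair, and carrying out the two surgeries needed for each product by means of the algebraic rules in Lemmas~\ref{lem:algebraic_merge} and \ref{lem:algebraic_split}, each edge of the diamond contributes either the identity (merge) or multiplication by $(-1)^{\pos(i)}(X_j\mp X_i)$ (split), together with the half-integer correction coming from the $X$-term. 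In this way both products become concrete polynomials in the $X_s$ modulo $I_{\un{\la^{\scriptscriptstyle(3)}}\ov{\la^{\scriptscriptstyle(1)}}}$, which I would then compare configuration by configuration. The explicit computation in $\mathbb{D}_6$ illustrating Relation (R-\ref{rel7}) i) is exactly the template, showing how the $\tfrac12$-terms on the two routes recombine into one and the same product.

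The main obstacle will be the sign and half-factor bookkeeping. The two routes $\la^{\scriptscriptstyle(1)}\to\la^{\scriptscriptstyle(2)}\to\la^{\scriptscriptstyle(3)}$ and $\la^{\scriptscriptstyle(1)}\to\la^{\scriptscriptstyle(4)}\to\la^{\scriptscriptstyle(3)}$ generally carry different signs $(-1)^{\beta}$ from the definition of $\Phi$ and different signs $(-1)^{\pos(i)}$ from the splits, and the entire content of the lemma is that these differences cancel exactly. The structural inputs that force the cancellation are the parity conditions built into the notion of equivalent diamonds (namely $\sigma(1)\equiv\sigma(3)$ and $\sigma(2)\equiv\sigma(4)\bmod 2$) together with the parent relations summarised in \eqref{parent1}--\eqref{parent4}, which control how the second component of a $\la$-pair changes along each edge. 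I expect that organising the case analysis according to whether the two moving cups are disjoint, nested, or share an endpoint (the three regimes appearing in the tables of Proposition~\ref{diamondsclass}), and then tracking the dotted versus undotted decorations within each regime, will keep the sign analysis manageable and reduce every case to the arithmetic already carried out in the $\mathbb{D}_6$ example.
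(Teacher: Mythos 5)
Your overall strategy coincides with the paper's proof of Lemma~\ref{stupidcalca}: reduce to the finitely many local shapes classified in Proposition~\ref{diamondsclass}, transport both products into the commutative model of Proposition~\ref{coho}, and verify the identity decoration by decoration (your explicit use of Section~\ref{annoying} to strip off rays is a reasonable, slightly more careful version of what the paper leaves implicit via part 2.)\ of Proposition~\ref{diamondsclass}).

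There is, however, one missing observation, and it is precisely the one that makes this local strategy sound: for all the configurations in \eqref{diamonds2} and \eqref{diamonds1}, \emph{every} surgery arising in the products $\Phi(p(\la^{\scriptscriptstyle(3)},\la^{\scriptscriptstyle(2)}))\Phi(p(\la^{\scriptscriptstyle(2)},\la^{\scriptscriptstyle(1)}))$ and $\Phi(p(\la^{\scriptscriptstyle(3)},\la^{\scriptscriptstyle(4)}))\Phi(p(\la^{\scriptscriptstyle(4)},\la^{\scriptscriptstyle(1)}))$ is a merge; no splits occur. This matters in two ways. First, the split factor $(-1)^{\pos(i)}(X_j\mp X_i)$ from Lemma~\ref{lem:algebraic_split}, which you single out as the main obstacle, never enters; the only bookkeeping left is the $(-1)^\beta$ attached to the half-terms in \eqref{isops}, and recording these over all admissible decorations is exactly what the paper's tables do. Second, and more seriously for the logic of your plan as written: the sign $(-1)^{\pos(i)}$ depends on the \emph{absolute} position of $i$ in the block, while your case check is carried out on local shapes that come with no absolute positions. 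If splits did occur, a configuration-by-configuration verification would not be legitimate without an additional argument that only the differences $\pos(i)-\pos(i')$ between the two routes matter --- an argument your proposal does not supply (the parity conditions on equivalent diamonds that you invoke do not by themselves give this). The merge-only fact is what the paper uses to justify that ``we do not have to care about the actual positions, but only the relative position of the cups.'' Once you add this observation (which, in practice, you would discover upon tracing components in the first computation), your plan becomes the paper's proof, whose remaining content is the explicit tables of outcomes $1\pm X_m$, $1$, $1+\tfrac{1}{2}(X_{m'}\pm X_m)$ over all decorations of each shape.
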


\begin{proof}
 Given a diamond from \eqref{diamonds2} or \eqref{diamonds1}, say 
\begin{eqnarray}
\label{diamondwrong}
\begin{tikzpicture}[thick,scale=0.7]
\node at (1.25,-.5) {$\lambda^{\scriptscriptstyle(1)}$};
\node at (-.25,-2.5) {$\lambda^{\scriptscriptstyle(2)}$};
\node at (2.75,-2.5) {$\lambda^{\scriptscriptstyle(4)}$};
\node at (1.25,-4.4) {$\lambda^{\scriptscriptstyle(3)}$};
\draw[<->] (.8,-1.1) -- +(-.8,-.8);
\draw[<->] (1.7,-1.1) -- +(.8,-.8);
\draw[<->] (-.2,-3) -- +(.8,-.8);
\draw[<->] (2.5,-3) -- +(-.8,-.8);
\node at (-.7,-1.3) {$(\alpha_1,\beta_1)$};
\node at (3.2,-1.3) {$(\alpha_2,\beta_2)$};
\node at (-.7,-3.7) {$(\alpha_3,\beta_3)$};
\node at (3.2,-3.7) {$(\alpha_4,\beta_4)$};
\end{tikzpicture}
\end{eqnarray}

we consider the four cup diagrams  $\underline{\la^{\scriptscriptstyle(1)}},\underline{\la^{\scriptscriptstyle(2)}},\underline{\la^{\scriptscriptstyle(3)}},\underline{\la^{\scriptscriptstyle(4)}}$ and number the involved  vertices from left to right by $1$ to $6$ respectively $1$ to $8$. 
Number in each of them the cups from left to right according to their left endpoint. Encode the dots via the four sets $D^{\scriptscriptstyle(1)},D^{\scriptscriptstyle(2)},D^{\scriptscriptstyle(3)},D^{\scriptscriptstyle(4)}$ of dotted cups for the top, middle left, middle right and bottom cup diagram. For a $\la$-pair $(\alpha,\beta)$ we denote by $m$ the position of the rightmost vertex of the component which contains the cup-cap-pair involved in the surgery. For instance, in the notation of \eqref{lapair2}, the first diagram in \eqref{diamonds2} without dots has the following triples $(\alpha_i,\beta_i,m_i)$ for $i=1,\ldots, 4$ of $\la$-pairs and maximal vertices  :
$$(2,3,4),(2,5,6),(4,5,6),(3,4,5).$$
Then the composition of the  maps on the left of \eqref{diamondwrong} equals $$1+\frac{1}{2}(-1)^3X_4+\frac{1}{2}(-1)^5X_6=1-X_6$$ as elements of $\cM(\underline{\la^{\scriptscriptstyle(2)}}\overline{\la^{\scriptscriptstyle(4)}})$ from Proposition \ref{coho}
by the relations given by $\underline{\la^{\scriptscriptstyle(4)}}$, whereas the composition of the  maps on the right equals $1+\frac{1}{2}(-1)^5X_6+\frac{1}{2}(-1)^4X_5=1-X_6$. The allowed composition of any other two maps in the diamond is also equal to $1-X_6$. Note that all the involved surgery moves in the diamonds \eqref{diamonds1} and \eqref{diamonds2} are merges, hence no additional signs appear and we do not have to care about the actual positions, but only the relative position of the cups.

The following three tables list all the possible decorations with the corresponding resulting maps for the three cases in \eqref{diamonds2} where we abbreviate the triples $P_i=(\alpha_i,\beta_i,m_i)$.

\begin{eqnarray*}
\begin{array}[t]{ccccccccc}
D^{\scriptscriptstyle(1)}&D^{\scriptscriptstyle(2)}&D^{\scriptscriptstyle(3)}&D^{\scriptscriptstyle(4)}&P_1&P_2&P_3&P_4&\text{result}\\
\hline
\{\}&\{\}&\{\}&\{\}&(2,3,4)&(2,5,6)&(4,5,6)&(3,4,5)&1-X_6\\
\{1,3\}&\{1,3\}&\{\}&\{\}&(2,3,4)&(1,2,6)&(1,4,6)&(3,4,5)&1+X_6\\
\{1\}&\{1\}&\{1\}&\{1\}&(2,3,4)&(2,5,6)&(4,5,6)&(3,4,5)&1-X_6\\
\{3\}&\{3\}&\{1\}&\{1\}&(2,3,4)&(1,2,6)&(1,4,6)&(3,4,5)&1+X_6
\end{array}
\end{eqnarray*}

\begin{eqnarray*}
\begin{array}[t]{ccccccccc}
D^{\scriptscriptstyle(1)}&D^{\scriptscriptstyle(2)}&D^{\scriptscriptstyle(3)}&D^{\scriptscriptstyle(4)}&P_1&P_2&P_3&P_4&\text{result}\\
\hline
\{\}&\{\}&\{\}&\{\}&(4,5,6)&(2,5,6)&(2,3,6)&(3,4,5)&1-X_6\\
\{3\}&\{2\}&\{1\}&\{1\}&(3,4,6)&(1,2,6)&(1,2,6)&(3,4,5)&1+X_6\\
\{1\}&\{1\}&\{1\}&\{1\}&(4,5,6)&(2,5,6)&(2,3,6)&(3,4,5)&1-X_6\\
\{1,3\}&\{1,2\}&\{\}&\{\}&(3,4,6)&(1,2,6)&(1,2,6)&(3,4,5)&1+X_6\\
\end{array}
\end{eqnarray*}

\begin{eqnarray*}
\begin{array}[t]{ccccccccc}
D^{\scriptscriptstyle(1)}&D^{\scriptscriptstyle(2)}&D^{\scriptscriptstyle(3)}&D^{\scriptscriptstyle(4)}&P_1&P_2&P_3&P_4&\text{result}\\
\hline
\{\}&\{\}&\{\}&\{\}&(2,3,4)&(4,5,6)&(4,5,6)&(2,3,6)&1-X_6\\
\{1\}&\{1\}&\{1\}&\{1\}&(2,3,4)&(4,5,6)&(4,5,6)&(2,3,6)&1-X_6\\
\{2\}&\{1\}&\{1\}&\{2\}&(1,2,4)&(4,5,6)&(4,5,6)&(1,2,6)&1\\
\{1,2,3\}&\{3\}&\{1\}&\{1\}&(1,2,4)&(3,4,6)&(1,4,6)&(2,3,6)&1\\
\{3\}&\{3\}&\{1\}&\{2\}&(2,3,4)&(3,4,6)&(1,4,6)&(1,2,6)&1+X_6\\
\{2,3\}&\{1,3\}&\{\}&\{\}&(1,2,4)&(3,4,6)&(1,4,6)&(2,3,6)&1\\
\{1,3\}&\{1,3\}&\{\}&\{1,2\}&(2,3,4)&(3,4,6)&(1,4,6)&(1,2,6)&1+X_6\\
\{1,2\}&\{\}&\{\}&\{1,2\}&(1,2,4)&(4,5,6)&(4,5,6)&(1,2,6)&1\\
\end{array}
\end{eqnarray*}

The claim is obviously true for the first two diamonds in \eqref{diamonds1} with all possible decorations.
The following five tables list all possible decorations for the remaining five diamonds with the corresponding resulting maps:

\begin{eqnarray*}
\begin{array}[t]{ccccc|cccccc}
D^{\scriptscriptstyle(1)}&D^{\scriptscriptstyle(2)}&D^{\scriptscriptstyle(3)}&D^{\scriptscriptstyle(4)}&\text{result}&D^{\scriptscriptstyle(1)}&D^{\scriptscriptstyle(2)}&D^{\scriptscriptstyle(3)}&D^{\scriptscriptstyle(4)}&\text{result}\\
\hline
\{\}&\{\}&\{\}&\{\}&1-X_8&
\{1,2,4\}&\{4\}&\{1\}&\{1\}&1\\
\{1,2\}&\{\}&\{\}&\{1,2\}&1&
\{2,4\}&\{1,4\}&\{\}&\{\}&1\\
\{1,4\}&\{1,4\}&\{\}&\{1,2\}&1+X_8&
\{1\}&\{1\}&\{1\}&\{1\}&1-X_8\\
\{2\}&\{1\}&\{1\}&\{2\}&1\\
\end{array}
\end{eqnarray*}

\begin{eqnarray*}
\begin{array}[t]{ccccc|cccccc}
D^{\scriptscriptstyle(1)}&D^{\scriptscriptstyle(2)}&D^{\scriptscriptstyle(3)}&D^{\scriptscriptstyle(4)}&\text{result}&D^{\scriptscriptstyle(1)}&D^{\scriptscriptstyle(2)}&D^{\scriptscriptstyle(3)}&D^{\scriptscriptstyle(4)}&\text{result}\\
\hline
\{\}&\{\}&\{\}&\{\}&1-X_8&
\{1\}&\{1\}&\{1\}&\{1\}&1-X_8\\
\{1,3\}&\{\}&\{\}&\{1,2\}&1+X_8&
\{3\}&\{1\}&\{1\}&\{2\}&1+X_8\\
\end{array}
\end{eqnarray*}

\begin{eqnarray*}
\begin{array}[t]{ccccc|cccccc}
D^{\scriptscriptstyle(1)}&D^{\scriptscriptstyle(2)}&D^{\scriptscriptstyle(3)}&D^{\scriptscriptstyle(4)}&\text{result}&D^{\scriptscriptstyle(1)}&D^{\scriptscriptstyle(2)}&D^{\scriptscriptstyle(3)}&D^{\scriptscriptstyle(4)}&\text{result}\\
\hline
\{\}&\{\}&\{\}&\{\}&1-X_8&
\{1,4\}&\{\}&\{\}&\{1,4\}&1+X_8\\
\{1\}&\{1\}&\{1\}&\{1\}&1-X_8&
\{4\}&\{1\}&\{1\}&\{4\}&1+X_8\\
\end{array}
\end{eqnarray*}

\begin{eqnarray*}
\begin{array}[t]{ccccc|cccccc}
D^{\scriptscriptstyle(1)}&D^{\scriptscriptstyle(2)}&D^{\scriptscriptstyle(3)}&D^{\scriptscriptstyle(4)}&\text{result}&D^{\scriptscriptstyle(1)}&D^{\scriptscriptstyle(2)}&D^{\scriptscriptstyle(3)}&D^{\scriptscriptstyle(4)}&\text{result}\\
\hline
\{\}&\{\}&\{\}&\{\}&1+\frac{X_7-X_8}{2}&
\{1,2\}&\{\}&\{\}&\{1,2\}&1+\frac{X_7+X_8}{2}\\
\{1\}&\{1\}&\{1\}&\{1\}&1+\frac{X_7-X_8}{2}&
\{2\}&\{1\}&\{1\}&\{2\}&1+\frac{X_7+X_8}{2}\\
\end{array}
\end{eqnarray*}

\begin{eqnarray*}
\begin{array}[t]{ccccc|cccccc}
D^{\scriptscriptstyle(1)}&D^{\scriptscriptstyle(2)}&D^{\scriptscriptstyle(3)}&D^{\scriptscriptstyle(4)}&\text{result}&D^{\scriptscriptstyle(1)}&D^{\scriptscriptstyle(2)}&D^{\scriptscriptstyle(3)}&D^{\scriptscriptstyle(4)}&\text{result}\\
\hline
\{\}&\{\}&\{\}&\{\}&1+\frac{X_5-X_8}{2}&
\{1,4\}&\{\}&\{\}&\{1,4\}&1+\frac{X_5+X_8}{2}\\
\{1\}&\{1\}&\{1\}&\{1\}&1+\frac{X_5-X_8}{2}&
\{4\}&\{1\}&\{1\}&\{4\}&1+\frac{X_5+X_8}{2}\\
\end{array}
\end{eqnarray*}
Hence the claim follows. 
\end{proof}
We finally showed that $\Phi$ is a well-defined map of algebras. This establishes Theorem \ref{mainthm},  showing that $\ABr$ and $\mathbb{D}_k$ are isomorphic as algebras.

\subsection{Passage to type ${\rm B}_{k}$}
\label{typeB}
Our result for Grassmannians of type ${\rm D}_{k+1}$ transfers directly to ${\rm B}_k$ via the following observation. The natural inclusions of algebraic groups $\op{SO}(2k+1,\mC)\hookrightarrow \op{SO}(2k+2,\mC)$, for $k \geq 2$, induce isomorphisms,  compatible with the Schubert stratification, of the partial flag varieties for the pairs $(G,P)$ of type $({\rm B}_k, {\rm A}_{k-1})$ and $({\rm D}_{k+1}, {\rm A}_{k})$, see  \cite[3.1]{BrionPolo}. As a  consequence, the corresponding categories, $\op{Perv}_k(\mathrm{B})$ and $\op{Perv}_{k+1}(\mathrm{D})$, of perverse sheaves constructible with respect to the Schubert stratification are equivalent, and hence both equivalent to the category $\mathbb{D}_{k+1}\Mod$.
To match our algebra directly with the combinatorics of the type ${\rm B}_k$ Grassmannian note that its Schubert varieties are canonically labelled by the set $W^\p$ of shortest length coset representatives for $W_\p \backslash W$, where $W$ denotes the Weyl group of type ${\rm B}_k$ and $W_\p$ its parabolic subroup of type ${\rm A}_{k-1}$. We choose the standard generators $s_0, s_1,\dots, s_{k-1}$ of $W$ such that the $s_i$ for $i>0$ generate $W_\p$ and $s_0, s_1$ form a Weyl group of type ${\rm B}_2$. Then $W$ acts naturally (from the right) on the set $S$ of $\{\up.\down\}$ sequences  of length $k$, where $s_i$ for $i>0$ acts by swapping the $i$-th and $(i+1)$-st symbol and $s_0$ changes the first symbol from $\up$ to $\down$ and from $\down$ to $\up$. Sending the identity element to $\down\down\ldots\down$ induces a bijection between $W^\p$ and $S$.  Now $S$ can be identified with the principal block $\Lambda_{k+1}^{\ov 0}$ via the assignment  
\begin{eqnarray*}
\mathbf{s}&\longmapsto&\mathbf{s}^\dagger=
\begin{cases}
\down\mathbf{s}&\text{if the parity of the number of $\up$'s in $\mathbf{s}$ is even,}\\
\up\mathbf{s}&\text{if the parity of the number of $\up$'s in $\mathbf{s}$ is odd,}
\end{cases}
\end{eqnarray*}
One easily checks that it corresponds to a bijection $W^\p\cong W^{\overline{0}}$ sending a reduced expression $w=s_0s_{i_1}s_{i_2}\cdots s_{i_r}$ to $s_0s_{j_1}s_{j_2}\cdots s_{j_r}$, where $j_a=i_a$ if $i_a\not=0$ and otherwise $j_a=0$ respectively $j_a=1$ depending whether  there are an even respectively odd number of $s_0$'s strictly to the left of $s_{j_a}$.  Altogether there is an equivalence of categories 
\begin{eqnarray*}
\op{Perv}_k(\mathrm{B})&\cong &\mathbb{D}_{k+1}\Mod
\end{eqnarray*}
sending the simple module labelled by $\mathbf{s}$ to the  the simple module labelled by $\mathbf{s}^\dagger.$ Moreover it provides a direct closed formula for type  $({\rm B}_k, {\rm A}_{k-1})$ parabolic Kazhdan-Lusztig polynomials $d^{{\rm B}_k}_{\mathbf{s},\mathbf{t}}$ via Lemma~\ref{KLformula}, namely 
\begin{eqnarray}
\label{B}
d^{{\rm B}_k}_{\mathbf{s},\mathbf{t}}(q)&=&
\begin{cases}
q^d&\text{if $\underline{\la}\mu$ is oriented of degree $d$,} \\
0&\text{if $\underline{\la}\mu$ is not oriented}.
\end{cases}
\quad=\quad d_{\la,\mu}(q)
\end{eqnarray}
where $\la$ and $\mu$ are the principal weights for ${\rm D}_{k+1}$ given by $\mathbf{s}^\dagger$ and $\mathbf{t}^\dagger$.
This simplifies the approach from  \cite{Shigechi} substantially, by reducing it to \cite{LS}.

\bibliographystyle{siam}
\bibliography{references}

\end{document}